\newtheorem{thm}{Theorem}[section]
\newtheorem{prop}[thm]{Proposition}
\newtheorem{cor}[thm]{Corollary}
\newtheorem{lem}[thm]{Lemma}
\newtheorem{example}[thm]{Example}
\theoremstyle{definition}
\newtheorem{definition}[thm]{Definition}
\newtheorem{ex}[thm]{Example}
\newtheorem{rem}[thm]{Remark}
\newtheorem{art}[thm]{}
\numberwithin{paragraph}{section}
\numberwithin{equation}{thm}
\def\P{{\mathbb P}}
\def\N{{\mathbb N}}
\def\Z{{\mathbb Z}}
\def\Q{{\mathbb Q}}
\def\R{{\mathbb R}}
\def\C{{\mathbb C}}
\def\A{{\mathbb A}}
\def\B{{\mathbb B}}
\def\G{{\mathbb G}}
\def\L{{\mathbb L}}
\def\T{{\mathbb T}}
\def\KC{{\mathscr C}}
\def\KD{{\mathscr D}}
\def\KO{{\mathcal O}}
\newcommand{\metr}{{\|\hspace{1ex}\|}}
\newcommand{\Hom}{{\rm Hom}}
\def\an{{\rm an}}
\def\div{{\rm div}}
\def\trop{{\rm trop}}
\def\Trop{{\rm Trop}}
\newcommand{\Xan}{{X^{\rm an}}}
\newcommand{\Zan}{{Z^{\rm an}}}
\newcommand{\Tan}{{T^{\rm an}}}
\newcommand{\Uan}{{U^{\rm an}}}
\newcommand{\Acal}{{\mathscr A}}
\newcommand{\Bcal}{{\mathscr B}}
\newcommand{\Ccal}{{\mathscr C}}
\newcommand{\Dcal}{{\mathscr D}}
\newcommand{\Hcal}{{\mathscr H}}
\newcommand{\Lcal}{{\mathscr L}}
\newcommand{\Mcal}{{\mathscr M}}
\newcommand{\Ocal}{{\mathscr O}}
\newcommand{\Ucal}{{\mathscr U}}
\newcommand{\Vcal}{{\mathscr V}}
\newcommand{\Xcal}{{\mathscr X}}
\newcommand{\codim}{{\rm codim}}
\newcommand{\Div}{{\rm div}}
\newcommand{\cyc}{{\rm cyc}}
\newcommand{\Pic}{{\rm Pic}}
\newcommand{\Spec}{{\rm Spec}}
\newcommand{\Spf}{{\rm Spf}}
\newcommand{\ve}{{\varepsilon}}
\newcommand{\id}{{\rm id}}
\newcommand{\rk}{{\rm rk}}
\newcommand{\supp}{{\rm supp}}
\newcommand{\relint}{{\rm relint}}
\newcommand{\kcirc}{{ K^\circ}}
\newcommand{\ktilde}{{ \tilde{K}}}
\newcommand{\Xfrak}{{\mathfrak X}}
\newcommand{\Lfrak}{{\mathfrak L}}
\newcommand{\Lan}{{L^{\rm an}}}
\newcommand{\val}{{\rm val}}
\newcommand{\LC}{{\rm LC}}
\newcommand{\res}{{\rm res}}
\newcommand{\ps}{{\rm ps}}
\newcommand{\dpa}{{d'_{\rm P}}}
\newcommand{\dpb}{{d''_{\rm P}}}
\title{A tropical approach to non-archimedean Arakelov geometry}
\author{Walter Gubler}
\address{W. Gubler, Fakultät für Mathematik, Universit{\"a}t Regensburg, 
93040 Regensburg, Germany}
\email{walter.gubler@mathematik.uni-regensburg.de}
\author{Klaus K{\"u}nnemann}
\address{K. K{\"u}nnemann, Fakultät für Mathematik, Universit{\"a}t Regensburg, 
93040 Regensburg, Germany}
\email{klaus.kuennemann@mathematik.uni-regensburg.de}
\date{\today}
\begin{document}

\begin{abstract}
Chambert-Loir and Ducros have recently introduced a theory of real valued
differential forms {and currents} on Berkovich spaces.
In analogy to the theory of forms with logarithmic singularities,  we enlarge the space of
differential forms by 
so called $\delta$-forms on the non-archimedean analytification
of an algebraic variety. 
This extension is based on an intersection theory for tropical cycles with smooth
weights. 
We prove a generalization of the Poincar\'e-Lelong formula  which allows us to represent the first Chern current of a formally metrized 
line bundle by a $\delta$-form. We introduce the associated Monge-Amp\`ere measure $\mu$ 
as a wedge-power of this first Chern $\delta$-form and we show that $\mu$ is equal to the corresponding 
Chambert--Loir measure.
The $*$-product of Green currents is a crucial ingredient in the construction of the arithmetic intersection 
product. Using the formalism of $\delta$-forms, we obtain a non-archimedean analogue at least in the case 
of divisors. We use it to compute non-archimedean local heights of proper varieties.

\bigskip

\noindent
MSC: Primary 14G40; Secondary  14G22, 14T05, 32P05
\end{abstract}

\maketitle
\setcounter{tocdepth}{1}
\setcounter{section}{-1}

\tableofcontents

\section{Introduction}

Weil's adelic point of view was to compactify the ring of integers $\KO_K$ 
of a number field $K$ by the archimedean primes. 
Arakelov's brilliant idea was to  add metrics on the 
``fibre at infinity'' of a surface over $\KO_K$ which gave a good intersection theory for arithmetic divisors.
This Arakelov theory became popular after Faltings used it to prove Mordell's conjecture. 
A higher dimensional arithmetic intersection theory was developed by Gillet and Soul\'e. 
Their theory combines algebraic intersection theory on a regular model $\Xcal$ 
over $\KO_K$ with differential geometry on the associated complex 
manifold $\Xan$ of the generic fibre $X$ of $\Xcal$. 
Roughly speaking, an arithmetic cycle on $\Xcal$ is given by a pair 
$({\mathscr  Z},g_Z)$, where $\mathscr Z$ is a cycle on $\Xcal$ with 
generic fibre $Z$ and $g_Z$ is a  current on $\Xan$ satisfying the equation 
\[
dd^cg_Z =[\omega_Z]-\delta_Z
\]
for a smooth differential form $\omega_Z$ and the current of 
integration $\delta_Z$ over $\Zan$. The arithmetic intersection product 
uses the algebraic intersection product for algebraic 
cycles in the first component and the $*$-product of Green currents in 
the second component. 
This arithmetic intersection theory is nowadays called Arakelov theory. 
It found many nice applications as Faltings's proof of the Mordell--Lang 
conjecture for abelian varieties and the proof of Ullmo and Zhang of 
the Bogomolov conjecture for abelian varieties. 

It is an old dream to handle archimedean and non-archimedean places in a 
similar way. This means that we are looking for a description in terms of 
currents for the contributions of the non-archimedean places to 
Arakelov theory. Such a non-archimedean Arakelov theory at finite places was 
developed by Bloch--Gillet--Soul\'e relying strongly on the conjectured 
existence of resolution of singularities for models in mixed characteristics. 
The use of models has also another disadvantage since they are not suitable 
to describe canonical metrics as for line bundles on abelian varieties with 
bad reduction. 
A more analytic non-archimedean Arakelov theory was developed by 
Chinburg--Rumely and Zhang in the case of curves. 
A crucial role is played here by the reduction graph of the curve. 
Without any doubt, the latter should be replaced by the Berkovich analytic 
space associated to the curve and this was done by Thuillier in his thesis 
introducing a non-archimedean potential theory. 
Chambert--Loir and Ducros \cite{chambert-loir-ducros} introduced recently 
 differential forms and currents on Berkovich spaces. These 
provide us with a new tool to give an analytic 
description of non-archimedean 
Arakelov theory in higher dimensions.

We recall the definition of differential forms given in 
\cite{chambert-loir-ducros}. We restrict here to the algebraic
case. Let $U$ be an $n$-dimensional very affine open variety which means 
that $U$ has a closed embedding into a multiplicative torus 
$T = {\mathbb G}_m^r$ over a non-archimedean field $K$. 
By definition, such a field $K$ is endowed with a complete non-archimedean 
absolute value $|\phantom{a}|$. 
Let $t_1, \dots, t_r$ be the torus coordinates. Then we have the tropicalization
map $\trop:\Tan \rightarrow \R^r, \, t \mapsto (-\log|t_1|, \dots , -\log|t_r|)$.
By the Bieri--Groves theorem, the tropical variety $\Trop(U):=\trop(\Uan)$ is a 
finite union of $n$-dimensional polyhedra. More precisely, $\Trop(U)$ is an $n$-dimensional 
tropical cycle which means that $\Trop(U)$ is a polyhedral complex endowed with 
canonical weights satisfying a balancing condition. 
Let $x_1, \dots , x_r$ be the  coordinates on $\R^r$. 
Then Lagerberg's superforms on $\R^r$ are formally given by  
\[
\alpha = \sum_{|I|=p, |J|=q}  \alpha_{IJ} d'x_{i_1} \wedge \dots 
\wedge d'x_{i_p} \wedge d''x_{j_1} \wedge \dots \wedge d''x_{j_q}
\]
where $I$ (resp. $J$) consists of $i_1 < \dots < i_p$ 
(resp. $j_1 < \dots < j_q$), $\alpha_{IJ} \in C^\infty(\R^r)$. 
We have differential operators $d'$ and $d''$ on the space of superforms given by
\[d' \alpha :=  \sum_{|I|=p, |J|=q}  \sum_{i=1}^r \frac{\partial \alpha_{IJ}}{\partial x_i} d'x_i 
\wedge d'x_{i_1} \wedge \dots \wedge d'x_{i_p} \wedge d''x_{j_1} \wedge \dots \wedge d''x_{j_q}\]
and 
\[d'' \alpha :=  \sum_{|I|=p, |J|=q}  \sum_{j=1}^r \frac{\partial \alpha_{IJ}}{\partial x_j} {d''x_j}
\wedge d'x_{i_1} \wedge \dots \wedge d'x_{i_p} \wedge d''x_{j_1} \wedge \dots \wedge d''x_{j_q}.\]
They are the analogues of the differential operators $\partial$ and 
$\bar{\partial}$ in complex analysis. The space of superforms on $\R^r$ with 
the usual wedge-product is a differential bigraded $\R$-algebra with respect 
to $d'$ and $d''$. The space of supercurrents on $\R^r$ is given as the 
topological dual of the space of superforms.

Every superform $\alpha$ induces a differential form on $\Uan$ and two 
superforms $\alpha, \alpha'$ induce the same form if and only if they 
restrict to the same superform on $\Trop(U)$. 
In general, a differential form on an $n$-dimensional variety $X$ is given 
locally for the Berkovich analytic topology on very affine open subsets by Lagerberg's superforms 
which agree on common intersections (see \cite{gubler-forms} for more details). 
The wedge-product and the differential operators can be carried over 
to $\Xan$ leading to a  sheaf $A^{\cdot,\cdot}$ of differential forms on $\Xan$. 
Integration of superforms leads to integration of compactly supported 
$(n,n)$-forms on $\Xan$. 
The space of currents $D^{\cdot,\cdot}(\Xan)$  is defined as the topological 
dual of the space of compactly supported forms. 

A major result of Chambert--Loir and Ducros is the Poincar\'e--Lelong 
formula for the meromorphic section of a line bundle endowed with a 
continuous metric $\metr$. Note that in this situation, $c_1(L,\metr)$ is only 
a current, while a smooth metric allows to define the first Chern form in 
$A^{1,1}(\Xan)$. 
For a smooth metric, $c_1(L,\metr)^n$ is a form of top degree and hence 
defines a signed measure called the Monge--Amp\`ere measure of $(L,\metr)$. 
In arithmetics, metrics are often induced by proper algebraic models over the 
valuation ring. Such metrics are called algebraic. 
They are continuous on $\Xan$, but not smooth. This makes it difficult to 
define the Monge--Amp\`ere measure as a wedge product of currents. 
In the complex situation, one needs Bedford--Taylor theory to define such a 
wedge product. In the non-archimedean situation, Chambert--Loir and Ducros 
use an approximation process by smooth metrics to define this top-dimensional 
wedge product of first Chern currents. 

The main theorem in \cite{chambert-loir-ducros} shows that the 
Monge-Amp\`ere measure of a line bundle endowed with a formal metric is 
equal to the Chambert-Loir measure. 
The latter was introduced in \cite{chambert-loir-2006} before a definition 
of first Chern current was available. 
It is defined as a discrete measure on the Berkovich space using 
degrees of the irreducible components of the special fibre. 
Chambert-Loir measures play a prominent role in non-archimedean equidistribution results. 
For example, they occur in the non-archimedean version of Yuan's 
equidistribution theorem which has applications to the geometric 
Bogomolov conjecture.

In the thesis of Christensen \cite{christensen-2013} a 
different approach to a first Chern form was given. Christensen studied 
the example $E^2$ for a Tate elliptic curve E and he defined the first 
Chern form as a tropical divisor on the skeleton of $E^2$.  Then he showed 
that the  {$2$-fold} tropical self-intersection of this divisor gives the 
Chambert-Loir measure.

\vspace{2mm}
In this paper, we combine both approaches. We enrich the theory of differential forms given in 
\cite{chambert-loir-ducros} by enlarging the 
space of smooth forms to the space of $\delta$-forms. 
They behave as forms and they have the advantage that we can define 
a first Chern $\delta$-form for a line bundle endowed with a formal metric. 
This leads to a direct definition of the Monge--Amp\`ere measure as a 
wedge-product of $\delta$-forms and to an approach to non-archimedean 
Arakelov theory.

This will be explained in more detail now.   {
Throughout this paper $K$ denotes 
an algebraically closed field endowed with a non-trivial 
non-archimedean complete absolute value.} 
Note that this is no restriction of generality as for many problems including the ones discussed in this paper 
such a setup can always be achieved by base change. This is similar to the archimedean case where analysis is usually performed over the complex numbers.
For sake of simplicity, we assume in the introduction that tropical 
cycles have constant weights as usual in tropical geometry 
(see Section \ref{tropinttheo} for details and for a generalization 
to smooth weights). A $\delta$-preform on $\R^r$ is a supercurrent 
$\alpha$ on $\R^r$ of the form
\begin{equation} \label{deltapreform}
\alpha = \sum_{i \in I} \alpha_i \wedge \delta_{C_i} 
\end{equation}
for finitely many superforms $\alpha_i$ and tropical cycles $C_i$ on $\R^r$. 
Using the wedge-product of superforms and the stable intersection product 
of tropical cycles, we get a wedge-product of  
{$\delta$-preforms}. 
Since the supercurrents of integration $\delta_{C_i}$ are $d'$-closed 
and $d''$-closed, we can extend the differential operators $d'$ and 
$d''$ to $\delta$-preforms leading to a differential bigraded $\R$-algebra. 
We refer to Section \ref{algebradeltaforms} for precise definitions and 
generalizations allowing smooth tropical weights.

In Section \ref{Currents and delta-preforms on tropical cycles}, we extend all 
these notions from $\R^r$ to a fixed tropical cycle $C$ of $\R^r$. 
The balancing condition is equivalent to closedness of the supercurrent $\delta_C$ which means that $C$ is boundaryless in 
the sense that no boundary integral shows up in the theorem of Stokes over $C$. Therefore we may view a tropical cycle  as a 
combinatorial analogue of a complex analytic space. 
Using integration over $C$, we will see that a piecewise smooth form $\eta$ 
on the support of $C$ induces a supercurrent $[\eta]$ on $C$. 
We will apply this to a piecewise  {smooth} function $\phi$ on $C$. 
In tropical geometry, $\phi$ plays the role of a Cartier divisor on $C$ and 
has an associated tropical Weil divisor $\phi \cdot C$. 
The latter is also called the corner locus of $\phi$ as it is a tropical 
cycle of codimension $1$ with support equal to the singular locus of $\phi$. 
We will show in Corollary  \ref{tropicalpoincarelelong} the following 
{\it tropical Poincar\'e--Lelong formula}:

\begin{thm}\label{tpl} 
Let $\phi$ be a piecewise smooth function on $C$ and let  $\delta_{\phi\cdot C}$ 
be the supercurrent of integration over the corner locus $\phi\cdot C$. Then we have 
\begin{equation*}
d'd''[\phi]-[d'd''\phi]=\delta_{\phi\cdot C}
\end{equation*}
as supercurrents on $C$. 
\end{thm}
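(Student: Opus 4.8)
The plan is to reduce the global statement on the tropical cycle $C$ to a local computation near a point of the codimension-one skeleton of the piecewise smooth structure, and there to carry out an explicit integration by parts that produces exactly the weighted sum over the corner locus. First I would observe that the identity is local on $|C|$ in the following sense: fix a polyhedral complex structure on $C$ adapted to the piecewise smooth function $\phi$, so that $\phi$ is smooth on each face and the corner locus $\phi\cdot C$ is supported on the union of the codimension-one faces $\tau$. Both sides of $d'd''[\phi]-[d'd''\phi]=\delta_{\phi\cdot C}$ are supercurrents on $C$, and by a partition of unity argument (subordinate to the stars of the faces) it suffices to test against a smooth compactly supported form $\omega$ whose support meets at most one codimension-one face $\tau$ in its relative interior. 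On such a test form, the contribution of the interiors of the top-dimensional faces cancels by definition of $[d'd''\phi]$, so everything is concentrated along $\tau$.

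Next I would make the local model explicit. Choose an integral affine chart near $\relint(\tau)$ in which $C$ looks like the product of the linear span of $\tau$ with a fan in the transverse direction whose rays are the primitive generators $u_\sigma$ of the top-dimensional faces $\sigma\succ\tau$ (weighted by $m_\sigma$); then $\phi$ restricted to a transverse slice is a piecewise smooth function on this fan, and the balancing condition for $C$ at $\tau$ reads $\sum_{\sigma\succ\tau} m_\sigma u_\sigma \in$ (span of $\tau$), which is precisely what makes $\delta_C$ closed. Pairing $d'd''[\phi]-[d'd''\phi]$ with the test form $\omega$ means integrating $[\phi]\wedge d'd''\omega - [d'd''\phi]\wedge\omega$ over $C$, i.e. summing integrals over the top faces $\sigma$. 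On each $\sigma$ I integrate by parts (Stokes for superforms, as recalled in the excerpt), converting the interior integral of $\phi\, d'd''\omega - (d'd''\phi)\,\omega$ into a boundary term along $\tau$ involving the normal derivative of $\phi$ along $u_\sigma$ and the restriction of $\omega$ (and its first derivatives) to $\tau$. Summing over $\sigma\succ\tau$, the terms that are symmetric in the two derivative orders cancel in pairs across faces, and the balancing condition kills the remaining span-of-$\tau$ component; what survives is $\bigl(\sum_{\sigma\succ\tau} m_\sigma\,\partial_{u_\sigma}\phi|_\tau\bigr)$ times the integral of $\omega$ over $\tau$. But $\sum_{\sigma\succ\tau} m_\sigma\,\partial_{u_\sigma}\phi$ is by definition the weight of $\tau$ in the corner locus $\phi\cdot C$, so the boundary term is exactly $\langle \delta_{\phi\cdot C},\omega\rangle$.

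The main obstacle, and where I would spend the most care, is the bookkeeping in the integration-by-parts step: superforms are bigraded and $d'$, $d''$ each produce boundary contributions, so I must track orientations and the interaction of the $d'$- and $d''$-pieces of $d'd''$ consistently, and verify that the cross terms genuinely cancel when I sum over the faces $\sigma$ adjacent to $\tau$ (this is the point where closedness of $\delta_C$, equivalently the balancing condition, enters and is indispensable). A secondary technical point is that $\phi$ is only piecewise smooth, so $d'd''\phi$ has an $L^1_{\mathrm{loc}}$ (indeed piecewise smooth) density and $[d'd''\phi]$ is genuinely a well-defined current on $C$; I would invoke the construction from Section~\ref{Currents and delta-preforms on tropical cycles} that a piecewise smooth form induces a supercurrent, so that both $[\phi]$ and $[d'd''\phi]$ make sense and the pairing manipulations above are legitimate. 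Finally, I would note that the case of smooth (non-constant) weights is handled by the same argument: the weight functions are smooth along the faces and so pass through the integration by parts without creating new singular contributions, the balancing condition with smooth weights still gives closedness of $\delta_C$, and the corner-locus weight formula picks up the appropriate Leibniz terms, which are precisely the definition of $\phi\cdot C$ in that generality (Section~\ref{tropinttheo}).
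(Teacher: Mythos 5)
Your overall strategy is the paper's: both proofs reduce the identity to an integration by parts (Stokes for superforms, resp.\ for polyhedral supercurrents, Proposition \ref{stokesforpolyhedralcurrents}) over the top-dimensional faces, organize the resulting boundary terms along the codimension-one faces $\tau$, and identify the surviving contribution with the corner-locus weight by means of the balancing condition; the paper does this globally over $P=|\KC|$ (and proves the more general Theorem \ref{deltatropicalpoincarelelong}, where one tests against $\eta\wedge\omega$ with $\omega$ a closed $\delta$-preform, which is needed later), whereas you localize with a partition of unity. That difference is cosmetic, except that your reduction to test forms whose support meets only one codimension-one face in its relative interior is not literally achievable near codimension-two strata; one should instead simply sum the per-$\tau$ boundary contributions, as the paper's global bookkeeping does automatically.

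The step you should repair is the identification of the boundary term with the corner-locus weight. With primitive lattice representatives $u_\sigma=\omega_{\sigma,\tau}$, balancing does not kill the tangential contribution: it only guarantees that $\omega_\tau:=\sum_{\sigma\succ\tau}m_\sigma\omega_{\sigma,\tau}$ lies in $\L_\tau$, and the corresponding term in the boundary integral survives and equals $-\partial\phi_\tau/\partial\omega_\tau$ (this is exactly the computation in Lemma \ref{tropicallemma}). Correspondingly, the weight of $\tau$ in $\phi\cdot C$ is not $\sum_{\sigma\succ\tau} m_\sigma\,\partial\phi_\sigma/\partial\omega_{\sigma,\tau}$ — that expression depends on the choice of the lifts $\omega_{\sigma,\tau}$ — but the combination \eqref{cornerlocusg1}, which contains the correction $-\partial\phi_\tau/\partial\omega_\tau$ and is precisely what the boundary computation produces. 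Your argument becomes correct if you either carry this correction term along, or normalize the (then real, in general non-primitive) transverse lifts so that $\sum_{\sigma}m_\sigma u_\sigma=0$; as written, the two misstatements happen to compensate each other. Finally, your closing claim that the same proof covers tropical cycles $C$ with non-constant smooth weights because ``balancing still gives closedness of $\delta_C$'' contradicts Proposition \ref{diffsmoothtropcyc}, where $d'\delta_C=d'm\wedge\delta_{\KC}$ is nonzero in general; the theorem as stated concerns constant weights on $C$ (only $\phi$ is piecewise smooth), so this extra assertion is unnecessary and, with that justification, incorrect.
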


This is a statement about integration of superforms on tropical currents and 
its proof relies on Stokes theorem. In fact, we prove a more general statement  
in  Theorem \ref{deltatropicalpoincarelelong} involving integration 
of $\delta$-preforms on $C$. 

Let $X$ be an $n$-dimensional algebraic variety over $K$. 
We now define $\delta$-forms on $\Xan$ similarly as differential forms, but 
replacing superforms by the more general $\delta$-preforms. 
This means that a $\delta$-form 
is given locally with respect to the Berkovich analytic topology on very affine open subsets by pull-backs of 
$\delta$-preforms with respect to the tropicalization maps. 
The $\delta$-preforms have to agree on overlappings which involves a 
quite complicated restriction process which is explained in 
Section \ref{deltaalgvar}. 
Moreover, we will show that $\delta$-forms are bigraded, have a wedge 
product and differential operators $d',d''$ extending the corresponding 
structures for differential forms on $\Xan$. 
There is also a pull-back with respect to morphisms and so we see 
that $\delta$-forms behave as differential forms on complex manifolds.

In Section \ref{Integration of delta-forms}, we will study integration of 
compactly supported $\delta$-forms of bidegree $(n,n)$ on $\Xan$. 
To define the integral of such a $\delta$-form $\alpha$, we choose a 
dense open subset $U$ of $X$ with a closed embedding $U \hookrightarrow {\mathbb G}_m^r$ such that $\alpha$ is given on $\Uan$ by the 
pull-back of a $\delta$-preform $\alpha_U$ on $\R^r$ with respect to 
the tropicalization map $\trop_U:\Uan \rightarrow \R^r$. 
Using the corresponding tropical variety $\Trop(U)$, we set 
\[
\int_\Xan \alpha := \int_{|\Trop(U)|}\alpha_U.
\]
In Section \ref{delta-currents}, we introduce $\delta$-currents as 
continuous linear functionals on the space of compactly supported $\delta$-forms. 
By integration, every $\delta$-form $\alpha$ induces a $\delta$-current $[\alpha]$. 
Similarly, we get a current of integration $\delta_Z$ for every cycle $Z$ on $X$. 
As a major result, we will show in Corollary \ref{deltaform defines-measure} 
that $[\alpha]$ is a signed Radon measure on $\Xan$ for every $\alpha$ of 
bidegree $(n,n)$. 
We will deduce in Proposition \ref{current continuous function} that 
every continuous real function $g$ on $\Xan$ induces a $\delta$-current 
$[g]$ on $\Xan$ which is defined at a compactly supported $\delta$-form 
$\alpha$ of bidegree $(n,n)$ by integrating $g$ with respect 
to the corresponding Radon measure. 

Now let $f$ be a rational function on $X$ which is not identically zero. 
By integration again, we will get a $\delta$-current $[-\log|f|]$ on $\Xan$.

\begin{thm} \label{PL-formula}
Let $\cyc(f)$ be the Weil divisor associated to $f$. 
Then we have the  Poincar\'e--Lelong equation
\[
\delta_{{\cyc}(f)}=d'd''\bigl[
\log|f|\bigr]
\]
of $\delta$-currents on $\Xan$.
\end{thm}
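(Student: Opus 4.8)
The plan is to localize on $\Xan$ and reduce Theorem~\ref{PL-formula} to the tropical Poincar\'e--Lelong formula (Theorem~\ref{tpl}, or rather to its $\delta$-preform refinement Theorem~\ref{deltatropicalpoincarelelong}) by choosing a tropicalization adapted to $f$. Both sides are $\delta$-currents, hence are determined by their values on compactly supported $\delta$-forms $\alpha$ of bidegree $(n-1,n-1)$, and since two $\delta$-currents coinciding on all $\delta$-forms supported in the members of a Berkovich-open cover of $\Xan$ are equal (partition of unity for $\delta$-forms), I would first reduce to the case where $\alpha$ is the pull-back of a $\delta$-preform under a fixed closed embedding $\iota\colon U\hookrightarrow\G_m^r$ of a very affine open $U\subseteq X$. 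Writing $f=g/h$ with $g,h\in\KO(U)$ and using $\cyc(f)=\cyc(g)-\cyc(h)$ and $\log|f|=\log|g|-\log|h|$, I may moreover assume that $f$ is regular on $U$, so that $\cyc(f)|_U$ is the effective divisor $D$ cut out by $f$ on $U$.

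Next I would build the tropical picture. Since $f$ is invertible on $U_f:=U\setminus D$, adjoining $f$ as an $(r+1)$-st coordinate gives a closed embedding $\iota'\colon U_f\hookrightarrow\G_m^{r+1}$ with $\log|f|=-x_{r+1}\circ\trop_{\iota'}$ on $U_f^{\an}$ and with $\trop_{\iota'}$ projecting onto $\trop_\iota$. Set $C':=\Trop(U_f)\subseteq\R^{r+1}$. The combinatorial heart of the proof is the behaviour of $C'$ ``at infinity'' in the $x_{r+1}$-direction: the closure of $U_f$ in the toric variety $\G_m^r\times\A^1$ is the graph of the morphism $f\colon U\to\A^1$, hence isomorphic to $U$, and it meets the boundary torus $\G_m^r\times\{0\}$ exactly in $D$; translated to tropicalizations --- using the star-at-infinity description of tropicalizations of subvarieties of toric varieties from the tropical intersection theory of Section~\ref{tropinttheo} --- this says that for $N$ large the hyperplane slice $C'\cap\{x_{r+1}=N\}$ is $\Trop(D)\times\{N\}$, where $\Trop(D)\subseteq\R^r$ is computed through $\iota$ and carries the multiplicities $\ord_Y(f)$.

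Now I would recover the divisor by truncation. For $N>0$ the function $\psi_N:=\max(\log|f|,-N)$ is continuous on $\Uan$, is locally constant equal to $-N$ near $D^{\an}$, and decreases pointwise to $\log|f|$. On $U_f^{\an}$ it equals the pull-back under $\trop_{\iota'}$ of the convex piecewise linear function $\max(-x_{r+1},-N)$ on $C'$, whose corner locus is exactly the slice $C'\cap\{x_{r+1}=N\}$ with its canonical weights (a routine weight computation for a convex bend). Since $\psi_N$ is piecewise linear, Theorem~\ref{deltatropicalpoincarelelong} gives $d'd''[\psi_N]$ as the current of integration over that corner locus, which for $N\ge N_0$, by the previous paragraph and the definition of integration of $\delta$-forms over cycles (Section~\ref{Integration of delta-forms}), is the current of integration $\delta_D$. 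Finally, $\log|f|$ is integrable against the signed Radon measures attached to $\delta$-forms (Corollary~\ref{deltaform defines-measure}), so monotone convergence yields $[\psi_N]\to[\log|f|]$, hence $d'd''[\psi_N]\to d'd''[\log|f|]$, and therefore $d'd''[\log|f|]=\delta_D=\delta_{\cyc(f)}$ on $\Uan$; gluing over a very affine cover of $\Xan$ proves the theorem.

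The hard part will be the rigorous handling of this ``corner at infinity'': $-x_{r+1}$ is affine on $C'$ and so has empty corner locus there, yet the current $[\log|f|]$ on $\Uan$ must see the divisor $D$ sitting over $x_{r+1}=\infty$, and making this precise is what forces the detour through the approximants $\psi_N$. Concretely one has to establish (a) the star-at-infinity identification $\lim_N\,(C'\cap\{x_{r+1}=N\})=\Trop(D)$ together with the correct weights $\ord_Y(f)$ --- a genuinely tropical statement, where it may be convenient to reduce to $X$ normal and to refine the toric structure --- and (b) the continuity of $d'd''$ along the decreasing approximation $\psi_N\downarrow\log|f|$, which relies on the local integrability of $\log|f|$ and is exactly the point at which the $\delta$-form formalism is shown to behave, at the level of currents, as the smooth forms do in the complex case.
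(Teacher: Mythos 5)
Your overall strategy---truncate $\log|f|$ at level $-N$, move to a tropicalization having $-\log|f|$ as an extra coordinate, apply the tropical Poincar\'e--Lelong formula, and let the corner at level $N$ produce the divisor---is in spirit close to the paper's proof of Theorem \ref{Poincare-Lelong equation}, which likewise works on the region $\{|f|\ge s\}$ and reduces everything to tropical Stokes/Green computations. Your step (b) is in fact easier than you make it: by Lemma \ref{support and degree of delta-forms} (or Corollary \ref{support corollary}) the support of $d'd''\alpha$ is a compact subset of $\Xan$ disjoint from $|\cyc(f)|^{\rm an}$, so $\psi_N=\log|f|$ on a neighbourhood of it once $N$ is large, and no monotone-convergence argument is needed.

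The genuine gap is your step (a). As stated it is false: for $C'=\Trop(U_f)\subset\R^{r+1}$ the slice $C'\cap\{x_{r+1}=N\}$ is in general not $\Trop(D)\times\{N\}$ for any finite $N$, nor does it stabilize. For example, take $U=\{s\neq 0,\ t\neq 1\}\subset\A^2$, which is very affine via $(s,t-1)$, and $f=st$; then $\Trop(U_f)$ is the image of $\R\times L$ (with $L$ the tropical line of $\{(t-1,t)\}$) under $(x_1,(x_2,y))\mapsto(x_1,x_2,x_1+y)$, and the slice at $x_3=N$ projects to the union of $\{x_2=0,\ x_1\le N\}$ with two further rays emanating from $(N,0)$; it agrees with $\Trop(D\cap U)=\{x_2=0\}$ only on compact sets as $N\to\infty$. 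So at best you have a germ/local-on-compacta statement, and on top of that you must show that the weights of the corner locus converge to the multiplicities $\ord_Y(f)$. This corrected statement is not ``from the tropical intersection theory of Section \ref{tropinttheo}''---nothing there describes tropicalizations at toric infinity or compares weights of slices with orders of vanishing---and it is exactly the analytic core of the theorem: in the paper it is supplied by the result of Chambert-Loir--Ducros (their Proposition 4.6.6, resting on Ducros' structure theorems) that over a small punctured disc the $n$-skeleton of the tropicalization of $\{s\le|f|\le r\}$ is a product $-\log(I)\times\KC$ with $\KC$ the weighted tropicalization of $\Div(f)$, together with the comparison of analytic tropical weights; most of the paper's proof is the bookkeeping (affinoid coverings $W_E$, inclusion--exclusion, the cut-offs $\psi_\ell$) needed to exploit it. Without proving or citing such a statement your argument never identifies the limiting corner contribution with $\delta_{\cyc(f)}$. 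A secondary, repairable point: Theorem \ref{deltatropicalpoincarelelong} requires compact support of the test data on the tropical side, which fails for $\alpha_{U_f}$ when $\supp(\alpha)$ meets $D^{\rm an}$, so you would need the refined version of Remark \ref{generalization of tropicallemma} and a careful choice of charts rather than a direct application.
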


The proof will be given given in \ref{Poincare-Lelong equation}. 
The Poincar\'e--Lelong equation of Chambert-Loir and Ducros is the special case of 
our formula where one evaluates the $\delta$-currents at differential forms. 
The generalization to $\delta$-forms is not obvious and needs a more tropical adaption of their 
beautiful arguments. 
In Section \ref{PoincareLelongFormula}, we will introduce the first 
Chern $\delta$-current $[c_1(L,\metr)]$ of a continuously metrized line 
bundle $(L, \metr)$ on $X$. 
As usual, we mean here continuity with respect to the Berkovich topology on $\Xan$. 
In Corollary \ref{PL for line bundles}, we will deduce from 
Theorem \ref{PL-formula} that a non-zero meromorphic section $s$ of $L$ 
satisfies the Poincar\'e--Lelong equation
\begin{equation} \label{PL for s}
d'd''[-\log\|s\|]=[c_1(L,\metr)]-\delta_{\cyc(s)}
\end{equation}
for $\delta$-currents on $\Xan$. 

In Section \ref{piecewice smooth and formal metrics on lb}, we will define 
piecewise smooth (resp. piecewise linear) metrics on $L$. 
We will show in Proposition \ref{equivalences for formal metrics} that a 
metric is piecewise linear if and only if it is induced by a formal model 
of the line bundle. 
In Section \ref{piecewise smooth forms and delta-metrics}, we will introduce 
piecewise smooth forms on $\Xan$. For a piecewise smooth metric $\metr$ on $L$, 
the first Chern $\delta$-current $[c_1(L,\metr)]$ has a canonical 
decomposition into a sum of a piecewise smooth form  and a residual current.
If $\metr$ is smooth, then $c_1(L,\metr)$ is a differential form on $\Xan$. 
We say that a piecewise smooth metric $\metr$ is a $\delta$-metric if the 
first Chern $\delta$-current $[c_1(L,\metr)]$ is induced by a 
$\delta$-form $c_1(L,\metr)$ (see Definition \ref{delta-metrics} for a 
more precise definition). 
In this situation, we call $c_1(L,\metr)$ the first Chern $\delta$-form 
of $(L,\metr)$. 
We will see in Remark \ref{smooth and pl are delta-metrics}
that every 
piecewise linear metric is a $\delta$-metric. 
Canonical metrics on line bundles exist on line bundles on abelian varieties, 
on line bundles which are algebraically equivalent to zero and on 
line bundles on toric varieties. 
It follows from our considerations in Section 
\ref{piecewice smooth and formal metrics on lb} that all these canonical 
metrics are $\delta$-metrics (see Example \ref{examples for delta-metrics}). 

In Section \ref{MAm}, we consider a proper algebraic variety $X$ over $K$ of 
dimension $n$ with a line bundle $L$ endowed with an algebraic metric $\metr$. 
This means that the metric is induced by an algebraic model of $L$.  
Based on the formal GAGA principle, we  show in Proposition \ref{algebraic and formal metrics} that an algebraic 
metric is the same as a formal metric and hence this is also the same as a piecewise linear metric.  
As a consequence, we note that $\metr$ is a $\delta$-metric and hence 
$c_1(L,\metr)$ is a well-defined $\delta$-form. 
We deduce that $c_1(L,\metr)^n$ is a $\delta$-form of bidegree $(n,n)$ 
on $\Xan$ which we may view as a signed Radon measure on $\Xan$ by the above. 
We call it the Monge--Amp\`ere measure associated to $(L,\metr)$. 
In \ref{Monge-Ampere vs Chambert-Loir} we are going to show the following result:
                                                                                                                                                                                                                                    
\begin{thm} \label{CLvsMA}
Under the assumptions above, the Monge--Amp\`ere measure associated to  
$(L,\metr)$ is equal to the Chambert--Loir measure associated to $(L, \metr)$. 
\end{thm}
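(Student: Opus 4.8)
The plan is to reduce the equality of measures to a local, combinatorial statement on a single tropical chart and then to a known description of the Chambert--Loir measure in terms of the skeleton of a formal model. First I would fix a strictly semistable (or, after base change and refinement, polystable) formal model $\Xcal$ of $X$ over $\kcirc$ inducing the metric $\metr$. Since the statement is about signed Radon measures, it suffices to test both sides against a dense set of functions; by Proposition \ref{current continuous function} and Corollary \ref{deltaform defines-measure} the $\delta$-form $c_1(L,\metr)^n$ is the Radon measure obtained by integrating $\delta$-preforms over tropical varieties, so I would choose a very affine open cover $\{U_\lambda\}$ of $X$ together with closed embeddings $U_\lambda\hookrightarrow \G_m^{r_\lambda}$ such that $\metr$ is piecewise linear with respect to each $\trop_{U_\lambda}$, which is possible because piecewise linear metrics are exactly formal metrics (Proposition \ref{equivalences for formal metrics}, Proposition \ref{algebraic and formal metrics}). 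On each such chart, $c_1(L,\metr)$ is given by a tropical piecewise linear function $\phi_\lambda$ on $\Trop(U_\lambda)$, and by the tropical Poincar\'e--Lelong formula (Theorem \ref{tpl}, Theorem \ref{deltatropicalpoincarelelong}) together with the definition of the wedge product of $\delta$-preforms via stable tropical intersection, the $n$-fold power $c_1(L,\metr)^n|_{U_\lambda}$ is the $\delta$-preform associated to the iterated corner locus $\phi_\lambda\cdots\phi_\lambda\cdot\Trop(U_\lambda)$, a weighted sum of points in $\R^{r_\lambda}$.

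Next I would compute this zero-dimensional tropical cycle explicitly. The key input is the compatibility of tropicalization with the formal model: after possibly refining, $\Trop(U_\lambda)$ together with the lattice structure on its cells reflects the special fibre $\Xcal_s$, the vertices of the corresponding polyhedral complex correspond to the irreducible components of $\Xcal_s$ (or more precisely to the strata of the stratification by the $\Xcal_s$-components), and the integral affine structure encodes the multiplicities. The iterated stable self-intersection $\phi_\lambda\cdots\phi_\lambda\cdot\Trop(U_\lambda)$ is supported on these vertices, and its weight at the vertex corresponding to an irreducible component $Y$ of $\Xcal_s$ is, by the combinatorial formula for the corner locus, a mixed-volume type expression in the slopes of $\phi_\lambda$. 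I would identify this weight with the intersection number $\deg_{L}(Y) = \int_Y c_1(L|_Y)^{n}$ (the degree of the restriction of the model line bundle to the component), which is exactly the coefficient appearing in the Chambert--Loir measure $\sum_Y \deg_L(Y)\,\delta_{\xi_Y}$, where $\xi_Y$ is the Shilov point reducing to the generic point of $Y$; see \cite{chambert-loir-2006}. The point-mass of $c_1(L,\metr)^n$ at the Berkovich point $\xi_Y$ equals its tropical weight at the corresponding vertex, by the definition of integration of $\delta$-forms in Section \ref{Integration of delta-forms}.

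The verification that these two numbers agree is the heart of the argument, and it is where I expect the main obstacle. On the tropical side the weight is a purely combinatorial quantity built from the slopes of the piecewise linear function $\phi_\lambda$ and the fan structure of $\Trop(U_\lambda)$ at the vertex; on the algebraic side $\deg_L(Y)$ is an intersection number on the component of the special fibre. Bridging the two requires a local intersection-theoretic computation at the stratum of $\Xcal_s$: one expresses $\deg_L(Y)$ via the refined Gysin homomorphism on the model, uses the description of the Cartier divisor of the section $s$ near $Y$ in terms of the piecewise linear function, and matches the multiplicities via the standard dictionary between toric intersection theory and mixed volumes (so that the $n$-fold corner locus weight becomes a sum of local intersection multiplicities). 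For the purely toric case this is classical and for the semistable case it follows from the theory of tropicalization of subvarieties of tori together with the projection formula; the general case is handled by reducing to these via a suitable choice of chart and by the multilinearity and locality of both sides. Once the weight-by-weight equality is established on each chart $U_\lambda$, the two Radon measures agree on $\bigcup_\lambda \Uan_\lambda$, and since this is a dense open subset whose complement is a proper analytic subset of lower dimension, hence of measure zero for both sides, the two measures coincide on all of $\Xan$, proving Theorem \ref{CLvsMA}.
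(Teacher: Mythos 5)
Your overall strategy (localize on tropical charts where the formal metric is piecewise linear, compute $c_1(L,\metr)^n$ as an $n$-fold corner locus, and match the resulting point masses with the Dirac masses of the Chambert--Loir measure) is indeed the shape of the paper's argument, but the step you yourself identify as ``the heart of the argument'' is left unproven, and it is exactly the content of the theorem. You assert that the weight of the iterated corner locus at the vertex corresponding to a component $Y$ of $\Xcal_s$ can be identified with $\deg_{\Lcal}(Y)$ by ``the standard dictionary between toric intersection theory and mixed volumes'' after reducing to toric or semistable models; no such computation is carried out, and for a general very affine chart the vertices of $\Trop(U_\lambda)$ do not correspond to the components of $\Xcal_s$ at all, so even the claim that the zero-dimensional cycle is supported at such vertices needs justification. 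The paper bridges this gap by a quite different and more specific mechanism: it first reduces, via de Jong's alteration and the projection formulas for both measures (Propositions \ref{projection formula for Monge-Ampere} and \ref{Chambert-Loir's measures}(b)) --- a descent step your ``fix a semistable model'' opening silently skips --- to a model with reduced special fibre; then Lemma \ref{tropicalization and special fibre} produces, for each $x\in\Xan$, a very affine neighbourhood $\Ucal$ of $\pi(x)$ in $\Xcal$ embedded in a torus over $\kcirc$ such that $\LC_0(\Trop(U))=\Trop(\Ucal_s)$ and such that \emph{one and the same} piecewise linear function $\phi$ describes both $\metr_{\Lcal}$ near $x$ and the canonical metric $\metr_{\rm can}$ on $\Lcal_s$ near $\pi(x)$.

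This double role of $\phi$ is what replaces your mixed-volume dictionary: the local mass of the Monge--Amp\`ere measure at $\xi_Y$ is the tropical intersection number of the $n$-fold self-intersection $C$ of $\phi\cdot N_\R$ with $\Trop(\Ucal_s)$, and the very same number is $\int_{Y^{\rm an}}c_1(\Lcal_s|_Y,\metr_{\rm can})^n$, which equals $\deg_{\Lcal}(Y)$ by the total mass formula (Proposition \ref{total mass of Monge-Ampere}) applied over the residue field; no direct comparison of corner-locus weights with intersection numbers on the model is ever needed. In addition, the vanishing of the Monge--Amp\`ere measure at points $x$ whose reduction is \emph{not} a generic point of a component is not automatic from ``lower-dimensional complement has measure zero''; the paper proves it using Ducros' theorem bounding the dimension of local tropicalizations by the transcendence degree of $\ktilde(\pi(x))$ (cf.\ Lemma \ref{support and degree of delta-forms} and Corollary \ref{support corollary}), which forces the codimension-$n$ tropical cycle to miss $\trop_{U'}(V')$ there. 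Without these two ingredients --- the identification in Lemma \ref{tropicalization and special fibre} and the dimension argument --- your proposal does not yet constitute a proof.
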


As mentioned before, this theorem was first proved by Chambert--Loir and Ducros 
in a slightly different setting (for discrete valuations, but their method 
works also for algebraically closed fields). 
However, they have a different construction of the Monge--Amp\`ere measure. 
Since  algebraic metrics are usually not smooth, they have only a first 
Chern current $c_1(L,\metr)$ available. In general, the wedge product of 
currents is not well-defined. In the present situation, they can use a rather 
complicated approximation process by smooth metrics to make sense of the 
wedge product $c_1(L_1,\metr_1)^n$ as a current leading to their 
Monge--Amp\`ere measure.  Our Monge--Amp\`ere measure is defined 
directly as a wedge product of $\delta$-forms based on tropical 
intersection theory instead of the approximation process. 
This means that our proof is more influenced by tropical methods. 

In Section \ref{GC}, we define a Green current for a cycle $Z$ on the 
algebraic variety $X$ over $K$ as a $\delta$-current $g_Z$ such that 
\[d'd''g_Z = [\omega_Z] - \delta_Z\]
for a $\delta$-form $\omega_Z$ on $\Xan$. 
By the Poincar\'e--Lelong equation \eqref{PL for s}, a non-zero meromorphic 
section $s$ of $L$ induces a Green current
$g_Y:=-\log\|s\|$ for the Weil divisor $Y$ of $s$. 
Here, we assume that $\metr$ is a $\delta$-metric on the line 
bundle $L$ of $X$. In case of proper intersection, we define 
$g_Y * g_Z := g_Y \wedge \delta_Z +  {\omega_Y \wedge g_Z}$ as in the 
archimedean theory of Gillet-Soul\'e.  
It is an easy consequence of the Poincar\'e--Lelong equation that 
$g_Y * g_Z$ is a Green current for the cycle $Y\cdot Z$. 
We will show the usual properties for such $*$-products. 
Most difficult is the proof of the commutativity of the $*$-product of 
two Green currents for properly intersecting divisors. 
It relies on the study of piecewise smooth forms and the tropical 
Poincar\'e--Lelong formula in Theorem \ref{tpl}. 

In Section \ref{Local heights of varieties}, we  define the local height 
of a proper $n$-dimensional variety $X$ over $K$ with respect to properly 
intersecting Cartier divisors $D_0,\dots, D_n$ endowed with $\delta$-metrics 
on $O(D_0),\dots, O(D_n)$ as follows: Let $g_{Y_j}$ be the 
Green current for the Weil divisor $Y_j$ associated to $D_j$ as above, 
then the local height is given by 
\[\lambda_{\hat{D}_0, \dots, \hat{D}_n}(X):= g_{Y_0} * \dots * g_{Y_n}(1).\]
We will show that these local heights are multilinear and symmetric in the 
metrized Cartier divisors $\hat{D}_0, \dots, \hat{D}_n$, functorial 
with respect to morphisms and satisfy an induction formula useful 
to decrease the dimension of $X$. 
For algebraic metrics, local heights of proper varieties are also 
defined using intersection theory on a suitable 
proper model (see \cite[\S 9]{gubler-crelle}). 

\begin{thm} \label{localalgebraicheights}
Suppose that the metrics on $O(D_0),\dots, O(D_n)$ are all algebraic. 
Then the local height $\lambda_{\hat{D}_0, \dots, \hat{D}_n}(X)$ based on 
the $*$-product of Green currents is equal to the local height of $X$ given 
by intersection theory of divisors on $\kcirc$-models.
\end{thm}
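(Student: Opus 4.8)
The plan is to prove the identity by induction on $n=\dim X$. Both sides are defined, more generally, for an arbitrary cycle $Z$ on $X$ in place of $X$, are additive in $Z$, multilinear and symmetric in the metrized Cartier divisors, and --- this is what drives the induction --- satisfy a recursion formula lowering $\dim Z$ by one. The induction is well posed: restricting an algebraic metric to a subvariety again gives an algebraic metric, and proper intersection of $\cyc(s_0),\dots,\cyc(s_n)$ on $X$ is inherited by $\cyc(s_0)$, so the reduced data still lie in the class handled by both theories. Here and below $s_j$ denotes the canonical meromorphic section of $O(D_j)$, $Y_j:=\cyc(s_j)$, and $\bar L_j:=(O(D_j),\metr)$. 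For the base case $n=0$, $X$ is a finite set of closed points of $K$, every line bundle is trivial, and both local heights of $X$ reduce, directly from their definitions, to $\sum_{x\in X}\bigl(-\log\|s_0\|(x)\bigr)$; so they agree.

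\textbf{The two recursions.} Let $n\ge 1$; by proper intersection $\hat D_1,\dots,\hat D_n$ meet the $(n-1)$-cycle $Y_0$ properly, with algebraic metrics, so the inductive hypothesis applies to $Y_0$. The induction formula for $*$-product local heights proved in Section~\ref{Local heights of varieties} --- whose proof uses the Poincar\'e--Lelong equation \eqref{PL for s} (hence Theorem~\ref{PL-formula} and Corollary~\ref{PL for line bundles}), the associativity, commutativity and restriction properties of the $*$-product from Section~\ref{GC}, and the fact (Section~\ref{MAm}) that $c_1(\bar L_1)\wedge\dots\wedge c_1(\bar L_n)$ is a $\delta$-form of bidegree $(n,n)$, hence a signed Radon measure on $\Xan$ --- takes the shape
\[
\lambda_{\hat D_0,\dots,\hat D_n}(X)=\lambda_{\hat D_1,\dots,\hat D_n}(Y_0)+\int_{\Xan}\bigl(-\log\|s_0\|\bigr)\,c_1(\bar L_1)\wedge\dots\wedge c_1(\bar L_n),
\]
the integrand being read via the $\delta$-current $[-\log\|s_0\|]$ from the Poincar\'e--Lelong discussion preceding Theorem~\ref{PL-formula}. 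By \cite[\S 9]{gubler-crelle}, together with \cite{chambert-loir-2006}, the local height defined by intersection theory on $\kcirc$-models obeys the very same recursion, with the wedge of first Chern $\delta$-forms replaced by the mixed Chambert--Loir measure of $\bar L_1,\dots,\bar L_n$. The inductive hypothesis shows the first terms on the two sides agree, so the theorem reduces to comparing the two correction integrals.

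\textbf{Matching the corrections, and the main obstacle.} To compare the correction terms I would invoke Theorem~\ref{CLvsMA} in its polarized form: as a signed Radon measure on $\Xan$, the $\delta$-form $c_1(\bar L_1)\wedge\dots\wedge c_1(\bar L_n)$ equals the mixed Chambert--Loir measure of $\bar L_1,\dots,\bar L_n$. This is deduced from Theorem~\ref{CLvsMA} by polarization, since both the wedge product of first Chern $\delta$-forms and the Chambert--Loir measure are multilinear and symmetric in the metrized line bundles. Given this, the two correction integrals coincide, the induction closes, and the theorem follows. I expect the real difficulty to sit in this last step: one must establish the polarized form of Theorem~\ref{CLvsMA} as an identity of measures --- not merely after pairing with smooth forms ---, verify that the integral of $-\log\|s_0\|$, which is unbounded along the support of $Y_0$, against these measures is well defined and is computed the same way on both sides, and check that the normalizations of the absolute value, of the metrics, and of the intersection product on $\kcirc$-models used in \cite{gubler-crelle} are consistent with those of the present paper.
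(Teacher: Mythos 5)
Your proposal is correct and follows essentially the same route as the paper's proof in \ref{comparision with old local heights}: induction via the induction formula (Proposition \ref{induction formula}) for the $*$-product heights, the analogous recursion with Chambert--Loir measures for the model-theoretic heights from \cite{gubler-pisa}, and the identification of the Monge--Amp\`ere measure with the Chambert--Loir measure. The ``polarized'' difficulty you flag is already covered, since Theorem \ref{Monge-Ampere vs Chambert-Loir} is stated for distinct formally (equivalently, by Proposition \ref{algebraic and formal metrics}, algebraically) metrized line bundles as an identity of signed Radon measures.
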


The proof uses that the induction formula holds for both definitions of 
local heights and then Theorem \ref{CLvsMA} gives the claim (see 
\ref{comparision with old local heights} for more details and the proof). 

In the introduction, we have presented the whole theory of $\delta$-forms 
based on  $\delta$-preforms as in \eqref{deltapreform} 
using tropical cycles with constant weights. However, the theory can 
be extended to $\delta$-forms locally given by $\delta$-preforms allowing 
tropical cycles with smooth weights. This will be done throughout the whole 
paper which leads to slightly more 
complications, but it increases the class of $\delta$-metrics 
at the end which makes it worthwhile.
 {Observe that tropical cycles which arise as tropicalizations
from varieties always have {integer} weights. 
Therefore tropical cycles are always considered with constant weights when
they serve as in Section 
\ref{Currents and delta-preforms on tropical cycles} as underlying spaces for
supercurrents and $\delta$-preforms.}

The authors would like to thank  {José Ignacio Burgos Gil, Thomas Fenzl, 
Philipp Jell, Christian Vilsmeier, and Veronika Wanner} for helpful comments and the 
collaborative research center
SFB 1085 funded by the Deutsche For\-schungsgemeinschaft for its support. 
We also thank  {the anonymous referees for their careful reading and their} 
detailed comments.

\medskip
\centerline{\sc {Notations and }Terminology}  \nopagebreak
\medskip  

 {
Throughout this paper $K$ denotes an algebraically closed field
endowed with a complete non-trivial non-archimedean absolute value $|\phantom{a}|$,
valuation ring $\kcirc$,
and corresponding valuation $v=-\log |\phantom{a}|$. 
Let $\Gamma:=v(K^\times)$ be the value group.}

In $A \subset B$, $A$ is strictly smaller than $B$. 
 The complement of $A$ in $B$ is denoted by $B \setminus A$. 
The zero is included in $\N$ and in $\R_+$.

The group of multiplicative units in a ring $A$ with $1$ is denoted by $A^\times$. 
An (algebraic) variety over a field is an irreducible separated reduced scheme of finite type. 
The terminology from convex geometry is explained in the appendix.

\section{Tropical intersection theory with smooth weights}\label{tropinttheo}

In tropical geometry, a tropical cycle is given by a polyhedral complex whose 
maximal faces are weighted by integers satisfying a balancing condition along 
the faces of codimension $1$. In this section, we generalize the notion of 
a tropical cycle allowing smooth real functions on the maximal faces as weights.
This is similar to the tropical fans with polynomial weights 
introduced by Esterov and Fran{\c{c}}ois \cite{esterov, francois}.
{We generalize basic facts from stable tropical intersection theory 
and introduce the corner locus of a piecewise smooth function}.

Throughout this section $N$ and $N'$ denote free $\Z$-modules of finite rank 
$r$ and $r'$. We write $N_\R=N\otimes_\Z\R$ and $N_\R'=N'\otimes_\Z\R$.
Every integral $\R$-affine polyhedron $\sigma$ of dimension $n$ 
in the $\R$-vector space $N_\R=N\otimes_\Z\R$ determines an
affine subspace $\A_\sigma$  with
underlying vector space $\L_\sigma$ and a lattice
$N_\sigma=\L_\sigma\cap N$ in $\L_\sigma$
 {(see Appendix \ref{polyhedron})}. 
A smooth function $f:\sigma\to \R$ on a polyhedron $\sigma$ in $N_\R$ is 
the restriction of a smooth function on some open neighbourhood of 
$\sigma$ in $\A_\sigma$. For  further notation borrowed 
from convex geometry, we refer to Appendix \ref{convex geometry}.

\begin{definition}\label{polcomplex}
(i) A polyhedral complex $\KC$ of pure dimension $n$ is called {\it weighted
(with smooth weights)} if each polyhedron  $\sigma\in\KC_n$ is endowed
with a smooth weight function $m_\sigma:\sigma\to \R$. 
If all $m_\sigma$ are constant functions, then we call them 
\emph{constant weights}.
The {\it support} $|(\KC,m)|$ of a weighted polyhedral complex $(\KC,m)$
of pure dimension $n$ is the closed set
\[
|(\KC,m)|=\bigcup_{\sigma\in \KC_n}{\rm supp}\,(m_\sigma).
\]
The support $|\KC|$ of a polyhedral complex $\KC$ is the support of 
$(\KC,m)$ where $m=1$ is the trivial weight function.
We have $|(\KC,m)|\subseteq |\KC|$.

(ii) Let $C=(\KC,m)$ be an integral $\R$-affine polyhedral complex 
of pure dimension $n$ with smooth weights in $N_\R$.
For each codimension one face $\tau$ of a polyhedron $\sigma\in \KC_n$
we choose a representative $\omega_{\sigma,\tau}\in N_\sigma$
of the generator of the one-dimensional lattice $N_\sigma/N_\tau$
pointing in the direction of $\sigma$.
Then we say that the weighted polyhedral complex $C$ satisfies the 
\emph{balancing condition} if we have
\begin{equation}\label{balancingcondition}
\sum\limits_{\genfrac{}{}{0pt}{}{\sigma\in \KC_n}{{\sigma \succ \tau}}}
m_\sigma(\omega)\omega_{\sigma,\tau}\in \L_\tau
\end{equation}
for all $\tau \in \KC_{n-1}$ and all $\omega\in \tau$, where $\sigma \succ \tau$ means that $\tau$ is a face of $\sigma$.
This is a straightforward generalization of the balancing condition
for polyhedral complexes with integer weights \cite[13.9]{gubler-guide}.

(iii) A {\it tropical cycle 
$C=(\KC,m)$ of dimension $n$} in $N_\R$ is a weighted integral  
$\R$-affine  polyhedral complex of pure dimension $n$ which 
satisfies the balancing condition (\ref{balancingcondition}).
In the following, we identify two tropical cycles
$(\KC,m)$ and $(\KC',m')$ of dimension $n$ if $|(\KC,m)|=|(\KC',m')|$  and if $m_\sigma = m_{\sigma'}$
on the intersection of the relative interiors of $\sigma$ and $\sigma'$ for all  $\sigma\in \KC_n$ and $\sigma'\in \KC_n'$.
A tropical cycle $(\KC,m)$ whose underlying polyhedral complex is a rational polyhedral fan
 in the vector space $N_\R$ is 
called a {\it tropical fan}.

(iv) Let $C=(\KC,m)$ be a tropical cycle of  dimension $n$.
Given an integral $\R$-affine subdivision $\KC'$ of $\KC$,
there is a unique family of weight functions $m'$ such that $(\KC',m')$ 
is a tropical cycle and $m_\sigma|_{\sigma'}=m'_{\sigma'}$ holds for all
$\sigma'\in \KC_n'$ and $\sigma\in \KC_n$ such that $\sigma'\subseteq \sigma$.
If a tropical cycle $C$ in $N_\R$ is defined by a weighted  integral $\R$-affine polyhedral
complex $(\KC,m)$, we call $\KC$ a {\it polyhedral complex of definition
for the tropical cycle $C$}.

(v) The set of tropical cycles with smooth weights 
of pure dimension $n$ in $N_\R$ defines an 
abelian group ${\rm TZ}_n(N_\R)$ where the group law is given by the addition
of multiplicity functions on a common refinement of the integral $\R$-affine polyhedral
complexes.
We denote by ${\rm TZ}^k(N_\R)={\rm TZ}_{r-k}(N_\R)$ the {\it group of tropical
cycles of codimension $k$}.
\end{definition}

\begin{rem}[Reduction from smooth to constant weight functions]
\label{reductiontoclassicaltropicalvarieties}
In tropical geometry, one considers usually tropical cycles with integer 
weights. 
However it causes no problems to work instead with tropical
cycles with constant but not necessarily integer weights.
 
Many properties of these tropical cycles with integer or constant
weights extend even 
to tropical cycles with smooth weights by the following local argument 
in $\omega \in |\Ccal|$. We replace $\Ccal$ by the 
rational polyhedral fan of local cones 
in $\omega$ (see \ref{cones})
and we endow the local cone of $\sigma \in \Ccal_n$ by 
the constant weight $m_\sigma(\omega)$. 
By definition, these constant weights on the rational cones 
satisfy the balancing condition. 
 {We illustrate the use of this reduction process 
in \ref{intpairing}(ii).}
\end{rem} 

\begin{art} \label{stable troical intersection theory and FS}
In tropical geometry, there is a {\it stable tropical intersection product} of 
tropical cycles with integer weights. 
The astonishing fact is that this product is well-defined as a tropical 
cycle in contrast to algebraic intersection theory or homology, where an 
equivalence relation is needed. 
Constructions of a stable tropical intersection product  of tropical cycles 
with integer  weights have been given by Mikhalkin and Allermann-Rau. 
In both cases the construction is reduced to the case of tropical fans.
For tropical fans with integer weights, Mikhalkin \cite{mikhalkin-2006} 
uses the fan deplacement rule 
from \cite{fulton-sturmfels} whereas Allermann and Rau 
\cite{allermann-rau-2010} use reduction to the
diagonal and intersections with tropical Cartier divisors.  
It is shown in \cite[\S 5]{katz-2012}, \cite[Thm. 1.5.17]{rau-2009} that both definitions agree. 
This is based on a result of Fulton--Sturmfels \cite[Theorem 3.1] {fulton-sturmfels} which shows that the space of tropical fans,  with integer weights and with a given complete rational polyhedral fan $\Sigma$ as a polyhedral complex of definition, is canonically isomorphic to the Chow cohomology ring of the  complete toric variety $Y_\Sigma$ associated to $\Sigma$. Then the product in Chow cohomology leads to the stable intersection product of tropical fans with integer weights and the usual properties in Chow cohomology lead to corresponding properties in stable tropical intersection theory. By passing to a smooth rational polyhedral fan subdividing $\Sigma$, which means that  $Y_\Sigma$ is smooth, we may use the usual Chow groups instead of the Chow cohomology groups from \cite[Chapter 17]{fulton-intersection-theory}.
\end{art}

\begin{rem}[Stable tropical intersection theory]
\label{intpairing}
As an application of the reduction principle described in Remark \ref{reductiontoclassicaltropicalvarieties}, we  get a  stable tropical intersection theory for tropical cycles with smooth weights. The reduction process leads to tropical fans with constant weight functions. These weights are    not necessarily integers, but it is still possible to apply \ref{stable troical intersection theory and FS} by using Chow cohomology with real coefficients. We list here the main properties:

(i) There exists a natural bilinear pairing
\[
{\rm TZ}^k(N_\R)\times {\rm TZ}^l(N_\R)\longrightarrow 
{\rm TZ}^{k+l}(N_\R),\,\,
(C_1,C_2)\longmapsto
C_1\cdot C_2.
\]
which is called the {\it stable intersection product for tropical cycles}.
It is associative and commutative and respects supports in the sense that we have
$|C_1\cdot C_2|\subseteq |C_1|\cap |C_2|$.

(ii) Here  {comes the concrete construction} of the stable intersection product
for tropical cycles $C_1$ and $C_2$ of codimension $l_1$ and $l_2$ in $N_\R$
which is based on the fan displacement rule (see \cite[\S 4]{fulton-sturmfels}).
We choose a common polyhedral complex of definition $\KC$ for $C_1$ and $C_2$
and write $C_i=(\KC,m_i)$ $(i=1,2)$ for suitable families of weight functions
$m_i=(m_{i,\sigma})_{\sigma\in \KC^{l_i}}$.
Let $\KD$ denote the polyhedral subcomplex of $\KC$ which is generated by
$\KC^{l_1+l_2}$. We choose a generic vector $v\in N_\R$ for $\KD$, 
 a small $\varepsilon >0$
and equip $\KD$ with the family of weight functions $m=(m_\tau)_{\tau\in 
\KC^{l_1+l_2}}$ where $m_\tau:\tau\to \R$ is given by 
\begin{equation}\label{fandepruleg1}
m_\tau(\omega)=\sum\limits_{
\genfrac{}{}{0pt}{}{(\sigma_1,\sigma_2)\in
\KC^{l_1}\times \KC^{l_2}}{\genfrac{}{}{0pt}{}{
\tau=\sigma_1\cap\sigma_2}{\sigma_1\cap(\sigma_2+\varepsilon v)\neq \emptyset}}
}[N:N_{\sigma_1}+N_{\sigma_2}]\,
m_{1,\sigma_1}(\omega)\,m_{2,\sigma_2}(\omega).
\end{equation}
 {We will} show that $D=(\KD,m)$ is a tropical cycle whose construction is 
independent  of the choice of the generic vector $v$ and a sufficiently small   
$\varepsilon >0$.  {We use
$D$ as the definition} of the stable intersection product $C_1\cdot C_2$.

 {The proof is an illustration of the reduction to constant weights given in Remark \ref{reductiontoclassicaltropicalvarieties}. For $\omega \in |\KC|$, 
let $\KC_\omega$ be the rational polyhedral fan of local cones in $\omega$ of the polyhedra in $\Ccal$. First, we note that $\sigma \mapsto \rho := \LC_\sigma(\omega)$ is a bijective 
map from the set of polyhedra in $\KC$ containing $\omega$ onto $\Ccal_\omega$. 
For $i=1,2$ and $\sigma \in \KC_n$ with $\omega \in \sigma$, we endow the local cone $\rho =\LC_\sigma(\omega)$ with the 
constant weight $m_{i,\rho}(\omega):=m_{i,\sigma}(\omega)$. Since the weight functions $m_{i,\sigma}$  satisfy pointwise the balancing condition, we get a tropical fan $(\Ccal_\omega,m_i(\omega))$ 
with  real weights.}  

 {We claim that $m_\tau(\omega)$ from \eqref{fandepruleg1} is the same as the weight of the stable intersection product $(\Ccal_\omega,m_1(\omega)) \cdot (\Ccal_\omega,m_2(\omega))$ 
in $\tau\in 
\KC^{l_1+l_2}$   obtained from Chow cohomology as in \ref{stable troical intersection theory and FS}. To see this, note that for a generic vector $v \in N_R$, we choose 
 $\varepsilon >0$  so small that the condition $\sigma_1\cap(\sigma_2+\varepsilon v)\neq \emptyset$ is equivalent to $\rho_1\cap(\rho_2+v)\neq \emptyset$ for the corresponding cones $\rho_1,\rho_2$. Then \eqref{fandepruleg1} agrees 
with the formula in \cite[Theorem on p.~ 336]{fulton-sturmfels} for the product in Chow cohomology of proper toric varieties. By definition, the same formula is used for the stable intersection product of 
tropical fans with constant weights proving our local claim. It is well-known in tropical geometry and follows  from the comparison with Chow cohomology in \cite{fulton-sturmfels} that the stable tropical intersection product of tropical fans with real weights is independently defined from the choice of a generic vector $v$ and hence the definition of $D=(\KD,m)$ is independent of the choice of a generic vector $v\in N_\R$ and  a sufficiently small $\ve >0$.}

 {It is easily seen that the definition of $D$ is compatible with subdivisions and hence $C_1 \cdot C_2$ is a well-defined tropical cycle. The properties in (i) follow from the corresponding properties of the stable tropical intersection product of tropical fans with real weights.}

(iii) 
Let $F:N_\R'\to N_\R$ be an integral $\R$-affine map.
Let $C'=(\KC',m')$ be a weighted integral $\R$-affine polyhedral complex
in $N_\R'$ of pure dimension $n$.
After a suitable refinement we can assume that
\begin{equation}\label{intpairingg1}
F_*\KC':=\{F(\tau')\,|\, \exists \sigma'\in \KC_n' \,\mbox{ such that }\,
\tau' \preccurlyeq \sigma'\,\mbox{ and }\,F|_{\sigma'}\, \mbox{ is injective}\}
\end{equation}
is a polyhedral complex in $N_\R$.
We equip $F_*\KC'$ with the family of weight functions
\begin{equation}\label{intpairingg2}
m_{\nu}:\nu\to \R,\,\,
m_{\nu}(\omega)=\sum\limits_{
\genfrac{}{}{0pt}{}{\sigma' \in \KC_n'}{F(\sigma') =\nu}}
[N_\nu:\L_F(N'_{\sigma'})]m'_{\sigma'}\bigl((F|_{\sigma'})^{-1}(\omega)\bigr)
\end{equation}
for $\nu$ in $(F_*\KC')_n$ where $\L_F$ denotes the linear morphism 
defined by the affine morphism $F$.
The weighted integral $\R$-affine  polyhedral complex 
\[
F_*C'=(F_*\KC',m)
\]
in $N_\R$ of pure dimension $n$ is called the {\it direct image of $C'$ under $F$}.

(iv) 
Let $F:N_\R'\to N_\R$ be an integral $\R$-affine map.
There is a natural push-forward morphism
\[
F_*:{\rm TZ}_n(N_\R')\longrightarrow {\rm TZ}_{n}(N_\R),\,\,
C'\longmapsto F_*C',
\]
which satisfies
$|F_*C'|\subseteq F\bigl(|C'|\bigr)$.
Given a tropical cycle $C'$ in ${\rm TZ}_n(N_\R')$, we write $C'=(\KC',m')$
for a polyhedral complex of definition $\KC'$ such that
$F_*\KC'$ from (\ref{intpairingg1})
is a polyhedral complex in $N_\R$.
One defines the direct image $F_*C'=(F_*\KC',m)$ as in (iii) and 
verifies that $F_*C'$ is again a tropical cycle.
The formation of $F_*$ is functorial in $F$.
For further details see \cite[\S 7]{allermann-rau-2010} or
\cite[13.16]{gubler-guide}.

(v) 
Let $F:N_\R'\to N_\R$ be an integral $\R$-affine map.
There is a natural pull-back
\[
F^*:{\rm TZ}^l(N_\R)\longrightarrow {\rm TZ}^{l}(N_\R'),\,\,
C\longmapsto F^*(C)
\]
which satisfies
$|F^*C|\subseteq F^{-1}\bigl(|C|\bigr)$. 
The formation of $F^*$ is functorial in $F$. 
For a tropical cycle $C$ in ${\rm TZ}^l(N_\R)$, there is a complete polyhedral complex of definition
$\KC$  and we write $C=(\KC,m)$. 
After passing to a  subdivision,  there is a complete, integral $\R$-affine polyhedral complex $\KC'$ of $N_\R'$
such that for every $\gamma' \in \KC'$, there is a $\sigma \in \KC$ with   $F(\gamma') \subseteq \sigma$. 
We choose a generic vector $v  \in N_\R$ and a sufficiently small $\ve >0$. For $\gamma' \in (\KC')^l$, $\sigma \in \KC^l$ with $F(\gamma') \subseteq \sigma$ and $\sigma' \in (\KC')^0$ with $\gamma' \subseteq \sigma'$, we define
$$m_{\sigma',\sigma}^{\gamma'}:= 
\begin{cases} 
[N:\L_F(N')+N_\sigma] &\text{if $F(\sigma')$ meets $\sigma + \ve v$}\\
0                          &\text{otherwise.}
\end{cases}$$
These coefficients may depend on the choice of the generic vector $v$, but the following smooth weight function $m_{\gamma'}$ on $\gamma' \in (\KC')^l$ does not:
\begin{equation} \label{weight function for pull-back}
m_{\gamma'}(\omega'):= \sum_{\sigma',\sigma} m_{\sigma',\sigma}^{\gamma'} m_\sigma(F(\omega'))
\end{equation}
where $(\sigma',\sigma)$ ranges over all pairs in $(\KC')^0 \times \KC^l$ with $\gamma' \subseteq \sigma', F(\gamma') \subseteq \sigma$ and where $\omega' \in \gamma'$. 
By \cite[4.5--4.7]{fulton-sturmfels},  $(\KC')^{\geq l}$ equipped with the smooth weight functions $m_{\gamma'}$ is a tropical cycle in ${\rm TZ}^l(N_\R')$
which we define as $F^*(C)$. 

Let $p_1$ (resp. $p_2$) be the projection of $N_\R' \times N_\R$ to $N_\R'$ 
(resp. $N_\R$) and let $\Gamma_F$ be the graph of $F$ in $N_\R' \times N_\R$. 
Using the stable tropical intersection product from (ii) and \cite[4.5--4.7]{fulton-sturmfels}, we deduce 
\begin{equation} \label{graph formula}
F^*(C)=(p_1)_*(p_2^*(C)\cdot\Gamma_F).
\end{equation}
\end{rem}

\begin{prop}\label{tropintthprop}
Let $F:N_\R'\to N_\R$ be an integral $\R$-affine map.
\begin{enumerate}
\item[(i)]
For tropical cycles $C$ and $D$ on $N_\R$ we have
\[
F^*(C\cdot D)=F^*(C)\cdot F^*(D).
\]
\item[(ii)]
For tropical cycles $C$ on $N_\R$ and $C'$ on $N_\R'$ we have
\[
F_*\bigl(F^*(C)\cdot C'\bigr)=C\cdot F_*(C')
\]
\end{enumerate}
\end{prop}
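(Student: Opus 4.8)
The plan is to reduce everything to the graph formula \eqref{graph formula} and to the well-known projection formula for the stable intersection product, so that both statements become formal consequences of associativity and commutativity of the stable intersection product together with functoriality of push-forward and pull-back. Throughout, I will use the factorization of $F^*$ through the graph $\Gamma_F \subset N_\R' \times N_\R$: writing $p_1, p_2$ for the two projections, \eqref{graph formula} gives $F^*(C) = (p_1)_*(p_2^*(C) \cdot \Gamma_F)$. Since the whole theory has been reduced (Remark \ref{reductiontoclassicaltropicalvarieties}) to tropical fans with real constant weights, and for those \ref{stable troical intersection theory and FS} identifies the stable intersection product with the product in Chow cohomology of a smooth complete toric variety (with $\R$-coefficients), all the identities to be proved hold because the analogous identities hold in the Chow cohomology / Chow group formalism of \cite[Chapter 17]{fulton-intersection-theory}; the point is to phrase the tropical statements so that this transfer is legitimate.

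For part (i): first I would establish the two ingredients. (a) Pull-back is multiplicative for the projections $p_2: N_\R' \times N_\R \to N_\R$, i.e. $p_2^*(C \cdot D) = p_2^*(C) \cdot p_2^*(D)$; this is the flat pull-back compatibility with products, which for a coordinate projection of tropical fans follows directly from the fan displacement description \eqref{fandepruleg1} (or from \cite[4.5--4.7]{fulton-sturmfels}), since $p_2$ is a linear projection and the lattice indices and generic-displacement conditions multiply correctly. (b) The projection formula $(p_1)_*\bigl( (p_1)^*(A) \cdot B \bigr) = A \cdot (p_1)_*(B)$ on $N_\R' \times N_\R$, again standard for stable intersection along a linear projection. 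Granting these, compute
\[
F^*(C \cdot D) = (p_1)_*\bigl(p_2^*(C \cdot D) \cdot \Gamma_F\bigr)
= (p_1)_*\bigl(p_2^*(C) \cdot p_2^*(D) \cdot \Gamma_F\bigr).
\]
Now $p_2^*(D) \cdot \Gamma_F$ is supported on $\Gamma_F$ and equals $(\iota_F)_*$ of the restriction of $p_2^*(D)$ to $\Gamma_F$, where $\iota_F: N_\R' \cong \Gamma_F \hookrightarrow N_\R' \times N_\R$; under the isomorphism $p_1 \circ \iota_F = \id$ and $p_2 \circ \iota_F = F$, this restriction is precisely $F^*(D)$, so $p_2^*(D) \cdot \Gamma_F = (\iota_F)_* F^*(D)$ and $p_2^*(C) \cdot \Gamma_F = (\iota_F)_* F^*(C)$. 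Hence by the projection formula for $\iota_F$ (or directly, since $\Gamma_F \cdot \Gamma_F$ inside $N_\R' \times N_\R$ behaves like the diagonal self-intersection) one gets $p_2^*(C) \cdot p_2^*(D) \cdot \Gamma_F = (\iota_F)_*\bigl(F^*(C) \cdot F^*(D)\bigr)$, and applying $(p_1)_*$ kills $\iota_F$ and yields $F^*(C) \cdot F^*(D)$. The only delicate point here is the proper-intersection / transversality bookkeeping needed to write $p_2^*(D) \cdot \Gamma_F = (\iota_F)_* F^*(D)$ as tropical cycles (not just up to refinement), but this is exactly the content of \eqref{graph formula} read in reverse, so it is available.

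For part (ii), I would likewise reduce to the projection formula. Using \eqref{graph formula} and the identification just made, $F^*(C) = (\iota_F)^* p_2^*(C)$ pulled along $\Gamma_F$; more usefully, write $F = p_2 \circ \iota_F$ and $\id_{N_\R'} = p_1 \circ \iota_F$, and note $F_* = (p_2)_* \circ (\iota_F)_*$ while $(\iota_F)_*(\alpha) = p_1^*(\alpha) \cdot \Gamma_F$ for $\alpha$ a cycle on $N_\R'$ (again by the graph description applied to the embedding $\iota_F$). Then
\[
F_*\bigl(F^*(C) \cdot C'\bigr)
= (p_2)_*\Bigl( p_1^*\bigl(F^*(C) \cdot C'\bigr) \cdot \Gamma_F \Bigr)
= (p_2)_*\Bigl( p_1^*(F^*(C)) \cdot p_1^*(C') \cdot \Gamma_F \Bigr),
\]
using multiplicativity of $p_1^*$ from part (i) applied to the projection $p_1$. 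Since $p_1^*(F^*(C)) \cdot \Gamma_F = p_2^*(C) \cdot \Gamma_F$ — both are the restriction to $\Gamma_F$ of the pullback of $C$ under the map $\Gamma_F \to N_\R$, which is $F$ through either factorization, by functoriality of pull-back (Remark \ref{intpairing}(v)) — we may substitute to get $(p_2)_*\bigl(p_2^*(C) \cdot p_1^*(C') \cdot \Gamma_F\bigr)$, and then the projection formula for $p_2$ (pulling $C$ out of the push-forward) gives $C \cdot (p_2)_*\bigl(p_1^*(C') \cdot \Gamma_F\bigr) = C \cdot (p_2)_*(\iota_F)_*(C') = C \cdot F_*(C')$, as desired.

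The main obstacle I anticipate is not the formal manipulation above but the two "standard" inputs it rests on: (1) the multiplicativity $p^*(C \cdot D) = p^*(C) \cdot p^*(D)$ for a linear coordinate projection $p$, and (2) the projection formula $p_*(p^*(A) \cdot B) = A \cdot p_*(B)$, both at the level of tropical cycles with real (resp. smooth) weights. For constant real weights these follow from the Chow-cohomology dictionary of \ref{stable troical intersection theory and FS} together with \cite[4.5--4.7]{fulton-sturmfels} and the projection formula in \cite[Chapter 17]{fulton-intersection-theory}; the extension to smooth weights is then handled fibrewise by the local-cone reduction of Remark \ref{reductiontoclassicaltropicalvarieties}, evaluating the smooth weight functions at a point $\omega$ and checking that both sides of each identity are given by the same polyhedral complex with weight functions that agree after this evaluation (the weight functions enter \eqref{fandepruleg1}, \eqref{intpairingg2}, \eqref{weight function for pull-back} only through their values, multiplied by lattice indices, so the smooth case is a pointwise consequence of the constant case). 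Carefully matching polyhedral complexes of definition so that all the products in the displayed chains are simultaneously computed on one common refinement — rather than merely up to the identification of tropical cycles — is the bookkeeping that needs the most care, and I would handle it by fixing at the outset a single complete fan (after subdivision) fine enough to serve as a complex of definition for $C$, $D$, $C'$, $\Gamma_F$ and all their pairwise stable intersections.
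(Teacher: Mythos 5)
Your formal skeleton is correct, and your route is genuinely different from the paper's. The paper's proof is a three-line reduction: pass to local cones as in Remark \ref{reductiontoclassicaltropicalvarieties} to get tropical fans with constant weights, use linearity of both sides in the weights to reduce to integer weights, and then quote \cite[Theorem 3.3]{allermann2012a}. You instead re-derive the general statements from the graph formula \eqref{graph formula} together with special instances of (i) and (ii) for the product projections $p_1,p_2$ and for the graph embedding $\iota_F$ --- the classical reduction-to-the-graph argument. What this buys is a proof whose non-formal content is concentrated entirely in those special cases; your final reduction from smooth to constant weights is the same local-cone argument the paper uses, and your chains of identities (including $p_2^*(D)\cdot\Gamma_F=(\iota_F)_*F^*(D)$ deduced from \eqref{graph formula}, support considerations and functoriality of push-forward) are all valid given the inputs.

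The caveat is that those inputs are exactly where the substance sits, and your grounding of them is incomplete in two places. First, part (i) needs the projection formula for the embedding $\iota_F$, to pass from $p_2^*(C)\cdot(\iota_F)_*F^*(D)$ to $(\iota_F)_*\bigl(F^*(C)\cdot F^*(D)\bigr)$; this is a third input beyond the two you list, and the parenthetical justification via ``$\Gamma_F\cdot\Gamma_F$ behaves like the diagonal self-intersection'' is not the relevant statement --- what is actually needed is either a direct fan-displacement computation using $\L_{\Gamma_F}+(\{0\}\times N_\R)=N_\R'\times N_\R$ with lattice index one, or the projection formula for transversal linear embeddings. Second, for the projection formula $p_*(p^*A\cdot B)=A\cdot p_*B$ you appeal to \cite[4.5--4.7]{fulton-sturmfels} plus Fulton's Chapter 17; but those sections of Fulton--Sturmfels concern pull-back, and transporting the Chow-theoretic projection formula to the combinatorial side additionally requires that the push-forward \eqref{intpairingg2} corresponds to the Chow-theoretic push-forward under the Fulton--Sturmfels isomorphism --- a true but nontrivial compatibility that the paper does not establish and that lives in \cite{katz-2012}, \cite{rau-2009} (or, in the end, in \cite{allermann2012a} itself). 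So your argument is correct modulo external facts of essentially the same calibre as the single citation the paper's proof rests on; to make it self-contained you would need to verify the projection and graph special cases directly from \eqref{fandepruleg1}, \eqref{intpairingg2} and \eqref{weight function for pull-back}, which is feasible but is genuine work rather than a formality.
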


\proof
We reduce as in
\ref{reductiontoclassicaltropicalvarieties} to the case where
our tropical cycles are tropical fans with constant weight functions. Since both sides of the claims are linear in the weights of the tropical fans, we may assume that the weights are integers. In this situation, the claims were proven by L. Allermann in  
\cite[Theorem 3.3]{allermann2012a}.
\qed

\begin{definition} \label{piecewise smooth function}
Let $\Omega$ be an open subset of an integral $\R$-affine polyhedral set $P$ in $N_\R$. We call $\phi:\Omega \rightarrow \R$ {\it piecewise smooth} 
 if there is an integral $\R$-affine polyhedral complex $\KC$ with support $P$ and smooth functions  $\phi_\sigma:\sigma \cap \Omega \to \R$  for every $\sigma \in \KC$ such that $\phi|_\sigma=\phi_\sigma$ on $\sigma \cap \Omega$. In this situation,  we call $\KC$ a {\it polyhedral complex of definition for the piecewise smooth function $\phi$}. We call $\phi$ {\it piecewise linear} if each $\phi_\sigma$ extends to an integral $\R$-affine function on $\A_\sigma$.
\end{definition}

\begin{rem} \label{tropical cycles of degree 0}
The balancing condition  \eqref{balancingcondition} for smooth 
weights shows easily that a tropical cycle 
of codimension $0$ in $N_\R$ is the same as a piecewise smooth function defined on 
the whole space $N_\R$. 
\end{rem}

\begin{prop} \label{extension of piecewise smooth}
Let $\phi$ be a piecewise smooth function on the open subset $\Omega$ of the integral $\R$-affine  
polyhedral set $P$ in $N_\R$ and let $\widetilde\Omega$ be any open subset of $N_\R$ with $\widetilde\Omega \cap P = \Omega$.
Then there is a piecewise smooth function on $\widetilde\Omega$ which restricts to $\phi$ on $\Omega$. 
\end{prop}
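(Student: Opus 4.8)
The plan is to extend the polyhedral complex underlying $\phi$ to a complete complex of $N_\R$ and then build the extension cell by cell; the only non-formal point will be the extension of a piecewise smooth function from the relative boundary of a polyhedron to the polyhedron itself.

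First, fix a polyhedral complex of definition $\KC$ for $\phi$ with $|\KC|=P$, together with the smooth functions $\phi_\sigma\colon\sigma\cap\Omega\to\R$. Passing to a subdivision of $\KC$ leaves the hypotheses intact (each $\phi_\sigma$ restricts to a smooth function on every subface), so, intersecting $\KC$ with the arrangement of a finite family of rational hyperplanes containing all the affine hulls $\A_\sigma$ and all facet hyperplanes of cells of $\KC$, we may assume that $\KC$ is a subcomplex of a complete integral $\R$-affine polyhedral complex $\KD$ of $N_\R$. Then for each $\tau\in\KD$ the intersection $\tau\cap P$ is a union of faces of $\tau$, and $\tau\in\KC$ if and only if $\tau\subseteq P$.

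Next I construct, by induction on $\dim\tau$, smooth functions $\phi'_\tau\colon\tau\cap\tilde\Omega\to\R$ for all $\tau\in\KD$ with the properties: (a) $\phi'_\tau=\phi_\tau$ on $\tau\cap\tilde\Omega$ whenever $\tau\in\KC$ — this is meaningful since $\tau\cap\tilde\Omega\subseteq\tau\cap P\cap\tilde\Omega=\tau\cap\Omega$, the domain of $\phi_\tau$; and (b) $\phi'_\rho=\phi'_\tau$ on $\rho\cap\tilde\Omega$ for every face $\rho\preccurlyeq\tau$. For $\tau\in\KC$ I set $\phi'_\tau:=\phi_\tau|_{\tau\cap\tilde\Omega}$; two such functions agree on a common face because both equal $\phi$ there, and (b) holds for the same reason. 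For $\tau\in\KD\setminus\KC$ whose proper faces are already treated, the functions $\phi'_\rho$ with $\rho\prec\tau$ form, by (b), a compatible family and hence glue to a piecewise smooth function on the relative boundary $\partial\tau\cap\tilde\Omega$; I then extend this to a smooth function $\phi'_\tau$ on $\tau\cap\tilde\Omega$, allowing if necessary a subdivision of $\tau$ compatible with those already fixed on $\partial\tau$. Granting these extensions, the $\phi'_\tau$ glue to a function $\phi'$ on $\tilde\Omega$ which is piecewise smooth with respect to $\KD$ (resp. its refinement), and $\phi'|_\Omega=\phi$ by (a): any $x\in\Omega$ lies in $\relint(\sigma)$ for some $\sigma\in\KC$, where $\phi'(x)=\phi_\sigma(x)=\phi(x)$.

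The step I expect to be the main obstacle is this extension of a piecewise smooth function from the relative boundary of a polyhedron $\tau$ to $\tau$. A naive partition of unity built from smooth bump functions fails here, since it reproduces the prescribed boundary data only when that data already extends to a genuinely smooth function near $\partial\tau$, which piecewise smooth data need not do. Instead one extends over the facets $F_1,\dots,F_m$ of $\tau$ one at a time, combining Whitney's extension theorem for smooth functions on polyhedra with Hadamard's lemma (and a coherent choice of the intermediate extensions along the skeleton): the discrepancy between the datum on $F_j$ and the function built from $F_1,\dots,F_{j-1}$ vanishes on those facets, hence is, up to a smooth factor, divisible on $F_j$ by the product of the corresponding defining affine forms, and adding that correction back leaves the previously matched data undisturbed. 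Equivalently, one may pass to a simplicial subdivision and reduce, by means of the \emph{piecewise-linear} partition of unity furnished by barycentric coordinates — which are piecewise smooth and whose products with piecewise smooth functions remain piecewise smooth, so that the piecewise structure is preserved, in contrast with smooth bumps — to a local extension near each vertex, where the relevant cones become coordinate orthants $[0,\infty)^d$ and the extension is given by an explicit inclusion–exclusion formula. The remaining ingredients — the existence of $\KD$, the compatibility of the auxiliary subdivisions, and the bookkeeping in the induction — are routine.
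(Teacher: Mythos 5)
Your global skeleton---subdivide $\KC$ until it is a subcomplex of a complete integral $\R$-affine complex $\KD$ obtained from a hyperplane arrangement, then extend cell by cell by ascending dimension---is sensible and parallels the paper's induction in its special case, but the crucial single-cell step is not established, and both mechanisms you offer for it break down. For your Whitney--Hadamard scheme: the cells of a hyperplane-arrangement complex need not be simplicial or even simple, and you never arrange this for $\KD$; for such a cell two facets $F_i,F_j$ can meet in a face of codimension $\geq 2$ in each (two opposite lateral facets of a pyramid over a square meet only at the apex). Then the discrepancy on $F_j$ vanishes only on that small face, it is not divisible by $\ell_i|_{F_j}$, and a correction of the form $\ell_1\cdots\ell_{j-1}\cdot(\mathrm{extension})$ cannot both vanish on $F_i$ and restore the datum on $F_j$; dropping $\ell_i$ from the product instead spoils the previously matched facet $F_i$. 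Worse, at such a cell a smooth extension of compatible smooth facet data need not exist at all: at the apex of the cone over a square, compatible second-order jets on the four facet planes form a family of dimension at least $4\cdot 3-4=8$, while restrictions of a single ambient Hessian give at most a $6$-dimensional family, so there is a genuine obstruction. Hence your fallback ``allow a subdivision of $\tau$'' is not optional bookkeeping: it abandons the stated induction (a smooth $\phi'_\tau$ per cell of $\KD$) and forces exactly the compatibility-of-subdivisions analysis that you dismiss as routine. To make this route work you must make $\KD$ simplicial (so that any two facets of a cell share a ridge) before starting the induction, which you mention only parenthetically inside your second method.

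Your second method has a different gap, and it is the one the statement is really about: the inclusion--exclusion formula on an orthant evaluates the facet data at projected points, so it needs that data on entire facets (equivalently on the whole star of a vertex), whereas your data is given only on $\partial\tau\cap\tilde\Omega$; likewise the barycentric gluing $\sum_v\lambda_v\Phi_v$ reproduces the prescribed boundary values only if each $\Phi_v$ agrees with them on the whole facets through $v$, not merely ``near $v$''. This localization to $\tilde\Omega$ is nowhere addressed in your proposal, yet it is where the paper does half of its work: the paper first proves the case of a complete simplicial fan with $\tilde\Omega=N_\R$ (there the facet data on each new cone is smooth and the inclusion--exclusion extension is smooth, so no subdivisions ever occur), and then reduces the general case to it by covering $\Omega$ by sets on which $P$ coincides with a translated local cone, multiplying by cutoff functions $\nu_j$, applying the fan case, gluing with a partition of unity, and checking that only finitely many complexes of definition occur so that the glued sum is again piecewise smooth with a single finite complex of definition. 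Either import that reduction, or rebuild your induction on a simplicial completion with the Whitney--Hadamard argument carried out directly on the sets $\tau\cap\tilde\Omega$; as written, the proof has a gap at its central step.
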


\begin{proof} 
We first show the claim in the special case when $\Omega=P$ is the support of a 
rational polyhedral fan $\KC$ of definition for $\phi$ and $\widetilde\Omega=N_\R$.  
After passing to a  subdivision of $\KC$, we can easily find a complete rational polyhedral fan $\KC'$ in $N_\R$ which contains
$\KC$. After suitable subdivisions of $\KC'$
(and $\KC$) we may furthermore assume that all cones in $\KC'$ are simplicial.
Now we will extend $\phi$ inductively by ascending dimension
from the cones in $\KC$ to the cones in $\KC'$.

Let $\sigma$ be a cone in $\KC'$ of dimension $m$.
We are looking for an extension $\tilde \phi$ of $\phi$ to $\sigma$.
By our inductive procedure, we can assume that $\phi$ is defined already 
on all  faces of codimension one of $\sigma$.
After a linear change of coordinates, we may assume that $\sigma$ is the 
standard cone $\R_+^m$ in $\R^m$.
Let us assume that $\phi$ is given on the face $\{x_i=0\}$ of $\sigma$ by 
the smooth function $\phi_i(x_1,\dots, x_{i-1},x_{i+1}, \dots ,x_m)$. 
For any  $1\leq i_1 < \dots < i_k \leq m$, the restriction of $\phi$ to the face 
$\{x_{i_1}=\dots=x_{i_k}=0\}$ of $\sigma$ is given by a smooth function 
$\phi_{i_1 \dots i_k}$ depending only on the coordinates $x_j$ with 
$j \not \in \{i_1,\dots , i_k\}$ which agrees with the restrictions of the 
functions $\phi_{i_1}, \dots, \phi_{i_k}$ to this face of codimension $k$. 
We consider all these functions $\phi_{i_1 \dots i_k}$ as functions on 
$\sigma$ depending only on the coordinates $x_j$. Then an elementary 
combinatorial argument shows that 
\[
\tilde{\phi}:=\sum_i \phi_i - \sum_{i<j} \phi_{ij} + \dots + 
(-1)^{k+1}\sum_{i_1 < \dots < i_k} \phi_{i_1 \dots i_k} \pm 
\dots +(-1)^{n+1}\phi_{1 \dots n}
\]
is a smooth extension of $\phi$ to $\sigma$. 

Finally, we prove the claim in general.  There is a finite open covering $(\Omega_i)_{i \in I}$ of $\Omega$ such that $\Omega_i=  \Omega_i \cap (\LC_{\omega_i}(P) + \omega_i)$ for the local cone $\LC_{\omega_i}(P)$ of $P$ at a suitable  $\omega_i$.
Let us choose an open covering $(\widetilde\Omega_i)_{i \in I}$ of $\widetilde\Omega$ such that $\widetilde\Omega_i\cap P = \Omega_i$. There is a partition of unity $(\rho_j)_{j \in J}$ on $\widetilde\Omega$  such that every $\rho_j$ has compact support in $\widetilde\Omega_{i(j)}$ for a suitable $i(j) \in I$. We choose  $\nu_j \in C^\infty(N_\R)$ with compact support in $\widetilde\Omega_{i(j)}$ such that $\nu_j \equiv 1$ on $\supp(\rho_j)$. 
Then the special case above shows that the piecewise smooth function $\nu_j \phi$ on $P$ has a piecewise smooth extension $\tilde\phi_j$ to $N_\R$. Even if $J$ is infinite, we note that only finitely many rational fans of definition occur  and the above construction gives piecewise smooth extensions $\tilde\phi_j$ with finitely many integral $\R$-affine polyhedral complexes of definition. By passing to a common refinement, we may assume that they are all equal to a complete integral $\R$-affine polyhedral complex $\KD$. We conclude that $\tilde\phi = \sum_{j \in J} \tilde\rho_j \tilde\phi_j$ is a piecewise smooth extension of $\phi$ to $\widetilde\Omega$ with $\KD$ as a polyhedral complex of definition. 
\end{proof}

\begin{rem} \label{variations of extensions} 
Let $\Sigma$ be a rational polyhedral fan of $N_\R$ and let $\phi:|\Sigma| \to \R$ be a piecewise linear function with polyhedral complex of definition $\Sigma$. Then $\phi$  is the restriction of 
a piecewise linear function on $N_\R$ with a complete rational polyhedral fan of definition.
The argument is a little different: 
By toric resolution of singularities, one can subdivide $\Sigma$ until we get a subcomplex of a smooth rational polyhedral fan $\Sigma'$ of $N_\R$ (see \ref{fans} for the connection to toric varieties). 
We may assume that $\phi(0)=0$. Let $\lambda$ be a primitive lattice vector 
contained in an edge of $\Sigma'$ with $\lambda \not \in |\Sigma|$
and let $\phi(\lambda )\in \Z$. Then there is a unique piecewise linear 
function $\phi'$ on $N_\R$ with $\phi'=\phi$ on $|\Sigma|$ and 
$\phi'(\lambda) = \phi(\lambda)$ for all primitive lattice 
vectors $\lambda$ as above.
\end{rem}

Similarly as in \cite{esterov,francois}, we introduce the corner locus of a piecewise 
smooth function.

\begin{definition}[Corner locus]\label{cornerlocus}
Let $C=(\KC,m)$ be a tropical cycle with smooth weights of dimension $n$. 
We consider a piecewise smooth function $\phi:|\KC| \to \R$ with  
polyhedral complex of definition $\KC$.   
Given $\tau \in \Ccal_{n-1}$ we choose for each $\sigma\in \KC_n$
with $\tau\prec \sigma$ an  $\omega_{\sigma,\tau}\in N_\sigma$ as 
in \ref{polcomplex}(ii). For $\omega$ in $\tau$,  we define
\[
\omega_\tau:=
\sum\limits_{\genfrac{}{}{0pt}{}{\sigma\in \KC_n}{\tau\prec \sigma}}
m_\sigma(\omega)\omega_{\sigma,\tau} \in \L_\tau.
\]
Note that $\omega_\tau$ depends on the choice of $\omega$. 
Viewing $\omega_{\sigma,\tau}$ (resp.\ $\omega_\tau$) 
as a tangential vector at $\omega$,   
we denote the corresponding derivative by  
$\frac{\partial \phi_\sigma}{\partial \omega_{\sigma,\tau}} {:=
\langle  d\phi_\sigma, \omega_{\sigma,\tau}\rangle}$ 
(resp.\ $ {\frac{\partial \phi_\tau}{\partial \omega_{\tau}}:=
\langle  d\phi_\tau, \omega_{\tau}\rangle}$).      
It is straightforward to check that the definition of the weight function
\begin{equation}\label{cornerlocusg1}
m_\tau:\tau \to \R,\,\,
m_\tau(\omega) := \Bigl(
\sum\limits_{\genfrac{}{}{0pt}{}{\sigma\in \KC_n}{\tau\prec \sigma}}
m_\sigma(\omega) 
\frac{\partial \phi_\sigma}{\partial \omega_{\sigma,\tau}}(\omega)\Bigr)
- \frac{\partial \phi_\tau}{\partial \omega_\tau}(\omega)
\end{equation}
does not depend on the choice of the $\omega_{\sigma,\tau}$.
The {\it corner locus $\phi \cdot C$ of $\phi$} 
is by definition the weighted polyhedral subcomplex $\Ccal'$ of $\Ccal$ 
generated by $\Ccal_{n-1}$ endowed with the smooth weight functions
$m_\tau$ defined in (\ref{cornerlocusg1}).
\end{definition}

\begin{rem}\label{connectiontodivisors}
Let $\phi:|\KC|\to \R$ be a piecewise linear function 
on a tropical cycle
$C=(\KC,m)$ with  integral weights. Then the corner locus 
$\phi\cdot \KC$ is a tropical cycle with integral weights which is the
tropical Weil divisor of $\phi$ on $C$ in the sense of Allermann and Rau 
\cite[6.5]{allermann-rau-2010}.
\end{rem}

Esterov showed in \cite[Theorem 2.7]{esterov} that the corner 
locus of a piecewise  {polynomial} function on a tropical cycle with polynomial 
weights is again a tropical 
cycle of the same kind. We have here a similar result for tropical cycles 
with smooth weights:

\begin{prop} \label{corner locus for tropical cycle}
The corner locus $\phi\cdot C$ of a piecewise smooth function 
$\phi:|\KC|\to \R$ on a tropical cycle $C=(\KC,m)$ of dimension $n$ 
is  a tropical cycle with smooth weights of dimension $n-1$. 
 {The corner locus is defined independently of
the choice of the polyhedral complex $\KC$ and  $\phi\cdot C$ depends only on
the function $\phi|_{|C|}$.}
\end{prop}

\proof
This follows  from Remark 
\ref{connectiontodivisors} as explained in \ref{reductiontoclassicaltropicalvarieties}. 
\qed

\begin{prop} \label{projection formula for corner loci}
Let $F:N'_\R \rightarrow N_\R$ be an integral $\R$-affine map. 
Let $\phi$ be a piecewise smooth function on an integral 
$\R$-affine  polyhedral complex $\Ccal$ on $N_\R$. 
Suppose that $C'=(\Ccal',m')$ is a tropical cycle on $N_\R'$ with 
smooth weights $m'$ such that $F(|\Ccal'|) \subseteq |\Ccal|$. 
Then we have the projection formula 
$F_*(F^*(\phi)\cdot C')=\phi\cdot {F_*(C')}$, where 
$F^*(\phi)$ is the piecewise smooth function on $|\Ccal'|$ 
obtained by $\phi \circ F$. 
 \end{prop}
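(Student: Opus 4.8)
The plan is to reduce the projection formula for corner loci to the already-established stable-intersection projection formula, Proposition \ref{tropintthprop}(ii), by realizing the corner locus as an intersection with a tropical Cartier divisor on a graph. First I would reduce, exactly as in Remark \ref{reductiontoclassicaltropicalvarieties}, to the case where $C'$ is a tropical fan with constant weights and, by linearity of both sides in the weights, to integer weights; simultaneously I would replace $\phi$ by its local model at a point of $|\Ccal'|$ so that $\phi$ becomes a piecewise \emph{linear} function on a rational polyhedral fan of definition (the derivative terms in \eqref{cornerlocusg1} are computed locally, so this reduction is harmless, and $F^*(\phi)=\phi\circ F$ is then piecewise linear as well). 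In this situation, Remark \ref{connectiontodivisors} identifies $\phi\cdot C'$ and $F^*(\phi)\cdot C'$ with the tropical Weil divisors in the sense of Allermann--Rau, so the statement becomes the projection formula for Weil divisors along pull-backs of Cartier divisors.

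Next I would set up the graph construction as in \eqref{graph formula}. Let $p_1\colon N'_\R\times N_\R\to N'_\R$ and $p_2\colon N'_\R\times N_\R\to N_\R$ be the projections and $\Gamma_F\subset N'_\R\times N_\R$ the graph of $F$, a tropical cycle with constant weight $1$. The point is that a piecewise linear function $\phi$ on $N_\R$ pulls back along $p_2$ to a piecewise linear function $p_2^*\phi$ on $N'_\R\times N_\R$, and one has the two elementary compatibilities: $(p_2^*\phi)\cdot \Gamma_F = \Gamma_F\cdot (\text{corner locus pulled back})$, giving via $(p_1)_*$ that $F^*(\phi)\cdot C' = (p_1)_*\bigl((p_2^*\phi)\cdot (p_1^*C'\cdot\Gamma_F)\bigr)$, while on the other hand $\phi\cdot F_*(C') = (p_2)_*\bigl((p_2^*\phi)\cdot p_1^*C'\cdot\Gamma_F\bigr)$ composed with the identification $F_*(C')=(p_2)_*(p_1^*C'\cdot\Gamma_F)$. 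Both reduce to: corner-locus-with-a-pulled-back-PL-function commutes with push-forward. Concretely I would invoke the projection formula for tropical Cartier divisors from Allermann--Rau \cite[\S 6,\S 7]{allermann-rau-2010} (their Proposition on $\pi_*(\pi^*\varphi\cdot C)=\varphi\cdot\pi_*C$), which is precisely the divisor-theoretic shadow of Proposition \ref{tropintthprop}(ii). Combining this with the associativity and commutativity of the stable intersection product recorded in Remark \ref{intpairing}(i) yields the identity $F_*(F^*(\phi)\cdot C')=\phi\cdot F_*(C')$ for fans with integer weights.

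Finally I would unwind the reductions: by bilinearity the identity extends to constant real weights, and since it is a pointwise statement about the smooth weight functions $m_\tau$ in \eqref{cornerlocusg1} — whose values at $\omega\in|\Ccal'|$ depend only on the local cone data at $\omega$ and on the first-order jet of $\phi$ and $\phi\circ F$ there — the local-cone argument of Remark \ref{reductiontoclassicaltropicalvarieties} upgrades it to tropical cycles with arbitrary smooth weights. I expect the main obstacle to be the bookkeeping in the reduction step: one must check that the local-cone replacement is compatible \emph{simultaneously} with $F$, with the formation of $F_*$ (which requires a compatible refinement so that $F_*\Ccal'$ as in \eqref{intpairingg1} is a polyhedral complex), and with the pull-back $F^*(\phi)$, so that taking local cones at $\omega\in|C'|$ and at $F(\omega)\in|F_*C'|$ matches up; once the fan case with the Allermann--Rau projection formula is in place, the rest is formal.
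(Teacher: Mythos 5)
Your proposal is correct and follows essentially the paper's own route: reduce via local cones and linearity to fans with integer weights, linearize $\phi$ at a point (the weights \eqref{cornerlocusg1} only see first-order data, and linearization is compatible with the affine map $F$), identify corner loci with Allermann--Rau Weil divisors via Remark \ref{connectiontodivisors}, and invoke their projection formula for Cartier divisors (\cite[Prop.~4.8]{allermann-rau-2010}). The graph-of-$F$ construction in your middle paragraph is a superfluous detour, since you end up quoting exactly that Allermann--Rau projection formula, which is what the paper applies directly.
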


\begin{proof} This follows locally as in 
\cite[Prop. 4.8]{allermann-rau-2010} 
using \ref{reductiontoclassicaltropicalvarieties},
 \ref{connectiontodivisors}, 
and a linearization procedure (see  proof 
of Proposition \ref{asscommcornerlocus}). 
\end{proof} 

\begin{prop}\label{asscommcornerlocus}
Let $C$ and $C'$ be tropical cycles on $N_\R$ 
with smooth weights. Let $\KC$ be a polyhedral complex of definition for
$C$ and $\phi:|\KC|\to \R$ and
$\psi:|\KC|\to \R$ piecewise smooth functions. 
Then we have the associativity law 
\begin{equation}\label{asscommcornerlocusg1}
\phi\cdot (C\cdot C')= (\phi\cdot C)\cdot C'
\end{equation} 
and the commutativity law 
\begin{equation}\label{asscommcornerlocusg2}
\phi\cdot (\psi\cdot C)= \psi\cdot (\phi\cdot C)
\end{equation} 
as identities of tropical cycles on $N_\R$.
\end{prop}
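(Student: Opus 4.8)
The plan is to reduce both identities to the classical case of tropical cycles with constant integer weights, where they are already known, via the two-step reduction used repeatedly above: first localize to reduce smooth weights to constant weights, then linearize to reduce a piecewise smooth function to a piecewise linear one, and finally scale to reduce constant real weights to integer weights.

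**Reduction of smooth weights to constant weights.** Both sides of \eqref{asscommcornerlocusg1} and \eqref{asscommcornerlocusg2} are tropical cycles; to check that two tropical cycles agree it suffices, by the identification in Definition \ref{polcomplex}(iii), to compare their weight functions on the relative interior of each maximal face, and this can be done pointwise. So fix $\omega$ in the support and pass to the fan of local cones at $\omega$ as in Remark \ref{reductiontoclassicaltropicalvarieties}: replace $C$ (and $C'$) by the local cone fan equipped with the constant weights $m_\sigma(\omega)$, which again satisfy the balancing condition. The corner locus and the stable intersection product are both defined by local formulas — \eqref{cornerlocusg1} involves only the weights $m_\sigma(\omega)$ and the derivatives of $\phi$ at $\omega$ along the $\omega_{\sigma,\tau}$, while \eqref{fandepruleg1} involves only the weights at $\omega$ — so forming either side of the two identities commutes with this localization. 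Hence it suffices to prove the identities for tropical fans with constant (real) weights.

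**Linearization and reduction to integer weights.** For a tropical fan, the corner locus weight \eqref{cornerlocusg1} at a point depends on $\phi$ only through its first-order behaviour, i.e. through the linear functions $d\phi_\sigma(\omega)$ on each cone $\sigma$; replacing each $\phi_\sigma$ by the linear function $x\mapsto d\phi_\sigma(\omega)(x)$ (which glue to a well-defined \emph{piecewise linear} function on the fan, since $\phi$ is continuous and the $\phi_\sigma$ agree along common faces, so their differentials agree along those faces) does not change $\phi\cdot C$; and the same linearization applied to $\psi$ leaves $\psi\cdot C$ unchanged, while it is compatible with intersecting against $C'$. Thus we may assume $\phi$ and $\psi$ are piecewise linear. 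Since both sides of each identity are linear in the (now constant) weights of the fans involved, we may finally assume these weights are integers. By Remark \ref{connectiontodivisors}, the corner locus of a piecewise linear function on an integral tropical cycle is exactly the tropical Weil divisor of Allermann–Rau, so \eqref{asscommcornerlocusg1} and \eqref{asscommcornerlocusg2} become the associativity $\phi\cdot(C\cdot C')=(\phi\cdot C)\cdot C'$ and commutativity $\phi\cdot(\psi\cdot C)=\psi\cdot(\phi\cdot C)$ of intersecting with tropical Cartier divisors, which are \cite[Prop. 6.7, Cor. 6.8]{allermann-rau-2010} (equivalently, these follow from \ref{stable troical intersection theory and FS} via the ring structure on Chow cohomology).

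**Main obstacle.** The delicate point is the linearization step: one must check that the pointwise corner-locus weight \eqref{cornerlocusg1} genuinely depends only on the first-order jet of $\phi$ at the point in question, so that passing from $\phi$ to its differential does not alter $\phi\cdot C$, and — more subtly — that this replacement is compatible with first forming $\psi\cdot C$ in \eqref{asscommcornerlocusg2} (where one linearizes $\phi$ on the fan of local cones of the \emph{already differentiated} cycle $\psi\cdot C$) and, for the proof of Proposition \ref{projection formula for corner loci}, with pullback along $F$. Once this compatibility is in place, everything is a routine transcription of the Allermann–Rau arguments; this is precisely the ``linearization procedure'' referred to in the proof of Proposition \ref{projection formula for corner loci}, and the details are as in \cite[Prop. 4.8]{allermann-rau-2010}.
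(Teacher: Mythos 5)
Your proposal follows essentially the same route as the paper's own proof: localize at each point to the fan of local cones equipped with the constant weights $m_\sigma(\omega)$, replace $\phi$ (and $\psi$) by the piecewise linear function on that fan determined by the differentials at the point, observe that the corner-locus and stable-intersection weights at the point depend only on this data (``the local nature of the definitions''), and then reduce by linearity to the integral case of Allermann--Rau. The one step to state explicitly is that both sides are linear not only in the (constant) weights but also in the slopes of $\phi$ and $\psi$, so the Allermann--Rau identities for integral slopes extend to the real-sloped linearizations; this is precisely the ``linear in weights and slopes'' remark in the paper's proof.
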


\begin{proof}
Using \ref{connectiontodivisors} it is shown in 
\cite[Lemma 9.7, Proposition 6.7]{allermann-rau-2010} 
that \eqref{asscommcornerlocusg1} and \eqref{asscommcornerlocusg2} 
hold for tropical cycles $C,C'$ with integral weights and 
piecewise linear functions $\phi,\psi:|\KC|\to \R$ with integral slopes.
As both sides of (\ref{asscommcornerlocusg1}) and (\ref{asscommcornerlocusg2})
are linear in weights
and slopes, both formulas extends by linearity to tropical cycles 
with constant weight functions and piecewise linear functions with 
arbitrary real slopes.

To reduce to the above situation, we will use the procedure described 
in \ref{reductiontoclassicaltropicalvarieties}. 
We may assume that $C$ and $C'$ are tropical cycles
of pure dimension $n$ and $n'$ respectively.
Let $\KC$ be an integral $\R$-affine polyhedral complex such that
$\KC_{\leq n}$ and $\KC_{\leq n'}$ are polyhedral complexes of definition
for $C$ and $C'$.
We write $C=(\KC_{\leq n},m)$, $C'=(\KC'_{\leq n'},m')$, and
$C\cdot C'=(\KC_{\leq l},m'')$ with $l:=n+n'-r$.
Given $\omega\in |\KC|$ we denote by $\KC_\omega$ the rational polyhedral fan 
of local cones of $\KC$ in $\omega$. 
There is a bijective correspondence between the polyhedra $\sigma\in \KC$ 
with $\omega\in \sigma$ and the cones $\sigma_\omega$ in $\KC_\omega$.
Each $\sigma\in \KC$ with $\omega\in \sigma$ determines a canonical isomorphism
of affine spaces
$I_\omega:\L_{\sigma_{\omega}}\stackrel{\sim}{\to} \A_\sigma$
with $I_\omega(0)=\omega$.
We obtain tropical fans with constant weight functions  
$C_\omega=(\KC_{\omega,\leq n},m(\omega))$, 
$C'_{\omega}=(\KC_{\omega,\leq n'},m'(\omega))$,
and $(C\cdot C')_\omega=(\KC_{\omega,\leq n},m''(\omega))$. We have
$C_\omega\cdot C'_\omega=(C\cdot C')_\omega$
by our construction of the stable tropical intersection product with smooth weights.
There is a unique piecewise linear function
$\phi_\omega:|\KC_\omega|\to \R$
such that for all $\sigma_\omega\in \KC_\omega$ the $\R$-linear function
$\phi_{\sigma_\omega}$ on $\L_{\sigma_\omega}$ determined by
$\phi_\omega|_{\sigma_\omega}=\phi_{\sigma_\omega}|_{\sigma_\omega}$ 
satisfies
\[
(d\phi_{\sigma_\omega})(0)=(I_\omega^*d\phi)(0)
\]
in $\L_{\sigma_\omega}^*$.
We write 
\begin{eqnarray*}
\phi\cdot (C\cdot C')&=&(\KC_{\leq  {l-1}},m_1),\\
(\phi\cdot C)\cdot C'&=&(\KC_{\leq  {l-1}},m_2),\\
\phi_\omega\cdot (C_\omega\cdot C_\omega')&=&(\KC_{\omega,\leq 
 {l-1}},m_{\omega,1}),\\
(\phi_\omega\cdot C_\omega)\cdot C_\omega'&=&(\KC_{\omega,\leq 
 {l-1}},m_{\omega,2}).
\end{eqnarray*}
The local nature of our definitions yields 
\[
m_{i,\sigma}(\omega)=m_{\omega,i,\sigma_\omega}(0)
\]
for $i=1,2$ and all $\sigma \in \KC_{\leq n+n'-r-1}$ with $\omega\in \sigma$.
Formula (\ref{asscommcornerlocusg1}) for
constant weight functions and piecewise linear functions with arbitrary real slopes gives 
$m_{\omega,1,\sigma_\omega}(0)=m_{\omega,2,\sigma_\omega}(0)$.
Hence $m_1=m_2$ and (\ref{asscommcornerlocusg1}) is proven in general.
The reduction of (\ref{asscommcornerlocusg2}) 
to the case of constant weight functions and piecewise linear functions
proceeds exactly in the same way.
\end{proof}

\begin{cor} \label{pullbackcornerlocus}
Let $F:N'_\R \rightarrow N_\R$ be an integral $\R$-affine map.
We consider a tropical cycle $C=(\KC,m)$ with smooth weights on $N_\R$ 
and a piecewise smooth function $\phi:|\KC|\to \R$.
We write $F^*C=(\KC',m')$ where $F(|\KC'|)\subseteq |\KC|$. 
Then $\phi$ induces a piecewise smooth function $F^*(\phi):|\KC'|\to \R$
and we have
\[
F^*(\phi)\cdot F^*(C)=F^*(\phi\cdot C),
\]
i.e. the formation of the corner locus is compatible with pull-back.
\end{cor}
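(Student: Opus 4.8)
The plan is to deduce the identity $F^*(\phi)\cdot F^*(C)=F^*(\phi\cdot C)$ formally from three tools already established: the graph formula \eqref{graph formula}, the projection formula for corner loci (Proposition \ref{projection formula for corner loci}), and the associativity law for corner loci (Proposition \ref{asscommcornerlocus}). As a preliminary remark, after refining $\KC'$ we may assume that $F$ maps each face of $\KC'$ into a face of $\KC$; then $F^*(\phi)=\phi\circ F$ is smooth on each face of $\KC'$, so it is the piecewise smooth function on $|\KC'|$ occurring in the statement.

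Write $N'_\R\times N_\R$ with its two projections $p_1,p_2$ and let $\Gamma_F\subseteq N'_\R\times N_\R$ be the graph of $F$, a tropical cycle of dimension $r'$. By \eqref{graph formula} we have $F^*(C)=(p_1)_*\bigl(p_2^*(C)\cdot\Gamma_F\bigr)$, and since $\phi\cdot C$ is a tropical cycle on $N_\R$ by Proposition \ref{corner locus for tropical cycle}, also $F^*(\phi\cdot C)=(p_1)_*\bigl(p_2^*(\phi\cdot C)\cdot\Gamma_F\bigr)$. Applying the projection formula for corner loci to the map $p_1$, the function $F^*(\phi)$ and the tropical cycle $p_2^*(C)\cdot\Gamma_F$ gives
\[
F^*(\phi)\cdot F^*(C)=(p_1)_*\Bigl(p_1^*\bigl(F^*(\phi)\bigr)\cdot\bigl(p_2^*(C)\cdot\Gamma_F\bigr)\Bigr).
\]
Along $\Gamma_F$ one has $p_2=F\circ p_1$, so $p_1^*(F^*(\phi))=\phi\circ F\circ p_1$ and $p_2^*(\phi)=\phi\circ p_2$ agree on $\Gamma_F$ and have the same derivatives in directions tangent to $\Gamma_F$. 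Since the faces of a polyhedral complex of definition of $p_2^*(C)\cdot\Gamma_F$ all lie in $\Gamma_F$, formula \eqref{cornerlocusg1} shows $p_1^*(F^*(\phi))\cdot\bigl(p_2^*(C)\cdot\Gamma_F\bigr)=p_2^*(\phi)\cdot\bigl(p_2^*(C)\cdot\Gamma_F\bigr)$, and by associativity (Proposition \ref{asscommcornerlocus}) the latter equals $\bigl(p_2^*(\phi)\cdot p_2^*(C)\bigr)\cdot\Gamma_F$. Finally, for the projection $p_2$ one checks from the definitions that $p_2^*$ is the cylinder construction, i.e.\ $p_2^*(C)$ is $N'_\R\times C$ with weight $m_\sigma$ on $N'_\R\times\sigma$ and $p_2^*(\phi)$ is independent of the $N'_\R$-coordinate, so a fibrewise inspection of \eqref{cornerlocusg1} yields $p_2^*(\phi)\cdot p_2^*(C)=p_2^*(\phi\cdot C)$. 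Combining these equalities with the graph formula for $\phi\cdot C$ gives $F^*(\phi)\cdot F^*(C)=(p_1)_*\bigl(p_2^*(\phi\cdot C)\cdot\Gamma_F\bigr)=F^*(\phi\cdot C)$, as claimed.

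The substance of the argument is this formal chain; the main obstacle is the bookkeeping needed to make the cited propositions literally applicable — one must choose polyhedral complexes of definition for $\phi$, $C$, $\Gamma_F$ and $p_2^*(C)\cdot\Gamma_F$ compatibly and verify the relevant support inclusions (in particular that $p_1$ carries $|p_2^*(C)\cdot\Gamma_F|$ into the domain of $F^*(\phi)$), and one must carry out the two elementary local verifications with \eqref{cornerlocusg1} used above. Alternatively, in the style of the proof of Proposition \ref{tropintthprop}, one may reduce by Remark \ref{reductiontoclassicaltropicalvarieties} to tropical fans with constant — and hence, by linearity in the weights and slopes, integral — weights and a piecewise linear $\phi$ with integral slopes (linearizing $\phi$ as in the proof of Proposition \ref{asscommcornerlocus}), and then invoke the corresponding compatibility of tropical pull-back with Weil divisors from \cite{allermann2012a}. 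Either route is routine once the formalism above is in place, and no difficulty beyond these manipulations arises.
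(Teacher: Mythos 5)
Your argument is essentially the paper's own proof: the paper deduces the corollary in one line from the graph formula \eqref{graph formula}, the projection formula for corner loci (Proposition \ref{projection formula for corner loci}) and the associativity law \eqref{asscommcornerlocusg1}, which is exactly the chain you spell out. The two additional local verifications you supply (that $p_1^*(F^*(\phi))$ and $p_2^*(\phi)$ have the same corner locus on cycles supported in $\Gamma_F$, and the cylinder case $p_2^*(\phi)\cdot p_2^*(C)=p_2^*(\phi\cdot C)$ for the projection) are correct and simply make explicit what the paper leaves implicit.
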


\proof
Using \eqref{graph formula} giving pull-back as a stable intersection with the graph, 
the claim follows  
by applying Proposition \ref{projection formula for corner loci} and 
\eqref{asscommcornerlocusg1} in  Proposition \ref{asscommcornerlocus}.
\qed

\section{The algebra of delta-preforms}\label{algebradeltaforms}

In this section we define polyhedral supercurrents  
on an open subset $\widetilde\Omega$ in $N_\R$
for some free $\Z$-module $N$ of finite rank.
The polyhedral supercurrents are special supercurrents
in the sense of Lagerberg. We show that an analogue of
Stokes' theorem holds for polyhedral supercurrents with
respect to the polyhedral derivatives $\dpa$ and $\dpb$.
Then we introduce the algebra of $\delta$-preforms
on $\widetilde\Omega$ which is going to play a central role in this paper.
These $\delta$-preforms are  special 
polyhedral supercurrents defined by tropical cycles and superforms.
We show that $\delta$-preforms admit products and
pullback morphisms, satisfy a projection formula
and that the polyhedral derivative of a $\delta$-preform
coincides with its derivative in the sense of supercurrents.

Throughout this section $N$ and $N'$ denote free $\Z$-modules of finite rank 
$r$ and $r'$. We write $N_\R=N\otimes_\Z\R$ and $N_\R'=N'\otimes_\Z\R$. 
We refer to Appendix \ref{convex geometry} for the notation from convex geometry.

\begin{art}
Given an open subset $\widetilde\Omega$ in $N_\R$, we denote
by $A^{p,q}(\widetilde\Omega)$ the space of superforms of type
$(p,q)$ on $\widetilde\Omega$,
by $A_c^{p,q}(\widetilde\Omega)$ the space of superforms with compact support
of type $(p,q)$ on $\widetilde\Omega$, and by 
$D_{k,l}(\widetilde\Omega)=D^{r-k,r-l}(\widetilde\Omega)$ the 
space of supercurrents of type $(k,l)$ on $\widetilde\Omega$ in the 
sense of Lagerberg \cite{lagerberg-2012}
(see also \cite{chambert-loir-ducros, gubler-forms}).
We have seen in the introduction that $A:=\oplus_{p,q} A^{p,q}$ 
defines a sheaf of differential bigraded 
$\R$-algebras with respect to the differentials $d'$ and $d''$. 
The bigraded sheaf 
$D:=\oplus_{p,q} D^{p,q}$ contains $A$ as a bigraded subsheaf and has 
canonical differentials $d'$ and $d''$ extending those of $A$.

The sheaf $A^{p,q}$  comes with a natural operator 
$J^{p,q}:A^{p,q}\to A^{q,p}$ which extends to 
$J^{p,q}:D^{p,q}\to D^{q,p}$. 
The first one induces an involution $J:=\oplus_{p,q}J^{p,q}$ on $A$ which is determined by 
the fact that it is an endomorphism of sheaves of $A^{0.0}$-algebras 
and that $d' \circ J = J \circ d''$. The extension of $J$ to supercurrents 
is determined by 
$$\langle J(T) , \alpha \rangle = (-1)^r \langle T ,  J(\alpha) \rangle$$
for $\alpha \in A^{r-p,r-q}(\widetilde\Omega)$ and  
$T \in D^{p,q}(\widetilde\Omega)$.
Sections of $A^{p,p}$ (resp. $D^{p,p}$) which are invariant under the
action of $(-1)^p J^{p,p}$ are called {\it symmetric superforms}
({\it resp. symmetric supercurrents}).
Sections of $A^{p,p}$ (resp. $D^{p,p}$) which are invariant under the
action of $(-1)^{p+1} J^{p,p}$ are called {\it anti-symmetric superforms}
({\it resp. anti-symmetric supercurrents}).
\end{art}

\begin{art}\label{formsaffinespace} 
Let $\widetilde\Omega$ be an open subset of $N_\R$.
An integral $\R$-affine polyhedron $\Delta$ of dimension $n$ in
$N_\R$ determines a canonical calibration
\[
 {\mu_\Delta\in |\!\wedge^n\!\L_\Delta|
={\rm Or}\,(\A_\Delta)\times^{\pm 1}
\wedge^n\L_\Delta}
\]
as in \cite[ {(1.3.5)}]{chambert-loir-ducros}.
Given a superform $\alpha \in A_c^{n,n}(\widetilde\Omega)$ the integral
\[
\int_\Delta\alpha=\int_{N_\R}
\langle \alpha, \mu_\Delta\rangle
\]
was defined in \cite[{\S 1.5}]{chambert-loir-ducros} 
{(see also \cite[\S 3]{gubler-forms})}.
The polyhedron $\Delta$ determines a continuous functional
\begin{equation}\label{intcurrent}
A_c^{n,n}(\widetilde\Omega)\longrightarrow 
\R,\,\,\alpha\longmapsto
\int_\Delta\alpha
\end{equation}
and a symmetric supercurrent $\delta_\Delta\in D_{n,n}(\widetilde\Omega)$.

 {For $\Omega := \widetilde\Omega \cap \Delta$, we define  $A_\Delta^{p,q}(\Omega)$ as the space of superforms on the open subset $\widetilde\Omega \cap \relint(\Delta)$ 
of the affine space $\A_\Delta$ given by restriction of elements in $A^{p,q}(\widetilde\Omega)$.  
A partition of unity argument shows that this definition is independent of the 
choice of $\widetilde\Omega$.}

For a superform $\alpha\in A^{p,q}_\Delta(\widetilde\Omega \cap\Delta)$ 
the supercurrent
\[
\alpha\wedge \delta_\Delta\in D_{n-p,n-q}(\widetilde\Omega)
\]
is defined by $\langle \alpha\wedge \delta_\Delta,\beta\rangle=
\langle \delta_\Delta , \alpha\wedge \beta\rangle$
for all $\beta\in A_c^{n-p,n-q}(\widetilde\Omega)$.
\end{art}

\begin{definition}[Polyhedral supercurrents]\label{polyhedral current}
Let $\widetilde\Omega$ be an open subset of $N_\R$.
A supercurrent $\alpha\in D(\widetilde\Omega)$ is called {\it polyhedral}
if there exists an integral $\R$-affine polyhedral complex $\KC$ in $N_\R$ and
a family $(\alpha_\Delta)_{\Delta\in \KC}$ of
superforms $\alpha_\Delta\in A_\Delta(\widetilde\Omega\cap \Delta)$ 
such that
\begin{equation}\label{stdeltaform}
\alpha=\sum_{\Delta\in \KC}\alpha_\Delta\wedge
\delta_{\Delta}
\end{equation}
holds in $D(\widetilde\Omega)$.
In this case we say that $\KC$ is {\it a polyhedral complex of 
definition for $\alpha$}.
The {\it polyhedral derivatives} $\dpa(\alpha)$ and $\dpb(\alpha)$
of a polyhedral supercurrent (\ref{stdeltaform}) are the polyhedral supercurrents
defined by the formulas
\begin{eqnarray*}
\dpa(\alpha)=\sum_{\Delta\in \KC}d'(\alpha_\Delta)\wedge
\delta_{\Delta},\\
\dpb(\alpha)=\sum_{\Delta\in \KC}d''(\alpha_\Delta)\wedge
\delta_{\Delta}.
\end{eqnarray*}
\end{definition}

\begin{rem}
(i) We observe that the family of forms $(\alpha_\Delta)_{\Delta\in \KC}$ in
(\ref{stdeltaform}) is uniquely determined by $\alpha$ and $\KC$.
Furthermore the support $\supp(\alpha)$ of a polyhedral supercurrent $\alpha$ is the union of 
the supports of the forms $\alpha_\Delta$ for all $\Delta\in \KC$.

(ii) It is straightforward to check that the definitions of the polyhedral
derivatives $\dpa(\alpha)$ and $\dpb(\alpha)$ do not depend on the
choice of the polyhedral complex of definition $\KC$.

(iii) We do not claim that the polyhedral derivatives of a polyhedral supercurrent $\alpha$
coincide with derivative of a $\alpha$ in the sense of supercurrents.
In fact the derivatives of a polyhedral supercurrent in the sense of supercurrents 
are in general not even polyhedral.
\end{rem}

\begin{definition}\label{intdelta}
Let $\widetilde\Omega$ denote an open subset of $N_\R$.
Let $P \subseteq \widetilde{\Omega}$ be an integral $\R$-affine polyhedral set 
 in $N_\R$.
We choose an integral $\R$-affine polyhedral complex  {$\KC$}
in $N_\R$ whose support is $P$. 

(i) Let $\alpha\in D_{0,0}(\widetilde\Omega)$ be a polyhedral supercurrent
such that $\supp(\alpha) \cap P$ is compact.  
After suitable refinements, we may assume that $\alpha$ admits a polyhedral 
complex of definition  {$\KD$} such that $\KD$ is a subcomplex of $\KC$.
In this situation we write
\begin{equation}\label{intdeltag1} 
\alpha=\sum_{\Delta\in  {\KD}}\alpha_\Delta\wedge \delta_\Delta
\end{equation}
as in (\ref{stdeltaform}) and define the integral of $\alpha$ over $P$ as
\begin{equation}\label{intdeltag2} 
\int_{P}\alpha=\sum_{\Delta\in \KD}
\int_{\Delta} \alpha_\Delta.
\end{equation}

(ii) Let $\beta\in D_{1,0}(\widetilde\Omega)$ be a polyhedral supercurrent 
with $\supp(\beta) \cap P$ compact. Proceeding as in (i), 
 {we get $\beta=\sum_{\Delta\in  {\KD}}\beta_\Delta\wedge \delta_\Delta$ 
for a suitable subcomplex $\KD$ of $\KC$} and we define 
the integral of $\beta$ over the boundary of $P$ as
\begin{equation}\label{intdeltag3} 
\int_{\partial P}\beta=\sum_{\Delta\in \KD}
\int_{\partial\Delta} \beta_\Delta
\end{equation}
where the boundary integrals on the right are defined as in 
\cite[1.5]{chambert-loir-ducros} and \cite[2.6]{gubler-forms}.
We define the boundary integral (\ref{intdeltag3}) for
a polyhedral supercurrent $\beta\in D_{0,1}(\widetilde\Omega)$ with $\supp(\beta) \cap P$ compact by the same 
formula. 
\end{definition}

\begin{rem}\label{boundaryintdelta} 
(i) The definitions in (\ref{intdeltag2}) and (\ref{intdeltag3}) do not depend 
on the choice of the polyhedral complex $\KD$.

(ii) On the Borel algebra $\B(P)$, we get signed measures 
\[
\mu_{P,\alpha}:\B(P)\to \R,\,\,\,
\mu_{P,\alpha}(M)=\sum_{\Delta\in \KD}\int_{M\cap \Delta} \alpha_\Delta
\]
and 
\[
\mu_{\partial P,\beta}:\B(P)\to \R,\,\,\,
\mu_{\partial P,\beta}(M)=\sum_{\Delta\in \KD}\int_{M\cap \partial\Delta} 
\beta_\Delta.
\]

(iii) We recall from \ref{polyhedral set} that $\relint(P)$ denotes the 
set of regular points of a polyhedral set $P$. 
Then $\supp(\beta) \cap P \subseteq \relint(P)$ implies
\begin{equation}\label{stokesintdeltag3} 
\int_{\partial P}\beta=0
\end{equation}
as an immediate consequence of the definitions.
\end{rem}

\begin{prop}[Stokes' formula for polyhedral supercurrents]
\label{stokesforpolyhedralcurrents}
Let $\widetilde\Omega$ denote an open subset and $P$ an integral $\R$-affine polyhedral subset 
 in $N_\R$ with $P \subseteq\widetilde{\Omega}$. 
Then we have
\[
\int_P\dpa\alpha=\int_{\partial P}\alpha,\,\,\,
\int_P\dpb\beta=\int_{\partial P}\beta
\]
for all polyhedral supercurrents $\alpha\in D_{1,0}(\widetilde\Omega)$ 
and $\beta\in D_{0,1}(\widetilde\Omega)$ with $\supp(\alpha) \cap P$ and $\supp(\beta) \cap P$ 
compact.
\end{prop}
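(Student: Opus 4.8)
The plan is to reduce Stokes' formula for polyhedral supercurrents to the classical Stokes' formula for superforms on a single integral $\R$-affine polyhedron, which is already available by \cite[1.5]{chambert-loir-ducros} and \cite[2.6]{gubler-forms}. First I would fix a polyhedral complex $\KC$ in $N_\R$ which simultaneously serves as a polyhedral complex of definition for the supercurrent $\alpha$ and admits a subcomplex $\KD$ with $|\KD| = P$; this is possible after passing to a common refinement, exactly as in Definition \ref{intdelta}. Writing $\alpha = \sum_{\Delta \in \KC} \alpha_\Delta \wedge \delta_\Delta$ as in \eqref{stdeltaform}, the polyhedral derivative is $\dpa \alpha = \sum_{\Delta \in \KC} d'(\alpha_\Delta) \wedge \delta_\Delta$ by definition. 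By \eqref{intdeltag2} applied to the $(0,0)$-current $\dpa\alpha$ and by \eqref{intdeltag1}--\eqref{intdeltag3}, both sides of the asserted identity decompose as sums over $\Delta \in \KD$: we must show
\[
\sum_{\Delta \in \KD} \int_\Delta d'(\alpha_\Delta) = \sum_{\Delta \in \KD} \int_{\partial \Delta} \alpha_\Delta.
\]

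So the second step is purely local: for a single $n$-dimensional polyhedron $\Delta$, and a superform $\alpha_\Delta \in A^{n-1,n}_\Delta(\tilde\Omega \cap \Delta)$ with $\supp(\alpha_\Delta) \cap \Delta$ compact, one needs $\int_\Delta d'\alpha_\Delta = \int_{\partial\Delta}\alpha_\Delta$. This is precisely the statement of Stokes' theorem for superforms on a polyhedron proved in \cite[\S 1.5]{chambert-loir-ducros} and \cite[2.6]{gubler-forms}; I would simply invoke it, being careful that the orientation/calibration conventions match those fixed in \ref{formsaffinespace} (the calibration $\mu_\Delta$ and the induced boundary calibrations on the facets of $\Delta$). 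Summing over $\Delta \in \KD$ then gives the first identity; the identity for $\dpb$ and $\beta \in D_{0,1}$ follows in exactly the same way, using instead the $d''$-version of the polyhedral Stokes' theorem, or by applying $J$ to reduce to the $d'$-case.

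The one genuine subtlety — and the step I expect to be the main obstacle — is the well-definedness and consistency of the boundary contributions when summing over $\KD$. A facet $\tau$ of codimension one in $P$ may be shared by several top-dimensional polyhedra $\Delta \in \KD$, or may lie on the relative boundary of $P$. For the formula to make sense one must check that the contribution $\int_{\partial\Delta}\alpha_\Delta$ restricted to $\tau$ is computed using the superform $\alpha_\Delta$ restricted from the correct side, with the correct induced orientation on $\tau$ as a facet of $\Delta$; the forms $\alpha_\Delta$ need not agree across $\tau$ for different $\Delta$, so no cancellation is claimed here — rather, $\int_{\partial P}\alpha$ is simply the sum of these boundary integrals with the facet-orientations induced from the respective $\Delta$'s, as already recorded in \eqref{intdeltag3} and Remark \ref{boundaryintdelta}. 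Thus there is genuinely nothing to cancel, and independence of the choice of $\KD$ was already noted in Remark \ref{boundaryintdelta}(i); the proof is then complete once the local Stokes' theorem is applied termwise and the resulting boundary sum is recognized as $\int_{\partial P}\alpha$.

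I would also remark that, by the same termwise argument, the compact support hypotheses $\supp(\alpha)\cap P$ and $\supp(\beta)\cap P$ compact ensure each $\int_\Delta$ and $\int_{\partial\Delta}$ is a finite integral of a compactly supported smooth superform, so the classical Stokes' theorem applies without boundary-at-infinity terms; in particular, when $\supp(\alpha)\cap P \subseteq \relint(P)$ one recovers the vanishing \eqref{stokesintdeltag3} as a special case.
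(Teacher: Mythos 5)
Your proposal is correct and follows essentially the same route as the paper: choose a complex of definition $\KC$ containing a subcomplex $\KD$ with $|\KD|=P$, reduce by linearity to a single term $\alpha_\Delta\wedge\delta_\Delta$, and invoke the classical Stokes formula for superforms on a polyhedron from \cite[(1.5.7)]{chambert-loir-ducros}, \cite[2.9]{gubler-forms}, with the $d''$-case handled identically. Your observation that the boundary contributions are summed by definition of $\int_{\partial P}$ (so no cancellation is needed) is exactly how the paper's definitions are set up, and the compact-support remark matches the paper's hypothesis $\alpha_\Delta\in A_c^{n-1,n}(\tilde\Omega\cap\Delta)$.
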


\proof
We choose a polyhedral complex of definition $\KC$ for $\alpha$ 
such that a subcomplex $\KD$ has support $P$. 
By linearity it is sufficient to treat the case 
$\alpha=\alpha_\Delta\wedge\delta_\Delta$ for a superform
$\alpha_\Delta\in A_c^{n-1,n}(\widetilde\Omega\cap \Delta)$ and $\Delta \in \KD_n$.
We get
\[
\int_P\dpa(\alpha)
=\int_Pd'(\alpha_\Delta)\wedge\delta_\Delta
=\int_\Delta d'(\alpha_\Delta)
=\int_{\partial\Delta}\alpha_\Delta
=\int_{\partial P}\alpha.
\]
using Stokes' formula for superforms on polyhedra 
(see \cite[(1.5.7)]{chambert-loir-ducros} or \cite[2.9]{gubler-forms}). 
The formula for $\beta$ follows in the same way.
\qed

\begin{rem} \label{superform wedge tropical cycle}
Let $\widetilde\Omega$ be an open subset of $N_\R$.
An integral $\R$-affine polyhedral complex  $C=(\KC,m)$ with smooth weights
of pure dimension $n$ and a superform $\alpha \in A^{p,q}(\widetilde\Omega)$
determine a polyhedral supercurrent
\[
\alpha\wedge \delta_C =\sum_{\Delta\in \KC_n}
\bigl(m_\Delta\cdot \alpha|_\Delta\bigr)\wedge \delta_\Delta
\in D_{n-p,n-q}(\widetilde\Omega).
\] 
We get in particular the polyhedral supercurrents $[\alpha]=\alpha\wedge \delta_{N_\R}
\in D_{r-p,r-q}(\widetilde\Omega)$
and $\delta_C=1\wedge \delta_C\in D_{n,n}(\widetilde\Omega)$.
\end{rem}

\begin{definition}[$\delta$-preforms]\label{defdeltaforms}
(i) Let $\widetilde\Omega$ be an open subset of $N_\R$.
A supercurrent $\alpha\in D_{r-p,r-q}(\widetilde\Omega)$ is called a {\it $\delta$-preform of 
type $(p,q)$} if there exist a finite set $I$, a family 
$(C_i)_{i\in I}$ of tropical 
cycles with smooth weights $C_i=(\KC_i,m_i)$ of codimension $n_i$ in $N_\R$,
and a family $(\alpha_i)_{i\in I}$ of 
superforms $\alpha_i\in A^{p_i,q_i}(\widetilde\Omega)$
such that $p_i+n_i=p$ and $q_i+n_i=q$ for all $i\in I$ and 
\begin{equation}\label{tropstandform}
\alpha=\sum_{i\in I}\alpha_i\wedge
\delta_{C_i}
\end{equation}
holds in $D_{r-p,r-q}(\widetilde\Omega)$.
The {\it support} of a $\delta$-preform is the support of its
underlying supercurrent.

(ii) The $\delta$-preforms  define a subspace $P^{p,q}(\widetilde\Omega)$ in 
$D_{r-p,r-q}(\widetilde\Omega)$.
We put 
\[
P^n(\widetilde\Omega)=\bigoplus_{p+q=n}P^{p,q}(\widetilde\Omega)
\]
and $P(\widetilde\Omega)=\oplus_{n\in \N}P^n(\widetilde\Omega)$.
We denote by $P_c(\widetilde\Omega)$ the subspace of
$P(\widetilde\Omega)$ given by the $\delta$-preforms with compact support.
A $\delta$-preform $\alpha \in P^{p,p}(\widetilde\Omega)$ of type $(p,p)$ is called 
{\it symmetric (resp. anti-symmetric)}, if the underlying supercurrent 
of $\alpha$ is symmetric (resp. anti-symmetric).

(iii) We say that a $\delta$-preform $\alpha$
has {\it codimension $l$}, if it admits a presentation (\ref{tropstandform})
where all the tropical cycles $\KC_i$ are of pure codimension $l$.
The $\delta$-preforms  {of type $(p+l,q+l)$} of codimension $l$ define a subspace
of $D^{p+l,q+l}(\widetilde\Omega)$ which we denote 
by $P^{p,q,l}(\widetilde\Omega)$.
As an immediate consequence of our definitions, we have the direct sum
\[
P^n(\widetilde\Omega)=\bigoplus_{p+q+2l=n}P^{p,q,l}(\widetilde\Omega).
\]
\end{definition}

\begin{ex} \label{preforms of degree 0} 
It follows from Remark \ref{tropical cycles of degree 0} that a 
$\delta$-preform  of codimension $0$ on $\widetilde\Omega$ is the same 
as a superform on $\widetilde\Omega$ with piecewise smooth coefficients.
\end{ex}

\begin{rem}\label{standardform}
Let 
\[
\alpha=\sum_{i\in I}\alpha_i\wedge
\delta_{C_i}\in P^{p,q,l}(\widetilde\Omega)
\]
be a $\delta$-preform as in (\ref{tropstandform}).
Let $\KC$ be a common polyhedral complex of definition for
the tropical cycles $(C_i)_{i\in I}$. 
Then the supercurrent $\alpha$ is polyhedral and $\KC$ is a polyhedral
complex of definition for $\alpha$.
In fact we have $C_i=(\KC,m_i)$ for suitable families of weight functions $m_{i,\Delta}$
on polyhedra $\Delta$ in $\KC_{r-l}$ and define
\[
\alpha_{\Delta}:=\sum_{i\in 
I}m_{i,\Delta}\cdot
(\alpha_i|_{\Delta})\in A_\Delta^{p,q}(\widetilde\Omega\cap |\Delta|).
\]
Then we get
\[
\delta_{C_i}=\sum_{\Delta\in \KC_{r-l}}
m_{i,\Delta}\wedge \delta_{\Delta}
\]
and
\[
\alpha=\sum_{\Delta\in \KC_{r-l}}\alpha_\Delta\wedge
\delta_{\Delta}.
\]
In order to compare  $\delta$-preforms
\[
\alpha=\sum_{i\in I}\alpha_i\wedge
\delta_{{C}_i},\,\,
\beta=\sum_{j\in J}\beta_j\wedge
\delta_{{D}_j}\in P^{p,q,l}(\widetilde\Omega)
\]
presented as in (\ref{tropstandform}),
we choose a common polyhedral complex of definition $\KC$ for 
the finite families  $(C_i)_{i\in I}$ and $(D_j)_{j\in J}$ 
of tropical cycles and obtain
\begin{equation}\label{equalcrit}
\alpha=\beta\,\,\Leftrightarrow
\alpha_{\Delta}=\beta_{\Delta}\,
\forall \Delta\in \KC_{r-l} .
\end{equation}
\end{rem}

\begin{prop}\label{deltaformsbasicprop}
Let $\widetilde\Omega$ denote an open subset of $N_\R$.
Presenting $\delta$-preforms as in (\ref{tropstandform}), we can
perform the following constructions:
\begin{enumerate}
\item[(i)]
We have a canonical $C^\infty(\widetilde{\Omega})$-linear map
\[
A^{p,q}(\widetilde\Omega)\longrightarrow 
P^{p,q,0}(\widetilde\Omega),\,\,\,
\alpha \longmapsto \alpha\wedge \delta_{N_\R}
\]
and a $C^\infty(N_\R)$-linear isomorphism
\[
{\rm TZ}^l{}(N_\R)
\stackrel{\sim}{\longrightarrow}  P^{0,0,l}(N_\R),\,\,\,
C\longmapsto 1\wedge \delta_C.
\]
\item[(ii)]
There are well-defined $C^\infty(\widetilde{\Omega})$-bilinear products
\begin{eqnarray*}
\wedge:P^{p,q,l}(\widetilde\Omega)\otimes_\R P^{p',q',l'}(
\widetilde\Omega)& \longrightarrow &
P^{p+p',q+q',l+l'}(\widetilde\Omega),\\
\Bigl(\sum_{i\in I}\alpha_i\wedge\delta_{{C}_i}\Bigr)\wedge
\Bigl(\sum_{j\in J}\beta_j\wedge\delta_{{D}_j}\Bigr)
&=&\sum_{(i,j)\in I\times J}(\alpha_i\wedge \beta_j)\wedge
\delta_{{C}_i\cdot {D}_j}.
\end{eqnarray*}

\item[(iii)]
An integral $\R$-affine map $F:N'_\R\to N_\R$ induces a natural pull-back 
\[
F^*:P^{p,q,k}(\widetilde\Omega)\longrightarrow 
P^{p,q,k}(\widetilde\Omega'),\,\,\,
\sum_{i\in I}\alpha_i\wedge
\delta_{{C}_i}\longmapsto
\sum_{i\in I}(F^*\alpha_i)\wedge
\delta_{F^*{C}_i}
\]
for any open subset $\widetilde\Omega'$ of $F^{-1}(\widetilde\Omega)$.
\item[(iv)]
The pull-back morphism $F^*$ in (iii)  
satisfies 
\[
F^*(\alpha\wedge \beta)=(F^*\alpha) \wedge (F^*\beta)
\]
for all  $\alpha,\beta\in P(\widetilde\Omega)$.
\end{enumerate}
\end{prop}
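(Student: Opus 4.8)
The plan is to verify, for each of the four items, that the proposed formula is independent of the chosen presentation \eqref{tropstandform} of the $\delta$-preforms involved, and then that the stated algebraic properties ($C^\infty$-linearity, bigrading, functoriality) follow from the corresponding properties of superforms and of the tropical operations from Section \ref{tropinttheo}. For (i), the map $\alpha \mapsto \alpha \wedge \delta_{N_\R}$ is visibly $C^\infty(\widetilde\Omega)$-linear since $\delta_{N_\R}$ is just the trivial tropical cycle of codimension $0$; its image lands in $P^{p,q,0}$ by definition. For the isomorphism ${\rm TZ}^l(N_\R) \xrightarrow{\sim} P^{0,0,l}(N_\R)$, injectivity uses the identification criterion \eqref{equalcrit}: if $1 \wedge \delta_C = 1 \wedge \delta_{C'}$ as supercurrents, then after choosing a common polyhedral complex of definition the weight functions agree on all top-dimensional (here, codimension-$l$) faces, which by Definition \ref{polcomplex}(iii) means $C = C'$ as tropical cycles; surjectivity is immediate from the definition of $P^{0,0,l}$. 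The $C^\infty(N_\R)$-linearity here uses Remark \ref{tropical cycles of degree 0}, identifying multiplication by a piecewise smooth (here, smooth, since defined on all of $N_\R$) function with the module structure; I would be slightly careful to note that $1 \wedge \delta_{fC}$ and $f \wedge \delta_C$ define the same polyhedral supercurrent, which again reduces to comparing weight functions on codimension-$l$ faces via Remark \ref{standardform}.

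The main work is (ii), well-definedness of the wedge product. First I would observe that the right-hand side makes sense as a $\delta$-preform of codimension $l+l'$ because $C_i \cdot D_j$ is a tropical cycle of codimension $n_i + n_j'$ (stable intersection, Remark \ref{intpairing}(i)) and $\alpha_i \wedge \beta_j \in A^{p_i+p_j',q_i+q_j'}(\widetilde\Omega)$, and the bidegree bookkeeping $p_i + n_i = p$, etc., adds up correctly. To check independence of presentations, by bilinearity it suffices to treat the case of a single summand on each side and to show that if $\alpha_1 \wedge \delta_{C_1} = \alpha_2 \wedge \delta_{C_2}$ in $P^{p,q,l}(\widetilde\Omega)$, then $(\alpha_1 \wedge \beta) \wedge \delta_{C_1 \cdot D} = (\alpha_2 \wedge \beta) \wedge \delta_{C_2 \cdot D}$ for any $\beta \wedge \delta_D$. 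I would pass to a common polyhedral complex of definition $\KC$ for $C_1, C_2, D$ and their stable intersections with a generic-displacement vector $v$ (as in \eqref{fandepruleg1}), write $C_i = (\KC, m_i)$, and use Remark \ref{standardform}: the hypothesis gives $\sum_? m_{1,\Delta}\,(\alpha_1|_\Delta) = \sum_? m_{2,\Delta}\,(\alpha_2|_\Delta)$ on each codimension-$l$ face $\Delta$, and the explicit fan displacement formula \eqref{fandepruleg1} for $C_i \cdot D$ expresses its weight on a codimension-$(l+l')$ face $\tau$ as a finite $\R$-linear combination (with coefficients the lattice indices, independent of $i$) of products $m_{i,\sigma_1}(\omega)\,m_{D,\sigma_2}(\omega)$ over pairs with $\tau = \sigma_1 \cap \sigma_2$. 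Multiplying by $\beta|_\tau$ and restricting the superforms appropriately, the equality of the two sides on $\tau$ reduces, face by face, to the assumed equality of weight-times-superform data on the codimension-$l$ faces $\sigma_1 \succ \tau$. This is the delicate combinatorial bookkeeping step and is where I expect the main obstacle: one must check that the restriction of $\alpha_i|_{\sigma_1}$ to the smaller face $\tau$ is exactly what appears, and that the displacement vector and small $\varepsilon$ can be chosen once and for all compatibly with both presentations; the well-definedness of \eqref{fandepruleg1} from Remark \ref{intpairing}(ii) does most of the heavy lifting here. The $C^\infty(\widetilde\Omega)$-bilinearity is then immediate from that of the superform wedge product and $\R$-linearity of stable intersection in the weights.

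For (iii), well-definedness of $F^*$ follows by the same strategy: pull-back of superforms $F^*\colon A^{p_i,q_i}(\widetilde\Omega) \to A^{p_i,q_i}(\widetilde\Omega')$ is standard, pull-back of tropical cycles $F^*\colon {\rm TZ}^{n_i}(N_\R) \to {\rm TZ}^{n_i}(N_\R')$ is Remark \ref{intpairing}(v), and to see that $\alpha_1 \wedge \delta_{C_1} = \alpha_2 \wedge \delta_{C_2}$ implies $(F^*\alpha_1) \wedge \delta_{F^*C_1} = (F^*\alpha_2) \wedge \delta_{F^*C_2}$ I would use \eqref{graph formula} to rewrite $F^*$ as $(p_1)_* \circ (p_2^* \,\cdot\, \Gamma_F)$, thereby reducing to the already-established compatibilities of the wedge/stable-intersection product with the hypothesis plus the corresponding statement for push-forward along $(p_1)_*$ — or, alternatively, argue directly via the explicit pull-back weight formula \eqref{weight function for pull-back}, which again expresses the pulled-back weight on a face $\gamma'$ as an $\R$-linear combination of values $m_\sigma(F(\omega'))$ with coefficients independent of the presentation. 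Finally, (iv), the compatibility $F^*(\alpha \wedge \beta) = F^*\alpha \wedge F^*\beta$, reduces by bilinearity and the definitions in (ii),(iii) to the two identities $F^*(\alpha_i \wedge \beta_j) = F^*\alpha_i \wedge F^*\beta_j$ for superforms (standard) and $F^*(C_i \cdot D_j) = F^*C_i \cdot F^*D_j$ for tropical cycles, which is exactly Proposition \ref{tropintthprop}(i). Assembling these, all four claims follow; functoriality of $F^*$ in $F$, if desired, comes from functoriality of superform pull-back together with functoriality of $F^*$ on ${\rm TZ}^\bullet$ recorded in Remark \ref{intpairing}(v).
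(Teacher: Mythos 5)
Your proposal is correct and follows essentially the same route as the paper: pass to the uniquely determined polyhedral standard representations over a common polyhedral complex of definition (Remark \ref{standardform}, criterion \eqref{equalcrit}), expand the product via the fan displacement rule \eqref{fandepruleg1} and the pull-back via the weight formula \eqref{weight function for pull-back} with coefficients independent of the presentation, and deduce (iv) from Proposition \ref{tropintthprop}(i). One small caution: the claimed reduction in (ii) to a single summand on each side is not literally valid (two presentations of the same $\delta$-preform need not match term by term), but this is harmless because the face-by-face comparison of standard-form coefficients you then carry out applies verbatim to arbitrary finite presentations, which is exactly how the paper argues; likewise, for (iii) the direct argument via \eqref{weight function for pull-back} that you offer as an alternative is the paper's proof and is preferable to the graph-formula route, which would implicitly presuppose a case of (iii) for the projection.
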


\proof
The proof of (i) is straightforward. 
For (ii), we have to show that the definition
\begin{equation} \label{wedge definition}
\alpha \wedge \beta := \sum_{(i,j)\in I\times J}(\alpha_i\wedge \beta_j)\wedge
\delta_{{C}_i\cdot {D}_j}
\end{equation}
is independent of the presentations 
\[
\alpha=\sum_{i\in I}\alpha_i\wedge\delta_{C_i}\in P^{p,q,l}(\widetilde\Omega),\,\,\,
\beta=\sum_{j\in J}\beta_j\wedge\delta_{D_j}\in P^{p',q',l'}(\widetilde\Omega)
\]
given as in  \eqref{tropstandform}. We choose a common 
 polyhedral complex of definition $\KC$ for all tropical cycles $C_i$ and $D_j$. 
Using Remark \ref{standardform}, we represent the $\delta$-preforms as 
polyhedral supercurrents
\begin{equation} \label{standard alpha beta}
\alpha = \sum_{\sigma \in \Ccal^l} \alpha_\sigma \wedge \delta_\sigma, \,\,\, 
\beta = \sum_{\sigma' \in \Ccal^{l'}} \beta_{\sigma'} \wedge \delta_{\sigma'}.
\end{equation} 
We choose a generic vector $v$ and $\ve >0$ as in \ref{intpairing}(ii).
From \eqref{wedge definition} and \eqref{fandepruleg1}, we deduce
\begin{equation}  \label{standard form and wedge}
\alpha\wedge \beta = 
\sum_{\tau \in \KC^{l+l'}} \sum_{\sigma,\sigma'}
[N:N_{\sigma}+N_{\sigma'}] \cdot \alpha_\sigma \wedge \beta_{\sigma'} \wedge \delta_\tau
\end{equation}
where $\sigma, \sigma'$ ranges over all pairs in $
\KC^{l}\times \KC^{l'}$ with $\sigma \cap \sigma' = \tau$ and $\sigma \cap (\sigma'+\ve v) \neq \emptyset$. 
Then (ii) follows from \eqref{standard form and wedge} and from the uniqueness of the representations in \eqref{standard alpha beta}. Bilinearity is obvious.

Similarly we show (iii).
Given a $\delta$-preform $\alpha$ as above, we have to prove that
\begin{equation}\label{deltaformsbasicpropg3}
F^*(\alpha):=\sum_{i\in I}(F^*\alpha_i)\wedge \delta_{F^*C_i}
\end{equation}
is independent of the representation of $\alpha$ in \eqref{standard alpha beta}. There is a complete, integral $\R$-affine polyhedral complex $\KC'$ of $N_\R'$
and a complete, common polyhedral complex of definition $\KC$ for all tropical cycles $C_i$ satisfying the following compatibility property: 
For every $\sigma' \in \KC'$, there is a $\sigma \in \KC$ with $F(\sigma') \subseteq \sigma$. Using the coefficients $m_{\sigma',\sigma}^{\gamma'}$ from \ref{intpairing}(v), 
we deduce from \eqref{deltaformsbasicpropg3} and \eqref{weight function for pull-back} that 
\begin{equation} \label{standard form and pull-back}
F^*(\alpha) =  \sum_{\gamma' \in (\KC')^{l}} \sum_{\sigma',\sigma} m_{\sigma',\sigma}^{\gamma'} \cdot F^*\alpha_\sigma \wedge \delta_{\gamma'}
\end{equation}
where $\sigma',\sigma$ ranges over all pairs in $(\KC')^0 \times \KC^l$ with $\gamma' \subseteq \sigma', F(\gamma') \subseteq \sigma$.
Then (iii) follows from \eqref{standard form and pull-back} and  uniqueness of the representation \eqref{standard alpha beta}.

Note that (iv) is a direct consequence of our definitions.
\qed

\begin{rem}\label{boundaryintdelta2} 
Let $P$ be an integral $\R$-affine polyhedral subset in $N_\R$
of dimension $n$.
Let $C=(\KC,m)$ be a tropical cycle with $|C|=P$ or $|\KC|=P$ and 
$\alpha\in P_c^{\cdot}(N_\R)$. Observe that
$\int_P \alpha$ is in general different from 
$\int_{N_\R}\alpha\wedge \delta_C$ as the latter integral takes the multiplicities of $C$ into account. 
\end{rem}

\begin{prop}[Projection formula] \label{projection formula for preforms}
Let $F:N_\R' \rightarrow N_\R$ be an integral $\R$-affine map
and $C'$ a tropical cycle of dimension $n$ on $N_\R'$. 
Let $P$ be an integral $\R$-affine polyhedral subset and $\widetilde\Omega$ an open subset of $N_\R$ with $P \subseteq \widetilde{\Omega}$.
Let $\alpha \in P(\widetilde\Omega)$ be a
$\delta$-preform such that 
$\supp(F^*(\alpha) \wedge \delta_{C'}) \cap F^{-1}(P)$ is compact. 
Then $\supp(\alpha \wedge \delta_{F_*(C')}) \cap P$ is compact. If $\alpha \in P^{n,n}(\widetilde\Omega)$, then  we have 
\begin{equation}\label{projection formula for preformsg1}
\int_{P} \alpha \wedge \delta_{F_*(C')} 
=\int_{F^{-1}(P)} F^*(\alpha) \wedge \delta_{C'},
\end{equation}
If  $\alpha \in P^{n-1,n}(\widetilde\Omega)$, 
then we have 
\begin{equation}\label{projection formula for preformsg2}
\int_{\partial P} \alpha \wedge \delta_{F_*(C')} 
=\int_{\partial F^{-1}(P)} F^*(\alpha) \wedge \delta_{C'}.
\end{equation}
\end{prop}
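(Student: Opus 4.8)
The plan is to reduce the statement to the corresponding facts for superforms by writing everything in terms of a sufficiently fine common polyhedral complex of definition, and then to invoke the ordinary projection formula for integrals of superforms over polyhedra together with the compatibility of the stable tropical operations with pushforward and pullback. First I would set up the bookkeeping: choose a polyhedral complex $\KC'$ on $N_\R'$ that is simultaneously a complex of definition for $C'$ and for the $\delta$-preform $F^\ast(\alpha)$, refined enough (as in \eqref{intpairingg1}) that $F_\ast \KC'$ is a polyhedral complex on $N_\R$, refining the latter so that it is a complex of definition for $\alpha$ and for $F_\ast(C')$, and then pulling back again so that $\KC'$ contains a subcomplex with support $F^{-1}(P)$ and $F_\ast\KC'$ contains one with support $P$. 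Using Remark~\ref{standardform}, the preform $\alpha$ restricted to the support of $F_\ast(C')$ becomes a sum $\sum_{\Delta} \alpha_\Delta \wedge \delta_\Delta$ over the top-dimensional $\Delta$, and the definitions of $F_\ast$ on tropical cycles \eqref{intpairingg2}, of pullback of preforms (Proposition~\ref{deltaformsbasicprop}(iii)), and of the integral $\int_P$ in Definition~\ref{intdelta} are all simultaneously available on this fixed combinatorial data.

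The core computation is then: for a single top face $\nu$ of $F_\ast\KC'$ with the cells $\sigma'$ of $\KC'$ mapping injectively onto it, the multiplicity of $F_\ast(C')$ on $\nu$ is $\sum_{\sigma'} [N_\nu : \L_F(N'_{\sigma'})]\, m'_{\sigma'}\circ(F|_{\sigma'})^{-1}$, while the superform coefficient of $F^\ast(\alpha)$ on each such $\sigma'$ is $F^\ast(\alpha_\nu)$ times the corresponding weight. The change-of-variables formula for integration of superforms under the integral affine isomorphism $F|_{\sigma'}:\sigma'\xrightarrow{\sim}\nu$ — which is exactly the statement that $\int_{\sigma'} F^\ast(\beta) = [N_\nu:\L_F(N'_{\sigma'})]^{-1}\int_\nu \beta$ for a top-degree superform $\beta$, i.e.\ the calibration transforms by the lattice index, as in \cite[\S1.5]{chambert-loir-ducros} / \cite[\S3]{gubler-forms} — makes the index factors cancel, so that summing over all $\sigma'$ mapping to $\nu$ and then over all $\nu$ gives \eqref{projection formula for preformsg1}. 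For \eqref{projection formula for preformsg2} one runs the identical argument face-by-face on the boundary polyhedra, using that $F$ carries $\partial F^{-1}(P)$ onto $\partial P$ compatibly with the induced complexes and using the boundary version of the superform change-of-variables formula from \cite[1.5]{chambert-loir-ducros}, \cite[2.6]{gubler-forms}. Compactness of $\supp(\alpha\wedge\delta_{F_\ast(C')})\cap P$ follows because this support is contained in $F\bigl(\supp(F^\ast(\alpha)\wedge\delta_{C'})\cap F^{-1}(P)\bigr)$, the continuous image of a compact set, so both integrals are finite sums of finite integrals and the manipulations are legitimate.

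The step I expect to be the main obstacle is verifying that one can genuinely choose a \emph{single} polyhedral complex $\KC'$ on $N_\R'$ that simultaneously (a) is a complex of definition for $C'$, (b) is a complex of definition for the preform $F^\ast(\alpha)$ in the sense of Remark~\ref{standardform} — which itself requires the compatibility ``for every $\sigma'\in\KC'$ there is $\sigma$ with $F(\sigma')\subseteq\sigma$'' used in the proof of Proposition~\ref{deltaformsbasic prop}(iii) — (c) has $F_\ast\KC'$ a genuine polyhedral complex containing a subcomplex supporting $P$, and (d) has $\KC'$ itself containing a subcomplex supporting $F^{-1}(P)$, all at once; and then checking that on this common data the three \emph{definitions} being compared (pushforward of tropical cycles, pullback of preforms, and the integral over a polyhedral subset) are the ones actually used, rather than merely equivalent after further refinement. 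This is a compatibility-of-refinements argument of the kind already carried out in the paper, so I would phrase it by first establishing \eqref{projection formula for preformsg1} when all objects are given with respect to such a common $\KC'$, then noting that both sides of the claimed identity are independent of the chosen complex of definition (by Remark~\ref{boundaryintdelta}(i) and the well-definedness statements in Proposition~\ref{deltaformsbasicprop}), which reduces the general case to this one. The actual superform change-of-variables and the index cancellation are then routine.
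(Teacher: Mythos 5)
Your overall strategy (common complexes of definition, coefficient comparison, transformation formula for superform integrals, boundary variant for the second identity) is the same as the paper's, but as written your core computation only covers the case where $\alpha$ has codimension $0$, i.e.\ is a superform with piecewise smooth coefficients. For a general $\delta$-preform $\alpha=\sum_i\alpha_i\wedge\delta_{C_i}$ with the $C_i$ of positive codimension, the current $\alpha\wedge\delta_{F_*(C')}$ is built from the stable intersections $C_i\cdot F_*(C')$ and $F^*(\alpha)\wedge\delta_{C'}$ from $F^*C_i\cdot C'$; the relevant cells then have positive codimension in the cycles, the coefficient of $F^*(\alpha)$ on such a cell is given by \eqref{weight function for pull-back} and \eqref{standard form and pull-back} and is \emph{not} simply ``$F^*(\alpha_\nu)$ times the corresponding weight'', and what identifies the two sides on a common pair of complexes is precisely the tropical projection formula $F_*\bigl(F^*C_i\cdot C'\bigr)=C_i\cdot F_*(C')$ of Proposition \ref{tropintthprop}(ii). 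You allude to ``compatibility of the stable tropical operations with pushforward and pullback'' in your plan, but the computation never invokes it, and without it the comparison of $\alpha\wedge\delta_{F_*(C')}$ with $F^*(\alpha)\wedge\delta_{C'}$ does not get started; this is the heart of the paper's proof (rewrite both sides via Proposition \ref{tropintthprop}(ii), then compare the coefficients $\alpha_\sigma$ and $\alpha_{\sigma'}$ using \eqref{intpairingg2}). The compactness assertion likewise rests on that coefficient identity rather than on a purely support-theoretic containment, since cancellation among the summands is not excluded a priori.

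There is also a concrete error in the analytic ingredient: the change-of-variables formula is stated backwards. For a top-degree $(n,n)$-superform $\beta$ and the bijection $F|_{\sigma'}:\sigma'\to\nu$, with calibrations normalized by the lattices $N'_{\sigma'}$ and $N_\nu$, one has $\int_{\sigma'}F^*\beta=[N_\nu:\L_F(N'_{\sigma'})]\int_\nu\beta$ (the coefficient of a type $(n,n)$ form pulls back with the square of the determinant, while the normalized measures account for only one factor of the index), not the inverse. Consequently the index factors do not cancel: the index produced by the transformation formula on the $F^{-1}(P)$-side is exactly what reproduces the index $[N_\nu:\L_F(N'_{\sigma'})]$ appearing in the pushforward weights \eqref{intpairingg2} on the $P$-side. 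With the formula as you state it, your face-by-face identity would be off by the square of the index on every cell where that index exceeds $1$, so the ``cancellation'' step does not yield \eqref{projection formula for preformsg1}. The repair is to quote the transformation formula of \cite[1.5.8]{chambert-loir-ducros} or \cite[Proposition 3.10]{gubler-forms} in its correct form and match the indices rather than cancel them; the same correction applies to the boundary case \eqref{projection formula for preformsg2}.
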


\proof
We consider first the case where $\alpha\in P^{n,n}(\widetilde\Omega)$.
We may assume without loss of generality that 
$\alpha\in P^{p,p,l}(\widetilde\Omega)$ where $n=p+l$.
We write
\[
\alpha=\sum_{i\in I}\alpha_i\wedge
\delta_{C_i}
\]
for suitable $\alpha_i\in A^{p,p}(\widetilde\Omega)$ and $C_i\in {\rm TZ}^l(N_\R)$ 
as in (\ref{tropstandform}). We get
\begin{eqnarray*}
\alpha \wedge \delta_{F_*(C')}
&=&\sum_{i\in I}\alpha_i \wedge \delta_{F_*(F^*C_i\cdot C')},\\
F^*(\alpha) \wedge \delta_{C'}
&=&\sum_{i\in I}F^*(\alpha_i) \wedge \delta_{F^*C_i\cdot C'}
\end{eqnarray*}
by the projection formula \ref{tropintthprop}(ii).
We choose common polyhedral complexes of definition $\KC'$ in $N'_\R$ for
$C'$ and $F^*C_i$ for all $i\in I$ and $\KC$ in $N_\R$ for
$F_*C'$ and $C_i$ for all $i\in I$. 
We may assume that $F_*(\KC')$ is a subcomplex of $\KC$. 
After further refinements we can find polyhedral subcomplexes 
$\KD'$ of $\KC'$ with support $F^{-1}(P)$ and $\KD$ of $\KC$ with support $P$.
Then  $F_*\KD'$ is a subcomplex of $\KD$ and we write
\begin{eqnarray}
\label{zwischenformel1}
\sum_{i\in I}\alpha_i \wedge \delta_{F_*(F^*C_i\cdot C')}
&=&\sum_{\sigma\in \KC_{p}}\alpha_\sigma \wedge \delta_{\sigma},\\
\label{zwischenformel2}
\sum_{i\in I}F^*(\alpha_i) \wedge \delta_{F^*C_i\cdot C'}
&=&\sum_{\sigma'\in \KC'_{p}}\alpha_{\sigma'} 
\wedge \delta_{\sigma'}.
\end{eqnarray}
Consider $\sigma\in \KC_{p}$. 
Given $\sigma'\in \KC'_p$ with $F(\sigma')=\sigma$ there 
is a unique form $\tilde\alpha_{\sigma'}\in A_\sigma(\sigma)$ such that 
$F^*(\tilde\alpha_{\sigma'})=\alpha_{\sigma'}$ in $A_{\sigma'}(\sigma')$.
From (\ref{intpairingg2}), 
(\ref{zwischenformel1}) and (\ref{zwischenformel2}) we get 
\begin{equation} \label{a-formula}
\alpha_\sigma=
\sum\limits_{
\genfrac{}{}{0pt}{}{\sigma' \in \KC'_{p}}{F(\sigma')=\sigma}}
\bigl[N_\sigma:\L_F(N'_{\sigma'})\bigr]\cdot\tilde\alpha_{\sigma'}.
\end{equation}
and $\alpha_{\sigma'}=0$ for all $\sigma'\in \KC'_p$ with ${\rm dim}\,F(\sigma')<p$. 
If $\sigma \in \KD_p$, which means $\sigma \subseteq P$, then only $\sigma' \in \KD_p'$ contribute to 
the sum in \eqref{a-formula}. Since the latter is equivalent to $\sigma' \subseteq F^{-1}(P)$, 
we deduce from \eqref{a-formula} and compactness of $\supp(F^*(\alpha) \wedge \delta_{C'}) \cap F^{-1}(P)$
that $\supp(\alpha \wedge \delta_{F_*(C')}) \cap P$ is compact. 
The above formulas show that 
\begin{equation*}
\int_{P}\alpha\wedge\delta_{F_*(C')}
=\sum_{\sigma \in \KD_{p}}\int_{\sigma}\alpha_\sigma
=\sum_{\sigma \in \KD_{p}}\sum\limits_{
\genfrac{}{}{0pt}{}{\sigma' \in \KC'_{p}}{F(\sigma')=\sigma}}
\bigl[N_\sigma:\L_F(N'_{\sigma'})\bigr]\int_\sigma\tilde\alpha_{\sigma'}
\end{equation*}
and hence the transformation formula of integration theory (see \cite[1.5.8]{chambert-loir-ducros}, \cite[Proposition 3.10]{gubler-forms}) gives 
\begin{equation*}
  \int_{P}\alpha\wedge\delta_{F_*(C')} =\sum_{\sigma \in \KD_{p}}\sum\limits_{
\genfrac{}{}{0pt}{}{\sigma' \in \KC'_{p}}{F(\sigma')=\sigma}}
\int_{\sigma'}\alpha_{\sigma'}
=\sum_{\sigma' \in \KD'_{p}}\int_{\sigma'}\alpha_{\sigma'}
=\int_{F^{-1}(P)}F^*(\alpha) \wedge \delta_{C'}.
\end{equation*}
This proves (\ref{projection formula for preformsg1}). 
Formula (\ref{projection formula for preformsg2}) is proved exactly 
in the same way using the transformation formula for boundary integrals in \cite[1.5.8]{chambert-loir-ducros}.
\qed

\begin{art}
Given a tropical cycle $C=(\KC,m)$ with constant weight functions, it follows
from Stokes' theorem that the supercurrent $\delta_C$ is closed under
$d'$ and $d''$ \cite[Prop. 3.8]{gubler-forms}.
The following Proposition shows that this is is no longer true for tropical 
cycles with smooth weights.
\end{art}

\begin{prop}\label{diffsmoothtropcyc}
Let $C=(\KC,m)$ be a tropical cycle with smooth weights
of pure dimension $n$ in $N_\R$.
Then we have 
\begin{eqnarray*}
d'\delta_C&=&d'm\wedge \delta_\KC,\\
d''\delta_C&=&d''m\wedge \delta_\KC
\end{eqnarray*} 
in $D_\cdot(N_\R)$ where the polyhedral supercurrent $d'm\wedge \delta_\KC$ is 
defined by
\[
\langle d'm\wedge \delta_\KC,\alpha\rangle =
\sum_{\sigma\in \KC_n}\int_\sigma d'm_\sigma\wedge \alpha
\]
and the supercurrent $d''m\wedge \delta_\KC$ is defined in an analog way.
\end{prop}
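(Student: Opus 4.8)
The plan is to reduce the statement to a local computation on each maximal polyhedron of $\KC$ and then apply the ordinary Stokes/Leibniz formula for superforms on a polyhedron. By linearity in the polyhedral complex of definition, it suffices to test the claimed identity against a compactly supported superform $\alpha \in A_c^{n-1,n}(N_\R)$ (for $d'$) or $\alpha \in A_c^{n,n-1}(N_\R)$ (for $d''$), and to verify it summand by summand over $\sigma \in \KC_n$. So first I would unwind the definition of $d'\delta_C$ as a supercurrent: $\langle d'\delta_C, \alpha\rangle = \pm \langle \delta_C, d'\alpha\rangle = \pm\sum_{\sigma\in\KC_n}\int_\sigma m_\sigma\, d'\alpha$, with the sign the standard Koszul sign coming from Lagerberg's conventions for the dual operator on currents.

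Next I would fix a maximal polyhedron $\sigma$ and rewrite $\int_\sigma m_\sigma\, d'\alpha$ using the Leibniz rule $d'(m_\sigma\alpha) = d'm_\sigma \wedge \alpha + m_\sigma\, d'\alpha$, so that $\int_\sigma m_\sigma\, d'\alpha = \int_\sigma d'(m_\sigma\alpha) - \int_\sigma d'm_\sigma\wedge\alpha$. By Stokes' formula for superforms on a single polyhedron (the version cited in the excerpt, \cite[(1.5.7)]{chambert-loir-ducros} or \cite[2.9]{gubler-forms}), $\int_\sigma d'(m_\sigma\alpha) = \int_{\partial\sigma} m_\sigma\alpha$. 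Summing over $\sigma\in\KC_n$, the boundary terms are exactly $\sum_\sigma \int_{\partial\sigma} m_\sigma\alpha$; the point is that these cancel. Along a codimension-one face $\tau \prec \sigma$ shared by the maximal polyhedra $\sigma \succ \tau$, the contributions combine into the pairing of $\alpha$ against the vector $\sum_{\sigma\succ\tau} m_\sigma(\omega)\,\omega_{\sigma,\tau}$, which lies in $\L_\tau$ by the balancing condition \eqref{balancingcondition}; since $\alpha$, being a $(n-1,n)$-superform, annihilates the $(n-1)$-dimensional calibration of $\tau$ paired with a tangent vector already inside $\L_\tau$, each such face contributes zero. (Here one must be slightly careful: the balancing condition is pointwise in $\omega\in\tau$, so this cancellation happens pointwise along $\tau$, which is exactly what is needed since $m_\sigma$ is now a function, not a constant.) Faces of $\KC_n$ lying on the topological boundary $\partial|\KC|$ do not occur because $|C| = N_\R$ has no boundary; alternatively, one works with $\alpha$ of compact support and a polyhedral complex of definition that is complete, so every codimension-one face is interior.

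What remains after the boundary terms vanish is $\langle d'\delta_C,\alpha\rangle = \mp\sum_{\sigma\in\KC_n}\int_\sigma d'm_\sigma \wedge \alpha$, and comparing with the stated definition of $d'm\wedge\delta_\KC$ — namely $\langle d'm\wedge\delta_\KC,\alpha\rangle = \sum_\sigma \int_\sigma d'm_\sigma\wedge\alpha$ — gives the identity, once one checks that the sign bookkeeping is consistent (the sign absorbed into the definition of $d'$ on currents is precisely compensated by the sign in moving $d'm_\sigma$ past $m_\sigma$ in the Leibniz rule, which is why the formula comes out with a plain $+$). The argument for $d''$ is verbatim the same with $d'$ replaced by $d''$ throughout and the parity of the test form adjusted to $(n,n-1)$.

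The main obstacle I anticipate is not conceptual but bookkeeping: getting the Koszul/Lagerberg signs exactly right in the definition $\langle d'T,\alpha\rangle = (-1)^{\deg}\langle T, d'\alpha\rangle$ and in the wedge $d'm_\sigma\wedge(\text{rest})$, so that the final formula genuinely has the coefficient $+1$ as stated rather than $-1$. A secondary subtlety worth spelling out is that the vanishing of the codimension-one boundary contributions really does require the full force of the pointwise balancing condition \eqref{balancingcondition} for smooth weights, and that on a face $\tau$ the relevant wedge $d'm_\tau \wedge \delta_\tau$-type terms do \emph{not} appear — only the $\delta_\KC$ term survives — which is the precise sense in which $\delta_C$ fails to be $d'$-closed exactly by the amount $d'm\wedge\delta_\KC$.
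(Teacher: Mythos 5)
Your proposal is correct and follows exactly the paper's (very terse) proof, which simply invokes Stokes' formula for superforms on polyhedra together with the pointwise balancing condition \eqref{balancingcondition}; your Leibniz--Stokes computation on each $\sigma\in\KC_n$, the cancellation of the codimension-one boundary contributions via balancing, and the sign check are precisely the details the paper leaves implicit. The only cosmetic slip is the remark that $|C|=N_\R$, which is not needed: balancing is imposed at every $\tau\in\KC_{n-1}$, so faces adjacent to a single maximal polyhedron force the weight to vanish there and the boundary terms cancel in all cases.
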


\proof
This is a direct consequence of Stokes' formula for superforms
on polyhedra 
\cite[(1.5.7)]{chambert-loir-ducros}, \cite[2.9]{gubler-forms}
and the balancing condition (\ref{balancingcondition}).
\qed

\begin{rem} \label{preforms are not d-invariant}
It follows from \ref{diffsmoothtropcyc} that the subspace $P^\cdot(N_\R)$
of $D^\cdot(N_\R)$ of $\delta$-preforms is not closed under the
differentials $d'$ and $d''$ in the sense of supercurrents.
We will address this problem again in  {\ref{differentiation and closedness}.} 
\end{rem}

\begin{prop}\label{polyhedralequalcurrent}
Let $\widetilde\Omega$ denote an open subset of $N_\R$. 
Then we have
\[
d'(\beta)=\dpa(\beta),\,\,\,d''(\beta)=\dpb(\beta)
\]
for all $\delta$-preforms $\beta\in P(\widetilde\Omega)$.
\end{prop}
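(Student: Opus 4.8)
The plan is to reduce by linearity to a single generator $\beta=\alpha\wedge\delta_C$ of the space of $\delta$-preforms, compute the supercurrent derivative $d'\beta$ via the Leibniz rule together with the explicit formula for $d'\delta_C$ from Proposition \ref{diffsmoothtropcyc}, and then identify the outcome with the polyhedral derivative $\dpa(\beta)$ of Definition \ref{polyhedral current}. Both operators $\beta\mapsto d'(\beta)$ (defined since every $\delta$-preform is in particular a supercurrent) and $\beta\mapsto\dpa(\beta)$ (defined since every $\delta$-preform is polyhedral by Remark \ref{standardform}, and independent of the chosen polyhedral complex of definition) are $\R$-linear. Hence, using the direct sum decomposition $P^n(\tilde\Omega)=\bigoplus_{p+q+2l=n}P^{p,q,l}(\tilde\Omega)$ and writing a general element of $P^{p,q,l}(\tilde\Omega)$ as a finite sum as in \eqref{tropstandform}, it suffices by additivity to prove $d'(\beta)=\dpa(\beta)$ for $\beta=\alpha\wedge\delta_C$ with $\alpha\in A^{p',q'}(\tilde\Omega)$ and a tropical cycle $C=(\KC,m)$ of pure dimension $n=r-l$.

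For such $\beta$ I would apply the Leibniz rule for the wedge product of a superform with a supercurrent, which follows by duality from the corresponding rule for superforms; since $\delta_C\in D_{n,n}(\tilde\Omega)$ has even bidegree this gives $d'(\alpha\wedge\delta_C)=d'\alpha\wedge\delta_C+\alpha\wedge d'\delta_C$, and likewise for $d''$. By Proposition \ref{diffsmoothtropcyc} we have $d'\delta_C=d'm\wedge\delta_\KC=\sum_{\Delta\in\KC_n}d'm_\Delta\wedge\delta_\Delta$. Combining this with the description of $d'\alpha\wedge\delta_C$ in Remark \ref{superform wedge tropical cycle} and unwinding the definition of the wedge product with the polyhedral supercurrents $d'm_\Delta\wedge\delta_\Delta$ from \ref{formsaffinespace}, one obtains
\[
d'(\alpha\wedge\delta_C)=\sum_{\Delta\in\KC_n}\bigl(d'm_\Delta\wedge\alpha|_\Delta+m_\Delta\cdot(d'\alpha)|_\Delta\bigr)\wedge\delta_\Delta=\sum_{\Delta\in\KC_n}d'\bigl(m_\Delta\cdot\alpha|_\Delta\bigr)\wedge\delta_\Delta,
\]
where the last equality is the ordinary Leibniz rule for superforms on each polyhedron $\Delta$.

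To finish, by Remark \ref{standardform} the complex $\KC$ is a polyhedral complex of definition for $\beta=\alpha\wedge\delta_C$ with associated forms $\beta_\Delta=m_\Delta\cdot\alpha|_\Delta$ for $\Delta\in\KC_n$; hence the right-hand side of the displayed equation equals $\sum_{\Delta\in\KC_n}d'(\beta_\Delta)\wedge\delta_\Delta=\dpa(\beta)$ by Definition \ref{polyhedral current}, and the identity $d''(\beta)=\dpb(\beta)$ follows in exactly the same way. I expect the only genuinely delicate point to be the sign bookkeeping in the Leibniz rule for a superform wedged with a supercurrent, together with verifying that wedging $\alpha$ into the polyhedral supercurrent $d'\delta_C$ is computed face by face as claimed; the subtler cancellation governed by the balancing condition has already been absorbed into Proposition \ref{diffsmoothtropcyc}, so apart from these sign issues the statement is a formal consequence of it.
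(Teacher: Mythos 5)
Your proposal is correct and follows essentially the same route as the paper: reduce by linearity to $\beta=\alpha\wedge\delta_C$, differentiate using the Leibniz rule for a superform wedged with a supercurrent, insert Proposition \ref{diffsmoothtropcyc}, and regroup face by face via the ordinary Leibniz rule on each $\Delta\in\KC_n$. One caveat: your justification of the sign-free Leibniz identity (``since $\delta_C$ has even bidegree'') is not the right reason — the rule \eqref{leibnizforcurrents} reads $d'(\alpha\wedge T)=d'\alpha\wedge T+(-1)^{p+q}\alpha\wedge d'T$, with the sign governed by the degree of the superform $\alpha$, and the paper keeps this $(-1)^{p+q}$, which then cancels when commuting $\alpha$ past the $(1,0)$-forms $d'm_\sigma$ to put them in the order $d'm_\sigma\wedge\alpha|_\sigma$. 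Your final displayed formula is nevertheless correct because these two omitted signs cancel; you should just record them explicitly rather than appeal to the bidegree of $\delta_C$.
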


\proof
It is sufficient to treat the case $\beta=\alpha\wedge\delta_C$ for a superform
$\alpha\in A^{p,q}(\widetilde\Omega)$ and a tropical cycle $C=(\KC,m)$ of
pure dimension $n$ on $N_\R$. 
We have
\[
\beta=\sum_{\sigma\in \KC_n}(m_\sigma\cdot \alpha|_\sigma)\wedge \delta_\sigma.
\]
From Proposition \ref{diffsmoothtropcyc} we get
\begin{equation*}
d'\beta
=d'\alpha\wedge \delta_C+(-1)^{p+q}\alpha\wedge d'm\wedge \delta_\KC
=\sum_{\sigma\in \KC_n}\bigl(m_\sigma\cdot d'\alpha|_\sigma +d'm_\sigma\wedge \alpha|_\sigma\bigr)\wedge \delta_\sigma.
\end{equation*}
Then Leibniz's rule shows 
\begin{equation*}
d'\beta
=\sum_{\sigma\in \KC_n}d'(m_\sigma\cdot \alpha|_\sigma)
\wedge \delta_\sigma
=\dpa(\beta)
\end{equation*}
which proves the first equality.
The second claim is proved similarly. 
\qed

\section{Supercurrents and delta-preforms on tropical cycles}
\label{Currents and delta-preforms on tropical cycles}

In this section, we introduce supercurrents and $\delta$-preforms on a given 
tropical cycle $C=(\KC,m)$ of pure dimension $n$  with {\it constant weight functions}.  
Similarly to complex manifolds, such tropical cycles have no boundary as $d'\delta_C=d''\delta_C=0$. 
In the applications, $C$ will be the tropical variety of a closed subvariety of a multiplicative torus. 
We build up on the  results in Section \ref{algebradeltaforms}. 
We will obtain the formulas of Stokes and Green. The main result is the tropical Poincar\'e--Lelong 
equation which will be  used in Section \ref{piecewise smooth forms and delta-metrics} 
for the first Chern $\delta$-current of a metrized line bundle.

\begin{art}\label{formspoyhedra} 
The space $A_\KC^{p,q}(\Omega)$ of $(p,q)$-superforms
on an open subset $\Omega$ in $|\KC|$ is defined  {as follows.
We choose an open subset $\widetilde\Omega$ of $N_\R$ such that
$\Omega=\widetilde\Omega\cap |\KC|$. Elements in  $A_\KC^{p,q}(\Omega)$ are represented by elements
in $A^{p,q}(\widetilde\Omega)$ where two such elements are identified if
they induce the same element in $A_\Delta^{p,q}({\Omega}\cap\Delta)$ 
(see \ref{formsaffinespace}) for all maximal polyhedra $\Delta$ in $\KC$.
A partition of unity argument shows that this definition is independent of the 
choice of $\tilde\Omega$.} 
Observe  {furthermore} that $A_\KC^{p,q}(\Omega)$ depends only on the support
$|\KC|$ of $\KC$.
We will often omit the polyhedral complex $\KC$ from our notation
and write simply $A^{p,q}(\Omega)$ instead of
$A_\KC^{p,q}(\Omega)$ when $\KC$ or at least $|\KC|$ is clear from the context.
The spaces $A^{p,q}(\Omega)$ define a sheaf on $|\KC|$. 
Hence the support of a superform in $A^{p,q}(\Omega)$ is defined as
a closed subset of $\Omega$. We denote by 
$A_{c}^{p,q}(\Omega)$ the space of superforms 
on $\Omega$ with compact support. 
\end{art}

\begin{definition}\label{currentspoylhedra} 
We define the {\it space of supercurrents} 
$D_{p,q}^\KC(\Omega)$ of
type $(p,q)$ on an open subset 
$\Omega$ in $|\KC|$ as follows. 
An element in $D_{p,q}^\KC(\Omega)$ is given by
a linear form
$T\in {\rm Hom}_\R(A^{p,q}_{c}(\Omega),\R)$
such that we can find an open set 
$\widetilde\Omega$ of $N_\R$ and a supercurrent 
$T' \in D_{p,q}(\widetilde\Omega)$ such that
$\Omega=\widetilde\Omega\cap |\KC |$ and
$T(\eta|_\Omega)=T'(\eta)$
for all $\eta\in A^{p,q}_c(\widetilde\Omega)$.
As in \ref{formspoyhedra} we often omit $\KC$ from  the notation 
and write $D_{p,q}(\Omega)$ instead of $D_{p,q}^\KC(\Omega)$. We also use the grading $D^{p,q}(\Omega):=D_{n-p,n-q}(\Omega)$. 
\end{definition}

\begin{rem} \label{currents and extension}
In the situation of \ref{currentspoylhedra} we fix an open subset 
$\widetilde\Omega$ of $N_\R$ with 
$\Omega = \widetilde\Omega \cap |\KC|$. 
 {It follows from a partition of unity argument that in the definition of 
$D_{p,q}(\Omega)$ we may use this $\widetilde\Omega$.
We may identify $D_{p,q}(\Omega)$ with a subspace of $D_{p,q}(\tilde\Omega)$ using 
the canonical map $T \mapsto {T'}$. Indeed, this map is well-defined and injective 
using that $T(\eta|_\Omega)=T'(\eta)$ holds
for all $\eta\in A^{p,q}_c(\widetilde\Omega)$.} 
Furthermore the differentials $d'$ and $d''$ on $D(\widetilde\Omega)$ induce 
 {well-defined} 
differentials $d'$ and $d''$ on $D(\Omega)$.

A polyhedral supercurrent $\alpha'$  on $\widetilde\Omega$ is in $D(\Omega)$ if and only 
if $\supp(\alpha')$ is contained in $\Omega$. The corresponding element $\alpha$ in $D(\Omega)$ 
is called a {\it polyhedral supercurrent on $\Omega$}. Using \ref{polyhedral current}, 
the polyhedral derivatives $\dpa \alpha$ and $\dpb \alpha$ are again polyhedral supercurrents 
on $\Omega$. Definition \ref{intdelta} yields  integrals $\int_P \alpha = \int_P \alpha'$ and boundary integrals 
$\int_{\partial P} \alpha =\int_{\partial P} \alpha'$ of polyhedral supercurrents $\alpha$ in $D_0(\Omega)$ (resp. $D_1(\Omega)$) over an integral 
$\R$-affine polyhedral subset $P$ of $\Omega$ provided that $\supp(\alpha) \cap P$ is compact.
\end{rem}

\begin{definition} \label{definitionpreformaufOmega}
Let $\Omega$ be an open 
subset of $|\KC|$ and consider an open subset 
$\widetilde\Omega$ of $N_\R$ 
with $\Omega=\widetilde\Omega\cap |\KC|$. For any 
 {$\delta$-preform}
$\tilde\alpha \in P(\widetilde\Omega)$  {on $\widetilde\Omega$}, the supercurrent 
$\tilde\alpha \wedge \delta_{C}$  on $\widetilde\Omega$ l
ies in the subspace 
$D(\Omega)$ of $D(\widetilde\Omega)$. 
We will denote the corresponding element 
in $D(\Omega)$ by $\tilde\alpha|_\Omega$. 
A supercurrent $\alpha \in D(\Omega)$ is called a {\it $\delta$-preform on $\Omega$} if there 
is an open subset $\widetilde\Omega$ of $N_\R$ with 
$\Omega=\widetilde\Omega\cap |\KC|$ and a
$\tilde\alpha \in P(\widetilde\Omega)$ with 
$\alpha =\tilde\alpha\wedge\delta_C$. 
The space of $\delta$-preforms on $\Omega$ is denoted by 
$P(\Omega)$ and the subspace of compactly supported $\delta$-
preforms is denoted by $P_c(\Omega)$. 
Note that these spaces depend also on the weights $m$ of 
the tropical cycle $C=(\KC,m)$ and not only on the open subset 
$\Omega$ of $|\KC|$. 
\end{definition}

\begin{rem}  \label{preform properties}
(i) A partition of unity argument again shows that $P(\Omega)$ is the 
image of the natural morphism
\[
P(\widetilde\Omega)\longrightarrow D(\Omega),\,\,\,
\tilde\alpha\longmapsto \tilde\alpha|_\Omega:=\tilde\alpha\wedge \delta_C
\]
for any open subset $\widetilde\Omega$ of $N_\R$ with 
$\Omega=\widetilde\Omega\cap |\KC|$.
We give $P(\Omega)$ the unique structure as a bigraded algebra such that the surjective map $P(\widetilde\Omega) \to P(\Omega)$ is a homomorphism of bigraded algebras. Similarly, we define the grading by codimension on $P(\Omega)$. For $\delta$-preforms $\alpha=\tilde\alpha\wedge \delta_C$ and
$\alpha'=\tilde\alpha'\wedge \delta_C$ on $\Omega$,  their
product is given by the formula
\[
\alpha\wedge\alpha'=\tilde\alpha\wedge\tilde\alpha'\wedge\delta_C.
\]

(ii) By Remarks \ref{standardform} and \ref{currents and extension}, $\alpha = \tilde\alpha\wedge \delta_C \in P(\Omega)$ is a polyhedral supercurrent on $\Omega$. After 
possibly passing to a subdivision of $\KC$, we have
$$\alpha=\sum_{\Delta\in \KC}\alpha_\Delta\wedge
\delta_{\Delta} \in D(\Omega)$$
with  $\alpha_\Delta \in A_\Delta(\Omega \cap \Delta)$. It follows from Proposition \ref{polyhedralequalcurrent} that 
\begin{equation} \label{polyhedral equals current}
\dpa \alpha = d'\alpha \quad \mbox{\rm and} \quad \dpb \alpha = d'' \alpha,
\end{equation}
where on the left hand sides we use the polyhedral derivative introduced in \ref{polyhedral current} and where on the right hand sides we use the derivative of currents in $D(\Omega)$.

(iii) Now we assume that $\alpha \in P^{n,n}(\Omega)$  and that $P$ is an integral $\R$-affine polyhedral subset of $\Omega$ such that 
$\supp(\alpha) \cap P$ is compact. By passing again to a subdivision, we may assume that $\KC$ has a subcomplex $\KD$ with $|\KD|=P$. 
 {Using the definition of the} integral of polyhedral 
supercurrents on $\Omega$ in Remark \ref{currents and extension}   and a decomposition of $\alpha$ as above {, \eqref{intdeltag2} gives}
\[
\int_P \alpha = \sum_{\Delta \in \KD} \int_\Delta \alpha_\Delta
\]
A similar formula holds for the  boundary integral $\int_{\partial P} \alpha$ for $\alpha \in P^{n-1,n}(\Omega)$ or $\alpha \in P^{n,n-1}(\Omega)$.
\end{rem}

\begin{prop}[Stokes' formula for $\delta$-preforms]
\label{stokesfordeltapreforms}
Let  $P$ be an integral $\R$-affine polyhedral subset 
of the open subset $\Omega$ of $|\KC|$. 
Then we have
\[
\int_Pd'\alpha=\int_{\partial P}\alpha,\,\,\,
\int_Pd''\beta=\int_{\partial P}\beta
\]
for all $\delta$-preforms $\alpha\in P^{n-1,n}(\Omega)$
and $\beta\in P^{n,n-1}(\Omega)$ with $\supp(\alpha) \cap P$ and $\supp(\beta) \cap P$ compact.
\end{prop}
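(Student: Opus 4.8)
The plan is to reduce the statement for $\delta$-preforms on $\Omega$ to the already-established Stokes' formula for polyhedral supercurrents (Proposition \ref{stokesforpolyhedralcurrents}), using the machinery set up in Remark \ref{preform properties}. First I would fix a $\delta$-preform $\alpha\in P^{n-1,n}(\Omega)$ with $\supp(\alpha)\cap P$ compact. By Remark \ref{preform properties}(ii), $\alpha$ is a polyhedral supercurrent on $\Omega$, and after passing to a suitable subdivision of $\KC$ we may assume that $\KC$ has a subcomplex $\KD$ with $|\KD|=P$ and that $\alpha=\sum_{\Delta\in\KC}\alpha_\Delta\wedge\delta_\Delta$ with $\alpha_\Delta\in A_\Delta(\Omega\cap\Delta)$. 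By Remark \ref{preform properties}(iii), both sides of the desired identity are computed using the integral and boundary integral of this polyhedral supercurrent over $P$, i.e. $\int_P d'\alpha=\sum_{\Delta\in\KD}\int_\Delta (d'\alpha)_\Delta$ and $\int_{\partial P}\alpha=\sum_{\Delta\in\KD}\int_{\partial\Delta}\alpha_\Delta$.

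The key step is then to identify the polyhedral derivative with the supercurrent derivative: by \eqref{polyhedral equals current} in Remark \ref{preform properties}(ii) we have $d'\alpha=\dpa\alpha$ as supercurrents on $\Omega$, where $\dpa\alpha=\sum_{\Delta\in\KC}d'(\alpha_\Delta)\wedge\delta_\Delta$ is again a polyhedral supercurrent on $\Omega$ with the same polyhedral complex of definition. Since $\supp(\alpha)\cap P$ is compact, so is $\supp(\dpa\alpha)\cap P$, and we may invoke Proposition \ref{stokesforpolyhedralcurrents} (choosing $\tilde\Omega$ with $\Omega=\tilde\Omega\cap|\KC|$, viewing $\alpha$ as an element of $D_{1,0}(\tilde\Omega)$ after lifting via a partition of unity as in Remark \ref{currents and extension}) to obtain $\int_P\dpa\alpha=\int_{\partial P}\alpha$. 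Combining with $d'\alpha=\dpa\alpha$ gives $\int_P d'\alpha=\int_{\partial P}\alpha$. The argument for $\beta\in P^{n,n-1}(\Omega)$ with $d''$ is entirely symmetric, replacing $d'$ by $d''$, $\dpa$ by $\dpb$, and using the $D_{0,1}$ case of Proposition \ref{stokesforpolyhedralcurrents}.

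The main obstacle, such as it is, is bookkeeping rather than anything deep: one must make sure that the compact support condition on $\alpha$ (a supercurrent on $\Omega\subseteq|\KC|$) transfers correctly to a compact support condition on a lift $\tilde\alpha\in P(\tilde\Omega)$ suitable for applying the $N_\R$-version of Stokes, and that the subdivision of $\KC$ needed to realize $P$ as $|\KD|$ does not disturb the identities \eqref{polyhedral equals current} — but this is exactly the content of Remarks \ref{standardform}, \ref{currents and extension} and \ref{preform properties}, so no new idea is required. In short, the proof is a short assembly: pass to a polyhedral presentation, use $d'=\dpa$ on $\delta$-preforms, and quote Stokes for polyhedral supercurrents.
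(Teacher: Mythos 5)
Your proposal is correct and follows exactly the paper's argument: the paper's proof is precisely the combination of the identity $d'\alpha=\dpa\alpha$, $d''\beta=\dpb\beta$ for $\delta$-preforms (equation \eqref{polyhedral equals current}) with Stokes' formula for polyhedral supercurrents (Proposition \ref{stokesforpolyhedralcurrents}). The additional bookkeeping you spell out (subdivision, lifting to $\tilde\Omega$, transfer of compact support) is exactly what the cited remarks already provide, so nothing further is needed.
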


\begin{proof}
This follows from Proposition \ref{stokesforpolyhedralcurrents} and 
\eqref{polyhedral equals current}.
\end{proof}

The following result will be important in the construction of $\delta$-forms on algebraic varieties.

\begin{lem}\label{erstesproduktlemma}
Let $\Omega$ be an open 
subset of $|\KC|$.
Given $d'$-closed (resp. $d''$-closed) $\delta$-preforms $\gamma$ and 
$\gamma'$ on $\Omega$, their product $\gamma\wedge\gamma'$ is again
$d'$-closed (resp. $d''$-closed).
\end{lem}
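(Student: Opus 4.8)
The plan is to reduce the statement to the corresponding fact about polyhedral supercurrents and superforms, where it follows from the Leibniz rule together with the $d'$- and $d''$-closedness of the building blocks $\delta_\sigma$.

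First I would use Remark \ref{preform properties}(i) to lift the situation: choose an open subset $\widetilde\Omega$ of $N_\R$ with $\Omega = \widetilde\Omega \cap |\KC|$ and write $\gamma = \tilde\gamma \wedge \delta_C$, $\gamma' = \tilde\gamma' \wedge \delta_C$ for $\delta$-preforms $\tilde\gamma, \tilde\gamma' \in P(\widetilde\Omega)$. By Remark \ref{preform properties}(i), $\gamma \wedge \gamma' = \tilde\gamma \wedge \tilde\gamma' \wedge \delta_C$, and since the differentials $d'$, $d''$ on $D(\Omega)$ are induced from those on $D(\widetilde\Omega)$ (Remark \ref{currents and extension}), it suffices to work with the $\delta$-preform $\tilde\gamma \wedge \tilde\gamma' \in P(\widetilde\Omega)$. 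Concretely, I would choose a common polyhedral complex of definition $\KC'$ for all the tropical cycles occurring in presentations of $\tilde\gamma$ and $\tilde\gamma'$ as in \eqref{tropstandform}, and by Remark \ref{standardform} write both as polyhedral supercurrents $\tilde\gamma = \sum_{\sigma} \gamma_\sigma \wedge \delta_\sigma$, $\tilde\gamma' = \sum_{\sigma'} \gamma'_{\sigma'} \wedge \delta_{\sigma'}$.

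Next I would use Proposition \ref{polyhedralequalcurrent}: the supercurrent differential $d'$ of a $\delta$-preform equals its polyhedral derivative $\dpa$. Hence $d'(\gamma \wedge \gamma') = \dpa(\tilde\gamma \wedge \tilde\gamma') \wedge \delta_C$, and by the Leibniz rule for the polyhedral derivative (which holds since $\dpa$ is defined face by face on the superform components, where the ordinary Leibniz rule for $d'$ on superforms applies) one gets $\dpa(\tilde\gamma \wedge \tilde\gamma') = \dpa(\tilde\gamma) \wedge \tilde\gamma' \pm \tilde\gamma \wedge \dpa(\tilde\gamma')$. Now $d'$-closedness of $\gamma$ means $d'\gamma = \dpa(\tilde\gamma) \wedge \delta_C = 0$ in $D(\Omega)$, and similarly for $\gamma'$. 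Wedging the Leibniz identity with $\delta_C$ and using associativity and bilinearity of the product of $\delta$-preforms (Proposition \ref{deltaformsbasicprop}(ii)) — or equivalently the explicit formula \eqref{standard form and wedge} combined with the stable intersection product being unaffected — yields $d'(\gamma \wedge \gamma') = (d'\gamma) \wedge \gamma' \pm \gamma \wedge (d'\gamma') = 0$. The $d''$ case is identical with $\dpb$ in place of $\dpa$.

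The one subtlety I expect to need care with is the Leibniz rule itself at the level of $\delta$-preforms: one must check that $d'$ distributes over the $\delta$-preform wedge product with the correct sign, which is not entirely formal because the wedge product of $\delta$-preforms is built from the stable tropical intersection product rather than an ordinary wedge of forms. The clean way around this is to pass to the polyhedral-supercurrent presentation on a common complex of definition $\KC'$ (Remark \ref{standardform}, \eqref{standard form and wedge}): there the product is expressed, up to the lattice indices $[N:N_\sigma+N_{\sigma'}]$ and the fan displacement rule, as an honest wedge of superforms $\gamma_\sigma \wedge \gamma'_{\sigma'}$ on the intersecting faces, for which the ordinary graded Leibniz rule applies term by term; since those constants are locally constant, $\dpa$ passes through them. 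Combined with Proposition \ref{polyhedralequalcurrent} this delivers the Leibniz identity for $d'$ and $d''$ on $\delta$-preforms, and the conclusion follows as above. This is the main (and only real) obstacle; everything else is bookkeeping.
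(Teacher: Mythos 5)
Your overall route is the paper's: lift to $\widetilde\Omega$, pass to a common polyhedral complex of definition, replace $d'$ by $\dpa$ via Proposition \ref{polyhedralequalcurrent}, expand cellwise through the fan displacement rule, and apply Leibniz. But the decisive step is asserted rather than proved, and as written it does not go through. The hypothesis is not that $\dpa\tilde\gamma$ vanishes on $\widetilde\Omega$; it is only that $\dpa(\tilde\gamma\wedge\delta_C)=d'\gamma=0$ in $D(\Omega)$. Moreover $d'\gamma$, i.e.\ $\dpa\tilde\gamma$, is in general not a $\delta$-preform, because the cycles $C_i$ carry smooth weights (Proposition \ref{diffsmoothtropcyc}, Remark \ref{preforms are not d-invariant}), and the paper has no wedge product of arbitrary polyhedral supercurrents. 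Hence the expression $(d'\gamma)\wedge\gamma'$ is undefined, Proposition \ref{deltaformsbasicprop}(ii) does not apply to it, and the formula $d'(\gamma\wedge\gamma')=(d'\gamma)\wedge\gamma'\pm\gamma\wedge(d'\gamma')$ is exactly what has to be given a meaning and then proved, with the vanishing extracted from closedness \emph{after} restriction to $C$.

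Concretely, the cellwise Leibniz expansion you describe produces, on each cell $\tau$ of a common complete complex, sums over triples $(\sigma,\sigma',\Delta)$ with $\tau=\sigma\cap\sigma'\cap\Delta$, weighted by triple displacement coefficients $c_{\sigma\sigma'\Delta}$; here the standing assumption that the weights $m_\Delta$ of $C$ are constant is needed (otherwise extra $d'm_\Delta$-terms appear), and you never use it. The hypotheses, by contrast, are cellwise identities over pairs $(\sigma,\Delta)$ with $\rho=\sigma\cap\Delta$, namely \eqref{newomega1} for $\gamma$ and \eqref{newomega2} for $\gamma'$. To make the two meet, one must regroup the triple sums using the factorizations $c_{\sigma\sigma'\Delta}=c_{\sigma',\sigma\cap\Delta}\,c_{\sigma\Delta}$ and $c_{\sigma\sigma'\Delta}=c_{\sigma,\sigma'\cap\Delta}\,c_{\sigma'\Delta}$ (equations \eqref{newomega0} and \eqref{newomega0a}), which the paper deduces from associativity and commutativity of the stable intersection, $C_i\cdot(C_j'\cdot C)=C_j'\cdot(C_i\cdot C)$; only after this regrouping do the inner sums become precisely the expressions killed by \eqref{newomega1} and \eqref{newomega2}. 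Your appeal to associativity of the product of $\delta$-preforms points at the right mechanism, but you apply it to objects it does not cover; supplying this coefficient factorization and the regrouping is the actual content of the paper's proof and is the piece missing from yours.
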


\proof
Consider an open subset $\widetilde\Omega$ of $N_\R$ 
with $\Omega=\widetilde\Omega\cap |\KC|$ and $d'$-closed $\delta$-preforms 
$\gamma$ and $\gamma'$ on $\Omega$. 
We may assume that $\gamma$ (resp. $\gamma'$) is of codimension $l$ (resp. $l'$) 
and that $\gamma$ has  degree $k+2l$ (resp. $k'+2l$).
We choose $\delta$-preforms $\tilde\gamma=\sum_i\alpha_i \wedge \delta_{C_i}$ 
and  $\tilde\gamma':= \sum_j \alpha_j' \wedge \delta_{C_j'}$ for superforms $\alpha_i \in A^k(\widetilde\Omega)$, 
$\alpha_j' \in A^{k'}(\widetilde\Omega)$ 
and tropical cycles $C_i=(\KC_i,m_i), C_{j}'=(\KC_j',m_j')$ 
of  codimension $l,l'$ with smooth weight functions such
that $\gamma=\tilde\gamma\wedge \delta_C$ and 
$\gamma'=\tilde\gamma'\wedge\delta_C$.
We have to show that the supercurrent
\[
\gamma\wedge\gamma'=\tilde\gamma\wedge\tilde\gamma'\wedge \delta_C\in D(\Omega)
\]
is $d'$-closed.
After suitable refinements we may assume that the polyhedral complexes 
$\KC_i$, $\KC_j'$ and $\KC$ are all subcomplexes of a complete integral 
$\R$-affine polyhedral complex $\Dcal$ in  {$N_{\R}$}. 
We  choose $v,w\in N_\R$ generic vectors 
in order to compute stable tropical intersection products as in \ref{intpairing} 
for tropical cycles with polyhedral complex of definition $\KD$.
We have $C_j'\cdot C=(\KD_{\leq n-l'},m_{j}'')$.
For $\rho\in \KD_{n-l'}$ and $\omega \in \rho$, we have
\[
m_{j\rho}''(\omega)=
\sum_{\rho = \sigma' \cap \Delta}
c_{\sigma'\Delta}\,
m_{j\sigma'}'(\omega)\,m_\Delta.
\]
for small $\varepsilon>0$, where $(\sigma',\Delta)$ ranges over $\KD^{l'}\times \KD_{n}$ and 
$c_{\sigma'\Delta}=[N:N_{\sigma'}+N_{\Delta}]$ if ${\sigma'\cap(\Delta+\varepsilon v)\neq \emptyset}$ and 
$c_{\sigma'\Delta}= 0$ else.
In the same way we write 
\[
C_i\cdot C_j'\cdot C=(\KD_{\leq n-l-l'},m_{ij}''').
\]
For $\tau\in \KD_{n-l-l'}$ and $\omega \in \tau$, we have
\[
m_{ij\tau}'''(\omega)=
\sum_{
\tau=\sigma\cap\rho}
c_{\sigma\rho}\,
m_{i\sigma}(\omega)\,m_{j\rho}''(\omega).
\]
for small $\varepsilon>0$, where $(\sigma,\rho)$ range over $\KD^{l}\times \KD_{n-l'}$ 
and $c_{\sigma\rho}=[N:N_{\sigma}+N_{\rho}]$ if 
$\sigma\cap(\rho+\varepsilon w)\neq \emptyset$ and $c_{\sigma\rho}=0$ else.
Combining the last two formulas, we get
\begin{equation} \label{newomega-1}
m_{ij\tau}'''(\omega)=
\sum_{
\tau=\sigma\cap\sigma'\cap \Delta}
c_{\sigma\sigma'\Delta}\,
m_{i\sigma}(\omega)\,m_{j\sigma'}'(\omega)\,m_\Delta
\end{equation}
where $(\sigma,\sigma',\Delta)$ range over $\KD^{l}\times \KD^{l'}\times \KD_n$ and
\begin{equation}\label{newomega0a}
c_{\sigma\sigma'\Delta}=c_{\sigma,\sigma'\cap\Delta}\cdot c_{\sigma'\Delta}.
\end{equation}
We observe that by associativity and commutativity
$C_i\cdot(C_j'\cdot C)=C_j'\cdot (C_i\cdot C)$. This implies
\begin{equation}\label{newomega0}
c_{\sigma\sigma'\Delta}=c_{\sigma',\sigma\cap\Delta}\cdot c_{\sigma\Delta}.
\end{equation}
Now we use $\dpa(\tilde\gamma \wedge \delta_{C})=d'\gamma=0$ in $D(\Omega)$. 
For every $\rho\in \Dcal_{n-l}$, we get
\begin{equation}\label{newomega1}
\sum_{i}\sum_{
\rho=\sigma\cap\Delta}
 c_{\sigma \Delta} \, d'( m_{i\sigma} m_\Delta\alpha_i) =0
\end{equation} 
on $\Omega \cap \rho$.  
Similarly, we use $\dpa(\tilde\gamma' \wedge \delta_{C})=d'\gamma'=0$.
For every  $\rho'\in \Dcal_{n-l'}$, this gives
\begin{equation}\label{newomega2}
\sum_{j}\sum_{
\rho'=\sigma'\cap\Delta}
 c_{\sigma' \Delta}\, d'( m_{j\sigma'}' m_\Delta \alpha_j') =0
\end{equation} 
on $\Omega \cap \rho'$. 
We have to show that
\begin{equation} \label{newomega1a}
d'(\gamma \wedge \gamma')=d'(\tilde\gamma \wedge \tilde\gamma' \wedge \delta_{C})
= \sum\limits_{ij} d'\bigl(\alpha_i \wedge \alpha_j' 
\wedge \delta_{C_i \cdot  C_j' \cdot C}\bigr)
\end{equation}
vanishes in $D(\Omega)$. Since $d'$ agrees with $\dpa$ on $\delta$-preforms, we deduce
\begin{equation} \label{Leibniz direct1}
d'(\alpha_i \wedge \alpha_j' \wedge \delta_{C_i \cdot  C_j' \cdot  C})
= \sum_{\tau } d'(m_{ij\tau}''' \alpha_i \wedge \alpha_j') \wedge \delta_\tau.
\end{equation}
By \eqref{newomega-1} and   Leibniz's rule, we can split this into the sum of 
\begin{equation} \label{Leibniz direct2}
\sum_{\tau }   
\sum_{
\tau=\sigma\cap\sigma'\cap \Delta}
c_{\sigma\sigma'\Delta}\,m_\Delta \,
d'(m_{i\sigma} \alpha_i)\wedge m_{j\sigma'}'\alpha_j' \wedge  \delta_\tau.
\end{equation}
and 
\begin{equation} \label{Leibniz direct3}
(-1)^k \sum_{\tau }   
\sum_{
\tau=\sigma\cap\sigma'\cap \Delta}
c_{\sigma\sigma'\Delta}\,m_\Delta \,
m_{i\sigma} \alpha_i \wedge d'(  m_{j\sigma'}'\alpha_j')     \wedge  \delta_\tau.
\end{equation}
Note that here and in the following, we use our standing assumption that the weight $m_\Delta$ of $C$ is constant. 
From (\ref{newomega0}) and (\ref{newomega1}) we get
\begin{eqnarray*}
&&\sum_{ij}\sum_\tau\sum_{\tau=\sigma\cap\sigma'\cap \Delta}
c_{\sigma\sigma'\Delta}m_\Delta d'(m_{i\sigma}\alpha_i)
\wedge m_{j\sigma'}'\alpha_j'\\
&=&\sum_j\sum_\tau\sum_{\tau =\sigma' \cap \rho}c_{\sigma'\rho}
\Bigl(\sum_{\rho =\sigma \cap \Delta}\sum_{i}c_{\sigma\Delta}
m_\Delta d'(m_{i\sigma}\alpha_i)\Bigr)
\wedge m_{j\sigma'}'\alpha_j'\\
&=&0.
\end{eqnarray*}
In the same way we get
\[
\sum_{ij}\sum_\tau\sum_{\tau=\sigma \cap\sigma' \cap\Delta}
c_{\sigma\sigma'\Delta}m_\Delta m_{i\sigma}\alpha_i
\wedge d'(m_{j\sigma'}'\alpha_j')=0
\]
from (\ref{newomega0a}) and (\ref{newomega2}).
These two equations and \eqref{newomega1a}--\eqref{Leibniz direct3} prove the vanishing of $d'(\gamma\wedge\gamma')$. In the same way,  
one derives $d''(\gamma\wedge\gamma')=0$ from the vanishing of 
$d''(\gamma)$ and $d''(\gamma')$.
\qed

\begin{cor}\label{omegaleibnizformel}
Let $\Omega$ be an open subset of $|\KC|$.
We consider $\beta=\eta\wedge \gamma\in P^k(\Omega)$ and 
$\beta'=\eta'\wedge \gamma'\in P^{k'}(\Omega)$ 
for superforms $\eta,\eta'\in A(\Omega)$ and $\delta$-preforms
$\gamma,\gamma'\in P(\Omega)$. If $d'\gamma=d'\gamma'=0$, then $d'\beta$ is again
a $\delta$-preform with 
\[
d'\beta=d'\eta\wedge \gamma \quad{\rm and} \quad  d'(\beta\wedge \beta')=d'\beta\wedge\beta'+(-1)^k
\beta\wedge d'\beta'.
\]
If $d'' \gamma = d'' \gamma' = 0$, then 
 {$d''\beta$ is again a $\delta$-preform with 
\[
d''\beta=d''\eta\wedge \gamma \quad{\rm and} 
\quad  d''(\beta\wedge \beta')=d''\beta\wedge\beta'+(-1)^k
\beta\wedge d''\beta'.
\]
}
\end{cor}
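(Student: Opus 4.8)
The plan is to reduce the statement to two facts already at hand: on a $\delta$-preform the differential $d'$ agrees with the polyhedral derivative $\dpa$, which is computed component by component on a polyhedral representation (see \eqref{polyhedral equals current}), together with the ordinary Leibniz rule for superforms on a polyhedron. Throughout I write $|\eta|$, $|\gamma|$, $|\eta'|$, $|\gamma'|$ for the total degrees of the superforms and $\delta$-preforms involved, so that $|\eta|+|\gamma|=k$ and $|\eta'|+|\gamma'|=k'$. To establish the formula $d'\beta=d'\eta\wedge\gamma$, I would argue as follows. Since $\beta=\eta\wedge\gamma\in P^k(\Omega)$ is a $\delta$-preform, \eqref{polyhedral equals current} gives $d'\beta=\dpa\beta$. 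By Remark \ref{preform properties}(ii) one may, after subdividing $\KC$, represent $\gamma$ as a polyhedral supercurrent $\gamma=\sum_{\Delta\in\KC}\gamma_\Delta\wedge\delta_\Delta$ with $\gamma_\Delta\in A_\Delta(\Omega\cap\Delta)$, so that $\beta=\sum_{\Delta\in\KC}(\eta|_\Delta\wedge\gamma_\Delta)\wedge\delta_\Delta$. Using the definition of $\dpa$ and the Leibniz rule for superforms on each polyhedron $\Delta$ one obtains
\[
d'\beta=\dpa\beta
=\sum_{\Delta\in\KC}\bigl(d'(\eta|_\Delta)\wedge\gamma_\Delta\bigr)\wedge\delta_\Delta
+(-1)^{|\eta|}\sum_{\Delta\in\KC}\bigl(\eta|_\Delta\wedge d'\gamma_\Delta\bigr)\wedge\delta_\Delta .
\]
The first sum is $d'\eta\wedge\gamma$, and the second equals $(-1)^{|\eta|}\,\eta\wedge\dpa\gamma=(-1)^{|\eta|}\,\eta\wedge d'\gamma=0$ by \eqref{polyhedral equals current} and the hypothesis $d'\gamma=0$. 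Hence $d'\beta=d'\eta\wedge\gamma$, which is again a $\delta$-preform, being the product of the superform $d'\eta$ with the $\delta$-preform $\gamma$.

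For the Leibniz formula for $d'(\beta\wedge\beta')$, I would use graded commutativity of the wedge product of $\delta$-preforms to write $\beta\wedge\beta'=(-1)^{|\gamma|\,|\eta'|}(\eta\wedge\eta')\wedge(\gamma\wedge\gamma')$, where $\eta\wedge\eta'$ is a superform and $\gamma\wedge\gamma'$ is a $\delta$-preform which is $d'$-closed by Lemma \ref{erstesproduktlemma}. Applying the case already proved to $\eta\wedge\eta'$ and $\gamma\wedge\gamma'$ gives $d'\bigl((\eta\wedge\eta')\wedge(\gamma\wedge\gamma')\bigr)=d'(\eta\wedge\eta')\wedge(\gamma\wedge\gamma')$; expanding $d'(\eta\wedge\eta')=d'\eta\wedge\eta'+(-1)^{|\eta|}\eta\wedge d'\eta'$ by the Leibniz rule for superforms and moving the factors back into place by graded commutativity then yields $d'\beta\wedge\beta'+(-1)^k\beta\wedge d'\beta'$, once one checks that the signs match; this last check is an elementary computation using $k=|\eta|+|\gamma|$. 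The two assertions for $d''$ are proved in exactly the same way, with $\dpa$ replaced by $\dpb$ and Lemma \ref{erstesproduktlemma} by its $d''$-version.

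The only substantial point of the argument --- and the step I expect to require the most care --- is the vanishing of the residual term $(-1)^{|\eta|}\eta\wedge d'\gamma$ in the displayed identity. For a general $\delta$-preform $\gamma$ this term is precisely what produces a supercurrent of the non-$\delta$-preform type appearing in Proposition \ref{diffsmoothtropcyc} and Remark \ref{preforms are not d-invariant}, so the hypothesis $d'\gamma=0$ is exactly what is needed for $d'\beta$ to remain a $\delta$-preform. Everything else is bookkeeping with the Leibniz sign.
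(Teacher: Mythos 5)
Your argument is correct and follows essentially the same route as the paper: the closedness hypothesis kills the extra Leibniz term in $d'(\eta\wedge\gamma)$, and the second identity is obtained by regrouping $\beta\wedge\beta'$ as $(\eta\wedge\eta')\wedge(\gamma\wedge\gamma')$ and invoking Lemma \ref{erstesproduktlemma}. The only (cosmetic) difference is that the paper gets the first identity in one line from the Leibniz rule \eqref{leibnizforcurrents} for a superform wedged with an arbitrary supercurrent, whereas you re-derive that rule in this special case via the polyhedral representation and $d'=\dpa$ on $\delta$-preforms (Proposition \ref{polyhedralequalcurrent}).
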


\proof
Given a superform $\eta\in A^{p}(\Omega)$ and a supercurrent $T\in D(\Omega)$,
we have
\begin{equation}\label{leibnizforcurrents}
d'(\eta\wedge T)=d'\eta\wedge T+(-1)^p
\eta\wedge d'T.
\end{equation}
This implies the first formula and hence $d'\beta$ is a preform. Combined with Lemma \ref{erstesproduktlemma}, 
we deduce the second formula as well. Similarly, we prove the corresponding claims for $d''$.
\qed

\begin{prop}[Greens' formula for $\delta$-preforms]
\label{greenfordeltapreforms}
 Let $\Omega$ be an open 
subset of $|\KC|$ and 
let $P$ be an integral $\R$-affine polyhedral subset  of $\Omega$. 
We consider symmetric $\delta$-preforms 
$\beta_i\in P^{p_i,p_i}(\Omega)$ for $i=1,2$ with $p_1+p_2=n-1$ such 
that $\beta_i=\eta_i\wedge \gamma_i$  
for superforms $\eta_i\in A(\Omega)$ and $\delta$-preforms
$\gamma_i\in P(\Omega)$ with  $d'\gamma=d'\gamma'=d'' \gamma = d'' \gamma' = 0$. 
Then we have
\[
\int_P
\bigl(\beta_1\wedge d'd''\beta_2-\beta_2\wedge d'd''\beta_1\bigr)
=\int_{\partial P}
\bigl(\beta_1\wedge d''\beta_2-\beta_2\wedge d''\beta_1\bigr)
\]
if we assume furthermore that $\supp(\beta_1) \cap \supp(\beta_2) \cap P$ is compact.
\end{prop}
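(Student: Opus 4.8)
The plan is to reduce the statement to Stokes' formula for $\delta$-preforms (Proposition \ref{stokesfordeltapreforms}): I would write the $(n,n)$-preform $\beta_1\wedge d'd''\beta_2-\beta_2\wedge d'd''\beta_1$ as the sum of a $d'$-exact and a $d''$-exact $\delta$-preform, apply Stokes to each, and finally identify the two resulting boundary integrals with the right hand side by means of the symmetry of $\beta_1$ and $\beta_2$.

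For the algebraic reduction I would first note that, since $\gamma_1$ and $\gamma_2$ are both $d'$-closed and $d''$-closed, the product $\gamma_1\wedge\gamma_2$ is again $d'$-closed and $d''$-closed by Lemma \ref{erstesproduktlemma}; hence, after reordering factors in the bigraded algebra $P(\Omega)$, every product formed from $\beta_1,\beta_2$ and their derivatives $d'\beta_i,\,d''\beta_i,\,d'd''\beta_i$ is of the shape (superform)$\,\wedge\,(\gamma_1\wedge\gamma_2)$, so the graded Leibniz rule of Corollary \ref{omegaleibnizformel} applies to all of them. Using it repeatedly, together with the fact that the even-degree $\delta$-preforms $\beta_i$ and $d'd''\beta_i$ are central (so that $\beta_2\wedge d'd''\beta_1=d'd''\beta_1\wedge\beta_2$ and $\beta_2\wedge d''\beta_1=d''\beta_1\wedge\beta_2$), one obtains
\[
\beta_1\wedge d'd''\beta_2-\beta_2\wedge d'd''\beta_1=d'(\beta_1\wedge d''\beta_2)+d''(d'\beta_1\wedge\beta_2)
\]
and, with the roles of $d'$ and $d''$ interchanged,
\[
\beta_1\wedge d'd''\beta_2-\beta_2\wedge d'd''\beta_1=-d''(\beta_1\wedge d'\beta_2)-d'(d''\beta_1\wedge\beta_2)
\]
in $P^{n,n}(\Omega)$. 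Averaging these two identities yields
\[
\beta_1\wedge d'd''\beta_2-\beta_2\wedge d'd''\beta_1=\tfrac12\,d'(\beta_1\wedge d''\beta_2-\beta_2\wedge d''\beta_1)+\tfrac12\,d''(d'\beta_1\wedge\beta_2-\beta_1\wedge d'\beta_2),
\]
where $\beta_1\wedge d''\beta_2-\beta_2\wedge d''\beta_1\in P^{n-1,n}(\Omega)$ and $d'\beta_1\wedge\beta_2-\beta_1\wedge d'\beta_2\in P^{n,n-1}(\Omega)$; both have support in $\supp(\beta_1)\cap\supp(\beta_2)$, which meets $P$ in a compact set by hypothesis.

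Integrating this identity over $P$ and applying Proposition \ref{stokesfordeltapreforms} to each exact term gives
\[
\int_P\bigl(\beta_1\wedge d'd''\beta_2-\beta_2\wedge d'd''\beta_1\bigr)=\tfrac12\int_{\partial P}\bigl(\beta_1\wedge d''\beta_2-\beta_2\wedge d''\beta_1\bigr)+\tfrac12\int_{\partial P}\bigl(d'\beta_1\wedge\beta_2-\beta_1\wedge d'\beta_2\bigr),
\]
so it remains to show that the second boundary integral equals the first. This is where the symmetry of the $\beta_i$ enters. Using $J\circ d'=d''\circ J$, the multiplicativity of $J$, and $J\beta_i=(-1)^{p_i}\beta_i$ with $p_1+p_2=n-1$, one computes $J\bigl(d'\beta_1\wedge\beta_2-\beta_1\wedge d'\beta_2\bigr)=(-1)^{n}\bigl(\beta_1\wedge d''\beta_2-\beta_2\wedge d''\beta_1\bigr)$; combined with the compatibility of the boundary integral with $J$, namely $\int_{\partial P}J\omega=(-1)^{n}\int_{\partial P}\omega$ for $\omega$ of type $(n,n-1)$ (which descends from the corresponding property of boundary integrals of superforms over polyhedra, see \cite[1.5]{chambert-loir-ducros}, \cite[\S 2]{gubler-forms}), this forces $\int_{\partial P}(d'\beta_1\wedge\beta_2-\beta_1\wedge d'\beta_2)=\int_{\partial P}(\beta_1\wedge d''\beta_2-\beta_2\wedge d''\beta_1)$, and the claimed formula follows.

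I expect the main obstacle to be precisely this last step: setting up the operator $J$ on $\delta$-preforms on $|\KC|$, verifying its multiplicativity on the products above, and pinning down the exact sign in its interaction with the boundary integral, all of which require care with the Lagerberg sign conventions and are exactly the point where the hypothesis that $\beta_1,\beta_2$ be symmetric is indispensable. By contrast, the algebraic manipulation reducing the integrand to an exact $\delta$-preform and the application of Stokes are routine once Corollary \ref{omegaleibnizformel}, Lemma \ref{erstesproduktlemma}, and Proposition \ref{stokesfordeltapreforms} are available.
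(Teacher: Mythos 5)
Your algebraic core is correct: both exact representations of $\beta_1\wedge d'd''\beta_2-\beta_2\wedge d'd''\beta_1$, their average, the support remark and the appeal to Proposition \ref{stokesfordeltapreforms} all go through (note that you implicitly use $d''d'\beta_1=-d'd''\beta_1$, which holds here because $d'd''\beta_1=d'd''\eta_1\wedge\gamma_1$), and this is the same skeleton as the paper's proof, which simply invokes Corollary \ref{omegaleibnizformel} and Proposition \ref{stokesfordeltapreforms} ``as in \cite[1.3.8]{chambert-loir-ducros}''. The difference is where the symmetry of $\beta_1,\beta_2$ is spent. In the argument behind the paper's citation one uses only the single identity $d'(\beta_1\wedge d''\beta_2-\beta_2\wedge d''\beta_1)=(\beta_1\wedge d'd''\beta_2-\beta_2\wedge d'd''\beta_1)+\chi$ with $\chi:=d'\beta_1\wedge d''\beta_2-d'\beta_2\wedge d''\beta_1$; the symmetry of the $\beta_i$ makes $\chi$ an antisymmetric $\delta$-preform of bidegree $(n,n)$, and since in top bidegree the integral over the $n$-dimensional set $P$ only sees the symmetric part (on each $n$-dimensional polyhedron $J$ acts by $(-1)^n$ in top bidegree), one gets $\int_P\chi=0$; a single application of Stokes then finishes the proof, with the boundary term already in the desired shape.

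Your route instead averages and must then identify the two boundary integrals, and this is where there is a genuine gap as written: the compatibility $\int_{\partial P}J\omega=(-1)^n\int_{\partial P}\omega$ is not contained in \cite[1.5]{chambert-loir-ducros} or \cite[\S 2]{gubler-forms}, nor anywhere in this paper --- those sources define boundary integrals but never compare $\int_{\partial P}\omega$ with $\int_{\partial P}J\omega$, and extending $J$ multiplicatively to $\delta$-preforms on $|\KC|$ with the current convention $\langle JT,\alpha\rangle=(-1)^r\langle T,J\alpha\rangle$ requires exactly the sign bookkeeping you flag, so it cannot be quoted as known. The identity is true with consistent conventions, but its natural proof (apply Stokes to both boundary integrals and use that $J$ contributes $(-1)^n$ under $\int_P$ in bidegree $(n,n)$) reduces it to the same interior symmetry fact that the classical route uses directly, so the averaging detour costs an extra lemma and buys nothing. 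Either supply that lemma in full, or, more economically, drop the averaging and kill the cross term $\chi$ inside $P$ by antisymmetry as above.
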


\proof
The formula is obtained as in \cite[1.3.8]{chambert-loir-ducros}
as a direct consequence of 
Proposition \ref{stokesfordeltapreforms} and the Leibniz formula 
in Corollary \ref{omegaleibnizformel}.
\qed

\begin{definition}\label{piecewise smooth superforms}
Let $P$ be an integral $\R$-affine polyhedral 
subset in $N_\R$.
A {\it piecewise smooth superform} $\alpha$ on an open subset $\Omega$ of $P$ 
is given by an integral $\R$-affine polyhedral complex $\KD$ 
with support $P$ and smooth 
superforms $\alpha_\Delta \in A_\Delta(\Omega \cap \Delta)$ for every  $\Delta \in \KD$ 
such that $\alpha_\Delta$ restricts to $\alpha_\rho$ for every closed face 
$\rho$ of $\Delta$. 
In this case we call $\KD$ a {\it polyhedral complex of definition} for $\alpha$.
The {\it support} of a piecewise smooth superform $\alpha$ as above is
the union of the supports of the forms $\alpha_\Delta$ for all $\Delta$ in $\KD$.
We identify two superforms $\alpha,\alpha'$ on $\Omega$ if they have the same support 
and if $\alpha_\Delta = \alpha'_{\Delta'}$ on $\Delta \cap \Delta' \cap \Omega$ 
for all polyhedra $\Delta,\Delta'$ of the underlying polyhedral complexes $\KD,\KD'$.
\end{definition}

\begin{rem}[Properties of piecewise smooth superforms]  
\label{properties of piecewise smooth superforms}
Let $\Omega$ denote an open subset of an integral $\R$-affine polyhedral 
subset $P$ in $N_\R$.

(i) The space of piecewise smooth superforms on $\Omega$ is denoted by $PS(\Omega)$. 
It comes with a natural bigrading and has a natural wedge product. 
We conclude that  $PS^{\cdot,\cdot}(\Omega)$ is a bigraded  
$\R$-algebra which contains $A^{\cdot,\cdot}(\Omega)$ as a subalgebra.  
We denote by $PS_c^{\cdot,\cdot}(\Omega)$ the subspace of
$PS^{\cdot,\cdot}(\Omega)$ given by piecewise smooth superforms with
compact support.

(ii) Let $N'$ be also a free abelian group of finite rank and let 
$F:N_\R' \rightarrow N_\R$ be an integral $\R$-affine map. 
Suppose that $\Omega'$ is an open subset of an integral $\R$-affine 
polyhedral subset $Q$ in $N_\R'$ with $F(Q) \subseteq P$ and 
$F(\Omega') \subseteq \Omega$. 
For a piecewise smooth superform $\alpha$ on $\Omega$, 
 there is an integral $\R$-affine  polyhedral complex $\KD'$ 
 with $|\KD'|=Q$ and a polyhedral complex of definition $\KD$ for $\alpha$ such that for every $\Delta' \in \KD'$, there is a 
$\Delta \in \KD$ with $F(\Delta') \subseteq \Delta$. Then we define 
 a piecewise 
smooth superform $F^*(\alpha)=\alpha'$ on $\Omega'$ with $\KD'$ as a polyhedral complex of definition by 
setting $\alpha_{\Delta'}':=F^*(\alpha_\Delta) \in A_{\Delta'}(\Omega'\cap \Delta')$ for every $\Delta' \in \KD'$. 
In this way, we get a well-defined graded $\R$-algebra homomorphism 
\[
F^*:PS^{\cdot,\cdot}(\Omega) \rightarrow PS^{\cdot,\cdot}(\Omega').
\]
In particular, we can restrict $\alpha$ to an open subset of an integral $\R$-affine polyhedral subset of $P$.

(iii)  Let $\alpha \in PS^{p,q}(\Omega)$ be given by an integral $\R$-affine 
polyhedral complex $\KD$  and smooth superforms 
$\alpha_\Delta \in A^{p,q}(\Omega \cap \Delta)$ for every  $\Delta \in \KD$ . 
Then the superforms $d'\alpha_\Delta \in A_\Delta^{p+1,q}(\Omega \cap \Delta)$, with $\Delta$ 
ranging over $\KD$, define an element in $PS^{p+1,q}(\Omega)$ which 
we denote by $\dpa\alpha$. Similarly, we define 
$\dpb\alpha \in PS^{p,q+1}(\Omega)$. 
One verifies immediately that $PS^{\cdot,\cdot}(W)$ is a 
differential graded $\R$-algebra.
with respect to the differentials
\begin{equation}\label{differentiation of piecewise smooth}
\dpa:PS^{p,q}(\Omega)\to PS^{p+1,q}(\Omega),\,\,\,\,
\dpb:PS^{p,q}(\Omega)\to PS^{p,q+1}(\Omega).
\end{equation} 

(iv) The elements of $PS^{0,0}(\Omega)$ are the piecewise smooth functions on the open subset $\Omega$ of $P$ from  Definition \ref{piecewise smooth function}.
\end{rem}

\begin{art} \label{residue for tropical ps}
Now we apply the above to an open subset $\Omega$ of the polyhedral set $P:=|\KC|$ for the given tropical cycle $C = (\KC,m)$ with constant weight functions. 
A piecewise smooth superform $\alpha$ as above defines a polyhedral supercurrent
\[
[\alpha] := \sum_{\Delta \in \KC_n}\alpha_\Delta\wedge\delta_\Delta
\]
and the derivatives in (\ref{differentiation of piecewise smooth})
coincide -- as suggested by the notation -- with the polyhedral
derivatives introduced in \ref{polyhedral current}.
Note that these differentials of piecewise smooth superforms are not compatible with the corresponding differentials of the associated supercurrents. 
We define the {\it $d'$-residue of $\alpha$}  by 
\[
{\rm Res}_{d'}(\alpha) := d'[\alpha]-[\dpa\alpha].
\]
Similarly, we define residues with respect to the differential 
operators $d''$ and $d'd''$.
\end{art}

\begin{art} 
Given $\alpha \in PS(\Omega)$ and a polyhedral supercurrent $\beta$ on the open subset $\Omega$ of $|\KC|$, there is natural bilinear product $\alpha \wedge \beta$ which is defined as a polyhedral supercurrent on $\Omega$ as follows. After passing to a subdivision of $\KC$, we may assume that $\KC$ is a polyhedral complex of definition for $\alpha$ and $\beta$. Then $\KC$ is a polyhedral complex of definition for $\alpha \wedge \beta$ and for every $\Delta \in \KC$ we set 
$$(\alpha \wedge \beta)_\Delta := \alpha_\Delta \wedge \beta_\Delta \in A_\Delta(\Omega \wedge \Delta),$$
where $\alpha,\beta$ are given on $\Omega$ by $\alpha_\Delta,\beta_\Delta \in A_\Delta(\Omega \wedge \Delta)$. 
For $\alpha \in PS^k(\Omega)$, the Leibniz type formula
\begin{equation}\label{psproductsleibniz}
\dpa(\alpha\wedge\beta)=
\dpa\alpha\wedge \beta+(-1)^k\alpha\wedge \dpa \beta
\end{equation}
is a direct consequence of our definitions. An  {analogous} formula holds for $\dpb$.
\end{art}

There is no obvious product on the space of polyhedral currents
which extends the given products on the subspaces 
$P(\Omega)$ and $PS(\Omega)$. 
The next remark shows that such a product exists for a canonical subspace $PSP(\Omega)$ of the space of polyhedral currents.

\begin{rem} \label{PSP-forms}
The linear subspace  $PSP(\Omega)$ of $D(\Omega)$, generated by currents of the form $\alpha \wedge \beta$ with $\alpha \in PS(\Omega)$ and with $\beta \in P(\Omega)$, will play a role later. Note that $PSP(\Omega)$ has a unique structure as a bigraded differential $\R$-algebra with respect to $\dpa,\dpb$ extending the corresponding structures on $PS(\Omega)$ and $P(\Omega)$. To see that the wedge-product is well-defined, we can use the same arguments as for $P(\Omega)$. 
The crucial point is that for a piecewise smooth form $\alpha$ as in \ref{residue for tropical ps} and $\tau \preccurlyeq \Delta \in \KC$, the restriction of $\alpha_\Delta$ to $\tau$ is  $\alpha_\tau$. This allows to use the arguments in Proposition \ref{deltaformsbasicprop} which show that $\wedge$ is well-defined on $PSP(\Omega)$. 

If $F:N_\R' \rightarrow N_R$ is an integral $\R$-affine map and if $\widetilde\Omega'$ is an open subset of the preimage of the open subset $\widetilde\Omega$ of $N_\R$, then we have a unique pull-back $F^*:PSP(\widetilde\Omega) \rightarrow PSP(\widetilde\Omega')$ which extends the pull-back maps on piecewise smooth forms and on $\delta$-preforms and which is compatible with the bigrading and the wedge-product. Again, the argument is the same as in the proof of Proposition \ref{deltaformsbasicprop}. 
Moreover, it is clear that the projection formulas in Proposition \ref{projection formula for preforms} hold more generally for $\alpha \in PSP(\widetilde\Omega)$. 
\end{rem}

\begin{art} \label{corner locus and delta-preform}
Recall that $C=(\Ccal,m)$ is a tropical cycle on $N_\R$ of pure dimension $n$ 
and with constant weight functions. Let $\phi$ be a piecewise smooth function 
on $|\Ccal|$. We have seen in \ref{corner locus for tropical cycle} that the 
corner locus $\phi\cdot C$   is again a tropical cycle. It induces a polyhedral 
supercurrent $\delta_{\phi\cdot C} \in D_{n-1,n-1}(|\Ccal|)$ on $|\Ccal|$. 
By Proposition \ref{extension of piecewise smooth}, there is a piecewise 
smooth function $\tilde{\phi}$ on $N_\R$ extending $\phi$. We have
\[
\delta_{\phi\cdot C}=\delta_{\tilde{\phi}\cdot N_\R} \wedge \delta_C
\]
and hence $\delta_{\phi\cdot C}$ is a $\delta$-preform in $P^{1,1}(|\Ccal|)$. 
By Remark \ref{preform properties}, we obtain a $\delta$-preform 
$\delta_{\phi\cdot C} \wedge \beta \in P^{p,q,l+1}(|\KC|)$ for any 
$\beta \in P^{p,q,l}(|\Ccal|)$. 
\end{art}

The following {\it tropical Poincar\'e--Lelong formula} and
its Corollary \ref{tropicalpoincarelelong} compute the 
$d'd''$-residue of  
$\phi$. 

\begin{thm}\label{deltatropicalpoincarelelong}
We consider a $\delta$-preform $\omega \in P^{p,q,l}(|\KC|)$ such
that $d'\omega=0=d''\omega$. 
Let $\eta\in A^{n-p-l-1,n-q-l-1}(|\Ccal|)$ be a superform such that
$\beta=\eta\wedge\omega$ has compact support. Then we have
\begin{equation}\label{deltatropicalpoincarelelongg}
\int_{|\Ccal|} \phi\, d'd''\beta- \int_{|\Ccal|} \dpa \dpb\phi\wedge\beta
=\int_{|\Ccal|} \delta_{\phi\cdot  C} \wedge \beta
\end{equation}
where we use the integral of polyhedral supercurrents on $|\Ccal|$ defined in \ref{currents and extension}.
\end{thm}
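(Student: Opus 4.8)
The plan is to reduce the statement to the case where $\omega$ is trivial, i.e.\ to the \emph{tropical Poincar\'e--Lelong formula} of Theorem~\ref{tpl}, by exploiting the factorization $\beta = \eta \wedge \omega$ together with the Leibniz rules available for $d'$-closed and $d''$-closed $\delta$-preforms. Concretely, since $d'\omega = d''\omega = 0$, Corollary~\ref{omegaleibnizformel} gives $d'd''\beta = d'd''\eta \wedge \omega$ (up to the appropriate sign which is trivial here since $\eta$ has even total degree in the relevant bidegree), and more generally allows us to push all the differential operators onto the superform factor $\eta$. Thus both sides of \eqref{deltatropicalpoincarelelongg} can be rewritten as integrals over $|\Ccal|$ of $\delta$-preforms that are wedge products of $\omega$ with expressions built from $\phi$ and $\eta$. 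The key point is that wedging with the fixed $d'd''$-closed $\delta$-preform $\omega$ is compatible with all the operations in sight, so it suffices to prove the identity ``after stripping off $\omega$'', which is exactly the content of the $\omega = \delta_{N_\R}|_{|\Ccal|}$ (equivalently $l=0$, $\omega = 1$) case.

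First I would make precise the reduction: write $\omega = \sum_i \alpha_i \wedge \delta_{C_i}$ as in \eqref{tropstandform}, and observe using Proposition~\ref{deltaformsbasicprop}(ii) and \ref{corner locus and delta-preform} that each term $\delta_{\phi \cdot C} \wedge \beta$, $d'd''\beta$, and $[d_P'd_P''\phi] \wedge \beta$ can be expressed via the product of superforms/piecewise smooth forms with $\delta_{C \cdot C_i}$ for the various $i$. By linearity it is enough to treat a single tropical cycle; absorbing $\delta_{C_i}$ into $C$ (replacing $C$ by the stable intersection $C \cdot C_i$, which is again a tropical cycle of smaller dimension with constant weights by our standing assumption, after a local reduction as in Remark~\ref{reductiontoclassicaltropicalvarieties}) and absorbing $\alpha_i$ into $\eta$, we are reduced to showing
\[
\int_{|\Ccal|} \phi\, d'd''(\eta') - \int_{|\Ccal|} d_P'd_P''\phi \wedge \eta' = \int_{|\Ccal|} \delta_{\phi \cdot C} \wedge \eta'
\]
for a compactly supported superform $\eta' \in A^{n-1,n-1}(|\Ccal|)$ on the (possibly lower-dimensional) tropical cycle $C$ with constant weights. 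The compatibility of the corner locus with these reductions is guaranteed by Proposition~\ref{asscommcornerlocus} (associativity $\phi \cdot (C \cdot C_i) = (\phi \cdot C) \cdot C_i$).

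Next I would prove this reduced statement directly, which is Theorem~\ref{tpl} phrased as an integral identity. Here the mechanism is Stokes' formula for $\delta$-preforms, Proposition~\ref{stokesfordeltapreforms}, applied on a polyhedral complex of definition $\KD$ for $\phi$ with support $|\Ccal|$. Writing $[\phi] = \sum_{\Delta \in \KD_n} \phi_\Delta \wedge \delta_\Delta$ and using the Leibniz rule \eqref{psproductsleibniz} for piecewise smooth forms, one has $d'd''(\phi \cdot \eta') = \phi\, d'd''\eta' + d_P'd_P''\phi \wedge \eta' + (\text{mixed } d'\phi\wedge d''\eta' \text{ terms})$ on each maximal face; integrating over $|\Ccal|$, the left side vanishes by Stokes since $\eta'$ (hence $\phi\eta'$) is compactly supported and $C$ is boundaryless (as $d'\delta_C = d''\delta_C = 0$ for constant weights). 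The mixed terms, after a second application of Stokes along the codimension-one skeleton, collect into boundary contributions along each $\tau \in \KD_{n-1}$; the sum of the outgoing normal derivatives $\frac{\partial \phi_\sigma}{\partial \omega_{\sigma,\tau}}$ weighted by $m_\sigma$, minus the ``internal'' derivative $\frac{\partial \phi_\tau}{\partial \omega_\tau}$, is exactly the weight $m_\tau$ of the corner locus from \eqref{cornerlocusg1}. This identifies the remaining term with $\int_{|\Ccal|}\delta_{\phi\cdot C}\wedge \eta'$. A subtle bookkeeping point is the orientation/calibration conventions making the boundary integrals on adjacent faces combine with the correct signs, and ensuring the balancing condition \eqref{balancingcondition} is used precisely where the $d'\delta_C = 0$ relation is invoked.

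\textbf{Main obstacle.} The hardest part will be the careful Stokes-theoretic computation in the reduced case, i.e.\ matching the boundary terms along $\KD_{n-1}$ with the combinatorial definition of the corner locus weight $m_\tau$ in \eqref{cornerlocusg1}, keeping track of the calibrations $\mu_\Delta$ and the signs of $d'd''$ under the bigrading. The structural reduction (stripping off the $d'd''$-closed $\delta$-preform $\omega$ via the Leibniz rules) is conceptually clean but requires one to verify that every wedge, pull-back, and corner-locus operation commutes past $\omega$ in a uniform way; the paper has set up exactly the right lemmas (\ref{erstesproduktlemma}, \ref{omegaleibnizformel}, \ref{asscommcornerlocus}, and the $PSP(\Omega)$-formalism in Remark~\ref{PSP-forms}) for this, so I expect it to go through without surprises once the reduced identity is in hand.
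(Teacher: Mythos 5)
Your first step ($d'd''\beta=d'd''\eta\wedge\omega$ via Corollary \ref{omegaleibnizformel}) is correct and is also how the paper uses the closedness hypothesis, but the reduction that follows has a genuine gap. You decompose $\omega=\sum_i\alpha_i\wedge\delta_{C_i}$ and claim that "by linearity" it suffices to treat one term, absorbing $\delta_{C_i}$ into $C$ and $\alpha_i$ into $\eta$, the cycle $C\cdot C_i$ being "again a tropical cycle \ldots with constant weights by our standing assumption". Two things go wrong. First, the standing constant-weight assumption applies only to $C$; the cycles $C_i$ occurring in a $\delta$-preform are allowed smooth weights, so $C\cdot C_i$ has smooth, generally non-constant weights, and smooth weights cannot in general be traded for superform coefficients times constant-weight cycles (that possibility would make the whole smooth-weight formalism of Section \ref{tropinttheo} vacuous). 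Second, the closedness $d'\omega=d''\omega=0$ is a condition on the sum only: by Proposition \ref{diffsmoothtropcyc} the individual terms satisfy $d'(\alpha_i\wedge\delta_{C_i})=d'\alpha_i\wedge\delta_{C_i}\pm\alpha_i\wedge d'm_i\wedge\delta_{\KC_i}$, so the termwise statements you want are not instances of the theorem, and the derivative-of-weight terms cancel only after summation. Concretely, the reduced identity is \emph{false} for non-constant weights: on $C'=(\R,m)$ with a smooth weight $m$, $\phi$ piecewise linear with slope jump $a_+-a_-$ at $0$, and $\eta'=f\in A^{0,0}_c$, one gets $\int\phi\,d'd''\eta'-\int\dpa\dpb\phi\wedge\eta'=(a_+-a_-)m(0)f(0)+\int(2\phi'm'+\phi\,m'')f\,dx$, whereas $\int\delta_{\phi\cdot C'}\wedge\eta'=(a_+-a_-)m(0)f(0)$; the extra integral vanishes exactly when $m$ is constant. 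The local-freezing device of Remark \ref{reductiontoclassicaltropicalvarieties} cannot repair this, since it only yields pointwise identities of weights and is blind to $d'm_i$, $d''m_i$. (Note also that in the paper Corollary \ref{tropicalpoincarelelong} is \emph{deduced from} Theorem \ref{deltatropicalpoincarelelong}, not the other way round, though by itself that would not be a flaw.)

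For contrast, the paper's proof never splits $\omega$: it applies Stokes for polyhedral supercurrents once to the identity $\dpb(\phi\,d'\beta)+\dpa(\dpb\phi\wedge\beta)=\dpa\dpb\phi\wedge\beta-\phi\,d'd''\beta$ (which already needs $d'\beta=d'\eta\wedge\omega$, i.e.\ the closedness of the full $\omega$), kills the boundary term $\int_{\partial P}\phi\wedge d'\beta$ using the balancing condition of the $C_i$ (the contracted form restricts to zero on codimension-one faces), and then identifies $\int_{\partial P}\dpb\phi\wedge\beta$ with $-\int\delta_{\phi\cdot C}\wedge\beta$ in Lemma \ref{tropicallemma}; there the corner-locus bookkeeping is carried out with the smooth weights $m_i$ present, matching $m_{i\tau}$ from \eqref{cornerlocusg1} and using $\delta_{\phi\cdot C_i}=\delta_{\tilde C_i}\wedge\delta_{\phi\cdot C}$ via Proposition \ref{asscommcornerlocus}. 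Your Stokes computation for the constant-weight case is in the right spirit, but to make your strategy work you would have to prove a smooth-weight Poincar\'e--Lelong identity with the correct weight-derivative correction terms and then show these cancel when summed over the closed $\omega$ — which amounts to redoing Lemma \ref{tropicallemma}, not to a reduction to Theorem \ref{tpl}.
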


\proof
We may assume after suitable refinements that $\KC$ is also a polyhedral
complex of definition for $\phi$ and $\omega$.
From (\ref{psproductsleibniz}) and \eqref{polyhedral equals current}, we get
\[
\dpb(\phi\, d'\beta)
+\dpa(\dpa\phi\wedge \beta)=
\phi\, d''d'\beta+\dpa\dpb\phi\wedge\beta 
= \dpa\dpb\phi\wedge\beta - \phi\, d'd''\beta.
\]
Let $P$ denote the polyhedral set $|\KC|$.
Stokes' formula for polyhedral supercurrents 
\ref{stokesforpolyhedralcurrents}
yields
\begin{equation} \label{int1}
\int_{P}\Bigl(\dpa\dpb\phi\wedge\beta - \phi\, d'd''\beta\Bigr)
=\int_{\partial P}\phi\wedge d'\beta+
\int_{\partial P}\dpb\phi\wedge \beta.
\end{equation}
We write 
\[
\omega=\sum_{i\in I}\omega_i\wedge\delta_{C_i}
\]
for tropical cycles $C_i=(\KC_{\leq n-l},m_i)$ 
with suitable smooth  weight functions $m_i$
and  superforms $\omega_i$. Then we have
\begin{eqnarray*}
\int_{\partial P}\phi\wedge d'\beta
&=&\sum_{i\in I}\int_{\partial P}\phi\wedge d'\eta\wedge 
\omega_i\wedge\delta_{C_i}\\
&=&\sum_{i\in I}\sum\limits_{\sigma\in \KC_{n-l}}
\int_{\partial\sigma}m_{i\sigma}\phi_\sigma\, 
d'\eta\wedge\omega_i.
\end{eqnarray*}
For each $\sigma\in \KC_{n-l}$ and each face 
$\tau\in \KC_{n-l-1}$
we choose an element $\omega_{\sigma,\tau}$ as in 
(\ref{balancingcondition}).
We observe that the elements $\omega_{\tau,\sigma}$ used in
\cite[2.8]{gubler-forms} to define the boundary integrals 
$\int_{\partial \sigma}$
satisfy $\omega_{\tau,\sigma}=-\omega_{\sigma,\tau}$.
The definition of the boundary integral uses the contraction 
$\langle \,.\,, \omega_{\tau,\sigma}\rangle_{\{n-l\}}$ of the involved superform of type 
$(n-l,n-l)$ given by inserting $\omega_{\tau,\sigma}$ for the $(n-l)$-th variable and leads to
$$\int_{\partial P}\phi\wedge d'\beta = -\sum_{i\in I}\sum\limits_{\tau\in \KC_{n-l-1}}
\sum\limits_{\genfrac{}{}{0pt}{}{\sigma
\in \KC_{n-l}}{\tau \prec \sigma}}\int_\tau
\langle m_{i\sigma}\phi_\sigma\,  d'\eta\wedge\omega_i,
\omega_{\sigma,\tau}\rangle_{\{n-l\}}.$$
Given $i\in I$ and $\tau\in \KC_{n-l-1}$, the balancing condition 
\eqref{balancingcondition} for $C_i$
gives us the vector field
\[
\omega_{i\tau}:=\sum\limits_{\genfrac{}{}{0pt}{}{\sigma
\in \KC_{n-l}}{\tau \prec \sigma}}
m_{i\sigma}\omega_{\sigma,\tau}:\tau\to \L_\tau.
\]
We observe that $\phi_\sigma|_\tau=\phi_\tau$ for all $\tau\prec \sigma$ 
yielding
$$
\int_{\partial P}\phi\wedge d'\beta
= -\sum_{i\in I}\sum\limits_{\tau\in \KC_{n-1}}\int_\tau
\langle \phi_\tau\, d'\eta\wedge\omega_i,\omega_{i\tau}\rangle_{\{n-l\}}
= 0
$$
as a superform contracted with a vector field with values in $\L_\tau$ restricts to zero on $\tau$.  
Using this in \eqref{int1}, we obtain
\begin{equation} \label{crucial residue formula}
\int_{|\KC|}\phi\, d'd''\beta-\int_{|\KC|} \dpa\dpb \phi\wedge\beta
=-\int_{\partial {|\KC|}}\dpb\phi\wedge \beta.
\end{equation}
Our claim is then a consequence of the following lemma.
\qed

\begin{lem}\label{tropicallemma} 
Let $\phi$ be a piecewise smooth function on $|\KC|$. For any $\delta$-preform $\beta\in P_c^{n-1,n-1}(|\Ccal|)$ with compact support, we have
\begin{equation}\label{tropicallemmag1}
\int_{\partial |\KC|}\dpb\phi\wedge \beta=-\int_{|\KC|}  \delta_{\phi\cdot C}\wedge \beta, \quad 
\int_{\partial |\KC|}\dpa\phi\wedge \beta=\int_{|\KC|}  \delta_{\phi\cdot C}\wedge \beta.
\end{equation}
\end{lem}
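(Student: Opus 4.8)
The plan is to compute both boundary integrals directly, unwinding the definition of the boundary integral of a polyhedral supercurrent (Definition \ref{intdelta}, Remark \ref{currents and extension}, Remark \ref{preform properties}) and then matching the outcome with the combinatorial formula \eqref{cornerlocusg1} for the corner locus. I will carry out the first identity; the second then follows by the same computation with $d'$ in place of $d''$, the sign change being the usual one in the superform calculus (equivalently, apply the operator $J$, which exchanges $d'$ and $d''$ and fixes $\phi$). Since both sides are bilinear in $\beta$, I may assume $\beta=\omega\wedge\delta_{C'}$ for a superform $\omega\in A^{n-l-1,n-l-1}$ defined on a neighbourhood of $|\KC|$ in $N_\R$ and a tropical cycle $C'=(\KC_{\leq n-l},m')$ of dimension $n-l$ with smooth weights $m'$, where after a refinement $\KC$ is a common polyhedral complex of definition for $C$, $C'$ and $\phi$; the compact support of $\beta$ makes all the sums below finite. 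Writing $\beta$ as a polyhedral supercurrent, $\dpb\phi\wedge\beta=\sum_{\sigma\in\KC_{n-l}}(m'_\sigma\,d''\phi_\sigma\wedge\omega)\wedge\delta_\sigma$ is a polyhedral supercurrent of type $(n-1,n)$, so
\[
\int_{\partial|\KC|}\dpb\phi\wedge\beta=\sum_{\sigma\in\KC_{n-l}}\int_{\partial\sigma}m'_\sigma\,d''\phi_\sigma\wedge\omega .
\]

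For the key step I expand each $\int_{\partial\sigma}$ over the codimension-one faces $\tau\prec\sigma$ of $\sigma$, using the primitive normals $\omega_{\tau,\sigma}=-\omega_{\sigma,\tau}$ exactly as in the proof of Theorem \ref{deltatropicalpoincarelelong} (see \cite[2.8]{gubler-forms}); reindexing by $\tau$ running over $\KC_{n-l-1}$ gives
\[
\int_{\partial|\KC|}\dpb\phi\wedge\beta=-\sum_{\tau\in\KC_{n-l-1}}\int_\tau\Bigl(\sum_{\sigma\succ\tau}\bigl\langle m'_\sigma\,d''\phi_\sigma\wedge\omega,\ \omega_{\sigma,\tau}\bigr\rangle_{\{n-l\}}\Bigr)\Big|_\tau .
\]
I then apply the Leibniz rule for the interior product, $\langle d''\phi_\sigma\wedge\omega,v\rangle=\frac{\partial\phi_\sigma}{\partial v}\,\omega-d''\phi_\sigma\wedge\langle\omega,v\rangle$, so that the inner sum splits into two families. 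The first restricts on $\tau$ to $\bigl(\sum_{\sigma\succ\tau}m'_\sigma\frac{\partial\phi_\sigma}{\partial\omega_{\sigma,\tau}}\bigr)\omega|_\tau$. For the second I use $\phi_\sigma|_\tau=\phi_\tau$, hence $d''\phi_\sigma|_\tau=d''\phi_\tau$, together with the balancing condition for $C'$ to replace $\sum_{\sigma\succ\tau}m'_\sigma\,\omega_{\sigma,\tau}$ by the tangent vector $\omega'_\tau:=\sum_{\sigma\succ\tau}m'_\sigma\,\omega_{\sigma,\tau}\in\L_\tau$; the second family then restricts to $-d''\phi_\tau\wedge\langle\omega,\omega'_\tau\rangle|_\tau$. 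Since $\omega'_\tau$ is tangent to $\tau$ the interior product commutes with restriction to $\tau$, and one more application of the Leibniz rule together with the vanishing of $d''\phi_\tau\wedge\omega$ on $\tau$ (its $d''$-degree equals $n-l>\dim\tau$) collapses this to $-\frac{\partial\phi_\tau}{\partial\omega'_\tau}\,\omega|_\tau$. Collecting, the $\tau$-contribution equals
\[
\Bigl(\sum_{\sigma\succ\tau}m'_\sigma\frac{\partial\phi_\sigma}{\partial\omega_{\sigma,\tau}}-\frac{\partial\phi_\tau}{\partial\omega'_\tau}\Bigr)\omega|_\tau=m_\tau\,\omega|_\tau ,
\]
where $m_\tau$ is precisely the corner-locus weight of $\phi\cdot C'$ on $\tau$ given by \eqref{cornerlocusg1} applied to the tropical cycle $C'$. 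Hence $\int_{\partial|\KC|}\dpb\phi\wedge\beta=-\sum_\tau\int_\tau m_\tau\,\omega|_\tau=-\int_{|\KC|}\omega\wedge\delta_{\phi\cdot C'}$.

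It remains to identify $\omega\wedge\delta_{\phi\cdot C'}$ with $\delta_{\phi\cdot C}\wedge\beta$, which follows from \ref{corner locus and delta-preform} (writing $\delta_{\phi\cdot C}=\delta_{\tilde\phi\cdot N_\R}\wedge\delta_C$ for a piecewise smooth extension $\tilde\phi$ of $\phi$) together with the associativity of the corner-locus product (Proposition \ref{asscommcornerlocus}, using that intersection with the codimension-$0$ cycle $N_\R$ is the identity): one obtains $\delta_{\phi\cdot C}\wedge\delta_{C'}=\delta_{\phi\cdot C'}$ as $\delta$-preforms on $|\KC|$, hence $\delta_{\phi\cdot C}\wedge\beta=\omega\wedge\delta_{\phi\cdot C'}$, which finishes the first identity; the second follows by the parallel computation with $d'$ as indicated above. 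I expect the middle paragraph to be the main obstacle: keeping the boundary-integral and contraction conventions — in particular the signs coming from $\omega_{\tau,\sigma}=-\omega_{\sigma,\tau}$ and from the two Leibniz rules — under control, and recognising that, after invoking the balancing condition for $C'$, the terms obtained by contracting against $\omega$ reassemble into exactly the correction term $-\partial\phi_\tau/\partial\omega'_\tau$ of \eqref{cornerlocusg1}. This is the point at which the piecewise (rather than smooth) nature of $\phi$ becomes essential — for smooth $\phi$ the two families cancel and $\phi\cdot C'=0$ — and where the vanishing of $d''$-forms of degree exceeding $\dim\tau$ is used.
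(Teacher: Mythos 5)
Your argument is correct and is essentially the paper's own proof: expand the boundary integral over codimension-one faces $\tau\prec\sigma$, split the contraction of $d''\phi_\sigma\wedge\omega$ by the Leibniz rule, use the balancing condition of the cycle carrying $\beta$ to form the tangent vector $\omega'_\tau\in\L_\tau$ and the degree-vanishing of $d''\phi_\tau\wedge\omega$ on $\tau$ to recognise exactly the corner-locus weight \eqref{cornerlocusg1}, and then identify $\omega\wedge\delta_{\phi\cdot C'}$ with $\delta_{\phi\cdot C}\wedge\beta$ via Proposition \ref{asscommcornerlocus}, with the second identity obtained from $J$ and symmetry, just as in the paper. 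The only point worth making explicit in your reduction is that, since $\beta\in P_c^{n-1,n-1}(|\Ccal|)$ is by definition the restriction of an ambient $\delta$-preform, the cycle $C'$ may (and should) be taken of the form $\tilde C'\cdot C$ for an ambient tropical cycle $\tilde C'$ — the paper states this explicitly, and it is what your final identification $\delta_{\phi\cdot C}\wedge\delta_{C'}=\delta_{\phi\cdot C'}$ implicitly relies on.
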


\proof
We prove only the first formula. The second formula follows by applying the first one to $J^*(\beta)$ and using 
the symmetry of the supercurrent of integration. 
We use the notations introduced in the proof of 
Theorem \ref{deltatropicalpoincarelelong}. 
We may assume that $\beta \in P^{n-l-1,n-l-1,l}(|\KC|)$ and that
$$\beta = \sum_{i \in I} \eta_i \wedge \delta_{C_i}$$
for tropical cycles $C_i=(\KC_{\leq n-l},m_i)$ 
with suitable smooth  weight functions $m_i$
and  superforms $\eta_i \in A^{n-l-1,n-l-1}(|\KC|)$. 
Since $\beta$ is a $\delta$-preform on $C$, we may assume that there is a tropical 
cycle $\widetilde{C}_i$ of codimension $l$ in $N_\R$ such that $C_i= \widetilde{C}_i . C$ for 
every $i \in I$. Recall that $\frac{\partial}{\partial\omega_{\sigma,\tau}}$ denotes the partial derivative along 
the tangential vector $\omega_{\sigma,\tau}$. 
An exercise in linear algebra gives
\[
\langle d'' \phi_\sigma\wedge\eta_i,
\omega_{\sigma,\tau}\rangle_{\{2n-2l-1\}}=
\frac{\partial \phi_\sigma}{\partial\omega_{\sigma,\tau}}\wedge\eta_i+
d'' \phi_\sigma\wedge\langle  \eta_i,\omega_{\sigma,\tau}
\rangle_{\{2n-2l-2\}}
\]
for all $i\in I$, $\sigma\in \KC_{n-l}$ and all faces $\tau$ of $\sigma$
of codimension one.
Furthermore one sees easily that
\[
\int_\tau d''\phi_\tau\wedge\langle \eta_i,
\omega_{i\tau}\rangle_{\{2n-2l-2\}}
=-\int_\tau\frac{\partial  \phi_\tau}{\partial \omega_{i\tau}}\wedge
\eta_i.
\]
Let $\phi\cdot C_i=(\KC_{\leq n-l-1},m_i)$ denote the corner locus of 
$\phi$ on $C_i$ introduced in \ref{cornerlocus}.
Using the last two formulas and the definition of the weight functions
$m_{i\tau}$ of the corner locus in (\ref{cornerlocusg1}), we get
\begin{eqnarray*}
&&\sum\limits_{\genfrac{}{}{0pt}{}{\sigma
\in \KC_{n-l}}{\tau \prec \sigma}}\int_\tau
\langle m_{i\sigma}
d''\phi_\sigma\wedge \eta_i,\omega_{\sigma,\tau}\rangle_{\{2n-2l-1\}}\\
&=&\sum\limits_{\genfrac{}{}{0pt}{}{\sigma
\in \KC_{n-l}}{\tau \prec \sigma}}\int_\tau \Bigl(m_{i\sigma}
\frac{\partial \phi_\sigma}{\partial\omega_{\sigma,\tau}}\wedge \eta_i
+d'' \phi_\tau\wedge
\langle  \eta_i,\sum\limits_{\genfrac{}{}{0pt}{}{\sigma
\in \KC_{n-l}}{\tau \prec \sigma}}m_{i\sigma}\omega_{\sigma,\tau}\rangle_{\{2n-2l-2\}}
\Bigr)\\
&=&\sum\limits_{\genfrac{}{}{0pt}{}{\sigma
\in \KC_{n-l}}{\tau \prec \sigma}}\int_\tau \Bigl(m_{i\sigma}
\frac{\partial \phi_\sigma}{\partial\omega_{\sigma,\tau}}\wedge \eta_i
-\frac{\partial  \phi_\tau}{\partial \omega_{i\tau}}\wedge
\eta_i\Bigr)\\
&=&\int_\tau \Bigl(
\sum\limits_{\genfrac{}{}{0pt}{}{\sigma
\in \KC_{n-l}}{\tau \prec \sigma}}m_{i\sigma}
\frac{\partial \phi_\sigma}{\partial\omega_{\sigma,\tau}}-
\frac{\partial  \phi_\tau}{\partial \omega_{i\tau}}
\Bigr)\wedge\eta_i\\
&=&\int_\tau m_{i\tau}\,\eta_i
\end{eqnarray*}
for all $i\in I$ and $\tau\in \KC_{n-l-1}$.
For the polyhedral set  $P:=|\KC|$, we have
\begin{eqnarray*} 
\int_{\partial P}\dpb\phi\wedge\beta
&=&\sum_{i\in I}\sum_{\sigma\in \KC_{n-l}}\int_{\partial\sigma}
m_{i\sigma}d''\phi_\sigma\wedge\eta_i\\
&=&-\sum_{i\in I}\sum_{\tau\in \KC_{n-l-1}}\sum_{
\genfrac{}{}{0pt}{}{\sigma
\in \KC_{n-l}}{\tau \prec \sigma}}\int_{\tau}
\langle m_{i\sigma}d''\phi_\sigma\wedge \eta_i,
\omega_{\sigma,\tau}\rangle_{\{2n-2l-1\}}\\
&=&-\sum_{i\in I}\sum\limits_{\tau\in \KC_{n-l-1}}
\int_\tau m_{i\tau}\eta_i\\
&=&-\sum_{i\in I}\int_P
\eta_i\wedge\delta_{\phi\cdot C_i}.
\end{eqnarray*}
We get 
$\delta_{\phi\cdot C_i}=\delta_{\phi\cdot \widetilde C_i\cdot C}=\delta_{\widetilde C_i}\wedge\delta_{\phi\cdot C}$
from Proposition \ref{asscommcornerlocus}. Hence
\[
\sum_{i\in I}\int_P\eta_i\wedge\delta_{\phi\cdot C_i}
=\int_P\Bigl(\sum_{i\in I}
\eta_i\wedge\delta_{\widetilde C_i}\Bigr)\wedge\delta_{\phi\cdot C}\\
=\int_P \delta_{\phi\cdot C}\wedge \beta
\]
yields our claim.
\qed

\begin{rem} \label{generalization of tropicallemma}
In the situation of Lemma \ref{tropicallemma} 
we consider a $\delta$-preform $\beta\in P^{n-1,n-1}(|\KC|)$ on $C$. However we do no longer assume that $\beta$ has 
compact support. Instead we  make the weaker assumption that 
the polyhedral supercurrents $\dpb\phi\wedge \beta \in D_{1,0}(|\KC|)$ 
(resp. $\dpa\phi\wedge \beta \in D_{0,1}(|\KC|)$) 
and
$\delta_{\phi\cdot C} \wedge  \beta \in D_{0,0}(|\KC|)$
have compact support.
Then the first (resp. second) formula in (\ref{tropicallemmag1}) still hold for $\beta$.
In order to prove this, one chooses a function $f\in A^0_c(|\KC|)$
which is equal to $1$ on the above compact supports and
applies Lemma \ref{tropicallemma} to $f\cdot\beta$.
\end{rem}

\begin{cor}\label{tropicalpoincarelelong} 
Let $C=(\KC,m)$ be a tropical cycle with constant weight functions 
of pure dimension $n$ on $N_\R$ and
$\phi:|\KC|\to \R$ a piecewise smooth function with corner locus $\phi\cdot C$. 
Then we have
\begin{equation}\label{tropicalpoincarelelongg1}
d'd''[\phi]-[\dpa\dpb\phi]=\delta_{\phi\cdot C}
\end{equation}
in $D_{n-1,n-1}^\KC(|\KC|)$.
\end{cor}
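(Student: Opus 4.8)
The plan is to read off Corollary \ref{tropicalpoincarelelong} as the special case of Theorem \ref{deltatropicalpoincarelelong} obtained by choosing the auxiliary $\delta$-preform to be $\delta_C$ itself. Since $C=(\KC,m)$ has constant weight functions, the supercurrent $\omega:=1\wedge\delta_C\in P^{0,0,0}(|\KC|)$ satisfies $d'\omega=d''\omega=0$ (as recalled just before Proposition \ref{diffsmoothtropcyc}), so it is an admissible choice in Theorem \ref{deltatropicalpoincarelelong} with $p=q=l=0$. I would then let $\eta$ range over all of $A^{n-1,n-1}_c(|\KC|)$; for such $\eta$ the $\delta$-preform $\beta:=\eta\wedge\omega=\eta\wedge\delta_C$ has compact support, and Theorem \ref{deltatropicalpoincarelelong} gives
\[
\int_{|\KC|}\phi\,d'd''(\eta\wedge\delta_C)-\int_{|\KC|}\dpa\dpb\phi\wedge(\eta\wedge\delta_C)=\int_{|\KC|}\delta_{\phi\cdot C}\wedge(\eta\wedge\delta_C).
\]

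It then remains to identify these three integrals with the values of $d'd''[\phi]$, $[\dpa\dpb\phi]$ and $\delta_{\phi\cdot C}$ on $\eta$, where we write $[\phi]=\phi\wedge\delta_C\in D^\KC_{n,n}(|\KC|)$ and $[\dpa\dpb\phi]=\dpa\dpb\phi\wedge\delta_C\in D^\KC_{n-1,n-1}(|\KC|)$. For the second integral this is immediate from art \ref{formsaffinespace} and Remark \ref{preform properties}(iii), which computes the integral over $|\KC|$ of a top-degree $\delta$-preform on $C$. For the right-hand side I would use $\delta_{\phi\cdot C}=\delta_{\tilde\phi\cdot N_\R}\wedge\delta_C$ from \ref{corner locus and delta-preform} (with $\tilde\phi$ any piecewise smooth extension of $\phi$ to $N_\R$, which exists by Proposition \ref{extension of piecewise smooth}) to rewrite $\delta_{\phi\cdot C}\wedge(\eta\wedge\delta_C)=\eta\wedge\delta_{\phi\cdot C}$ as a top-degree $\delta$-preform on $C$, whose integral over $|\KC|$ is $\langle\delta_{\phi\cdot C},\eta\rangle$ by the same remark and the definition of the current of integration over the corner locus. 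For the first integral I would apply the Leibniz rule together with $d'\delta_C=d''\delta_C=0$ to obtain $d'd''(\eta\wedge\delta_C)=(d'd''\eta)\wedge\delta_C$, so that this integral equals $\langle[\phi],d'd''\eta\rangle$ by the definition of $[\phi]$ and of the integral of polyhedral supercurrents; comparing with the sign conventions for $d'$ and $d''$ on supercurrents, the two sign factors picked up from differentiating $[\phi]$ twice are compensated by the anticommutation relation $d''d'=-d'd''$ of superforms, so that $\langle[\phi],d'd''\eta\rangle=\langle d'd''[\phi],\eta\rangle$.

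Since an element of $D^\KC_{n-1,n-1}(|\KC|)$ is determined by its values on all $\eta\in A^{n-1,n-1}_c(|\KC|)$, these three identifications turn the displayed equality into $d'd''[\phi]-[\dpa\dpb\phi]=\delta_{\phi\cdot C}$ in $D^\KC_{n-1,n-1}(|\KC|)$, as claimed. I expect no real obstacle here: the analytic substance has already been absorbed in Theorem \ref{deltatropicalpoincarelelong}, whose proof rests on Stokes' theorem and the balancing condition, and the only point in the present deduction that requires care is the bookkeeping of the repeated factors $\delta_C$ and of the sign conventions for the differentials on supercurrents when passing between integrals of polyhedral supercurrents and pairings of currents against test forms.
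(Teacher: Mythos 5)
Your proof is correct and takes essentially the same route as the paper: the paper's own argument likewise reduces the identity to testing against $\alpha\in A_c^{n-1,n-1}(|\KC|)$ and quotes Theorem \ref{deltatropicalpoincarelelong}, with the choice $\omega=1\wedge\delta_C$ (admissible by constancy of the weights), $\eta=\alpha$ left implicit. You merely make that choice explicit and carry out the identification of the three integrals with $\langle d'd''[\phi],\alpha\rangle$, $\langle[\dpa\dpb\phi],\alpha\rangle$ and $\langle\delta_{\phi\cdot C},\alpha\rangle$, reading $[\phi]=\phi\wedge\delta_C$ and $[\dpa\dpb\phi]=\dpa\dpb\phi\wedge\delta_C$ (i.e.\ with the weights of $C$), which is indeed the interpretation consistent with the weighted corner locus on the right-hand side; there is no gap.
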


\proof
Both sides of (\ref{tropicalpoincarelelongg1}) have support in $|\KC|$. 
Hence it suffices to show that
\[
\bigl(d'd''[\phi]-[\dpa\dpb\phi]\bigr)(\alpha)=\delta_{\phi\cdot C}(\alpha)
\]
holds for all $\alpha\in A_{c}^{n-1,n-1}(|\KC|)$ and this is a special
case of Theorem \ref{deltatropicalpoincarelelong}.
\qed

\begin{cor}\label{weaktropicalpoincarelelong} 
Let $\phi:|\KC|\to \R$ a piecewise linear function on $C$.
Then we have
\begin{equation}
d'd''[\phi]=\delta_{\phi\cdot C}
\end{equation}
in $D_{n-1,n-1}^\KC(N_\R)$.
\end{cor}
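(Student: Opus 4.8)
The plan is to deduce the statement directly from the tropical Poincar\'e--Lelong formula in Corollary \ref{tropicalpoincarelelong}, which for an arbitrary piecewise smooth $\phi$ on $|\KC|$ gives
\[
d'd''[\phi]-[\dpa\dpb\phi]=\delta_{\phi\cdot C}
\]
in $D_{n-1,n-1}^\KC(|\KC|)$. Thus the only point to verify is that the residual term $[\dpa\dpb\phi]$ vanishes when $\phi$ is piecewise \emph{linear}, and then the claimed identity follows at once.

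\textbf{Vanishing of the residual term.} By Definition \ref{piecewise smooth function}, since $\phi$ is piecewise linear we may (after passing to a common refinement with a polyhedral complex of definition for $C$, as in the proof of Corollary \ref{tropicalpoincarelelong}) choose $\KC$ so that for each $\sigma\in\KC$ the restriction $\phi_\sigma$ is the restriction of an integral $\R$-affine function on $\A_\sigma$. By Remark \ref{properties of piecewise smooth superforms}(iii), the piecewise smooth superform $\dpa\dpb\phi$ is given face by face by the smooth superforms $d'd''\phi_\sigma$. For an affine function $\phi_\sigma$ one has $d''\phi_\sigma=\sum_j c_j\,d''x_j$ with constant $c_j$, hence $d'(d''\phi_\sigma)=0$; equivalently, the Hessian of $\phi_\sigma$ vanishes. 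Therefore $\dpa\dpb\phi=0$ as a piecewise smooth superform on $|\KC|$, and consequently $[\dpa\dpb\phi]=0$ as a polyhedral supercurrent in $D_{n-1,n-1}^\KC(|\KC|)$.

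\textbf{Conclusion and main difficulty.} Substituting $[\dpa\dpb\phi]=0$ into the formula of Corollary \ref{tropicalpoincarelelong} gives $d'd''[\phi]=\delta_{\phi\cdot C}$, as desired. There is essentially no serious obstacle here: the statement is a direct specialization of the already-established tropical Poincar\'e--Lelong formula. The only mild care needed is bookkeeping with polyhedral complexes of definition---one must work with a single $\KC$ that is simultaneously a polyhedral complex of definition for $C$, for $\phi$ (witnessing piecewise linearity with affine pieces), and for the corner locus $\phi\cdot C$---together with the observation that "piecewise linear'' here means the pieces $\phi_\sigma$ are genuinely affine, which is exactly what forces $d'd''\phi_\sigma=0$ and makes the residual current disappear.
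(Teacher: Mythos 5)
Your proposal is correct and follows exactly the paper's route: the paper's proof is simply to invoke Corollary \ref{tropicalpoincarelelong}, with the (implicit) observation that $\dpa\dpb\phi=0$ since each affine piece $\phi_\sigma$ has $d'd''\phi_\sigma=0$, which you have spelled out explicitly.
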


\proof
This follows from Corollary \ref{tropicalpoincarelelong}.
\qed

\section{Delta-forms on algebraic varieties}\label{deltaalgvar}

Let $X$ be an algebraic variety over $K$  of dimension $n$ 
and $X^\an$ the associated Berkovich space.

We introduce the algebra $B(W)$ of $\delta$-forms on
an open subset $W$ of  $X^\an$. We use tropicalizations
as in \cite{chambert-loir-ducros} and \cite{gubler-forms}
to pullback algebras of $\delta$-preforms to $X^\an$.
After a suitable sheafification process we obtain the sheaves of
algebras $B$ and $P$ of 
$\delta$-forms and generalized $\delta$-forms.
We show that $B$ is a sheaf of bigraded differential
$\R$-algebras with respect to natural differentials 
$d'$ and $d''$.

\begin{art}\label{chart}
Consider a {\it tropical chart} $(V,\varphi_U)$ on $X$
as in \cite[4.15]{gubler-forms}. 
It consists of a very affine   Zariski open $U$ in 
$X$. Recall that $U$ is called {\it very affine} if $U$ has a closed 
immersion into a multiplicative torus. Then 
there is a canonical torus $T_U$ with cocharacter group 
\[
N_U={\rm Hom}_\Z(\KO(U)^\times/K^\times,\Z),
\]
and a canonical closed embedding
$\varphi_U:U\to T_U$, unique up to translation (see \cite[4.12, 4.13]{gubler-forms} for details). We get a tropicalization map
\[
{\rm trop}_U:U^\an
\stackrel{\varphi_U^\an}{\longrightarrow} 
T_U^\an\stackrel{\rm trop}{\longrightarrow} N_{U,\R}
\]
associated with $\varphi_U$. The second ingredient of a tropical chart is 
 an open subset 
$V\subseteq U^{\rm an}$ for which there is  an open subset 
$\widetilde\Omega$ of $N_{U,\R}$ with $V=\trop_U^{-1}(\widetilde\Omega)$.

The set ${\rm trop}_{U}(U^\an)$ is the support
of a canonical tropical cycle ${\rm Trop}\,(U)=({\rm Trop}\,(U),m_U)$
with integral weights. It is the {\it tropical variety} associated to the closed 
subvariety $U$ of $T_U$ 
equipped with its canonical tropical weights (see \cite[\S 3, \S 13]{gubler-guide}). 
Note that  
$V = \trop_U^{-1}(\Omega)$ for the open subset 
$\Omega := \widetilde\Omega \cap \Trop(U)$ of $\Trop(U)$. 
\end{art}

\begin{definition}\label{compatible-pair}
Let $f:X'\to X$ be a morphism of algebraic varieties over $K$.
We say that charts $(V,\varphi_{U})$ and $(V',\varphi_{U'})$
of $X$ and $X'$ respectively are {\it compatible with respect to $f$},
if we have $f(U')\subseteq U$ and $f^{\rm an}(V')\subseteq V$.
\end{definition}

\begin{art}\label{sturmfelstevelev}
Let $f:X'\to X$ be a morphism of algebraic varieties over $K$
Given compatible charts $(V,\varphi_{U})$ and $(V',\varphi_{U'})$
of $X$ and $X'$, we obtain a commutative diagram
\[
\xymatrix{
V'\ar[d]^{f^{\rm an}|_{V'}}\ar@{^{(}->}[r]& 
(U')^{\rm an}\ar[d]^{f^{\rm an}|_{U'_{\rm an}}}\ar[r]^{\varphi_{U'}}&
T_{U'}\ar[d]^{\psi}\ar[r]^{\trop}&
N_{U',\R}\ar[d]^{F}\\
V\ar@{^{(}->}[r]&  
U^{\rm an}\ar[r]^{\varphi_{U}}&
T_U\ar[r]^\trop&
N_{U,\R}}
\]
where $\psi:T_{U'} \rightarrow T_U$ is the canonical affine homomorphism 
of tori induced by $\Ocal^\times(U) \to \Ocal^\times(U')$ and where $F:N_{U',\R} \rightarrow N_{U,\R}$ is the induced  
canonical integral $\Gamma$-affine map. These maps are  unique up to translation, but this ambuigity will never play a role. If $\Omega'$ is the open subset of 
$\Trop(U')$ with $\trop_{U'}^{-1}(\Omega')=V'$, then $\Omega'\subseteq F^{-1}(\Omega)
\cap \Trop(U')$.

{We define ${\rm deg}\,(f)=[K(X'):K(X)]$ if
$f$ is dominant and} the  extension of function fields is finite.
Otherwise we set ${\rm deg}\,(f)=0$.
{Let $Y$ be the schematic image of $f$ and $f':X'\to Y$ the 
induced morphism.} 
Then a formula of Sturmfels and Tevelev which was generalized by
Baker, Payne and Rabinoff to the present setting gives
\begin{equation}\label{cor-sturmfels-tevelev}
F_*\,{\rm Trop}\,(U')={{\rm deg}\,(f')}\cdot{\rm Trop}\,(\overline{f(U')})
\end{equation} 
as an equality of tropical cycles (see \cite{sturmfels-tevelev},
\cite[\S 7]{baker-payne-rabinoff}
or \cite[Thm. 13.17]{gubler-guide}).
\end{art}

\begin{definition}\label{presheaf} 
Let  us consider a tropical chart $(V,\varphi_U)$ of $X$. As above, we 
consider the open subset $\Omega :=\trop_U(V)$ of $\Trop(U)$. 
We choose an open subset $\widetilde{\Omega}$ of $N_{U,\R}$ with 
$\Omega = \widetilde\Omega \cap \Trop(U)$ and a $\delta$-preform 
$\tilde\alpha \in P^{p,q}(\widetilde\Omega)$. 
For any  morphism $f:X'\to X$  {of varieties over $K$} 
and a tropical chart $(V',\varphi_{U'})$ of 
$X'$ compatible with $(V,\varphi_U)$,  we define 
$\Omega':=\trop_{U'}(V')$. 
We choose an open subset $\widetilde\Omega'$ of 
 {$F^{-1}(\widetilde\Omega)$} with $\widetilde\Omega' \cap \Trop(U')=\Omega'$. 
By Proposition \ref{deltaformsbasicprop}, we have 
$F^*(\tilde\alpha) \in P^{p,q}(\widetilde\Omega')$. 
We denote by $N^{p,q}(V,\varphi_U)$ the subspace given by elements
$\tilde\alpha\in P^{p,q}(\widetilde\Omega)$ such that we have
$F^*(\tilde{\alpha})|_{\Omega'}=0 \in P^{p,q}(\Omega')$ for 
all compatible pairs of charts as above (see \ref{definitionpreformaufOmega} 
for the definition of the restriction).
We define
\[
P^{p,q}(V,\varphi_{U}):=
P^{p,q}(\widetilde\Omega)/N^{p,q}(V,\varphi_U).
\]
A partition of unity argument shows that this definition is independent 
of the choice of $\widetilde\Omega$.
We call an element in $P^{p,p}(V,\varphi_{U})$ {\it symmetric
(resp. anti-symmetric)} if it can be represented
by a symmetric (resp. anti-symmetric) $\delta$-preform in 
 {$P^{p,p}(\widetilde\Omega)$}.
We define
\[
P^{p,q,l}(V,\varphi_{U}):=
P^{p,q,l}(\widetilde\Omega)/\bigl(P^{p,q,l}(\widetilde\Omega)\cap 
N^{p,q}(V,\varphi_U)\bigr).
\]
using the $\delta$-preforms on $\widetilde\Omega$ of codimension $l$ from Definition \ref{defdeltaforms}. 
\end{definition}

\begin{rem} \label{properties of delta-preforms on charts}
(i) The $\wedge$-product descends to the space 
\[
P(V,\varphi_U):= \bigoplus_{p,q\geq 0} P^{p,q}(V,\varphi_U)
\] 
and we get a bigraded anti-commutative
$\R$-algebra which contains $A(\Omega)$ as a bigraded subalgebra. 

(ii) If $(V',\varphi_{{U'}})$ and  $(V,\varphi_U)$ are compatible charts with 
respect to $f:X' \rightarrow X$
as in \eqref{compatible-pair}, then  we get a canonical bigraded homomorphism
\[
f^*:P(V,\varphi_{U})\to P(V',\varphi_{U'})
\]
of bigraded $\R$-algebras which is defined for $\alpha \in P^{p,q}(V,\varphi_U)$ 
as follows: By definition, $\alpha$ is 
represented by  some $\tilde{\alpha} \in P^{p,q}(\widetilde\Omega)$. 
Let $\Omega':=\trop_{U'}(V')$ and choose an open subset 
$\widetilde\Omega'$ of $F^{-1}(\widetilde\Omega)$ with $\Omega'= \widetilde\Omega' \cap \Trop(U')$. 
Then  we define $f^*(\alpha) \in P^{p,q}(V',\varphi_{U'})$ as the class of $F^*(\tilde{\alpha}) 
\in P^{p,q}(\widetilde\Omega')$. If  $X=X'$ and $f=\id$, then 
$(V',\varphi_{U'})$ is a tropical subchart of $(V,\varphi_U)$ and we write $\alpha|_{V'}$ 
for the pull-back of $\alpha \in P^{p,q}(V,\varphi_U)$.  

Note that the definition of $f^*(\alpha)$ does not depend on the choice of the 
representative $\tilde{\alpha}$. 

However, the 
elements of $P^{p,q}(V,\varphi_U)$ do not only depend on the {\it restriction} 
\begin{equation} \label{restriction of delta to Omega}
\alpha|_\Omega:= \tilde{\alpha}|_\Omega = \tilde\alpha \wedge \delta_{\Trop(U)}\in P^{p,q}(\Omega) \subseteq D^{p,q}(\Omega)
\end{equation}
{\it to $\Omega$} 
as  {Example \ref{rational conic} below shows that it} might happen that 
two different elements $\alpha, \beta \in P^{p,q}(V,\varphi_U)$ satisfy 
$\alpha|_\Omega=\beta|_\Omega \in P^{p,q}(\Omega)$. 
The purpose of our definition of $P(V,\varphi_U)$  
is to have a pull-back as above at hand. 
Here we use the fact that we always have a pull-back from tropical cycles on $N_{U,\R}$ to tropical
cycles on $N_{U',\R}$, but there  {is} a pull-back available from tropical 
cycles on $\Trop(U)$ to tropical cycles on $\Trop(U')$ only if these tropical 
varieties are smooth (see \cite{francois-rau}).   {To have a pull-back available, we  consider  all morphisms $f:X' \to X$ of varieties over $K$ in the definition of $N^{p,q}(V,\varphi_{U})$ and not only open immersions.}
\end{rem}

\begin{art} \label{differentiation and closedness}
 As mentioned already in \ref{preforms are not d-invariant},
we have the problem that the differential operators $d'$ 
and $d''$ are not defined on the algebra $P(V,\varphi_U)$.
For $\alpha$ in $P^{p,q}(V,\varphi_U)$ and every compatible tropical chart 
$(V',\varphi_{U'})$ with respect to $f:X' \to X$, we use the 
above notation. 
We get 
a $\delta$-preform $f^*(\alpha)|_{\Omega'}= F^*(\tilde{\alpha})|_{\Omega'} 
\in P^{p,q}(\Omega')$. 
Recall that $f^*(\alpha)|_{\Omega'}$ is a supercurrent on $\Omega'$. 
We differentiate it in the sense of supercurrents to get 
$d'[f^*(\alpha)|_{\Omega'}] \in D(\Omega')$, but it has  not to be a $\delta$-preform
on $\Omega'$. In the following construction, we pass to a convenient 
subalgebra of $P(V, \varphi_U)$ which is invariant under $d'$ and $d''$. 

In a preliminary step, we consider the elements $\omega$ of 
$P^{p,q}(V,\varphi_{U})$ (resp. $P^{p,q,l}(V,\varphi_{U})$)  
satisfying the {\it closedness condition} 
\begin{equation} \label{closedness condition}
d'[f^*(\omega)|_{\Omega'}]=d''[f^*(\omega)|_{\Omega'}]=0 \in D(\Omega')
\end{equation}
for every tropical chart $(V',\varphi_{U'})$ which is compatible with 
$(V,\varphi_U)$ with respect to $f:X' \rightarrow X$. These elements form a  
subspace $Z^{p,q}(V, \varphi_U)$ of $P^{p,q}(V,\varphi_{U})$ 
(resp. $Z^{p,q,l}(V, \varphi_U)$ of $P^{p,q,l}(V,\varphi_{U})$) and we define 
\[
Z(V,\varphi_U):=\bigoplus_{p,q\geq 0} Z^{p,q}(V,\varphi_U)
=\bigoplus_{p,q,l\geq 0} Z^{p,q,l}(V,\varphi_U)
\] 
as usual. 
\end{art}

\begin{prop} \label{properties of omega-preforms}
Using the notation above, $Z(V,\varphi_U)$ is a bigraded 
$\R$-sub\-al\-ge\-bra of $P(V,\varphi_U)$.  
\end{prop}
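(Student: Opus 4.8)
The plan is to check the two properties defining a bigraded $\R$-subalgebra separately; the bigrading is automatic from the definition $Z(V,\varphi_U) = \bigoplus_{p,q\geq 0} Z^{p,q}(V,\varphi_U)$, so it suffices to see that $Z(V,\varphi_U)$ is an $\R$-linear subspace of $P(V,\varphi_U)$ which contains the unit and is closed under the wedge-product.

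First I would treat linearity, which is immediate: for a fixed compatible tropical chart $(V',\varphi_{U'})$ with respect to a morphism $f\colon X'\to X$, the assignment $\omega\mapsto d'[f^*(\omega)|_{\Omega'}]$ (and likewise for $d''$) is a composite of $\R$-linear maps — the algebra homomorphism $f^*\colon P(V,\varphi_U)\to P(V',\varphi_{U'})$ from Remark \ref{properties of delta-preforms on charts}(ii), the restriction to $\Omega' := \trop_{U'}(V')$, and the supercurrent differential — so the closedness condition \eqref{closedness condition} cuts out a linear subspace $Z^{p,q}(V,\varphi_U)$ of $P^{p,q}(V,\varphi_U)$. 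The constant function $1$ lies in $Z^{0,0}(V,\varphi_U)$ since $f^*(1)=1$ and $d'[1]=d''[1]=0$.

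The main step is closure under $\wedge$, and this is where I would be careful. Let $\gamma\in Z^{p,q}(V,\varphi_U)$, $\gamma'\in Z^{p',q'}(V,\varphi_U)$, and fix a compatible tropical chart $(V',\varphi_{U'})$ with respect to $f\colon X'\to X$, writing $\Omega' := \trop_{U'}(V')$. By Remark \ref{properties of delta-preforms on charts}, $f^*$ is a homomorphism of bigraded $\R$-algebras, and the restriction $\tilde\beta\mapsto\tilde\beta\wedge\delta_{\Trop(U')}$ descends to a well-defined homomorphism of bigraded algebras $P(V',\varphi_{U'})\to P(\Omega')$ for the product on $P(\Omega')$ from Remark \ref{preform properties}(i) — well-definedness because $N^{p,q}(V',\varphi_{U'})$ restricts to $0$ on $\Omega'$ (take $f=\id$ in its definition), multiplicativity because $P(\widetilde\Omega')\to P(\Omega')$ is multiplicative. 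Hence $f^*(\gamma\wedge\gamma')|_{\Omega'} = \bigl(f^*(\gamma)|_{\Omega'}\bigr)\wedge\bigl(f^*(\gamma')|_{\Omega'}\bigr)$ as $\delta$-preforms on $\Omega'$. Now $\Omega'$ is an open subset of $|\Trop(U')|$, and the tropical variety $\Trop(U')$ carries integral — hence constant — weight functions, so Lemma \ref{erstesproduktlemma} applies on $\Omega'$: since $f^*(\gamma)|_{\Omega'}$ and $f^*(\gamma')|_{\Omega'}$ are $d'$-closed and $d''$-closed by the hypothesis on $\gamma$ and $\gamma'$, so is their product. As the compatible chart was arbitrary, $\gamma\wedge\gamma'$ satisfies \eqref{closedness condition}, i.e. $\gamma\wedge\gamma'\in Z^{p+p',q+q'}(V,\varphi_U)$, and the anti-commutative bigraded algebra structure on $P(V,\varphi_U)$ restricts to $Z(V,\varphi_U)$.

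The only genuine subtlety — and the point I expect to need the most care — is the reduction just used to Lemma \ref{erstesproduktlemma}: one must check that restriction to $\Omega'$ intertwines the wedge-products on $P(V',\varphi_{U'})$ and on $P(\Omega')$, and that the constant-weight hypothesis of Lemma \ref{erstesproduktlemma} is indeed met here — which it is, precisely because tropical varieties of very affine opens come equipped with integral weights. Everything else is formal bookkeeping with the definitions in \ref{differentiation and closedness}.
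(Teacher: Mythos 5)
Your proof is correct and follows the same route as the paper: the only non-trivial point is closure under $\wedge$, and it is obtained by applying Lemma \ref{erstesproduktlemma} to the restricted pull-backs $f^*(\gamma)|_{\Omega'}$, $f^*(\gamma')|_{\Omega'}$ on the tropical cycle $\Trop(U')$ (with its constant integral weights) for every compatible chart. Your additional bookkeeping — linearity of the closedness condition, the unit, and the compatibility of restriction to $\Omega'$ with the wedge-product — just makes explicit what the paper leaves implicit.
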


\begin{proof} 
The only non-trivial point is that $Z(V,\varphi_U)$ is closed under the 
$\wedge$-product.
This is a direct consequence of Lemma \ref{erstesproduktlemma} applied to 
$\delta$-preforms on the tropical cycle ${\rm Trop}\,(U')$ for any tropical 
chart $(V',\varphi_{U'})$ compatible with $(V,\varphi_{U})$. 
\end{proof}

\begin{example} \label{constant weights and omega-preforms} \rm
Every tropical cycle $C=(\Ccal,m)$ on $N_{U,\R}$ with {\it constant weight functions} 
induces an element in  $Z(V,\varphi_U)$. Indeed, if $(V',\varphi_{U'})$ is 
a tropical chart on $X'$ compatible with $(V,\varphi_U)$ as above, then 
$F^*(\delta_C) |_{\Omega'}$ is given by the restriction of 
$\delta_{F^*(C)\cdot \Trop(U')}$ to $\Omega'$.
Since $F^*(C)\cdot \Trop(U')$ is a tropical cycle with constant weight functions, the 
associated current is $d'$- and $d''$-closed \cite[Proposition 3.8]{gubler-forms}.
\end{example}

\begin{definition} \label{AZ preforms}
Let $AZ(V,\varphi_U)$ be the subalgebra of $P(V,\varphi_U)$ generated by 
$A(\Omega)$ and $Z(V,\varphi_U)$. An element  {$\beta \in AZ(V,\varphi_U)$} has the form
\begin{equation} \label{AZ-decomposition}
\beta = \sum_{i \in I} \alpha_i \wedge \omega_i
\end{equation}
for a finite set $I$ with all $\alpha_i \in A(\Omega)$ and 
$\omega_i \in Z(V,\varphi_U)$. We define
\[
d'\beta :=  \sum_{i \in I} d'(\alpha_i) \wedge \omega_i, \quad d''\beta 
:=  \sum_{i \in I} d''(\alpha_i) \wedge \omega_i.
\]
It follows from the closedness condition \eqref{closedness condition} that 
$d'\beta$ and $d''\beta$ are well-defined elements in $AZ(V,\varphi_U)$. 
By definition, we have 
\[
Z(V,\varphi_U) = \{\alpha \in AZ(V,\varphi_U) \mid d'(\alpha)=d''(\alpha)=0\}.
\]
An element in $AZ(V,\varphi_U)$ is called {\it symmetric (resp.
anti-symmetric)} if
it is symmetric (resp. anti-symmetric) in $P(V,\varphi_U)$.
\end{definition}

The following result shows that $AZ(V,\varphi_U)$ is a good analogue of the 
algebra of complex differential forms.

\begin{prop} \label{AZ preforms are graded algebra}
The space $AZ(V,\varphi_U)$ is a bigraded differential $\R$-algebra
with respect to the differentials $d'$ and $d''$. 
\end{prop}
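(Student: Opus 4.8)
The plan is to verify three things: (1) the operators $d'$ and $d''$ in Definition \ref{AZ preforms} are well-defined on $AZ(V,\varphi_U)$, i.e. independent of the decomposition \eqref{AZ-decomposition}; (2) they satisfy the Leibniz rule and hence make $AZ(V,\varphi_U)$ a differential bigraded $\R$-algebra; (3) $d' \circ d' = d'' \circ d'' = 0$ and $d'd'' = -d''d'$. The bigrading is already recorded in Remark \ref{properties of delta-preforms on charts}(i), and Proposition \ref{properties of omega-preforms} gives that $AZ(V,\varphi_U)$ is a subalgebra of $P(V,\varphi_U)$, so what remains is purely the differential structure.

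First I would address well-definedness. An element $\beta \in AZ(V,\varphi_U)$ is, by definition, a finite sum $\sum_i \alpha_i \wedge \omega_i$ with $\alpha_i \in A(\Omega)$ and $\omega_i \in Z(V,\varphi_U)$; the point is that if $\sum_i \alpha_i \wedge \omega_i = 0$ in $P(V,\varphi_U)$ then $\sum_i d'(\alpha_i) \wedge \omega_i = 0$ as well. The key observation, already flagged in Definition \ref{AZ preforms}, is the closedness condition \eqref{closedness condition}: for any compatible chart $(V',\varphi_{U'})$ and the induced $F$, the restriction $F^*(\tilde\omega_i)|_{\Omega'}$ is $d'$- and $d''$-closed as a supercurrent. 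Hence, by the Leibniz rule for a superform wedged against a supercurrent — which is formula \eqref{leibnizforcurrents} in Corollary \ref{omegaleibnizformel} — we get $d'[F^*(\tilde\alpha_i)|_{\Omega'} \wedge F^*(\tilde\omega_i)|_{\Omega'}] = [F^*(d'\tilde\alpha_i)|_{\Omega'}] \wedge F^*(\tilde\omega_i)|_{\Omega'}$ in $D(\Omega')$. Summing over $i$ and using that $\sum_i \alpha_i \wedge \omega_i = 0$ in $P(V,\varphi_U)$ implies all its chart-restrictions vanish in $D(\Omega')$, we conclude $\sum_i F^*(d'\tilde\alpha_i)|_{\Omega'} \wedge F^*(\tilde\omega_i)|_{\Omega'} = 0$ for every compatible chart. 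Because the kernel $N^{p+1,q}(V,\varphi_U)$ is, by Definition \ref{presheaf}, exactly the $\delta$-preforms whose every compatible-chart restriction vanishes, this says $\sum_i d'(\alpha_i) \wedge \omega_i = 0$ in $P(V,\varphi_U)$. (One subtlety: $\sum_i d'\tilde\alpha_i \wedge \tilde\omega_i$ need not be the representative we started from, but since both have the same restriction to every $\Omega'$ they are equal in $P^{p+1,q}(V,\varphi_U)$.) The same argument works for $d''$.

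Next, the Leibniz rule: for $\beta = \sum_i \alpha_i \wedge \omega_i$ and $\beta' = \sum_j \alpha_j' \wedge \omega_j'$ of degrees $k$ and $k'$, we have $\beta \wedge \beta' = \sum_{i,j} (\alpha_i \wedge \alpha_j') \wedge (\omega_i \wedge \omega_j')$, and since $\omega_i \wedge \omega_j' \in Z(V,\varphi_U)$ by Proposition \ref{properties of omega-preforms}, applying the definition of $d'$ together with the Leibniz rule for $d'$ on the superform algebra $A(\Omega)$ gives $d'(\beta \wedge \beta') = d'\beta \wedge \beta' + (-1)^k \beta \wedge d'\beta'$, using anticommutativity of $\wedge$ to move $d'\alpha_j'$ past $\omega_i$ (a degree-$k$ element) up to sign; the same for $d''$. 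Finally, $d'd' = 0$, $d''d'' = 0$, and $d'd''= -d''d'$ follow termwise from the corresponding identities on $A(\Omega)$, since $d'$ and $d''$ on $AZ(V,\varphi_U)$ only act on the superform factors $\alpha_i$ and leave the $\omega_i$ untouched. I expect the main obstacle to be the well-definedness step: one must be careful that ``vanishing in $P(V,\varphi_U)$'' means vanishing of all compatible-chart restrictions in the supercurrent spaces $D(\Omega')$ (not merely in $P(\Omega)$), and that the closedness condition \eqref{closedness condition} is precisely what converts $d'$ of a product of a form and a closed $\delta$-preform into a $\delta$-preform again via \eqref{leibnizforcurrents} — everything else is a routine transcription of the differential-graded-algebra axioms from the superform level.
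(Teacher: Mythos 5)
Your proposal is correct and follows essentially the same route as the paper: well-definedness of $d'$ and $d''$ via the closedness condition \eqref{closedness condition} (which the paper records already in Definition \ref{AZ preforms}), combined with the Leibniz rule of Corollary \ref{omegaleibnizformel} and the fact that $Z(V,\varphi_U)$ is a subalgebra (Proposition \ref{properties of omega-preforms}). The paper's proof is simply a terser citation of these same ingredients, which you have spelled out in detail.
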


\begin{proof} This follows easily from Leibniz's rule 
\ref{omegaleibnizformel}(ii)
and Proposition \ref{properties of omega-preforms}.
\end{proof}

\begin{prop} \label{pullback of AZ preforms}
Let $f:X' \rightarrow X$ be a morphism of varieties over $K$. 
Let $(V,\varphi_U)$ and $(V',\varphi_{U'})$ be tropical charts of $X$ and $X'$ 
respectively which are compatible with 
respect to $f$. Then the pull-back  homomorphism 
$f^*: P(V,\varphi_U) \rightarrow P(V',\varphi_{U'})$ maps $Z(V,\varphi_U)$ 
 to $Z(V',\varphi_{U'})$ and $AZ(V,\varphi_U)$ to $AZ(V',\varphi_{U'})$.
\end{prop}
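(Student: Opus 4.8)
The plan is to check that $f^*$ respects the two defining closure properties, working from the definition of $f^*$ given in Remark \ref{properties of delta-preforms on charts}(ii). First I would reduce everything to a statement about $\delta$-preforms on the tropical varieties $\Trop(U)$ and $\Trop(U')$. Recall that an element $\omega \in Z(V,\varphi_U)$ is represented by some $\tilde\omega \in P^{p,q}(\widetilde\Omega)$, and its image $f^*(\omega)$ is represented by $F^*(\tilde\omega) \in P^{p,q}(\widetilde\Omega')$, where $F:N_{U',\R} \to N_{U,\R}$ is the canonical integral $\Gamma$-affine map attached to the compatible pair of charts as in \ref{sturmfelstevelev}.

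The first claim, that $f^*$ maps $Z(V,\varphi_U)$ to $Z(V',\varphi_{U'})$, is essentially a transitivity statement for the closedness condition \eqref{closedness condition}. The point is that testing whether $f^*(\omega) \in Z(V',\varphi_{U'})$ amounts to checking \eqref{closedness condition} for every tropical chart $(V'',\varphi_{U''})$ on some $X''$ which is compatible with $(V',\varphi_{U'})$ with respect to a morphism $g:X'' \to X'$. But such a chart is then automatically compatible with $(V,\varphi_U)$ with respect to the composite $f \circ g:X'' \to X$, with associated integral $\Gamma$-affine map the composite of the two maps on cocharacter spaces (functoriality of the construction in \ref{sturmfelstevelev}). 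Since $\omega \in Z(V,\varphi_U)$, the closedness condition already holds for this composite chart, and by the functoriality of pull-back of $\delta$-preforms (Proposition \ref{deltaformsbasicprop}(iii), applied on the level of $N_\R$'s, together with the compatibility of restriction to the relevant tropical varieties) this is exactly the closedness condition for $f^*(\omega)$ tested against $(V'',\varphi_{U''})$. Hence $f^*(\omega) \in Z(V',\varphi_{U'})$.

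For the second claim, $AZ(V,\varphi_U)$ is by Definition \ref{AZ preforms} generated as an $\R$-algebra by $A(\Omega)$ and $Z(V,\varphi_U)$; since $f^*$ is an algebra homomorphism it suffices to know that it maps each of these generating subsets into $AZ(V',\varphi_{U'})$. We have just shown $f^*(Z(V,\varphi_U)) \subseteq Z(V',\varphi_{U'}) \subseteq AZ(V',\varphi_{U'})$. For a smooth form $\alpha \in A(\Omega)$, its pull-back under $f^*$ is (the class of) $F^*(\tilde\alpha)$ for a smooth extension $\tilde\alpha$, and since the pull-back of a superform along an integral $\R$-affine map is again a superform, $f^*(\alpha)$ lies in $A(\Omega') \subseteq AZ(V',\varphi_{U'})$. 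Therefore $f^*$ maps the generators into $AZ(V',\varphi_{U'})$, and being a homomorphism it maps all of $AZ(V,\varphi_U)$ there.

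I expect the only genuinely delicate point to be the bookkeeping in the first step: one must be careful that the closedness condition \eqref{closedness condition}, which is a statement about restrictions to the tropical varieties $\Omega'$, behaves well under composition of the affine maps and the associated restriction maps $P(\widetilde\Omega) \to P(\Omega)$. Once one sets up the functoriality of the diagram in \ref{sturmfelstevelev} carefully — so that the map on $N_{U'',\R}$ attached to $f \circ g$ is the composite of those attached to $f$ and $g$, up to the harmless translation ambiguity — the verification is formal, using only Proposition \ref{deltaformsbasicprop} and the definitions. No new ideas beyond those already developed in the section are needed.
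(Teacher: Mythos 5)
Your proof is correct and is exactly the argument the paper has in mind: its own proof is just "this follows directly from the definitions. We leave the details to the reader," and your write-up supplies precisely those details — stability of the closedness condition \eqref{closedness condition} under composition of compatible charts via functoriality of the canonical affine maps, plus the generators-and-algebra-homomorphism argument for $AZ$.
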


\begin{proof} This follows directly from the definitions.
We leave the details to the reader. 
\end{proof}

\begin{prop} \label{finite sheaf property}
Let $(V,\varphi_U)$ be a tropical chart of $X$ and $\Omega :=\trop_U(V)$. 
Let $(\Omega_i)_{i \in I}$ be a finite open covering of $\Omega$. 
For $i\in I$, let $V_i := \trop_U^{-1}(\Omega_i)$ and let 
$\alpha_i \in P(V_i,\varphi_U)$. For all $i,j \in I$, we assume that 
$\alpha_i|_{{V_i \cap V_j}}=\alpha_j|_{{V_i \cap V_j}}$. 
Then there is a unique $\alpha \in P(V,\varphi_U)$ with 
$\alpha|_{V_i}=\alpha_i$ for every $i \in I$.
If $\alpha_i \in AZ(V_i,\varphi_U)$ for every $i\in I$ then
$\alpha \in AZ(V,\varphi_U)$.
\end{prop}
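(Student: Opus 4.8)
The plan is to reduce everything to a statement about $\delta$-preforms on open subsets of $N_{U,\R}$, where a sheaf-type gluing is available, and then check that the closedness condition and the $AZ$-structure descend. First I would fix an open subset $\widetilde\Omega$ of $N_{U,\R}$ with $\widetilde\Omega \cap \Trop(U) = \Omega$, and shrink the members of the covering to open subsets $\widetilde\Omega_i$ of $N_{U,\R}$ with $\widetilde\Omega_i \cap \Trop(U)=\Omega_i$ and $\bigcup_i \widetilde\Omega_i = \widetilde\Omega$ (this is possible because $(\Omega_i)$ is a finite open cover of the closed subset $\Omega$ of $\widetilde\Omega$, so we may thicken). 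Now pick representatives $\tilde\alpha_i \in P^{p,q}(\widetilde\Omega_i)$ of the $\alpha_i$. The compatibility hypothesis $\alpha_i|_{V_i \cap V_j} = \alpha_j|_{V_i\cap V_j}$ in $P(V_i\cap V_j,\varphi_U)$ says exactly that $\tilde\alpha_i - \tilde\alpha_j$ lies in $N^{p,q}(V_i\cap V_j,\varphi_U)$, i.e.\ it dies under every compatible pull-back-and-restrict. The $\tilde\alpha_i$ themselves need not agree on overlaps as actual preforms, so they cannot simply be glued; instead I would glue using a partition of unity.

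The key step is: choose a partition of unity $(\rho_k)_{k}$ on $\widetilde\Omega$ subordinate to $(\widetilde\Omega_i)$, say $\supp(\rho_k) \subset \widetilde\Omega_{i(k)}$, with the $\rho_k \in C^\infty(N_{U,\R})$, and set
\[
\tilde\alpha := \sum_k \rho_k \, \tilde\alpha_{i(k)} \in P^{p,q}(\widetilde\Omega),
\]
which makes sense since each $\rho_k\tilde\alpha_{i(k)}$ extends by zero. Let $\alpha \in P^{p,q}(V,\varphi_U)$ be its class. To verify $\alpha|_{V_i}=\alpha_i$, I would work on $V_i$: over $\widetilde\Omega_i$ we have $\tilde\alpha - \tilde\alpha_i = \sum_k \rho_k(\tilde\alpha_{i(k)} - \tilde\alpha_i)$, and I claim each summand lies in $N^{p,q}(V_i,\varphi_U)$. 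Indeed $\rho_k$ is supported in $\widetilde\Omega_{i(k)}$, so on $V_i$ only the part over $\widetilde\Omega_i \cap \widetilde\Omega_{i(k)}$ contributes, and there $\tilde\alpha_{i(k)} - \tilde\alpha_i$ represents $\alpha_{i(k)}|_{V_i\cap V_{i(k)}} - \alpha_i|_{V_i\cap V_{i(k)}} = 0$; multiplying a representative of $0$ in $P(V_i \cap V_{i(k)},\varphi_U)$ by the $A^{0,0}$-function $\rho_k$ (which extends by zero across the support boundary) again represents $0$ in $P(V_i,\varphi_U)$, because the defining property of $N^{p,q}$ is stable under multiplication by smooth functions and under the pull-back/restriction maps that appear in its definition. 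Summing over $k$ gives $\alpha|_{V_i}=\alpha_i$. Uniqueness: if $\alpha,\alpha'$ both restrict to $\alpha_i$ on each $V_i$, then $\alpha - \alpha'$ restricts to $0$ on each $V_i$; choosing a representative $\tilde\gamma$ of $\alpha-\alpha'$ on $\widetilde\Omega$ and writing $\tilde\gamma = \sum_k \rho_k \tilde\gamma$, each $\rho_k\tilde\gamma$ represents $0$ on $V_{i(k)}\supseteq \supp(\rho_k)\cap\Trop(U)$, hence lies in $N^{p,q}(V,\varphi_U)$ by the same stability, so $\alpha=\alpha'$.

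Finally I would check the $AZ$-refinement. Suppose each $\alpha_i \in AZ(V_i,\varphi_U)$. The subtlety is that the $\rho_k$ are merely smooth (elements of $A^{0,0}$, hence of $AZ$), so $\tilde\alpha = \sum_k \rho_k \tilde\alpha_{i(k)}$ is, locally over $\widetilde\Omega_{i(k)}$, a finite sum of products of elements of $A(\Omega_{i(k)})$ with elements of $Z(V_{i(k)},\varphi_U)$ — but we need a single global decomposition of the form \eqref{AZ-decomposition} on $V$. Here I would argue that the closedness condition is local: an element $\beta \in P^{p,q}(V,\varphi_U)$ lies in $Z^{p,q}(V,\varphi_U)$ iff $\beta|_{V_i} \in Z^{p,q}(V_i,\varphi_U)$ for all $i$, since the closedness condition \eqref{closedness condition} is tested against all compatible charts $(V',\varphi_{U'})$ and every such chart for $V$ restricts to one for some $V_i$ after intersecting $V'$ with $f^{-1}(V_i)$ and covering. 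Thus it suffices to show $\alpha\in AZ(V,\varphi_U)$, i.e.\ that $d'\alpha$ and $d''\alpha$ (defined via any local $AZ$-decomposition) are independent of the choice and glue; and this again follows by the local-to-global principle since $d',d''$ on $AZ$ are already defined locally on each $V_i$, agree on overlaps (they are independent of the $AZ$-decomposition by the closedness condition, as in Definition \ref{AZ preforms}), and the first part of the proposition glues the resulting local sections $d'\alpha_i$ into a global $d'\alpha$. I expect the main obstacle to be the bookkeeping in the $AZ$-part — precisely, pinning down that membership in $Z$ and in $AZ$ is a local condition on tropical charts and that the differentials glue — rather than the partition-of-unity gluing itself, which is routine once the stability of $N^{p,q}$ under multiplication by smooth functions is recorded.
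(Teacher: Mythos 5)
Your gluing of the underlying generalized $\delta$-forms is essentially the paper's argument: thicken the $\Omega_i$ to open subsets $\widetilde\Omega_i$ of $N_{U,\R}$ with $\widetilde\Omega_i\cap\Trop(U)=\Omega_i$, pick representatives $\tilde\alpha_i\in P(\widetilde\Omega_i)$, and glue with a smooth partition of unity subordinate to $(\widetilde\Omega_i)_i$. The checks you spell out (stability of $N^{p,q}$ under multiplication by smooth functions and under passage to compatible subcharts, and the resulting verification of $\alpha|_{V_i}=\alpha_i$ and of uniqueness) are exactly the ``straightforward'' steps the paper leaves implicit, and they are correct; uniqueness can even be seen directly by covering any compatible chart $(V',\varphi_{U'})$ by the subcharts lying over the $V_i$ and using that vanishing of the restricted supercurrents is a local property.

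The $AZ$-part, however, has a genuine gap. By Definition \ref{AZ preforms}, membership in $AZ(V,\varphi_U)$ means the existence of a finite decomposition $\alpha=\sum_j\alpha_j\wedge\omega_j$ with $\alpha_j\in A(\Omega)$ and $\omega_j\in Z(V,\varphi_U)$; it is \emph{not} the statement that $d'\alpha$ and $d''\alpha$ are well defined or glue, so your reduction ``it suffices to show the differentials are independent of the choice and glue'' does not prove what is required. Locality of the closedness condition \eqref{closedness condition} (which is true, and proved as you sketch) only tests whether a given element of $P(V,\varphi_U)$ lies in $Z(V,\varphi_U)$; it does not produce the global $Z$-factors, since the $\omega_{ij}$ appearing in the decompositions of the $\alpha_i$ live only in $Z(V_i,\varphi_U)$, and you can neither cut them off by a bump function (this destroys closedness) nor extend them to $V$ in general. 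The paper avoids this by never touching the $Z$-factors: writing $\alpha_i=\sum_j\beta_{ij}\wedge\omega_{ij}$ and choosing representatives $\tilde\beta_{ij}\in A(\widetilde\Omega_i)$ and $\tilde\omega_{ij}\in P(\widetilde\Omega_i)$, it takes as glued representative $\tilde\alpha=\sum_{i,j}(\phi_i\tilde\beta_{ij})\wedge\tilde\omega_{ij}$, i.e.\ the partition of unity is absorbed into the smooth factors, each $\phi_i\tilde\beta_{ij}$ extends by zero to a smooth form on $\widetilde\Omega$, and finiteness of $I$ makes this a finite sum exhibiting the decomposition \eqref{AZ-decomposition}. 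Your proposal is missing exactly this construction (or a substitute for it), and this is the only substantive divergence from the paper's proof.
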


\begin{proof}
It is a straightforward consequence of our definitions that $\alpha$ is unique.
In order to construct $\alpha$ we choose for each $i\in I$ an open subset 
$\widetilde\Omega_i$ in $N_{U,\R}$ such that
$\widetilde\Omega_i\cap {\rm Trop}\,(U)=\Omega_i$ and a $\delta$-preform
$\tilde\alpha_i\in P(\widetilde\Omega_i)$ which represents $\alpha_i$.
Let $(\phi_i)_{i\in I}$ be a smooth partition of unity
on $\widetilde\Omega=\cup_{i\in I}\widetilde\Omega_i$
with respect to the covering $(\widetilde\Omega_i)_{i\in I}$.
Observe that we may choose the same index set $I$ as we do not require that
the $\phi_i$ have compact support.
Then $\tilde\alpha:=\sum_{i\in I}\phi_i\tilde\alpha_i\in P(\widetilde\Omega)$
induces by our assumptions the desired element $\alpha$ in $P(V,\varphi_U)$.
If 
\[
\alpha_i =\sum_{j\in I_i}\beta_{ij}\wedge\omega_{ij}\in AZ(V_i,\varphi_U)
\]
as in \eqref{AZ-decomposition},
we choose representatives $\tilde\beta_{ij}\in A(\widetilde\Omega_i)$ of
$\beta_{ij}\in A(\Omega_i)$ and
$\tilde\omega_{ij}$ in $P(\widetilde\Omega_i)$ of 
$\omega_{ij} \in Z(V,\varphi_U)$.
Then we may choose $\tilde\alpha_i$ as 
$\sum_{j\in I_i}\phi_i\beta_{ij}\wedge\tilde\omega_{ij}$
and
\[
\tilde\alpha=\sum_{i\in I}\phi_i\tilde\alpha_i=
\sum_{i\in I}\sum_{j\in I_i}\phi_i\beta_{ij}\wedge\tilde\omega_{ij}
\]
shows $\alpha\in AZ(V,\varphi_U)$ using finiteness of $I$.
\end{proof}

Recall that the tropical charts $(V,\varphi_U)$ of $X$ form a 
basis for $\Xan$ \cite[Proposition 4.16]{gubler-forms}. 
Hence we can use  the algebras $P(V,\varphi_U)$ and $AZ(V,\varphi_U)$ to define sheaves on $\Xan$ as follows: 

\begin{definition} \label{predefinition}
For a fixed open subset $W$ in $X^\an$,
the set of all tropical charts $(V,\varphi_U)$ on $X$ with $W\subseteq V$ is
ordered with respect to compatibility and forms a directed set. 
Then we get presheaves
\begin{equation}\label{predefinitiong}
W\mapsto \underrightarrow\lim\,P(V,\varphi_U),\,\,\,
W\mapsto \underrightarrow\lim\,AZ(V,\varphi_U)
\end{equation}
of real  {vector spaces} on $\Xan$, 
where the limit is taken over this directed set with respect
to the pull-back maps considered in Proposition \ref{pullback of AZ preforms}.
The associated sheaves $P$ and $B$ on $X^\an$ are by
definition the {\it sheaf of generalized $\delta$-forms}
and the {\it subsheaf of $\delta$-forms}. 
On an open subset $W$ of $X^{\rm an}$ the space of $\delta$-forms
\[
B(W)=\bigoplus_{p,q\geq 0}B^{p,q}(W)=\bigoplus_{p,q,l\geq 0}B^{p,q,l}(W)
\]
and the space of generalized $\delta$-forms
\[
P(W)=\bigoplus_{p,q\geq 0} P^{p,q}(W)=\bigoplus_{p,q,l\geq 0} P^{p,q,l}(W) 
\] 
carry natural gradings by the $(p,q)$-type of the underlying currents
and the codimension of the underlying tropical cycles (as defined in \ref{defdeltaforms} and \ref{presheaf}). The wedge product 
on the spaces $AZ(V, \varphi_U)$ (resp. $P(V,\varphi_U)$)  induces a product on $B(W)$ (resp. $P(W)$). 
Moreover, the differential operators $d',d''$ on $AZ(V, \varphi_U)$  induce differential operators 
$d',d''$ on $B(W)$.
The symmetric and anti-symmetric elements in $P(V,\varphi_U)$
define subsheaves of (generalized) symmetric and anti-symmetric 
$\delta$-forms in $B^{p,q}$ and $P^{p,q}$ for all $p,q\geq 0$.
 
\end{definition}

\begin{art} \label{delta-forms on X}
We conclude that a {\it $\delta$-form} 
$\beta$ of bidegree $(p,q)$ on 
an open subset $W$ of $\Xan$ is given by a covering $(V_i)_{i \in I}$ 
of $W$ by tropical charts $(V_i,\varphi_{U_i})$ of $\Xan$ and elements
 $\beta_i \in AZ^{p,q}(V_i,\varphi_{U_i})$
 such that 
\[
\beta_i|_{V_i \cap V_j}=\beta_j|_{V_i \cap V_j}
\]
holds for all 
$i,j \in I$.  If $\beta'$ is another $\delta$-form  of bidegree 
$(p,q)$ on $W$ given by $\beta_j' \in AZ^{p,q}(V_j',\varphi_{U'_j})$ 
with respect to the tropical charts $(V_j',\varphi_{U'_j})_{j \in J}$ 
covering $W$, then $\beta$ and $\beta'$ define 
the same $\delta$-forms  
if and only if  
\[
\beta_i|_{V_i \cap V_j'}=\beta_j'|_{V_i \cap V_j'}
\] 
holds for all $i \in I$ and $j\in J$. A similar description holds for generalized $\delta$-forms.
\end{art}

\begin{prop} \label{properties of delta-forms}
(i) The sheaves $P$ and $B$ are sheaves of 
bigraded anti-commu\-ta\-tive $\R$-algebras.

(ii) We have natural monomorphisms of sheaves of bigraded $\R$-algebras 
$A\to B$ and $B\to P$.

(iii) The differentials $d',d'':B\to B$ turn $(B,d',d'')$ into a sheaf 
of bigraded differential $\R$-algebras.
\end{prop}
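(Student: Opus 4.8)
The plan is to note that every structure occurring in (i)--(iii) is already available on the chart algebras $P(V,\varphi_U)$ and $AZ(V,\varphi_U)$, is compatible with the pull-back maps that serve as transition maps in the directed limits \eqref{predefinitiong}, and therefore descends first to the presheaves of Definition \ref{predefinition} and then, since sheafification is a functor commuting with finite products and finite direct sums, to the associated sheaves $P$ and $B$. Since $X$ has dimension $n$, only finitely many bidegrees $(p,q)$ and codimensions $l$ occur (see \ref{defdeltaforms}, \ref{presheaf}), so the grading by $(p,q)$-type and by codimension is preserved both by the directed colimit and by the sheafification.

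For (i), Remark \ref{properties of delta-preforms on charts}(i) says that each $P(V,\varphi_U)$ is a bigraded anti-commutative $\R$-algebra, and Proposition \ref{pullback of AZ preforms} shows that the pull-back maps are homomorphisms of such algebras, functorial in the morphism. Hence $\varinjlim P(V,\varphi_U)$, the limit over the tropical charts containing a fixed open $W$, is a bigraded anti-commutative $\R$-algebra, and so is its sheafification $P$; replacing Remark \ref{properties of delta-preforms on charts}(i) by Proposition \ref{AZ preforms are graded algebra} and again using Proposition \ref{pullback of AZ preforms} gives the same statement for $B$.

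For (ii), one has for each tropical chart the type-preserving inclusions of bigraded $\R$-algebras $A^{p,q}(\Omega)\hookrightarrow AZ^{p,q}(V,\varphi_U)\hookrightarrow P^{p,q}(V,\varphi_U)$ provided by Remark \ref{properties of delta-preforms on charts}(i) and Definition \ref{AZ preforms}; these are compatible with pull-back, because the pull-back of a smooth superform is smooth and agrees with the pull-back of the corresponding $\delta$-preform of codimension $0$ from Proposition \ref{deltaformsbasicprop}(iii). Recalling that the sheaf $A$ of \cite{chambert-loir-ducros}, \cite{gubler-forms} is the sheaf associated to the presheaf $W\mapsto\varinjlim A^{p,q}(V,\varphi_U)$, these inclusions induce natural morphisms $A\to B\to P$ of sheaves of bigraded $\R$-algebras; they are monomorphisms because on stalks they are directed colimits of injective maps and filtered colimits are exact.

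For (iii), the differentials $d',d''$ on $AZ(V,\varphi_U)$ from Proposition \ref{AZ preforms are graded algebra} commute with the pull-back maps: writing $\beta=\sum_i\alpha_i\wedge\omega_i$ with $\alpha_i\in A(\Omega)$ and $\omega_i\in Z(V,\varphi_U)$ as in \eqref{AZ-decomposition}, one computes $f^*(d'\beta)=\sum_i d'(f^*\alpha_i)\wedge f^*\omega_i=d'(f^*\beta)$, using that $d'$ commutes with pull-back of smooth superforms and that the $f^*\omega_i$ are again $d'$- and $d''$-closed by Proposition \ref{pullback of AZ preforms}, and similarly for $d''$. Hence $d'$ and $d''$ pass to $\varinjlim AZ(V,\varphi_U)$ and to sheaf endomorphisms of $B$; the identities $d'\circ d'=d''\circ d''=0$, the anti-commutativity of $d'$ and $d''$, and the Leibniz rule (cf. Corollary \ref{omegaleibnizformel}) hold section by section because they already hold in each $AZ(V,\varphi_U)$, and thus on $B$. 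I expect no genuine obstacle in any of this; the one place that deserves care is the passage in (ii) from chart-level injectivity to a sheaf monomorphism, where one uses exactness of filtered colimits on stalks, together with the bookkeeping identifying the resulting map $A\to B$ with the natural inclusion.
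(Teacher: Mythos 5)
Your overall architecture coincides with the paper's: parts (i) and (iii), and the existence of the maps in (ii), follow directly from the chart-level results (Remark \ref{properties of delta-preforms on charts}, Propositions \ref{AZ preforms are graded algebra} and \ref{pullback of AZ preforms}, Definition \ref{predefinition}) together with routine compatibility with the directed limits and sheafification; the paper's proof says exactly this. However, you have a gap at the one point the paper actually proves, namely the injectivity of $A\to B$. You take the chart-level injectivity $A^{p,q}(\Omega)\hookrightarrow AZ^{p,q}(V,\varphi_U)$ as "provided by Remark \ref{properties of delta-preforms on charts}(i)", but that remark merely \emph{asserts} the containment; the paper's proof of the present proposition is precisely where this assertion gets justified ("only the injectivity of the natural morphism $A\to B$ does not follow directly from what we have shown before"). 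The point is not formal: an element of $AZ^{p,q}(V,\varphi_U)$ is a class in $P^{p,q}(\widetilde\Omega)/N^{p,q}(V,\varphi_U)$, and $N^{p,q}(V,\varphi_U)$ is defined by the vanishing of the restrictions $F^*(\tilde\alpha)|_{\Omega'}$ for \emph{all} morphisms $f:X'\to X$ and all compatible charts, so a priori a nonzero smooth superform could land in this kernel; some argument is needed to exclude this. Your closing remark locates the delicate step in the passage from chart level to stalks (exactness of filtered colimits, sheafification), which is indeed routine — the genuinely nonroutine step is the chart-level injectivity itself.

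The missing argument is short and is the one the paper gives: compose with the restriction map $AZ^{p,q}(V,\varphi_U)\to D^{p,q}(\Omega)$, $\alpha\mapsto \alpha\wedge\delta_{\Trop(U)}$, for $\Omega=\trop_U(V)$. If a smooth superform lies in $N^{p,q}(V,\varphi_U)$, then in particular (taking $f=\id$ and the chart $(V,\varphi_U)$ itself) its associated supercurrent on $\Omega$ vanishes, and since the map $A^{p,q}(\Omega)\to D^{p,q}(\Omega)$ is injective, the form is already zero in $A^{p,q}(\Omega)$. With this inserted, your stalkwise colimit argument then yields the sheaf monomorphism as you describe, and the rest of your proof is correct and follows the paper's route.
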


\proof
Only the injectivity of the natural morphism $A\to B$ does not follow
directly from what we have shown before.
The injectivity of $A\to B$ can be checked on the presheaves \eqref{predefinitiong}.
For each tropical chart $(V,\varphi_U)$ of $X$
the natural map from $A(V)$ to $AZ(V,\varphi_U)$
is injective as the associated map 
$A(\Omega)\to AZ(V,\varphi_U)\to D(\Omega)$ for $\Omega={\rm trop}_U(V)$ 
is injective.
This yields directly our claim.
\qed

\begin{art} \label{pull-back of delta-forms}
Let $f:X' \rightarrow X$ be a morphism of varieties over $K$. For an open subset 
$W$ of $\Xan$ and an open subset $W'$ of $f^{-1}(W)$, we have a canonical 
pull-back morphism 
$f^*:P(W)\to P(W')$
which respects products and the bigrading. Furthermore it induces a  homomorphism $f^*:B(W)\to B(W')$  of bigraded $\R$-algebras 
which commutes with the differentials $d'$ and $d''$ on $B$. They are induced by the pull-back $f^*: P(V,\varphi_U) \rightarrow P(V',\varphi_{U'})$
 for compatible charts $(V',\varphi_{U'})$ on $W'$ and $(V,\varphi_U)$ on $W$ given in Proposition \ref{pullback of AZ preforms}.
\end{art}

\begin{lem}\label{phitrop lemma}  
Let $(V,\varphi_U)$ be a tropical chart on $X$.
Let $(V_i)_{i\in I}$ be an open covering of $V$ by tropical charts
$(V_i,\varphi_{U_i})$ on $X$ which are compatible with $(V,\varphi_U)$.
There are canonical integral $\Gamma$-affine morphisms $F_i:N_{U_i,\R}\to N_{U,\R}$
such that ${\rm trop}_U=F_i\circ {\rm trop}_{U_i}$.
We choose open subsets $\widetilde\Omega$ in $N_{U,\R}$ and 
$\widetilde\Omega_i$ in $F_i^{-1}(\widetilde\Omega)$ such that
$V={\rm trop}_U^{-1}(\widetilde\Omega)$ and $V_i={\rm trop}_U^{-1}(\widetilde\Omega_i)$
for all $i\in I$.
Let $\tilde\alpha_U\in P(\widetilde\Omega)$ be a $\delta$-preform.
Then $\tilde\alpha_U\wedge \delta_{{\rm Trop}\,(U)}$ vanishes in
$D(\widetilde\Omega)$ if $F_i^*(\tilde\alpha_U)\wedge \delta_{{\rm Trop}\,(U_i)}$ 
vanishes in $D(\widetilde\Omega_i)$ for every $i\in I$.
\end{lem}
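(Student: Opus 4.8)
The plan is to reduce the statement to a purely local, combinatorial question about supports of superforms on the charts $(V_i,\varphi_{U_i})$, and then exploit the Sturmfels–Tevelev formula \eqref{cor-sturmfels-tevelev} together with the transformation formula for integration. First I would write $\tilde\alpha_U = \sum_{a\in A}\gamma_a\wedge\delta_{C_a}$ as in \eqref{tropstandform}, with $\gamma_a\in A(\tilde\Omega)$ and $C_a$ a tropical cycle with smooth weights on $N_{U,\R}$. The supercurrent $\tilde\alpha_U\wedge\delta_{\Trop(U)}$ vanishes in $D(\tilde\Omega)$ if and only if it pairs to zero against every compactly supported test superform $\beta\in A_c(\tilde\Omega)$. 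Using the definition of integration on $\Trop(U)$ and Remark~\ref{standardform}, after passing to a common polyhedral complex of definition $\Ccal$ refining $\Trop(U)$ and all the $C_a$, the pairing becomes a finite sum $\sum_{\sigma\in\Ccal_{\dim}}\int_\sigma (\tilde\alpha_U)_\sigma\wedge\beta$, where $(\tilde\alpha_U)_\sigma$ is the piecewise-smooth weight-form attached to $\sigma$. So the vanishing of $\tilde\alpha_U\wedge\delta_{\Trop(U)}$ is equivalent to $(\tilde\alpha_U)_\sigma = 0$ on $\sigma\cap\tilde\Omega$ for every maximal $\sigma$ in $\Trop(U)$.

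Next I would pull back along the covering. Since $\trop_U = F_i\circ\trop_{U_i}$ and the $V_i$ cover $V$, the images $\trop_{U_i}(V_i)$ map onto an open covering of $\Omega=\trop_U(V)$ under the $F_i$; more precisely, by \eqref{cor-sturmfels-tevelev} applied to the identity morphism (so $\deg(f)=1$), we have $(F_i)_*\Trop(U_i)=\Trop(U)$. The content of this identity is exactly the statement that for each maximal polyhedron $\sigma$ of $\Trop(U)$ and each point $\omega\in\relint(\sigma)$, the fiber-weight formula \eqref{intpairingg2} recovers the weight $m_U$ on $\sigma$ from the weights $m_{U_i}$ on the maximal polyhedra $\sigma'$ of $\Trop(U_i)$ with $F_i(\sigma')=\sigma$ and $F_i|_{\sigma'}$ injective. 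In particular, for each such $\sigma$ there is at least one index $i$ and one such $\sigma'$ with $F_i(\sigma')=\sigma$, $F_i|_{\sigma'}$ injective, and $m_{U_i,\sigma'}$ not identically zero near the relevant point. Combining the hypothesis $F_i^*(\tilde\alpha_U)\wedge\delta_{\Trop(U_i)}=0$ with the local characterization from the first step, we get $\bigl(F_i^*(\tilde\alpha_U)\bigr)_{\sigma'}=0$ on $\sigma'\cap\tilde\Omega_i$. Since $F_i|_{\sigma'}$ is injective, $F_i^*$ identifies $A_\sigma(\sigma)$ with $A_{\sigma'}(\sigma')$ (this is the content of the transformation formula for integration, cf.\ \cite[1.5.8]{chambert-loir-ducros}, \cite[Proposition 3.10]{gubler-forms}), so $\bigl(F_i^*(\tilde\alpha_U)\bigr)_{\sigma'} = F_i^*\bigl((\tilde\alpha_U)_\sigma\bigr)$ up to the nonzero lattice index $[N_\sigma:\L_{F_i}(N_{U_i,\sigma'})]$, whence $(\tilde\alpha_U)_\sigma = 0$ on $F_i(\sigma'\cap\tilde\Omega_i)$.

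Finally I would assemble these local vanishings into global vanishing on each $\sigma\cap\tilde\Omega$. The sets $\trop_{U_i}(V_i)$ cover $\Omega$, so as $i$ and $\sigma'$ range over all admissible choices the images $F_i(\sigma'\cap\tilde\Omega_i)$ cover $\sigma\cap\Omega$; by continuity and the fact that $\tilde\Omega$ only enters through $\Omega=\tilde\Omega\cap\Trop(U)$, a partition-of-unity argument (the same one used throughout Section~\ref{deltaalgvar} to see that the constructions are independent of $\tilde\Omega$) extends the vanishing from $\sigma\cap\Omega$ to $\sigma\cap\tilde\Omega$. Hence $(\tilde\alpha_U)_\sigma=0$ for every maximal $\sigma$ of $\Trop(U)$, which by the first step means precisely that $\tilde\alpha_U\wedge\delta_{\Trop(U)}=0$ in $D(\tilde\Omega)$.

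The main obstacle is the bookkeeping in the second step: one must verify carefully that the lattice multiplicities $[N_\sigma:\L_{F_i}(N_{U_i,\sigma'})]$ appearing in the pushforward weight formula \eqref{intpairingg2} are nonzero exactly when $F_i|_{\sigma'}$ is injective, and that $F_i^*$ then really is an isomorphism on the relevant spaces of superforms compatible with integration — so that vanishing of the pulled-back weight-form forces vanishing of the original. This is where the Sturmfels–Tevelev formula does the essential work: without the surjectivity of $(F_i)_*\Trop(U_i)\to\Trop(U)$ onto each maximal cell, some maximal polyhedron of $\Trop(U)$ might not be ``seen'' by any chart in the covering, and the conclusion could fail. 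Once that surjectivity and the local identification of superform spaces under injective $F_i|_{\sigma'}$ are in hand, the rest is the routine partition-of-unity gluing already used repeatedly in this section.
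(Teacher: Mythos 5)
The central step of your argument fails. You assert that, for a single face $\sigma'$ with $F_i(\sigma')=\sigma$ and $F_i|_{\sigma'}$ injective, the weight-form of $F_i^*(\tilde\alpha_U)\wedge\delta_{\Trop(U_i)}$ on $\sigma'$ equals $F_i^*$ of the weight-form of $\tilde\alpha_U\wedge\delta_{\Trop(U)}$ on $\sigma$ up to the lattice index, and you deduce $(\tilde\alpha_U)_\sigma=0$ on $F_i(\sigma'\cap\tilde\Omega_i)$. This identification is false. Writing $\tilde\alpha_U=\sum_j\gamma_j\wedge\delta_{C_j}$ as in \eqref{tropstandform}, the weight-form upstairs on $\sigma'$ is $\sum_j m'_{j,\sigma'}\,F_i^*\gamma_j$ with $m'_{j,\sigma'}$ the weight of the stable intersection $F_i^*C_j\cdot\Trop(U_i)$, while the pullback of the downstairs weight-form is $\sum_j m_{j,\sigma}\,F_i^*\gamma_j$ with $m_{j,\sigma}$ the weight of $C_j\cdot\Trop(U)$; these collections of weights are not proportional by a $j$-independent factor. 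The correct relation is not Sturmfels--Tevelev alone (which only controls the push-forward of $\Trop(U_i)$ itself, not its stable intersections with the cycles $C_j$ occurring in $\tilde\alpha_U$) but the tropical projection formula of Proposition \ref{tropintthprop}(ii), $F_{i*}(F_i^*C_j\cdot\Trop(U_i))=C_j\cdot\Trop(U)$, combined with \eqref{cor-sturmfels-tevelev}; via the push-forward weight formula \eqref{intpairingg2} and \eqref{a-formula} it expresses $(\tilde\alpha_U)_\sigma(\omega)$ as a sum over \emph{all} maximal faces $\sigma'$ lying over $\sigma$, weighted by the positive indices $[N_\sigma:\L_{F_i}(N_{\sigma'})]$, of the descended weight-forms evaluated at the corresponding preimages of $\omega$. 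You never invoke this projection formula, and without it the vanishing of the single term that your chosen pair $(i,\sigma')$ sees says nothing about the total.

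Consequently your gluing step also does not close the argument: covering $\sigma\cap\Omega$ by the images $F_i(\sigma'\cap\tilde\Omega_i)$ only gives, at a point $\omega$, the vanishing of those contributions whose preimage of $\omega$ happens to lie in $\tilde\Omega_i$, whereas one must control all faces over $\sigma$ simultaneously before concluding $(\tilde\alpha_U)_\sigma(\omega)=0$. (Two smaller points: since the current is supported on $|\Trop(U)|$, one has $\sigma\cap\tilde\Omega=\sigma\cap\Omega$, so your final ``extension from $\Omega$ to $\tilde\Omega$'' is vacuous; and when $\tilde\alpha_U$ has codimension $l>0$ the relevant maximal faces are the $(\dim\Trop(U)-l)$-dimensional faces of the intersection cycles $C_j\cdot\Trop(U)$, not the maximal faces of $\Trop(U)$ to which you apply the Sturmfels--Tevelev weight formula.) The paper's proof is the repaired version of your plan: standard form, $F_{i*}\Trop(U_i)=\Trop(U)$ from \eqref{cor-sturmfels-tevelev}, then $F_{i*}(F_i^*C_j\cdot\Trop(U_i))=C_j\cdot\Trop(U)$ from Proposition \ref{tropintthprop}(ii), and finally the weight-form bookkeeping from the proof of Proposition \ref{projection formula for preforms} to descend the vanishing; it is exactly this projection-formula input, not additional partition-of-unity arguments, that your proposal is missing.
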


\begin{proof}
We write $\tilde\alpha_U=\sum_{j\in J}\alpha_j\wedge\delta_{C_j}$ for
suitable superforms $\alpha_j\in A(\widetilde\Omega)$ and tropical cycles $C_j$.
We have $F_{i*}{\rm Trop}\,(U_i)={\rm Trop}\,(U)$ by \eqref{cor-sturmfels-tevelev}.
The projection formula \ref{tropintthprop} gives
\[
F_{i*}(F_i^*C_j\cdot {\rm Trop}\,(U_i))=C_j\cdot {\rm Trop}\,(U).
\]
By the same arguments as in the proof of Proposition \ref{projection formula for preforms}, the vanishing of
\[
F_i^*(\tilde\alpha_U)\wedge \delta_{{\rm Trop}\,(U_i)}
=\sum_{j\in J}F_i^*(\alpha_j)\wedge \delta_{F_i^*C_j\cdot {\rm Trop}\,(U_i)}
\]
in $D(\widetilde\Omega_i)$ for all $i\in I$ yields that
\[
\tilde\alpha_U\wedge \delta_{{\rm Trop}\,(U)}
=\sum_{j\in J}\alpha_j\wedge \delta_{F_{i*}(F_i^*C_j\cdot {\rm Trop}\,(U_i))}
\]
vanishes in $D(\widetilde\Omega)$.
\end{proof}

\begin{prop}\label{phitrop}
Given a tropical chart $(V,\varphi_U)$ on $X$, we have by
construction natural algebra homomorphisms
\begin{eqnarray*}
&\trop_U^*:P^{p,q}(V,\varphi_U)\longrightarrow P^{p,q}(V),&\\
\label{canonical trop maps g2}
&\trop_U^*:AZ^{p,q}(V,\varphi_U)\longrightarrow B^{p,q}(V)&
\end{eqnarray*}
for all $p,q\geq 0$. These maps are injective.
\end{prop}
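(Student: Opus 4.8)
The plan is to construct $\trop_U^*$ as a composite of two tautological maps and then to prove injectivity by a covering argument that ultimately invokes Lemma~\ref{phitrop lemma}. First I would observe that $(V,\varphi_U)$ itself is an object of the directed set of all tropical charts of $X$ containing $V$, so the structure map into the direct limit, composed with the canonical map from the presheaf \eqref{predefinitiong} to its sheafification, produces homomorphisms of bigraded $\R$-algebras $\varinjlim P^{p,q}(V',\varphi_{U'}) \to P^{p,q}(V)$; the analogous construction with $AZ$ in place of $P$ lands in $B^{p,q}(V)\subseteq P^{p,q}(V)$ and is the restriction of the first map. Since $AZ(V,\varphi_U)\hookrightarrow P(V,\varphi_U)$ and $B\hookrightarrow P$ are injective by Proposition~\ref{properties of delta-forms}, it then suffices to prove injectivity of $\trop_U^*\colon P^{p,q}(V,\varphi_U)\to P^{p,q}(V)$.

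For injectivity, let $\alpha$ be represented by $\tilde\alpha\in P^{p,q}(\widetilde\Omega)$ with $\widetilde\Omega\cap\Trop(U)=\Omega=\trop_U(V)$, and suppose $\trop_U^*(\alpha)=0$ in $P^{p,q}(V)$. Using that the tropical subcharts of $(V,\varphi_U)$ form a basis of $V$, together with the description of sections of a sheafification, I would extract an open covering $(V_i)_{i\in I}$ of $V$ by tropical subcharts $(V_i,\varphi_{U_i})$ of $(V,\varphi_U)$ such that $\alpha|_{V_i}=0$ in $P^{p,q}(V_i,\varphi_{U_i})$ for every $i$; equivalently, $F_i^*(\tilde\alpha)\in N^{p,q}(V_i,\varphi_{U_i})$, where $F_i\colon N_{U_i,\R}\to N_{U,\R}$ is the canonical integral $\Gamma$-affine map with $\trop_U=F_i\circ\trop_{U_i}$.

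To finish I must show $\tilde\alpha\in N^{p,q}(V,\varphi_U)$, i.e.\ that $F''^*(\tilde\alpha)|_{\Omega''}=0$ for every tropical chart $(V'',\varphi_{U''})$ of some $X''$ compatible with $(V,\varphi_U)$ with respect to a morphism $f\colon X''\to X$, where $F''\colon N_{U'',\R}\to N_{U,\R}$ and $\Omega''=\trop_{U''}(V'')$. Since $f^{\an}(V'')\subseteq V=\bigcup_i V_i$, I would refine $(V'',\varphi_{U''})$ into a covering $(W_j,\varphi_{U'''_j})_{j\in J}$ of $V''$ by tropical subcharts of $(V'',\varphi_{U''})$ that are, for a suitable index $i(j)$ with $W_j\subseteq (f^{\an})^{-1}(V_{i(j)})$, also compatible with $(V_{i(j)},\varphi_{U_{i(j)}})$ with respect to $f$. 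Writing $H_j$ and $G_j$ for the induced affine maps to $N_{U'',\R}$ and $N_{U_{i(j)},\R}$, both $F''\circ H_j$ and $F_{i(j)}\circ G_j$ are the canonical integral $\Gamma$-affine map attached to the restriction $f|_{U'''_j}\colon U'''_j\to U$, hence coincide up to a harmless translation; by functoriality of the pull-back of $\delta$-preforms (Proposition~\ref{deltaformsbasicprop}(iii)) this gives $H_j^*(F''^*(\tilde\alpha))=G_j^*(F_{i(j)}^*(\tilde\alpha))$, whose restriction to $\trop_{U'''_j}(W_j)$ vanishes because $F_{i(j)}^*(\tilde\alpha)\in N^{p,q}(V_{i(j)},\varphi_{U_{i(j)}})$. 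Thus $H_j^*(F''^*(\tilde\alpha))\wedge\delta_{\Trop(U'''_j)}=0$ for all $j$, and Lemma~\ref{phitrop lemma}, applied to the chart $(V'',\varphi_{U''})$, its covering by the $(W_j,\varphi_{U'''_j})$, and the $\delta$-preform $F''^*(\tilde\alpha)$, yields $F''^*(\tilde\alpha)\wedge\delta_{\Trop(U'')}=0$, as required; since $f$ and $(V'',\varphi_{U''})$ were arbitrary, $\alpha=0$.

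The main obstacle I anticipate is the chart bookkeeping in the last two steps: passing from ``$\trop_U^*(\alpha)=0$ in the sheaf $P$'' to the existence of a covering by \emph{subcharts of $(V,\varphi_U)$} on which $\alpha$ already vanishes at the level of the algebras $P(V_i,\varphi_{U_i})$, and then arranging a common refinement of $(V'',\varphi_{U''})$ that is simultaneously subordinate to $(V'',\varphi_{U''})$ and compatible with the $(V_i,\varphi_{U_i})$ with respect to $f$, so that functoriality of the induced affine maps permits Lemma~\ref{phitrop lemma} to glue the local vanishing into the global statement $\tilde\alpha\in N^{p,q}(V,\varphi_U)$. Everything else is either formal (direct limits, sheafification, injectivity of the inclusions) or already available (functoriality of $F^*$ on $\delta$-preforms, and the Sturmfels--Tevelev push-forward formula used inside Lemma~\ref{phitrop lemma}).
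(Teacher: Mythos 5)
Your proposal is correct and follows essentially the same route as the paper: reduce to injectivity at the level of $P(V,\varphi_U)$, extract from the vanishing in the sheaf a covering of $V$ by compatible subcharts on which $\alpha$ vanishes in $P(V_i,\varphi_{U_i})$, then for an arbitrary compatible chart on $X''$ refine by charts compatible with both it and the $V_i$, use commutativity/functoriality of the canonical affine maps, and conclude with Lemma~\ref{phitrop lemma}. The only cosmetic difference is your phrase ``coincide up to a harmless translation'': since the composite affine maps have the same linear part and agree on the nonempty set $\trop_{U'''_j}(W_j)$, they in fact coincide, which is exactly the commutative diagram the paper invokes.
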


\begin{proof}
We extend the argument in \cite[Lemme 3.2.2]{chambert-loir-ducros}.
It suffices to show that the upper map is injective. 
Let ${\rm trop}_U^*(\alpha_U)$ vanish for some 
$\alpha_U\in P(V,\varphi_U)$. We obtain an open covering $(V_i)_{i\in I}$ of
$V$ by tropical charts $(V_i,\varphi_{U_i})$ compatible with $(V,\varphi_U)$
such that $\alpha_U|_{V_i}=0$ in $P(V_i,\varphi_{U_i})$ for all $i\in I$.
Let $\alpha_U$ be induced by $\tilde\alpha_U \in P(\widetilde\Omega)$ for  some
open subset $\widetilde\Omega\in N_{U,\R}$ with 
$V={\rm trop}_U^{-1}(\widetilde\Omega)$.
We have to show that $\tilde\alpha_U\in N(V,\varphi_U)$.
Let $f:X'\to X$ be a morphism of varieties and 
$(V',\varphi_{U'})$ a
tropical chart on $X'$ which is compatible with $(V,\varphi_U)$. 
We obtain a canonical integral  $\Gamma$-affine morphism 
$F:N_{U',\R}\to N_{U,\R}$ such that 
${\rm trop}_U=F\circ {\rm trop}_{U'}$.
We choose an open subset $\widetilde\Omega'$ in 
$F^{-1}(\widetilde\Omega)$
such that $V'=\trop_U^{-1}(\widetilde\Omega')$.
We have to show that 
$F^*(\tilde\alpha_U)\wedge\delta_{\Trop(U')}$ 
vanishes in $D(\widetilde\Omega')$.

For every $i\in I$ we choose an open covering $(V_{ij}')_{j\in J_i}$
of $(f^{\rm an})^{-1}(V_i)\cap V'$ by tropical charts
$(V_{ij}',\varphi_{U_{ij}'})$ on $X'$ which are compatible with 
$(V',\varphi_{U'})$ and $(V_i,\varphi_{U_i})$.
For all $i\in I$ and $j\in J_i$ we obtain a commutative diagram
\[
\xymatrix{
(U_{ij}')^{\rm an}\ar[d]_{f|_{U_{ij}'}}\ar@{^{(}->}[r]\ar[drrrr]_{{\rm trop}_{U_{ij}'}}&
(U')^{\rm an}\ar[d]_{f|_{U'}}\ar[drrrr]_{{\rm trop}_{U'}}\\
U_i^{\rm an}\ar@{^{(}->}[r]\ar[drrrr]_{{\rm trop}_{U_i}}&
U^{\rm an}\ar[drrrr]_{{\rm trop}_{U}}&&&
N_{U_{ij}',\R}\ar[r]_{F_{ij}'}\ar[d]^{F_{ij}}&
N_{U',\R}\ar[d]^{F}\\
&&&&N_{U_i,\R}\ar[r]_{F_i}&N_{U,\R}.}
\]
of canonical maps. We choose an open subset $\widetilde\Omega'_{ij}$ in 
$(F_{ij}')^{-1}(\widetilde\Omega')\cap (F_{ij})^{-1}(\widetilde\Omega_i)$
such that $V_{ij}'={\rm trop}_{U'_{ij}}^{-1}(\widetilde\Omega'_{ij})$.
We have $(F_{ij}')^*F^*(\tilde\alpha_U)\wedge\delta_{{\rm Trop}\,(U_{ij}')}=0$
in $D(\widetilde\Omega_{ij}')$ by the commutativity of the above diagram
and the fact that $\alpha_U|_{V_i}=0$ in $P(V_i,\varphi_{U_i})$.
Now Lemma \ref{phitrop lemma} applied to $F^*(\tilde\alpha_U)$ on $(V',\varphi_{U'})$
and the covering $(V_{ij}')_{ij}$ of $V'$  
yields the vanishing of $F^*(\tilde\alpha_U)\wedge \delta_{{\rm Trop}\,(U')}$ 
in $D(\widetilde\Omega')$.
\end{proof}

\begin{art} \label{functions as delta-forms}
Let $W$ be an open subset of $\Xan$. By construction, the algebra 
$A^{\cdot,\cdot}(W)$ of differential forms on 
$W$ is a bigraded subalgebra of the algebra $B^{\cdot,\cdot}(W)$ 
of $\delta$-forms. In general, $A^{p,q}(W)$ is 
a proper subspace of $B^{p,q}(W)$. The situation in degree zero is quite 
different as we may identify $\delta$-forms
of degree $0$ with functions. We will show that 
\begin{equation} \label{delta-forms in degree 0}
A^{0,0}(W)=B^{0,0}(W).
\end{equation}
Clearly, this is a local statement and so we may consider a tropical 
chart $(V,\varphi_U)$ on $W$. It is enough to show 
\begin{equation} \label{AZ-forms in degree 0}
A^{0,0}(\Omega)=AZ^{0,0}(V,\varphi_U)
\end{equation}
for the open subset $\Omega := \trop_U(V)$ of $\Trop(U)$. 
Let $\widetilde\Omega$ be any open subset of $N_{U,\R}$. 
Since pull-back of functions is always well-defined, we may identify the 
elements of $P^{0,0}(V,\varphi_U)$ with some continuous 
functions on $\Omega$ and a partition of unity argument together with Example \ref{preforms of degree 0}  shows 
\begin{equation} \label{preforms and degree 0}
P^{0,0}(V,\varphi_U)= \{ \phi|_\Omega \mid \phi \in P^{0,0}(
\widetilde\Omega)\}=\{ \phi|_\Omega \mid \phi \in PS^{0,0}(
\widetilde\Omega)\}
\end{equation}
To prove \eqref{AZ-forms in degree 0}, it is enough to show that the 
elements 
of $Z(V,\varphi_U)$ are precisely the locally constant 
functions on $\Omega$. By \eqref{preforms and degree 0}, we have to show that 
$\phi|_\Omega$ is locally constant for any $\phi \in PS^{0,0}(\widetilde\Omega)$ with $\phi|_\Omega \in Z(V,\varphi_U)$. 
This means that $\phi$ is a continuous 
function on $\widetilde\Omega$ with an integral $\R$-affine complete polyhedral complex $\KC$ on $N_\R$ 
such that $\phi|_{\Omega \cap \Delta}$ is smooth for every $\Delta \in \KC$. 
By refinement, we may assume that a subcomplex $\KD$ of $\KC$ has support 
equal to $\Trop(U)$.  Then the 
closedness condition \eqref{closedness condition} yields that $[\phi|_\Omega]$ 
is $d'$- and $d''$-closed. We 
conclude that $\phi|_{\Omega \cap \Delta}$ is constant on every 
$\Delta \in \KD$. 
By continuity, we deduce that $\phi|_\Omega$ is locally 
constant proving the claim.
\end{art}

\begin{art}\label{delta-support}
Let $(V,\varphi_U)$ be a tropical chart on $X$ and $\Omega={\rm trop}_U(V)$.

(i) If $\Omega_0$ is an open subset of $\Omega$, then $V_0:=\trop_U^{-1}(\Omega_0)$ 
is an open subset of $V$ and $(V_0,\varphi_U)$ is a tropical chart 
of $X$. 
We say that 
$\alpha_U \in P(V,\varphi_U)$ vanishes on the open subset 
$\Omega_0$ if we have $\alpha_U|_{V_0}=0$ in $P(V_0,\varphi_U)$ (see Remark \ref{properties of delta-preforms on charts}). 
We define the {\it support of $\alpha_U\in P(V,\varphi_U)$} as 
the closed subset 
\[
\supp(\alpha_U)=
\Bigl\{\omega \in \Omega \Big| 
\genfrac{}{}{0pt}{}{\mbox{$\alpha_U$ does not vanish on any open}}{
\mbox{neighbourhood $\Omega_0$ of $\omega$ in $\Omega$}}
\Bigr\}
\]
in $\Omega$.

(ii) A (generalized) $\delta$-form $\alpha$ on an open subset
$W$ of $X^{\an}$ has a well defined support as a section of the sheaf $B^{p,q}$
(resp. $P^{p,q}$).
We denote by  $B^{p,q}_c$ 
(resp. by $P^{p,q}_c$) the subsheaves of forms with compact support.

(iii) Observe that compact support always implies proper support
in the sense of \cite[4.2.1]{chambert-loir-ducros} as our assumptions imply
that we have $\partial W=\emptyset$ for each open subset
$W$ of $X^{\an}$ (using that $\Xan$ is closed meaning that it has no 
boundary, see \cite[Theorem 3.4.1]{berkovich-book}).
\end{art}

\begin{prop}\label{delta-support on chart}
Let $(V,\varphi_U)$ be a tropical chart on $X$. 
Suppose that a generalized $\delta$-form $\alpha \in P(V)$ is 
given by $\alpha_U \in P(V,\varphi_U)$. 
Then $\alpha_U$ is uniquely determined and we have 
$\trop_U(\supp(\alpha))= \supp(\alpha_U)$. 
Furthermore $\alpha$ has compact support if and only if $\alpha_U$ has
compact support.
\end{prop}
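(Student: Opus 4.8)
The plan is to reduce the whole statement to two inputs: the injectivity of the canonical maps $\trop_U^*$ from Proposition \ref{phitrop}, and the properness of tropicalization maps. Uniqueness of $\alpha_U$ is then immediate, since by hypothesis $\alpha=\trop_U^*(\alpha_U)$ and $\trop_U^*:P^{p,q}(V,\varphi_U)\to P^{p,q}(V)$ is injective.

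For the support identity I would first record the needed topology. The map $\trop:T_U^\an\to N_{U,\R}$ is proper, because the preimage of a compact box is a closed polyannulus, i.e.\ an affinoid subdomain of $T_U^\an$ and thus compact; as $U^\an$ is a closed subspace of $T_U^\an$, the map $\trop_U:U^\an\to N_{U,\R}$ is proper as well, and restricting over the open subset $\Omega=\trop_U(V)\subseteq|\Trop(U)|$ shows $\trop_U:V\to\Omega$ is proper, hence a closed map since $\Omega$ is locally compact Hausdorff; note also that this map is onto because $\trop_U$ surjects onto $|\Trop(U)|$. Setting $Z:=\trop_U(\supp(\alpha))$, which is closed in $\Omega$ because $\supp(\alpha)$ is closed in $V$, I would then establish $Z=\supp(\alpha_U)$ by a double inclusion. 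For $\supp(\alpha_U)\subseteq Z$: given $\omega\in\Omega\setminus Z$, the set $V_0:=\trop_U^{-1}(\Omega\setminus Z)=V\setminus\trop_U^{-1}(Z)$ is an open subset of $V$ disjoint from $\supp(\alpha)$ (as $\supp(\alpha)\subseteq\trop_U^{-1}(Z)$) and $(V_0,\varphi_U)$ is a tropical chart by \ref{delta-support}; hence $\trop_U^*(\alpha_U|_{V_0})=\alpha|_{V_0}=0$ by functoriality of $\trop_U^*$, so Proposition \ref{phitrop} applied to $(V_0,\varphi_U)$ gives $\alpha_U|_{V_0}=0$ and therefore $\omega\notin\supp(\alpha_U)$. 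For the reverse inclusion: if $\omega\notin\supp(\alpha_U)$, choose an open $\Omega_1\ni\omega$ in $\Omega$ on which $\alpha_U$ vanishes; then $\alpha$ vanishes on $V_1:=\trop_U^{-1}(\Omega_1)$, so no point of $\supp(\alpha)$ lies over $\omega$ and $\omega\notin Z$. This yields $\trop_U(\supp(\alpha))=Z=\supp(\alpha_U)$.

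The compact support assertion then drops out: if $\supp(\alpha)$ is compact so is its continuous image $\supp(\alpha_U)$; conversely, if $\supp(\alpha_U)$ is compact, then $\trop_U^{-1}(\supp(\alpha_U))$ is compact by properness of $\trop_U:V\to\Omega$, and $\supp(\alpha)$, being a closed subset of $V$ contained in $\trop_U^{-1}(\supp(\alpha_U))$, is a closed subset of a compact set and hence compact. I expect the only genuine obstacle to be the properness of $\trop_U$ and the resulting tube-lemma step that produces the chart $V_0$; the remaining arguments are just a careful bookkeeping of the definition of support in \ref{delta-support} and repeated application of the injectivity in Proposition \ref{phitrop} to subcharts.
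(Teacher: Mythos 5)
Your proof is correct and follows essentially the same route as the paper: uniqueness and the support identity rest on the injectivity of $\trop_U^*$ from Proposition \ref{phitrop}, applied on subcharts of the form $\trop_U^{-1}(\Omega_0)$ (this is exactly the Chambert-Loir--Ducros argument the paper cites), and the compactness statement follows from continuity and properness of $\trop_U$. The only cosmetic difference is that you verify properness of $\trop_U$ directly via preimages of compact boxes being affinoid polyannuli, where the paper simply refers to Baker--Payne--Rabinoff.
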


\begin{proof}
This uniqueness follows from Proposition \ref{phitrop}.
The second statement follows from Proposition \ref{phitrop} by the same arguments as in \cite[Corollaire 3.2.3]{chambert-loir-ducros}. 
The last statement is a direct consequence of the
continuity and 
properness of the tropicalization map ${\rm trop}_U$
(see \cite[Remark in 2.3]{baker-payne-rabinoff}).
\end{proof}

\begin{example} \label{rational conic} \rm
 {We construct a tropical chart 
$(V,\varphi_U)$ and a non-zero {$\delta$-form} 
$\alpha \in AZ(V,\varphi_U)\setminus\{0\}$ 
with $\alpha|_\Omega=0$ for $\Omega := \trop_U(V)$. 
This example announced in Remark 
\ref{properties of delta-preforms on charts}
justifies the functorial definition of 
(generalized) delta-forms in Definition \ref{presheaf}.}

 {
We work over the ground field $K=\C_p$ 
for some prime number $p \neq 2,3$ 
and consider the affine curve $X$ in $\mathbb A_K^2$ defined
by the affine equation
\[
f(x,y)=xy+px^3+py^3.
\]
We consider the very affine open subset
$U=X\setminus (\{x=1\}\cup\{y=1\})$. 
The only singularity of the rational cubic $X$ is the origin $0=(0,0)$  
which is an ordinary double point. 
The normalization of $X$ may be seen as an open subset of $\mathbb P_K^1$ 
and can be obtained as the blowup of $X$ in $(0,0)$ as in \cite[Example I.4.9.1]{hartshorne77}. This description leads 
to a surjective morphism 
\[
\varphi\colon\mathbb P_K^1\setminus \{\xi_1,\xi_2,\xi_3\} \to X, 
\quad u \mapsto \biggl(x=\frac{-u}{p(1+u^3)},y=\frac{-u^2}{p(1+u^3)}\biggr)
\]
for a suitable affine coordinate $u$ on $\mathbb P_K^1$, where $\xi_i$ are the roots of $u^3+1=0$. 
It is clear that all $\xi_i$ have absolute value $1$ and we may choose  $\xi_1=-1$. 
Note that 
$\varphi^{-1}(\{x=1\})=\{\rho_1,\rho_2,\rho_3\}$ for the roots 
$\rho_i$ of $pu^3+u+p=0$ in $K$
and $\varphi^{-1}(X\setminus \{y=1\})=\{\rho_1^{-1},\rho_2^{-1},\rho_3^{-1}\}$.
Moreover, we have $\varphi^{-1}(0)=\{0,\infty\}$. }

 {The method of the Newton polygon  
\cite[Proposition II.6.3]{neukirch1999} shows that $pu^3+u+p=0$ has two roots of 
absolute value $|p|^{-1/2}$, say $\rho_2, \rho_3$, and one root $\rho_1$ of absolute value $|p|$.
We put 
\[
(\lambda_0,\lambda_1,\ldots,\lambda_8)=
(\xi_1,\xi_2,\xi_3,\rho_1,\rho_2,\rho_3,\rho_1^{-1},\rho_2^{-1},\rho_3^{-1})
\]
and get
\[
W:= \varphi^{-1}(U)=\mathbb P_K^1\setminus \{(\lambda_i:1)\mid i=0,\ldots,8\}.
\]
The abelian group $\mathcal O(W)^\times/K^\times$ is free of rank
eight with generators
$b_i=\frac{u-\lambda_i}{u+1}$, $i=1,\ldots,8$.
We deduce from \cite[Proposition 7.5.15]{liu-book} that 
$$ \mathcal O(U)^\times = \{f \in \mathcal O(W)^\times \mid f(0)=f(\infty)\}.$$
We conclude that $M_U:= \mathcal O(U)^\times/K^\times$ is a free abelian group of rank seven. }

 {
In the following, we would like to describe the canonical tropicalization 
${\Trop}(U)$ in the euclidean space $\R^7$  
given by chosing a basis in $M_U$. 
This is rather complicated and so we compute the tropicalization 
$\trop_{x-1,y-1}(U)$ in $\R^2$ using the tropicalization map 
\[
\trop_{x-1,y-1}\colon U^\an\longrightarrow \R^2,\,\,\,{q \mapsto 
(-\log |(x-1)(q)|, -\log |(y-1)(q)|}).
\]
This will be not enough for our purpose, but we will use the minimal skeleton $S(W)$ of $W$ for the computation and as $S(W)$ also covers ${\Trop}(U)$, we get a very good picture of the latter. This method to compute tropicalizations is due to 
\cite{baker-payne-rabinoff-2013,baker-payne-rabinoff} 
and we will refer to these papers for details of the following construction. 
{Skeleta are discussed in \cite{baker-payne-rabinoff-2013} and we refer to \cite[Corollary 4.23]{baker-payne-rabinoff-2013} for existence and uniqueness of the minimal skeleton $S(W)$ of the smooth curve $W$.} 
We recall that the skeleton $S(W)$ has a canonical retraction 
$\tau\colon (\mathbb P_K^1)^{\rm an} \to S(W)$ and hence $S(W)$ is a 
compact subset of $(\mathbb P_K^1)^{\rm an}$. 
Similarly as in the examples in \cite[Section 2]{baker-payne-rabinoff}, 
we describe the minimal skeleton $S(W)$ 
and the tropicalization $\Trop_{x-1,y-1}(U):=\trop_{x-1,y-1}(U^\an)$ in  Figure \ref{picture}. 
{Using \cite[Section 5]{gubler-rabinoff-werner},} there is a map $F:S(W) \to \Trop_{x-1,y-1}({U})$ 
with $F \circ \tau = \trop_{x-1,y-1} \circ \varphi^{\rm an}$ 
such that 
$F$ maps each segment  (resp.~leave) of $S(W)$  by an {integral $\Q$-affine map} 
 onto a segment (resp.~leave) of $\Trop_{x-1,y-1}(U)$. {One computes easily that these affine maps are all integral $\Q$-affine isomorphisms.} 
The {polyhedral set} $\Trop_{x-1,y-1}(U)$ carries a natural structure of
a tropical cycle \cite[Theorem 13.11]{gubler-guide}.
All weights are one if not indicated otherwise in Figure 1\footnote{Thanks to Christian Vilsmeier for drawing the {figure}.}.
For $r>0$, let $\zeta_r\in (\P_K^1)^\an$ be the 
supremum norm on the closed ball $\{|u|\leq r\}$
where $u$ denotes our distinguished affine coordinate on $\P_K^1$.}

\begin{figure}[H]\label{figure1}
\includegraphics{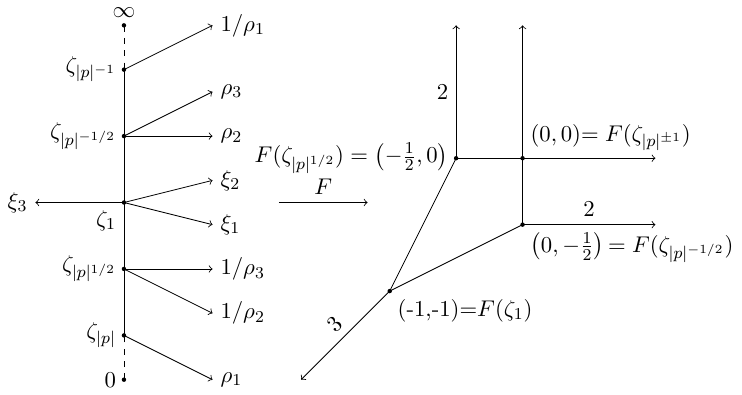}
\caption[Minimal skeleton $S(W)$ and $\trop_{x-1,y-1}(U)$]{Minimal skeleton $S(W)$ and $\Trop_{x-1,y-1}(U)$.}
\label{picture}
\end{figure}

 {Let $\widetilde\Omega := \{(x,y) \in \R^2 \mid x>-1/2, \, y > -1/2\}$  
and $\Omega := \widetilde\Omega \cap \Trop_{x-1,y-1}({U})$. 
Let $H\colon N_{U,\R} \to \R^2$ be the canonical 
affine map with $N_U$ the dual of $M_U$. 
Moreover, we have a canonical surjective map $G$ from the minimal 
skeleton $S(W)$ onto the canonical tropicalization $\Trop(U)$ 
which is affine on every segment and every leave of the minimal 
skeleton and such that $\trop_U \circ \varphi^{\rm an} = G \circ \tau$ for the canonical retraction $\tau$ onto the skeleton $S(W)$  (see \cite[Section 5]{gubler-rabinoff-werner}). 
Using our description of $\mathcal O(U)^\times$, we deduce that 
\[
{G(\zeta_{|p|^{-1}})}=G\circ \tau(\infty)=\trop_U \circ \varphi^{\rm an}(\infty)
\]
and
\[
{G(\zeta_{|p|})}=G\circ \tau(0)=\trop_U \circ \varphi^{\rm an}(0)
\]
are equal as the right hand sides are given in terms of units on $U$.}

 {Using that $F=H \circ G$,
we conclude that the fiber of the surjective map 
$H\colon \Trop(U)\to \trop_{x-1,y-1}(U)$ over $(0,0)$ is one single 
point and that $H$ maps $\Omega' := H^{-1}(\widetilde\Omega) \cap \Trop(U)$ 
homeomorphically and isometrically with respect to lattice 
length onto $\Omega$.
We express this fact by saying 
that $\Omega'$ is unimodular to $\Omega$. 
This is all we need in the following. }

 {
Now we consider the tropical chart $(V,\varphi_U)$ around the ordinary double point $0=(0,0)$ of $U$, where $V:=\trop_U^{-1}(\Omega)$. 
We consider the unique function $\tilde\phi$ on $\R^2$ which is linear on each quadrant with $\tilde\phi(1,0)=1$, $\tilde\phi(0,1)=-1$ and which is zero in the third quadrant. Let $ \phi$ be the restiction of $\tilde\phi$ to $\Omega$. 
Let $\phi':= \phi \circ H$ as a real function on $\Omega'$. 
It follows from the tropical Poincar\'e--Lelong formula in Theorem \ref{tpl} 
that $d'd''[\phi]$ is the supercurrent on $\Omega$ given by 
$\delta_{\phi \cdot \Trop_{x-1,y-1}(U)}$, where 
$\phi \cdot \Trop_{x-1,y-1}(U)$ is the corner locus of $\phi$. 
Similarly, $d'd''[\phi']=\delta_{\phi' \cdot \Trop(U)}$. 
It is clear that $\phi \cdot \Trop_{x-1,y-1}(U)$ is zero on 
$\Omega \setminus \{(0,0)\}$ as $\phi$ is linear there. 
By definition, the multiplicity of $\phi \cdot \Trop_{x-1,y-1}(U)$ in $(0,0)$ is the sum of the four outgoing slopes which is zero as well. 
We conclude that the corner locus  $\phi \cdot \Trop_{x-1,y-1}(U)$ is zero. 
Since we have shown that $\Omega'$ is unimodular to $\Omega$, we 
conclude that the corner locus $\phi' \cdot \Trop(U)$ is zero on $\Omega'$ as well.} 

{We note that the corner locus ${\tilde\phi' \cdot N_{U,\R}}$ of the 
function $\tilde\phi':=\tilde\phi \circ H$ on $N_{U,\R}$ induces a 
$\delta$-preform $\delta_{\tilde\phi' \cdot N_{U,\R}}$ on $N_{U,\R}$ 
which represents a $\delta$-form $\alpha$ on the tropical chart 
$(V,\varphi_U)$. 
We have $\alpha \in AZ^{1,1}(V,\varphi_U)\subset P^{1,1}(V,\varphi_U)$. 
It follows from Proposition \ref{asscommcornerlocus}  that
\begin{equation} \label{pull-back counterexample}
\alpha|_\Omega = \delta_{\tilde\phi' \cdot N_{U,\R}} \wedge 
\delta_{\Trop(U)} = \delta_{\phi' \cdot \Trop(U)}=0.
\end{equation}
Now let us consider the open ball $B:=\{|u| < |p|^{1/2}\}$ in $\mathbb P_K^1$. 
It is clear that $V'':= B \setminus \{\rho_1\}$ is mapped by $F$ to 
$\Omega \cap \{x=0\}$. 
The coordinate $w:=u-\rho_1$ on 
$U'':=\mathbb P_K^1 \setminus \{\rho_1,\infty\}$
induces {an isomorphism} $\varphi_{U''}\colon U''\to \G_m$.
Note that {$(V'',\varphi_{U''})$} is a tropical chart.
Indeed, we have $V''=\trop_{U''}^{-1}({\Omega''})$ 
and $\trop_{U''}(V'')= {\Omega''}$ {for $\Omega'':=]1/2,\infty[$.}
The tropical charts $(V'',\varphi_{U''})$ and $(V,\varphi_U)$ are compatible with 
respect to the morphism $\varphi$ and hence there is a canonical affine map 
$E:\R \to N_{U,\R}$ with $\trop_U \circ \varphi^{\rm an} = E \circ \trop_w$. 
We have
\[
\varphi^*(\alpha)|_{\Omega''}=E^*\bigl(\delta_{\tilde\phi' \cdot N_{U,\R}}\bigr)\big|_{\Omega''}
= \delta_{E^*(\tilde\phi')\cdot N_{U'',\R}}\big|_{\Omega''}.
\]
It is clear that $\phi'':=E^*(\tilde\phi')$ is a piecewise linear function on $\Omega''$ which is identically zero on ${]}1/2,1]$ and which has slope $1$ on $[1,\infty[$. It follows that 
$\varphi^*(\alpha)|_{\Omega''}= \delta_{{1}}$. 
We conclude that  $\alpha \in AZ^{1,1}(V,\varphi_U)$ is an example with $\alpha|_\Omega=0$, but {$\alpha\neq 0$ as} $\varphi^*(\alpha)|_{\Omega''} \neq 0$.}
 \end{example}

\section{Integration of delta-forms}\label{Integration of delta-forms}

We keep the notation and the hypotheses from the previous section. 
Our  goal is to introduce integration of generalized $\delta$-forms of top 
degree with compact support. We proceed as in \cite[5.13]{gubler-forms}. 
A crucial ingredient in our definition of the integral is
Lemma \ref{support and degree of delta-forms} which shows that 
the support of a generalized $\delta$-form of high degree
is always concentrated in points of high local dimensions.
This allows us to compute the integral with a single chart of integration. 
We obtain a well defined integral for generalized $\delta$-forms
which satisfies a projection formula and the theorem of Stokes.

\begin{art} \label{integration of P-forms}
Let $(V,\varphi_U)$ be a tropical chart of $X$.
As before we write $V={\rm trop}_U^{-1}(\widetilde\Omega)$ for some open subset
$\widetilde\Omega$ of $N_{U,\R}$ and $\Omega=\widetilde\Omega\cap {\rm Trop}\,(U)$.
 {Recall $n := \dim(X)$.} 
(i) An element $\alpha_U$ in $P(V,\varphi_U)$ is represented by
a $\delta$-preform $\tilde\alpha_U$ in $P(\widetilde\Omega)$
and determines a  $\delta$-preform 
\[
\alpha_U|_\Omega
=\tilde\alpha_U\wedge\delta_{{\rm Trop}\,(U)}
\in P(\Omega) \subseteq D(\Omega)
\]
on $\Omega$ as in \ref{restriction of delta to Omega} which 
does neither depend on the choice of $\tilde\alpha_U$ nor on 
the choice of $\widetilde\Omega$.  {Often, it is convenient 
to use the notation $\alpha_U|_{\Trop(U)}$ for $\alpha_U|_\Omega$.}

(ii) Given $\alpha_U$ in $P^{n,n}(V,\varphi_U)$ and an integral 
$\R$-affine polyhedral subset $P$ of $\Omega$
such that 
$P\cap \supp( {\alpha_U|_{\Omega}})$ is compact, we define
\[
\int_{P} \alpha_U
:=\int_{P} \tilde\alpha_U\wedge\delta_{{\rm Trop}\,(U)}
\]
where the righthand side is defined as in Remark \ref{preform properties}. 
As usual, we extend the integral by $0$ to the $\alpha_U$ of other bidegrees.

(iii) If $\alpha_U$ in $P^{n,n}(V,\varphi_U)$ and if the support of $\alpha_U|_\Omega$ is compact,  then we can 
consider $\alpha_U|_\Omega$ as a $\delta$-preform on $\Trop(U)$ with compact support 
 and we write
\[
\int_\Omega \alpha_U := \int_{|\Trop(U)|} \alpha_U|_\Omega
\]
again using Remark \ref{preform properties}.

(iv) Given $\alpha_U$ in $P^{n-1,n}(V,\varphi_U)$ or $P^{n,n-1}(V,\varphi_U)$
and an integral $\R$-affine polyhedral subset $P$ of $\Omega$
such that $P\cap \supp( {\alpha_U|_{\Omega}})$ is compact, 
we define 
\[
\int_{\partial P} \alpha_U
:=\int_{\partial P} \tilde\alpha_U\wedge\delta_{{\rm Trop}\,(U)}
\]
where the righthand side is defined in Remark \ref{preform properties}.
We extend the boundary integral by $0$ to the $\alpha_U$ of other bidegrees.
\end{art}

\begin{art} \label{setup for functoriality}
In the next result, we look at functoriality of the above 
integrals with respect to a morphism  $f:X'\to X$ of algebraic 
varieties over $K$. 
Let $(V,\varphi_U)$ be a tropical chart of $X$. 
Let $U'$ be a very affine open subset of $X'$ with 
$f(U') \subseteq U$. 
Recall that there is a canonical integral $\Gamma$-affine morphism 
$F:N_{U',\R}\to N_{U,\R}$ such that ${\rm trop}_U=F\circ {\rm trop}_{U'}$. 
We deduce easily that 
$(V':=({f^{\rm an}})^{-1}(V)\cap (U')^{\rm an},\varphi_{U'})$ 
is a tropical chart of $X'$ which is compatible with the tropical chart $(V,\varphi_U)$. 
Let $P$ be an integral $\R$-affine polyhedral subset of $\Omega :=\trop_U(V)$ and let $Q:=F^{-1}(P)\cap {\rm Trop}\,(U')$.
We consider $\alpha_U\in P(V,\varphi_U)$ and its pull-back
$f^*(\alpha_U)\in P(V',\varphi_{U'})$ (see Remark \ref{properties of delta-preforms on charts}).
In the following, 
we will use the degree of a morphism as introduced in \ref{sturmfelstevelev}. 
\end{art}

\begin{prop} \label{general independence of the chart}
Under the hypothesis of \ref{setup for functoriality}  {and with $n:=\dim(X)$}, we assume additionally that $Q\cap \supp(f^*(\alpha_U)|_{\Trop(U')})$ is compact. Then 
the following properties hold:
\begin{itemize}
\item[(i)] The set $P\cap {\rm supp}\,(\alpha_U|_{{\rm Trop}\,(U)})$ is compact.
\item[(ii)] If $\alpha_U$  is of bidegree $(n,n)$, then we have 
\begin{equation}\label{general independence of the chart g1}
\deg(f)\cdot \int_{P} \alpha_U=\int_{
Q} f^*(\alpha_{U}).
\end{equation}
\item[(iii)] If $\alpha_U$  is of bidegree 
$(n-1,n)$ or $(n,n-1)$, then we have 
\begin{equation}\label{general independence of the chart g2}
\deg(f) \cdot \int_{\partial P} \alpha_U=\int_{\partial
Q} f^*(\alpha_{U}).
\end{equation}
\end{itemize}
\end{prop}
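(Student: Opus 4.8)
The plan is to deduce the statement from the projection formula for $\delta$-preforms in Proposition~\ref{projection formula for preforms}, transported from the ambient real vector spaces to $\Xan$ via the tropicalization maps, with the Sturmfels--Tevelev formula \eqref{cor-sturmfels-tevelev} supplying the one genuinely new ingredient, namely the computation of the relevant pushforward of tropical cycles.

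First I would choose a representing $\delta$-preform $\widetilde\alpha_U\in P^{n,n}(\widetilde\Omega)$ of $\alpha_U$ on an open set $\widetilde\Omega\subseteq N_{U,\R}$ with $\Omega=\widetilde\Omega\cap\Trop(U)$, and recall from \ref{integration of P-forms} that $\int_P\alpha_U=\int_P\widetilde\alpha_U\wedge\delta_{\Trop(U)}$, that $\int_{\partial P}\alpha_U=\int_{\partial P}\widetilde\alpha_U\wedge\delta_{\Trop(U)}$, and that $f^*(\alpha_U)$ is represented on an open subset of $N_{U',\R}$ by $F^*(\widetilde\alpha_U)$, where $F\colon N_{U',\R}\to N_{U,\R}$ is the canonical integral $\Gamma$-affine map with $\trop_U=F\circ\trop_{U'}$. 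Since $F^*(\widetilde\alpha_U)\wedge\delta_{\Trop(U')}$ has support contained in $\Trop(U')$ and $Q=F^{-1}(P)\cap\Trop(U')$, the extra hypothesis says precisely that $\supp\bigl(F^*(\widetilde\alpha_U)\wedge\delta_{\Trop(U')}\bigr)\cap F^{-1}(P)$ is compact, and one has $\int_Q f^*(\alpha_U)=\int_{F^{-1}(P)}F^*(\widetilde\alpha_U)\wedge\delta_{\Trop(U')}$, and similarly for boundary integrals. We are thus exactly in the situation of Proposition~\ref{projection formula for preforms}, applied to $F$, the $\delta$-preform $\widetilde\alpha_U$, the polyhedral set $P$ inside $\widetilde\Omega$, and the tropical cycle $C'=\Trop(U')$: that proposition gives that $\supp\bigl(\widetilde\alpha_U\wedge\delta_{F_*\Trop(U')}\bigr)\cap P$ is compact and that
\[
\int_P\widetilde\alpha_U\wedge\delta_{F_*\Trop(U')}=\int_{F^{-1}(P)}F^*(\widetilde\alpha_U)\wedge\delta_{\Trop(U')},
\]
together with the analogous boundary identity for $\alpha_U$ of bidegree $(n-1,n)$ or $(n,n-1)$ (the second case obtained from the first by applying the involution $J$).

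It remains to identify $F_*\Trop(U')$. By \eqref{cor-sturmfels-tevelev} one has $F_*\Trop(U')=\deg(f)\cdot\Trop(\overline{f(U')})$, and since $U'$ maps dominantly onto its schematic image $\overline{f(U')}$, which equals $U$ when $f$ is dominant (the case relevant for comparing tropical charts), this reads $F_*\Trop(U')=\deg(f)\cdot\Trop(U)$ as tropical cycles; the identity holds trivially as well when $\deg(f)=0$, both sides then vanishing. Substituting $\widetilde\alpha_U\wedge\delta_{F_*\Trop(U')}=\deg(f)\cdot\widetilde\alpha_U\wedge\delta_{\Trop(U)}=\deg(f)\cdot(\alpha_U|_\Omega)$ into the displayed equality and its boundary analogue yields (ii) and (iii), while the compactness conclusion of Proposition~\ref{projection formula for preforms} yields (i).

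The only substantial work beyond invoking these two results is the bookkeeping with polyhedral complexes of definition: one fixes compatible polyhedral complexes of definition in $N_{U',\R}$ and $N_{U,\R}$ for $\Trop(U')$, $\Trop(U)$ and the tropical cycles appearing in presentations of $\widetilde\alpha_U$ and $F^*(\widetilde\alpha_U)$, arranged so that the pushforward of the first refines the second, and then passes to subcomplexes with supports $F^{-1}(P)$, $Q$ and $P$. All of this is already carried out inside the proof of Proposition~\ref{projection formula for preforms} by means of the transformation formula for integrals and boundary integrals of superforms on polyhedra, so it can simply be cited; I expect the only step needing genuine care is the verification that the compactness hypothesis on $Q$ matches the hypothesis required by that proposition, for which the inclusion $\supp\bigl(F^*(\widetilde\alpha_U)\wedge\delta_{\Trop(U')}\bigr)\subseteq\Trop(U')$ and the equality $Q=F^{-1}(P)\cap\Trop(U')$ are the key points.
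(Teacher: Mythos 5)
Your proof is correct and follows essentially the same route as the paper's: represent $\alpha_U$ by a $\delta$-preform $\tilde\alpha_U$, apply the projection formula of Proposition \ref{projection formula for preforms} to $F$ and $C'=\Trop(U')$, and identify $F_*\Trop(U')$ with $\deg(f)\cdot\Trop(U)$ via the Sturmfels--Tevelev formula \eqref{cor-sturmfels-tevelev}. The only difference is cosmetic: you spell out the dominant versus $\deg(f)=0$ case distinction that the paper leaves implicit when combining these two ingredients.
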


\begin{proof}
We choose an open subset $\widetilde\Omega$ of $N_{U,\R}$ with $\Omega = \widetilde\Omega \cap \Trop(U)$. 
We write $V'={\rm trop}_U^{-1}(\widetilde\Omega')$ for some open subset
$\widetilde\Omega'$ of $N_\R'$.
Replacing $\widetilde\Omega'$ by $\widetilde\Omega'\cap F^{-1}(\widetilde\Omega)$,
we may assume that $\widetilde\Omega'$ is contained in $F^{-1}(\widetilde\Omega)$.
We write $\Omega'=\widetilde\Omega'\cap \Trop(U')$.
If $\alpha_U\in P(V,\varphi_U)$ is represented by some
element $\tilde\alpha_U\in P(\widetilde\Omega)$, then
$f^*(\alpha_U)$ 
is represented by the element $F^*(\tilde\alpha_U)$ in $P(\widetilde\Omega')$.
We obtain from (\ref{cor-sturmfels-tevelev}) and 
(\ref{projection formula for preformsg1}) that 
$P\cap {\rm supp}\,(\alpha_U|_{{\rm Trop}\,(U)})$ is compact. This proves (i). 

If $\alpha_U\in P^{n,n}(V,\varphi_U)$, then we obtain
\begin{equation}\label{general independence of the chart g3}
{\rm deg}\,(f)\,\int_{P}
\tilde\alpha_U\wedge\delta_{{\rm Trop}\,(U)}
=\int_{F^{-1}(P)}
F^*\tilde\alpha_U\wedge\delta_{{\rm Trop}\,(U')}\\
\end{equation}
if we combine (\ref{cor-sturmfels-tevelev}) with the projection
formula (\ref{projection formula for preformsg1}).
By definition, (\ref{general independence of the chart g1}) is a direct 
consequence of (\ref{general independence of the chart g3}).
Equation (\ref{general independence of the chart g2})
is derived in the same way from (\ref{cor-sturmfels-tevelev}) 
and (\ref{projection formula for preformsg2})
\end{proof}

Let $W$ denote an open subset of $X^{\rm an}$.
Note that a generalized $\delta$-form on $W$ is locally given by elements of 
$P(V,\varphi_U)$ for tropical charts $(V,\varphi_U)$. 
The following corollary will be crucial for the definition of the integral 
of generalized $\delta$-forms.

\begin{cor} \label{independence of the chart}
We consider very affine open subsets $U' \subseteq U$ in $X$.  
Let $\alpha = \trop_U^*(\alpha_U)$ for some
$\alpha_U\in P(U^{\rm an},\varphi_U)$.  
Then there is a unique $\alpha_{U'} \in P((U')^\an,\varphi_{U'})$ with 
$\alpha|_{(U')^{\rm an}}=\trop_{U'}^*(\alpha_{U'})$.  
If $\alpha$ is of bidegree $(n,n)$ and has compact support in $(U')^{\rm an}$, then we have 
\[
\int_{|\Trop(U)|} \alpha_U = \int_{|\Trop(U')|} \alpha_{U'}.
\]
\end{cor}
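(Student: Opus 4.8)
The plan is to produce $\alpha_{U'}$ as the pull-back of $\alpha_U$ along the open immersion $\iota\colon U'\hookrightarrow U$, and then to reduce the equality of integrals to the projection formula for $\delta$-preforms combined with the Sturmfels--Tevelev formula.

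First I would observe that $(U^\an,\varphi_U)$ and $((U')^\an,\varphi_{U'})$ are tropical charts which are compatible with respect to $\iota$, since $\iota(U')\subseteq U$ and $\iota^\an((U')^\an)\subseteq U^\an$. Hence Remark \ref{properties of delta-preforms on charts} provides a pull-back homomorphism $\iota^*\colon P(U^\an,\varphi_U)\to P((U')^\an,\varphi_{U'})$, and I set $\alpha_{U'}:=\iota^*(\alpha_U)$. Let $F\colon N_{U',\R}\to N_{U,\R}$ be the canonical integral $\Gamma$-affine map with $\trop_U=F\circ\trop_{U'}$ on $(U')^\an$. Using the compatibility of the maps $\trop_U^*$ and $\trop_{U'}^*$ with pull-back along $\iota$ (see \ref{pull-back of delta-forms}), one gets
\[
\trop_{U'}^*(\alpha_{U'})=(\iota^\an)^*\bigl(\trop_U^*(\alpha_U)\bigr)=\alpha|_{(U')^\an},
\]
which is the asserted property; uniqueness of $\alpha_{U'}$ with this property then follows at once from the injectivity of $\trop_{U'}^*$ proved in Proposition \ref{phitrop}.

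For the integral identity, choose a representative $\tilde\alpha_U\in P^{n,n}(N_{U,\R})$ of $\alpha_U$; by construction of the pull-back on charts, the class of $F^*(\tilde\alpha_U)\in P^{n,n}(N_{U',\R})$ is $\alpha_{U'}$, so by \ref{integration of P-forms}(iii) the quantities to be compared are $\int_{|\Trop(U)|}\tilde\alpha_U\wedge\delta_{\Trop(U)}$ and $\int_{|\Trop(U')|}F^*(\tilde\alpha_U)\wedge\delta_{\Trop(U')}$; both are defined because $\alpha$ has compact support in $(U')^\an\subseteq U^\an$, whence $\alpha_U$ and $\alpha_{U'}$ have compact support by Proposition \ref{delta-support on chart}. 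Since $X$ is irreducible, $U'$ is dense in $U$, so the schematic image of $\iota$ is $U$ and $\deg(\iota)=[K(X):K(X)]=1$; by the Sturmfels--Tevelev formula \eqref{cor-sturmfels-tevelev} this gives $F_*\Trop(U')=\Trop(U)$, while $F(|\Trop(U')|)=\trop_U((U')^\an)\subseteq|\Trop(U)|$ shows $|\Trop(U')|\subseteq F^{-1}(|\Trop(U)|)$. Applying the projection formula \eqref{projection formula for preformsg1} of Proposition \ref{projection formula for preforms} to $F$, the tropical cycle $C'=\Trop(U')$, the polyhedral set $P=|\Trop(U)|$ and the $\delta$-preform $\tilde\alpha_U$, whose compactness hypothesis holds because $\supp\bigl(F^*(\tilde\alpha_U)\wedge\delta_{\Trop(U')}\bigr)$ is contained in $|\Trop(U')|$ and is compact (as $\alpha_{U'}$ has compact support), we obtain
\[
\int_{|\Trop(U)|}\tilde\alpha_U\wedge\delta_{\Trop(U)}=\int_{F^{-1}(|\Trop(U)|)}F^*(\tilde\alpha_U)\wedge\delta_{\Trop(U')}.
\]
Since $F^*(\tilde\alpha_U)\wedge\delta_{\Trop(U')}$ is a $\delta$-preform on $\Trop(U')$, the signed measure it defines (Remark \ref{boundaryintdelta}) is concentrated on $|\Trop(U')|\subseteq F^{-1}(|\Trop(U)|)$, so the right-hand side equals $\int_{|\Trop(U')|}F^*(\tilde\alpha_U)\wedge\delta_{\Trop(U')}$, which is the desired identity.

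I expect the main difficulty to be purely organizational: identifying $\alpha_{U'}$ simultaneously with $\iota^*(\alpha_U)$ and with the class of $F^*(\tilde\alpha_U)$, making sure $\deg(\iota)=1$ so that $F_*\Trop(U')=\Trop(U)$ holds on the nose, and checking that enlarging the domain of integration from $|\Trop(U')|$ to $F^{-1}(|\Trop(U)|)$ does not change the value of the integral.
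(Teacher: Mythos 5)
Your proposal is correct and follows essentially the same route as the paper: the paper also takes $\alpha_{U'}=F^*(\alpha_U)$, gets uniqueness from Proposition \ref{phitrop}, and obtains the integral identity by citing \eqref{general independence of the chart g1} with $f=\id$, $P=|\Trop(U)|$, $Q=|\Trop(U')|$ — whose proof is precisely the Sturmfels--Tevelev plus projection-formula argument you spell out explicitly.
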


\begin{proof}
Let $F:N_{U',\R} \rightarrow N_{U,\R}$ 
be the canonical affine map with $\trop_U=F \circ  \trop_{U'}$ on 
$(U')^{\rm an}$. Then $\alpha|_{(U')^{\rm an}}$ is given by 
$\alpha_{U'}:=F^*(\alpha_U) \in P((U')^{\rm an},\varphi_{U'})$. 
This proves existence and uniqueness follows from \ref{phitrop}. 
To prove the last claim, we use (\ref{general independence of the chart g1}) 
for $f={\rm id}$, $P=|{\rm Trop}\,(U)|$ and $Q=|{\rm Trop}\,(U')|$.
\end{proof}

In the following result, we need the local invariant $d(x)$ for 
$x \in \Xan$ \cite[4.2]{gubler-forms}. This invariant was introduced in \cite[Chapter 9]{berkovich-book} and was extensively studied in \cite{ducros}. We note that $d(x) \leq m$ if $x$ belongs to a Zariski closed subset of dimension $m$ \cite[Proposition 9.1.3]{berkovich-book}.

\begin{lem} \label{support and degree of delta-forms}
Let $W$ be an open subset of $\Xan$ and let $\alpha \in P^{p,q}(W)$. 
If $x \in W$ satisfies $d(x) < \max(p,q)$, then $x \not \in \supp(\alpha)$.
\end{lem}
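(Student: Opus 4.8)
The plan is to transfer to $\delta$-forms the proof of the analogous statement for ordinary differential forms in \cite[§5]{gubler-forms}, the new feature being that one has to carry the tropical cycles of a $\delta$-preform through the argument. Since the hypothesis $d(x)<\max(p,q)$ and the conclusion are invariant under the operator $J$, we may assume $p\ge q$, so that $\max(p,q)=p$. The support of $\alpha$ is the support of a section of the sheaf $P^{p,q}$, and the tropical charts form a basis of $\Xan$ (\cite[Proposition 4.16]{gubler-forms}); hence it suffices, for a fixed $x\in W$ with $d(x)<p$, to exhibit a single tropical chart $(V,\varphi_U)$ with $x\in V\subseteq W$ for which $\alpha|_V=0$. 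By Proposition \ref{phitrop}, $\alpha|_V=\trop_U^*(\alpha_U)$ for a unique $\alpha_U\in P^{p,q}(V,\varphi_U)$, so what has to be produced is a chart with $\alpha_U=0$ in $P^{p,q}(V,\varphi_U)$.

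First I would choose the chart using the structure theory of the local invariant $d(x)$ of Berkovich and Ducros, in the form exploited in \cite[§4, §5]{gubler-forms}. Because $d(x)<p$, one can arrange --- after passing to a suitable very affine open neighbourhood $U$ of the image of $x$ and a corresponding tropical chart $(V,\varphi_U)$ with $x\in V$ --- that the support of $\alpha_U$ near $\trop_U(x)$, together with the supports of all its pull-backs $F^*(\alpha_U)$ under the canonical maps $F\colon N_{U',\R}\to N_{U,\R}$ attached to charts $(V',\varphi_{U'})$ compatible with $(V,\varphi_U)$, is concentrated on polyhedra of dimension at most $d(x)$. Making this selection precise, and checking that it behaves uniformly under the compatibility maps, is the step I expect to require the most work; it is where the input of \cite{gubler-forms} is indispensable, and I would isolate it as a lemma of the same shape as the one used there for superforms.

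Granting such a well-adapted chart, the conclusion follows from the bidegree/codimension bookkeeping for $\delta$-preforms. Write a representative $\tilde\alpha_U=\sum_{i\in I}\alpha_i\wedge\delta_{C_i}\in P^{p,q}(\widetilde\Omega)$ with $\alpha_i\in A^{p_i,q_i}(\widetilde\Omega)$ and $C_i\in{\rm TZ}^{n_i}(N_{U,\R})$, where $p_i+n_i=p$ and $q_i+n_i=q$ (Definition \ref{defdeltaforms}). For any compatible chart, with map $F$ and tropical variety $\Trop(U')$ of dimension $n$, one has $F^*(\tilde\alpha_U)\wedge\delta_{\Trop(U')}=\sum_{i}F^*\alpha_i\wedge\delta_{F^*C_i\cdot\Trop(U')}$, and $F^*C_i\cdot\Trop(U')$ is a tropical cycle of codimension $n_i$; by the choice of chart it cuts the $(\le d(x))$-dimensional locus carrying $F^*(\alpha_U)$ near the relevant point down to faces $\tau$ with $\dim\tau< p-n_i=\max(p_i,q_i)$, on which the bidegree-$(p_i,q_i)$ superform $F^*\alpha_i$ restricts to $0$. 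By the comparison criterion \eqref{equalcrit} the restriction of $F^*(\tilde\alpha_U)$ to $\Trop(U')$ vanishes near that point; since this holds for all compatible charts, $\tilde\alpha_U$ lies in the subspace $N^{p,q}$ of Definition \ref{presheaf} over a small tropical subchart $V_0=\trop_U^{-1}(\widetilde\Omega_0)$ containing $x$, so $\alpha_U|_{V_0}=0$ in $P^{p,q}(V_0,\varphi_U)$ and hence $\alpha|_{V_0}=0$. As $\delta$-forms are just the generalized $\delta$-forms locally lying in the subalgebras $AZ(V,\varphi_U)$ (Definition \ref{predefinition}), the same chart also settles the case $\alpha\in B^{p,q}(W)$.
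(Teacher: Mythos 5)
Your overall strategy is the same as the paper's (use Ducros's theorem to get a compact neighbourhood $V_x$ of $x$ with $\dim\trop_U(V_x)\le d(x)<\max(p,q)$, then kill each term $\alpha_i\wedge\delta_{C_i}$ by a bidegree/codimension count on compatible charts), but the step you wave through is precisely the heart of the matter, and the way you state it is not correct. You claim that the stable intersection $F^*C_i\cdot\Trop(U')$ ``cuts the $(\le d(x))$-dimensional locus carrying $F^*(\alpha_U)$ down to faces $\tau$ with $\dim\tau<p-n_i$, on which the bidegree-$(p_i,q_i)$ superform $F^*\alpha_i$ restricts to $0$.'' This conflates the locus upstairs with its image downstairs: the relevant open set $\Omega'=\trop_{U'}(V')$ lies in $F^{-1}(\trop_U(V_x))$, and since the fibres of $F$ can be positive-dimensional, $\Omega'$ (and the maximal faces $\Delta'$ of $F^*C_i\cdot\Trop(U')$ meeting it, which have dimension $\dim X'-n_i$) need not be small at all. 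What is small is the image $F(\Delta'\cap\Omega')\subseteq\trop_U(V_x)$, and the vanishing of $F^*\alpha_i$ on $\Delta'\cap\Omega'$ holds because there it is the pull-back of $\alpha_i$ restricted to that image, a $(p_i,q_i)$-superform on a polyhedral set of dimension $<\max(p_i,q_i)$.

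Moreover, the assertion that the intersection ``cuts down'' the dimension of that image by $n_i$ is not automatic and is exactly what the paper has to prove as \eqref{inequality for codimension}: one needs the transversality built into the constructions, namely $N_{\Delta_0',\R}+N_{\Delta_1',\R}=N_{U',\R}$ from the fan displacement rule defining the stable product (\ref{intpairing}(ii)) and $N_{\Delta_1,\R}+\L_F(N_{U',\R})=N_{U,\R}$ from the definition of $F^*C_i$ (\ref{intpairing}(v)), which together give $\L_F(N_{\Delta',\R})=\L_F(N_{\Delta_0',\R})\cap N_{\Delta_1,\R}$ and hence $\codim\bigl(F(\Delta'\cap\Omega'),\trop_U(V_x)\bigr)\ge n_i$, so $\dim F(\Delta'\cap\Omega')\le d(x)-n_i<\max(p_i,q_i)$. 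Without this argument your dimension count has no justification; a codimension-$n_i$ tropical cycle could a priori meet the relevant locus without the expected drop. By contrast, the part you single out as the main difficulty (choosing the chart and ``uniformity under compatibility maps'') is essentially immediate: Ducros's theorem gives $V_x$ once and for all, and any compatible chart with $V'\subseteq V_x$ automatically satisfies $F(\Omega')\subseteq\trop_U(V_x)$. So the proposal has the right skeleton but a genuine gap at the transversality/codimension step, together with an incorrect formulation of where the dimension bound lives.
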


\begin{proof} 
The proof relies on a result of Ducros \cite[Th\'eor\`eme 3.4]{ducros} which says roughly
that in a sufficiently small analytic neighbourhood of $x$, 
the dimension of the  tropical variety is bounded by $d(x)$. 
The details are as follows. We choose a tropical chart $(V,\varphi_U)$ around 
$x$ such that $\alpha$ is induced by a $\delta$-preform  
$\sum_{i \in I} \alpha_i \wedge \delta_{C_i}$ on $N_{U,\R}$. 
By linearity, we may assume that $\alpha$ is induced by 
$\alpha_1\wedge \delta_{C_1}$ for a superform $\alpha_1$ in 
$ A^{p',q'}(N_{U,\R})$ and a tropical cycle $C_1$ of codimension 
$c:=p-p'=q-q' \geq 0$ in $N_{U,\R}$. By definition of a tropical chart, 
there is an open subset $\widetilde{\Omega}$ of $N_{U,\R}$ such that 
$V=\trop_U^{-1}(\widetilde{\Omega})$. 
By the mentioned result of Ducros (see also \cite[Proposition 4.14]{gubler-forms}), there is a compact neighbourhood $V_x$ of $x$ in $V$ 
such that $\trop_U(V_x)$ is a polyhedral subset of $N_{U,\R}$ with 
\begin{equation} \label{tropical dimension bound}
\dim(\trop_U(V_x))\leq  d(x) < \max(p,q).
\end{equation}

We will show that $\alpha|_{V_x}=0$. Let   $f:X' \rightarrow X$ be a morphism of algebraic varieties over $K$ and  $(V',\varphi_{U'})$ a tropical chart of $X'$ 
with $f^{\rm an}(V') \subseteq V_x$. By definition, we have $V'=\trop_{U'}^{-1}(\Omega')$ for the open subset $\Omega':=\trop_{U'}(V')$ 
of $\Trop(U')$. 
In this situation, we get a commutative diagram 
\begin{equation} \label{diagram of compatible charts}
\xymatrix{
V'\ar[d]^{f^{\rm an}|_{V'}}\ar@{^{(}->}[r]& 
(U')^{\rm an}\ar[d]^{f^{\rm an}|_{(U')^{\rm an}}}\ar[r]^{\varphi_{U'}}&
T_{U'}\ar[d]\ar[r]&
N_{U',\R}\ar[d]^{F}\\
V\ar@{^{(}->}[r]&  
U^{\rm an}\ar[r]^{\varphi_{U}}&
T_U\ar[r]&
N_{U,\R}}
\end{equation}
as before. 
To prove the claim, it is enough to show that 
$$f^*(\alpha)|_{\Omega'}:=F^*(\alpha_1) \wedge \delta_{F^*(C_1)}|_{\Omega'}=0,$$
or equivalently
\begin{equation} \label{goal to show}
F^*(\alpha_1) \wedge \delta_{C'} = 0 \in D(\Omega')
\end{equation}
for the tropical cycle  $C':=F^*(C_1)\cdot \Trop(U')$  of codimension $c$ 
in $\Trop(U')$.
We note that 
$\Omega'=\trop_{U'}(V')\subseteq F^{-1}(\trop_U(V_x))$.
Let $\Delta'$ be a maximal  polyhedron from $C'$ with $\Delta' \cap \Omega' \neq \emptyset$. 
Then $F(\Delta' \cap \Omega') \subseteq \trop_U(V_x)$ and hence 
\begin{equation} \label{consequence of diagram}
F^*(\alpha_1)|_{\Delta' \cap \Omega'}
= (F|_{\Delta'\cap \Omega'})^*(\alpha_1|_{F(\Delta') \cap \trop_U(V_x)}).
\end{equation}
We will show below that 
\begin{equation} \label{inequality for codimension}
\codim(F(\Delta' \cap \Omega'),\trop_U(V_x)) \geq c.
\end{equation}
Then \eqref{goal to show} follows from \eqref{consequence of diagram} by using \eqref{inequality for codimension} and \eqref{tropical dimension bound}. This proves $x \not \in \supp(\alpha)$. 

It remains to prove \eqref{inequality for codimension}. By definition of the stable tropical intersection product in \ref{intpairing}(ii), 
there are maximal polyhedra $\Delta_0'$ and $\Delta_1'$ of $\Trop(U')$ and $F^*(C_1)$, respectively, such that $\Delta'=\Delta_0' \cap \Delta_1'$. Moreover, 
$N_{\Delta_0',\R}$ and $N_{\Delta_1',\R}$ intersect transversely in $N_{U',\R}$ which means that 
\begin{equation} \label{transverse1}
 N_{\Delta_0',\R}+N_{\Delta_1',\R}=N_{U',\R}.
\end{equation}

Similarly, the definition of pull-back of tropical cycles in \eqref{intpairing}
(v) shows that there is a maximal polyhedron $\Delta_1$ of $C_1$ with $F(\Delta_1') \subseteq \Delta_1$ and such that 
\begin{equation} \label{transverse2}
 N_{\Delta_1,\R}+\L_F(N_{U',\R})=N_{U,\R}.
\end{equation}
It follows from \eqref{transverse1} and \eqref{transverse2} that $\L_F(N_{\Delta_0',\R})$ intersects $N_{\Delta_1,\R}$ transversely in $N_{U,\R}$. Since the codimension is decreasing under a surjective linear map, we get easily
\begin{equation*} 
\L_F(N_{\Delta',\R}) = \L_F(N_{\Delta_0',\R} \cap N_{\Delta_1',\R}) = \L_F(N_{\Delta_0',\R}) \cap N_{\Delta_1,\R}
\end{equation*}
and hence 
$$\codim(F(\Delta' \cap \Omega'),F(\Delta_0'\cap \Omega')) = \codim(\L_F(N_{\Delta',\R}), \L_F(N_{\Delta_0',\R}))=c$$
by transversality. Using $F(\Omega') \subseteq \trop_U(V_x)$, this proves \eqref{inequality for codimension}.  
\end{proof}

\begin{cor} \label{support corollary}
Let $W$ be an open subset of $\Xan$ and let $U$ be an open subset 
of $X$. If $\alpha \in P^{p,q}(W)$ with $\dim(X \setminus U) < \max(p,q)$, 
then $\supp(\alpha) \subseteq W \cap \Uan$.
\end{cor}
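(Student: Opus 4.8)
The plan is to reduce Corollary \ref{support corollary} to Lemma \ref{support and degree of delta-forms} by a purely dimension-theoretic argument. First I would recall that for any point $x \in \Xan$ the local invariant $d(x)$ satisfies $d(x) \leq m$ whenever $x$ lies in a Zariski closed subset of dimension $m$; this is stated in the excerpt just before Lemma \ref{support and degree of delta-forms}, citing \cite[Proposition 9.1.3]{berkovich-book}.

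Now take $\alpha \in P^{p,q}(W)$ with $\dim(X \setminus U) < \max(p,q)$, and let $x \in W \setminus \Uan$, i.e. $x \in W$ with $x$ lying over a point of $X \setminus U$. Set $Z := X \setminus U$, a Zariski closed subset of $X$ of dimension $\dim(X \setminus U)$. Since $x$ belongs to $Z^\an$, we have $d(x) \leq \dim(Z) = \dim(X \setminus U) < \max(p,q)$. Applying Lemma \ref{support and degree of delta-forms} directly, $x \not\in \supp(\alpha)$. As $x$ was an arbitrary point of $W \setminus \Uan$, this shows $\supp(\alpha) \cap (W \setminus \Uan) = \emptyset$, equivalently $\supp(\alpha) \subseteq W \cap \Uan$, which is exactly the claim.

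The only point requiring a word of care — and the step I would flag as the potential obstacle, though it is minor — is making precise the relation between $x$ being "outside $\Uan$" and $x$ lying in $Z^\an$ for $Z = X \setminus U$: one uses that $\Uan$ is the analytification of the open subscheme $U$, so its complement in $\Xan$ is exactly the analytification of the reduced closed complement $Z$, and the local dimension invariant $d(x)$ is computed with respect to the ambient space and is monotone under passing to a closed subspace containing $x$. Once this identification is in hand, there is nothing else to do; the corollary is an immediate specialization of the lemma to the closed subset $Z = X \setminus U$. I would therefore keep the proof to two or three sentences, simply spelling out this substitution and invoking the lemma.

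\begin{proof}
Let $x\in W$ with $x\notin\Uan$. Then $x$ lies in the analytification of the Zariski closed subset $Z:=X\setminus U$ of $X$, which has dimension $\dim(X\setminus U)$. By \cite[Proposition 9.1.3]{berkovich-book} we have $d(x)\leq\dim(Z)=\dim(X\setminus U)<\max(p,q)$, and hence $x\notin\supp(\alpha)$ by Lemma \ref{support and degree of delta-forms}. Since $x$ was an arbitrary point of $W\setminus\Uan$, we conclude $\supp(\alpha)\subseteq W\cap\Uan$.
\end{proof}
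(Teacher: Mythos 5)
Your proof is correct and follows exactly the paper's argument: for $x \in W \setminus \Uan$ one has $d(x) \leq \dim(X\setminus U) < \max(p,q)$ by the bound on $d(x)$ for points of a Zariski closed subset, and Lemma \ref{support and degree of delta-forms} then gives $x \notin \supp(\alpha)$. The extra remark about identifying $\Xan \setminus \Uan$ with the analytification of $X \setminus U$ is fine and does not change anything.
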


\proof If $x \in W \setminus \Uan$, then the assumptions yield 
$d(x) \leq \dim(X \setminus U) < \max(p,q)$ and hence the claim follows from 
Lemma \ref{support and degree of delta-forms}. \qed

\begin{prop} \label{single chart}
Let $\alpha \in P^{p,q}(\Xan)$ with compact support in the open subset $W$ of $\Xan$ . 
\begin{itemize}
\item[(a)] 
There is a non-empty tropical chart $(V,\varphi_U)$ with $\supp(\alpha) \cap \Uan \subseteq V \subseteq \Uan \cap W$ and  
$\alpha_U \in {P}^{p,q}(U^{\rm an},\varphi_U)$ such that 
$\alpha=\trop_U^*(\alpha_U)$ on $\Uan$.
\item[(b)] Given $U$, the element $\alpha_U$ in {\rm(a)} is unique.
\item[(c)] If $\alpha$ is a $\delta$-form, then we may choose $\alpha_U \in AZ^{p,q}(U^{\rm an},\varphi_U)$.
\item[(d)] If $\max(p,q)=\dim(X)$, then any non-empty very affine open subset $U$ of $X$ with $\alpha|_{\Uan}=\trop_U^*(\alpha_U)$ for some $\alpha_U \in {P}^{p,q}(U^{\rm an},\varphi_U)$ satisfies automatically $\supp(\alpha) \subseteq \Uan$. Moreover, $\alpha_U$ has always compact support in $\Trop(U)$.
\end{itemize}
Explicitly, if $\supp (\alpha)$ is covered by non-empty tropical charts $(V_i,\varphi_{U_i})_{i=1,\ldots,s}$ in $W$  and if
$\alpha$ is given on $V_i$ by $\alpha_i \in {P}^{p,q}(V_i,\varphi_{U_i})$, 
then  any non-empty very affine open subset $U$ of $U_1 \cap \ldots \cap U_s$ and $V=(V_1 \cup \ldots \cup V_s) \cap \Uan$  fit in (a).
\end{prop}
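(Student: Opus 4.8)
The plan is to reduce everything to the functoriality statement in Corollary \ref{independence of the chart}, using the support-bound in Lemma \ref{support and degree of delta-forms} (more precisely its consequence Corollary \ref{support corollary}) to guarantee that the single chart of integration actually captures all of $\supp(\alpha)$. First I would establish the existence statement (a): starting from a cover of $\supp(\alpha)$ by non-empty tropical charts $(V_i,\varphi_{U_i})_{i=1,\dots,s}$ in $W$ on which $\alpha$ is given by $\alpha_i \in P^{p,q}(V_i,\varphi_{U_i})$, pick any non-empty very affine open $U \subseteq U_1 \cap \dots \cap U_s$ (such $U$ exists because $X$ is a variety, hence irreducible, so finite intersections of dense opens are dense, and every variety has a basis of very affine opens) and set $V := (V_1 \cup \dots \cup V_s) \cap \Uan$. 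Since each $U \subseteq U_i$, restriction of charts (Remark \ref{properties of delta-preforms on charts}, the $\id$-pull-back) gives $\alpha|_{V \cap V_i} = \trop_U^*((\alpha_i)|_{\cdots})$; these glue on overlaps because the $\alpha_i$ already agree there, and by Proposition \ref{finite sheaf property} we obtain a single $\alpha_U \in P^{p,q}(U^{\an},\varphi_U)$ with $\alpha|_{V} = \trop_U^*(\alpha_U)$. To upgrade this to an equality on all of $\Uan$, I would invoke Corollary \ref{support corollary}: by construction $V$ contains $\supp(\alpha) \cap \Uan$ (it contains all the $V_i \cap \Uan$ which cover $\supp(\alpha)$), and $\alpha$ vanishes on the complementary open $\Uan \setminus V$, so $\trop_U^*(\alpha_U)$ extended by $0$ agrees with $\alpha$ on $\Uan$ — here one checks $\alpha_U$ itself can be taken with $\supp(\alpha_U) = \trop_U(\supp(\alpha)\cap\Uan)$ compact, using Proposition \ref{delta-support on chart} and properness/continuity of $\trop_U$.

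Next, uniqueness (b) is immediate from the injectivity of $\trop_U^*$ in Proposition \ref{phitrop}: if $\trop_U^*(\alpha_U) = \trop_U^*(\alpha_U')$ on $\Uan$ then $\alpha_U = \alpha_U'$ in $P^{p,q}(U^{\an},\varphi_U)$. For (c), if $\alpha$ is a $\delta$-form then each $\alpha_i$ may be chosen in $AZ^{p,q}(V_i,\varphi_{U_i})$ by \ref{delta-forms on X}, and the last assertion of Proposition \ref{finite sheaf property} (the gluing respects the $AZ$-condition) yields $\alpha_U \in AZ^{p,q}(U^{\an},\varphi_U)$.

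For part (d), suppose $\max(p,q) = \dim(X) = n$ and $U$ is any non-empty very affine open of $X$ with $\alpha|_{\Uan} = \trop_U^*(\alpha_U)$. Since $U$ is a dense open of the $n$-dimensional variety $X$, the closed complement $X \setminus U$ has dimension $< n = \max(p,q)$, so Corollary \ref{support corollary} gives $\supp(\alpha) \subseteq W \cap \Uan \subseteq \Uan$ directly. Compactness of $\supp(\alpha_U)$ in $|\Trop(U)|$ then follows from Proposition \ref{delta-support on chart}, since $\supp(\alpha)$ is compact and $\trop_U$ is proper. Finally the explicit claim at the end of the statement is exactly the construction carried out in proving (a), so nothing further is needed.

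\textbf{Main obstacle.} The routine parts are the gluing (Proposition \ref{finite sheaf property}) and the injectivity (Proposition \ref{phitrop}); the one genuinely delicate point is the passage from ``$\alpha = \trop_U^*(\alpha_U)$ on $V$'' to ``on all of $\Uan$'', i.e.\ checking that outside $V$ the form $\alpha$ really is zero as a section of $B^{p,q}$ (resp.\ $P^{p,q}$) and that $\alpha_U$ can be arranged to vanish on $\trop_U(\Uan \setminus V)$. This is where Corollary \ref{support corollary}, whose proof in turn rests on the Ducros dimension bound feeding Lemma \ref{support and degree of delta-forms}, does the essential work: it forces $\supp(\alpha)$ into $\Uan$ (in case (d)) or confirms that the charts $V_i$ already exhaust $\supp(\alpha)\cap\Uan$, so that no mass of $\alpha$ is lost when we restrict attention to the single chart $U$. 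Everything else is bookkeeping with the compatibility of restriction maps and the sheaf axioms for $P$ and $B$.
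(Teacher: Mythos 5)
Your skeleton matches the paper's: glue the $\alpha_i|_{V_i\cap\Uan}$ via Proposition \ref{finite sheaf property}, get (b) from the injectivity in Proposition \ref{phitrop}, use the $AZ$-clause of Proposition \ref{finite sheaf property} for (c), and Corollary \ref{support corollary} together with Proposition \ref{delta-support on chart} for (d). But the step you yourself flag as the delicate one — passing from an identity on the chart $V$ to an element $\alpha_U\in P^{p,q}(\Uan,\varphi_U)$ with $\alpha=\trop_U^*(\alpha_U)$ on all of $\Uan$ — is not justified as written. First, Corollary \ref{support corollary} cannot be invoked here for general $(p,q)$: it needs $\dim(X\setminus U)<\max(p,q)$, while $X\setminus U$ will typically have dimension $n-1\geq\max(p,q)$; it is only relevant for part (d). Second, your claim that $\supp(\alpha_U)=\trop_U(\supp(\alpha)\cap\Uan)$ is compact is false in general: $\supp(\alpha)\cap\Uan$ is the intersection of a compact set with the open set $\Uan$ and need not be compact (only when $\supp(\alpha)\subseteq\Uan$, as in case (d)), and Proposition \ref{delta-support on chart} yields compactness of $\supp(\alpha_U)$ only if $\supp(\alpha)\cap V$ is compact. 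So ``extend by zero'' is precisely the point that requires an argument: one must produce a second tropical chart $V''=\trop_U^{-1}(\Omega'')$ with $\Omega\cup\Omega''=\Trop(U)$ on which $\alpha$ is given by the zero element, check chart-level agreement on the overlap (sheaf-level vanishing there gives chart-level vanishing by Proposition \ref{phitrop}), and then apply the finite glueing of Proposition \ref{finite sheaf property} to a finite cover of $\Uan$.

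The paper arranges this by shrinking first: it chooses relatively compact opens $\Omega_i'\subseteq\Omega_i$ whose preimages still cover $\supp(\alpha)$, so that the closure $S$ of $\Omega'=\Trop(U)\cap\bigcup_iF_i^{-1}(\Omega_i')$ is contained in $\Omega$; then $V'':=\trop_U^{-1}(\Trop(U)\setminus S)$ is disjoint from $\trop_U^{-1}(\Omega')\supseteq\supp(\alpha)\cap\Uan$, and glueing over the finite cover of $\Uan$ by $(V_i\cap\Uan)_{i=1,\dots,s}$ and $V''$ gives $\alpha_U$. Your route can be repaired without shrinking by replacing ``compact'' with ``closed in $\Trop(U)$'': since $\supp(\alpha)\cap\Uan$ is closed in $\Uan$ and $\trop_U$ is proper, hence a closed map onto the locally compact space $\Trop(U)$, the set $\trop_U(\supp(\alpha)\cap\Uan)$ is closed and its complement serves as $\Omega''$. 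Some such argument must be supplied; as it stands, the existence proof of (a) has a gap at its central step, while the remaining parts (b), (c), (d) are handled as in the paper.
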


\begin{proof}
Since the support of $\alpha$ is a compact subset of $W$, it is covered by tropical charts $(V_i,\varphi_{U_i})_{i=1,\ldots,s}$ describing $\alpha$ as above. Compactness again shows that for any $i=1,\dots,s$, there is a relatively  compact open subset $\Omega_i'$ of $\Omega_i$ with corresponding open subset $V_i':=\trop_{U_i}^{-1}(\Omega_i')$ of $V_i$ such that $\supp(\alpha) \subseteq V_1' \cup \ldots \cup V_s'$. 
Let us consider a non-empty very 
affine open subset  $U$ of $U_1 \cap \dots \cap U_s$  of $X$ 
and the open subsets 
$$
V':= \Uan \cap \bigcup_{i=1}^s V_i' \subseteq V:= \Uan \cap \bigcup_{i=1}^s V_i
$$ 
of $W \cap \Uan$. We have to show that $V$ and $U$ satisfy (a). 
Let $F_i:N_{U,\R} \to N_{U_i,\R}$ be the canonical integral $\Gamma$-affine map induced by the inclusion $U \subseteq U_i$ (see \ref{sturmfelstevelev}). Then the open subsets 
$$
\Omega':= \Trop(U) \cap \bigcup_{i=1}^s F_i^{-1}(\Omega_i') \subseteq \Omega:= \Trop(U) \cap \bigcup_{i=1}^s F_i^{-1}(\Omega_i) 
$$
of $\Trop(U)$ satisfy $V = \trop_U^{-1}(\Omega)$ and $V'=\trop_U^{-1}(\Omega')$ which means that $(V',\varphi_U)$ and $(V,\varphi_U)$ are compatible tropical charts of $X$ contained in $W$.  
Note that the tropical chart $(V_i \cap \Uan, \varphi_U)$ is compatible with
$(V_i,\varphi_{U_i})$ and hence $\alpha$ is given on 
 {$(V_i\cap \Uan,\varphi_U)$}
by $\alpha_i'=\alpha_i|_{V_i \cap \Uan} \in {P}(V_i \cap \Uan,\varphi_U)$.
Using that $\Omega_i'$ is relatively compact in $\Omega_i$, we deduce that the closure $S$ of $\Omega'$ in $\Trop(U)$ is contained in $\Omega$. We set $V'':=\trop_U^{-1}(\Trop(U) \setminus S)$ leading to the tropical chart $(V'',\varphi_U)$. Since $\alpha$ has compact support in $W$, we may view $\alpha$ as an element of $P^{p,q}(\Xan)$. By construction, we have $\supp(\alpha) \cap \Uan \subseteq V'$. Using that $V'$ and $V''$ are disjoint,  we deduce that   $\alpha$ is given on the tropical chart $(V'',\varphi_U)$ by $0 \in {P}^{p,q}(V'',\varphi_{U})$. We note that the tropical charts $(V_i \cap \Uan,\varphi_U)_{i=1,\dots,s}$ and $(V',\varphi_U)$ cover $\Uan$ and hence we may apply the glueing from 
Proposition \ref{finite sheaf property} to get the desired $\alpha_U \in P^{p,q}(\Uan,\varphi_U)$ from (a).

Uniqueness in (b) follows from Proposition \ref{phitrop}. If $\alpha \in B_c^{p,q}(\Xan)$, then we may choose $\alpha_i \in {AZ}^{p,q}(V_i,\varphi_{U_i})$ and hence we get  (c).

If $\max(p,q)=\dim(X)$, then (d) follows from 
 Corollary \ref{support corollary} and 
Proposition \ref{delta-support on chart}. 
\end{proof}

\begin{definition} \label{chart of integration}
Let $W$ be an open subset of $\Xan$ and let 
$\alpha \in {P}_c^{n,n}(W)$, where $n:=\dim(X)$. 
We may view $\alpha$ as a generalized $\delta$-form on 
$\Xan$ with compact support contained in $W$. 
A non-empty very affine open subset $U$ of $X$ is called a 
{\it very affine chart of integration for $\alpha$} if 
$\alpha|_\Uan=\trop_U^*(\alpha_U)$ for some 
$\alpha_U \in {P}^{n,n}(U^{\rm an},\varphi_U)$. 
By Proposition \ref{single chart}, a chart of integration exists,  
$\alpha_U$ is unique and has compact support in $\Trop(U)$. 
We define {\it the integral of $\alpha$ over $W$} by
\[
\int_W \alpha := \int_{|\Trop(U)|} \alpha_U
\]
where the righthand side is defined in \ref{integration of P-forms}. 
As usual, we extend the integral by $0$ to generalized $\delta$-forms of other bidegrees.
\end{definition}

\begin{prop} \label{integration well-defined}
Let $W$ be an open subset of $\Xan$ and $\alpha \in {P}_c^{n,n}(W)$  {as above.}
 
(i) If $\supp(\alpha)$ is covered by finitely many non-empty tropical charts $(V_i,\varphi_{U_i})$ such that $\alpha$ is given on any $V_i$ by $\alpha_i \in {P}^{n,n}(V_i,\varphi_{U_i})$, then $U:= \bigcap_{i} U_i$ is a very affine chart of integration for $\alpha$.  

(ii) The definition of the integral $\int_W \alpha$ given in 
\ref{chart of integration} does not depend on the choice 
of the very affine chart of integration for $\alpha$. 

(iii) The integral defines a linear map
$\int_W\colon P_c^{n,n}(W)\to \R$.

(iv) If $f:X'\to X$ is a proper morphism of degree 
${\rm deg}\,(f)$ then the projection formula
\begin{equation}\label{integration well-definedg1}
{\rm deg}\,(f)\cdot \int_W\alpha=\int_{(f^{\rm an})^{-1}(W)}f^*\alpha
\end{equation}
holds for all $\alpha\in P_c^{n,n}(W)$.
\end{prop}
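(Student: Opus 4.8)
\textbf{Proof plan for Proposition \ref{integration well-defined}.}

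The plan is to reduce everything to the already-established machinery from \ref{integration of P-forms} and the functoriality statement in Proposition \ref{general independence of the chart}. For part (i), I would start from the tropical charts $(V_i,\varphi_{U_i})$ covering $\supp(\alpha)$ and set $U:=\bigcap_i U_i$, which is again a very affine open subset of $X$ (an intersection of very affine opens is very affine, as recalled in \ref{chart}). Let $F_i:N_{U,\R}\to N_{U_i,\R}$ be the canonical integral $\Gamma$-affine maps. The tropical chart $(V_i\cap\Uan,\varphi_U)$ is compatible with $(V_i,\varphi_{U_i})$, so $\alpha$ is given on $V_i\cap\Uan$ by $\alpha_i':=\alpha_i|_{V_i\cap\Uan}$; since $\max(n,n)=n=\dim X$, Corollary \ref{support corollary} gives $\supp(\alpha)\subseteq\Uan$, so the charts $(V_i\cap\Uan,\varphi_U)$ together with a chart with trivial restriction away from $\supp(\alpha)$ cover $\Uan$, and Proposition \ref{finite sheaf property} glues the $\alpha_i'$ to a single $\alpha_U\in P^{n,n}(\Uan,\varphi_U)$ with $\alpha|_\Uan=\trop_U^*(\alpha_U)$. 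By Proposition \ref{single chart}(d), $\alpha_U$ has compact support in $\Trop(U)$, so $U$ is indeed a very affine chart of integration.

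For part (ii), suppose $U$ and $U'$ are two very affine charts of integration for $\alpha$. Passing to $U'':=U\cap U'$, which is again a very affine chart of integration by part (i) applied to the two charts $(\Uan,\varphi_U)$ and $((U')^\an,\varphi_{U'})$ restricted appropriately, it suffices to compare $U$ with $U''$ where $U''\subseteq U$. Here I invoke Corollary \ref{independence of the chart}: with $\alpha|_{(U'')^\an}=\trop_{U''}^*(\alpha_{U''})$ the unique such element, and $\alpha$ of bidegree $(n,n)$ with compact support in $(U'')^\an$ (again by Corollary \ref{support corollary}, as $n=\dim X$), we get $\int_{|\Trop(U)|}\alpha_U=\int_{|\Trop(U'')|}\alpha_{U''}$. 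The same comparison for $U'$ and $U''$ then gives the claim. Part (iii) is immediate: linearity of the integral $\int_{|\Trop(U)|}$ on $P^{n,n}(\Trop(U))$ (Remark \ref{preform properties}) together with the fact that a common chart of integration for $\alpha,\alpha'$ can be chosen by part (i) — e.g.\ the intersection of a chart of integration for each — makes $\int_W$ additive, and homogeneity is clear.

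For part (iv), let $f:X'\to X$ be proper of degree $\deg(f)$ and $\alpha\in P_c^{n,n}(W)$. Choose a very affine chart of integration $U$ for $\alpha$; by properness of $f$ the preimage $(f^\an)^{-1}(\supp(\alpha))$ is compact, so $f^*\alpha$ has compact support in $(f^\an)^{-1}(W)$. Set $U':=(f^\an)^{-1}(V)\cap(f^{-1}(U))'$ — more precisely, one picks a very affine chart of integration $U'$ for $f^*\alpha$ that maps into $U$, which exists because the charts describing $f^*\alpha$ can be taken compatible with $(\Uan,\varphi_U)$. Then $V'=({f^\an})^{-1}(\Uan)\cap(U')^\an$ and $P=|\Trop(U)|$, $Q=F^{-1}(P)\cap\Trop(U')=|\Trop(U')|$ fit the hypotheses of Proposition \ref{general independence of the chart}, and formula \eqref{general independence of the chart g1} gives $\deg(f)\int_{|\Trop(U)|}\alpha_U=\int_{|\Trop(U')|}f^*(\alpha_U)$, which is exactly \eqref{integration well-definedg1} after unwinding the definitions of both integrals. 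The main obstacle I anticipate is purely bookkeeping: ensuring that the charts of integration for $\alpha$ and for $f^*\alpha$ can be chosen compatibly (i.e.\ $U'$ mapping into $U$) and that the relevant supports are genuinely compact so that all the integrals in \ref{integration of P-forms} and Proposition \ref{general independence of the chart} are defined; this is where Corollary \ref{support corollary}, Proposition \ref{single chart} and properness of $f$ must be combined with care, but no new ideas beyond those already in the excerpt are needed.
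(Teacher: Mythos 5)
Your proposal is correct and follows essentially the same route as the paper: part (i) is exactly the explicit clause of Proposition \ref{single chart}, part (ii) reduces to a common very affine open subset and invokes Corollary \ref{independence of the chart}, part (iii) is formal, and part (iv) picks a very affine $U'\subseteq X'$ with $f(U')\subseteq U$, uses properness of $f^{\rm an}$ to get compact support of $f^*\alpha$, and applies \eqref{general independence of the chart g1}. The only cosmetic difference is that the paper first reduces (iv) to the case $\dim X'=\dim X=n$ (otherwise both sides are interpreted as zero), a one-line step you omit but which does not affect the substance of the argument.
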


\begin{proof}
The explicit description of $U$ in Proposition \ref{single chart} proves (i). 
We show (ii).
Let $U$ be a very affine chart of integration for $\alpha$. Then every non-empty 
very affine open subset $U'$ of $U$ is a very affine chart of integration and it is 
enough to show that $U'$ leads to the same integral. By uniqueness in Proposition \ref{single chart},
the pull-back of $\alpha_U$ with respect to the canonical affine map $F: N_{U',\R}\rightarrow N_{U,\R}$
is equal to $\alpha_{U'}$ and the claim follows from Corollary \ref{independence of the chart}.

Claim (iii) is a direct consequence of our definitions.
To prove (iv), we may assume that $\dim(X')=\dim(X)=n$. 
We choose a very affine chart of integration $U$ for $\alpha$ 
and a non-empty very affine open subset $U'$ of $X'$ with 
$f(U') \subseteq U$. 
Note that $f^*(\alpha)$ is given on $(U')^{\rm an}$ by 
$f^*(\alpha_U) \in P^{n,n}(U',\varphi_{U'})$ constructed in 
Remark \ref{properties of delta-preforms on charts}. 
Since $f^{\rm an}$ is proper as well, the support of 
$f^*(\alpha)$ is compact. 
We conclude that $U'$ is a very affine chart of integration 
for $f^*(\alpha)$ and
$$
\int_{(f^{\rm an})^{-1}(W)}f^*\alpha = \int_{|\Trop(U')|} f^*(\alpha_U).
$$
The projection formula in (iv) is now a direct consequence
of \eqref{general independence of the chart g1}. 
\end{proof}

In our setting, we have the following version of the 
{\it theorem of Stokes}.

\begin{thm}\label{stokes-for-var}
For $\alpha\in B_c^{2n-1}(X)$
we have
\[
\int_\Xan d'\alpha= \int_\Xan d''\alpha = 0.
\]
\end{thm}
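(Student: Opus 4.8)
The plan is to reduce the statement to a single very affine chart of integration and there to integrate by parts, the vanishing of the resulting boundary term coming from the balancing condition for the tropical variety. It suffices to prove $\int_{\Xan}d'\alpha=0$; the identity $\int_{\Xan}d''\alpha=0$ follows in the same way with the roles of $d'$ and $d''$ exchanged, or by applying the $d'$-statement to $J(\alpha)$ and using $d'\circ J=J\circ d''$. Since superforms of bidegree $(p,q)$ on $X$ vanish once $\max(p,q)>n=\dim(X)$, the $(n,n-1)$-component of $\alpha$ contributes nothing to $d'\alpha$, so we may assume $\alpha\in B_c^{n-1,n}(\Xan)$ and then $d'\alpha\in B_c^{n,n}(\Xan)$. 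By Proposition \ref{single chart} there is a non-empty tropical chart $(V,\varphi_U)$ with $\supp(\alpha)\cap\Uan\subseteq V\subseteq\Uan$ and an element $\alpha_U\in AZ^{n-1,n}(\Uan,\varphi_U)$ with $\alpha|_{\Uan}=\trop_U^*(\alpha_U)$; hence $d'\alpha|_{\Uan}=\trop_U^*(d'\alpha_U)$. As $\max(n,n)=n=\dim(X)$, Proposition \ref{single chart}(d) shows that $U$ is a very affine chart of integration for $d'\alpha$ and that both $\alpha_U$ and $d'\alpha_U$ have compact support in $\Trop(U)$. Writing $\Omega:=\trop_U(V)$, Definition \ref{chart of integration} together with \ref{integration of P-forms} gives
\[
\int_{\Xan}d'\alpha=\int_{|\Trop(U)|}(d'\alpha_U)|_\Omega .
\]

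Next I would identify $(d'\alpha_U)|_\Omega$ with the current derivative $d'(\alpha_U|_\Omega)$ of the $\delta$-preform $\alpha_U|_\Omega\in P^{n-1,n}(\Omega)$. Writing $\alpha_U=\sum_i a_i\wedge\omega_i$ with superforms $a_i$ and $\omega_i\in Z(V,\varphi_U)$ as in Definition \ref{AZ preforms}, the closedness condition \eqref{closedness condition} applied to the identity morphism $X\to X$ and to the chart $(V,\varphi_U)$ shows that each $\omega_i|_\Omega$ is a $d'$- and $d''$-closed $\delta$-preform on $\Trop(U)$. Since $\Trop(U)$ carries constant (integral) weights, one has $(d'a_i)|_\Omega=d'(a_i|_\Omega)$, and Corollary \ref{omegaleibnizformel} then yields
\[
(d'\alpha_U)|_\Omega=\sum_i d'(a_i|_\Omega)\wedge\omega_i|_\Omega=d'\Bigl(\sum_i a_i|_\Omega\wedge\omega_i|_\Omega\Bigr)=d'(\alpha_U|_\Omega).
\]
As $\alpha_U|_\Omega$ has compact support in $\Trop(U)$, Stokes' formula for $\delta$-preforms (Proposition \ref{stokesfordeltapreforms}), applied with $P=|\Trop(U)|$ for a polyhedral complex of definition $\KC$ of $\Trop(U)$, gives
\[
\int_{\Xan}d'\alpha=\int_{|\Trop(U)|}d'(\alpha_U|_\Omega)=\int_{\partial|\Trop(U)|}\alpha_U|_\Omega .
\]

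It remains to show that this boundary integral vanishes, and this is the main point; but it is precisely the computation already carried out in the proof of Theorem \ref{deltatropicalpoincarelelong} to establish $\int_{\partial P}\phi\wedge d'\beta=0$, now in the trivial case $\phi\equiv 1$. By linearity it is enough to treat one term $\gamma\wedge\delta_{C'}$ of $\alpha_U|_\Omega=\sum_i a_i|_\Omega\wedge\omega_i|_\Omega$, where $C'$ is a tropical cycle of some codimension $l$ in $\Trop(U)$ (with possibly smooth weights) and $\gamma$ is a superform of bidegree $(n-1-l,n-l)$. Expanding $\int_{\partial|\Trop(U)|}$ as a sum over the maximal cells $\sigma$ of $C'$ and their codimension-one faces $\tau$ (Definition \ref{intdeltag3}), one contracts $\gamma$ with the vectors $\omega_{\sigma,\tau}\in N_\sigma$ chosen as in \ref{polcomplex}(ii); summing over all $\sigma\succ\tau$ and using linearity of the contraction in the vector slot together with the balancing condition \eqref{balancingcondition} for $C'$ leaves, for each $\tau\in\KC_{n-l-1}$, a contraction of $\gamma$ with a vector field on $\tau$ taking values in $\L_\tau$. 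Since $\gamma$ has $d''$-degree $n-l>\dim\tau$, its restriction to $\tau$ vanishes, and interior multiplication by a vector tangent to $\tau$ commutes with restriction to $\tau$; hence each such contraction restricts to zero on $\tau$. Therefore $\int_{\partial|\Trop(U)|}\alpha_U|_\Omega=0$ and $\int_{\Xan}d'\alpha=0$.

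The genuine obstacle is not any single estimate but the bookkeeping of the reduction: one must pass from the abstract description of $d'\alpha$ as a $\delta$-form on a chart to a current-theoretic Stokes identity on the balanced complex $\Trop(U)$, and for this the closedness of the $Z$-factors (so that $d'$ of a product reduces to $d'$ applied to the superform factor) and the compatibility of the various integrals in \ref{integration of P-forms} are exactly what is needed. Once this is in place, the vanishing of the boundary term is geometric content already available through Theorem \ref{deltatropicalpoincarelelong} and Proposition \ref{stokesfordeltapreforms}: a balanced polyhedral complex has no boundary. The statement for $d''$ follows by the symmetric argument, using $J$ and the symmetry of the supercurrents of integration as in Lemma \ref{tropicallemma}.
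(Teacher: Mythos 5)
Your proof is correct and follows essentially the same route as the paper: reduce to a single very affine chart of integration via Proposition \ref{single chart}, apply Stokes' formula for $\delta$-preforms (Proposition \ref{stokesfordeltapreforms}) on $\Trop(U)$, and observe that the boundary integral vanishes because of the balancing condition. The extra details you supply (compatibility of the $AZ$-differential with the current derivative via the closedness condition and Corollary \ref{omegaleibnizformel}, and the explicit contraction argument as in the proof of Theorem \ref{deltatropicalpoincarelelong}) are exactly what the paper's shorter argument implicitly relies on.
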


\begin{proof}
By Proposition \ref{single chart}, there is a non-empty very affine open subset $U$ of $X$ such that $\supp(\alpha) \subseteq\Uan$ and  
$\alpha_U \in {AZ}_c^{2n-1}(U^{\rm an},\varphi_U)$ such that 
$\alpha|_\Uan=\trop_U^*(\alpha_U)$. Then $U$ is a chart of integration for $d'\alpha$ and $d''\alpha$ using $d'\alpha_U$ and $d''\alpha_U$ on the tropical side for integration. 
The claim follows from Stokes' formula for $\delta$-preforms on $\Trop(U)$ (see Proposition \ref{stokesfordeltapreforms}) using that boundary integrals 
$\int_{\partial |\Trop(U)|}$ vanish as $\Trop(U)$
satisfies the balancing condition. 
\end{proof}

\section{Delta-currents}\label{delta-currents}

In this section, we  define $\delta$-currents on an open subset $W$ of 
$\Xan$ for an $n$-dimensional algebraic variety $X$ over $K$.  
We  proceed similarly as in the case of manifolds in differential geometry  
endowing some specific subspaces of the 
space $B_c(W)$ of $\delta$-forms with compact support in $W$ with the 
structure of a locally convex topological vector space.
{Then} we  define a $\delta$-current as a linear functional on $B_c(W)$ 
with continuous restrictions to all these subspaces.

\begin{art}\label{topology on AZ}
Let $(V,\varphi_U)$ be a tropical chart of $X$ with 
$V \subseteq W$ and let $\Omega := \trop_U(V)$ be as usual.
We recall from \ref{AZ preforms} that an element 
$\beta \in AZ(V,\varphi_U)$ has the form
\begin{equation} \label{AZ-decomposition2}
\beta = \sum_{j \in J} \alpha_j \wedge \omega_j \in P(V,\varphi_U)
\end{equation}
for a finite set $J$, $\alpha_j \in A(\Omega)$ and 
$\omega_j \in Z(V,\varphi_U)$. 

Now we {\it fix} the family ${\omega}_J:=(\omega_j)_{j \in J}$ and 
define $AZ(V,\varphi_U,{\omega}_J)$ as the subspace 
of $AZ(V,\varphi_U)$ given by all elements $\beta$ with a 
decomposition \eqref{AZ-decomposition2} for suitable 
$\alpha_j \in A(\Omega)$. For every $s \in \N$ and 
every compact subset $C$ of $\Omega$, we have the usual 
seminorms $p_{C,s}$ on $A(\Omega)$ measuring uniform convergence 
on $C$ of the derivatives of 
the coefficients of the superforms up to order $s$ 
(see for example \cite[(17.3.1)]{dieudonne-III}). 
We get seminorms $p_{C,s,{\omega}_{J}}$ on 
$AZ(V,\varphi_U,{\omega}_J)$ by defining
\[
p_{C,s,{\omega}_J}(\beta) 
:= \inf \Bigl\{ \max_{j \in J} p_{C,s}(\alpha_j) \,\Big|\, \beta 
= \sum_{j \in J} \alpha_j \wedge \omega_j , \, \alpha_j \in A(\Omega)
\Bigr\}.
\]
Letting $s \in \N$ and the compact subset $C$ of $W$ vary, we get a 
structure of a locally convex topological vector space 
on $AZ(V,\varphi_U,{\omega}_J)$.  
\end{art}

\begin{art} \label{topology on subspaces of delta-forms}
A $\delta$-form $\beta$ on $W$ is given by a covering 
$(V_i,\varphi_{U_i})_{i \in I}$ of $W$ by tropical charts  and by 
$\beta_i \in AZ(V_i,\varphi_{U_i})$ such 
that $\beta|_{V_i}=\trop_{U_i}^*(\beta_i)$ for every $i \in I$. 
Using \ref{topology on AZ}, we have a finite tuple $\omega_{J_i}$ 
of elements in $AZ(V_i,\varphi_{U_i})$ such that 
$\beta_i \in AZ(V_i,\varphi_{U_i},\omega_{J_i})$ for every $i \in I$. 
Now  we fix the covering by tropical charts
and all ${\omega_{J_i}}$ and we define 
$B(W;V_i,\varphi_{U_i},{\omega_{J_i}}:i \in I)$
to be the subspace of $B(W)$ given by the elements $\beta$ such 
that $\beta|_{V_i}=\trop_{U_i}^*(\beta_i)$ for some 
$\beta_i \in AZ(V_i,\varphi_{U_i},{\omega_{J_i}})$ 
and for every $i \in I$.
We endow $B(W;V_i,\varphi_{U_i},\omega_{J_i}:i \in I)$ with the 
coarsest structure of a locally convex topological vector space such that the 
canonical linear maps 
\[
B(W;V_i,\varphi_{U_i},\omega_{J_i}:i \in I) \rightarrow 
AZ(V_i,\varphi_{U_i},{\omega_{J_i}})
\]
are continuous for every $i \in I$. 
An element $\beta \in  B(W;V_i,\varphi_{U_i},{\omega_{J_i}}:i \in I)$  
given as above is mapped to $\beta_i$ which is well-defined by Proposition \ref{phitrop}. 

For a compact subset $C$ of $W$, we consider the subspace 
$B_C(W;V_i,\varphi_{U_i},{\omega_{J_i}}:i \in I)$ of 
$B(W;V_i,\varphi_{U_i},{\omega_{J_i}}:i \in I)$ given by the 
$\delta$-forms with compact support in $C$. 
We endow it with the induced structure of a 
locally convex topological vector space.
\end{art}

\begin{definition}\label{def-delta-current}
A {\it $\delta$-current} on $W$ is a real linear functional 
$T$ on $B_c(W)$ such that the restriction of $T$ to 
$B_C(W;V_i,\varphi_{U_i},{\omega_{J_i}}:i \in I)$
is continuous for every compact subset $C$ of $W$, 
for every covering $(V_i,\varphi_{U_i})_{i \in I}$ of $W$ by tropical charts 
and for every finite tuple ${\omega_{J_i}}$ of elements 
in $Z(V_i,\varphi_{U_i})$. We denote the space of $\delta$-currents
on $W$  by $E(W)$.
A $\delta$-current is called {\it symmetric (resp. anti-symmetric)}
if it vanishes on the subspace of anti-symmetric (resp. symmetric)
$\delta$-forms in $B_c(W)$.
\end{definition}

\begin{art}\label{sheaf-property}
Let $W$ be an open subset of $X^\an$.
Using that $B_c(W)= \oplus_{p,q} B_c^{p,q}(W)$ is bigraded,
we get $E(W)=  \oplus_{r,s} E_{r,s}(W)$ as a bigraded $\R$-vector space, 
where a $\delta$-current in $E_{r,s}(W)$ acts trivially on every $B_c^{p,q}(W)$ 
with $(p,q) \neq (r,s)$. We set $E^{p,q}(W):=E_{n-p,n-q}(W)$.
The definition of $\delta$-currents in \ref{def-delta-current} is local 
and hence $E_{\cdot,\cdot}$ is a sheaf of bigraded real vector spaces on $\Xan$. 
This follows from standard arguments using partition of unity 
if $W$ is paracompact, and follows in 
general from the fact that every compact subset $C$ of $W$ has a paracompact open neighbourhood in $W$
by \cite[Lemme (2.1.6)]{chambert-loir-ducros}. 
The argument is similar as in \cite[Lemme 4.2.5]{chambert-loir-ducros} and we leave the details to the reader.

There is a product 
\begin{equation}
B^{p,q}(W)\times E^{p',q'}(W)\longrightarrow E^{p+p',q+q'}(W),\,\,
(\alpha,T)\longmapsto \alpha\wedge T 
\end{equation}
such that
\[
\langle \alpha\wedge T,\beta\rangle=(-1)^{(p+q)(p'+q')}T(\alpha\wedge\beta)
\]
for each $\beta\in B_c^{n-p-p',n-q-q'}(W)$.
\end{art}

\begin{prop} \label{currents and Zariski dense}
Let $U$ be a  Zariski open subset of $X$ and let $W$ be an open subset of $\Xan$.  
If $\codim(X\setminus U,X) > \min(p,q)$, then 
 $E^{p,q}(W \cap \Uan)=E^{p,q}(W)$. 
\end{prop}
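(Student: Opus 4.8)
The plan is to prove the statement by showing that the natural restriction map $E^{p,q}(W) \to E^{p,q}(W \cap \Uan)$ is an isomorphism under the codimension hypothesis. Since $E_{\cdot,\cdot}$ is a sheaf (as noted in \ref{sheaf-property}), it suffices to produce a canonical inverse, i.e. to extend every $\delta$-current on $W \cap \Uan$ uniquely to a $\delta$-current on $W$. The key input is the dual statement for $\delta$-forms: by Corollary \ref{support corollary}, any $\alpha \in B^{p',q'}(W)$ with $\min(p',q') < \codim(X \setminus U, X)$, equivalently $\max(n-p',n-q') > \dim(X\setminus U)$... wait, more precisely, for a compactly supported $\delta$-form $\beta \in B_c^{n-p,n-q}(W)$ that pairs with an element of $E^{p,q}(W)$, Corollary \ref{support corollary} applied with the bidegree $(n-p,n-q)$ gives $\supp(\beta) \subseteq W \cap \Uan$ whenever $\dim(X\setminus U) < \max(n-p,n-q)$, i.e. whenever $\codim(X\setminus U,X) > \min(p,q)$. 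So the hypothesis guarantees that $B_c^{n-p,n-q}(W) = B_c^{n-p,n-q}(W\cap \Uan)$ as vector spaces via the obvious extension-by-zero / restriction maps.

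Concretely, I would first record the inclusion $j\colon W \cap \Uan \hookrightarrow W$ and observe that restriction of $\delta$-forms gives a linear map $j^*\colon B_c^{n-p,n-q}(W) \to B_c^{n-p,n-q}(W\cap\Uan)$ (a compactly supported form on $W$ has support inside the closed-in-$W$ set $W\cap\Uan$, by Corollary \ref{support corollary}, hence restricts to a compactly supported form on $W\cap\Uan$), and conversely any $\beta \in B_c^{n-p,n-q}(W\cap\Uan)$ extends by zero to an element of $B_c^{n-p,n-q}(W)$ because its support is already compact in $W$; these two maps are mutually inverse. Then a $\delta$-current $T$ on $W\cap\Uan$ is pulled back to the functional $T \circ j^*$ on $B_c^{n-p,n-q}(W)$, and conversely a $\delta$-current $S$ on $W$ restricts to $S$ precomposed with extension-by-zero. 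The bijectivity at the level of forms makes these two operations mutually inverse bijections on the underlying vector spaces of functionals, so it only remains to check that the topological continuity condition of Definition \ref{def-delta-current} is preserved in both directions.

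For the continuity check I would argue as follows. Fix a compact subset $C$ of $W$, a covering $(V_i,\varphi_{U_i})_{i\in I}$ of $W$ by tropical charts, and finite tuples $\omega_{J_i}$ in $Z(V_i,\varphi_{U_i})$. A $\delta$-form $\beta$ in the subspace $B_C(W;V_i,\varphi_{U_i},\omega_{J_i}:i\in I)$ has $\supp(\beta) \subseteq C \cap (W\cap\Uan)$ by the support result above (its bidegree is $(n-p,n-q)$ in the relevant pairing, so $\max$ of its two degrees is $\geq$ ... the same inequality applies), so only the charts $V_i$ meeting $W\cap\Uan$ are relevant and $\beta$ lies in the analogous subspace for the open set $W\cap\Uan$ with respect to the restricted covering and the restricted tuples. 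The restriction map between these locally convex spaces is continuous for the coarsest-topology definition in \ref{topology on subspaces of delta-forms}, since the chart-projection maps factor compatibly; hence $T\circ j^*$ restricted to $B_C(W;\ldots)$ is continuous whenever $T$ is continuous on the corresponding subspace over $W\cap\Uan$. The reverse direction is symmetric: a $\delta$-form on $W\cap\Uan$ extended by zero to $W$ lies in a corresponding subspace over $W$, and the extension map is continuous by the same factorization through chart projections.

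The main obstacle I expect is the bookkeeping in the last paragraph: one must verify carefully that the extension-by-zero and restriction operations interact correctly with the somewhat intricate topology on $B_C(W;V_i,\varphi_{U_i},\omega_{J_i}:i\in I)$, in particular that a covering of $W$ and tuples $\omega_{J_i}$ restrict to admissible data on $W\cap\Uan$ and that charts disjoint from $W\cap\Uan$ contribute nothing (which is exactly where Corollary \ref{support corollary} is used, together with the observation $d(x) \leq \dim(X\setminus U)$ for $x\notin\Uan$). Once the support localization is in place, everything else is a routine unwinding of definitions. I would therefore organize the proof as: (1) state the support lemma consequence $B_c^{n-p,n-q}(W) = B_c^{n-p,n-q}(W\cap\Uan)$; (2) conclude the vector space identification $E^{p,q}(W) = E^{p,q}(W\cap\Uan)$ of functionals; (3) verify the continuity condition survives in both directions using the explicit topology of \ref{topology on subspaces of delta-forms}.
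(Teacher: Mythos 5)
Your overall strategy (identify $B_c^{n-p,n-q}(W)$ with $B_c^{n-p,n-q}(W\cap \Uan)$ via Corollary \ref{support corollary}, then transport the continuity condition) is the same as the paper's, but your continuity step has a genuine gap. In the third paragraph you assert that a form $\beta \in B_C^{n-p,n-q}(W;V_i,\varphi_{U_i},\omega_{J_i}:i\in I)$ ``lies in the analogous subspace for the open set $W\cap\Uan$''. The analogous subspaces in Definition \ref{def-delta-current} are indexed by \emph{compact} subsets of $W\cap\Uan$, and the only set you have produced is $C\cap\Uan$, which is in general not compact ($\Uan$ is open, so $C\cap\Uan$ is merely locally compact). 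Knowing that each individual $\beta$ has compact support inside $W\cap\Uan$ does not suffice: continuity of $T$ on $B_{C'}(W\cap\Uan;\ldots)$ gives an estimate only for forms supported in the fixed compact $C'$, and without a single compact $D\subseteq W\cap\Uan$ containing the supports of \emph{all} elements of $B_C(W;V_i,\varphi_{U_i},\omega_{J_i}:i\in I)$ you cannot transfer these estimates to the restriction of $T$ to that subspace. This uniformity is exactly the non-trivial point, not routine bookkeeping.

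The paper supplies it as follows: let $S$ be the set of $x\in W$ admitting a compact neighbourhood $V_x\subseteq W\cap V_i$ (some $i$) with $\dim(\trop_{U_i}(V_x))<\max(n-p,n-q)$. Then $S$ is open, and the proof of Lemma \ref{support and degree of delta-forms} shows that $S$ is disjoint from the support of \emph{every} form in $B^{n-p,n-q}(W;V_i,\varphi_{U_i},\omega_{J_i}:i\in I)$, so that
\[
B_C^{n-p,n-q}(W;V_i,\varphi_{U_i},\omega_{J_i}:i\in I)=B_D^{n-p,n-q}(W;V_i,\varphi_{U_i},\omega_{J_i}:i\in I)
\]
for the compact set $D:=C\setminus S$. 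Moreover every $x\notin\Uan$ satisfies $d(x)\le\dim(X\setminus U)<\max(n-p,n-q)$ and hence lies in $S$, so $D\subseteq W\cap\Uan$. With this uniform compact $D$ in hand, the continuity of $T$ as a $\delta$-current on $W\cap\Uan$ applies and yields continuity on the subspace over $W$. Your proposal cites the inequality $d(x)\le\dim(X\setminus U)$ in passing, but uses it only to locate supports pointwise; to close the argument you must turn it into the statement that the supports avoid a fixed open neighbourhood $S$ of $C\setminus\Uan$, which is what makes $D=C\setminus S$ compact and contained in $W\cap\Uan$.
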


\begin{proof} 
Corollary \ref{support corollary}  shows 
that every $\delta$-form on $W$ of bidegree $(n-p,n-q)$ has support in $W \cap \Uan$. 
We conclude that every $\delta$-current $T$ in $E^{p,q}(W \cap \Uan)$ is  a linear functional  on $B_c^{n-p,n-q}(W)$. 
It remains to prove that the restriction of $T$ to $B_C^{n-p,n-q}(W;V_i,\varphi_{U_i},{\omega_{J_i}}:i \in I)$ is continuous for every  
compact subset $C$ of $W$, for every covering $(V_i,\varphi_{U_i})_{i \in I}$ of $W$ by tropical charts 
and for every finite tuple ${\omega_{J_i}}$ of elements 
in $Z(V_i,\varphi_{U_i})$.  

We consider the set $S$ of $x \in W$ for which there is $i \in I$ and a compact neighbourhood $V_x$ of $x$ in $W \cap V_i$ with
\begin{equation} \label{S1}
\dim(\trop_{U_i}(V_x)) < \max(n-p,n-q).
\end{equation}
Note that $\trop_{U_i}(V_x)$ is a polyhedral subset of $N_{U,\R}$ by  \cite[Th\'eor\`eme 3.2]{ducros}. Obviously, $S$ is an open subset of $W$. 
It follows from the proof of Lemma \ref{support and degree of delta-forms} that $S$ is disjoint from the support of any {$\delta$-form in $B^{n-p,n-q}(W;V_i,\varphi_{U_i},{\omega_{J_i}}:i \in I)$}. We conclude that 
\begin{equation} \label{S2}
B_C^{n-p,n-q}(W;V_i,\varphi_{U_i},{\omega_{J_i}}:i \in I) = B_D^{n-p,n-q}(W;V_i,\varphi_{U_i},{\omega_{J_i}}:i \in I)
\end{equation}
for the compact subset $D := C \setminus S$ of $C$. 
By the proof of Lemma \ref{support and degree of delta-forms} again, every $x \in \Xan \setminus \Uan$  satisfies 
$$d(x) \leq  \dim(X \setminus U) < \max(n-p,n-q)$$ 
and  has a compact neighbourhood $V_x$ contained in some $V_i$ with \eqref{S1}. This proves $D \subseteq W \cap \Uan$. 
Using  \eqref{S2} and  
$T \in E^{p,q}(W \cap \Uan)$, we get
the continuity of the restriction of $T$ to  $B_C^{n-p,n-q}(W;V_i,\varphi_{U_i},{\omega_{J_i}}:i \in I)$. 
\end{proof}

\begin{prop} \label{current associated to generalized delta-form}
A generalized $\delta$-form $\eta\in P^{p,q}(W)$ 
determines a $\delta$-current $[\eta]\in E^{p,q}(W)$
such that
\[
\bigl\langle[\eta],\beta\rangle=\int_W\eta\wedge\beta
\]
for each $\beta\in B_c^{n-p,n-q}(W)$.
\end{prop}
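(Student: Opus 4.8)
The plan is to show that the linear functional $\beta \mapsto \int_W \eta\wedge\beta$ is well-defined on $B_c^{n-p,n-q}(W)$ and continuous in the sense of Definition \ref{def-delta-current}. First I would check that the wedge-product $\eta\wedge\beta$ makes sense: since $\eta\in P^{p,q}(W)$ is a section of the sheaf of generalized $\delta$-forms and $\beta\in B_c^{n-p,n-q}(W)\subseteq P^{n-p,n-q}(W)$, the product $\eta\wedge\beta$ lies in $P^{n,n}(W)$ by Proposition \ref{properties of delta-forms}(i), and it has compact support because $\supp(\eta\wedge\beta)\subseteq\supp(\beta)$. Hence $\int_W\eta\wedge\beta$ is defined by Definition \ref{chart of integration}, and linearity in $\beta$ is immediate from the bilinearity of $\wedge$ and the linearity of the integral in Proposition \ref{integration well-defined}(iii). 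This already gives a linear functional $[\eta]$ on $B_c^{n-p,n-q}(W)$; extending by zero on the other bidegrees puts it in the right graded piece, so $[\eta]\in \bigoplus E_{r,s}(W)$ with the correct type $(n-p,n-q)$, i.e. $[\eta]\in E^{p,q}(W)$ once continuity is established.

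The substantive point, and the step I expect to be the main obstacle, is the continuity requirement: for every compact $C\subseteq W$, every covering $(V_i,\varphi_{U_i})_{i\in I}$ of $W$ by tropical charts, and every finite tuple $\omega_{J_i}$ of elements in $Z(V_i,\varphi_{U_i})$, the restriction of $[\eta]$ to $B_C^{n-p,n-q}(W;V_i,\varphi_{U_i},\omega_{J_i}:i\in I)$ must be continuous. Here I would localize: choose a finite subcover of the compact set $C$ by the charts $V_i$, pick a subordinate partition of unity, and write $\int_W\eta\wedge\beta$ as a finite sum of integrals over the charts. On a single chart $(V_i,\varphi_{U_i})$, writing $\beta|_{V_i}=\trop_{U_i}^*(\beta_i)$ with $\beta_i=\sum_{j\in J_i}\alpha_j\wedge\omega_j$ and $\alpha_j\in A(\Omega_i)$, the form $\eta$ is itself locally given by a $\delta$-preform $\sum_k\gamma_k\wedge\delta_{C_k}$ on some $\widetilde\Omega_i$, so $\eta\wedge\beta$ is represented on $\Omega_i$ by a finite sum of terms $\gamma_k\wedge\alpha_j\wedge\omega_j\wedge\delta_{C_k}$, and after choosing a common polyhedral complex of definition this reduces via Remark \ref{standardform} and Remark \ref{preform properties}(iii) to a finite sum of classical integrals $\int_{\Delta}(\text{smooth coefficient})\,\mu_\Delta$ over top-dimensional polyhedra $\Delta$. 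Each such integral is bounded in absolute value by a constant (depending on the fixed $\gamma_k$, $\omega_j$, the complex, and $C$) times a seminorm $p_{C',0}(\alpha_j)$ of the $\alpha_j$ on a suitable compact subset $C'$; taking the infimum over the decompositions of $\beta_i$ as in \ref{topology on AZ}, this is controlled by $p_{C',0,\omega_{J_i}}(\beta_i)$, which is exactly one of the defining seminorms of $AZ(V_i,\varphi_{U_i},\omega_{J_i})$. By the definition of the topology on $B_C(W;V_i,\varphi_{U_i},\omega_{J_i}:i\in I)$ in \ref{topology on subspaces of delta-forms} as the coarsest one making the maps $\beta\mapsto\beta_i$ continuous, the estimate on the finite sum over the charts gives continuity of $[\eta]$.

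Finally I would record the defining formula $\langle[\eta],\beta\rangle=\int_W\eta\wedge\beta$, which holds by construction, and note the type check: if $\beta\in B_c^{n-p',n-q'}(W)$ with $(p',q')\neq(p,q)$ then $\eta\wedge\beta$ has bidegree $(p+n-p',q+n-q')\neq(n,n)$ in a generalized $\delta$-form sense, so its integral vanishes by the extension-by-zero convention in Definition \ref{chart of integration}; hence $[\eta]$ genuinely sits in $E^{p,q}(W)$. The only mildly delicate bookkeeping is making sure the compact subsets $C'$ and the bounding constants in the chart-by-chart estimate can be chosen uniformly once $C$, the covering, the partition of unity, and the tuples $\omega_{J_i}$ are fixed — but this is routine given that the relevant tropicalization maps are proper (so $\trop_{U_i}(C\cap V_i)$ is compact) and only finitely many charts and finitely many $\delta$-preform terms are involved.
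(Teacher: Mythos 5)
Your first and last paragraphs are fine (the product $\eta\wedge\beta$ lies in $P_c^{n,n}(W)$, the integral exists, linearity and the bidegree bookkeeping are immediate), and your overall plan -- reduce continuity on $B_C^{n-p,n-q}(W;V_i,\varphi_{U_i},\omega_{J_i}:i\in I)$ to a seminorm estimate coming from finitely many charts -- is the same as the paper's. But the execution of the continuity step has a genuine gap. You split $\int_W\eta\wedge\beta$ by a partition of unity on $W$ subordinate to the given charts and then claim that on each chart the integrand is represented on $\Omega_i$ by a $\delta$-preform $\sum_k\gamma_k\wedge\alpha_j\wedge\omega_j\wedge\delta_{C_k}$, reducing to classical polyhedral integrals bounded by $p_{C',0}(\alpha_j)$. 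This does not work as stated: a smooth cut-off function on the Berkovich space is only locally a pullback along \emph{some} (generally finer) tropicalization, not along the fixed $\trop_{U_i}$, and $\eta$ itself need not be given by a $\delta$-preform on the prescribed charts $(V_i,\varphi_{U_i})$ (one must refine the covering and check that the restriction maps are continuous for the seminorms of \ref{topology on AZ}). So the pieces $\rho_i\,\eta\wedge\beta$ are not in the image of $\trop_{U_i}^*$ applied to $P^{n,n}(V_i,\varphi_{U_i})$, and the claimed chartwise reduction to integrals over polyhedra with coefficients controlled by the seminorms of the $\alpha_j$ is not available.

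The deeper point you dismiss as ``routine bookkeeping'' is exactly where the paper's proof does its real work. Any correct version needs a chart of integration, and by Proposition \ref{integration well-defined}(i) this is an intersection $U=\bigcap_{i\in I_0}U_i$; but $C$ (or $\supp\beta$) is \emph{not} contained in $\Uan$, and $C\cap\Uan$ is not compact, so properness of $\trop_{U_i}$ does not produce a compact region of integration. The paper resolves this with Lemma \ref{support and degree of delta-forms}: the set $S$ of points of $W$ admitting a compact neighbourhood $V_x\subseteq V_i$ with $\dim(\trop_{U_i}(V_x))<n$ is open, disjoint from $\supp(\eta\wedge\beta)$ for \emph{every} $\beta$ in the subspace, and $D:=C\setminus S$ is a compact subset of $W\cap\Uan$; hence the representative $\gamma_U$ of $\eta\wedge\beta$ has support in $\trop_U(D)$, which can be enclosed in a single integral $\R$-affine polyhedral set $P$ that is \emph{independent of} $\beta$. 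Only then does smallness of the $\alpha_{ij}$ in the fixed seminorms (transported along the canonical affine maps $F_i:N_{U,\R}\to N_{U_i,\R}$, with a partition of unity used on $\Trop(U)$, where it is harmless) give smallness of $\int_P\gamma_U$. Without producing such a $\beta$-independent region of integration, your estimate does not close; if you try to repair the partition-of-unity route you are forced back to precisely this argument.
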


\begin{proof} 
We have to show that the restriction of $[\eta]$ to every subspace 
\[
B_C^{n-p,n-q}(W;V_i,\varphi_{U_i},{\omega_{J_i}}:i \in I)
\] 
as in Definition 
\ref{def-delta-current} is continuous. By passing to a refinement of the covering by tropical charts,
we may assume that $\eta$ is given on $V_i$ by $\eta_i \in P^{p,q}(V_i,\varphi_{U_i})$ for every $i \in I$. 
Since $C$ is compact, there is a finite subset
$I_0$ of $I$ such that $\bigcup_{i \in I_0}V_i$ covers $C$.  
By Proposition \ref{integration well-defined}(i), we may use 
$U:=\bigcap_{i \in I_0} U_i$ as very affine chart of integration for any  
$\gamma \in B_C^{n,n}(W;V_i,\varphi_{U_i},{\omega_{J_i}}:i \in I)$. 

Similarly as in the proof of Proposition \ref{currents and Zariski dense}, 
we consider the set $S$ of $x \in W$ for which there is $i \in I$ and a 
compact neighbourhood $V_x$ of $x$ in $W \cap V_i$ with
\begin{equation} \label{S11}
\dim(\trop_{U_i}(V_x)) < n.
\end{equation}
It follows again from  the proof of Lemma \ref{support and degree of delta-forms} 
that the open subset $S$ of $W$ is disjoint from the support of 
$\eta \wedge \beta \in P^{n,n}(W)$ for any 
$\beta \in B^{n-p,n-q}(W;V_i,\varphi_{U_i},{\omega_{J_i}}:i \in I)$ and that the 
compact set $D := C \setminus S$ is contained in  $W \cap \Uan$. 

By definition, $\beta \in B_C^{n-p,n-q}(W;V_i,\varphi_{U_i},{\omega_{J_i}}:i \in I)$ is given on $V_i$ by 
$\beta_i = \sum_{j \in J_i} \alpha_{ij} \wedge \omega_{ij}$ 
with $\alpha_{ij} \in A(\Omega_i)$ and $\omega_{ij} \in Z(V_i,\varphi_{U_i})$, 
where $\Omega_i:=\trop_{U_i}(V_i)$. 
For $i \in I_0$, let $F_i:N_{U,\R} \rightarrow N_{U_i,\R}$ 
be the canonical affine map  with $\trop_{U_i}=F_i \circ \trop_U$ on $\Uan$ and 
let $\Omega_i':=F_i^{-1}(\Omega_i)\cap \Trop(U)=\trop_U(V_i \cap \Uan)$. 
The definition of $\int_W\eta\wedge\beta$ uses that $\eta \wedge \beta$ is 
given on $\Uan$ by a unique $\gamma_U \in P^{n,n}(U^\an,\varphi_U)$ 
(see Definition \ref{chart of integration}). 
Moreover, {Proposition \ref{single chart}} shows   
that $\gamma_U$ has  compact support in $\bigcup_{i \in I_0} \Omega_i'$ 
and that $\gamma_U$ is characterized by the restrictions 
$$
\gamma_U|_{V_i \cap \Uan}=\sum_{j \in J_i} \eta_i \wedge \alpha_{ij} 
\wedge \omega_{ij}|_{V_i \cap \Uan} \in P^{n,n}(V_i,\varphi_{U_i})
$$
for every $i \in I_0$.  
Recall that $D$ is a compact subset of $W \cap \Uan$ with $\supp(\gamma) \subseteq D$. By Proposition \ref{delta-support on chart}, $\trop_U(D)$ is a compact set of $\Trop(U)$ containing the support 
of $\gamma_U$. Then there is an integral $\R$-affine polyhedral subset $P$ of $\Trop(U)$ with $\trop_U(D) \subseteq P$ and hence we have 
\begin{equation} \label{integration and P}
\langle [\eta], \beta \rangle = \int_\Xan \eta \wedge \beta = \int_{|\Trop(U)|} \gamma_U = \int_{P} \gamma_U.
\end{equation}
We use now that $P$ is independent of the choice of $\beta \in B_C^{n,n}(W;V_i,\varphi_{U_i},{\omega_{J_i}}:i \in I)$. If all the $\alpha_{ij}$ are small with respect to the supremum-norm 
(of the coefficients), then a partition of unity argument on $\Trop(U)$ shows that \eqref{integration and P} 
 is small proving the desired continuity. 
\end{proof}

\begin{rem} \label{remarks to currents and delta-forms}
{The maps $P^{p,q}(W)\rightarrow E^{p,q}(W)$ induce a map of sheaves
$P^{p,q}\to E^{p,q},\,\alpha\mapsto [\alpha]$}
which fits into a commutative diagram
\begin{equation}\label{sheaf-propertyg1}
\xymatrix{
A^{p,q}\ar@{^{(}->}[r]\ar@{^{(}->}[d]^{[\,.\,]_D}&
B^{p,q}\ar@{^{(}->}[r]\ar[d]^{[\,.\,]}&
P^{p,q}.\ar[dl]^{[\,.\,]}\\
D^{p,q}&E^{p,q}\ar[l]&}
\end{equation} 
There is an induced map $P^{p,q}(W)\rightarrow D^{p,q}(W)$.
For $\beta \in P^{p,q}(W)$, we denote the associated current 
in $D^{p,q}(W)$ by $[\beta]_D$.
\end{rem}

There is no a priori reason that the canonical map from $\delta$-forms to 
currents or $\delta$-currents is injective. 
However, we have the following functorial criterion:

\begin{prop} \label{functorial criterion}
Let $W$ be an open subset of $\Xan$ and let $\alpha, \beta \in P^{p,q}(W)$. 
Then $\alpha = \beta$ if and only if 
$[f^*(\alpha)]_D=[f^*(\beta)]_D \in D^{p,q}(W')$ for all morphisms 
$f:X' \rightarrow X$ from algebraic varieties $X'$ over $K$ and for all 
open subsets $W'$ of $(X')^{\rm an}$ with $f(W') \subseteq W$.
\end{prop}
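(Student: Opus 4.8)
The plan is to prove the nontrivial implication: if $[f^*(\alpha)]_D = [f^*(\beta)]_D$ in $D^{p,q}(W')$ for all morphisms $f : X' \to X$ and all open $W' \subseteq (X')^{\mathrm{an}}$ with $f(W') \subseteq W$, then $\alpha = \beta$. Since equality of $\delta$-forms (or generalized $\delta$-forms) is local on $\Xan$ and since the tropical charts $(V,\varphi_U)$ form a basis for the topology of $\Xan$, it suffices to fix a tropical chart $(V,\varphi_U)$ with $V \subseteq W$ and show $\alpha|_V = \beta|_V$ in $P^{p,q}(V)$. By Proposition \ref{phitrop}, the map $\trop_U^*\colon P^{p,q}(V,\varphi_U) \to P^{p,q}(V)$ is injective, so $\alpha|_V$ and $\beta|_V$ come from uniquely determined classes $\alpha_U, \beta_U \in P^{p,q}(V,\varphi_U)$, and it is enough to prove $\alpha_U = \beta_U$ in $P^{p,q}(V,\varphi_U)$.

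By the very definition of $P^{p,q}(V,\varphi_U) = P^{p,q}(\widetilde\Omega)/N^{p,q}(V,\varphi_U)$, proving $\alpha_U = \beta_U$ amounts to showing that $\gamma := \tilde\alpha_U - \tilde\beta_U \in N^{p,q}(V,\varphi_U)$, where $\tilde\alpha_U, \tilde\beta_U \in P^{p,q}(\widetilde\Omega)$ are representatives. By Definition \ref{presheaf}, this means: for every morphism $f : X' \to X$ and every tropical chart $(V',\varphi_{U'})$ of $X'$ compatible with $(V,\varphi_U)$, inducing the canonical integral $\Gamma$-affine map $F : N_{U',\R} \to N_{U,\R}$, we must verify $F^*(\gamma)|_{\Omega'} = 0$ in $P^{p,q}(\Omega')$, i.e. $F^*(\gamma) \wedge \delta_{\Trop(U')} = 0$ in $D^{p,q}(\Omega')$. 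But $F^*(\gamma) \wedge \delta_{\Trop(U')}$ is precisely (up to the restriction identification \ref{restriction of delta to Omega}) the supercurrent underlying $f^*(\alpha - \beta)|_{V'}$ read off on $\Omega' = \trop_{U'}(V')$; under $\trop_{U'}^*$ this corresponds to the current $[f^*(\alpha)]_D - [f^*(\beta)]_D \in D^{p,q}(V')$, which vanishes by hypothesis applied to $f$ and $W' := V'$ (noting $f(V') \subseteq V \subseteq W$). More precisely, the commutative diagram \eqref{sheaf-propertyg1} together with the compatibility of $\trop_{U'}^*$ with the passage to currents shows that the vanishing of $[f^*(\alpha - \beta)]_D$ on $V'$ forces $F^*(\gamma) \wedge \delta_{\Trop(U')} = 0$ on $\widetilde\Omega'$ via the injectivity statement in Proposition \ref{phitrop} (or directly, since the supercurrent on $\widetilde\Omega'$ restricting to $0$ on $\Omega'$ is what is encoded by $N^{p,q}$). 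Hence $\gamma \in N^{p,q}(V,\varphi_U)$, giving $\alpha_U = \beta_U$ and therefore $\alpha|_V = \beta|_V$; letting $V$ range over a basis of $W$ yields $\alpha = \beta$.

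The main obstacle, and the point requiring care, is the bookkeeping in the chain of identifications: one must match up (a) the supercurrent $[f^*(\alpha)]_D$ on $W'$ built via the sheaf $P \to D$ of Remark \ref{remarks to currents and delta-forms}, (b) its local description on a tropical chart $(V',\varphi_{U'})$ as $\trop_{U'}^*$ of the class $f^*(\alpha)|_{V'} \in P^{p,q}(V',\varphi_{U'})$, and (c) the defining condition $F^*(\tilde\alpha_U)|_{\Omega'} = 0$ in the quotient $P^{p,q}(V,\varphi_U)$. The key technical facts making this work are: the functoriality of pull-back $f^* : P(V,\varphi_U) \to P(V',\varphi_{U'})$ and its compatibility with $\trop^*$ (Remark \ref{properties of delta-preforms on charts}, \ref{pull-back of delta-forms}); the injectivity of $\trop_U^*$ on both $P^{p,q}(V,\varphi_U)$ and $P^{p,q}(\Omega) \hookrightarrow D^{p,q}(\Omega)$ (Proposition \ref{phitrop} and its proof); and the fact that every compatible pair of charts arising in Definition \ref{presheaf} is realized by an honest morphism $f : X' \to X$ with $X' = U'$ and $W' = V'$, so that the hypothesis of the proposition is exactly strong enough to cover all test data in the definition of $N^{p,q}(V,\varphi_U)$. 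Once these identifications are set up cleanly, the argument is essentially a tautological unwinding of the definition of $\delta$-forms as equivalence classes of $\delta$-preforms. The converse implication is trivial, since $f^*$ and $[\,\cdot\,]_D$ are well-defined operations on generalized $\delta$-forms.
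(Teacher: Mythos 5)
Your proof is correct and follows essentially the same route as the paper's: localize to tropical charts, unwind equality in $P^{p,q}(V,\varphi_U)$ into the defining condition of $N^{p,q}(V,\varphi_U)$, and obtain the required vanishing $F^*(\tilde\alpha_U-\tilde\beta_U)\wedge\delta_{\Trop(U')}=0$ on $\Omega'$ from the hypothesis applied to $f$ and $W'=V'$ (the paper is equally brief about translating the vanishing of the current on $V'$ into the vanishing of the supercurrent on $\Omega'$ by pairing with pulled-back test superforms). One small repair: injectivity of $\trop_U^*$ in Proposition \ref{phitrop} does not guarantee that $\alpha|_V,\beta|_V$ are represented by classes in $P^{p,q}(V,\varphi_U)$ for an \emph{arbitrary} chart $(V,\varphi_U)$, since $P$ is a sheafification; you should, as the paper does, first choose a covering by tropical charts fine enough that both $\alpha$ and $\beta$ are given on them by elements $\alpha_i,\beta_i\in P^{p,q}(V_i,\varphi_{U_i})$, and then run your argument on those charts.
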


\begin{proof}
If $\alpha = \beta$, then all pull-backs and also their associated currents 
are the same. Conversely, we assume that the associated currents of all 
pull-backs are the same for $\alpha$ and $\beta$. There is an open covering 
$(V_i)_{i \in I}$ of $\Xan$ by tropical charts $(V_i,\varphi_{U_i})$ such 
that $\alpha, \beta$ are given on $V_i$ by 
$\alpha_i,\beta_i \in P^{p,q}(V_i,\varphi_{U_i})$. 
Let $f:X' \rightarrow X$ be a 
morphism of varieties over $K$ and let $(V',\varphi_{U'})$ be a tropical 
chart of $X'$ which is compatible with $(V_i,\varphi_{U_i})$. 
Let $\Omega'$ denote the open subset $\trop_{U'}(V')$ of $\Trop(U')$.
It follows from our definitions that $\alpha_i=\beta_i$ in
$P(V_i,\varphi_{U_i})$ if we show 
$f^*(\alpha_i)|_{\Omega'}=f^*(\beta_i)|_{\Omega'} \in D^{p,q}(\Omega')$ for all morphisms
$f$ and all charts $(V',\varphi_{U'})$ compatible with $(V_i,\varphi_{U_i})$.
By assumption, we have $[f^*(\alpha)]_D=[f^*(\beta)]_D$ in $D^{p,q}(V')$. 
We conclude that 
$f^*(\alpha_i)|_{\Omega'}= f^*(\beta_i)|_{\Omega'} \in P^{p,q}(\Omega') \subseteq D^{p,q}(\Omega')$ 
and get $\alpha_i= \beta_i \in P(V_i,\varphi_{U_i})$ proving the claim. 
\end{proof}

\begin{art} \label{differentiation of delta-currents}
As usual, we define the linear differential operators 
$d':E^{p,q}(W) \rightarrow E^{p+1,q}(W)$ 
and $d'':E^{p,q} \rightarrow E^{p,q+1}(W)$  by
$$\langle d'T , \beta \rangle := (-1)^{p+q+1} \langle T, d' \beta \rangle , \quad 
\langle d''T , \beta \rangle := (-1)^{p+q+1} \langle T, d'' \beta \rangle.$$
Note that $d'$ and $d''$ induce continuous linear maps on the locally convex 
topological vector spaces  introduced in  
\ref{topology on subspaces of delta-forms} 
and hence it is easy to check that $d'$ and $d''$ are well-defined on 
$\delta$-currents. Moreover, the natural maps from \ref{remarks to currents and delta-forms} fit 
into commutative diagrams 
\begin{equation}\label{differentiation of delta-currentsg1}
\xymatrix{
B^{p,q}\ar[r]^{[\,.\,]}\ar@{->}[d]^{d'}&E^{p,q}\ar@{->}[d]^{d'}&&
B^{p,q}\ar[r]^{[\,.\,]}\ar@{->}[d]^{d''}&
E^{p,q}\ar@{->}[d]^{d''}\\
B^{p+1,q}\ar[r]^{[\,.\,]}&E^{p+1,q}&&
B^{p,q+1}\ar[r]^{[\,.\,]}&
E^{p,q+1}
}
\end{equation}
of sheaves. As usual, we define $d:=d' + d''$ 
also on $E$. 
\end{art}

\begin{art} \label{pushforward of delta-currents}
If $f:X' \rightarrow X$ is a proper morphism of algebraic varieties, 
then we get a {\it push-forward} $f_*:E_{r,s}(f^{-1}(W)) \rightarrow E_{r,s}(W)$ as follows: 
For $T' \in E_{r,s}(f^{-1}(W))$, the push-forward is the $\delta$-current on $W$ given by
$$ \langle f_*(T), \beta \rangle := \langle T, f^*(\beta) \rangle $$
for $\beta \in B_c^{r,s}(W)$. 
It is easy to see that pull-back of $\delta$-forms induces continuous linear 
maps between appropriate locally convex topological vector spaces 
defined in \ref{topology on subspaces of delta-forms} and hence the proper 
push-forward of $\delta$-currents is well-defined. 
\end{art}

\begin{ex} \label{current of integration}
In \ref{chart of integration}, we have introduced $\int_\Xan \beta$ 
for $\beta \in P_c^{n,n}(\Xan)$. 
Setting $\langle \delta_{X}, \beta\rangle:=\int_\Xan \beta$, 
we get the $\delta$-current 
$\delta_{X}=[1] \in E^{0,0}(\Xan)$. 
We call it the {\it $\delta$-current of integration} along $X$. 
Using linearity in the components and \ref{pushforward of delta-currents}, 
we get a $\delta$-current of integration $\delta_Z$ for 
every algebraic cycle $Z$ on $X$.
\end{ex}

\begin{prop}
Let $f:X'\to X$ be a proper morphism of algebraic varieties and 
let $Z'$ be a $p$-dimensional algebraic cycle on $X'$. Then we have
the equality $f_*\delta_{Z'}=\delta_{f_*Z'}$ in $E_{p,p}(X^{\rm an})$.
\end{prop}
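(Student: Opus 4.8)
The plan is to reduce the statement to the functoriality of integration for $\delta$-forms and to linearity in the cycle components. First I would observe that both sides are linear in $Z'$ and supported on the components of $Z'$ (resp.\ $f_*Z'$), so by \ref{pushforward of delta-currents} and Example \ref{current of integration} it suffices to treat the case where $Z'$ is a prime cycle, i.e.\ the closure of an irreducible $p$-dimensional subvariety of $X'$, which we may then normalize; more precisely it is enough to prove the statement when $X' \to X$ is a proper morphism with $X'$ an integral $p$-dimensional variety and $Z' = X'$, since $\delta_{Z'}$ on the ambient $X'$ is by definition the push-forward of $\delta_{X'_{Z'}}$ along the inclusion of the subvariety, and push-forward is functorial by \ref{pushforward of delta-currents}.

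So I would assume $X'$ is a $p$-dimensional variety, $Z'=X'$, and $Y := f_*X' = \deg(f)\cdot \overline{f(X')}$, where $\deg(f)=[K(X'):K(\overline{f(X')})]$ if this extension is finite and $0$ otherwise. Let $\beta \in B_c^{p,p}(X^{\rm an})$. By definition of push-forward of $\delta$-currents we have $\langle f_* \delta_{X'}, \beta\rangle = \langle \delta_{X'}, f^*(\beta)\rangle = \int_{(X')^{\rm an}} f^*(\beta)$, using Example \ref{current of integration}. On the other side, $\langle \delta_Y, \beta\rangle = \deg(f)\cdot \int_{\overline{f(X')}^{\rm an}} \beta|_{\overline{f(X')}^{\rm an}}$ by linearity of $\delta_{(\cdot)}$ in the cycle and Example \ref{current of integration} applied to the variety $\overline{f(X')}$. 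Thus the claim reduces to the identity
\[
\int_{(X')^{\rm an}} f^*(\beta) = \deg(f)\cdot \int_{\overline{f(X')}^{\rm an}} \beta|_{\overline{f(X')}^{\rm an}}.
\]
Replacing $X$ by $\overline{f(X')}$ (which is harmless since $\beta|_{\overline{f(X')}^{\rm an}}$ is what enters, and $\overline{f(X')}$ is a $p$-dimensional variety), we are reduced to the case where $f: X' \to X$ is a proper surjective morphism of $p$-dimensional varieties of degree $\deg(f)$, and the identity to prove is exactly the projection formula for integration of $\delta$-forms, Proposition \ref{integration well-defined}(iv), which gives $\deg(f)\cdot \int_{X^{\rm an}} \beta = \int_{(X')^{\rm an}} f^*(\beta)$.

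The main obstacle will be the bookkeeping in the first reduction: I must check that $\delta_{Z'}$ for a subvariety $i:Z'\hookrightarrow X'$ really equals $i_*\delta_{Z'}$ with $\delta_{Z'}$ computed intrinsically on the variety $Z'$ (this is how $\delta_Z$ is defined in Example \ref{current of integration} via "linearity in the components and \ref{pushforward of delta-currents}"), and that $f_* i_* = (f\circ i)_*$ — functoriality of proper push-forward of $\delta$-currents, which follows immediately from functoriality of pull-back of $\delta$-forms in \ref{pull-back of delta-forms}. One should also note that when $\dim \overline{f(X')} < p$, both sides vanish: the left because $f_*X' = 0$ as a $p$-cycle (here $\deg(f)=0$), and the right because $f^*(\beta)$, being a $\delta$-form of bidegree $(p,p)$ on the $<p$-dimensional source after restricting along $f$, has support forced into points of local dimension $<p$ by Lemma \ref{support and degree of delta-forms}, so its integral over $(X')^{\rm an}$ is zero. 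Everything else is a direct application of the already-established projection formula and the definitions of $\delta$-currents of integration and their push-forward.
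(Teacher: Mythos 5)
Your argument is correct and takes essentially the same route as the paper, which disposes of this proposition as a direct consequence of the projection formula \eqref{integration well-definedg1}; your reductions (linearity in the prime components, functoriality of proper push-forward, and the intrinsic definition of $\delta_Z$ via push-forward from the subvariety) simply spell out the bookkeeping the paper leaves implicit. One small imprecision: in the degenerate case $\dim\overline{f(X')}<p$, Lemma \ref{support and degree of delta-forms} should be applied to the restriction of $\beta$ to $\overline{f(X')}^{\rm an}$, where every point has $d(x)<p$ so that this restriction --- and hence $f^*(\beta)$ and its integral over $(X')^{\rm an}$ --- vanishes, rather than to the support of $f^*(\beta)$ on $(X')^{\rm an}$ itself, where points with $d(x)=p$ do exist.
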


\proof
This is a direct consequence of the projection formula
\eqref{integration well-definedg1}.
\qed

\begin{prop}\label{integral is continuous}
Let $W$ be an open subset of $X^{\rm an}$. We equip the space
$C_c(W)$ of continuous functions $f:W\to \R$ with compact support
with the
supremum norm $|\phantom{a}|_W$ and its subspace $A^0_c(W)$ of smooth functions
with compact support with the induced norm.
Then for each $\alpha\in P_c^{n,n}(W)$ the map
\[
A^0_c(W)\longrightarrow \R,\,\,
f\mapsto \int_Wf\cdot \alpha
\]
is continuous and extends in a unique way to a continuous
map $C_c(W)\to \R$.
\end{prop}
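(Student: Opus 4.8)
The plan is to reduce the statement to a single very affine chart of integration and then to invoke the fact that the integral of a $\delta$-preform of top bidegree against a continuous function is nothing but integration against a signed Radon measure on the associated tropical variety. First I would fix $\alpha \in P_c^{n,n}(W)$ and choose a very affine chart of integration $U$ for $\alpha$ as provided by Proposition \ref{single chart}; so $\alpha|_{\Uan}=\trop_U^*(\alpha_U)$ for a unique $\alpha_U \in P^{n,n}(U^{\rm an},\varphi_U)$ with compact support in $\Trop(U)$, and by Proposition \ref{single chart}(d) the support of $\alpha$ is automatically contained in $\Uan$. For $f \in A_c^0(W)$, Corollary \ref{support corollary} and the degree-zero identification $A^{0,0}(W)=B^{0,0}(W)$ from \ref{functions as delta-forms} show that $f\cdot \alpha \in P_c^{n,n}(W)$, and that $U$ is again a chart of integration for $f \cdot \alpha$; explicitly $f\cdot\alpha$ is given on $\Uan$ by $(\trop_U^*\bar f)\wedge \alpha_U$ where $f|_{\Uan}$ descends to a continuous function $\bar f$ on $\Trop(U)$. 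Hence $\int_W f\cdot\alpha = \int_{|\Trop(U)|} \bar f \cdot \alpha_U|_\Omega$, where $\alpha_U|_\Omega$ is the polyhedral supercurrent attached to $\alpha_U$ on $\Trop(U)$.

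Next I would appeal to the description of $\int_{|\Trop(U)|}(\cdot)$ in Remark \ref{preform properties}(iii): after a subdivision, $\alpha_U|_\Omega = \sum_{\Delta \in \KC_n} \alpha_\Delta \wedge \delta_\Delta$ with $\alpha_\Delta \in A_\Delta$, and by Remark \ref{boundaryintdelta}(ii) this defines a signed measure $\mu$ on the Borel algebra of $|\Trop(U)|$ by $\mu(M) = \sum_{\Delta \in \KC_n}\int_{M \cap \Delta}\alpha_\Delta$; since $\alpha_U$ has compact support, $\mu$ is a compactly supported signed Radon measure. Multiplying by the piecewise smooth function $\bar f$ simply multiplies the densities $\alpha_\Delta$ by the continuous functions $\bar f|_\Delta$, so $\int_{|\Trop(U)|}\bar f\cdot\alpha_U|_\Omega = \int_{|\Trop(U)|}\bar f \, d\mu$. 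Therefore $f \mapsto \int_W f\cdot\alpha$ equals $f \mapsto \int \bar f\, d\mu$, and the estimate $\bigl|\int \bar f\, d\mu\bigr| \le |\mu|(|\Trop(U)|)\cdot |\bar f|_{|\Trop(U)|} \le |\mu|(|\Trop(U)|)\cdot |f|_W$ gives continuity with respect to the supremum norm.

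For the extension to $C_c(W)$: one first notes that smooth functions with compact support are dense in $C_c(W)$ for the supremum norm — this follows from the existence of smooth partitions of unity on $\Xan$ (the tropicalization maps pull back smooth bump functions on $N_{U,\R}$, and $A^0_c$ separates points) together with a Stone--Weierstrass type argument, exactly as one uses to show $A^0_c(W)$ is dense in $C_c(W)$ in \cite{chambert-loir-ducros}. The bounded linear functional $f \mapsto \int_W f\cdot\alpha$ on the dense subspace $A_c^0(W)$ then extends uniquely to a continuous linear functional on $C_c(W)$; concretely the extension is again $g \mapsto \int g\, d\mu$ when one views $\mu$ as a measure on $W$ via $\trop_U$ (pushed forward along the proper map $\trop_U$, i.e. $g \mapsto \int_{|\Trop(U)|}\bar g\, d\mu$ for the descent $\bar g$ of $g|_{\Uan}$, which makes sense because $g$ is constant on fibres of $\trop_U$ over the support). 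One must check that this does not depend on the chart $U$, but that is immediate from Proposition \ref{integration well-defined}(ii).

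The main obstacle is the density of $A_c^0(W)$ in $C_c(W)$ for the supremum norm, or more precisely organizing the extension so that it visibly agrees with integration against the pushforward Radon measure; the subtlety is that a general $g \in C_c(W)$ need not be constant on the fibres of $\trop_U$, so one cannot directly write $\int_W g \cdot \alpha$ and must instead argue that approximating $g$ by smooth functions and passing to the limit yields a well-defined value depending only on the restriction of $g$ to $\supp(\alpha)$. This is handled by the uniform bound above, which shows the functional factors through restriction to the support; the routine verification that $\mu$ (suitably pushed forward) represents the extension then follows. None of these steps is deep, but assembling them cleanly — especially the interplay between the chart $U$, the descent $\bar f$, and the measure $\mu$ — is where the care is needed.
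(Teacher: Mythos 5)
There is a genuine gap, and it sits exactly at the step you treat as routine: you claim that for $f\in A^0_c(W)$ the fixed chart of integration $U$ for $\alpha$ is again a chart of integration for $f\cdot\alpha$, because ``$f|_{\Uan}$ descends to a continuous function $\bar f$ on $\Trop(U)$''. This is false in general. A smooth function on $W$ is only \emph{locally} a pullback along tropicalization maps $\trop_{U'_j}$ of charts adapted to $f$, and such a function need not be constant on the fibres of $\trop_U$ for the chart $U$ chosen for $\alpha$ (factoring through $\trop_{U'}$ for a smaller very affine $U'\subseteq U$ is strictly weaker than factoring through $\trop_U$). By Proposition \ref{integration well-defined}(i) a chart of integration for $f\cdot\alpha$ is $U'=U\cap\bigcap_j U'_j$, which depends on $f$. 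You even note this fibre-constancy issue yourself, but only for the extension to $C_c(W)$; it already defeats your formula $\int_W f\cdot\alpha=\int_{|\Trop(U)|}\bar f\,d\mu$ for smooth $f$, so the uniform estimate by $|\mu|(|\Trop(U)|)\cdot|f|_W$ is not established.

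Once this is recognized, the real content of the proposition becomes visible: one must bound $\int_W f\cdot\alpha$, computed on $\Trop(U')$ for an $f$-dependent chart $U'$, by a constant $C_\alpha$ defined purely on $\Trop(U)$. This is what the paper's proof does: it writes the lift $\tilde\alpha_U$ as a polyhedral supercurrent $\sum_\sigma\alpha_\sigma\wedge\delta_\sigma$, pulls back along the surjective map $F:N'_\R\to N_\R$, intersects with $\Trop(U')$, and then shows via the projection formula for lattices (Lemma \ref{projection formula for lattices}) and the Sturmfels--Tevelev multiplicity formula \eqref{cor-sturmfels-tevelev} that the resulting sum of lattice indices times integrals of $|\alpha_{\sigma\tau}|$ collapses to the chart-independent bound $C_\alpha$ of \eqref{definition of the norm}. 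Note that the absolute values prevent one from simply invoking the projection formula for tropical cycles (Proposition \ref{tropintthprop}), since cancellations could occur; the paper points this out explicitly. Your measure-theoretic picture on $\Trop(U)$, and the Stone--Weierstra\ss{} extension to $C_c(W)$, are fine as far as they go, but they presuppose the very reduction that fails; the missing ingredient is precisely the chart-comparison estimate, which is the heart of the paper's four-step proof.
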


\begin{proof} 
We may assume that $\alpha$ is of codimension $l$. 
We observe that the Stone-Weierstraß Theorem
\cite[Prop. (3.3.5)]{chambert-loir-ducros} implies that
$A_c^0(W)$ is a dense subspace of $C_c(W)$.
Consider $f\in A_c^0(W)$ and $\alpha\in P^{n,n}_c(W)$.
Our claims are obvious once we have obtained  
a bound $C_\alpha$ such that the inequality
\begin{equation}\label{basic integral estimate}
\Bigl|\int_Wf\cdot \alpha\Bigr|\leq  C_\alpha \cdot |f|_W
\end{equation}
holds. 
{We are going to prove this inequality in four steps.}

\vspace{2mm}\noindent
{\it First step: The definition of the bound $C_\alpha$.}

\vspace{2mm}\noindent
We fix a very affine chart of integration $U$ for $\alpha$ which 
means that there is $\alpha_U \in P_c^{n,n}(U^\an,\varphi_U)$ with $\trop_U^*(\alpha_U)=\alpha$ and we set $N:=N_{U}$. 
Then $\alpha_U$ is represented by a $\delta$-preform $\tilde{\alpha}_U \in P_c^{n,n}(N_\R)$ of the form
\begin{equation} \label{representation of the lift}
\tilde{\alpha}_U = \sum_\sigma \alpha_\sigma \wedge \delta_\sigma
\end{equation}
as a polyhedral supercurrent, where $\sigma$ ranges over $\KC^l$ for a 
complete integral $\R$-affine polyhedral complex $\KC$ of $N$ and where 
$\alpha_\sigma \in A_c^{n-l,n-l}(\sigma)$.
The definition of the bound $C_\alpha$ will depend on the choice of $U$ 
and of the lift $\tilde{\alpha}_U$, but not on the choice of $\KC$. 
The restriction $\alpha_{\sigma \tau}$ of $\alpha_\sigma$ to an 
$n-l$-dimensional face $\tau$ of $\sigma$ is an element of $A_c^{n-l,n-l}(\tau)$. 
As this is a superform of top-degree, we have a well-defined compactly 
supported superform $|\alpha_{\sigma \tau}|$ of degree $(n-l,n-l)$ with 
continuous coefficient on $\tau$.
This single coefficient is independent 
of the choice of an integral base of $\L_\tau$ and it is given by the 
absolute value of the coefficient of $\alpha$. 
After passing to a refinement, we may assume that $\Trop(U)$ is given by 
the tropical cycle $(\Ccal_{\leq n},m)$. Then we define
\begin{equation} \label{definition of the norm}
C_\alpha := \sum_{ {(\Delta, \sigma)}} [N:N_\Delta + N_\sigma] 
m_\Delta \int_\tau |\alpha_{\sigma \tau}|,
\end{equation}
where $(\Delta,\sigma)$ ranges over all elements of $\Ccal_n \times \Ccal^l$ such that {$\L_\Delta + \L_\sigma = N_\R$} and such that $\tau:=\Delta \cap \sigma$ is $n-l$-dimensional. Here, the integral of a superform of top-degree with continuous coefficient is defined as 
 in \cite[(1.2.2),(1.4.1)]{chambert-loir-ducros}.

\vspace{2mm}\noindent{\it Second step: A first estimate for the integral.}

\vspace{2mm}\noindent
By definition of a smooth function, there is a covering of $W$ by tropical charts $(V_j',\varphi_{U_j'})_{j \in J}$ such that $f|_{V_j'}=\trop_{U_j'}^*(\phi_j')$ for smooth functions $\phi_j'$ on 
the open subsets $\Omega_j'=\trop_{U_j'}(V_j')$ of $\Trop(U'_j)$. Any given compact subset $C$ of $W$ containing the support of $\alpha$ will be covered by  $(V_j')_{j \in J_0}$ for a finite subset $J_0$ of $J$. 
By Proposition \ref{integration well-defined}, 
$U':=U \cap  \bigcap_{j \in J_0} U_j' $ is a very affine chart of integration 
for $\alpha$ and for $f \alpha$. 
Let $N':=N_{U'}$ and let $F:N_\R' \rightarrow N_\R$ be the canonical integral $\R$-affine map. Since the restriction map $\Ocal(U)^\times \rightarrow \Ocal(U')^\times$ is injective, it follows that $F$ is surjective. After refining $\KC$, there is  a complete integral $\R$-affine polyhedral complex $\KC'$ on $N'_\R$ such that $\Trop(U')=(\KC_{\leq n}',m')$ and such that $\Delta:=F(\Delta') \in \KC$ for every $\Delta' \in \KC'$. 

Note that $(V':=\bigcup_{j \in J_0} V_j \cap (U')^\an, \varphi_{U'})$ is a tropical chart of $W$ containing $C \cap (U')^\an$ and the support of $\alpha$ by Corollary \ref{support corollary}. The pull-backs of the functions $\phi_j'$ with respect to the canonical affine maps $F_j:N'_\R \rightarrow N_{U'_j,\R}$ glue to a well-defined smooth function $f_{U'}$ on $\Omega':= \trop_{U'}(V')$. By definition, we have
\begin{equation} \label{integration of f alpha}
\int_W f \alpha = \int_{|\Trop(U')|} f_{U'} F^*(\tilde{\alpha}_U)|_{\Trop(U')}.
\end{equation}
Using that $F$ is surjective, we deduce from \eqref{representation of the lift} and \eqref{standard form and pull-back} that 
\begin{equation*} 
F^*(\tilde{\alpha}_U) =  \sum_{\sigma'} [N:\L_F(N')+N_\sigma] \cdot F^*\alpha_\sigma \wedge \delta_{\sigma'},
\end{equation*}
where $\sigma'$ ranges over all elements of $(\KC')^l$ such that $\sigma := F(\sigma')$ is of codimension $l$ in $N$. We choose a generic vector $v' \in N'_\R$. It follows from  \eqref{standard form and wedge} that 
\begin{equation*}  
F^*(\tilde{\alpha}_U)|_{\Trop(U')}= 
\sum_{\tau'} \sum_{ {(\Delta', \sigma')}}
[N':N'_{\Delta'}+N'_{\sigma'}]   [N:\L_F(N')+N_\sigma]  m_{\Delta'} F^*\alpha_\sigma \wedge \delta_{\tau'},         
\end{equation*}
where $\tau'$ ranges over $\KC_{n-l}'$ and $(\Delta',\sigma')$ ranges over all pairs in $\KC_n' \times (\KC')^l$ such that $\tau' = \Delta' \cap \sigma'$ and such that $\Delta' \cap (\sigma' + \ve v') \neq \emptyset$ for all sufficiently small $\ve >0$. 
Additionally, we assume that $\sigma:= F(\sigma')$ is of codimension $l$ in $N_\R$ as above. 
By degree reasons, we may restrict the sum to those 
$\tau'$ with $\tau:=F(\tau')$ of dimension $n-l$. 
Note that this is equivalent to restrict our attention to those 
$\Delta'$ with $\Delta:=F(\Delta')$ of dimension $n$. 
Since $\alpha$ has support in $V'$, the restriction of 
$F^*\alpha_\sigma$ to $\sigma'$ has support in $\Omega' \cap \sigma'$.
By \eqref{integration of f alpha}, we have
\begin{equation*} 
\int_W f\alpha 
= \sum_{\tau'} \sum_{ {(\Delta', \sigma')}}
[N':N'_{\Delta'}+N'_{\sigma'}]   [N:\L_F(N')+N_\sigma]  m_{\Delta'} \int_{\tau'} f_{U'} F^*\alpha_\sigma .
\end{equation*}
We deduce the following bound
\begin{equation} \label{upper bound 1}
\begin{split}
&\left|\int_W f\alpha \right|  \\
& \leq |f|_W
 \sum_{\tau'} \sum_{ {(\Delta', \sigma')}}
[N':N'_{\Delta'}+N'_{\sigma'}]   [N:\L_F(N')+N_\sigma] m_{\Delta'}  \int_{\tau'} | F^*\alpha_{\sigma \tau}|.
\end{split}
\end{equation}
The transformation formula shows
$$ \int_{\tau'} | F^*\alpha_{\sigma \tau}|=[N_\tau:\L_F(N'_{\tau'})] \int_\tau |\alpha_{\sigma \tau}|$$
and hence the sum in \eqref{upper bound 1} is equal to 
\begin{equation} \label{upper bound 2}
 \sum_{\tau'} \sum_{ {(\Delta', \sigma')}}
[N_\tau:\L_F(N'_{\tau'})] [N':N'_{\Delta'}+N'_{\sigma'}]   [N:\L_F(N')+N_\sigma]    m_{\Delta'}  \int_{\tau} |\alpha_{\sigma \tau}|.
\end{equation}

\vspace{2mm}\noindent
{\it Third step: The following basic lattice index identity holds:
\begin{equation} \label{basic lattice index identity}
\begin{split}
&[N_\tau:\L_F(N'_{\tau'})] [N':N'_{\Delta'}+N'_{\sigma'}]   [N:\L_F(N')+N_\sigma]  \\ & = [N : N_\Delta + N_\sigma] [N_\Delta : \L_F(N_{\Delta'})].
\end{split}
\end{equation}}

\vspace{2mm}\noindent
In the basic lattice index identity \eqref{basic lattice index identity}, $(\Delta',\sigma')$ is a pair in  $\KC_n' \times (\KC')^l$ such that $ \Delta' \cap (\sigma'+\ve v') \neq \emptyset$ for $\ve>0$ sufficiently small and such that $\sigma:=F(\sigma')$ is of codimension $l$ in $N$.  
We have also used $\Delta := F(\Delta')$, $\tau':= \Delta' \cap \sigma'$ and $ \tau := F(\tau')$.
Since $F$ is a surjective integral $\R$-affine map, all lattice 
indices in the claim of the third step are finite. 
Setting $P':=N'_{\Delta'}$ and $Q:=N_\sigma$,  
the basic lattice identity \eqref{basic lattice index identity} 
follows from the projection formula for lattices in 
Lemma \ref{projection formula for lattices} below.

\vspace{2mm}\noindent
{\it Fourth step:
The desired inequality \eqref{basic integral estimate} holds.}

\vspace{2mm}\noindent
To prove \eqref{basic integral estimate}, we note that $v:=F(v')$ is a generic vector for $\Ccal$. We have $\tau = \Delta \cap \sigma$ and $\Delta \cap (\sigma + \ve v) \neq \emptyset$.  
The basic lattice index identity \eqref{basic lattice index identity} yields that the sum in \eqref{upper bound 2} is equal to
\begin{equation} \label{upper bound 3}
\sum_{\tau'} \sum_{ {(\Delta', \sigma')}}
 [N : N_\Delta + N_\sigma] [N_\Delta : \L_F(N'_{\Delta'})]
m_{\Delta'}  \int_{\tau} |\alpha_{\sigma \tau}|.
\end{equation}
The Sturmfels--Tevelev multiplicity formula  \eqref{cor-sturmfels-tevelev} gives
$$\sum_{\Delta'} [N_\Delta : \L_F(N'_{\Delta'})] m_{\Delta'} = m_\Delta,$$
where $\Delta'$ ranges over all elements of $\KC'_n$ mapping onto a given $\Delta \in \KC_n$. 
Using this, one can show that \eqref{upper bound 3} is equal to
\begin{equation} \label{upper bound 4}
\sum_{\tau} \sum_{ {(\Delta, \sigma)}}
 [N : N_\Delta + N_\sigma] 
m_{\Delta}  \int_{\tau} |\alpha_{\sigma \tau}|,
\end{equation}
where the sum is over all pairs $(\Delta,\sigma) \in \KC_n \times \KC^l$ such that $\Delta \cap (\sigma + \ve v) \neq \emptyset$
and $\tau=\Delta \cap \sigma$. 
Now \eqref{basic integral estimate} follows from \eqref{definition of the norm}--\eqref{upper bound 4}.
\end{proof}

The basic lattice index identity \eqref{basic lattice index identity} is a special case of  the following {\it projection formula for lattices.} 
Note that it is stronger as the projection formula for tropical cycles in Proposition \ref{tropintthprop}. The latter would not give the required bound in the fourth step above.

\begin{lem} \label{projection formula for lattices}
 Let $F:N' \rightarrow N$ be a  homomorphism of free abelian groups of finite 
rank and let $P' \subseteq N'$, $Q \subseteq N$ be  subgroups. 
We assume that $\rk(F(N'))=\rk(N)=\rk(F(P')+Q)$. Then we have the equality
\begin{equation} \label{projection formula for lattices2}
\begin{split}
&[F(P')_\R \cap Q : F(P'\cap F^{-1}(Q))][N':P'+F^{-1}(Q)][N:F(N')+Q] \\ & = [N : F(P')_\R \cap N+Q]  [F(P')_\R \cap N : F(P')]
\end{split}
\end{equation}
where all involved lattice indices are finite.
\end{lem}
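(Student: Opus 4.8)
The plan is to reduce the identity to a statement about indices of sublattices inside the $\R$-span $M_\R:=F(P')_\R$, and then to a pair of short exact sequences. First I would set $M:=F(P')_\R\cap N$, so that $M$ is a saturated sublattice of $N$ of rank $\rk(F(P'))$, and $F(P')\subseteq M$. The hypothesis $\rk(F(N'))=\rk(N)=\rk(F(P')+Q)$ guarantees that $M+Q$ has finite index in $N$, that $F(N')$ has finite index in $N$, and that $M\cap Q$ has finite index in $M$; all six indices appearing in \eqref{projection formula for lattices2} are therefore finite. The right-hand side is, by definition, $[N:M+Q]\,[M:F(P')]$, so the real content is to show the left-hand side equals the same quantity.

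Next I would attack the left-hand side via the multiplicativity of the index. Consider the homomorphism $F$ restricted appropriately and the subgroup $F^{-1}(Q)\cap N'$. I would use the exact sequence $0\to \ker(F)\to P'+F^{-1}(Q)\xrightarrow{F} F(P')+Q\to 0$ together with $0\to\ker(F)\to N'\xrightarrow{F}F(N')\to 0$ to rewrite $[N':P'+F^{-1}(Q)]$ as $[F(N'):F(P')+Q]$ (the kernels cancel). Since $F(P')+Q\subseteq M+Q\subseteq N$ and $F(N')\subseteq N$, multiplicativity gives
\[
[F(N'):F(P')+Q]\,[N:F(N')+Q]=\frac{[N:F(P')+Q]}{[N:F(N')+Q]}\,[N:F(N')+Q]\cdot(\ast)
\]
— here I must be a little careful, since $F(P')+Q$ and $F(N')+Q$ are two sublattices of $N$ and I would instead argue directly: $[N':P'+F^{-1}(Q)]=[F(N'):F(P')+Q]$, and then $[F(N'):F(P')+Q]\,[N:F(N')+Q]=[N:F(P')+Q]$ by multiplicativity of indices along $F(P')+Q\subseteq F(N')+Q\subseteq N$ (using $F(N')+Q=F(N')+(F(P')+Q)$ and that $Q\subseteq F(P')+Q$, so $F(N')+Q\supseteq F(P')+Q$). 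So the product of the last two factors on the left of \eqref{projection formula for lattices2} equals $[N:F(P')+Q]$.

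It then remains to prove
\[
[M\cap Q : F(P'\cap F^{-1}(Q))]\,[N:F(P')+Q]=[N:M+Q]\,[M:F(P')],
\]
i.e. $[M\cap Q:F(P')\cap Q]\,[N:F(P')+Q]=[N:M+Q]\,[M:F(P')]$, where I use $F(P'\cap F^{-1}(Q))=F(P')\cap Q$. For this I would work entirely inside $N$: apply the standard lemma that for sublattices $A\subseteq B$ of finite index in $N$ and an arbitrary sublattice $Q$, one has $[B:A]=[B+Q:A+Q]\,[B\cap Q:A\cap Q]$ (this is the snake/butterfly lemma for lattices, coming from the exact sequence $0\to (B\cap Q)/(A\cap Q)\to B/A\to (B+Q)/(A+Q)\to 0$). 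Taking $A=F(P')\subseteq B=M$ yields $[M:F(P')]=[M+Q:F(P')+Q]\,[M\cap Q:F(P')\cap Q]$. Substituting $[M+Q:F(P')+Q]=[N:F(P')+Q]/[N:M+Q]$ and rearranging gives exactly the displayed equality. Combining the two reductions proves \eqref{projection formula for lattices2}.

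The main obstacle is purely bookkeeping: keeping track of which sublattices sit inside which, ensuring every index invoked is finite (this is where the three rank hypotheses get used), and checking that $\ker F$ genuinely cancels when passing from $N'$ to $F(N')$ — one must verify $P'+F^{-1}(Q)\supseteq\ker F$, which holds because $0\in Q$ forces $\ker F\subseteq F^{-1}(Q)$. No deep input is needed beyond multiplicativity of lattice indices and the exact-sequence lemma relating $B/A$ to $(B+Q)/(A+Q)$ and $(B\cap Q)/(A\cap Q)$; once those are in place the identity falls out by elementary manipulation.
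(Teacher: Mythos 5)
Your strategy is essentially the paper's: both proofs reduce \eqref{projection formula for lattices2} to the identity $[N':P'+F^{-1}(Q)]\,[N:F(N')+Q]=[N:F(P')+Q]$ and then treat the remaining factors inside $F(P')_\R$ via the second isomorphism theorem; your ``butterfly'' exact sequence $0\to (B\cap Q)/(A\cap Q)\to B/A\to (B+Q)/(A+Q)\to 0$ applied with $A=F(P')\subseteq B=F(P')_\R\cap N$ is just the paper's two applications of $A/(A\cap B)\cong (A+B)/B$ packaged into a single lemma, and your use of $F(P'\cap F^{-1}(Q))=F(P')\cap Q$ and of the rank hypotheses for finiteness matches the paper.

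One step of your first reduction needs repair as written. The sequence $0\to\ker F\to P'+F^{-1}(Q)\xrightarrow{F}F(P')+Q\to 0$ is not exact on the right, because $F(F^{-1}(Q))=Q\cap F(N')$ rather than $Q$; accordingly $[N':P'+F^{-1}(Q)]=[F(N'):F(P')+(Q\cap F(N'))]$, and the index $[F(N'):F(P')+Q]$ you invoke is not even defined in general, since $Q$ need not lie in $F(N')$. The statement you actually feed into multiplicativity, namely $[N':P'+F^{-1}(Q)]=[F(N')+Q:F(P')+Q]$, is true, but it requires one more line: since $F(P')\subseteq F(N')$, the modular law gives $F(P')+(Q\cap F(N'))=(F(P')+Q)\cap F(N')$, and the second isomorphism theorem identifies $F(N')/\bigl((F(P')+Q)\cap F(N')\bigr)$ with $(F(N')+Q)/(F(P')+Q)$; then multiplicativity along $F(P')+Q\subseteq F(N')+Q\subseteq N$ completes the reduction. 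This is exactly how the paper proves its intermediate identity, so the gap is a one-line fix rather than a missing idea; the remainder of your argument is correct.
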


\begin{proof} 
The assumptions show easily that all lattice indices are finite. 
Using $F(P'\cap F^{-1}(Q))=F(P') \cap Q$ and  the isomorphism theorem 
$A/(A \cap B) \cong (A+B)/B$ for abelian groups, we get
$$
(F(P')_\R \cap Q )/ F(P'\cap F^{-1}(Q))\cong (F(P')_\R \cap Q + F(P'))/F(P').
$$
Similarly,  $F(P')_\R \cap Q + F(P')=F(P')_\R \cap (F(P')+Q)$ yields 
$$
(F(P')_\R\cap N)/(F(P')_\R \cap Q + F(P'))\cong (F(P')_\R \cap N + Q)/(F(P')+Q).
$$
Multiplying \eqref{projection formula for lattices2} by $[F(P')_\R \cap N + Q:F(P')+Q]$, the above two isomorphisms show that 
the claim is equivalent to 
\begin{equation} \label{projection formula for lattices3}
[N':P'+F^{-1}(Q)][N:F(N')+Q]  
= [N : F(P')+Q]. 
\end{equation} 
Using $F(P')+Q \cap F(N')=(F(P')+Q) \cap F(N')$, we have
$$N'/(P'+F^{-1}(Q)) \cong F(N')/ (F(P')+Q \cap F(N')) \cong (F(N')+Q) / (F(P')+Q)$$
and hence \eqref{projection formula for lattices3} holds. This proves the claim.
\end{proof}

We recall that on a locally compact Hausdorff space $Y$, 
the Riesz representation theorem gives a bijective correspondence 
between positive (resp. signed) Radon 
measures on $Y$ and positive (resp. bounded) linear functionals on the 
space of continuous real functions with compact support on $Y$ 
endowed with the supremum norm.

\begin{cor}\label{deltaform defines-measure}
Let $W$ be an open subset of $X^{\rm an}$. 
For each $\alpha\in P_c^{n,n}(W)$ there is a unique signed Radon measure
$\mu_\alpha$ on $W$ such that
\begin{equation} \label{measure identity}
\int_W f\cdot\alpha=\int_Wf\,d\mu_\alpha
\end{equation}
for all smooth functions $f$ on $W$ with compact support. 
\end{cor}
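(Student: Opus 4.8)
The plan is to extract the Radon measure from the continuous linear functional supplied by Proposition \ref{integral is continuous}. First I would observe that the map $f \mapsto \int_W f\cdot\alpha$ is, by Proposition \ref{integral is continuous}, a well-defined continuous linear functional on $A^0_c(W)$ with respect to the supremum norm, and that it extends uniquely to a continuous linear functional $\Lambda$ on the whole space $C_c(W)$ of continuous real functions with compact support on $W$, again because $A^0_c(W)$ is dense in $C_c(W)$ by the Stone--Weierstra{\ss} theorem \cite[Prop. (3.3.5)]{chambert-loir-ducros} as used in that proof.

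Next I would invoke that $W$ is a locally compact Hausdorff space: it is an open subset of the Berkovich analytic space $X^{\rm an}$, which is locally compact and Hausdorff, hence so is $W$. The Riesz representation theorem then gives a bijective correspondence between signed Radon measures on $W$ and bounded linear functionals on $C_c(W)$ with the supremum norm. Here one has to be slightly careful: the estimate \eqref{basic integral estimate} shows that $\Lambda$ is bounded on $C_c(W)$ \emph{only after} fixing a compact set $C$ containing $\supp(\alpha)$, since the constant $C_\alpha$ in \eqref{definition of the norm} depends on $\alpha$ but not on $f$; and indeed $\Lambda(f)$ depends only on $f|_{\supp(\alpha)}$. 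So $\Lambda$ is a bounded linear functional on $C_c(W)$ in the sense required, and the Riesz representation theorem produces a unique signed Radon measure $\mu_\alpha$ on $W$ with $\Lambda(f)=\int_W f\,d\mu_\alpha$ for all $f\in C_c(W)$.

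Finally, restricting this identity to smooth compactly supported functions $f$ gives exactly \eqref{measure identity}. Uniqueness of $\mu_\alpha$ is immediate: two signed Radon measures agreeing on $A^0_c(W)$ agree on $C_c(W)$ by density and continuity, hence coincide by the uniqueness clause of the Riesz representation theorem.

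I expect the only genuinely delicate point to be the bookkeeping around boundedness versus local boundedness of the functional --- that is, making sure one applies the signed (not merely positive) version of the Riesz theorem correctly, using that $\Lambda$ factors through restriction to a fixed compact set. Everything else is a direct citation of Proposition \ref{integral is continuous} and the Riesz representation theorem, so the proof is short.

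\begin{proof}
By Proposition \ref{integral is continuous}, the linear map $f \mapsto \int_W f\cdot\alpha$ on $A^0_c(W)$ is continuous for the supremum norm and extends uniquely to a continuous linear functional $\Lambda$ on $C_c(W)$. The bound \eqref{basic integral estimate} shows moreover that $|\Lambda(f)| \leq C_\alpha\,|f|_W$ for all $f \in C_c(W)$, and since $\Lambda(f)$ depends only on the values of $f$ on the compact set $\supp(\alpha)$, the functional $\Lambda$ is bounded on $C_c(W)$ in the sense required for the Riesz representation theorem. As $W$ is an open subset of the Berkovich space $\Xan$, it is locally compact and Hausdorff, so the Riesz representation theorem yields a unique signed Radon measure $\mu_\alpha$ on $W$ with $\Lambda(f) = \int_W f\,d\mu_\alpha$ for all $f \in C_c(W)$. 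Restricting to $f \in A^0_c(W)$ gives \eqref{measure identity}. For uniqueness, if $\mu$ is another signed Radon measure on $W$ satisfying \eqref{measure identity} for all smooth compactly supported $f$, then $\int_W f\,d\mu = \int_W f\,d\mu_\alpha$ for all $f \in A^0_c(W)$, hence for all $f \in C_c(W)$ by density and continuity, so $\mu = \mu_\alpha$ by the uniqueness part of the Riesz representation theorem.
\end{proof}
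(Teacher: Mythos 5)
Your proof is correct and follows exactly the route the paper takes: the paper's own proof is the one-line observation that the corollary follows from Proposition \ref{integral is continuous} together with the Riesz representation theorem, which is precisely the argument you spelled out (including the boundedness via \eqref{basic integral estimate} and local compactness of $W$). Your additional bookkeeping about the signed version of Riesz and uniqueness by density is consistent with the paper's intended argument and adds no new ideas.
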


\proof
This is a consequence of Proposition \ref{integral is continuous} and Riesz's representation 
theorem.
\qed

\begin{prop}\label{current continuous function}
Let $W$ be an open subset of $X^{\rm an}$ and let $f$ be a continuous function on $W$. 
Then the map 
$$[f]: B_c^{n,n}(W) \rightarrow \R,\quad \alpha \mapsto \int_Wf \,d\mu_\alpha$$
is a $\delta$-current in {$ E^{0,0}(W)$}. 
\end{prop}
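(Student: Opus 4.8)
The goal is to show that $[f]$ defines a $\delta$-current, i.e.\ that it is a linear functional on $B_c^{n,n}(W)$ whose restriction to every subspace $B_C(W;V_i,\varphi_{U_i},\omega_{J_i}:i\in I)$ from Definition \ref{def-delta-current} is continuous. Linearity is immediate once we know the map is well-defined: for a fixed $\alpha$, the measure $\mu_\alpha$ is uniquely determined by Corollary \ref{deltaform defines-measure}, and $\alpha\mapsto\mu_\alpha$ is linear because $\alpha\mapsto\int_W g\cdot\alpha$ is linear for every fixed smooth $g$ by Proposition \ref{integration well-defined}(iii), so passing to the Riesz limit preserves linearity; hence $\alpha\mapsto\int_W f\,d\mu_\alpha$ is linear. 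The substance is the continuity statement, and the plan is to reduce it to the key estimate \eqref{basic integral estimate} established in the proof of Proposition \ref{integral is continuous}.

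\textbf{Key steps.} First I would fix a compact subset $C$ of $W$, a covering $(V_i,\varphi_{U_i})_{i\in I}$ of $W$ by tropical charts, and a finite tuple $\omega_{J_i}$ of elements of $Z(V_i,\varphi_{U_i})$ for each $i$, and work with the locally convex space $B_C^{n,n}(W;V_i,\varphi_{U_i},\omega_{J_i}:i\in I)$. Next, as in the proof of Proposition \ref{current associated to generalized delta-form}, I would extract a finite subcover indexed by $I_0\subseteq I$ with $\bigcup_{i\in I_0}V_i\supseteq C$, and set $U:=\bigcap_{i\in I_0}U_i$, which by Proposition \ref{integration well-defined}(i) is a very affine chart of integration for every $\alpha$ in this subspace. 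Each such $\alpha$ is then represented by a unique $\alpha_U\in P^{n,n}(U^{\mathrm{an}},\varphi_U)$ with compact support in $\Trop(U)$, lifted to a $\delta$-preform $\tilde\alpha_U=\sum_\sigma\alpha_\sigma\wedge\delta_\sigma$ as in \eqref{representation of the lift}. The crucial point is that the bound $C_\alpha$ constructed in the first step of the proof of Proposition \ref{integral is continuous}, see \eqref{definition of the norm}, depends only on the chart $U$ and the representing coefficients $\alpha_\sigma$ --- and these coefficients are, up to the finitely many fixed $\omega_{J_i}$, controlled by the seminorms $p_{C,s,\omega_{J_i}}$ defining the topology on $B_C^{n,n}(W;V_i,\varphi_{U_i},\omega_{J_i}:i\in I)$. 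Concretely, I would argue that $\alpha\mapsto C_\alpha$ is dominated by a finite sum of such seminorms (this uses that $U$ is fixed on the subspace, that the index $I_0$ is fixed, and that the weight functions of $\Trop(U')$ and the lattice indices appearing in \eqref{definition of the norm} are fixed combinatorial data once $U$ is fixed). Then \eqref{basic integral estimate} gives $\bigl|\int_W g\cdot\alpha\bigr|\le C_\alpha\,|g|_W$ for all smooth $g$, hence by the Riesz correspondence $|\mu_\alpha|(W)\le C_\alpha$, and therefore
\[
\Bigl|[f](\alpha)\Bigr|=\Bigl|\int_W f\,d\mu_\alpha\Bigr|\le |f|_C\cdot|\mu_\alpha|(W)\le |f|_C\cdot C_\alpha,
\]
where $|f|_C$ is finite since $f$ is continuous and $\supp(\alpha)\subseteq C$. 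This is a bound by a continuous seminorm on the subspace, so the restriction of $[f]$ is continuous; since $C$ and the chart/tuple data were arbitrary, $[f]\in E^{0,0}(W)$. Finally I would note that $[f]$ kills $B_c^{p,q}(W)$ for $(p,q)\ne(n,n)$ by construction, placing it in $E_{n,n}(W)=E^{0,0}(W)$.

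\textbf{Main obstacle.} The delicate point is verifying that the bound $C_\alpha$ from \eqref{definition of the norm} is actually a continuous seminorm in the variable $\alpha$ when $\alpha$ ranges over $B_C^{n,n}(W;V_i,\varphi_{U_i},\omega_{J_i}:i\in I)$: one must check that passing through the very affine chart of integration $U=\bigcap_{i\in I_0}U_i$, the glueing from Proposition \ref{finite sheaf property}, and the pull-back maps $F_i^*$ only introduces fixed multiplicative constants and does not destroy the estimate by the seminorms $p_{C,s,\omega_{J_i}}$. This is essentially a bookkeeping argument --- tracking how the coefficients $\alpha_\sigma$ of the representing $\delta$-preform on $\Trop(U)$ are built from the local data on the $V_i$ via partitions of unity and the finitely many fixed generalized preforms $\omega_{J_i}$ --- but it requires care because the codimension-$l$ part and the wedge with the $\omega_{J_i}$ mix superform coefficients with tropical weight data. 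Everything else is a direct application of Proposition \ref{integral is continuous}, Corollary \ref{deltaform defines-measure}, and the Riesz representation theorem.
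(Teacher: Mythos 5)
Your overall strategy coincides with the paper's: reduce to the estimate \eqref{basic integral estimate} of Proposition \ref{integral is continuous}, deduce $\bigl|\int_W f\,d\mu_\alpha\bigr|\leq |f|_C\cdot C_\alpha$ (the paper does this with a smooth cutoff and Stone--Weierstra\ss{}, you via the Riesz bound $|\mu_\alpha|(W)\leq C_\alpha$; both are fine), and then argue that $C_\alpha$ is small when the defining seminorms of $B^{n,n}_C(W;V_i,\varphi_{U_i},\omega_{J_i}:i\in I)$ are small. However, at exactly that last point there is a genuine gap. The quantity $C_\alpha$ in \eqref{definition of the norm} is a sum of integrals $\int_\tau|\alpha_{\sigma\tau}|$ over polyhedra of $\Trop(U)$, whereas the seminorms $p_{C',s,\omega_{J_i}}$ only control suprema of the local coefficients $\alpha_{ij}$ on \emph{fixed} compact subsets of the $\Omega_i$. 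To dominate an integral by such sup-norms you need two things that "fixed combinatorial data once $U$ is fixed" does not provide: (1) all representatives $\alpha_U$, as $\alpha$ ranges over the subspace, must have support in one fixed compact subset of $\Trop(U)$ (note that $\supp(\alpha)\subseteq C$ only gives $\trop_U(\supp\alpha)\subseteq\trop_U(C\cap U^{\rm an})$, which is in general unbounded, since supports may approach $C\setminus U^{\rm an}$ and then tropicalize arbitrarily far out); and (2) the images of that fixed compact set under the maps $F_i$ must land in the fixed compacts where the seminorms are measured, so that smallness of $p_{C',0}(\alpha_{ij})$ really controls every coefficient $\alpha_{\sigma\tau}$ on the region that matters for the integral.

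This uniform compactness is the actual content of the paper's proof and is absent from your plan. The paper does not take an arbitrary finite subcover and $U=\bigcap_{i\in I_0}U_i$; instead, for each $x\in C$ it chooses a polytopal neighbourhood $P_{i(x)}$ of $\trop_{U_{i(x)}}(x)$ with $P_{i(x)}\cap\Trop(U_{i(x)})\subseteq\Omega_{i(x)}$, covers $C$ by finitely many $\trop_{U_{i(x)}}^{-1}(Q_{i(x)})$ (interiors), sets $U=\bigcap_{x\in Y}U_{i(x)}$, and then proves, using the Sturmfels--Tevelev multiplicity formula \eqref{cor-sturmfels-tevelev} together with degree reasons on faces where $F_{i(x)}$ drops dimension, that $\supp(\alpha_U)$ is contained in the fixed compact set $C_U=\bigcup_{x\in Y}\bigcup_{\Delta_{i(x)}}\Delta_{i(x)}\cap F_{i(x)}^{-1}(P_{i(x)})$ (the raw preimages $F_{i(x)}^{-1}(P_{i(x)})$ are unbounded, so restricting to the faces mapping onto $n$-dimensional faces is essential). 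Only then does the statement "only the compact subset $C_U\cap\tau$ matters for integration" make the passage from sup-norms to $C_\alpha$ legitimate. In your write-up this step is subsumed under "bookkeeping" about fixed constants, but it would fail as stated: without the construction of $C_U$, smallness of the seminorms does not bound $C_\alpha$, because the integrals in \eqref{definition of the norm} can pick up mass outside every fixed compact set. Filling the gap requires precisely the polytopal refinement of the covering and the Sturmfels--Tevelev/degree argument (compare Corollary \ref{support corollary} and Proposition \ref{delta-support on chart} for where the support of $\alpha_U$ lives).
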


\begin{proof}
The integral is well-defined by Corollary \ref{deltaform defines-measure} using 
that $\supp(\alpha)$ is compact. 
Obviously, $[f]$ is a linear map. We have to show that the restriction of $[f]$ to 
any {subspace} $B^{n,n}_C(W;V_i,\varphi_{U_i},{\omega_{J_i}}:i \in I)$ as in 
\ref{topology on subspaces of delta-forms} 
is continuous. For $i \in I$, let $\Omega_i := \trop_{U_i}(V_i)$. 
For every $x \in C$, there is $i(x) \in I$ with $x \in V_{i(x)}$. 
We choose a polytopal neighbourhood $P_{i(x)}$ of $\trop_{U_{i(x)}}(x)$ in 
$N_{U_{i(x),\R}}$ such that 
$P_{i(x)} \cap \Trop(U_{i(x)}) \subseteq \Omega_{i(x)}$ and 
we denote the interior of  $P_{i(x)}$ by $Q_{i(x)}$. 
There is a finite set $Y$ of $X$ such that the open sets 
$\trop_{U_{i(x)}}^{-1}(Q_{i(x)})$, $x \in Y$, cover the compact set $C$. 
By Proposition \ref{integration well-defined}, 
$U:= \cap_{x \in Y} U_{i(x)}$ works as a very affine chart of integration for 
every $\alpha \in B^{n,n}_C(W;V_i,\varphi_{U_i},{\omega_{J_i}}:i \in I)$. Then we 
have $\alpha_U \in AZ_c^{n,n}(U,\varphi_U)$ with $\trop_U^*(\alpha_U)=\alpha$.
By the Sturmfels--Tevelev multiplicity formula \eqref{cor-sturmfels-tevelev} and 
by degree reasons, 
one can show that $\alpha_U$ has support in the compact subset 
\[
C_U=\bigcup_{x \in Y} \bigcup_{\Delta_{i(x)} } \Delta_{i(x)} \cap  F_{i(x)}^{-1}( P_{i(x)})
\]
of $\Trop(U)$, where $\Delta_{i(x)}$ ranges over all $n$-dimensional faces of $\Trop(U)$ such that $ \Delta_{i(x)} \cap  F_{i(x)}^{-1}( P_{i(x)})$ is mapped onto an $n$-dimensional face of $\Trop(U_{i(x)})$ by the canonical affine map $F_{i(x)}:N_{U,\R} \to N_{U_{i(x)},\R}$.  
Using the supremum semi\-norm $|f|_C$ on $C$, we get
\begin{equation}\label{basic integral estimate2}
\Bigl|\int_Wf \,d\mu_\alpha\Bigr|\leq  C_\alpha \cdot |f|_C.
\end{equation} 
To see this, we note first that $\supp(\mu_\alpha) \subseteq C$. 
There is a smooth function $g$ on $W$ with $0 \leq g \leq 1$ with 
$g \equiv 1$ on $C$ and with compact support in a sufficiently small 
neighbourhood of $C$ \cite[Corollaire 3.3.4]{chambert-loir-ducros}. 
Then \eqref{basic integral estimate2} follows from applying 
\eqref{basic integral estimate} to compactly supported smooth approximations 
of $fg$ using the Stone--Weierstra\ss -theorem in 
\cite[Corollaire 3.3.4]{chambert-loir-ducros}.

Now we deduce the claim from  the definition of the bound 
$C_\alpha$ in \eqref{definition of the norm}: We set $i:=i(x)$ 
for $x \in Y$ and we may 
assume that $\alpha|_{V_i}$ is given by 
$$\sum_{j \in J_i} \alpha_{ij} \wedge \omega_{ij}$$
for $\alpha_{ij} \in A(\Omega_i)$ with small supremum seminorm 
$p_{P_i \cap \Trop(U_i), 0}(\alpha_{ij})$ of the coefficients. 
Noting that the $\omega_{ij}$ are fixed, this yields that every  
$\alpha_{\sigma\tau}$ in \eqref{definition of the norm} has small coefficient. 
Using that only the compact subset $C_U \cap \tau$ matters for integration, 
we deduce that $C_\alpha$ is 
small and hence \eqref{basic integral estimate2} shows that $[f]$ is continuous.
\end{proof}

\section{The Poincar\'e--Lelong formula and first Chern delta-currents} \label{PoincareLelongFormula}

The Poincar\'e--Lelong formula in complex analysis is of fundamental 
importance for Arakelov theory. 
Chambert--Loir and Ducros have shown in 
\cite[\S 4.6]{chambert-loir-ducros} that 
the Poincar\'e--Lelong formula holds 
 {as an identity between currents {on Berkovich spaces} while 
Theorem \ref{Poincare-Lelong equation} below enhances {the} 
Poincar\'e--Lelong formula as an equality of $\delta$-currents.}
We use the Poincar\'e--Lelong formula to define the first Chern 
$\delta$-current of a continuously metrized line bundle.

\begin{art} \label{definition of the currents in P-L}
Let $X$ be a variety over $K$ of dimension $n$ and let 
$f \in K(X) \setminus \{0\}$. In Example \ref{current of integration}, we  
have introduced the  $\delta$-current of integration $\delta_{X}$  
leading to the definition of the $\delta$-current $\delta_Z$ for any cycle 
$Z$ on $X$. Using that for the Weil divisor $\cyc(f)$ of $f$, we get a $\delta$-current $\delta_{\cyc(f)}$ on $\Xan$.

On the other hand, the complement $U$ of the support of the principal Cartier 
divisor $\Div(f)$ is an open dense subset of $X$. 
By Proposition \ref{currents and Zariski dense}, we get the $\delta$-current $[\log|f|] \in E^{0,0}(\Uan)=E^{0,0}(\Xan)$. 
\end{art}

\begin{thm} \label{Poincare-Lelong equation}
For a non-zero rational function  $f$ on $X$, the Poincar\'e--Lelong equation
\[
\delta_{\cyc(f)}=d'd''\bigl[
\log|f|\bigr]
\]
holds in $E^{1,1}(X^\an)$.
\end{thm}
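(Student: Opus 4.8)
The statement is an equality of $\delta$-currents in $E^{1,1}(\Xan)$, so by definition it suffices to test it against $\delta$-forms $\beta \in B_c^{n-1,n-1}(\Xan)$. The strategy is to reduce everything to a single tropical chart by means of Proposition \ref{single chart}: since $\beta$ has compact support, there is a very affine open $U$ of $X$ with $\supp(\beta) \subseteq \Uan$ and $\beta = \trop_U^*(\beta_U)$ for some $\beta_U \in AZ^{n-1,n-1}(\Uan,\varphi_U)$, and we may arrange that $U$ is small enough that $f$ is invertible on a dense open subset not meeting $\supp(\beta)$, so that $-\log|f|$ pulls back from $N_{U,\R}$ in the relevant region. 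Concretely, after possibly shrinking $U$, the function $-\log|f|$ on $\Uan$ is of the form $\trop_U^*(\phi)$ for a piecewise linear function $\phi$ on $\Trop(U)$, because $f \in \Ocal(U)^\times$ up to a divisor disjoint from the support, and $|f|$ is then a monomial in the torus coordinates whose tropicalization is $\R$-affine; more precisely one uses that on the torus $|t^m| = \exp(-\langle m, \trop(\cdot)\rangle)$, and the general case follows by writing $f$ as a ratio of such and localizing. The corner locus $\phi \cdot \Trop(U)$ is, by the Sturmfels--Tevelev formula \eqref{cor-sturmfels-tevelev} together with the compatibility of corner loci with push-forward (Proposition \ref{projection formula for corner loci}), exactly the tropicalization of the Weil divisor $\cyc(f)$ restricted to $U$ — this is the tropical shadow of the classical fact that $\Div(f)$ tropicalizes to the corner locus of $\trop(f)$, which is proved in the Chambert-Loir--Ducros framework.

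First I would set up the chart of integration and verify the two identities that need to be matched on the tropical side:
\begin{align*}
\langle d'd''[\log|f|], \beta\rangle &= \int_{|\Trop(U)|} \bigl(-\phi\bigr)\, d'd''\beta_U\big|_{\Trop(U)}, \\
\langle \delta_{\cyc(f)}, \beta\rangle &= \int_{|\Trop(U)|} \delta_{\phi \cdot \Trop(U)} \wedge \beta_U\big|_{\Trop(U)}.
\end{align*}
For the first line I would use the definition of $d'$ and $d''$ on $\delta$-currents in \ref{differentiation of delta-currents}, the definition of the current $[\log|f|]$ via the measure $\mu_\beta$ from Proposition \ref{current continuous function} and Corollary \ref{deltaform defines-measure}, and the fact (to be checked via the chart) that integrating $\log|f|$ against $\mu_\beta$ equals $\int_{|\Trop(U)|}(-\phi)\,\beta_U$. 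For the second line I would use the definition of $\delta_{\cyc(f)}$ via push-forward of the integration current from \ref{current of integration} together with the identification of $\Trop$ of $\cyc(f)$ with the corner locus. Then the theorem reduces precisely to the tropical Poincar\'e--Lelong formula: since $\phi$ is \emph{piecewise linear}, $d_P'd_P''\phi = 0$, so Corollary \ref{weaktropicalpoincarelelong}, or more directly Theorem \ref{deltatropicalpoincarelelong} applied with $\omega = \beta_U\big|_{\Trop(U)}$ (which is $d'$- and $d''$-closed because $\beta_U \in AZ$, hence locally $\sum \alpha_i \wedge \omega_i$ with $\omega_i$ closed, and after testing we may take $\eta = \alpha_i$), gives
\[
\int_{|\Trop(U)|} \phi\, d'd''\beta_U - 0 = \int_{|\Trop(U)|} \delta_{\phi \cdot \Trop(U)} \wedge \beta_U,
\]
which is exactly the needed equality up to the sign carried by $\log|f| = -\phi$ versus $\phi$, and that sign cancels against the sign conventions in $d'd''$ on currents.

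The main obstacle I expect is \emph{not} the tropical formula itself (that is Theorem \ref{deltatropicalpoincarelelong}, already available) but the bookkeeping that identifies $-\log|f|$ with $\trop_U^*(\phi)$ for a genuinely piecewise linear $\phi$ on $\Trop(U)$ and that identifies $\delta_{\cyc(f)}$, restricted to the chart, with $\delta_{\phi\cdot\Trop(U)}$. This is where the argument really uses the algebraic geometry: one must choose $U$ so that $f$ becomes a ratio of invertible functions whose tropicalizations are the two parts of a balanced piecewise linear function, handle the multiplicities in $\cyc(f)$ versus the weights of the corner locus (these match by the order-of-vanishing computation, essentially \cite{gubler-crelle} or the divisor computation in \cite{chambert-loir-ducros}), and use functoriality under the canonical maps $F_i$ between cocharacter lattices so that the identification is independent of the chart. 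A secondary technical point is justifying the passage from an arbitrary test form $\beta_U \in AZ^{n-1,n-1}$ to the hypotheses of Theorem \ref{deltatropicalpoincarelelong}, which requires writing $\beta_U$ locally in the form $\eta \wedge \omega$ with $\omega$ closed and $\eta$ a superform, using a partition of unity on $\Trop(U)$ and the fact that $d'd''$ and the integral are local; this is routine but must be done carefully to respect compact supports. Once these identifications are in place, the theorem follows by combining them with Theorem \ref{deltatropicalpoincarelelong} as above.
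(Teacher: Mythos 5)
There is a genuine gap at the heart of your reduction: the two requirements you impose on the chart are incompatible in exactly the cases where the theorem has content. If $-\log|f|$ is to be of the form $\trop_U^*(\phi)$ on $\Uan$, then $f$ must lie in $K^\times\cdot\Ocal(U)^\times$, so $U$ is disjoint from $\supp(\cyc(f))$; but then $\phi$ is the restriction to $\Trop(U)$ of a single linear form $u\in M_{U}$, and the balancing condition forces its corner locus $\phi\cdot\Trop(U)$ to vanish identically (plug $\phi_\sigma=u|_\sigma$ into \eqref{cornerlocusg1}) — it is certainly not a tropicalization of $\cyc(f)$, which is not even a cycle on $U$. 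Conversely, if $U$ meets the divisor, $|f|$ does not factor through $\trop_U$ at all. Moreover, for a test form $\beta$ of bidegree $(n-1,n-1)$ you cannot force $\supp(\beta)\subseteq\Uan$ with $U$ avoiding the divisor: Proposition \ref{single chart}(d) and Corollary \ref{support corollary} give this only in top bidegree, and the interesting test forms are precisely those whose support meets $(\supp\cyc(f))^{\rm an}$, where $\langle\delta_{\cyc(f)},\beta\rangle$ picks up its mass. In your setup both sides would simply vanish, so the argument only verifies the equation against test forms supported away from the divisor.

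The paper's proof shows where the divisor term actually comes from, and it is not a corner locus on a single chart. One works on the region $\{|f|\geq s\}$ inside a finite cover by affinoids adapted to the charts of $\beta$, applies Green/Stokes for polyhedral supercurrents (so boundary integrals do not vanish, since $\beta_U$ restricted to $\Trop(U)$ is not compactly supported — its support escapes in the direction $-\log|f|\to\infty$), and then identifies the limiting boundary integral along $\{x_0=-\log s\}$ with $\int_Y\alpha$ summed over the components $Y$ of $\cyc(f)$. That identification is the essential analytic input: it uses the Chambert-Loir--Ducros result on the tropicalizations of the fibers $W_t$ and of $W_I$ (properties (a) and (b) in the proof, i.e.\ that the $n$-skeleton of $h_{\trop}(W_I)$ is a product $-\log(I)\times(\Ccal,m)$ with $(\Ccal,m)$ the tropicalization of $\Div(f|_W)$ with its tropical weights), the Sturmfels--Tevelev formula for the moment maps $h_E=(f,\varphi_{U_E})$, and an inclusion--exclusion over intersections $W_E$ together with charts $U_{E_Y}$ adapted to each component $Y$ in \eqref{PL9}. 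Your plan replaces all of this by the assertion that ``$\Div(f)$ tropicalizes to the corner locus of $\trop(f)$'' on one chart, which, as explained above, is false in the only configuration where your chart reduction is available; the tropical Poincar\'e--Lelong formula (Theorem \ref{deltatropicalpoincarelelong}) by itself cannot produce $\delta_{\cyc(f)}$ there. The tropical formula does enter the paper's argument, but only through the boundary-term analysis (via \eqref{crucial residue formula} and Lemma \ref{tropicallemma}) in Section \ref{piecewise smooth forms and delta-metrics}, not as the engine of Theorem \ref{Poincare-Lelong equation}.
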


\begin{proof} 
The proof is similar as in \cite[\S 4.6]{chambert-loir-ducros}, 
but it is more on the tropical side as we do not have integrals 
of $\delta$-forms 
over analytic subdomains at hand. 
We will first do some reduction steps and then we will introduce some 
notation which allows us to use results from \cite{chambert-loir-ducros}. 
The claim is local on $\Xan$ and so we may 
assume that $X= \Spec(A)$ and $f \in A$. 
The latter induces a morphism $f:X \rightarrow {\mathbb A}^1$. 
We may assume that the morphism is not constant as otherwise all 
terms are $0$. 
Since $A$ is a domain, the property $S_1$ of Serre is satisfied. 

Let us recall some results from \cite{chambert-loir-ducros} before we start
the actual proof. 
Let $W$ be an affinoid subdomain of $\Xan$ and let $g:W \rightarrow \Tan$ 
be an analytic map for $T = {\mathbb G}_m^r$. 
Following \cite{chambert-loir-ducros}, 
we call such a map to a torus an {\it analytic moment map}. 
We obtain a continuous map 
$$g_\trop := \trop \circ g:W \rightarrow \R^r.$$ 
We get an analytic map 
$h:=(f,g):W \rightarrow ({\mathbb A}^1)^{\rm an} \times \Tan$. 
We denote the fibre of $W$ over $t \in ({\mathbb A}^1)^{\rm an}$ by $W_t$ 
with respect to the restriction of $f$ to $W$. 
If $I$ is an interval in $(0,\infty)$, 
then $W_I := |f|^{-1}(I) \cap W$. 
We observe that $W_t$ and $W_I$ carry natural structures of analytic spaces of 
dimension $n-1$ and $n$ respectively.
It follows from general results of Ducros \cite[Th\'eor\`eme 3.2]{ducros} that the sets 
$g_{\trop}(W_t)$ and $h_{\trop}(W_I)$ are integral $\R$-affine polyhedral
sets of dimension less or equal to $n-1$ and $n$ respectively.
These polyhedral sets can be equipped with natural integral weights.
A construction of these so called tropical weights can be found in
\cite[\S 7]{gubler-forms} or in \cite[3.5]{chambert-loir-ducros}
in the language of calibrations.
We observe that the tropical weights take the multiplicities of 
irreducible components into account.
The $k$-skeleton of a polyhedral set $P$ of dimension at most $k$ is by 
definition the union of all $k$-dimensional polyhedra contained in $P$.
By \cite[Proposition 4.6.6]{chambert-loir-ducros}, there exist 
a real number $r >0$ and an 
integral $\R$-affine polyhedral complex $\Ccal$ in $\R^r$ of pure 
dimension $n-1$ with integer weights $m$ such that all polyhedra
in $\KC$ are polytopes
with the following properties:
\begin{itemize}
\item[(a)] 
for every $t$ in the closed ball in $({\mathbb A}^1)^{\rm an}$ with 
center $0$ and radius $r$, the $(n-1)$-skeleton of $g_{\rm trop}(W_t)$ endowed 
with the canonical tropical weights is equal to $(\Ccal,m)$;
\item[(b)] 
for every closed interval $I \subset (0,r]$, the $n$-skeleton of 
$h_{\rm trop}(W_I)$ endowed with the canonical analytic tropical weights is 
equal to $(-\log(I),1) \times (\Ccal,m)$ as a product of weighted polyhedral 
complexes. 
\end{itemize}
In fact, Chambert--Loir and Ducros formulated this crucial result in terms of 
canonical calibrations instead of analytic tropical weights. We refer to 
\cite[\S 7]{gubler-forms} for the definition and translation of these 
equivalent notions. 
The analytic space $W_0$ coincides with the closed analytic subspace of $W$
determined by the effective Cartier divisor ${\rm div}\,(f|_W)$.
Using (a) for $t=0$, we see that $(\KC,m)$ is equal to the $(n-1)$-skeleton of 
$g_{\rm trop}(\Div(f|_W))$ as a weighted polyhedral complex.

After recalling these preliminary results, we start with the proof. 
Since the $\delta$-currents $\delta_{\cyc(f)}$ and $d'd''[\log|f|]$ are 
symmetric, it is enough to check the Poincar\'e--Lelong equation by 
evaluating at a symmetric $\alpha \in B_c^{n-1,n-1}(\Xan)$. 
The $\delta$-form $\alpha$ is given by  tropical charts 
$(V_i,\varphi_{U_i})_{i \in I}$ covering $\Xan$ and symmetric 
$\alpha_i \in AZ^{n-1,n-1}(V_i,\varphi_{U_i})$. 
Since $\alpha$ has compact support, there are finitely many $i$ such that 
the corresponding $V_i$'s cover $\supp(\alpha)$. 
In the following, we restrict our attention to these finitely many $i$'s and 
we number them by $i=1, \dots, m$. 

Let us consider the very affine open subset  
$U:= U_1 \cap \dots \cap U_m \setminus \supp(\Div(f))$ of $X$. 
Let $G_i:N_{U,\R} \rightarrow N_{U_i,\R}$ (resp. $F: N_{U,\R} \rightarrow \R$) 
be the canonical affine map compatible with $\trop_U$ and $\trop_{U_i}$ 
(resp. $-\log|f|$). Let $x_0$ be the coordinate on $\R$ and let 
$H_i:=(F,G_i):N_{U,\R}\to \R \times N_{U_i,\R}$.

For every $x \in \supp(\alpha)$, there is an $i \in \{1, \dots , m\}$ such 
that $x \in V_i$. We choose an integral $\Gamma$-affine polytope $\Delta_i$ 
of maximal dimension in $N_{U_i,\R}$ containing $\trop_{U_i}(x)$ in its interior. 
We may assume that $\Delta_i \cap \Trop(U_i) \subseteq\trop_{U_i}(V_i)$. 
Then $W_i := \trop_{U_i}^{-1}(\Delta_i)$ is an affinoid subdomain  of 
$\Xan$ with $x \in {\rm Int}(W_i)$. 
Renumbering the covering and using again compactness of $\supp(\alpha)$, 
we may assume that $i$ does not depend on $x$ which means that the interiors 
of the affinoid subdomains $W_1, \dots , W_m$ cover $\supp(\alpha)$. 
Note that $W:= \bigcup_{i=1}^m W_i$ is a compact analytic subdomain of $\Xan$.

For every non-empty subset $E$ of $\{1, \dots , m\}$, the set 
$W_E:= \bigcap_{i \in E} W_i$ is affinoid (using that $\Xan$ is separated). 
Note that $U_E:= \bigcap_{i \in E} U_i$ is  very affine and we set 
$V_E:= \bigcap_{i \in E} V_i$. 
We choose $r>0$ sufficiently small such that (a) and (b) above hold for 
every $W_E$ and moment map $g_E:=\varphi_{U_E}$.  
Note that the union of the integral $\Gamma$-affine polyhedral sets
\begin{equation} \label{polydeco 1}
\trop_U(W_i \cap \Uan)= \Trop(U) \cap G_i^{-1}(\Delta_i)  \quad (i=1, \dots, m)
\end{equation}
is equal to $\trop_U(W \cap \Uan)$. 
For every subset $E$ of $\{1, \dots , m\}$, we have a integral $\Gamma$-affine polyhedral set
\begin{equation} \label{polydeco 2}
\trop_U(W_E \cap \Uan)= \Trop(U) \cap \bigcap_{i \in E} G_i^{-1}(\Delta_i)
=\bigcap_{i \in E} \trop_U(W_i \cap \Uan) .
\end{equation}
For  $V:= \bigcup_i V_i \cap \Uan$, it follows from Corollary 
\ref{support corollary}  that $(V,\varphi_U)$ is a tropical chart containing 
the support of $d''\alpha$.  
The $\delta$-form $\alpha$ is represented on $V$ by  
$\alpha_U \in AZ^{n-1,n-1}(V,\varphi_U)$, i.e. $\alpha=\trop_U^*(\alpha_U)$ on $V$. 
In fact, we have seen in Proposition  \ref{single chart} that $\alpha_U$ extends by $0$ to an element of 
$AZ^{n-1,n-1}(U^{\rm an},\varphi_U)$, but the support of this extension 
is not necessarily compact. 
We conclude that $U$ is a very affine chart of integration  for 
$\log|f|\,d'd''\alpha$ and that 
\begin{equation} \label{PL1}
\bigl\langle d'd''[\log|f|], \alpha \bigr\rangle 
= -\int_{\trop_U(V)} F^*(x_0) d'd'' \alpha_U.
\end{equation}
The minus sign comes from the use of tropical coordinates  
$\trop_U^*(F^*(x_0))=-\log|f|$ as remarked above. 
Corollary \ref{support corollary} shows that the support of $d''\alpha$ 
does not meet $f^{-1}(0)$. 
Since the support of $d''\alpha$ is compact,
there is a positive $s <r $ such $|f(x)| > s$ for every $x \in \supp(d'' \alpha)$.
We consider the analytic subdomain $W(s):=\{x\in W \mid |f(x)| \geq s\}$ of $W$, 
the affinoid subdomain $W_i(s):=\{x \in W_i \mid |f(x)| \geq s \}$ of $W_i$ 
and the affinoid subdomain $W_E(s):=\{x \in W_E \mid |f(x)| \geq s \}$ of $W_E$. 
It follows from \eqref{polydeco 1} and \eqref{polydeco 2} that their 
tropicalizations are integral $\R$-affine polyhedral  sets such that the union of all
\begin{equation} \label{polydeco 3}
\trop_U(W_i(s) \cap \Uan)
= \Trop(U) \cap G_i^{-1}(\Delta_i)\cap F^{-1}((-\infty,-\log s])
\end{equation}
for $i=1,\dots, m$ is equal to $\trop_U(W(s) \cap \Uan)$ and such that 
\begin{equation} \label{polydeco 4}
\trop_U(W_E(s) \cap \Uan)
= \Trop(U) \cap \bigcap_{i\in E}G_i^{-1}(\Delta_i)\cap F^{-1}((-\infty,-\log s]).
\end{equation}
In the following, we use integrals and boundary integrals of $\delta$-preforms 
over integral $\R$-affine polyhedral  sets as introduced in Definition \ref{intdelta}, 
Remark \ref{preform properties} and \ref{integration of P-forms}. 
By the choice of $s$,  we have $\supp(d''\alpha) \subseteq W(s) \cap \Uan$. 
We conclude that $\supp(d''\alpha_U)  \subseteq \trop_U(W(s) \cap \Uan)$ and hence
\begin{equation} \label{PL2}
\int_{\trop_U(V)} F^*(x_0) d'd'' \alpha_U  
=  \int_{\trop_U(W(s)\cap \Uan)} F^*(x_0) d'd'' \alpha_U.
\end{equation}
By Green's formula (see Proposition \ref{greenfordeltapreforms}) and using $d'd''F^*(x_0)=0$, 
the integrals in \eqref{PL2} are equal to
\begin{equation} \label{PL4}
\int_{\partial (\trop_U(W(s)\cap \Uan))}
\bigl(F^*(x_0)  d''\alpha_U   -  d''(F^*(x_0) ) \wedge \alpha_U\bigr).
\end{equation}
By construction and \eqref{polydeco 1}, we have 
$$\supp(\alpha_U)  \subseteq \relint(\trop_U(W \cap \Uan)).$$
By the choice of $s$, it follows that $\supp(d''\alpha_U )$ is contained in 
$\relint(\trop_U(W(s)\cap \Uan))$. 
It follows from Remark  \ref{boundaryintdelta}(iii) applied to the integral $\R$-affine polyhedral  
set $\trop_U(W(s)\cap \Uan)$ that 
\begin{equation} \label{PL5}
\int_{\partial (\trop_U(W(s)\cap \Uan))} F^*(x_0)  d''\alpha_U  =0.
\end{equation}
{Combining \eqref{PL1} and \eqref{PL2}--\eqref{PL5}
with \eqref{PLL1} below,
we get
\begin{equation} \label{PL10}
\langle d'd''[\log|f|], \alpha \rangle =  \langle \delta_{\cyc(f)},\alpha \rangle 
\end{equation}
proving the claim.} 
\qed

\begin{lem}\label{PLL}
{In the situation of the proof of Theorem
\ref{Poincare-Lelong equation} above, we have
\begin{equation} \label{PLL1}
\int_{\partial (\trop_U(W(s)\cap \Uan))}
d''(F^*(x_0))  \wedge \alpha_U=\langle\delta_{{\rm cyc}\,(f)},
\alpha\rangle.
\end{equation}}
\end{lem}

\proof
For integers $\ell \geq 1$, there are $\varphi_\ell \in C^\infty(\R)$ 
with $0 \leq \varphi_\ell \leq 1$, $\varphi_\ell(t)=1$ for 
$t \leq -\log(s)-1/\ell$ and $\varphi_\ell(t)=0$ for $t \geq -\log(s)-1/(2\ell)$. 
By construction,  $\supp\,(\varphi_\ell(F^*(x_0)) d''(F^*(x_0)) \wedge \alpha_U$ 
is contained in the relative interior of $\trop_U(W(s)\cap \Uan)$ and hence
\begin{equation} \label{PL6}
\int_{\partial (\trop_U(W(s)\cap \Uan))} 
\varphi_\ell(F^*(x_0)) d''(F^*(x_0)) \wedge \alpha_U =0
\end{equation}
as above. Setting $\psi_\ell := 1 - \varphi_\ell$, it follows from 
{\eqref{PL6} that the left hand side in \eqref{PLL1}} is equal to
\begin{equation} \label{PL7}
\int_{\partial(\trop_U(W(s)\cap \Uan))}\psi_\ell(F^*(x_0))d''(F^*(x_0))\wedge \alpha_U .
\end{equation}
Now we use the additivity of measures from Remark \ref{boundaryintdelta}(ii). 
The decomposition  \eqref{polydeco 3} of the polyhedral set 
$\trop_U(W(s) \cap \Uan)$ and equation \eqref{polydeco 4} show that 
\eqref{PL7} is equal to
\begin{equation} \label{PL3}
 \sum_{j=1}^m (-1)^{j+1} \sum_{|E|=j} \int_{\partial(\trop_U(W_E(s) \cap \Uan))} 
\psi_\ell(F^*(x_0)) d''(F^*(x_0)) \wedge  \alpha_U.
\end{equation}
We fix $i \in E$. Let 
$G_E:N_{U,\R} \rightarrow N_{U_E,\R}$ and $G_{E,i}:N_{U_E,\R} \rightarrow N_{U_i,\R}$ 
be the canonical affine maps which are compatible with the given moment maps. 
Let  us consider the closed embedding 
\[
h_E:=(f,g_E)=(f,\varphi_{U_E}):
U_E\setminus \Div(f)
\rightarrow {\mathbb G}_m \times T_{U_E}.
\] 
inducing the tropical variety $h_{E,\rm trop}(U_E \setminus \Div(f))$ which we view as a tropical cycle on $\R \times N_{U_E,\R}$. The affine maps  $H_E:=(F,G_E):N_{U,\R} \rightarrow \R \times N_{U_E,\R}$ (resp. 
$H_{E,i}:= \id_\R \times G_{E,i}:\R \times N_{U_E,\R} \rightarrow \R \times N_{U_i,\R}$) 
are compatible with the moment maps $\varphi_U$ and $h_E$ (resp. $h_E$ and $h_i$). 
The Sturmfels--Tevelev multiplicity formula shows 
that \begin{equation} \label{ST}
h_{E,\rm trop}(U_E \setminus \Div(f))=(H_E)_*(\Trop(U))
\end{equation}(see \cite[Proposition 4.11]{gubler-forms} for the required generalization of  \eqref{cor-sturmfels-tevelev}). 
For $\alpha_E:= \alpha_i|_{V_E}\in AZ^{n-1,n-1}(V_E,\varphi_{U_E})$, we have 
$\alpha|_{V_E}=\trop_{U_E}^*(\alpha_E)$ and the definition of $\alpha_E$ does 
not depend on the choice of $i \in E$. 
In the following, the weighted integral $\R$-affine polyhedral  complex 
$\Sigma_E(s):=h_{E,\rm trop}(W_E(s))$  in $\R \times N_{U_E,\R}$ plays a crucial role. 
Note that we have 
\begin{equation} \label{polydeco 5}
\Sigma_E(s)=h_{E,\rm trop}(U_E \setminus \div(f)) \cap \bigcap_{i \in E} 
H_{E,i}^{-1}((-\infty,-\log s] \times \Delta_i).
\end{equation}
Let $P_E:\R\times N_{U_E,\R}\to N_{U_E,\R}$ denote the canonical projection.
By definition, the element $\alpha_E$ of $AZ^{n-1,n-1}(V_E,\varphi_{U_E})$ is represented by a $\delta$-preform $\tilde\alpha_E$ on an open subset $\widetilde\Omega_E$ of $N_{U_E,\R}$ with $\widetilde\Omega_E \cap \Trop(U_E) = \trop_{U_E}(V_E)$. Recall from \eqref{restriction of delta to Omega}, that $\alpha_U|_{\Omega}=\tilde\alpha_U \wedge \delta_{\Trop(U)}$ denotes the $\delta$-preform on  $\Omega:=\Trop_U(V)$ induced by $\alpha_U$.  Using $\alpha_U=G_E^*(\alpha_E)$, 
we get
\[
\alpha_U|_\Omega=G_E^*(\alpha_E)|_\Omega=G_E^*(\tilde\alpha_E)\wedge\delta_{{\rm Trop}\,(U)}
=H_E^*P_E^*(\tilde\alpha_E)\wedge\delta_{{\rm Trop}\,(U)}.
\]
We consider the coordinate $x_0$ on $\R$ also as a function on $\R\times N_{U_E,\R}$.
Using $\trop_U(W(s)\cap \Uan)=H_E^{-1}(\Sigma_E(s)) \cap \Trop(U)$ 
and \eqref{ST}, the projection formula 
\eqref{projection formula for preformsg2} shows that 
\begin{eqnarray} 
\nonumber&&\int_{\partial(\trop_U(W_E(s)\cap \Uan))}  \psi_\ell(F^*(x_0)) d''(F^*(x_0)) \wedge  \alpha_U\\  
&&=\int_{\partial(\trop_U(W_E(s)\cap \Uan))}   H_E^*(\psi_\ell(x_0) d''x_0) 
\wedge H_E^*P_E^*(\tilde\alpha_E)\wedge\delta_{{\rm Trop}\,(U)}\\
\nonumber&&=\int_{\partial(\Sigma_E(s))}   \psi_\ell(x_0) d''x_0 
\wedge P_E^*(\tilde\alpha_E)\wedge\delta_{h_{E,\rm trop}(U_E \setminus \Div(f))}
\end{eqnarray}
By construction of the functions $\varphi_\ell$, we have 
\begin{equation} \label{limit of integrals}
\begin{split}
&\lim_{\ell \to \infty}\int_{\partial(\Sigma_E(s))} \psi_\ell(x_0)d''x_0\wedge
P_E^*(\tilde\alpha_E)\wedge\delta_{h_{E,\rm trop}(U_E \setminus \Div(f))}\\
&= \int_{\Sigma_E(s)\cap \{x_0=-\log|s|\}}  
P_E^*(\tilde\alpha_E)\wedge\delta_{h_{E,\rm trop}(U_E \setminus \Div(f))}.
\end{split}
\end{equation}
By \eqref{polydeco 5} and \cite[\S 7]{gubler-forms}, the analytic tropical 
weights on the $n$-skeleton of the tropicalization $\Sigma_E(s)$ of the 
affinoid domain $W_E(s)$ are the same as the tropical weights induced by 
$h_{E,\rm trop}(U_E \setminus \Div(f))$. 
Using that $s <r$ and $I:=[s,r]$, it follows from (a) and (b) that the 
$n$-skeletons of 
$\Sigma_E(I):=\{\omega \in \Sigma_E(s) \mid x_0(\omega) \in -\log(I)\} $ and 
$-\log(I) \times \trop_{U_E}(\Div(f)\cap W_E)$ are equal even as a product 
of weighted polyhedral complexes if we endow $-\log(I)$ with weight $1$. 
Note that these tropicalizations can differ from the $n$-skeletons only 
inside the relative boundary. 
As we have some flexibility in the choice of the polyhedra $\Delta_i$ and 
in the choice of $s$, we may assume that 
$\Sigma_E(I)= -\log(I) \times \trop_{U_E}(\Div(f)\cap W_E)$ and that 
this is of pure dimension $n$. 
We conclude that \eqref{limit of integrals} is equal to 
\begin{equation} \label{PL8}
\int_{\trop_{U_E}(\Div(f)\cap W_E \cap U_E^{\rm an} )} \alpha_E .
\end{equation}
Using \eqref{PL7}--\eqref{PL8}, it follows that the left hand side of \eqref{PLL1} 
is equal to
\[
\sum_{j=1}^m (-1)^{j+1} \sum_{|E|=j} \int_{\trop_{U_E}(\Div(f)\cap W_E \cap U_E^{\rm an}  )} \alpha_E .
\]
Let $Y$ be an irreducible component of $\Div(f)$ and let 
$E_Y := \{i \in \{1, \dots, m\} \mid U_i \cap Y \neq \emptyset \}$. 
Then we use the very affine open subset $U_{E_Y}$ to compute the following 
integrals over $Y$ by going the above steps backwards:
\begin{equation} \label{PL9}
\begin{split}
&\sum_{j=1}^m (-1)^{j+1} \sum_{|E|=j} \int_{\trop_{U_E}(Y\cap W_E \cap U_E^{\rm an})} \alpha_E\\
&= \int_{\trop_{U_{E_Y}}(Y \cap W \cap U_{E_Y}^{\rm an})}\alpha_{U_{E_Y}}  = \int_Y \alpha,
\end{split}
\end{equation}
where we have used in the last step that $W$ covers $\supp(\alpha)$. 
Using linearity in the irreducible components $Y$ 
(see \cite[Remark 13.12]{gubler-guide}), 
we get equation \eqref{PLL1}. 
\end{proof}

\begin{rem}\label{PLR}
Let $f$ denote a regular function on the affine variety $X$.
The proof of Lemma \ref{PLL} given above shows that
equation \eqref{PLL1} holds more 
generally for any generalized $\delta$-forms $\alpha$ on $X^\an$ 
with compact support.
If we permute the roles of $d'$ and $d''$, we obtain by
the same argument that
\begin{equation} \label{PLL2}
-\int_{\partial (\trop_U(W(s)\cap \Uan))}
d'F^*(x_0)  \wedge \alpha_U=\langle\delta_{{\rm cyc}(f)},
\alpha\rangle
\end{equation}
holds for all generalized $\delta$-forms $\alpha\in
P_c^{n-1,n-1}(X^\an)$. 
An elegant way to deduce \eqref{PLL2} is to apply \eqref{PLL1} for $J^*(\alpha)$ and to use 
symmetry of the $\delta$-current of integration.
\end{rem}

\begin{art}\label{analyticddc}
Let $\varphi$ denote an invertible analytic function on some open subset $W$ of
$X^\an$. 
Given $x\in W$ there exists by \cite[Proposition 7.2]{gubler-forms}
an open subset $U$ of $X$, an algebraic moment map $f:U\to\G_m$
and an open neighbourhood $V$ of $x$ in $U^\an\cap W$ such that
$-\log |\varphi|$ and $-\log |f|$ agree on $V$.  
It follows that the function $-\log |\varphi|$ belongs to $A^0(W)$ and we get
\begin{equation}\label{analyticddcg1}
d'd''[-\log |\varphi|]=-[d'd''\log |\varphi|]=0
\end{equation}
from \eqref{differentiation of delta-currentsg1} and the trivial
case of the Poincar\'e--Lelong formula where $f$ is invertible.
\end{art}

\begin{art}\label{definitionmetric}
Let $L$ be a line bundle on $X$ and let $W$ be an open subset of $\Xan$. 
We fix an open covering $(U_i)_{i\in I}$
of $X$, a family $(s_i)_{i\in I}$
of  {nowhere vanishing sections} $s_i\in \Gamma(U_i,L)$, and the $1$-cocyle
$(h_{ij})$ with values in $\KO_X^\times$ determined by $s_j=h_{ij}s_i$.
Recall that a {\it continuous metric $\|\,\,\|$
on $L$} over $W$ is given by a family $(\rho_i)_{i\in I}$
of continuous functions $\rho_i: U_i^\an \cap W \to \R$ such that
$\rho_j=|h_{ij}|\rho_i$ on $(U_i\cap U_j)^\an \cap W$ for all $i,j\in I$.
An analytic section $s\in \Gamma(V,L^\an)$ on some open subset $V$ of
$W$ determines as follows a continuous function $\|s\|:V\to \R$.
We write $s=f_is_i$ for some analytic function $f_i$ on $V\cap U_i^\an$ and define
$\|s\|=|f_i|\cdot\rho_i$ on $V\cap U_i^\an$.
Observe that we have $\rho_i=\|s_i\|$ on $U_i^\an\cap W$.
\end{art}

\begin{art} \label{first Chern current}
Let $L$ be a line bundle on $X$ endowed with a continuous metric $\metr$ over the open subset $W$ of $\Xan$. 
Then we define the {\it first Chern current associated to the metrized line 
bundle $(L|_W, \metr)$} as the $\delta$-current $[c_1(L|_W,\metr)] \in E^{1,1}(W)$ 
given locally on $W \cap \Uan$   
by $d'd''[-\log \|s|_{\Uan \cap W}\|]$ for any trivialization $U$ of $L$ with 
 {nowhere vanishing} section $s \in \Gamma(U,L)$. 
Here, we have used that a continuous function defines a 
$\delta$-current as explained in Proposition \ref{current continuous function}.
Since $d'd''[-\log|\varphi|]=0$ for an invertible analytic function 
$\varphi$, the $\delta$-current $[c_1(L|_W,\metr)]$ 
is well-defined on $W$ and we may even use analytic trivializations 
in the definition. Obviously, the formation of the first Chern current is 
compatible with tensor product of metrized line bundles as usual. 

If the metric is smooth then $[c_1(L,\metr)]$ is 
the current associated to 
the first Chern form $c_1(L|_W,\metr)$ defined in 
\cite{chambert-loir-ducros}. 
In general, the notion $c_1(L|_W,\metr)$ has no 
meaning as a form and we use 
brackets  to emphasize that $[c_1(L|_W,\metr)]$ 
is a $\delta$-current. 
In Section \ref{piecewise smooth forms and delta-metrics}, 
we will introduce   
metrics for which  $c_1(L|_W,\metr)$ has a meaning as 
a $\delta$-form.
\end{art}

\begin{cor} \label{PL for line bundles}
Let $L$ be a line 
bundle on $X$ endowed with a continuous metric $\metr$ over the open subset $W$ of $\Xan$. 
For every non-trivial
meromorphic section $s$ of $L$ with associated Weil divisor $Y$, 
the equality
\[
[c_1(L|_W,\|.\|)]=
 {d'd''\bigl[{-}\log \|s|_W\|\bigr]}+
\delta_{Y}|_W
\]
holds in $E^{1,1}(W)$.
\end{cor}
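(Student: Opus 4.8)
The plan is to derive this corollary directly from the Poincar\'e--Lelong equation in Theorem \ref{Poincare-Lelong equation} by a purely local computation, exactly as one does in the classical complex-analytic setting. Since the assertion is an equality of $\delta$-currents on $W$, and since $E^{1,1}$ is a sheaf (see \ref{sheaf-property}), it suffices to verify the identity on the members of an open covering of $W$. Thus I would first reduce to the case where $L$ is trivial: choose a covering of $W$ by sets of the form $\Uan \cap W$ with $U$ a trivializing open subset of $X$ for $L$ equipped with a nowhere-vanishing regular section $s_U \in \Gamma(U,L)$, and small enough that the meromorphic section $s$ is of the form $s = f_U \cdot s_U$ for a non-zero rational function $f_U$ on $U$.

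On such a chart, the definition of the metric (see \ref{definitionmetric}) gives $\|s|_{\Uan \cap W}\| = |f_U| \cdot \rho_U$, where $\rho_U = \|s_U\|$ is the continuous local weight of the metric. Hence $-\log\|s|_{\Uan\cap W}\| = -\log|f_U| - \log\rho_U$ as continuous functions on $\Uan \cap W$, and these induce $\delta$-currents via Proposition \ref{current continuous function}. Applying $d'd''$ and using linearity of the differential operators on $\delta$-currents (see \ref{differentiation of delta-currents}), I would write
\[
d'd''[\log\|s|_{\Uan\cap W}\|] = d'd''[\log|f_U|] + d'd''[\log\rho_U].
\]
By the very definition of the first Chern current in \ref{first Chern current}, the term $d'd''[-\log\rho_U] = d'd''[-\log\|s_U\|]$ equals $[c_1(L|_{\Uan\cap W},\metr)]$ restricted to this chart. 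Meanwhile, Theorem \ref{Poincare-Lelong equation} applied to the rational function $f_U$ on the variety $U$ (together with the compatibility of restriction of $\delta$-currents to open subsets) yields $d'd''[\log|f_U|] = \delta_{\cyc(f_U)}|_{\Uan\cap W}$. Finally, since $Y = \cyc(s)$ is the Weil divisor of $s$ and on $U$ one has $\cyc(s)|_U = \cyc(f_U)$ (the divisor of $s$ differs from that of $f_U$ only by the locus where $s_U$ vanishes, which is empty on $U$), the two right-hand sides agree and the local identity follows.

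The only genuine subtlety — and the step I expect to require the most care — is the bookkeeping of signs and the verification that the local pieces glue: one must check that the $\delta$-currents $[c_1(L|_W,\metr)]$, $d'd''[\log\|s|_W\|]$ and $\delta_Y|_W$ are each well-defined globally on $W$ (independence of the chosen trivialization, which for the first Chern current is exactly the content of $d'd''[-\log|\varphi|]=0$ from \ref{analyticddc}, and for $\delta_Y$ follows from \ref{current of integration}) and that the chart-wise identities are compatible on overlaps. Once the covering is fixed this is routine: on $(U \cap U')^{\an} \cap W$ the ratio $f_U/f_{U'}$ is the transition function $h_{UU'}$, which is invertible, so $d'd''[\log|f_U/f_{U'}|]=0$, reconciling the two local expressions for $d'd''[\log\|s\|]$; and $\cyc(f_U)$ and $\cyc(f_{U'})$ restrict to the same Weil divisor $Y$ on the overlap. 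Assembling these observations gives the stated equality in $E^{1,1}(W)$.
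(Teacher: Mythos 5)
Your proposal is correct and follows essentially the same route as the paper: check the identity locally on a trivialization $U$ with nowhere-vanishing section $s_U$, write $s=f\,s_U$ for a rational function $f$, split $-\log\|s\|=-\log|f|-\log\|s_U\|$, use the definition of the first Chern current for the $\|s_U\|$-term and Theorem \ref{Poincare-Lelong equation} for the $|f|$-term. The gluing/independence-of-trivialization points you raise are exactly the content of \ref{analyticddc} and \ref{first Chern current}, which the paper's proof relies on implicitly.
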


\begin{proof} This can be checked locally on a trivialization $U$ of $L$ 
with  {a nowhere vanishing} $s_U \in \Gamma(U,L)$. Then there is a rational function 
$f$ on $X$ with $s=fs_U$ and hence 
\[
d'd''[-\log\|s|_{W \cap \Uan}\|]
=d'd''[-\log\|s_U|_{W \cap \Uan}\|]+d'd''[-\log|f|_{W \cap \Uan}|].
\] 
Using the definition of $c_1(L|_{W \cap \Uan},\|.\|)$ for the first 
summand and using Theorem \ref{Poincare-Lelong equation} for the 
second summand,  we get the claim.
\end{proof}

\section{Piecewise smooth and formal metrics on line bundles} \label{piecewice smooth and formal metrics on lb}

In this section, $X$ is an algebraic variety over $K$.  
In the following, we consider an open subset $W$ of $\Xan$. 

We first introduce piecewise smooth functions and piecewise linear functions on $W$. This leads to corresponding notions for metrics on line 
bundles. We prove that a piecewise linear metric is the same as a formal metric. We show that canonical metrics in various situations are piecewise smooth.

\vspace{2mm}
In Definition \ref{piecewise smooth function},  we have defined piecewise 
smooth functions on an open subset  of an integral $\R$-affine polyhedral set. 
Using tropicalizations and viewing tropical varieties as polyhedral sets, 
we will define piecewise smooth functions on $W$ as follows:

\begin{definition} \label{piecewise smooth}
A function $f: W \rightarrow \R$ is called {\it piecewise smooth}  if for 
every $x \in W$ there is a tropical chart $(V,\varphi_U)$ such that $V$ is 
an open neighbourhood of $x$ in $W$ and such that there is a  piecewise 
smooth  function $\phi$ on $\trop_U(V)$ with $f = \phi \circ \trop_U$ on $V$. 
\end{definition}

In a similar way, we will define a piecewise linear function on $W$. 
We recall from Definition \ref{piecewise smooth function} that we have defined 
piecewise linear functions on integral $\R$-affine polyhedral complexes. 
As we are working with a variety over a valued field, we will take  
the value group $\Gamma$ into account and we will require 
{in the definition of piecewise linear functions} additionally 
that the underlying polyhedral complex and the restriction of the functions 
are both integral $\Gamma$-affine. Note however that in Definition 
\ref{piecewise smooth}, the underlying polyhedral complex for $\phi$ is   
only assumed to be integral $\R$-affine.

\begin{definition} \label{piecewise linear on X}
A function $f: W \rightarrow \R$ is called {\it piecewise linear}  if for 
every $x \in W$ there is a tropical chart $(V,\varphi_U)$ such that $V$ is 
an open neighbourhood of $x$ in $W$ and a  real function $\phi$ on 
$\trop_U(V)$ with $f = \phi \circ \trop_U$ on $V$. 
We require that there is an integral $\Gamma$-affine polyhedral complex 
$\Sigma$ in $N_{U,\R}$ with $\trop_U(V) \subseteq |\Sigma|$ such that $\phi$ 
is the restriction of a function on $|\Sigma|$ with integral $\Gamma$-affine 
restrictions to all faces of $\Sigma$. 
\end{definition}

\begin{art} \label{properties of piecewise smooth}
The space of piecewise  smooth  functions on $W$ is an $\R$-subalgebra  
of the $\R$-algebra of continuous functions on $W$. It contains all smooth 
functions on $W$.  
The space of piecewise linear functions on $W$ is closed under 
forming $\max$ and $\min$. Moreover, it is a subgroup of the space of 
piecewise smooth functions on $W$ with respect to addition. 
If $\varphi:X' \rightarrow X$ is a morphism and $W'$ is an open subset 
of $(\varphi^{\rm an})^{-1}(W)$, then for every piecewise smooth 
(resp. piecewise linear) function $f$ on $W$, the restriction of 
$f \circ \varphi$ to $W'$ is a piecewise smooth (resp. piecewise linear) function on $W'$.
\end{art}

In the following result, we need the $\rm G$-topology on $W$. 
It is a Grothendieck topology build up from analytic subdomains of $W$ and 
it is closely related to the Grothendieck topology of the underlying rigid analytic space 
(\cite[\S 1.3, \S 1.6]{berkovich-ihes}). 

\begin{prop} \label{G-characterization}
Let $f:W \rightarrow \R$ be a continuous function. Then $f$ is piecewise smooth 
(resp. piecewise linear) if and only if there is a $\rm G$-covering 
$(W_i)_{i \in I}$ by analytic (resp. strict analytic) subdomains $W_i$ of 
$W$ and analytic moment maps $\varphi_i:W_i \rightarrow (T_i)^{\rm an}$ to 
tori $T_i := \Spec(K[M_i])$ such that $f=\phi_i \circ \varphi_{i, \rm trop}$ 
on $W_i$ for a smooth (resp. integral $\Gamma$-affine) function 
$\phi_i:N_{i,\R} \rightarrow \R$, where $N_i := \Hom(M_i,\Z)$ as usual.
\end{prop}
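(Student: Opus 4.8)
\textbf{Proof plan for Proposition \ref{G-characterization}.}
The statement is an equivalence, and the plan is to prove each implication separately, treating the piecewise smooth and piecewise linear cases in parallel since the arguments differ only by replacing ``smooth function on $N_{i,\R}$'' with ``integral $\Gamma$-affine function''. For the direction ``$\Rightarrow$'': suppose $f$ is piecewise smooth (resp. piecewise linear) in the sense of Definition \ref{piecewise smooth} (resp. \ref{piecewise linear on X}). Then every $x \in W$ has a tropical chart $(V_x,\varphi_{U_x})$ with $V_x$ an open neighbourhood of $x$ and $f = \phi_x \circ \trop_{U_x}$ on $V_x$ for a piecewise smooth (resp. integral $\Gamma$-affine-on-faces) function $\phi_x$ on $\trop_{U_x}(V_x)$. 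The key point is to refine each such $V_x$ into an analytic (resp. strict analytic) subdomain on which $\phi_x$ becomes a \emph{single} smooth (resp. integral $\Gamma$-affine) function pulled back from a torus. Concretely, using the polyhedral complex of definition $\KC$ for $\phi_x$, I would shrink $\trop_{U_x}(V_x)$ to meet only a single maximal polyhedron $\sigma \in \KC$, so that $\phi_x$ is the restriction of a smooth (resp. integral $\Gamma$-affine) function on the affine span $\A_\sigma$; pulling this back along the affine projection $N_{U_x,\R} \to \A_\sigma$ and composing with $\trop_{U_x}$ realizes $f$ locally as $\psi \circ g_{\rm trop}$ for a torus moment map $g$ obtained from $\varphi_{U_x}$ followed by the torus homomorphism dual to $\A_\sigma \hookrightarrow N_{U_x,\R}$. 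The preimages $\trop_{U_x}^{-1}(\sigma\text{-part})$ are analytic subdomains (strict, in the $\Gamma$-affine case, since $\sigma$ is then integral $\Gamma$-affine), and these cover $W$ in the $\rm G$-topology after passing to a locally finite refinement.

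For the direction ``$\Leftarrow$'': suppose we are given a $\rm G$-covering $(W_i)_{i\in I}$ by analytic (resp. strict analytic) subdomains and moment maps $\varphi_i: W_i \to T_i^{\rm an}$ with $f = \phi_i \circ \varphi_{i,\rm trop}$ on $W_i$, with $\phi_i$ smooth (resp. integral $\Gamma$-affine). Fix $x \in W$; since a $\rm G$-covering is refined by a covering by affinoid (resp. strict affinoid) neighbourhoods, I may assume $x$ lies in the interior of some affinoid $W_i$. Now I would invoke the comparison between analytic moment maps and algebraic tropicalizations: by \cite[Proposition 7.2]{gubler-forms} (the same tool used in \ref{analyticddc}), near $x$ the function $-\log|\varphi_i^{(k)}|$ of each torus coordinate agrees with $-\log|h_k|$ for an algebraic moment map $h_k$ on a Zariski open $U' \ni x$, hence $\varphi_{i,\rm trop}$ agrees on a neighbourhood of $x$ with $F \circ \trop_{U'}$ for a canonical integral $\Gamma$-affine map $F: N_{U',\R} \to N_{i,\R}$. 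Therefore $f = (\phi_i \circ F) \circ \trop_{U'}$ near $x$, and $\phi_i \circ F$ is smooth (resp. integral $\Gamma$-affine on the faces of an integral $\Gamma$-affine polyhedral complex, by Definition \ref{piecewise smooth function} applied to the composite), which is exactly the condition in Definition \ref{piecewise smooth} (resp. \ref{piecewise linear on X}). Shrinking to a basic tropical chart $(V,\varphi_{U'})$ with $V \subseteq W_i$ completes this direction.

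The main obstacle I anticipate is the bookkeeping in the ``$\Rightarrow$'' direction of producing an \emph{honest analytic subdomain} from the tropical data: the set $\trop_{U_x}^{-1}(\sigma \cap \trop_{U_x}(V_x))$ need not be an analytic subdomain for an arbitrary polyhedron $\sigma$, so one must instead intersect with preimages of $\Gamma$-rational (resp. $\R$-rational) polytopes and use that $\trop_U$ is the composition of the affinoid-friendly map to the torus with the standard tropicalization, whose fibres over rational polytopes are (strict) affinoid by the theory of \cite{berkovich-ihes} and \cite{gubler-forms}. In the piecewise \emph{linear} case one must additionally be careful that ``integral $\Gamma$-affine on faces'' is preserved, which forces the polytopes to be $\Gamma$-rational and hence the subdomains to be strict; this is precisely where the distinction between the two cases is used and where I would spend the most care. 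A secondary point is checking that the resulting family is a genuine $\rm G$-covering (not merely a set-theoretic cover), which follows from compactness of affinoids together with \cite[Lemme (2.1.6)]{chambert-loir-ducros} on paracompactness, and I would phrase it so as to reduce to the already-known fact that tropical charts form a basis of $\Xan$ (\cite[Proposition 4.16]{gubler-forms}).
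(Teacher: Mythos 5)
Your ``$\Leftarrow$'' direction has a genuine gap: you reduce to the case that $x$ lies in the \emph{interior} of a single member $W_i$ of the $\rm G$-covering. This is not available — a $\rm G$-covering only guarantees that $x$ has a neighbourhood covered by finitely many $W_i$ all containing $x$ (think of a closed disc covered by two closed half-discs: points on the dividing line are interior to neither). Worse, if your reduction were valid it would prove too much: on the interior of a single $W_i$ the function $f$ is a \emph{smooth} pullback $\phi_i\circ\varphi_{i,\trop}$, so your argument would show every such $f$ is smooth, contradicting the basic examples (e.g.\ $-\log\|s\|$ for a formal metric, locally a maximum of linear tropical pullbacks). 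The whole content of this direction lies at the seams of the covering, and the paper handles it by working with \emph{all} the finitely many $W_i$ containing $x$ at once: after approximating the moment maps and affinoid coordinates of each $W_i'$ by rational functions on $X$ (the step where your appeal to \cite[Prop.~7.2]{gubler-forms} fits), one builds a single tropical chart $(V,\varphi_U)$ with $U$ inside all the relevant $U_i$, shows that $W_i'\cap \Uan=\trop_U^{-1}(\sigma_i)$ for explicit integral $\R$-affine (in the linear case: conical, using $|x_{ij}(x)|=1$) polyhedra $\sigma_i$ cut out by the tropicalized affinoid coordinates, and then pastes the functions $\phi_i\circ\psi_i$ on the pieces $\sigma_i\cap\Delta_j\cap\widetilde\Omega$ into one piecewise smooth (resp.\ piecewise linear) function on $\trop_U(V)$. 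Without this simultaneous tropicalization and pasting step, the implication is not proved.

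In the ``$\Rightarrow$'' direction your plan is closer to the paper's, but the opening move ``shrink $\trop_{U_x}(V_x)$ to meet only a single maximal polyhedron of $\KC$'' fails at points of $\trop_{U_x}(x)$ lying on a face shared by several maximal polyhedra: no open shrinking isolates one polyhedron. The correct (and the paper's) device is exactly what the $\rm G$-topology permits: take \emph{finitely many} integral $\R$-affine (resp.\ integral $\Gamma$-affine) polytopes $\Delta_i$ containing $\trop_U(x)$ whose union is a neighbourhood of $\trop_U(x)$ in $\Omega$, on each of which $\phi$ extends to a smooth (resp.\ integral $\Gamma$-affine) function on $N_{U,\R}$, and use the non-open affinoid (resp.\ strictly affinoid) subdomains $\trop_U^{-1}(\Delta_i)$ with the moment map $\varphi_U$ itself; letting $x$ vary gives the $\rm G$-covering. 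Your ``obstacle'' paragraph gestures at this (preimages of rational polytopes are affinoid, $\Gamma$-rationality forces strictness in the linear case, though note your parenthetical ``$\Gamma$-rational (resp.\ $\R$-rational)'' has the two cases swapped), but the plan as written does not replace the single-polyhedron shrinking by the several-polytope cover, which is the point of the statement.
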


\begin{proof} First, we assume that $f$ is piecewise smooth 
(resp.\ piecewise linear). 
For any $x \in W$, there is a tropical chart $(V,\varphi_U)$ 
in $W$ containing $x$  such that $f= \phi \circ \trop_U$ on 
$V$ for a  piecewise 
smooth (resp.\ integral $\Gamma$-affine function) $\phi$ on 
the open subset $\Omega:=\trop_U(V)$ of $\Trop(U)$. 
There are finitely many integral $\R$-affine (resp. 
$\Gamma$-affine) polytopes $\Delta_i$ in $N_{U,\R}$ 
containing $\trop_U(x)$ such that $\bigcup_i \Delta_i$ 
is a neighbourhood of $\trop_U(x)$ in $\Omega$ and such 
that $\phi|_{\Delta_i}=\phi_i|_{\Delta_i}$ for a smooth 
(resp.\ integral $\Gamma$-affine) function 
$\phi_i:N_{U,\R} \to \R$. Note that the affinoid 
(resp.\ strictly affinoid) 
subdomains  $W_i(x):=\trop_U^{-1}(\Delta_i)$ of $W$ 
contain $x$ and cover a neighbourhood of $x$. 
Letting $x$ vary over $W$, we get a $\rm G$-covering 
of $W$ with the desired properties.

To prove the converse, we assume that $f$ is given on 
a $\rm G$-covering 
$(W_i)_{i \in I}$ of $W$ by smooth (resp. integral 
$\Gamma$-affine) 
functions $\phi_i:N_{i,\R} \rightarrow \R$ with respect 
to analytic moment 
maps $\varphi_i:W_i \rightarrow (T_i)^{\rm an}$.
Piecewise smoothness (resp. 
piecewise linearity) is a local condition and 
so we have to check that $f$ 
is piecewise smooth in a neighbourhood of $x \in \Xan$. 
There is a finite $I_0 \subseteq I$ such that the 
sets $(W_i)_{i \in I_0}$ cover a
sufficiently small strict affinoid neighbourhood 
$W'$ of $x$ in $W$. 
By shrinking $W$, we may assume that  
$x \in W_i$ for every $i \in I_0$. 
In the following, we restrict our attention to 
elements $i \in I_0$. 
The definition of an analytic (resp. of a strict analytic) 
domain shows that 
we may assume that all $W_i':=W_i \cap W'$ are affinoid 
(resp. strict affinoid) 
subdomains of $W$. 
Any analytic function on a neighbourhood of $x$ in $W_i'$  
can be approximated uniformly on a sufficiently small 
neighbourhood of $x$ by 
rational functions on $X$. By shrinking $W$ again, this 
shows that we may assume that  $\varphi_i|_{W_i'}$ is 
induced by the 
restriction of an algebraic moment map 
$\varphi_i':U_i \rightarrow T_i$ for a 
dense open subset $U_i$ of $X$ with $W_i' \subseteq (U_i)^{\rm an}$ (see  \cite[Prop. 7.2]{gubler-forms} for a similar argument).
Similarly, we may assume that  there are affinoid coordinates $(x_{ij})_{j \in J_i}$ on $W_i'$ which extend to rational functions on $X$.   Clearly, we may assume that $|x_{ij}(x)|=1$ for $i \in I_0$ and $j \in J_i$. There is a tropical chart $(V,\varphi_U)$ with $x \in V \subseteq W'$, $U \subseteq  \bigcap_{i \in I_0} U_i$ and such that all the functions  $x_{ij}$ are in $\Ocal(U)^\times$. We may assume that $\trop_U(x)=0$ and hence there is an open neighbourhood $\widetilde{\Omega}$  of $0$ in $N_{U,\R}$ with $V=\trop_U^{-1}(\widetilde{\Omega})$. By \cite[4.12, Proposition 4.16]{gubler-forms}, $\varphi_i'|_U$ is the composition of an affine homomorphism $\psi_i:T_U \rightarrow T_i$ with $\varphi_U$. 
By shrinking $V$ and using the Bieri--Groves theorem \cite[Thm. 3.3]{gubler-guide}, we may assume that there are finitely many rational cones $(\Delta_j)_{j \in J}$ in $N_{U,\R}$ such that 
\begin{equation} \label{local cones in omega}
\trop_U(V) = \widetilde{\Omega} \cap \bigcup_{j \in J} \Delta_j.
\end{equation}
For every $i \in I_0$ and every $j \in J_i$,  there is a linear form $u_{ij} \in M_U$ with  $-\log|x_{ij}|=u_{ij}\circ \trop_U$ on $\Uan$. The definition of affinoid coordinates yields 
\begin{equation} \label{affinoid as pullback}
W_i' \cap \Uan = \trop_U^{-1}(\sigma_i )
\end{equation}
for  $\sigma_i:=\{\omega \in N_{U,\R}  \mid u_{ij}(\omega) \geq r_{ij} \, \forall j \in J_i\}$ and suitable $r_{ij} \in \R$. Note that $\sigma_i$ is an integral $\R$-affine polyhedron. In the piecewise linear case, we may choose always $r_{ij}=0$ and hence $\sigma_i$ is a rational cone. Using that the sets $W_i' \cap \Uan$ cover $V$ and equations \eqref{local cones in omega}, \eqref{affinoid as pullback}, we get  the decomposition $(\sigma_i \cap \Delta_j \cap \widetilde{\Omega})_{i \in I_0, j \in J}$ of $\trop_U(V)$. On  $\sigma_i \cap \Delta_j \cap \widetilde{\Omega}$, we choose the smooth (resp. integral $\Gamma$-affine) function $\phi_{ij}':=\phi_i \circ \psi_i$. Using \eqref{affinoid as pullback}, we see that  
these functions paste to a continuous piecewise smooth (resp. continuous piecewise linear) function $\phi'$ on $\trop_U(V)$ with $\phi' \circ \trop_U=f$ on $V$. This proves easily that $f$ is piecewise smooth (resp. piecewise linear) on $W$. 
\end{proof}

\begin{definition} \label{piecewise smooth metrics}
Let $L$ be a line bundle on $X$ and let $W$ be an open subset of $\Xan$. 
A metric $\metr$ on ${L|_W}$ is called 
{\it piecewise smooth} (resp. {\it piecewise linear}) if for 
every $x \in W$, 
there is a tropical chart $(V,\varphi_U)$ with $x \in V \subseteq W$ 
and a  {nowhere vanishing} section $s \in \Gamma(U,L)$  such 
that $-\log \|s|_{V} \|$ is piecewise smooth (resp. piecewise linear) 
on $V$. 
\end{definition}

\begin{art} \label{remarks for ps metrics}
Since $-\log |f|$ is smooth for an invertible regular function $f$ and even the 
pull-back of a linear function with respect to a suitable tropicalization, {the
last definition} does neither depend on the choice of the trivialization $s$ 
nor on the choice of the tropical chart $(V,\varphi_U)$. 
Moreover, we may also use 
analytic 
trivializations in the definition. By Proposition \ref{G-characterization}, 
the definition of a piecewise linear metric agrees with the definition of 
$\rm PL$-metrics in \cite[\S 6.2]{chambert-loir-ducros}. 

Note that every piecewise linear metric is piecewise smooth.
It follows from \ref{properties of piecewise smooth} that 
every piecewise smooth metric is continuous, that the tensor product of piecewise 
linear (resp. piecewise smooth) metrics is again a piecewise linear (resp. 
piecewise smooth) metric and that the dual metric of a  a piecewise linear (resp. 
piecewise smooth) metric is  piecewise linear (resp. piecewise smooth). Moreover, 
the pull-back of a piecewise linear (resp. piecewise smooth) metric on $L|_W$ with 
respect to a morphism  $\varphi:X' \rightarrow X$ is a piecewise linear (resp. 
piecewise smooth) metric on $\varphi^*(L)|_{W'}$ for any open subset $W'$ of 
$\varphi^{-1}(W)$.
\end{art}

\begin{art} \label{formal models}
Recall that  $\kcirc$ is the valuation ring of the given non-archimedean absolute value $|\phantom{a}|$ on $K$.
 Raynaud introduced an {\it admissible formal scheme over $\kcirc$} 
as  a formal scheme $\Xcal$ over the valuation ring $\kcirc$ which is locally isomorphic to 
$\Spf(A)$ for a flat $K^\circ$-algebra $A$  of topologically finite type over  
$\kcirc$ (see \cite[\S 1]{bosch-luetkebohmert-1} 
for details). For simplicity, we require additionally that $\Xcal$ has a locally 
finite atlas of admissible affine formal schemes over $\kcirc$.  
Then $\Xcal$ has a generic fibre $\Xcal_\eta$ (resp. a special fibre 
$\Xcal_s$) which is a paracompact strictly analytic Berkovich space over 
$K$ (resp. an algebraic scheme over the residue field $\ktilde$) locally 
isomorphic to $\Mcal(\Acal)$ (resp. $\Spec(A \otimes_{\kcirc} \ktilde$)) 
for the strict affinoid algebra $\Acal := A \otimes_\kcirc K$ (see 
\cite[\S 1.6]{berkovich-ihes} for the equivalence to rigid analytic 
spaces over $K$ with an affinoid covering of finite type). 

A {\it formal $\kcirc$-model  of $X$} is an admissible formal 
scheme $\Xcal$ over $\kcirc$ with an isomorphism 
$\Xcal_\eta \cong \Xan$. For a line bundle $L$ on $X$, we define a 
{\it formal $\kcirc$-model of $L$} as a line bundle $\Lcal$ on a formal 
$\kcirc$-model $\Xcal$ of $X$ with an isomorphism $\Lcal_\eta \cong \Lan$
{over $\Xcal_\eta\cong \Xan$}. 
For simplicity, we usually identify $\Lcal_\eta$ with $\Lan$. 
\end{art}

\begin{art} \label{formal metrics}
Let $L$ be a line bundle on $X$. A {\it formal metric} on $L$ is a metric 
$\metr_\Lcal$ associated to a formal $\kcirc$-model $\Lcal$ of $L$ in the 
following way: {If $\Lcal$ admits a formal trivialization over
$\Ucal$ and if  $s \in \Gamma(\Ucal, \Lcal)$ corresponds under this 
trivialization to the function 
$\gamma \in \Ocal_\Xcal(\Ucal)$,} then $\|s(x)\|=|\gamma(x)|$ for all 
$x \in \Ucal_\eta$. This definition is independent of the choice of the 
trivialization and shows immediately that formal metrics are continuous. 
The tensor product and the pull-back of formal metrics are again formal metrics. 
\end{art}

\begin{prop} \label{existence}
Every line bundle $L$ on $X$ has a formal $\kcirc$-model and hence a formal metric.
\end{prop}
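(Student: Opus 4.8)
The plan is to reduce the statement to a purely local question about admissible formal models and then to construct such a model by hand. Since $X$ is a variety over $K$ (in particular, of finite type), we may cover $X$ by finitely many affine opens $X_i = \Spec(A_i)$; because $L$ is a line bundle, after possibly refining this covering we may also assume that $L|_{X_i}$ is trivial, say with frame $s_i$, and that on overlaps $X_i \cap X_j$ we have the transition functions $h_{ij} \in \Ocal_X^\times(X_i \cap X_j)$. The first step is to spread out: for each $i$ choose a flat $\kcirc$-algebra $A_i^\circ$ of topologically finite type whose generic fibre is (a dense subalgebra of) the affinoid algebra attached to $X_i^\an$, compatible with the chosen coordinates; the completions $\Xcal_i := \Spf(A_i^\circ)$ are admissible affine formal schemes with $(\Xcal_i)_\eta = X_i^\an$. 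The transition functions $h_{ij}$, together with their inverses, lie in $\Ocal(X_i \cap X_j)^\times$, so after scaling by suitable elements of $K^\times$ (using that $K$ is a field with a value group that is dense enough, or simply that $\kcirc$ is a valuation ring so that any finite set of units can be simultaneously cleared of denominators) they become invertible functions on a common admissible formal model of $X_i \cap X_j$.

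\textbf{Glueing.} The second step is to glue the $\Xcal_i$ along these formal models of the intersections $X_i \cap X_j$ into a single admissible formal $\kcirc$-model $\Xcal$ of $X$; here one invokes Raynaud's theory (as recalled in \ref{formal models}) that admissible formal schemes glue exactly as schemes do, provided one has chosen the models of the double and triple intersections compatibly, which can always be arranged by passing to common refinements since the formal models of a fixed quasi-compact quasi-separated rigid space are cofiltered. One then has $\Xcal_\eta \cong \Xan$. To descend $L$, observe that the (rescaled) transition functions $h_{ij}$ now define a $1$-cocycle with values in $\Ocal_\Xcal^\times$ on this model, hence a line bundle $\Lcal$ on $\Xcal$ whose generic fibre is canonically identified with $\Lan$ over $\Xcal_\eta \cong \Xan$. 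Thus $\Lcal$ is a formal $\kcirc$-model of $L$, and by \ref{formal metrics} it induces a formal metric $\metr_\Lcal$ on $L$.

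\textbf{Main obstacle.} The genuinely delicate point is the compatible choice of the integral structures so that the transition functions and their inverses are simultaneously integral on a common model: a priori each $h_{ij}$ is integral on some model of $X_i \cap X_j$ but $h_{ij}^{-1}$ on another, and making all of these agree over all pairs (and ensuring cocycle compatibility survives the refinement) is where one must be careful. The clean way around this is to note that on the quasi-compact strictly $K$-analytic space $X_i^\an \cap X_j^\an$ the function $\max(|h_{ij}|, |h_{ij}|^{-1})$ is bounded, so after multiplying $s_j$ by a suitable element of $K^\times$ — equivalently, rescaling the model of $L$ over $X_j$ — we may assume $|h_{ij}| \le 1$ and $|h_{ij}|^{-1} \le 1$ on the nose, i.e. $|h_{ij}| = 1$, on a formal model; doing this consistently is a finite bookkeeping exercise because the index set is finite. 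One small caveat: when $X$ is not proper, $X^\an$ need not be quasi-compact, but the local nature of a formal model (we only require a locally finite atlas, cf. \ref{formal models}) means it suffices to work chart by chart, and the paracompactness built into the definition guarantees the glueing goes through. Once the model $\Lcal$ exists, the existence of the formal metric is immediate from the definition, which completes the proof.
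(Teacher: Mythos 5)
The decisive step in your construction does not work. To obtain a line bundle $\Lcal$ on a formal $\kcirc$-model you need the transition functions to be units of $\Ocal_\Xcal$, which forces $|h_{ij}|\equiv 1$ on the generic fibres of the overlaps. You propose to achieve this by multiplying the frames $s_j$ by constants in $K^\times$, arguing that $\max(|h_{ij}|,|h_{ij}|^{-1})$ is bounded. But rescaling a frame only multiplies $|h_{ij}|$ by a \emph{constant}, whereas $|h_{ij}|$ is in general a non-constant function on $(X_i\cap X_j)^\an$; no constant rescaling can make it identically $1$. Already for $L=\Ocal(1)$ on $\P^1$ with the standard two charts the transition function is the coordinate $t$, and $|t|$ takes every value in $(0,\infty)$ on $\G_m^\an$. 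The boundedness claim itself also fails: for affine $X_i$ the analytification $(X_i\cap X_j)^\an$ is not quasi-compact, so $|h_{ij}|$ need not be bounded at all. A related problem occurs in your spreading-out step: there is no ``affinoid algebra attached to $X_i^\an$'' for an affine variety $X_i$, so $\Spf(A_i^\circ)$ as you describe it does not have generic fibre $X_i^\an$; one must work with coverings by affinoid subdomains, which is exactly where Raynaud's theory enters.

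This normalization of the cocycle is the real content of the statement, and it cannot be done by bookkeeping with constants: one has to \emph{refine the covering} (in the $\rm G$-topology, after admissible formal blowing-ups) so that the line bundle is trivialized by frames of constant norm $1$ on the pieces --- compare condition (c) of Proposition \ref{equivalences for formal metrics}. The paper's proof accordingly does not construct the model by hand; it invokes Raynaud's theorem that every paracompact strictly $K$-analytic space has a formal $\kcirc$-model and then runs the extension argument for line bundles on the generic fibre of an admissible formal scheme (Gubler, Crelle, Prop.\ 7.6; first given for paracompact strictly analytic spaces by Chambert-Loir--Ducros, Prop.\ 6.2.13), where admissible formal blowing-ups are used to realize a refined trivializing covering as a formal open covering on which the transition functions become formal units. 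Your glueing and descent outline would be fine \emph{after} that refinement, but without it the proof has a genuine gap.
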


\begin{proof} This follows as in 
\cite[Prop. 7.6]{gubler-crelle} based on the theorem of Raynaud that every 
paracompact analytic space has a formal $\kcirc$-model 
(see \cite[Theorem 8.4.3]{bosch-lectures-2015}). 
The argument for paracompact strictly $K$-analytic spaces was first given 
in \cite[Proposition 6.2.13]{chambert-loir-ducros}.
\end{proof}

\begin{prop} \label{uniqueness of line bundle}
Let $\metr$ be a formal metric on the line bundle $L$ on $X$. 
Then there is an admissible formal $\kcirc$-model $\Xcal$ of $X$ with 
reduced special fibre $\Xcal_s$ and a $\kcirc$-model $\Lcal$ of $L$ on 
$\Xcal$ such that $\metr = \metr_\Lcal$. Moreover, the invertible sheaf 
associated to $\Lcal$ is always canonically isomorphic to the sheaf  
on $\Xcal$ given by 
$\{s \in \Gamma(L,\Ucal_\eta) \mid ||s(s)|| \leq 1 \; \forall x \in \Ucal_\eta\}$ 
on a formal open subset $\Ucal$ of $\Xcal$.
\end{prop}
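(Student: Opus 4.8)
The starting point is the definition of a formal metric: there exist an admissible formal $\kcirc$-model $\Xcal_0$ of $X$ and a formal $\kcirc$-model $\Lcal_0$ of $L$ on $\Xcal_0$ with $\metr=\metr_{\Lcal_0}$ (at least one such pair exists by Proposition \ref{existence}). The plan is to dominate $\Xcal_0$ by an admissible formal model $\Xcal$ with reduced special fibre, to set $\Lcal:=\pi^*\Lcal_0$ for the dominating morphism $\pi$, and then to identify $\Lcal$ with the sheaf of sections of sup-norm at most $1$.

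First I would construct the canonical admissible formal $\kcirc$-model $\Xcal$ of $X$ with reduced special fibre. Locally $\Xcal_0=\Spf(A_0)$ with $A_0$ flat and topologically of finite type over $\kcirc$; writing $\Acal_0:=A_0\otimes_{\kcirc}K$ for the associated strictly $K$-affinoid algebra (which is reduced, since $X$, hence $\Xan$, is reduced), one replaces $A_0$ by the subring $\Acal_0^\circ=\{f\in\Acal_0\mid |f|_{\sup}\le 1\}$ of power-bounded elements. Since $K$ is algebraically closed, hence a stable field, and $\Acal_0$ is reduced, $\Acal_0^\circ$ is again flat and topologically of finite type over $\kcirc$, with $\Acal_0^\circ\otimes_{\kcirc}K=\Acal_0$ and with reduced reduction $\Acal_0^\circ\otimes_{\kcirc}\ktilde=\widetilde{\Acal_0}$; here I would cite the relevant results on affinoid algebras and the reduced fibre theorem (cf.\ \cite{bosch-luetkebohmert-1}, \cite{bosch-lectures-2015}). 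As $\Acal_0^\circ$ depends only on $\Acal_0$, i.e.\ only on the generic fibre, these local pieces glue to an admissible formal model $\Xcal$ with reduced special fibre, equipped with a canonical affine morphism $\pi\colon\Xcal\to\Xcal_0$ restricting to the identity on generic fibres. (Alternatively one may quote the existence of such a dominating model directly, as in \cite{gubler-crelle} or from admissible formal blowing-up theory.) The decisive property I would then record is that, for every formal affine open $\Ucal=\Spf(A)$ of $\Xcal$, the ring $A$ equals the unit ball $\{f\in\Ocal_{\Xcal_\eta}(\Ucal_\eta)\mid |f(x)|\le 1\ \text{for all}\ x\in\Ucal_\eta\}$; this is exactly the reducedness of the special fibre.

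Next I would set $\Lcal:=\pi^*\Lcal_0$, a line bundle on $\Xcal$. Because $\pi$ is the identity on generic fibres, $\Lcal$ is a $\kcirc$-model of $L$ with $\metr_\Lcal=\metr_{\Lcal_0}=\metr$, and $\Xcal_s$ is reduced, which proves the first assertion. For the supplement I would work locally over a formal affine open $\Ucal$ on which $\Lcal$ admits a frame $t$: the frame $t$ corresponds to the function $1\in\Ocal_\Xcal(\Ucal)$, so $\|t(x)\|=1$ for all $x\in\Ucal_\eta$ by the definition of $\metr_\Lcal$. Any $s\in\Gamma(\Ucal_\eta,\Lan)$ is uniquely of the form $s=ft$ with $f\in\Ocal_{\Xcal_\eta}(\Ucal_\eta)$, and then $\|s(x)\|=|f(x)|$; hence $\|s(x)\|\le 1$ for all $x\in\Ucal_\eta$ if and only if $f\in A$ by the displayed description of $A$, i.e.\ if and only if $s\in A\cdot t=\Gamma(\Ucal,\Lcal)$. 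Thus $\Lcal$ coincides on a basis of the formal topology of $\Xcal$ with the sheaf $\Ucal\mapsto\{s\in\Gamma(\Ucal_\eta,\Lan)\mid\|s(x)\|\le 1\ \forall x\in\Ucal_\eta\}$, so the two are canonically isomorphic.

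I expect the main obstacle to be the construction of $\Xcal$ with reduced special fibre, and in particular the fact that $\Acal_0^\circ$ is topologically of finite type over $\kcirc$: this is where the hypothesis that $K$ is algebraically closed (equivalently, that we work over a stable field) is used, ultimately via the reduced fibre theorem of Bosch--L\"utkebohmert--Raynaud. Once this and the resulting identification of $A$ with the unit ball of analytic functions on $\Ucal_\eta$ are in place, the remaining steps --- the frame computation and the sheaf-theoretic gluing --- are routine.
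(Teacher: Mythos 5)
Your proposal is correct and follows essentially the same route as the paper, whose proof simply defers to \cite[Lemma 7.4, Proposition 7.5]{gubler-crelle}: dominate a defining formal model by one with reduced special fibre via the finiteness theorem for power-bounded elements over the (stable, algebraically closed) field $K$, pull back the model of $L$, and then use that reducedness of the special fibre identifies $\Ocal_\Xcal(\Ucal)$ with the unit ball of analytic functions to recognize $\Lcal$ as the sheaf of sections of sup-norm at most $1$. Your frame computation for the supplement is exactly the cited argument, so no gap remains beyond the standard facts you already attribute to Bosch--L\"utkebohmert and Gubler.
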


\begin{proof} This follows as in \cite[Lemma 7.4 and Proposition 7.5]{gubler-crelle}. 
\end{proof}

\begin{prop} \label{equivalences for formal metrics} 
Let $\metr$ be a metric on the  line bundle $L$ on $X$. 
Then the following properties are equivalent:
\begin{itemize}
\item[(a)]  $\metr$ is a formal metric;
\item[(b)]  $\metr$ is a piecewise linear metric;
\item[(c)]  there is a $\rm G$-covering $(W_i)_{i \in I}$ of $\Xan$ 
by strict analytic subdomains $W_i$ of $\Xan$ and trivializations 
$s_i \in \Gamma(W_i,\Lan)$ with $\|s_i(x)\|=1$ for all $x \in W_i$ and all $i \in I$.
\end{itemize}
\end{prop}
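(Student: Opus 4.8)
The strategy is to prove the chain of implications $(a)\Rightarrow(c)\Rightarrow(b)\Rightarrow(a)$. The implication $(a)\Rightarrow(c)$ is essentially unwinding the definition of a formal metric: a formal $\kcirc$-model $\Lcal$ of $L$ on a formal model $\Xcal$ comes with a locally finite atlas of admissible affine formal opens $\Ucal$ over which $\Lcal$ is trivial, and by \ref{formal metrics} the induced metric satisfies $\|s\|=|\gamma|$ for the corresponding unit $\gamma$; choosing $s=s_i$ to be the trivializing section over $\Ucal_i$ (so $\gamma=1$) gives $\|s_i\|\equiv 1$ on $W_i:=(\Ucal_i)_\eta$, and these $W_i$ form a $\rm G$-covering of $\Xan$ by strict analytic (in fact affinoid) subdomains. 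For the reverse direction $(c)\Rightarrow(a)$, from such a covering one reconstructs a formal model of $L$: the transition functions $h_{ij}=s_j/s_i$ are invertible analytic functions on $W_i\cap W_j$ with $|h_{ij}|\equiv 1$, hence they and their inverses are power-bounded, so after refining to a formal model $\Xcal$ of $X$ adapted to the covering (via Raynaud, \ref{existence} and the references there) the $h_{ij}$ glue the trivial formal line bundles on the formal opens into a formal $\kcirc$-model $\Lcal$ whose associated metric is $\metr$. Alternatively, and more cleanly, one invokes Proposition \ref{uniqueness of line bundle}: the sheaf $\{s\in\Gamma(L,\Ucal_\eta)\mid \|s(x)\|\le 1\}$ is then locally free of rank one by the condition $|h_{ij}|\equiv 1$, and defines the desired $\Lcal$.

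For $(c)\Rightarrow(b)$: on each $W_i$ we have $-\log\|s_i\|\equiv 0$, which is trivially piecewise linear; to get a piecewise linear metric in the sense of Definition \ref{piecewise smooth metrics} around an arbitrary point $x\in\Xan$, pass to an algebraic trivialization. Concretely, near $x$ choose $W_i$ containing $x$ and a non-vanishing algebraic section $s$ of $L$ on a Zariski neighbourhood $U$ of the reduction data; then $-\log\|s|_V\|= -\log|s/s_i|$ on a small enough $V$, and $s/s_i$ is an invertible analytic function on $W_i\cap V$, so by \cite[Prop. 7.2]{gubler-forms} it agrees locally with $-\log|g|$ for an algebraic moment map $g\colon U'\to\G_m$, whence $-\log\|s|_V\|$ is the pull-back of a linear (hence integral $\Gamma$-affine, after possibly enlarging the torus) function via $\trop_{U'}$. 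This is exactly the condition for a piecewise linear metric. The implication $(b)\Rightarrow(c)$ is the substantive analytic step and uses Proposition \ref{G-characterization}: a piecewise linear metric is locally of the form $-\log\|s\|=\phi\circ\varphi_{\rm trop}$ on a strict analytic subdomain for an integral $\Gamma$-affine $\phi$; the piecewise linear $\Gamma$-rational polyhedral decomposition of the target can be refined so that on each polyhedral piece $\phi$ is a single integral $\Gamma$-affine function $u+c$ with $u\in M$ and $c\in\Gamma$. Pulling $u$ back gives an invertible analytic function whose valuation matches $-\log\|s\|+c$ on the corresponding strict affinoid subdomain, and modifying $s$ by this unit and by a scalar of absolute value $e^{-c}$ produces a local trivialization of norm $1$; these assemble into the $\rm G$-covering required in (c).

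\textbf{Main obstacle.} The delicate point is the passage $(b)\Rightarrow(c)$ — matching a tropically piecewise $\Gamma$-affine description of the metric with an honest $\rm G$-covering by strict analytic subdomains carrying norm-one trivializations. One must be careful that the polyhedral pieces $\sigma_i$ coming out of Proposition \ref{G-characterization} are integral $\Gamma$-affine (not merely $\R$-affine), so that their preimages $\trop_U^{-1}(\sigma_i)$ are \emph{strict} affinoid subdomains, and that the constants $c$ appearing in the affine functions lie in the value group $\Gamma$, which is exactly what is needed to realize $e^{-c}$ as $|\lambda|$ for some $\lambda\in K^\times$ (here $K$ algebraically closed, hence $\Gamma$ divisible, helps). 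The remaining implications are bookkeeping with invertible analytic functions and the already-established dictionary between formal models and strict analytic coverings; I would organize the writeup so that the formal-model reconstruction in $(c)\Rightarrow(a)$ cites Proposition \ref{uniqueness of line bundle} to avoid redoing the Raynaud-type gluing from scratch.
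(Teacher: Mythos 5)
Your overall route coincides with the paper's: (a)$\Leftrightarrow$(c) via the dictionary between formal models and $\rm G$-coverings carrying norm-one trivializations (the paper disposes of this by citing \cite[Lemma 7.4, Prop.\ 7.5]{gubler-crelle} and \cite[Exemple 6.2.10]{chambert-loir-ducros}), and (b)$\Leftrightarrow$(c) via Proposition \ref{G-characterization}. Your handling of the substantive direction (b)$\Rightarrow$(c) --- writing the integral $\Gamma$-affine function as $\langle u,\cdot\rangle+c$ with $u\in M$, $c\in\Gamma$, pulling back the character along the moment map and absorbing $c$ by a scalar $\lambda\in K^\times$ with $v(\lambda)=c$, then replacing $s_i$ by the resulting unit times $s_i$ --- is exactly the paper's argument, and you correctly isolate why strictness of the subdomains and $c\in\Gamma$ are the delicate points.

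Two local slips, both repairable. First, in (c)$\Rightarrow$(b) you write ``near $x$ choose $W_i$ containing $x$ \dots then $-\log\|s|_V\|=-\log|s/s_i|$ on a small enough $V$''; this tacitly assumes that some member of the $\rm G$-covering is a topological neighbourhood of $x$, which fails in general (think of a point lying on the common boundary of the pieces of an affinoid covering): the equality only holds on $V\cap W_i$. The correct move, and the one the paper makes, is to feed the whole family of invertible analytic functions $f_i=s/s_i$ on the $\rm G$-covering $(W_i\cap U^{\rm an})_i$ into the converse direction of Proposition \ref{G-characterization} (moment maps to $\G_m$, the coordinate being integral $\Gamma$-affine), whose proof performs precisely the gluing across several members of the covering that your single-chart reduction skips. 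Second, your ``cleaner'' version of (c)$\Rightarrow$(a) invokes Proposition \ref{uniqueness of line bundle}, but that proposition presupposes that the metric is formal, so as a citation it is circular; the underlying idea (the unit-ball subsheaf on a model with reduced special fibre is invertible) is right, but it is exactly the content of \cite[Lemma 7.4, Prop.\ 7.5]{gubler-crelle}, and the same point is what makes your primary gluing argument go through: to know that the cocycle $h_{ij}$ with $|h_{ij}|\equiv 1$ consists of units of $\Ocal_\Xcal$ over the formal pieces you need the refined formal model to have reduced special fibre (equivalently $\Ocal_\Xcal(\Ucal)=\Ocal(\Ucal_\eta)^\circ$), which your sketch leaves implicit.
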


\begin{proof} If we use again  Raynaud's 
theorem to generalize to paracompact $\Xan$, 
the equivalence of (a) and (c) is proved as in 
\cite[Lemma 7.4 and Proposition 7.5]{gubler-crelle}. 
The implication (a) $\Rightarrow$ (c) can also be found in 
\cite[Exemple 6.2.10]{chambert-loir-ducros}.
It remains to see the equivalence of (b) and (c). Suppose that (b) holds. 
Then there is a locally finite covering of $X$ by trivializations $U_i$ of 
$L$ such that $-\log \|s_i\|$ is piecewise linear on $(U_i)^{\rm an}$ for 
every $i \in I$. 
By Proposition \ref{G-characterization}, there is a $\rm G$-covering 
$W_{ij}$ of $(U_i)^{\rm an}$ and analytic moment maps 
$\varphi_{ij}: W_{ij} \rightarrow (T_{ij})^{\rm an}$ such that 
$-\log\|s_i\|=\phi_{ij} \circ \varphi_{ij,\trop}$ on $W_{ij}$ for 
an integral $\Gamma$-affine function $\phi_{ij}$ on $N_{ij,\R}$. 
The definition of integral $\Gamma$-affine functions shows that there 
is an invertible analytic function $\gamma_{ij}$ on $W_{ij}$ such that 
$\|s_i\|=|\gamma_{ij}|$ on $W_{ij}$. 
Using the trivialization $\gamma_{ij}^{-1}s_i$ on $W_{ij}$, we 
get (b) $\Rightarrow$ (c). 
The converse is an immediate application of 
Proposition \ref{G-characterization}.
\end{proof}

\begin{art} \label{algebraic metrics}
If $X$ is proper over $K$, than an {\it algebraic $\kcirc$-model} of $X$ is an 
integral  scheme $\Xfrak$ which is of finite type, flat and proper over 
$\kcirc$ and with a fixed isomorphism between the generic fibre $\Xcal_\eta$ 
and $X$. We use the isomorphism to identify $\Xfrak_\eta$ and $X$.
An {\it algebraic $\kcirc$-model} of $L$ is a line bundle 
$\Lfrak$ on an algebraic $\kcirc$-model $\Xfrak$ of $X$ together with a fixed
isomorphism between $\Lfrak_\eta$ and $L$. We define an {\it algebraic metric} on $L$ as in \ref{formal metrics} by using an algebraic $\kcirc$-model $\Lfrak$ of $L$. 
\end{art}

\begin{prop} \label{algebraic and formal metrics}
On a line bundle on a proper variety over $K$, a metric is algebraic if and only if it is formal. 
\end{prop}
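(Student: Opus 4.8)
The statement is an equivalence, so there are two implications to establish. The implication ``algebraic $\Rightarrow$ formal'' is the easy direction and essentially a completion argument: if $\metr = \metr_\Lfrak$ for an algebraic $\kcirc$-model $\Lfrak$ on an algebraic $\kcirc$-model $\Xfrak$ of the proper variety $X$, then the formal completion $\widehat{\Xfrak}$ along the special fibre is an admissible formal $\kcirc$-scheme (here properness and flatness over $\kcirc$ are used to see that $\widehat{\Xfrak}$ has the required topological finiteness and a locally finite affine atlas), the completion $\widehat{\Lfrak}$ is a formal $\kcirc$-model of $L$, and by the comparison of the generic fibre of $\widehat{\Xfrak}$ with $\Xan$ one checks on formal trivializations that $\metr_{\widehat{\Lfrak}} = \metr_\Lfrak = \metr$. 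The point here is simply that the defining local equations $\|s\| = |\gamma|$ with $\gamma \in \Ocal_\Xfrak(\Ufrak)$ are unchanged under $\Ocal_\Xfrak(\Ufrak) \to \Ocal_{\widehat{\Xfrak}}(\widehat{\Ufrak})$, since these rings have the same image in the analytic functions on the generic fibre.

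\textbf{The converse.} For ``formal $\Rightarrow$ algebraic'' the plan is to invoke the formal GAGA principle, exactly in the spirit of \cite[\S 7]{gubler-crelle}. Given a formal metric $\metr$, Proposition \ref{uniqueness of line bundle} produces an admissible formal $\kcirc$-model $\Xcal$ of $X$ with reduced special fibre and a formal $\kcirc$-model $\Lcal$ of $L$ with $\metr = \metr_\Lcal$. Because $X$ is proper over $K$, the formal scheme $\Xcal$ is proper over $\Spf(\kcirc)$, and one may first arrange (by a blow-up, using Raynaud's theory, or by taking a suitable admissible model dominating $\Xcal$) that $\Xcal$ is the formal completion of a \emph{projective} algebraic $\kcirc$-model $\Xfrak$ of $X$; here one uses that $X$ itself can be assumed projective after reduction, or more robustly that any proper admissible formal model is dominated by the completion of a proper algebraic model — this is the standard algebraization step for proper formal schemes over a complete noetherian base. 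Then formal GAGA for the proper $\kcirc$-scheme $\Xfrak$ (\cite{ega3} type statement, or Grothendieck's existence theorem) gives an equivalence between coherent sheaves on $\Xfrak$ and on its formal completion $\widehat{\Xfrak} = \Xcal$; in particular the formal line bundle $\Lcal$ is the completion of a unique algebraic line bundle $\Lfrak$ on $\Xfrak$ with $\Lfrak_\eta \cong L$. Finally one checks that $\metr_\Lfrak = \metr_\Lcal = \metr$, again by comparing on trivializations, since a local frame of $\Lfrak$ completes to a local frame of $\Lcal$ and the functions representing a section agree.

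\textbf{Main obstacle.} The routine parts — completion of an algebraic model, comparison of metrics on trivializations, reducedness of the special fibre — are all bookkeeping. The genuine content, and the step I expect to require the most care, is the \emph{algebraization}: passing from the admissible formal $\kcirc$-model $\Xcal$ (which a priori exists only after Raynaud's theorem on paracompact analytic spaces, as used in Proposition \ref{existence}) to an honest proper algebraic $\kcirc$-model $\Xfrak$ whose completion carries $\Lcal$. One has to be careful that $K$ (hence $\kcirc$) need not be noetherian, so the naive form of Grothendieck's existence theorem does not apply directly; the standard workaround is to descend the whole situation to a noetherian subring of $\kcirc$ — e.g. a discretely valued or even excellent subring over which $X$, the model $\Xfrak$ and $\Lfrak$ are defined — apply formal GAGA there, and then base-change back. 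Since $X$ is of finite type and we are dealing with finitely much data (the finite trivializing cover and the transition functions defining the formal model), such a descent is available, and this reduces the converse to the classical formal GAGA statement. I would cite \cite{gubler-crelle} and \cite{chambert-loir-ducros} for the fact that this argument goes through in the present generality, and spell out only the metric-comparison step in detail.
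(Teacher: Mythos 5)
Your overall route is the paper's: completion along the special fibre for ``algebraic $\Rightarrow$ formal'', and for the converse the algebraization of a formal model via formal GAGA plus the existence of a proper algebraic $\kcirc$-model, following the projective-case argument of \cite[Prop. 10.5]{gubler-pisa}. The easy direction and the metric comparisons on trivializations are fine.

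The gap is precisely in the step you single out as the main obstacle. Since $K$ is algebraically closed with non-trivial valuation, $\kcirc$ is non-noetherian and any noetherian valuation subring of it is a field or a discrete valuation ring. An admissible formal model is not ``finitely much data'' in the algebraic sense: its charts are quotients of Tate algebras $\kcirc\langle x_1,\dots,x_m\rangle$, so the presentations, glueing maps and transition functions of $\Lcal$ involve infinitely many coefficients in $\kcirc$, and the valuations occurring in a formal metric need not lie in any discrete subgroup of $\R$ (already piecewise linear metrics use arbitrary elements of the divisible value group $\Gamma$). Consequently there is in general no noetherian subring of $\kcirc$ --- discretely valued or otherwise --- over which $X$, a proper model $\Xfrak$, and the formal data $(\Xcal,\Lcal)$ are simultaneously defined with the correct base change, so the assertion ``such a descent is available'' is unsubstantiated and fails as stated; the same problem undermines your appeal to the ``standard algebraization step for proper formal schemes over a complete noetherian base''. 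The paper resolves exactly this point by invoking the generalization of formal GAGA \cite[Theorem 5.1.4]{ega-3-1} to non-noetherian bases due to Fujiwara--Kato \cite[Theorem I.10.1.2]{fujiwara-kato-1}, applied to a proper flat algebraic $\kcirc$-model of $X$ whose existence follows from Nagata's compactification theorem in the non-noetherian generality of Conrad/Temkin \cite{temkin-2011}; with these two inputs the argument of \cite[Prop. 10.5]{gubler-pisa} extends from the projective to the proper case. If you wish to avoid these citations you must supply a genuine substitute for non-noetherian formal GAGA, not a descent to a noetherian subring.
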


\begin{proof} 
Passing to the formal completion along the special fibre, it is clear that every algebraic metric is a formal metric. 
Using \cite[Proposition 10.5]{gubler-pisa}, the converse is true if $X$ is projective. The same argument shows that the converse is also true for proper $X$ if the formal GAGA theorem in \cite[Theorem 5.1.4]{ega-3-1} holds over $\kcirc$ and if $X$ has an algebraic $\kcirc$-model. In  \cite[Theorem 5.1.4]{ega-3-1}, the base has to be  noetherian and hence it applies only for discrete valuation rings. The required generalization is now given in 
\cite[Theorem I.10.1.2]{fujiwara-kato-1}.
The existence of an algebraic $\kcirc$-model follows from Nagata's compactification theorem. This was proved by Nagata in the noetherian case and proved by Conrad in general (based on notes of Deligne, see Temkin's paper 
\cite{temkin-2011} for another proof and references).
\end{proof}

\begin{cor} \label{formal vs algebraic}
Let $L$ be a line bundle on a proper variety over $K$. Then $L$ has an algebraic metric. 
\end{cor}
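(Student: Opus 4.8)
The plan is to deduce this immediately from the two preceding results. First I would invoke Proposition \ref{existence}, which guarantees that the line bundle $L$ on $X$ admits a formal $\kcirc$-model and hence a formal metric $\metr$. Note that nothing in Proposition \ref{existence} requires $X$ to be proper; it only uses Raynaud's theorem that every paracompact analytic space has a formal $\kcirc$-model, so the formal metric $\metr$ on $L$ exists unconditionally.

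Next I would use the hypothesis that $X$ is proper over $K$ to upgrade this formal metric to an algebraic one. By Proposition \ref{algebraic and formal metrics}, on a line bundle over a proper variety over $K$ a metric is algebraic if and only if it is formal. Applying this equivalence to the formal metric $\metr$ produced in the first step yields that $\metr$ is in fact an algebraic metric on $L$, which is exactly the assertion of the corollary.

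There is essentially no obstacle here: the corollary is a direct combination of Proposition \ref{existence} and Proposition \ref{algebraic and formal metrics}, and all the substantive work (the existence of formal models via Raynaud, and the comparison of formal and algebraic models via formal GAGA over $\kcirc$ together with Nagata compactification) has already been carried out in those two statements. If anything merited a remark, it would only be to stress that properness enters solely through Proposition \ref{algebraic and formal metrics} — in particular through the existence of an algebraic $\kcirc$-model of $X$ (Nagata) and the validity of formal GAGA over $\kcirc$ (Fujiwara--Kato) — while the formal metric itself is available for an arbitrary variety.
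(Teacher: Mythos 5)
Your proof is correct and is exactly the paper's argument: the corollary is obtained by combining Proposition \ref{existence} (existence of a formal metric) with Proposition \ref{algebraic and formal metrics} (formal equals algebraic on proper varieties). Your added remark on where properness enters is accurate but not needed.
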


\begin{proof}
This follows from Proposition \ref{existence} and Proposition \ref{algebraic and formal metrics}.
\end{proof}

We will show now that many important metrics are piecewise smooth.

\begin{ex} \label{canonical metric on abelian variety}
Let $L$ be a line bundle on the abelian variety $A$ over $K$. 
Choosing a rigidification of $L$ at $0 \in A$ and assuming 
$L$ symmetric (resp. odd), the theorem of the cube allows to identify 
$[m]^*(L)$ with $L^{\otimes m^2}$ (resp. $L^{\otimes m}$). 
There is a unique continuous metric $\metr_{\rm can}$ on $\Lan$ with 
$[m]^*\metr_{\rm can}=\metr_{\rm can}^{\otimes m^2}$ (resp. 
$[m]^*\metr_{\rm can}=\metr_{\rm can}^{\otimes m}$) for all 
$m \in \Z$. In general, $L^{\otimes 2}$ is the tensor product 
of a symmetric and an odd line bundle, unique up to $2$-torsion 
in $\Pic(A)$, and hence we get a {\it canonical metric 
$\metr_{\rm can}$}  on $L$ which is unique up to multiples 
from $|K^\times|$ if we vary rigidifications. 
We claim that $\metr_{\rm can}$ is locally {on $X^\an$} the tensor product of a smooth 
metric and a piecewise linear metric. In particular, we deduce that 
$\metr_{\rm can}$ is a piecewise smooth metric. 

To prove the claim, we use the Raynaud extension  of $A$ to describe the 
canonical metric on $L$ (see  \cite[\S 4]{gubler-compositio} for details). 
{The Raynaud extension} is an exact sequence 
{\begin{equation} \label{Raynaud extension 2}
1 \longrightarrow \Tan \longrightarrow E \stackrel{q}{\longrightarrow} 
B^{\rm an} \longrightarrow 0
\end{equation}}
of {commutative} analytic groups, where $T=\Spec(K[M])$ is a 
multiplicative torus of rank $r$  and $B$ is an abelian variety of good reduction. 
Moreover, there is a lattice $P$ in $E$ with $E/P = A^{\rm an}$. 
{More precisely $P$} is a discrete subgroup of $E(K)$ which is mapped 
{by a canonical map }$\val:E \rightarrow N_\R$ isomorphically onto a 
complete lattice {$\Lambda$} of $N_\R$, where $N$ is the dual of $M$. 
The map $\val$ is locally over $B$ a tropicalization. Note that the Raynaud 
extension is algebraizable, but the quotient homomorphism 
$p:E \rightarrow A^{\an}$ is only { defined in the analytic category}. 
 
Let $\Bcal$ be the abelian scheme over $\kcirc$ with generic fibre $B$. 
{By {\it loc. cit.} there exists} a line bundle $\Hcal$ on $\Bcal$ such 
that $q^*((\Hcal_\eta)^{\rm an})= p^*(\Lan)$. Here and in the following, we 
use rigidified line bundles to identify isomorphic line bundles. 
Then $q^*\metr_\Hcal$ is a formal metric on $p^*(\Lan)$. 
On $p^*(\Lan)$, we have a {canonical} $P$-action $\alpha$ over 
{the canonical action of $P$ on $E$ by translation}. 
{By {\it loc. cit.}} there is a {$1$-cocycle $Z$ in 
$Z^1(P,(\R^\times)^E)$} such that
\begin{equation} \label{action and metric}
\left(q^*\|\alpha_\gamma(w)\|_\Hcal\right)_{\gamma \cdot x} = 
Z_\gamma(x)^{-1} \cdot \left(q^*\|w \|_\Hcal\right)_x
\end{equation}
for all $\gamma \in P$, $x \in E$ and $w \in (p^*L^{\rm an})_x$. 
The cocycle $Z$ depends only on the map $\val$ which means that there is a 
unique function $z_\lambda: N_\R \rightarrow \R$ with
\[
z_\lambda(\val(x))=- \log(Z_\gamma(x)) \quad (\gamma \in P, x \in E, \lambda=\val(\gamma)).
\]
Moreover, there is a canonical symmetric bilinear form 
$b:\Lambda \times \Lambda \rightarrow \Z$ associated to $L$ such that 
\[
z_\lambda(\omega) = z_\lambda(0) + b(\omega,\lambda) \quad (\omega \in N_\R, \lambda \in \Lambda).
\]
The cocycle condition 
\[
Z_{\rho \gamma}(x)=Z_\rho(\gamma x) Z_\gamma(x) \quad (\rho, \gamma \in P, x \in E)
\]
shows that 
\[
z_{\lambda + \mu}(0)=z_\lambda(0)+z_\mu(0) + b(\lambda, \mu) \quad (\lambda, \mu \in \Lambda)
\]
which means that $\lambda\mapsto z_\lambda(0)$ is a quadratic function 
{on $\Lambda$}. 
There is a unique extension to a quadratic function $q_0:N_\R \rightarrow \R$. 
We define a metric $\metr$ on $p^*(\Lan)$ by 
$\metr := e^{-q_0 \circ \val} q^*\metr_\Hcal$. 
Using \eqref{action and metric} and that $q_0$ is a quadratic function with 
associated bilinear form $b$, it follows easily that $\metr$ descends to the 
canonical metric on $L$. We conclude from the descent {with respect to the
local isomorphism $p$} that the canonical 
metric on $L$ is locally on $A^{\rm an}$ the tensor product of a smooth 
metric with a piecewise linear metric. This proves the claim.
\end{ex}

\begin{ex} \label{algebraically equivalent to zero} 
{Let $L$ be a line bundle on a proper smooth algebraic variety 
over $K$ which is algebraically equivalent to zero.
Let $A$ denote the Albanese variety of $X$ 
(see \cite[Exp. 236, Thm. 2.1, Cor. 3.2]{grothendieck-fga}).
We fix some $x\in X(K)$ and obtain a universal morphism
$\psi:X\to A$ from $X$ to the abelian variety $A$ with $\psi(x)=0$.
Furthermore $L$ is in a canonical way the pullback of an odd line
bundle on $A$ along $\psi$.
It follows that $L$ carries a canonical metric $\metr_{\rm can}$, unique 
up to multiples from $|K^\times|$.
}
By \cite[Example 3.7]{gubler-compositio}, there is an integer 
$N \geq 1$ such that $\metr_{\rm can}^{\otimes N}$ is an algebraic metric 
and hence piecewise linear. We conclude that $\metr_{\rm can}$ is a piecewise 
smooth metric.
\end{ex}

\begin{ex} \label{toric varieties}
Let $L$ be a line bundle on a complete toric variety $X$ over $K$. 
Similarly as in the case of abelian varieties and using rigidifications, 
we have $[m]^*(L)=L^{\otimes m}$ and there is a unique metric 
$\metr_{\rm can}$ on $L$ with $ {[m]^*\metr_{\rm can}}=\metr_{\rm can}^{\otimes m}$ 
for all integers $m \in \Z$ (see \cite[Section 3]{maillot-toric}). 
There is a canonical algebraic $\kcirc$-model $\Xcal$ of $\kcirc$ and a 
canonical algebraic $\kcirc$-model $\Lcal$ by using the same 
rational polyhedral fan and the same piecewise linear function. 
Since $\metr_{\rm can} = \metr_\Lcal$, the canonical metric on $L$ is 
algebraic and hence a piecewise linear metric.
\end{ex}

\begin{art} \label{trivial ps}
Finally, we consider the case where our  {variety $X$ is defined over a
ground field $F$ which} is equipped with
the trivial valuation. 
If $L$ is a line bundle on $X$, then we choose an algebraically 
closed extension field  {$K$} endowed with a non-trivial complete absolute value 
extending the trivial absolute value of  {$F$. Then $F\subseteq K^\circ$} and
the line bundle 
 {$L \otimes_F K^\circ$ on $X \otimes_F K^\circ$} is a canonical 
algebraic  {$K^\circ$}-model of the line bundle  {$L_K$ on $X_K$}. 
We conclude that $L$ has a canonical metric 
$\metr_{\rm can}$. 

The metric $\metr_{\rm can}$ has the following intrinsic description. 
Let $U=\Spec(A)$ be an affine open subset of $X$ which is a 
trivialization of $L$ given by the nowhere vanishing section 
$s \in \Gamma(U,L)$. We consider the formal affinoid subdomain 
$U^\circ := \{x \in \Uan \mid |f(x)| \leq 1 \; \forall f \in A\}$ 
of $\Xan$. Note that $U^\circ$ is the set of points in $\Uan$ with 
reduction contained in $U$ (see \cite[\S 4]{gubler-guide} for more 
details). It follows that $\|s(x)\|_{\rm can}=1$ for all $x \in U^\circ$. 
Since $X$ is proper, such trivializations $U^\circ$ cover $\Xan$ leading 
to a description of $\metr_{\rm can}$ which is independent of  {$K$}. 

For simplicity, we have considered only varieties in this paper. 
We may also consider continuous metrics on $L^{\rm an}$ for a line bundle 
over a separated scheme $X$ of finite type over the ground field  {$F$}.
For such schemes $X$, the intrinsic description above shows in particular 
that we still have a 
canonical metric $\metr_{\rm can}$ on  $L$  in the case of a 
trivially valued  {$F$}.
\end{art}

\section{Piecewise smooth forms and delta-metrics}  \label{piecewise smooth forms and delta-metrics}

We consider again an algebraic variety $X$ over $K$ of dimension $n$.
In this section, we first study piecewise smooth forms on an open subset $W$ 
of $\Xan$. 
This  {leads} to a decomposition of the first Chern current of a piecewise 
smoothly metrized line bundle $(L|_W,\metr)$ into the sum of a piecewise smooth 
form and a residual current. 
We show that the residual current is induced by a generalized $\delta$-form. 
If the first Chern current of $(L|_W,\metr)$ is induced by a $\delta$-form on $W$, 
then $\metr$ is called a $\delta$-metric and the $\delta$-form is called the 
first Chern $\delta$-form. 
We show that many important metrics are $\delta$-metrics. 
In the following sections, we will use $\delta$-metrics for our approach 
to non-archimedean Arakelov theory.

\begin{art} \label{recall of ps superforms}
In Definition \ref{piecewise smooth superforms}, we have defined the space 
$PS(\Omega)$ of piecewise smooth superforms on an open subset $\Omega$ of a polyhedral subset. 
If $(V,\varphi_U)$ is a tropical chart, then we apply this definition for the 
open subset $\Omega := \trop_U(V)$ of {$\Trop(U)$}. 
If $\alpha \in PS(\Omega)$ and $(V',\varphi_{U'})$ is a tropical chart with 
$V' \subseteq V$ and $U' \subseteq U$, then we define $\alpha|_{V'}$ as the 
piecewise smooth form on $\Omega':=\trop_{U'}(V')$ given by pull-back 
of $\alpha$ with respect to the canonical affine map $N_{U',\R} \rightarrow N_{U,\R}$. 
\end{art}

\begin{definition} \label{piecewise smooth forms on X}
A {\it piecewise smooth form} on an open subset $W$ of $\Xan$ may be defined 
in a similar way as a differential form in $A(W)$: 
A piecewise smooth form $\alpha$ is  given by an open covering 
$(V_i,\varphi_{U_i})_{i \in I}$ of $W$ by tropical charts and piecewise smooth superforms 
$\alpha_i$ on $\Omega_i:=\trop_{U_i}(V_i)$ such that 
$\alpha_i|_{V_i \cap V_j}= \alpha_j|_{V_i \cap V_j}$ for all $i,j \in I$. 
A superform $\alpha'$ given by the covering 
$(V_j',\varphi_{U_j'})_{j \in J}$ and piecewise smooth superforms 
$\alpha_j'$ on $\Omega_j':= \trop_{U_j'}(V_j')$ will be identified with $\alpha$ 
if and only if $\alpha_i|_{V_i \cap V_j'}= \alpha_j'|_{V_i \cap V_j'}$ for 
every $i\in I$ and every $j \in J$.
\end{definition}

\begin{art} \label{properties of ps forms on X}
We denote the space of piecewise smooth forms on $W$ by $PS(W)$. 
It comes with a bigrading and {is canonically equipped with a $\wedge$-product}. 
We conclude easily that $PS^{\cdot,\cdot}(W)$ is a bigraded  $A^{\cdot,\cdot}(W)$-algebra on $\Xan$. 
It is clear that $PS^{0,0}(W)$ is the space of piecewise smooth functions on $W$. 
{It coincides with the space $P^{0,0}(W)$ of generalized
$\delta$-preforms of degree zero.
The equality
\begin{equation}
PS^{0,0}(W)=P^{0,0}(W)
\end{equation} 
is in fact a direct consequence of \eqref{preforms and degree 0}.}

If $\varphi:X' \rightarrow X$ is a morphism of algebraic varieties over $K$, then 
the pull-back of piecewise smooth superforms from 
\ref{piecewise smooth superforms} carries over to define a {\it pull-back} 
$f^*:PS^{p,q}(W) \rightarrow PS^{p,q}(W')$ for any open subset $W'$ of 
$(X')^{\rm an}$ with $f(W') \subseteq W$. 
In the special case of $X=X'$, $f=\id$ and $W'$ an open subset of $W$, 
we denote the pull-back by $\alpha|_{W'}$ and call it the {\it restriction of $\alpha$ to $W'$}.
\end{art}

\begin{art} \label{differentiation of ps forms on X}
In \ref{differentiation of piecewise smooth}, we have introduced differentials 
of piecewise smooth forms on open subsets of polyhedral sets. 
If $\alpha \in PS^{p,q}(W)$ is given as in Definition 
\ref{piecewise smooth forms on X}, then the {polyhedral} differential 
$\dpa\alpha \in PS^{p+1,q}(W)$ is 
locally defined by $\dpa\alpha_i \in PS^{p+1,q}(\Omega_i)$. 
Similarly, we define $\dpb\alpha \in PS^{p,q+1}(W)$. 
Then $PS^{\cdot,\cdot}(W)$ is a differential graded $\R$-algebra with respect 
to the {polyhedral} differentials $\dpa$ and $\dpb$. 
\end{art}

\begin{art} \label{psp on W}
The bigraded differential $\R$-algebras $PS(W)$ of piecewise smooth forms and 
$P(W)$ of generalized $\delta$-forms are not directly comparable except that 
they contain both $A(W)$ as a bigraded differential $\R$-subalgebra. 
We construct a bigraded differential $\R$-algebra {$PSP(W)$} containing both 
spaces as follows. 

Recall from Remark \ref{PSP-forms} that we have obtained a bigraded 
differential $\R$-algebra $PSP(\widetilde\Omega)$ with respect to 
$\dpa,\dpb$ for any open subset $\widetilde\Omega$ of $N_\R$. 
We repeat now the construction of generalized $\delta$-forms in 
\S \ref{deltaalgvar} building up on the spaces $PSP(\widetilde\Omega)$ 
instead of $P(\widetilde\Omega)$. 
This leads first to {spaces} $PSP(V,\varphi_U)$ for tropical charts 
$(V,\varphi_U)$ of $X$ and then to the desired {space} $PSP(W)$. 
Note that $PSP(W)$ is a differential bigraded $\R$-algebra with respect to the 
{polyhedral} differential operators $\dpa$ and $\dpb$ which extends the 
corresponding structure on the subalgebra $P(W)$. 
To see that $PS(W)$ is a graded subalgebra of $PSP(W)$, we use the obvious 
generalization of Proposition \ref{extension of piecewise smooth} 
{from piecewise smooth functions} to piecewise smooth forms. 
Obviously, $PSP(W)$ is generated by the subalgebras $PS(W)$ and $P(W)$. 
Moreover, the {polyhedral} differentials $\dpa,\dpb$ agree with 
the corresponding differential operators on $PS(W)$.

All properties {of} generalized $\delta$-forms from \S \ref{deltaalgvar} 
and \S \ref{Integration of delta-forms} extend immediately to the sheaves $PSP$. 
Hence we have an integral $\int_W \alpha $ for any $\alpha \in PSP_c^{n,n}(W)$. 
As a special case, we obtain such an integral for a piecewise smooth form 
with compact support on $W$. 
As in \ref{sheaf-property}, this leads to a $\delta$-current 
$[\alpha] \in E^{p,q}(W)$ for any $\alpha \in PSP^{p,q}(W)$. 
In particular, this applies to a piecewise smooth $\alpha$.
\end{art}

\begin{rem} \label{residues for PSP and PS-forms}
Note that the {polyhedral} differential $\dpa \alpha$ of a piecewise smooth form $\alpha$ or more generally of  any $\alpha \in PSP(W)$ is not compatible 
with the corresponding differential of the associated $\delta$-current. 
We define the {\it $d'$-residue} 
by 
\[{\rm Res}_{d'}(\alpha) := d'[\alpha]-[\dpa\alpha].\]
Similarly, we define residues with respect to $d''$ and $d'd''$.
\end{rem}

\begin{art} \label{ps-part and residual part}
Now we consider a line bundle  $L$ on $X$ endowed with a piecewise smooth 
metric $\metr$ over the open subset $W$ of $\Xan$. We are going to obtain a canonical decomposition of the 
Chern current $[c_1(L|_W,\metr)] \in E^{1,1}(\Xan)$ (see \ref{first Chern current}) 
into a {\it piecewise smooth part} $c_1(L|_W,\metr)_{\rm ps}\in PS^{1,1}(W)$ 
and a {\it residual part $[c_1(L|_W,\metr)]_{\rm res}\in E^{1,1}(W)$}.

Let {$(U,s)$ be a trivialization of $L$, i.e. $U$ is an open subset of $X$ 
and $s$ is a nowhere vanishing section in $\Gamma(U,L)$}. 
Then $-\log \|s\|$ is a piecewise smooth function on $\Uan\cap W$ and hence 
$-\dpa\dpb\log \|s|_{\Uan \cap W}\| \in PS^{1,1}(\Uan\cap W)$. Note that this piecewise smooth form 
is independent of the choice of $s$ by the same argument as in 
\ref{first Chern current} and hence we obtain a globally defined element of $PS^{1,1}(W)$ 
which we denote by $c_1(L|_W,\metr)_{\rm ps}$. 
Recall from \ref{psp on W} that we denote the associated $\delta$-current on $W$ by $[c_1(L|_W,\metr)_{\rm ps}]$. 
The same argument shows that the residues ${\rm Res}_{d'd''}(-\log\|s|_{\Uan \cap W}\|)$ 
paste together to give a global $\delta$-current $[c_1(L|_W,\metr)]_{\rm res}\in E^{1,1}(W)$ and we have
\begin{equation} \label{decomposition into ps and res}
[c_1(L|_W,\metr)] = [c_1(L|_W,\metr)_{\rm ps}] + [c_1(L|_W,\metr)]_{\rm res}.
\end{equation}
\end{art}

\begin{prop} \label{residual Chern form} 
Let $\metr$ be a piecewise smooth metric on $L|_W$. 
Then there is a unique $\beta \in P^{1,1}(W)$ with 
\[[\varphi^*(\beta)]=[c_1(\varphi^*(L)|_{W'},\varphi^*\metr)]_\res \in E^{1,1}(W')\]
for every morphism $\varphi:X' \rightarrow X$ from any algebraic variety $X'$ over $K$ and for every open subset $W'$ of $\varphi^{-1}(W)$. 
The generalized $\delta$-form $\beta$ has codimension $1$ (see Definition \ref{predefinition}) 
and will be denoted 
by $c_1(L|_W,\metr)_\res$. 
\end{prop}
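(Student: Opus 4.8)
The strategy is to work locally on a tropical chart and reduce the global statement to a tropical one about piecewise smooth functions, using the local description of $[c_1(L|_W,\metr)]_{\res}$ via $d'd''$-residues of $-\log\|s\|$. First I would fix a tropical chart $(V,\varphi_U)$ on which $L$ has a nowhere vanishing section $s$, so that $-\log\|s|_{V}\|$ is piecewise smooth; by Definition~\ref{piecewise smooth} and Proposition~\ref{G-characterization} (or directly by the definition of a piecewise smooth metric) we may assume $-\log\|s|_V\| = \phi \circ \trop_U$ for a piecewise smooth function $\phi$ on $\Omega := \trop_U(V)$, and after refining the chart $\phi$ extends to a piecewise smooth function $\tilde\phi$ on an open subset $\widetilde\Omega$ of $N_{U,\R}$ (Proposition~\ref{extension of piecewise smooth}). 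The natural candidate for the local piece of $\beta$ is then the $\delta$-preform $\delta_{\tilde\phi\cdot N_{U,\R}} \in P^{1,1}(\widetilde\Omega)$ — i.e.\ (minus) the corner locus of the extension — which by \ref{corner locus and delta-preform} restricts to a $\delta$-preform $\delta_{\phi\cdot\Trop(U)}$ of codimension $1$ on $\Omega$. The key tropical input is the tropical Poincar\'e--Lelong formula (Corollary~\ref{tropicalpoincarelelong}): on $\Trop(U)$ one has $d'd''[\phi] - [\dpa\dpb\phi] = \delta_{\phi\cdot\Trop(U)}$, and the left-hand side is precisely the $d'd''$-residue $\Res_{d'd''}(-\log\|s|_V\|)$ pulled through $\trop_U$. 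So locally $[c_1(L|_W,\metr)]_\res$ is the $\delta$-current attached to $\delta_{\phi\cdot\Trop(U)}$, up to sign conventions which I would track carefully.

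\textbf{Gluing to a global $\delta$-form.} Having produced, for each chart in a covering, a candidate local generalized $\delta$-form $\beta_U := -\delta_{\phi\cdot\Trop(U)}$ (with the sign dictated by matching $[c_1]_\res$), I would check that these agree on overlaps. Independence of the choice of trivialization $s$ is handled exactly as in \ref{first Chern current}: changing $s$ alters $-\log\|s\|$ by $-\log|f|$ for an invertible regular $f$, which is smooth, hence contributes nothing to the $d'd''$-residue and nothing to the corner locus (a linear, in fact piecewise linear, function pulled back from a torus tropicalizes to an affine function whose corner locus vanishes — or more precisely $d'd''[-\log|f|]=0$ by \eqref{analyticddcg1}). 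Compatibility with respect to the canonical affine maps $F$ between charts $N_{U',\R}\to N_{U,\R}$ is the statement that the corner locus commutes with pullback, which is Corollary~\ref{pullbackcornerlocus} combined with the fact that $F^*$ of a $\delta$-preform represents the restriction (Proposition~\ref{deltaformsbasicprop}(iii) and the definitions in \ref{presheaf}). Thus the $\beta_U$ glue, by the sheaf property of generalized $\delta$-forms, to a well-defined $\beta \in P^{1,1}(W)$ of codimension $1$.

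\textbf{The functorial characterization.} It remains to verify that $\beta$ satisfies $[\varphi^*(\beta)] = [c_1(\varphi^*(L)|_{W'},\varphi^*\metr)]_\res$ in $E^{1,1}(W')$ for every morphism $\varphi:X'\to X$ and open $W'\subseteq \varphi^{-1}(W)$, and that $\beta$ is the unique $\delta$-form with this property. Uniqueness is immediate from Proposition~\ref{functorial criterion}: two generalized $\delta$-forms with the same associated currents after all pullbacks coincide. For existence of the identity, I would note that $\varphi^*\metr$ is again a piecewise smooth metric (by \ref{remarks for ps metrics}), that the piecewise smooth part and residual part of the first Chern current are compatible with pullback — this follows from compatibility of piecewise smooth forms, of $\dpa\dpb$, and of the $d'd''$-residue construction with $\varphi^*$, all of which are built into the definitions in \ref{psp on W} and \ref{ps-part and residual part} — and that pullback of $-\log\|s\|$ composed with a further tropicalization again lands in the piecewise smooth setting where the tropical Poincar\'e--Lelong formula applies. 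Then $[\varphi^*(\beta)]$ and $[c_1(\varphi^*(L))|_{W'}]_\res$ agree chart by chart by the same local computation via Corollary~\ref{tropicalpoincarelelong}, hence globally.

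\textbf{Main obstacle.} The substantive difficulty is not the local tropical computation, which is essentially Corollary~\ref{tropicalpoincarelelong}, but the bookkeeping needed to see that the corner-locus $\delta$-preforms on the various charts genuinely define a \emph{generalized $\delta$-form} in the sense of \ref{delta-forms on X}: one must check the compatibility on overlaps in the space $P(V,\varphi_U)$ — which is finer than mere agreement of the restrictions to $\Omega$ — and this is where Corollary~\ref{pullbackcornerlocus} (compatibility of corner loci with pullback of tropical cycles) does the essential work, together with care that the extension $\tilde\phi$ of $\phi$ can be chosen compatibly enough, or rather that the resulting $\delta_{\tilde\phi\cdot N_{U,\R}}\wedge\delta_{\Trop(U)}$ does not depend on the extension. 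A secondary point requiring attention is the sign: the definition of $c_1(L,\metr)_\ps$ uses $-\dpa\dpb\log\|s\|$ and the residue is $d'[\cdot]-[\dpa\cdot]$, so one must confirm that $c_1(L|_W,\metr)_\res$ as defined corresponds to $-\delta_{\phi\cdot\Trop(U)}$ and not $+\delta_{\phi\cdot\Trop(U)}$, which the formula in Corollary~\ref{tropicalpoincarelelong} settles once the conventions are lined up.
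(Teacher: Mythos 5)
Your overall architecture coincides with the paper's: local representative given by the corner locus $\delta_{\tilde\phi\cdot N_{U,\R}}$ of an extension of the tropical function describing $-\log\|s\|$, gluing via compatibility of corner loci with pull-back (Proposition \ref{asscommcornerlocus} and Corollary \ref{pullbackcornerlocus}), and uniqueness via Proposition \ref{functorial criterion}. However, your verification of the functorial identity $[\varphi^*(\beta)]=[c_1(\varphi^*(L)|_{W'},\varphi^*\metr)]_\res$ has a genuine gap. You claim this "follows from compatibility of \dots the $d'd''$-residue construction with $\varphi^*$, built into the definitions" and that the charts then agree "by the same local computation via Corollary \ref{tropicalpoincarelelong}". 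Neither suffices: the residue is a construction on $\delta$-currents, and $\delta$-currents do not pull back along $\varphi$, so the compatibility is precisely what must be proved, not a definitional fact. Moreover Corollary \ref{tropicalpoincarelelong} is an identity of supercurrents tested only against \emph{compactly supported superforms} on $|\KC|$, whereas here both sides must be evaluated on generalized $\delta$-forms $\alpha\in B_c^{n-1,n-1}(W')$. When you pass to a very affine chart of integration $U'$ on $X'$, the induced $\delta$-preform $\alpha'$ on $\Omega'=\trop_{U'}(V')$ need \emph{not} have compact support in $\Omega'$, because $\supp(\alpha)$ is not forced into $(U')^{\rm an}$ in bidegree $(n-1,n-1)$ (Corollary \ref{support corollary} only applies in top bidegree); only the degree-raised products $d'd''\alpha'$, $\dpa\phi'\wedge\alpha'$, $\dpb\phi'\wedge\alpha'$, $\alpha'\wedge\delta_{\phi'\cdot C'}$ have compact support. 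Consequently neither Corollary \ref{tropicalpoincarelelong} nor even Theorem \ref{deltatropicalpoincarelelong} (which assumes compact support of the test object) applies directly; the paper has to extract the intermediate identity \eqref{crucial residue formula} from the proof of Theorem \ref{deltatropicalpoincarelelong}, which only uses compactness of $d'\alpha'$ and $\dpb\phi'\wedge\alpha'$, and then invoke Lemma \ref{tropicallemma} together with its non-compact extension in Remark \ref{generalization of tropicallemma}. Without this step your argument does not establish the displayed equality of $\delta$-currents, which is the heart of the proposition.

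A secondary point: your sign is off. With $\phi\circ\trop_U=-\log\|s\|$, Corollary \ref{tropicalpoincarelelong} gives $d'd''[\phi]-[\dpa\dpb\phi]=\delta_{\phi\cdot\Trop(U)}$, and since the residue is ${\rm Res}_{d'd''}(-\log\|s\|)$, the correct local representative is $+\delta_{\phi\cdot\Trop(U)}$ (equivalently $+\delta_{C_i}$ for the corner locus of the extension), not $-\delta_{\phi\cdot\Trop(U)}$ as you write twice; you flag the issue but resolve it the wrong way, and this would propagate through the gluing and the final identity.
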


\begin{proof}
Note that uniqueness follows from Proposition \ref{functorial criterion}. 
By definition of a piecewise smooth metric, there is an open covering 
$(V_i)_{i \in I}$ of $W$ by tropical charts $(V_i, \varphi_{U_i})$, 
nowhere vanishing sections $s_i \in \Gamma(V_i,L^{\rm an})$ and piecewise smooth 
functions $\phi_i$ on $\Omega_i := \trop_{U_i}(V_i)$ with 
$-\log \|s_i\| = \phi_i \circ \trop_{U_i}$ on $V_i$.
Passing to a refinement of the open covering, we may assume that $\phi_i$ is 
defined on $\Trop(U_i)$.  By Proposition \ref{extension of piecewise smooth}, 
there is a piecewise smooth 
function $\tilde{\phi}_i$ on $N_{U_i,\R}$ restricting to $\phi_i$. 
By Lemma \ref{corner locus for tropical cycle}, the corner locus 
$C_i := \tilde{\phi}_i\cdot N_{U_i,\R}$  of $\tilde{\phi}_i$ is a tropical cycle 
of codimension $1$. 

The  $\delta$-preform $\delta_{C_i}$ represents an element $\beta_i \in P(V_i,\varphi_{U_i})$ 
of codimension $1$ (see \ref{presheaf}). 
We have seen in \ref{properties of delta-preforms on charts} that there is a 
pull-back $f^*(\beta_i) \in P^{1,1}(V',\varphi_{U'})$ for every morphism 
$f:X' \rightarrow X$ of algebraic varieties over $K$ and every tropical chart 
$(V',\varphi_{U'})$ of $X'$ compatible with $(V_i,\varphi_{U_i})$. 
For the open subset $\Omega':=\trop_{U'}(V')$ of $\Trop(U')$, 
we have $f^*(\beta_i)|_{\Omega'} \in P^{1,1}(\Omega') \subseteq D^{1,1}(\Omega')$ 
(see \ref{restriction of delta to Omega}). 
Let $F:N_{U',\R} \rightarrow N_{U_i,\R}$ be the canonical affine map with 
$\trop_{U_i} = F \circ \trop_{U'}$ on $(U')^{\rm an}$. 
By Proposition \ref{asscommcornerlocus} and Corollary \ref{pullbackcornerlocus}, $F^*(C_i)\cdot \Trop(U')$ is the 
corner locus of $\phi':=\phi_i \circ F|_{\Trop(U')}$ and hence we get 
\[f^*(\beta_i)|_{\Omega'}= F^*(\delta_{C_i}) \wedge \delta_{\Trop(U')} 
= \delta_{\phi'\cdot \Trop(U')} \in P^{1,1}(\Omega').\]
Together with the tropical Poincar\'e--Lelong formula 
(Corollary \ref{tropicalpoincarelelong}), we get 
\begin{equation} \label{tropical PL for covering}
 f^*(\beta_i)|_{\Omega'} + [\dpa\dpb\phi']=  d'd''[\phi'] \in  D^{1,1}(\Omega').
\end{equation}
It follows  from \eqref{tropical PL for covering} that $f^*(\beta_i)|_{\Omega'}$ 
is independent of all choices. 
This yields that  $\beta_i|_{V_i \cap V_j}= \beta_j|_{V_i \cap V_j}$ for all $i,j \in I$. 
We get a well-defined generalized $\delta$-form $\beta \in P^{1,1}(W)$ of 
codimension $1$ given by $\beta_i \in P^{1,1}(V_i,\varphi_{U_i})$ on $V_i$ for 
every $i \in I$. 

It remains to check that 
$[\varphi^*(\beta)]=[c_1(\varphi^*(L)|_{W'},\varphi^*\metr)]_\res$ for  every 
morphism $\varphi:X' \rightarrow X$ and every open subset 
$W'$ of $\varphi^{-1}(W)$. 
This has to be tested on $\alpha \in B_c^{n-1,n-1}(W')$. 
The claim is local and a partition of unity argument in a paracompact open neighbourhood of $\supp(\alpha)$ shows that we may 
assume $\supp(\alpha) \subseteq \varphi^{-1}(V_i)$ for some $i \in I$. 
There are finitely many tropical charts $(V_j',\varphi_{U_j'})_{j \in J}$ 
in $W'$ which cover $\supp(\alpha)$ such that $\alpha$ is given on every 
$V_j'$ by {$\alpha_j \in AZ^{n-1,n-1}(V_j',\varphi_{U_j'})$}. 
We choose a non-empty very affine open subset $U'$ of $X'$ contained in 
every {$U_j'$ and in  $\varphi^{-1}(U_i)$}. 
By Proposition \ref{integration well-defined}, $U'$ is a very affine chart of integration  for both 
$\varphi^*(\beta) \wedge \alpha$ and $d'd''\alpha$. 
By construction, $V':=\bigcup_{j \in J} V_j'\cap \varphi^{-1}(V_i)  
\cap (U')^{\rm an}$ and $\varphi_{U'}$ form a tropical chart in $W'$. 
By Proposition \ref{single chart}, $\alpha$ is given on $\Omega':=\trop_{U'}(V')$ by 
{$\alpha_{U'} \in AZ^{n-1,n-1}(V',\varphi_{U'})$}. 
In the following, we will use only the $\delta$-preform 
$\alpha' \in P^{n-1,n-1}(\Omega')$ induced by $\alpha_{U'}$. 
For the tropical cycle $C':=\Trop(U')$ and the canonical affine map 
$F:N_{U',\R}\rightarrow N_{U_i,\R}$, it follows as above that 
$\varphi^*(\beta)$ is given on $V'$ by the element in 
$P^{1,1}(V',\varphi_{U'})$ represented by  
$\delta_{(\phi_i\circ F)\cdot N_{U',\R}} \in P^{1,1}(N_{U',\R})$. 
For {$\phi':=\phi_i\circ F|_{\Trop(U')}$}, we have seen that 
\[
\varphi^*(\beta_i)|_{\Omega'}=(\delta_{(\phi_i\circ F)\cdot N_{U',\R}})|_{\Omega'}
= \delta_{\phi'\cdot C'} \in P^{1,1}(\Omega').
\]
Note that $\supp(\alpha) \subseteq \bigcup_{j \in J} V_j'\cap \varphi^{-1}(V_i)$. 
We deduce from the generalizations of Corollary \ref{support corollary} and Proposition \ref{delta-support on chart} 
to $PSP$-forms (see \ref{psp on W})
that the currents  
$\dpa\phi' \wedge \alpha', \dpb \phi' \wedge \alpha', 
d'd''\alpha',\alpha' \wedge \delta_{\phi'\cdot C'}$ 
have compact support in $\Omega'$. 
{We write $C'=(\Ccal',m')$ for an integral $\Gamma$-affine 
polyhedral complex $\Ccal'$ and a family of integral weights $m'$.}
To prove $[c_1(\varphi^*(L)|_{W'},\varphi^*\metr)]_\res= [\varphi^*(\beta)]$, 
we have to show that
\begin{equation} \label{generalized tropical PL}
{\int_{|\Ccal'|} \phi' \wedge d'd''\alpha' 
= \int_{|\Ccal'|}\delta_{\phi'\cdot C'}\wedge \alpha' 
+ \int_{|\Ccal'|} \dpa\dpb\phi' \wedge \alpha'}
\end{equation}
holds. If $\alpha'$ has compact support in $\Omega'$, then this follows from 
the  tropical Poincar\'e--Lelong formula \eqref{tropical PL for covering}. 
In general, we still can deduce  from the proof of the tropical Poincar\'e--Lelong 
formula in {Theorem \ref{deltatropicalpoincarelelong}}
the formula \eqref{crucial residue formula} which here reads as
\[
{\int_{|\Ccal'|} \phi' \wedge d'd''\alpha' 
= -\int_{\partial |\Ccal'|} \dpb\phi' \wedge \alpha' + 
\int_{|\Ccal'|} \dpa\dpb \phi' \wedge \alpha'}
\]
as we have used only that $d'\alpha'$ and $\dpb \phi' \wedge \alpha'$ have 
compact support. 
Now \eqref{generalized tropical PL} follows from Lemma \ref{tropicallemma} 
and Remark \ref{generalization of tropicallemma} using additionally that   
$\alpha' \wedge \delta_{\phi'\cdot C'}$ has compact support.
\end{proof}

\begin{definition}  \label{delta-metrics}
A  metric $\metr$ on $L|_W$ is called a {\it $\delta$-metric} if for every 
$x \in W$, there are a tropical chart $(V,\varphi_U)$ such that 
$x \in V \subseteq W$ and a piecewise smooth function $\phi$
on $\Trop(U)$ satisfying the following properties: 
\begin{itemize}
\item[(i)] There is a nowhere vanishing section $s$ of $L$ over $U$ such that  
$\phi \circ \trop_U = -\log \|s\|$ on $V$.
\item[(ii)] There is a superform $\gamma$ on $N_{U,\R}$ of bidegree $(1,1)$ 
with piecewise smooth coefficients such that $\dpa\dpb\phi$ and $\gamma|_{\Trop(U)}$ 
agree on the open subset $\trop_U(V)$ of $\Trop(U)$. 
\end{itemize}
\end{definition} 

\begin{rem} \label{remarks for delta-metrics}
Condition (i) just means that the metric is piecewise smooth. 
Note that a superform on $N_{U,\R}$ with piecewise smooth coefficients 
is the same as a $\delta$-preform on $N_{U,\R}$ of codimension $0$ (see 
Example \ref{preforms of degree 0}). Using \ref{ps-part and residual part}, 
we deduce easily that (ii) is equivalent to the 
condition that $[c_1(L|_W,\metr)_\ps]$ is the $\delta$-current associated to a 
generalized $\delta$-form on $W$ (of codimension $0$).
\end{rem}

\begin{prop} \label{characterization of Chern delta-form}
Let $\metr$ be a piecewise smooth metric on $L|_W$. 
Then $\metr$ is a $\delta$-metric if and only if there is a $\beta \in B^{1,1}(W)$ with 
\[
[\varphi^*(\beta)]=[c_1(\varphi^*(L)|_{W'},\varphi^*\metr)] \in E^{1,1}((W')
\]
for every morphism $\varphi:X' \rightarrow X$ from any algebraic variety $X'$ over $K$ and for every open subset $W'$ of $\varphi^{-1}(W)$. 
\end{prop}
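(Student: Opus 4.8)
The plan is to prove both implications using the local decomposition \eqref{decomposition into ps and res} of the first Chern current into a piecewise smooth part and a residual part, together with Proposition \ref{residual Chern form}, which already provides a canonical generalized $\delta$-form $c_1(L|_W,\metr)_\res \in P^{1,1}(W)$ of codimension $1$ representing the residual current functorially. The key observation is that, by \ref{ps-part and residual part}, for every morphism $\varphi: X' \to X$ and every open subset $W'$ of $\varphi^{-1}(W)$ one has $[c_1(\varphi^*(L)|_{W'},\varphi^*\metr)] = [c_1(\varphi^*(L)|_{W'},\varphi^*\metr)_\ps] + [c_1(\varphi^*(L)|_{W'},\varphi^*\metr)]_\res$, where the first summand is the $\delta$-current of a piecewise smooth form and the second is $[\varphi^*(c_1(L|_W,\metr)_\res)]$ by Proposition \ref{residual Chern form}. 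So everything reduces to understanding when the piecewise smooth part is itself induced by a $\delta$-form, and by Remark \ref{remarks for delta-metrics} this is exactly the content of condition (ii) in Definition \ref{delta-metrics}.

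For the forward implication, I would assume $\metr$ is a $\delta$-metric. Locally on a tropical chart $(V,\varphi_U)$ with the function $\phi$ and the superform $\gamma$ with piecewise smooth coefficients as in Definition \ref{delta-metrics}(ii), the superform $\gamma$ is a $\delta$-preform on $N_{U,\R}$ of codimension $0$ by Example \ref{preforms of degree 0}, and it represents an element $\gamma_U \in P^{1,1}(V,\varphi_U)$. Since $\dpa\dpb\phi$ and $\gamma|_{\Trop(U)}$ agree on $\trop_U(V)$, the associated $\delta$-current of $\gamma_U$ equals $[c_1(L|_V,\metr)_\ps]$. I would then show these local pieces glue: their associated currents agree on overlaps because both compute $[c_1(L|_{V_i \cap V_j},\metr)_\ps]$, and then apply Proposition \ref{functorial criterion} (together with the fact that pull-back commutes with all these constructions) to deduce the local $\gamma_U$'s patch to a global $\delta$-form of codimension $0$, call it $c_1(L|_W,\metr)_\ps \in B^{1,1}(W)$; one must check here that a piecewise smooth superform with the stated compatibility genuinely lands in the subsheaf $B$ and not just $PSP$, which is handled exactly as the degree-zero identity \eqref{preforms and degree 0} extended to forms. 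Setting $\beta := c_1(L|_W,\metr)_\ps + c_1(L|_W,\metr)_\res \in B^{1,1}(W)$ and invoking functoriality of both summands gives the desired identity $[\varphi^*(\beta)] = [c_1(\varphi^*(L)|_{W'},\varphi^*\metr)]$.

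For the converse, suppose such a $\beta \in B^{1,1}(W)$ exists. Then $[\varphi^*(\beta - c_1(L|_W,\metr)_\res)] = [c_1(\varphi^*(L)|_{W'},\varphi^*\metr)_\ps]$ functorially; I would want to conclude that $\beta - c_1(L|_W,\metr)_\res$ is a $\delta$-form of codimension $0$, i.e. a superform with piecewise smooth coefficients, whose associated current is the piecewise smooth part. This is where one needs to know that a $\delta$-form whose associated current (compatibly under all pull-backs) agrees with that of a piecewise smooth superform is itself of codimension $0$; this should follow from Proposition \ref{functorial criterion} applied on a tropical chart together with the uniqueness in Proposition \ref{residual Chern form} and the codimension decomposition of generalized $\delta$-forms. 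Pulling this local codimension-$0$ form back to $N_{U,\R}$ via Proposition \ref{extension of piecewise smooth} (in its version for forms, cf. \ref{psp on W}) produces the superform $\gamma$ with piecewise smooth coefficients required in Definition \ref{delta-metrics}(ii), while (i) is automatic since a $\delta$-metric is in particular piecewise smooth.

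The main obstacle I anticipate is the bookkeeping in gluing the local data into a genuine element of $B^{1,1}(W)$ rather than merely $P^{1,1}(W)$ or $PSP^{1,1}(W)$: one must verify that the piecewise smooth superform $\gamma$, a priori only an object of $PS$, defines an element of the subalgebra generated by $A$ and closed $\delta$-preforms (i.e. lies in $AZ$ on charts), which requires checking the closedness condition \eqref{closedness condition} for the pull-backs, and then that these patch compatibly. This is essentially a matter of combining Remark \ref{remarks for delta-metrics}, the construction in \ref{psp on W}, and Proposition \ref{functorial criterion}, but it needs to be done carefully; the rest is formal once Proposition \ref{residual Chern form} is in hand.
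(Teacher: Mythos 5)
Your overall skeleton --- decompose the first Chern current into its piecewise smooth and residual parts, invoke Proposition \ref{residual Chern form} for the residual part and Remark \ref{remarks for delta-metrics} for the piecewise smooth part, and set $\beta = c_1(L|_W,\metr)_{\rm ps} + c_1(L|_W,\metr)_{\rm res}$ --- is the paper's, and your converse is essentially correct. But the forward direction has a genuine gap precisely at the point you flag as "the main obstacle": you claim the piecewise smooth part glues to an element $c_1(L|_W,\metr)_{\rm ps}\in B^{1,1}(W)$, and that one should verify the closedness condition \eqref{closedness condition} for the pull-backs of $\gamma$ \emph{alone}. This is false in general. Locally $[\gamma|_{\Omega'}]=[\dpa\dpb(\phi\circ F)]$, and applying $d'$ to the tropical Poincar\'e--Lelong formula $d'd''[\phi\circ F]=[\dpa\dpb(\phi\circ F)]+\delta_{(\phi\circ F)\cdot \Trop(U')}$ together with Proposition \ref{diffsmoothtropcyc} gives $d'[\dpa\dpb(\phi\circ F)]=-d'm\wedge\delta_{\KC}$ for the (generally non-constant) smooth weights $m$ of the corner locus; so $\gamma$ fails the closedness condition whenever those weights are non-constant. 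Your cited justification, the degree-zero identity \eqref{preforms and degree 0}, in fact points the other way: its companion \eqref{AZ-forms in degree 0} shows $AZ^{0,0}(V,\varphi_U)=A^{0,0}(\Omega)$, i.e.\ genuinely piecewise smooth objects are generically \emph{not} sections of the subsheaf $B$.

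What rescues the argument is that only the sum $\beta=\gamma+c_1(L|_W,\metr)_{\rm res}$ needs to lie in $B^{1,1}(W)$, and for the sum the closedness is automatic: by construction $[f^*(\beta_V)|_{\Omega'}]=d'd''[\phi\circ F]$ for every compatible chart, and $d'd''$ of any current is annihilated by both $d'$ and $d''$. Hence $\beta$ is locally given by an element of $Z(V,\varphi_U)$ and therefore lies in $B^{1,1}(W)$, even though neither summand is separately a $\delta$-form; their individual failures of closedness cancel. With this correction the rest of your proposal goes through. In the converse, note also that you do not need to show $\beta-c_1(L|_W,\metr)_{\rm res}$ is of pure codimension $0$: Remark \ref{remarks for delta-metrics} only asks that $[c_1(L|_W,\metr)_{\rm ps}]$ be the $\delta$-current of \emph{some} generalized $\delta$-form, which Proposition \ref{residual Chern form} supplies directly.
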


\begin{proof}
Suppose that $\metr$ is a $\delta$-metric. 
By Remark \ref{remarks for delta-metrics}, there is $\gamma \in P^{1,1}(W)$ 
of codimension $0$ such that $[c_1(L|_W,\metr)_\ps]=[\gamma]$. 
Since $\gamma$ is of codimension $0$, we may handle $\gamma$ as a piecewise 
smooth form and hence we get
\[
[\varphi^*(\gamma)]=[\varphi^*(c_1(L|_W,\metr)_\ps)]
=[c_1(\varphi^*(L)|_{W'},\varphi^*\metr)_\ps] \in E^{1,1}(W').
\]
Proposition \ref{residual Chern form} yields that 
$\beta:=c_1(L,\metr)_\res + \gamma \in P^{1,1}(W)$ and that 
\[
[\varphi^*(\beta)]= [c_1(\varphi^*(L)|_{W'},\varphi^*\metr)_\res] + [\varphi^*(\gamma)]
=[c_1(\varphi^*(L)|_{W'},\varphi^*\metr)] \in E^{1,1}(W')
\]
as claimed. It remains to show that $\beta \in B^{1,1}(W)$. 
Let $(V,\varphi_U)$ be a tropical chart in $W$ and let $\phi$ be a piecewise 
smooth function on $\Trop(U)$ as in Definition \ref{delta-metrics} such that 
$\beta|_V$ is given by $\beta_V \in P^{1,1}(V,\varphi_U)$. 
For every tropical chart $(U',\varphi_{U'})$ of an algebraic variety 
$X'$ over $K$ compatible with $(V,\varphi_U)$ with respect to the morphism 
$f:X' \rightarrow X$ and for $\Omega':=\trop_{U'}(V')$, the last display yields 
\begin{equation}\label{property of beta}
[f^*(\beta_V)|_{\Omega'}]=  d'd''[\phi \circ F] \in D^{1,1}(\Omega'),
\end{equation}
where $F:N_{U',\R} \rightarrow N_{U,\R}$ is the canonical affine map. 
Since this supercurrent is $d'$-closed and $d''$-closed on $\Omega'$, we 
conclude that  $\beta$ is given on $V$ by an element of $Z(V,\varphi_U)$. 
This shows $\beta \in B^{1,1}(W)$.

To prove the converse, we just use that $[c_1(L|_W,\metr)]=[\beta]$ for some 
$\beta \in P^{1,1}(W)$. By Proposition \ref{residual Chern form},  
$[c_1(L|_{W},\metr)_\ps]$ is the $\delta$-current associated to 
$\beta - c_1(L|_{W},\metr)_\res \in P^{1,1}(W)$. By 
Remark \ref{remarks for delta-metrics}, $\metr$ is a $\delta$-metric. 
\end{proof}

\begin{definition} \label{first Chern delta-form}  \label{properties of Chern delta-form}
Let $\metr$ be a $\delta$-metric on $L|_W$. 
By Proposition \ref{functorial criterion}, the $\delta$-form  $\beta$ in 
Proposition \ref{characterization of Chern delta-form} is unique. 
We call it the {\it first Chern $\delta$-form} of $(L|_W,\metr)$ and we 
denote it by $c_1(L|_W,\metr)$.
\end{definition}

\begin{art} \label{summary of Chern delta-construction}
We summarize the above constructions and definitions. 
A metric $\metr$ on $L|_W$ is 
a $\delta$-metric if and only if every $x \in W$ is contained 
in a tropical 
chart $(V,\varphi_U)$ in $W$ with a piecewise smooth function 
$\phi$ on $N_{U,\R}$ and 
a nowhere vanishing section $s$ of $L$ over $U$ such that
$$-\log \|s\| = \phi \circ \trop_U$$
on $V$ and such that
$$\dpa \dpb (\phi|_{\trop_U(V)})= \gamma|_{\trop_U(V)}$$
for a superform $\gamma$ on $N_\R$ of bidegree $(1,1)$ with piecewise 
smooth coefficients. Then 
the restriction of $c_1(L|_W,\metr)_{\rm res}$ to $V$ is represented by 
the $\delta$-preform $\delta_{\phi \cdot N_{U,\R}}$ on $N_{U,\R}$, 
$c_1(L|_W,\metr)_{\rm ps}|_V$ is given by $\gamma$ and  
 $c_1(L|_W,\metr)|_V$ is represented by the $\delta$-preform 
$\gamma + \delta_{\phi\cdot N_{U,\R}}$ on $N_{U,\R}$. 
A piecewise linear metric is a $\delta$-metric as we can choose 
$\phi$ integral $\Gamma$-affine (use 
Remark \ref{variations of extensions}) and $\gamma=0$. 
\end{art}

\begin{art}\label{first Chern delta-form properties}
By construction, the {$\delta$-current} associated to $c_1(L|_W,\metr)$ is 
equal to the first Chern current $[c_1(L|_W,\metr)]$ defined in 
\ref{first Chern current} which explains the notation used there.
{It is an immediate consequence of equation \eqref{property of beta}
that the first Chern $\delta$-form $c_1(L|_W,\metr)$ is $d'$-closed and $d''$-closed.}

To be a $\delta$-metric is a local property and respects isometry. 
The tensor product of $\delta$-metrics is again a $\delta$-metric and the dual 
metric of a $\delta$-metric is also a $\delta$-metric. 
If a positive tensor power of a metric $\metr$ on $L|_W$ is a $\delta$-metric, 
then $\metr$ is a $\delta$-metric. 
It is easy to see that the first Chern $\delta$-form $c_1(L|_W,\metr)$ is 
additive in terms of isometry classes $(L|_W,\metr)$ for $\delta$-metrics $\metr$.
\end{art}

\begin{prop} \label{pull-back and Chern form}
Let $\varphi:X' \rightarrow X$ be a morphism of algebraic varieties and 
let $L$ be a line bundle on $X$ endowed with a $\delta$-metric $\metr$ 
over the open subset $W$ of $\Xan$. 
Then $\varphi^*\metr$ is a $\delta$-metric on $\varphi^*(L)|_{W'}$ and we have
\begin{equation} \label{pull-back equality}
c_1(\varphi^*(L)|_{W'}, \varphi^*\metr)=\varphi^*c_1(L|_W,\metr) {\in B^{1,1}(W')}
\end{equation}
for any open subset $W'$ of $\varphi^{-1}(W)$.
\end{prop}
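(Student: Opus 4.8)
The plan is to first check that $\varphi^*\metr$ is again a $\delta$-metric by transporting the local data provided by \ref{summary of Chern delta-construction} through $\varphi$, and then to deduce the identity \eqref{pull-back equality} from the functorial characterization of the first Chern $\delta$-form in Proposition \ref{characterization of Chern delta-form}. For the first part, fix $x'\in W'$ and set $x:=\varphi^{\rm an}(x')\in W$. By \ref{summary of Chern delta-construction} there are a tropical chart $(V,\varphi_U)$ with $x\in V\subseteq W$, a piecewise smooth function $\phi$ on $N_{U,\R}$, a nowhere vanishing section $s\in\Gamma(U,L)$ and a superform $\gamma$ of bidegree $(1,1)$ with piecewise smooth coefficients on $N_{U,\R}$ such that $-\log\|s\|=\phi\circ\trop_U$ on $V$ and $\dpa\dpb(\phi|_{\trop_U(V)})=\gamma|_{\trop_U(V)}$. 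Choosing a tropical chart $(V',\varphi_{U'})$ of $X'$ with $x'\in V'\subseteq W'$ compatible with $(V,\varphi_U)$ with respect to $\varphi$, we obtain the canonical integral $\Gamma$-affine map $F:N_{U',\R}\to N_{U,\R}$ with $\trop_U\circ\varphi^{\rm an}=F\circ\trop_{U'}$ on $(U')^{\rm an}$ (see \ref{sturmfelstevelev}). Then $s':=\varphi^*(s)$ is nowhere vanishing on $U'$, the function $\phi':=\phi\circ F$ is piecewise smooth on $N_{U',\R}$ and $F^*(\gamma)$ is a superform of bidegree $(1,1)$ with piecewise smooth coefficients on $N_{U',\R}$ by Remark \ref{properties of piecewise smooth superforms}. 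One has $-\log\|s'\|_{\varphi^*\metr}=\phi'\circ\trop_{U'}$ on $V'$, and since the polyhedral differentials commute with pull-back (Remark \ref{properties of piecewise smooth superforms}(iii)) and $F(\trop_{U'}(V'))\subseteq\trop_U(V)$, we get $\dpa\dpb(\phi'|_{\trop_{U'}(V')})=F^*(\dpa\dpb\phi)|_{\trop_{U'}(V')}=F^*(\gamma)|_{\trop_{U'}(V')}$. Hence $\varphi^*\metr$ satisfies Definition \ref{delta-metrics} at $x'$, so it is a $\delta$-metric; in particular $c_1(\varphi^*(L)|_{W'},\varphi^*\metr)\in B^{1,1}(W')$ is defined.

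For the Chern form identity, recall from Proposition \ref{characterization of Chern delta-form} that $c_1(\varphi^*(L)|_{W'},\varphi^*\metr)$ is the unique $\delta$-form $\alpha$ on $W'$ satisfying $[\psi^*(\alpha)]=[c_1(\psi^*(\varphi^*L)|_{W''},\psi^*\varphi^*\metr)]$ in $E^{1,1}(W'')$ for all morphisms $\psi:X''\to X'$ and all open $W''\subseteq(\psi^{\rm an})^{-1}(W')$; uniqueness here is Proposition \ref{functorial criterion}, after passing from $\delta$-currents in $E^{1,1}$ to ordinary currents in $D^{1,1}$ via the commutative diagram \eqref{sheaf-propertyg1}. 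Now $\varphi^*c_1(L|_W,\metr)$ lies in $B^{1,1}(W')$ by \ref{pull-back of delta-forms}, and for any such $\psi,W''$ we have $\psi^*\varphi^*=(\varphi\circ\psi)^*$ and $W''\subseteq(\varphi\circ\psi)^{{\rm an},-1}(W)$, so the defining property of $c_1(L|_W,\metr)$ applied to the morphism $\varphi\circ\psi$ gives $[\psi^*(\varphi^*c_1(L|_W,\metr))]=[c_1((\varphi\circ\psi)^*(L)|_{W''},(\varphi\circ\psi)^*\metr)]=[c_1(\psi^*(\varphi^*L)|_{W''},\psi^*\varphi^*\metr)]$ in $E^{1,1}(W'')$. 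Thus $\varphi^*c_1(L|_W,\metr)$ enjoys the property characterizing $c_1(\varphi^*(L)|_{W'},\varphi^*\metr)$, and uniqueness yields \eqref{pull-back equality}. (Alternatively, one may argue locally with the explicit representatives from \ref{summary of Chern delta-construction}: on $V'$ the pulled-back form is represented by $F^*(\gamma)+\delta_{F^*(\phi\cdot N_{U,\R})}$, and $F^*(\phi\cdot N_{U,\R})=\phi'\cdot N_{U',\R}$ by Corollary \ref{pullbackcornerlocus} together with $F^*(N_{U,\R})=N_{U',\R}$, which is exactly the representative of $c_1(\varphi^*(L)|_{W'},\varphi^*\metr)|_{V'}$.)

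The only genuine work is the bookkeeping in the first step: checking that the canonical map $F$ of a compatible pair of charts transports $(\phi,s,\gamma)$ to data of the same type and that $\dpa\dpb$ commutes with $F^*$ after restriction to the possibly smaller tropical variety $\Trop(U')$. Once this is in place, the second step is a formal consequence of the uniqueness of the first Chern $\delta$-form together with functoriality of pull-back of $\delta$-forms and of $\delta$-currents. The minor point that equality of associated $\delta$-currents in $E^{1,1}$ implies the equality of ordinary currents in $D^{1,1}$ needed for Proposition \ref{functorial criterion} is immediate from \eqref{sheaf-propertyg1}, so I do not expect any real obstacle beyond the chart bookkeeping.
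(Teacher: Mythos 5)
Your proof is correct and follows essentially the paper's (very terse) argument: equation \eqref{pull-back equality} is obtained exactly as intended, from the functorial characterization of the first Chern $\delta$-form in Proposition \ref{characterization of Chern delta-form} (with uniqueness via Proposition \ref{functorial criterion}) applied to compositions $\varphi\circ\psi$. Your first paragraph's chart-by-chart transport of $(\phi,s,\gamma)$ is fine but not strictly necessary, since \ref{remarks for ps metrics} already gives that $\varphi^*\metr$ is piecewise smooth and the converse direction of Proposition \ref{characterization of Chern delta-form}, applied to $\varphi^*c_1(L|_W,\metr)$, then simultaneously yields that $\varphi^*\metr$ is a $\delta$-metric.
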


\begin{proof}
This follows from \ref{remarks for ps metrics} and 
Proposition \ref{characterization of Chern delta-form}.
\end{proof}

\begin{rem} \label{smooth and pl are delta-metrics}
Smooth metrics and piecewise linear metrics are $\delta$-metrics 
which is clear from the definitions. It follows 
from Proposition \ref{equivalences for formal metrics} that every 
formal metric is a $\delta$-metric. In particular, every algebraic metric 
on a line bundle of a proper variety is a $\delta$-metric.
\end{rem}

\begin{ex} \label{examples for delta-metrics}
All the canonical metrics in Example \ref{canonical metric on abelian variety}, 
in Example \ref{algebraically equivalent to zero} and in Example 
\ref{toric varieties} are $\delta$-metrics. Indeed, a positive tensor power 
of such a metric is locally the tensor product of a formal metric with a 
smooth metric and hence the claim follows from 
Remark \ref{smooth and pl are delta-metrics}. 
\end{ex}

\section{Monge-Amp\`ere measures} \label{MAm}

We have seen in the previous section that formal metrics are $\delta$-metrics 
giving rise to a first Chern $\delta$-form. 
The formalism of $\delta$-forms allows us to define the Monge-Amp\`ere measure
as a wedge-product of first Chern $\delta$-forms.
We recall that Chambert--Loir has introduced discrete measures for formally 
metrized line bundles on a proper variety which are important 
for non-archimedean equidistribution. 
The main result of this section shows that the Monge--Amp\`ere measure is equal to 
the Chambert--Loir measure.

In this section $X$ is a proper algebraic variety over $K$ of dimension $n$.

\begin{art} \label{Monge-Ampere}
Let $\overline{L_1}, \dots , \overline{L_n}$ be line bundles on $X$  
endowed with $\delta$-metrics. 
Then the wedge-product $c_1(\overline{L_1}) \wedge  \dots \wedge c_1(\overline{L_n})$ of the first Chern $\delta$-forms is 
a $\delta$-form of bidegree $(n,n)$. 
By Corollary \ref{deltaform defines-measure}, the $\delta$-current 
associated to  a $\delta$-form on $\Xan$ of type $(n,n)$ extends 
to a bounded linear functional on the space of continuous functions 
and defines a signed Radon measure on $\Xan$. 
The signed measure associated to 
$c_1(\overline{L_1}) \wedge  \dots \wedge c_1(\overline{L_n})$ is called 
the {\it Monge-Amp\`ere measure} and is denoted by 
\[
{\rm MA}\bigl(c_1(\overline{L_1}), \dots ,c_1(\overline{L_n})\bigr).
\]
\end{art}

\begin{prop} \label{projection formula for Monge-Ampere}
If $\varphi: X' \rightarrow X$ is a  morphism of $n$-dimensional proper  
varieties over $K$, then the following projection formula holds:
\[
\varphi_*{\rm MA}\bigl(c_1(\varphi^*\overline{L_1}), \dots , c_1(\varphi^*\overline{L_n})\bigr)
=\deg(\varphi) {\rm MA}\bigl(c_1(\overline{L_1}), \dots ,c_1(\overline{L_n})\bigr).
\]
\end{prop}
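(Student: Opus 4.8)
The plan is to reduce the projection formula for Monge--Amp\`ere measures to the projection formula for the integral of $\delta$-forms, namely Proposition \ref{integration well-defined}(iv). Since both sides of the asserted identity are signed Radon measures on $\Xan$, it suffices by the Riesz representation theorem to test them against an arbitrary continuous function $f$ on $\Xan$; and since smooth functions are dense in $C(\Xan)$ with respect to the supremum norm by the Stone--Weierstra\ss\ theorem \cite[Prop.~(3.3.5)]{chambert-loir-ducros}, together with the continuity of integration of $\delta$-forms in Proposition \ref{integral is continuous}, it is enough to test against $f \in A^0(\Xan)$. For such $f$, unwinding the definition of the Monge--Amp\`ere measure in \ref{Monge-Ampere} and of push-forward of signed measures, the left-hand side evaluated at $f$ equals $\int_{(X')^\an} (f\circ \varphi^\an) \cdot \bigl(c_1(\varphi^*\overline{L_1}) \wedge \dots \wedge c_1(\varphi^*\overline{L_n})\bigr)$, while $\deg(\varphi)$ times the right-hand side evaluated at $f$ equals $\deg(\varphi) \int_{\Xan} f \cdot \bigl(c_1(\overline{L_1}) \wedge \dots \wedge c_1(\overline{L_n})\bigr)$.

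First I would record that $f \circ \varphi^\an = \varphi^*(f)$ as a $\delta$-form of bidegree $(0,0)$ on $(X')^\an$ (using $A^{0,0}=B^{0,0}$ from \ref{functions as delta-forms}), and that by Proposition \ref{pull-back and Chern form} we have $c_1(\varphi^*\overline{L_j}) = \varphi^*c_1(\overline{L_j})$ for each $j$. Then I would use that $\varphi^*$ is a homomorphism of bigraded $\R$-algebras on $\delta$-forms (Proposition \ref{properties of delta-forms}(i) and \ref{pull-back of delta-forms}) to conclude
\[
\varphi^*\Bigl( f \cdot c_1(\overline{L_1}) \wedge \dots \wedge c_1(\overline{L_n})\Bigr)
= (f\circ \varphi^\an)\cdot c_1(\varphi^*\overline{L_1}) \wedge \dots \wedge c_1(\varphi^*\overline{L_n})
\]
as $\delta$-forms of bidegree $(n,n)$ on $(X')^\an$. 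The form $f \cdot c_1(\overline{L_1}) \wedge \dots \wedge c_1(\overline{L_n})$ has compact support because $\Xan$ is compact ($X$ proper), and $\varphi^\an$ is proper since $\varphi$ is proper, so its pull-back also has compact support. Hence Proposition \ref{integration well-defined}(iv) applies and gives
\[
\deg(\varphi)\int_{\Xan} f \cdot c_1(\overline{L_1}) \wedge \dots \wedge c_1(\overline{L_n})
= \int_{(X')^\an} (f\circ \varphi^\an)\cdot c_1(\varphi^*\overline{L_1}) \wedge \dots \wedge c_1(\varphi^*\overline{L_n}),
\]
which is precisely the equality of the two functionals evaluated at $f$.

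The only genuine subtlety, and the step I would be most careful about, is the passage from smooth test functions to continuous ones on both sides simultaneously: one needs that $f \mapsto \int f\,d\bigl(\varphi_*\mathrm{MA}(\dots)\bigr)$ and $f\mapsto \deg(\varphi)\int f\,d\mathrm{MA}(\dots)$ are each continuous on $C(\Xan)$ for the supremum norm, which for the first follows from Proposition \ref{integral is continuous} applied on $(X')^\an$ combined with $|f\circ\varphi^\an|_{(X')^\an} \le |f|_{\Xan}$, and for the second directly from Proposition \ref{integral is continuous} on $\Xan$; both are therefore determined by their restriction to the dense subspace $A^0(\Xan)$. A minor bookkeeping point is that $\deg(\varphi)=0$ when the function field extension is not finite, in which case $\varphi$ is not generically finite; then one notes $\dim \varphi(X') < n$ and the pull-back $\delta$-form of bidegree $(n,n)$ has support in $(\varphi^\an)^{-1}$ of a lower-dimensional set, so by Corollary \ref{support corollary} its integral vanishes, and both sides are zero. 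Assembling these observations yields the proposition without any new computation beyond what is already available in the paper.
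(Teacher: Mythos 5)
Your proposal is correct and follows essentially the same route as the paper: reduce to smooth test functions via Stone--Weierstra\ss{} (using that both sides are signed Radon measures, hence determined by their values on the dense subspace $A^0(\Xan)$), identify $c_1(\varphi^*\overline{L_j})=\varphi^*c_1(\overline{L_j})$ by Proposition \ref{pull-back and Chern form}, and conclude with the projection formula \eqref{integration well-definedg1}. The only difference is your separate treatment of the case $\deg(\varphi)=0$, which is harmless but unnecessary, since Proposition \ref{integration well-defined}(iv) already covers it (the stated identity forces the pulled-back integral to vanish when $\deg(\varphi)=0$).
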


\begin{proof} 
{The Stone-Weierstraß Theorem
\cite[Prop. (3.3.5)]{chambert-loir-ducros} implies that
$A^0(\Xan)$ is a dense subspace of $C(\Xan)$. 
For functions in $A^0(\Xan)$ the desired equality 
follows from Proposition \ref{pull-back and Chern form} and 
from the projection formula for $\delta$-forms \eqref{integration well-definedg1}.
This yields our claim.}
\end{proof}

\begin{prop} \label{total mass of Monge-Ampere}
If $X$ is a proper variety of dimension $n$, then the total mass of 
${\rm MA}\bigl(c_1(L_1, \metr_1), \dots , c_1(L_n, \metr_n)\bigr)$ is equal to 
$\deg_{L_1, \dots, L_n}(X)$.
\end{prop}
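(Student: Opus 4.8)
The statement is that $\int_{\Xan} c_1(L_1,\metr_1) \wedge \dots \wedge c_1(L_n,\metr_n) = \deg_{L_1,\dots,L_n}(X)$. The plan is to reduce to the case of formal (equivalently, algebraic) metrics by a continuity and density argument, and then to identify the integral with an intersection number on a formal model, where the classical projection formula for intersection products yields the degree.

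First I would treat the case where all metrics $\metr_j$ are formal. By Proposition \ref{algebraic and formal metrics} these are algebraic metrics, so there is a common proper algebraic $\kcirc$-model $\Xfrak$ of $X$ together with line bundles $\Lfrak_j$ inducing $\metr_j$ (one can pass to a common model after suitable blow-ups, and enlarge $K$ if needed since the construction is invariant under base change of algebraically closed valued fields). By Theorem \ref{CLvsMA}, the Monge--Amp\`ere measure ${\rm MA}(c_1(L_1,\metr_1),\dots,c_1(L_n,\metr_n))$ equals the Chambert--Loir measure associated to these data. The Chambert--Loir measure is by construction a discrete measure supported on the divisorial points corresponding to the irreducible components $Y$ of the special fibre $\Xfrak_s$, with the mass at the point attached to $Y$ equal to $\mult_Y(\Xfrak_s) \cdot \deg_{\Lfrak_1,\dots,\Lfrak_n}(Y)$ (intersection of the first Chern classes of the $\Lfrak_j$ with $Y$ computed on the special fibre). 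Summing over components and using the standard fact that $\sum_Y \mult_Y(\Xfrak_s) \deg_{\Lfrak_1,\dots,\Lfrak_n}(Y)$ computes the intersection number $\Lfrak_1 \cdots \Lfrak_n \cdot \Xfrak_s$ on the model, which by flatness and the projection formula equals $L_1 \cdots L_n \cdot X = \deg_{L_1,\dots,L_n}(X)$ on the generic fibre, gives the total mass $\deg_{L_1,\dots,L_n}(X)$ in the formal case. I would cite \cite{chambert-loir-2006} and \cite{gubler-crelle} for the computation of the total mass of the Chambert--Loir measure.

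Next I would pass to general $\delta$-metrics. The total mass of ${\rm MA}(c_1(L_1,\metr_1),\dots,c_1(L_n,\metr_n))$ is $\int_{\Xan} c_1(L_1,\metr_1)\wedge\dots\wedge c_1(L_n,\metr_n) = \langle [c_1(L_1,\metr_1)\wedge\dots\wedge c_1(L_n,\metr_n)], 1\rangle$. Both sides of the claimed equality are multilinear in the metrized line bundles (the first Chern $\delta$-form is additive in isometry classes by \ref{first Chern delta-form properties}, and the degree is multilinear), so by polarization it suffices to prove $\int_{\Xan} c_1(L,\metr)^{\wedge n} = \deg_L(X)$ for a single $\delta$-metrized line bundle $(L,\metr)$ with $L$ fixed. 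Here I would use that the right-hand side depends only on $L$, not on $\metr$, so it is enough to show the left-hand side is independent of the $\delta$-metric $\metr$ on $L$: given two $\delta$-metrics $\metr,\metr'$, the function $g := -\log(\metr/\metr')$ is a well-defined continuous real function on $\Xan$ (piecewise smooth, in fact), and $c_1(L,\metr) - c_1(L,\metr')$ differs by $d'd''[g]$-type terms; more precisely, writing $c_1(L,\metr)^{\wedge n} - c_1(L,\metr')^{\wedge n}$ as a telescoping sum of terms each containing a factor $c_1(L,\metr) - c_1(L,\metr') = d'd'' (\text{something})$ wedged with closed $\delta$-forms, Stokes' theorem (Theorem \ref{stokes-for-var}) shows the difference of the integrals vanishes. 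Combined with the formal case and the fact that every line bundle on a proper variety admits an algebraic, hence formal, hence $\delta$-metric (Corollary \ref{formal vs algebraic}, Remark \ref{smooth and pl are delta-metrics}), this proves the claim for all $\delta$-metrics.

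The main obstacle is the independence-of-metric step: one must check carefully that the difference $c_1(L,\metr)^{\wedge n} - c_1(L,\metr')^{\wedge n}$ is $d'$- or $d''$-exact as a $\delta$-form (or at least that its integral vanishes), which requires that $g = -\log(\metr/\metr')$ really does give $c_1(L,\metr) = c_1(L,\metr') + d'd''[g]$ at the level of $\delta$-forms and not merely $\delta$-currents, and that the telescoping identity $a^{\wedge n} - b^{\wedge n} = \sum_{k=0}^{n-1} a^{\wedge k}\wedge(a-b)\wedge b^{\wedge (n-1-k)}$ is valid for closed $\delta$-forms — this uses the anti-commutativity and the $d'd''$-closedness of first Chern $\delta$-forms from \ref{first Chern delta-form properties} together with the Leibniz rule of Corollary \ref{omegaleibnizformel}. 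Once these formal manipulations are in place, the integral of each telescoping summand is $\int_{\Xan} \omega \wedge d'd''[g] = \int_{\Xan} d'd''(\omega \wedge [g])$ up to sign (since $\omega$ is closed), which vanishes by Stokes. Alternatively, one could bypass independence-of-metric entirely by a direct density argument: approximate an arbitrary $\delta$-metric by formal ones in the uniform norm and use continuity of $\int_{\Xan} c_1(\cdot)^{\wedge n}$ in the metric, but verifying that continuity is itself of comparable difficulty, so I would prefer the Stokes argument.
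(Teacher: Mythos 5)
The second half of your argument (multilinearity and polarization, the telescoping identity, $d'$- and $d''$-closedness of first Chern $\delta$-forms from \ref{first Chern delta-form properties}, and the vanishing of $\langle d'd''[g],\omega\rangle$ against closed $\delta$-forms by adjointness) is sound. The problem is your base case: you deduce the formal-metric case from Theorem \ref{Monge-Ampere vs Chambert-Loir} (Theorem \ref{CLvsMA} of the introduction), but in this paper that theorem is proved \emph{using} Proposition \ref{total mass of Monge-Ampere}, and in exactly the instance you need. In the proof of Theorem \ref{Monge-Ampere vs Chambert-Loir}, when $\pi(x)$ is the generic point of an irreducible component $Y$ of $\Xcal_s$, the identity $\int_{Y^{\rm an}}c_1(\Lcal_s|_Y,\metr_{\rm can})^n=\deg_{\Lcal}(Y)$ is obtained by citing Proposition \ref{total mass of Monge-Ampere} for the algebraic (hence formal) canonical metric on $\Lcal_s|_Y$. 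So the formal/algebraic-metric case of the proposition is an input to the comparison theorem, not a consequence of it; as written, your proof is circular within the paper's logical order, and since $Y$ has the same dimension $n$ as $X$, there is no dimension drop that could make the circularity harmless.

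The gap is repairable without abandoning your strategy: take smooth metrics as the base case (smooth metrics always exist) and quote \cite[Prop.\ 6.4.3]{chambert-loir-ducros} directly, which is an external result independent of everything here; your polarization-plus-Stokes step then transports the identity $\int_\Xan c_1(L,\metr)^{\wedge n}=\deg_L(X)$ from one $\delta$-metric on $L$ to all others. The paper itself takes a different and more direct route: it adapts the argument of \cite[Prop.\ 6.4.3]{chambert-loir-ducros} to $\delta$-metrics, i.e.\ induction on dimension via the Poincar\'e--Lelong equation (Corollary \ref{PL for line bundles}): pairing $[c_1(L_n,\metr_n)]=d'd''[\log\|s_n\|]+\delta_{Y_n}$ against the $d'$- and $d''$-closed $\delta$-form $c_1(L_1,\metr_1)\wedge\dots\wedge c_1(L_{n-1},\metr_{n-1})$ kills the $d'd''$-term and reduces the integral to the Weil divisor $Y_n$, where induction gives $\deg_{L_1,\dots,L_{n-1}}(Y_n)=\deg_{L_1,\dots,L_n}(X)$; no appeal to Chambert--Loir measures or to Theorem \ref{Monge-Ampere vs Chambert-Loir} is made.
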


\begin{proof} 
This follows similarly as in 
\cite[Proposition 6.4.3]{chambert-loir-ducros}. 
They handled there only the case of smooth metrics, but our formalism of $\delta$-forms allows us to obtain this result more generally for $\delta$-metrics.
\end{proof}

We  recall the crucial properties of Chambert-Loir's measures. 
They were introduced in a slightly different setting 
by Chambert--Loir in  \cite{chambert-loir-2006}.

\begin{prop} \label{Chambert-Loir's measures}
There is a unique way to associate to any $n$-dimensional proper variety $X$ over $K$ and to any family of formally metrized line bundles $\overline{L_1}, \dots, \overline{L_n}$ on $X$  a  {signed Radon} measure $\mu=\mu_{\overline{L_1}, \dots, \overline{L_n}}$ on $X^{\rm an}$ such that the following properties hold:
\begin{itemize}
\item[(a)] The measure $\mu$ is multilinear and symmetric in $\overline{L_1}, \dots, \overline{L_n}$.
\item[(b)] If $\varphi:Y \rightarrow X$ is a morphism of $n$-dimensional  proper varieties over ${K}$, 
then the following projection formula holds:
\[
\varphi_* \left( \mu_{\varphi^* \overline{L_1}, \dots , \varphi^* \overline{L_n}} \right) = 
\deg(\varphi) \mu_{\overline{L_1}, \dots, \overline{L_n}}.\]
\item[(c)] If $\Xcal$ is a formal $\kcirc$-model of $X$ with reduced 
 {special fibre $\Xcal_s$} and 
if the metric of $\overline{L_j}$ is induced by a formal $\kcirc$-model $\Lcal_j$ of $L_j$ on $\Xcal$ for every $j=1, \dots, n$, then
\[\mu= 
\sum_Y\deg_{ {{\Lcal}_1, \dots, {\Lcal}_n}}(Y) \delta_{\xi_Y},
\]
where $Y$ ranges over the irreducible components of  {${\Xcal}_s$} and $\delta_{\xi_Y}$ is the Dirac measure in the unique point $\xi_Y$ of $\Xan$ which reduces to the generic point of $Y$ (see \cite[Proposition 2.4.4]{berkovich-book}).
\item[(d)] 
The total mass is given by $\mu(\Xan)=\deg_{L_1,\dots,L_n}(X)$. 
\end{itemize}
\end{prop}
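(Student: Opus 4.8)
The plan is to prove existence and uniqueness separately, in both cases reducing to the situation in which all of the given metrics are induced by a single admissible formal $\kcirc$-model with reduced special fibre, where property (c) applies. The key reduction step is the following: given line bundles $L_1,\dots,L_n$ on $X$ equipped with formal metrics $\metr_1,\dots,\metr_n$, Proposition \ref{existence} together with the theory of admissible formal blowing-ups (see \cite{bosch-luetkebohmert-1}) provides a single admissible formal $\kcirc$-model $\Xcal$ of $X$ carrying formal $\kcirc$-models $\Lcal_j$ of $L_j$ with $\metr_j=\metr_{\Lcal_j}$, and then, arguing as in Proposition \ref{uniqueness of line bundle}, we may replace $\Xcal$ by its formal normalization so that $\tilde{\Xcal}$ becomes reduced while the pulled-back $\Lcal_j$ still induce the $\metr_j$. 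Granting this, uniqueness is immediate: if $\mu$ and $\mu'$ both satisfy (a)--(d), then property (c) expresses $\mu_{\overline{L_1},\dots,\overline{L_n}}$ (and likewise $\mu'$) in terms of the intersection numbers $\deg_{\tilde{\Lcal}_1,\dots,\tilde{\Lcal}_n}(Y)$ over the components $Y$ of $\tilde{\Xcal}$, so $\mu=\mu'$.

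For existence, the plan is to take the formula in (c) as the definition of $\mu_{\overline{L_1},\dots,\overline{L_n}}$ once the reduction above is available. The main obstacle is \emph{well-definedness}: one must show that the right-hand side of (c) is independent of the chosen admissible formal $\kcirc$-model $\Xcal$ of $X$ with reduced special fibre over which all the $\Lcal_j$ live. I would prove this by dominating two such models by a third one $\Xcal''$ (again with reduced special fibre after normalization) and comparing through the induced proper morphisms of special fibres over $\ktilde$: the projection formula for intersection numbers on the $\ktilde$-schemes $\tilde{\Xcal}$, combined with the compatibility of the reduction map with morphisms of formal models (which identifies the relevant points $\xi_Y$ of $\Xan$), yields the equality of the two discrete measures. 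The intersection-theoretic input here, over a possibly non-Noetherian valuation ring $\kcirc$, is exactly what is developed in \cite{gubler-crelle} and recalled in \cite[\S 13]{gubler-guide}; in the special case of a discrete valuation it is \cite{chambert-loir-2006}. I would also note that, although Chambert--Loir originally worked with semipositive metrics and positive measures, property (a) forces one to pass to \emph{signed} Radon measures for general formal metrics, using Proposition \ref{algebraic and formal metrics} to write such metrics as differences of semipositive ones.

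It then remains to verify (a)--(d). Symmetry and multilinearity in (a) follow from the corresponding properties of $\deg_{\tilde{\Lcal}_1,\dots,\tilde{\Lcal}_n}(Y)$ and from the fact that tensor products of formal models induce tensor products of formal metrics. For the projection formula (b) with a morphism $\varphi\colon Y\to X$ of $n$-dimensional proper varieties, I would extend $\varphi$ to a morphism of admissible formal models after a further blowing-up, and then combine the projection formula for intersection numbers on the special fibres with the compatibility of reduction maps and $\varphi^{\an}$. For the total mass (d), one uses that the intersection number does not change when passing between the generic and the special fibre of a flat proper formal $\kcirc$-model, so that $\sum_Y\deg_{\tilde{\Lcal}_1,\dots,\tilde{\Lcal}_n}(Y)=\deg_{L_1,\dots,L_n}(X)$. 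Since all of (a)--(d) and the uniqueness argument reproduce the constructions of \cite{chambert-loir-2006} and \cite{gubler-crelle}, I would present Proposition \ref{Chambert-Loir's measures} as a recollection, citing those sources for the details of the formal intersection theory and noting only the routine adaptations needed in the present generality.
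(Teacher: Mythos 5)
Your proposal follows essentially the same route as the paper: the paper likewise treats the formula in (c) as the defining description, deduces uniqueness from (c) together with the existence of a simultaneous formal $\kcirc$-model with reduced special fibre (citing \cite[Proposition 7.5]{gubler-crelle}), and refers to \cite[\S 3]{gubler-2007a} for exactly the existence/well-definedness argument you sketch via domination of models and the projection formula on special fibres. The only inessential slip is your aside that one must write formal metrics as differences of semipositive ones (Proposition \ref{algebraic and formal metrics} does not provide such a decomposition, and none is needed, since the formula in (c) already defines a signed discrete measure for arbitrary formal metrics).
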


\begin{proof} For existence, we refer to \cite[\S 3]{gubler-2007a}. Uniqueness follows from (c) alone as the existence of a simultaneous  formal $\kcirc$-model with reduced special fibre is a consequence of  \cite[Proposition 7.5]{gubler-crelle}. 
\end{proof}

\begin{thm} \label{Monge-Ampere vs Chambert-Loir}
For formally metrized line bundles $\overline{L_1}, \dots, \overline{L_n}$ on the  proper variety $X$ of dimension $n$, the Monge-Amp\`ere measure ${\rm MA}(c_1(\overline{L_1}), \dots , c_1(\overline{L_n}))$ agrees with the Chambert-Loir measure $\mu_{\overline{L_1}, \dots, \overline{L_n}}$.
\end{thm}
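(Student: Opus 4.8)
The plan is to reduce the equality of the two Radon measures to a coincidence after evaluating against a dense class of test functions, and then to use the characterizing property \ref{Chambert-Loir's measures}(c) of the Chambert--Loir measure together with our tropical computation of the Monge--Amp\`ere measure. Since both $\mathrm{MA}(c_1(\overline{L_1}),\dots,c_1(\overline{L_n}))$ and $\mu_{\overline{L_1},\dots,\overline{L_n}}$ are signed Radon measures on the compact space $\Xan$, it suffices to show they agree as linear functionals on $C(\Xan)$, and by the Stone--Weierstra\ss\ theorem \cite[Prop.~(3.3.5)]{chambert-loir-ducros} it is enough to test on smooth functions $f \in A^0(\Xan)$. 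Both sides are multilinear and symmetric in the metrized line bundles: for the Monge--Amp\`ere side this follows from \ref{first Chern delta-form properties}, and for the Chambert--Loir side it is property \ref{Chambert-Loir's measures}(a). Hence, by polarization, I may assume $\overline{L_1}=\dots=\overline{L_n}=\overline{L}$ for a single formally metrized line bundle; equivalently, by possibly replacing $\Xcal$ with a dominating model, that all the metrics are induced by formal $\kcirc$-models $\Lcal_1,\dots,\Lcal_n$ on one common formal $\kcirc$-model $\Xcal$ of $X$ with reduced special fibre (using \cite[Proposition 7.5]{gubler-crelle}, exactly as in the uniqueness argument for \ref{Chambert-Loir's measures}).

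\textbf{Key steps.} First I would fix such a common model $\Xcal$ with reduced special fibre $\tilde{\Xcal}$ and formal models $\Lcal_1,\dots,\Lcal_n$. By \ref{Chambert-Loir's measures}(c), the Chambert--Loir measure is then the explicit discrete measure $\sum_Y \deg_{\tilde{\Lcal}_1,\dots,\tilde{\Lcal}_n}(Y)\,\delta_{\xi_Y}$, where $Y$ runs over the irreducible components of $\tilde{\Xcal}$ and $\xi_Y$ is the associated Shilov-type point. So the task becomes: compute the Monge--Amp\`ere measure $\mathrm{MA}(c_1(\overline{L_1}),\dots,c_1(\overline{L_n}))$ locally and identify it with this sum. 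Here I would use the local structure of formal (equivalently piecewise linear, by Proposition \ref{equivalences for formal metrics}) metrics: around a point there is a tropical chart $(V,\varphi_U)$ and integral $\Gamma$-affine functions $\phi_j$ on $N_{U,\R}$ with $-\log\|s_j\| = \phi_j \circ \trop_U$ for local frames $s_j$ of $L_j$. By \ref{summary of Chern delta-construction}, on $V$ the first Chern $\delta$-form $c_1(\overline{L_j})$ is represented by the $\delta$-preform $\delta_{\phi_j \cdot N_{U,\R}}$ (the $\gamma$-part vanishes in the piecewise linear case). Therefore, using the product formula for $\delta$-preforms in Proposition \ref{deltaformsbasicprop}(ii) together with the associativity/commutativity of corner loci (Proposition \ref{asscommcornerlocus}), the wedge-product $c_1(\overline{L_1}) \wedge \dots \wedge c_1(\overline{L_n})$ is represented on $V$ by the iterated corner locus $\phi_1 \cdot \bigl(\phi_2 \cdot (\dots (\phi_n \cdot \Trop(U)))\bigr)$, a tropical $0$-cycle on $N_{U,\R}$. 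Its associated measure, via \ref{integration of P-forms} and Corollary \ref{deltaform defines-measure}, is the discrete measure supported on the vertices of $\Trop(U)$ with weights given by tropical intersection multiplicities. The final identification step is then the standard comparison between these tropical intersection numbers on $\Trop(U)$ (via the fan displacement rule, \ref{intpairing}(ii)) and the intersection degrees $\deg_{\tilde{\Lcal}_1,\dots,\tilde{\Lcal}_n}(Y)$ on the special fibre, which is precisely the content of Chambert--Loir's original construction and the computations in \cite[\S 3]{gubler-2007a}; in particular each vertex $\omega$ of $\Trop(U)$ corresponds to an irreducible component $Y_\omega$ of $\tilde{\Xcal}$ and $\trop_U^{-1}$ of a small neighbourhood of $\omega$ carries the Dirac mass at $\xi_{Y_\omega}$. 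Matching the total masses via Proposition \ref{total mass of Monge-Ampere} and \ref{Chambert-Loir's measures}(d) provides a useful consistency check.

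\textbf{Main obstacle.} The routine parts — reduction to smooth test functions, polarization, and picking a common reduced model — are straightforward given the results already established. The genuine heart of the argument is the local identification of the tropical intersection number attached to a vertex of $\Trop(U)$ with the algebraic intersection degree of the corresponding component of the special fibre of the formal model. This is where the combinatorics of the fan displacement rule has to be matched against intersection theory on $\tilde{\Xcal}$; one must check that the piecewise linear functions $\phi_j$ encode exactly the restrictions of $\Lcal_j$ to the toric-like local model of $\Xcal$ around $\xi_{Y}$, so that the stable tropical intersection product computes $\deg_{\tilde{\Lcal}_1,\dots,\tilde{\Lcal}_n}(Y)$. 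Since Chambert--Loir and Ducros proved the analogous statement with their (approximation-based) construction of the Monge--Amp\`ere measure, the cleanest route is to invoke that comparison and to verify that our $\delta$-form definition of $c_1(\overline{L_j})$ agrees with their first Chern current — which is immediate from \ref{first Chern delta-form properties}, since both have the same associated current $[c_1(L_j,\metr_j)]$ — so that the two notions of Monge--Amp\`ere measure coincide for formal metrics, and hence both equal the Chambert--Loir measure.
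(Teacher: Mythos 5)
Your reductions (testing against smooth functions, multilinearity, passing to a common model with reduced special fibre, and the local description of $c_1(\overline{L_j})$ on a tropical chart by the corner locus of an integral $\Gamma$-affine function, so that the wedge product is an iterated corner locus) are all sound and match the paper's setup. But the actual content of the theorem is exactly the step you defer: showing that the resulting measure on $\Xan$ is concentrated in the points $\xi_Y$ and that the local tropical intersection number equals $\deg_{\tilde{\Lcal}_1,\dots,\tilde{\Lcal}_n}(Y)$. This is not "the content of Chambert--Loir's original construction" nor of the cited computations constructing the Chambert--Loir measure; it requires two new ingredients that your proposal does not supply. First, you must show the Monge--Amp\`ere measure vanishes near every $x$ whose reduction $\pi(x)$ is \emph{not} a generic point of the special fibre; the paper gets this from Ducros's local dimension theorem (the germ of $\trop(V)$ at $\trop(x)$ has dimension equal to the transcendence degree of $\ktilde(\pi(x))$, which is $<n$), so the pulled-back $0$-dimensional tropical cycle misses the relevant open set. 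Second, at a point $\xi_Y$ you need a mechanism linking the piecewise linear data of the formal metric on the generic fibre to intersection numbers on the special fibre; this is Lemma \ref{tropicalization and special fibre}, which embeds a neighbourhood $\Ucal$ of $\pi(x)$ in the model into a torus over $\kcirc$ so that $\LC_0(\Trop(U))=\Trop(\Ucal_s)$ and one and the same function $\phi$ describes both $\metr_\Lcal$ near $x$ and the canonical metric on $\Lcal_s$ near $\pi(x)$; the local tropical intersection number is then converted into $\deg_\Lcal(Y)=\int_{Y^{\rm an}}c_1(\Lcal_s|_Y,\metr_{\rm can})^n$ by applying Proposition \ref{total mass of Monge-Ampere} over the (trivially valued) residue field. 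Without something playing this role, your "standard comparison" is an assertion, not a proof.

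The fallback you propose is also a genuine gap: invoking Chambert--Loir--Ducros and declaring that the two Monge--Amp\`ere measures coincide "since both have the same associated first Chern current" does not work. Agreement of the bidegree $(1,1)$ currents implies nothing about the top-degree products, because wedge products of currents are not defined: their measure is obtained by an approximation process with smooth metrics, yours as a wedge product of $\delta$-forms, and identifying the two for formal metrics is itself a nontrivial theorem -- given their result, it is essentially equivalent to the statement you are trying to prove. To make that route work you would have to prove a convergence statement (that the $\delta$-form Monge--Amp\`ere measure of a formal metric is the limit of the measures of smooth approximations), which you have not done; the paper deliberately avoids this and instead gives the independent tropical argument sketched above.
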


This theorem was first proven by Chambert-Loir and Ducros \cite[\S 6.9]{chambert-loir-ducros} for their  
Monge-Amp\`ere measures defined by a tricky approximation process with smooth metrics. Their argument uses Zariski--Riemann spaces, while we use here a more tropical approach related to our $\delta$-forms.

\begin{art} \label{notation for lemma}
In Lemma \ref{tropicalization and special fibre}, we will consider a closed subvariety $\Ucal$ of 
a torus $\T = \Spec(\kcirc[M])$ over $\kcirc$. We will use the following notation: $N$ is the dual of the 
free abelian group $M$ of finite rank. Let $U$ be the generic fibre of $\Ucal$ and let $\Ucal_s$ be the special fibre.

The tropicalization $\trop:(\T_K)^{\rm an} \rightarrow N_\R$ (resp. $\trop:\T_s^{\rm an} \rightarrow N_\R$) with respect to the valuation $v$ on $K$ (resp. the trivial valuation on $\ktilde$) leads to the tropical variety $\Trop(U)$ (resp. $\Trop(\Ucal_s)$). 

The local cone ${\rm LC}_0(\Trop(U))$ at $0$ is defined as the cone in $N_\R$ which agrees with $\Trop(U)$ in a neighbourhood of $0$. We endow it with the weights induced by {the canonical tropical weights on} $\Trop(U)$.

\end{art}

\begin{art} \rm \label{preparatory setup}
For the proof of Theorem \ref{Monge-Ampere vs Chambert-Loir}, we need a preparatory result. Let $L$ be a line bundle on the proper variety $X$ over $K$. 
We consider an algebraic $\kcirc$-model $(\Xcal,\Lcal)$ of $(X,L)$. Then we get an algebraic metric $\metr_\Lcal$ on $L$. 
We have seen in \ref{trivial ps} that the restriction $\Lcal_s$ of $\Lcal$ to the special fibre $\Xcal_s$ has a canonical metric $\metr_{\rm can}$. Note that the first metric is continuous on the Berkovich space $X^{\rm an}$ with respect to the given valuation $v$ while $\metr_{\rm can}$ is continuous on the Berkovich space $\Xcal_s^{\rm an}$ with respect to the trivial valuation on the residue field $\ktilde$. Since $\Xcal$ is assumed to be proper, we have a reduction map $\pi:X^{\rm an} \rightarrow \Xcal_s$. For $x \in X^{\rm an}$, 
$\pi(x)$ is a scheme theoretic point of $\Xcal_s$. Using the trivial valuation on the residue field of $\pi(x)$, we will view $\pi(x)$ 
as a point of $\Xcal_s^{\rm an}$. 
\end{art}

In the next lemma, we will show that $\metr_{\rm can}$ is piecewise linear in an  
{neighbourhood} of $\pi(x)$ in $\Xcal_s^{\rm an}$. This means that using a trivialization and a tropicalization, the canonical metric is induced by a piecewise linear function on the tropical variety.  It will be crucial in the proof of Theorem \ref{Monge-Ampere vs Chambert-Loir} that we can use tropically the same piecewise linear function {to describe} the formal metric $\metr_\Lcal$ in {a neighbourhood of $x$ in $X^\an$}. We make this now precise:

\begin{lem} \label{tropicalization and special fibre}
Under the setup given in \ref{preparatory setup}, we {fix an element $x\in X^\an$} and an open neighbourhood $\Vcal$ of $\pi(x)$ in $\Xcal$. Then there is an open neighbourhood $\Ucal$ of $\pi(x)$ in $\Vcal$ and a closed embedding $\Ucal \hookrightarrow \T$ into a torus $\T = \Spec(\kcirc[M])$ with the following properties (a)--(f) 
using the notation from \ref{notation for lemma}:
\begin{itemize}
\item[(a)]
{We have $0=\trop(x)$ and the weighted local cone in $0$ satisfies} 
\[{\rm LC}_0(\Trop(U))=\Trop(\Ucal_s).\]
\item[(b)] There is an open neighbourhood $\widetilde{\Omega}$ of $0$ in $N_\R$ with 
\[{\rm LC}_0(\Trop(U))  \cap \widetilde{\Omega} = \Trop(U) \cap \widetilde{\Omega}.\]
\item[(c)] There {exist a complete rational polyhedral fan $\Sigma$ on $N_\R$ 
and a continuous function $\phi:N_\R \rightarrow \R$ which is piecewise linear with 
respect to $\Sigma$ (i.e. for every $\sigma \in \Sigma$, there is 
$u_\sigma \in M$ with $\phi= u_\sigma$ on $\sigma$).}
\item[(d)] $\Ucal$ is a trivialization of $\Lcal$ with respect to a nowhere vanishing section $s \in \Gamma(\Ucal, \Lcal)$.
\item[(e)] We have $-\log \|s\|_\Lcal = \phi \circ \trop$  on a neighbourhood of $x$ in $X^{\rm an}$.
\item[(f)] We have $-\log \|s\|_{\rm can} = \phi \circ \trop$ on a neighbourhood of $\pi(x)$ in $\Xcal_s^{\rm an}$. 
\end{itemize}
If $\pi(x)$ is the generic point of an irreducible component of $\Xcal_s$, then there is an $\Ucal$ as above with (a)--(d) and the following stronger properties:
\begin{itemize}
\item[(e')] We have $-\log \|s\|_\Lcal = \phi \circ \trop$  on $\trop^{-1}(\widetilde{\Omega}) \subseteq \Uan$.
\item[(f')] The identity $-\log \|s\|_{\rm can} = \phi \circ \trop$ holds on $\Ucal_s^{\rm an}$.
\end{itemize}
\end{lem}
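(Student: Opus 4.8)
The statement is essentially a local model result: near a point $x \in X^\an$ we want to realize a piece of $X$ as a closed subscheme of a $\kcirc$-torus so that both the algebraic metric $\metr_\Lcal$ and the canonical metric $\metr_{\rm can}$ on the special fibre are tropically described by a \emph{single} piecewise linear function $\phi$. The plan is to build the torus embedding from carefully chosen regular functions on a small affine $\kcirc$-model neighbourhood $\Ucal$ of $\pi(x)$, and then to use the compatibility of tropicalization with reduction to transfer the special-fibre picture to a neighbourhood of $x$ in $X^\an$.

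First I would shrink $\Vcal$ to an affine open $\Spec(B)$ of $\Xcal$ containing $\pi(x)$, where $B$ is a flat $\kcirc$-algebra of finite type on which $\Lcal$ is trivialized by a nowhere vanishing section $s$; this gives (d) for free. Since $\pi(x)$ lies in the closed fibre, the point $x$ corresponds to a bounded seminorm on $B$, i.e. $|b(x)| \le 1$ for all $b \in B$. The key step is the choice of generators: I would pick finitely many $b_1,\dots,b_r \in B$ that (i) generate $B$ as a $\kcirc$-algebra, and (ii) have the property that $b_1,\dots,b_r$ together with $s/s$-data encode the transition functions, after inverting those $b_i$ with $|b_i(x)| \ne 0$. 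Replacing $\Ucal = \Spec(B)$ by the localization making exactly those $b_i$ invertible (so that $\pi(x)$ is still in $\Ucal$ precisely because $|b_i(x)|=0$ for the remaining generators), one gets a closed embedding $\Ucal \hookrightarrow \T = \Spec(\kcirc[M])$ with $M$ the group generated by the Laurent monomials in the invertible $b_i$. The normalization $0 = \trop(x)$ comes from the condition $|b_i(x)| = 1$ for the inverted coordinates, which can be arranged by scaling the $b_i$ by elements of $K^\times$ (here algebraic closedness of $K$ and divisibility of $\Gamma$ are used, as in the proof of Proposition \ref{G-characterization}).

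With the embedding in hand, (a) is the compatibility of tropicalization with the reduction map: the weighted local cone ${\rm LC}_0(\Trop(U))$ at a point reducing to the generic point of a stratum equals the tropical variety of the corresponding stratum of the special fibre — this is the content of the initial degeneration/Sturmfels–Tevelev type statements already invoked via \eqref{cor-sturmfels-tevelev} and its trivially-valued analogue; one applies it to $\Ucal_s \hookrightarrow \T_s$. Statement (b) follows because the Bieri–Groves/Ducros polyhedral structure (\cite[Th\'eor\`eme 3.2]{ducros}) is locally conical around $0$ after possibly shrinking to a small $\widetilde\Omega$. For (c)–(f): the algebraic model $\Lcal$ on the torus chart is given (up to the trivializing section $s$) by a rational monomial with $\kcirc$-unit coefficient, equivalently by a piecewise linear function on $N_\R$; I would take $\Sigma$ a complete rational fan refining the normal fan of the corresponding polytope and $\phi$ the associated support function with $\phi = u_\sigma$ on each $\sigma$. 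Then $-\log\|s\|_\Lcal = \phi\circ\trop$ holds on the locus where the monomial description is valid, i.e. on $\trop^{-1}(\widetilde\Omega) \subseteq \Uan$ after shrinking — giving (e), and (e') when no shrinking is needed. Formula (f), resp.\ (f'), is obtained by the same computation over the residue field with the trivial valuation, using the intrinsic description of $\metr_{\rm can}$ from \ref{trivial ps}: on the formal affinoid $\Ucal_s^\circ$ one has $\|s\|_{\rm can}=1$, and the piecewise linear function $\phi$ restricted to $\Trop(\Ucal_s) = {\rm LC}_0(\Trop(U))$ records exactly the transition data, so $-\log\|s\|_{\rm can} = \phi\circ\trop$ there.

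The main obstacle I expect is the last paragraph's compatibility: arranging that the \emph{same} $\phi$ works both on a neighbourhood of $x$ in $X^\an$ and on (a neighbourhood of $\pi(x)$ in) $\Xcal_s^\an$, with matching polyhedral structures. This requires that the torus embedding of $\Ucal$ restricts to a good embedding of $\Ucal_s$ and that the canonical tropical weights match under specialization — precisely property (a) — and that the polyhedral domain $\widetilde\Omega$ can be chosen small enough that $\Trop(U)$ is genuinely conical there while still large enough to contain a full neighbourhood of $x$. In the special case where $\pi(x)$ is the generic point of an irreducible component of $\Xcal_s$, the stratum is all of that component, $\Trop(\Ucal_s)$ is already a cone (indeed a linear space if the component is the whole torus chart), and $\trop^{-1}(\widetilde\Omega)$ can be taken to be all of $\Uan$; this removes the shrinking and yields the stronger (e') and (f'). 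The remaining verifications — that the index/multiplicity bookkeeping in (a) is consistent, and that $\phi$ as a support function indeed reproduces both metrics — are routine given Propositions \ref{G-characterization}, \ref{equivalences for formal metrics} and the material of \ref{trivial ps}.
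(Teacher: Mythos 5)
There is a genuine gap at the heart of your construction of $\phi$, in the step ``(c)--(f)''. You build the torus embedding only from generators of the one chart $B=\Ocal(\Ucal)$ on which $\Lcal$ is trivialized by $s$, and then claim that $\metr_\Lcal$ is given on this chart ``by a rational monomial with $\kcirc$-unit coefficient'', so that $\phi$ can be taken to be the support function of an associated polytope. But $\Lcal|_\Ucal$ is trivial and $X$ is not toric, so there is no such monomial or polytope; worse, the value $\|s(y)\|_\Lcal$ at a point $y$ near $x$ is \emph{not} determined by the chart $\Ucal$ at all. It is computed from whichever model chart $\Ucal_i$ of $\Xcal$ contains the reduction $\pi(y)$: one has $\|s\|_\Lcal=|s/s_i|$ on $U_i^\circ=\{y:\pi(y)\in\Ucal_{i,s}\}$, where $s_i$ trivializes $\Lcal$ over $\Ucal_i$. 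If $-\log\|s\|_\Lcal$ were given by a single character near $x$ it would be affine there, whereas genuinely piecewise linear behaviour near $x$ reflects the presence of several charts $\Ucal_i$ with $\pi(x)\in\Ucal_{i,s}$. Consequently your $\phi$ cannot satisfy (e), and the same objection applies to (f), (e'), (f').

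The missing idea (and the actual mechanism in the paper's proof) is to choose the embedding $\Ucal\hookrightarrow\T$ so that the \emph{whole atlas} of $(\Xcal,\Lcal)$ near $\pi(x)$ becomes tropically visible through the single map $\trop$: after shrinking $\Ucal$ one arranges that (i) for all charts $\Ucal_i,\Ucal_k$ containing $\pi(x)$ the transition functions $s_i/s_k$ restrict to characters $\chi^{u_{ik}}$ on $\Ucal$, and (ii) finitely many $\kcirc$-algebra generators $x_{ij}$ of $\Ocal(\Ucal_i)$, made invertible at $\pi(x)$ (e.g.\ replacing $x_{ij}$ by $1+x_{ij}$), also restrict to characters $\chi^{u'_{ij}}$. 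Then $U_i^\circ\cap\Uan=\trop^{-1}(\sigma_i)\cap\Uan$ and $\Ucal_{i,s}^\circ\cap\Ucal_s^{\rm an}=\trop^{-1}(\sigma_i)\cap\Ucal_s^{\rm an}$ for the cones $\sigma_i=\{\omega:\langle\omega,u'_{ij}\rangle\ge 0\}$, and since $\|s_i\|_\Lcal=1$ on $U_i^\circ$ and $\|s_i\|_{\rm can}=1$ on $\Ucal_{i,s}^\circ$, the metric is given on each such cone by the \emph{same} character $u_{ki}$ for both (e) and (f); $\phi$ is then the piecewise linear function equal to $u_{ki}$ on a fan subordinate to the $\sigma_i$, extended to $N_\R$ by Remark \ref{variations of extensions}. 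Note also two smaller points: the normalization $\trop(x)=0$ falls out automatically because the chosen characters are units at $\pi(x)$ (hence of absolute value exactly $1$ at $x$) --- your suggestion to rescale by $K^\times$ does not work in general since $|b_i(x)|$ need not lie in $|K^\times|$; and (a) rests on identifying $\Ucal_s$ with the initial degeneration of $U$ at $0$ (plus weight comparison), not on the Sturmfels--Tevelev push-forward formula. Finally, in the special case (e'), (f') the real work is to handle the charts $\Ucal_i$ with $\pi(x)\notin\Ucal_{i,s}$: one uses $\Ucal_{i,s}\cap Y=\emptyset$ to see that $\trop(U_i^\circ\cap\Uan)$ is a closed set avoiding $0$ and then shrinks $\widetilde\Omega$ accordingly; your argument via ``$\Trop(\Ucal_s)$ is already a cone'' does not address this.
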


\begin{proof}
Let $(\Ucal_i)_{i \in I}$ be a finite affine open covering of $\Xcal$ 
such that $\Lcal$ is trivial over any $\Ucal_i$. 
The generic (resp. special) fibre of $\Ucal_i$ is denoted by 
$U_i$ (resp. $\Ucal_{i,s}$).
For every $i \in I$, we choose a nowhere vanishing {section} 
$s_i \in \Gamma(\Ucal_i,\Lcal)$. 
Let $I(x):=\{i \in I \mid \pi(x) \in \Ucal_{i,s}\}$. 
For $i \in I(x)$, let $(x_{ij})_{j \in J_i}$ be  a finite set of 
generators of the $\kcirc$-algebra $\Ocal(\Ucal_i)$. 
Replacing $x_{ij}$ by $1+x_{ij}$ if necessary, 
we may assume that these generators are invertible in $\pi(x)$. 
For $i\in I$, we have an affinoid subdomain  
$$U_i^\circ := \{z \in U_i^{\rm an} \mid |a(z)| \leq 1 \; \forall a \in \Ocal(\Ucal_i)\}= \{z \in U_i^{\rm an} \mid \pi(z) \in \Ucal_{i,s}\}$$
of $X^{\rm an}$. Using the trivial valuation on $\ktilde$, we get similarly an affinoid subdomain $\Ucal_{i,s}^\circ := \{z \in \Ucal_{i,s}^{\rm an} \mid |a(z)| \leq 1 \; \forall a \in \Ocal(\Ucal_{i,s})\}$ of $\Xcal_s^{\rm an}$. We consider $\pi(x)$ as a point of $\Xcal_s^{\rm an}$ by using the trivial absolute value on the residue field of $\pi(x)$ and 
hence we have $I(x)= \{i \in I \mid \pi(x) \in \Ucal_{i,s}^\circ \}$. 

It is easy to see that $\pi(x)$ has a very affine open neighbourhood $\Ucal$ in $\Xcal$ such that $\Ucal$ is contained in $\Ucal_i$ for every $i \in I(x)$. Very affine means that there 
is a closed embedding $\varphi:\Ucal \hookrightarrow \T$ into a torus $\T = \Spec(\kcirc[M])$. 
By shrinking $\Ucal$ and by adding new invertible functions to $\varphi$, we obtain the following properties:
\begin{itemize}
 \item[(i)] For every $i,k \in I(x)$, the invertible meromorphic function $s_i/s_k$ on $\Ucal$ is the restriction of a character $\chi^{u_{ik}}$ associated to {some} $u_{ik} \in M$.
 \item[(ii)] For every $i \in I(x)$ and every $j \in J_i$, the generator $x_{ij}$ is invertible on $\Ucal$ and equal to the restriction of a character $\chi^{u_{ij}'}$ associated to {some} $u_{ij}' \in M$.
\end{itemize}
Note that we have $0=\trop(\pi(x)) \in \Trop(\Ucal_s)$ since we use the trivial valuation on the residue field of $\pi(x)$. It follows from $\pi(x) \in \Ucal_s$ that $\trop(x)=0$. 
By definition, $\Ucal_s$ is the initial degeneration of $U$ at $0$ and hence (a) follows from \cite[Propositions 10.15, 13.7]{gubler-guide}. 
By definition of the local cone, we find an open neighbourhood $\widetilde{\Omega}$ of $0$ in $N_\R$ with (b). 

By construction, {$\Lcal$  is trivial over $\Ucal$} and we choose $s:=s_k$ for a fixed $k \in I(x)$ in (d).
For $i \in I(x)$, we define the rational cone $\sigma_{i}:=\{\omega \in N_\R \mid \langle \omega, u_{ij}' \rangle \geq 0 \; \forall j \in J_i\}$ in $N_\R$. Then (ii) yields 
\begin{equation} \label{circ and cone1}
 \Ucal_{i,s}^\circ \cap \Ucal_s^{\rm an} = \trop^{-1}(\sigma_i) \cap \Ucal_s^{\rm an}.
\end{equation}
By the Bieri--Groves theorem, $\Trop(\Ucal_s)$ is the support of a rational polyhedral fan in $N_\R$ (see \cite[Remark 3.4]{gubler-guide}). We conclude that there is a complete rational polyhedral fan $\Sigma$ on $N_\R$  and a rational polyhedral subfan $\Sigma_x$ with $|\Sigma_x|=\bigcup_{i \in I(x)} \sigma_i \cap \Trop(\Ucal_s)$ such that every cone $\sigma \in \Sigma_x$ is 
contained in $\sigma_i$ for some $i \in I(x)$. Note that $\|s_i\|_{\rm can}=1$ on $\Ucal_{i,s}^\circ$ and hence (i) shows that  
\begin{equation} \label{eq1}
 -\log\|s\|_{\rm can}= -\log |s_k/s_i| = u_{ki}\circ \trop 
\end{equation}
on  $ \Ucal_{i,s}^\circ \cap \Ucal_s^{\rm an} $. By \eqref{circ and cone1}, there is a continuous function $\phi:|\Sigma_x| \rightarrow \R$ with $\phi = u_{ki}$ on every $\sigma$. Using Remark \ref{variations of extensions}  and passing to a refinement of $\Sigma$, we easily extend $\phi$ to a continuous function on $N_\R$ satisfying (c). 
Since $\Xcal_s$ is proper over $\ktilde$, the sets $\Ucal_{i,s}^\circ$, $i \in I$, form an open covering of $\Xcal_s^{\rm an}$.  
It follows from \eqref{circ and cone1} and \eqref{eq1} that (f) holds in the neighbourhood $W:=\Ucal_s^{\rm an} \setminus \bigcup_{i \in I \setminus I(x)} \Ucal_{i,s}^\circ$ of $\pi(x)$ in $\Xcal_s^{\rm an}$.

Again (ii) shows
\begin{equation} \label{circ and cone2}
 U_i^\circ \cap U^{\rm an} = \trop^{-1}(\sigma_i) \cap U^{\rm an} .
\end{equation}
for every $i \in I(x)$. Note  that $\|s_i\|_\Lcal=1$ on $U_i^\circ$ and hence (i) shows that   
\begin{equation} \label{eq2}
 -\log\|s\|_{\Lcal}= -\log |s_k/s_i| = u_{ki}\circ \trop 
\end{equation}
on  $ U_i^\circ \cap U^{\rm an} $. Since $X$ is proper, the sets $U_i^\circ$, $i \in I$, form a compact covering of $X^{\rm an}$. It follows from (a), (b), \eqref{circ and cone2} and \eqref{eq2} that (e) holds in the neighbourhood 
$\trop^{-1}(\widetilde{\Omega}) \setminus \bigcup_{i \in I \setminus I(x)} U_i^\circ$ of $x$ in $X^{\rm an}$. This proves (e).

We assume now that $\pi(x)$ is the generic point of an irreducible component $Y$ of $\Xcal_s$. 
Then we may assume that $\Ucal_s \subseteq Y$. Let $i \in I$ with $\Ucal_{i,s} \cap Y \neq \emptyset$. Since we use the trivial valuation 
on the residue field of $\pi(x)$, we deduce easily that $\pi(x) \in \Ucal_{i,s}^\circ$. By construction, we get $W=\Ucal_s^{\rm an}$ 
proving (f'). It remains to show (e'). Let $i \in I \setminus I(x)$. By 
construction,  we have $\Ucal_{i,s} \cap Y = \emptyset$. For $y \in U_i^\circ \cap \Uan$, we have $\pi(y) \in \Ucal_{i,s}$ and hence $\pi(y) \not \in Y$. In particular, we have $y \not \in U^\circ$. Using  $\trop^{-1}(0) \cap \Uan = U^\circ$, we see that  $\trop(U_i^\circ \cap \Uan)$ is a closed subset of $\Trop(U)$ not containing $0$. By shrinking $\widetilde{\Omega}$, we may assume that $\widetilde{\Omega}$ is a neighbourhood of $0$ which is disjoint from $\trop(U_i^\circ \cap \Uan)$ for every $i \in I \setminus I(x)$. Then the above proof of (e) shows that (e') holds. 
\end{proof}

Now we are ready to prove Theorem \ref{Monge-Ampere vs Chambert-Loir}.

\begin{proof}  Let $\mu^{\rm MA}:={\rm MA}(c_1(\overline{L_1}), \dots ,c_1(\overline{L_n}))$. 
For simplicity, we assume that 
$L=L_1= \dots = L_n$ and that all metrics are induced by the same $\kcirc$-model $\Lcal$ on $\Xcal$. The general case follows 
either by the same arguments or by multilinearity. 
It is more convenient for us to work algebraically and so we use Proposition 
\ref{algebraic and formal metrics} to assume that $\Xcal$ and $\Lcal$ are 
algebraic $\kcirc$-models. There is a generically finite surjective morphism 
$\Xcal' \rightarrow \Xcal$ from a proper flat variety $\Xcal'$ over $\kcirc$ 
with reduced special fibre. 
This is a consequence of de Jong's  pluristable alteration theorem 
which works over any Henselian valuation ring 
(see \cite[Lemma 9.2]{berkovich-1999}).
Since both sides of 
the claim satisfy the projection formula, we may prove the claim for $\Xcal'$. 
This shows that we may assume that $\Xcal$ is an algebraic $\kcirc$-model of 
$X$ with reduced special fibre.

We will analyse $\mu^{\rm MA}$ 
in a neighbourhood of $x \in \Xan$. Let $\pi(x) \in \Xcal_s$ be the 
reduction of $x$. We choose a  
very affine open neighbourhood $\Ucal$ of $\pi(x)$ in $\Xcal$ as in 
Lemma \ref{tropicalization and special fibre}. We will use the closed embedding 
$\Ucal \hookrightarrow \T$ into the torus $\T$ and the notation from there. 

It follows from a theorem of Ducros \cite[Th\'eor\`eme 3.4]{ducros} that $x$ has a compact analytic neighbourhood $V$
such that the germ of $\trop(V)$ in $\trop(x)$ (considering polytopal neighbourhoods) agrees with the 
germ of $\trop(W)$ in $\trop(x)$ for every compact analytic neighbourhood $W \subseteq V$ of $x$. Using that $\trop(x)=0$, we deduce from 1) in \cite[Th\'eor\`eme 3.4]{ducros}
that the dimension of the germ is equal to the transcendence degree of $\ktilde(\pi(x))$ over $\ktilde$. 

We first assume that $\pi(x)$ is not the generic point of an irreducible 
component of $\Xcal_s$. Then the transcendence degree of $\ktilde(\pi(x))$ 
over $\ktilde$ is smaller than $n=\dim(X)$. 
Using the theorem of Ducros,  there is a compact analytic neighbourhood $V$ 
of $x$ in $X^{\rm an}$ such that $\trop(V)$ has dimension $<n$ and such that 
Lemma \ref{tropicalization and special fibre}(e) holds on $V$. 
We choose a tropical chart $(V',\varphi_{U'})$ in $x$ which is contained 
in $V$ and with $U'$ contained in the generic fibre $U$ of $\Ucal$. 
We describe $c_1(\overline L)|_{V'}$ using the function $\phi$ constructed in
Lemma \ref{tropicalization and special fibre} and the canonical affine map 
$F:N_{U',\R} \rightarrow N_\R$.
Using that $\phi$ is piecewise linear, it follows from \ref{summary of Chern delta-construction} 
that $c_1(\overline{L})|_{V'}={\trop}_{U'}^*(\beta)$ for $\beta\in AZ^{1,1}(V',\varphi_{U'})$ represented by
the $\delta$-preform $\delta_{F^*(\phi)\cdot N_{U',\R}}$ on $N_{U',\R}$.
By our construction of products and Corollary \ref{pullbackcornerlocus}, 
$\mu^{\rm MA}$ is given on $V'$ by $\beta^{\wedge n} \in  AZ^{n,n}(V',\varphi_{U'})$ represented by the $\delta$-preform
\begin{equation} \label{mu representation}
\left(\delta_{F^*(\phi)\cdot N_{U',\R}}\right)^{\wedge n} = \delta_{F^*(C)}
\end{equation}
on $N_{U',\R}$, where $C$ is the $n$-codimensional tropical cycle of $N_\R$ obtained by the $n$-fold self-intersection of the tropical divisor $\phi \cdot N_\R$.
Since $V' \subseteq V$, we have $F(\trop_{U'}(V')) \subseteq \trop(V)$ and 
hence $\dim(F(\trop_{U'}(V')))<n$. It follows   
from the definition of the pull-back and the local nature of stable tropical intersection that 
$\delta_{F^*(C) \cdot\Trop(U')}$ does not meet the open subset $\trop_{U'}(V')$ of $\Trop(U')$. A similar argument applies 
to any tropical chart compatible with $(V',\varphi_{U'})$ and hence \eqref{mu representation} yields $\beta^{\wedge n}=0$. We conclude that 
the support of $\mu^{\rm MA}$ does not meet $V'$.

Now we assume that $\pi(x)$ is the generic point of an irreducible component $Y$ of $\Xcal_s$. Then $x$ is  the unique point of $\Xan$ with reduction $\pi(x)$ (see \cite[Proposition 2.4.4]{berkovich-book}) and we write  $x=\xi_Y$. 
We may assume that the very affine open neighbourhood $\Ucal$ of $\pi(x)$ in $\Xcal$ from Lemma \ref{tropicalization and special fibre} has special fibre $\Ucal_s$ disjoint from all other irreducible components $Y'$ of $\Xcal_s$. We conclude that $\pi(\xi_{Y'}) \not \in \Ucal_s$ and hence $\trop(\xi_{Y'}) \neq 0=\trop(x)$. We may 
choose the neighbourhood $\widetilde{\Omega}$ of $0$ in $N_\R$ disjoint from all points $\trop(\xi_{Y'})$. 
We will use in the following that Lemma \ref{tropicalization and special fibre}(e') holds on the open subset 
$V:=\trop^{-1}(\widetilde{\Omega})$ of $\Xan$. Since no $\xi_{Y'}$ is contained in $V$, 
the non-generic case above shows that the restriction of  $\mu^{\rm MA}$ to $V$ is supported in $\xi_Y$. 

Now we choose a very affine open subset $U'$ contained in the generic fibre $U$ of $\Ucal$ with $x \in (U')^{\rm an}$. 
Let $F:N_{U',\R} \rightarrow N_\R$ be the canonical affine map. Then 
$$V':=\trop_{U'}^{-1}(F^{-1}(\widetilde{\Omega}))= (U')^{\rm an} \cap V.$$
is an open neighbourhood of $x$ in $\Xan$ and $(V',\varphi_{U'})$ is a tropical chart. 
Similarly as above,  $\mu^{\rm MA}$ is given on $V'$ by $\beta^{\wedge n} \in  AZ^{n,n}(V',\varphi_{U'})$ represented by the $\delta$-preform in \eqref{mu representation}. 
Since $\mu^{\rm MA}|_{V'}$ is supported in the single point $x=\xi_Y$, we conclude that the $0$-dimensional tropical cycle $F^*(C)\cdot \Trop(U')$ has only one point $\omega'$ contained in the open subset $\trop_{U'}(V')$ of $\Trop(U')$. 
In fact, we have $\omega' = \trop_{U'}(x)$ with multiplicity $\mu^{\rm MA}(V')$. 
The tropical projection formula in Proposition \ref{tropintthprop} and the 
Sturmfels--Tevelev multiplicity formula \cite[Theorem 13.17]{gubler-guide} give the identity
\[
F_*(F^*(C)\cdot \Trop(U'))= C \cdot \Trop(U)
\]
of tropical cycles on $N_\R$. Using that $\trop_{U'}(V')=F^{-1}(\trop(V)) \cap \Trop(U')$, we deduce that 
 $\mu^{\rm MA}(V')$ is equal to the multiplicity of $0=\trop(x)=F(\omega')$ in 
$C \cdot \Trop(U)$. By Lemma \ref{tropicalization and special fibre}, we conclude that $\mu^{\rm MA}(V')$ 
is equal to the tropical intersection number of $C$ with $\LC_0(\Trop(U))=\Trop(\Ucal_s)$.

We recall that $C$ is the $n$-fold self-intersection of the tropical divisor $\phi\cdot N_\R$ and we note that 
these objects are weighted tropical fans. 
Now we use Lemma \ref{tropicalization and special fibre}(f'). This shows that $\Ucal_s$ is 
a very affine chart of integration for  $c_1(\Lcal_s|_Y,\metr_{\rm can})^n$, where this $\delta$-form is 
represented by the pull-back of $\delta_C$ with respect to the canonical affine map $N_{\Ucal_s,\R} \rightarrow 
N_\R$. Note that we may perform a base change to omit the trivial 
valuation which was excluded for simplicity in our paper. 
The tropical projection formula and 
the Sturmfels--Tevelev multiplicity formula  show 
$$\int_{Y^{\rm an}}c_1(\Lcal_s|_Y,\metr_{\rm can})^n = \deg(C \cdot \Trop(\Ucal_s))$$
as above.  By  Proposition \ref{total mass of Monge-Ampere}, the left hand side is equal to 
$\deg_\Lcal(Y)$. We have seen above that the right hand side equals $\mu^{\rm MA}(V')$. 
This proves that $\mu^{\rm MA}|_V$ is a point measure  concentrated in $x=\xi_Y$ with total mass $\deg_\Lcal(Y)$. 
By Proposition \ref{Chambert-Loir's measures}(c),   the Chambert-Loir measure  $\mu_{\overline{L_1}, \dots, \overline{L_n}}$ is equal to $\mu^{\rm MA}$. 
\end{proof}

\section{Green currents} \label{GC}

In this section $X$ is an algebraic variety 
over $K$ of dimension $n$. 
We introduce Green currents for cycles on $X$. 
We define the product $g_Y * g_Z$ for a divisor $Y$ and a cycle $Z$ on 
$X$ which intersect properly.
This operation has the expected properties.

\begin{definition} \label{green current definition}
Let $Z$ be a cycle of $X$ of codimension $p$ and let $g$ be any 
$\delta$-current in $E^{p-1,p-1}(X^\an)$. 
Then we define {$\omega(Z,g):=d'd''g + \delta_Z \in E^{p,p}(X^\an)$}. 
If there is a $\delta$-form {$\omega_{Z,g} \in B^{p,p}(X^\an)$} with 
$\omega(Z,g)=[\omega_{Z,g}]$, then we call $g$ a {\it Green current} 
for the cycle $Z$. We will use often the notion $g_Z$ for such a current and 
then we set $\omega(g_Z):=\omega(Z,g_Z)$ and $\omega_Z:=\omega_{Z,g_Z}$ 
for simplicity.
\end{definition}

\begin{art} \label{star product}
Let $(L,\metr)$ be a line bundle on $X$ endowed with a $\delta$-metric and 
let $Z$ be a cycle of codimension $p$ in $X$ with any current 
$g_Z \in E^{p-1,p-1}(X^\an)$. 
We assume that $s$ is a meromorphic section of $L$ with Cartier divisor $D$ 
intersecting $Z$ properly. By the Poincar\'e--Lelong equation in Corollary 
\ref{PL for line bundles}  and  by the definition of a $\delta$-metric in \ref{properties of Chern delta-form}, 
$g_Y:= [-\log\|s\|]$ is a Green current  for the Weil divisor $Y$ associated 
to $D$ with $\omega_Y=c_1(L, \metr)$. 

If $Z$ is a prime cycle {of codimension $p$}, then we
define $g_Y \wedge \delta_Z \in E^{p,p}(\Xan)$ as the push-forward of 
$[-\log\|s\||_Z]$ with respect to the inclusion $i_Z:Z \rightarrow X$. 
In general, we proceed by linearity in the prime components of $Z$ to define 
$g_Y \wedge \delta_Z \in E^{p,p}(\Xan)$. 
This leads to the definition of the {\it $*$-product}
\[
g_Y * g_Z := g_Y \wedge \delta_Z + \omega_Y \wedge g_Z \in E^{p,p}(\Xan).
\]
\end{art}

\begin{lem} \label{Poincare Lelong and restriction}
Under the hypothesis above and if $Z$ is prime, then we have the identity
\begin{equation} \label{restriction equation1}
d'd''[-\log\|s\||_Z]=[\omega_Y|_Z] - \delta_{D\cdot Z}
\end{equation}
of $\delta$-currents on $\Zan$. 
\end{lem}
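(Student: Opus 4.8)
Let me lay out how I would prove equation \eqref{restriction equation1}.

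The plan is to reduce the statement, which lives on the analytification of the prime cycle $Z$, to the Poincaré–Lelong equation from Corollary \ref{PL for line bundles} applied on $Z$ itself. Let $i_Z\colon Z\hookrightarrow X$ be the closed immersion. First I would observe that pulling back the metrized line bundle $(L,\metr)$ along $i_Z$ gives a line bundle $i_Z^*L$ on $Z$ equipped with the $\delta$-metric $i_Z^*\metr$; by Proposition \ref{pull-back and Chern form} this pullback metric is again a $\delta$-metric and its first Chern $\delta$-form satisfies $c_1(i_Z^*L, i_Z^*\metr)=i_Z^*c_1(L,\metr)=i_Z^*(\omega_Y)=\omega_Y|_Z$. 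Since $D$ intersects $Z$ properly, the section $s$ restricts to a nonzero meromorphic section $s|_Z$ of $i_Z^*L$ on $Z$, and its associated Weil divisor is exactly the cycle-theoretic restriction $D\cdot Z$ (this is the standard compatibility of intersection with divisors; it is where proper intersection is used). The function $-\log\|s|_Z\|$ computed with the metric $i_Z^*\metr$ coincides with the restriction to $\Zan$ of $-\log\|s\|$ since the norm is just pulled back.

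With these identifications in place, I would simply invoke Corollary \ref{PL for line bundles} on the variety $Z$ (of dimension $n-p+1$ after possible renumbering, but dimension is irrelevant here) with the line bundle $i_Z^*L$, the $\delta$-metric $i_Z^*\metr$ and the meromorphic section $s|_Z$. That corollary gives, as $\delta$-currents on $\Zan$,
\[
[c_1(i_Z^*L, i_Z^*\metr)]=d'd''\bigl[\log\|s|_Z\|\bigr]+\delta_{\cyc(s|_Z)},
\]
which after the above translations reads $[\omega_Y|_Z]=d'd''[-\log\|s|_Z\|]\cdot(-1)\cdot(-1)+\delta_{D\cdot Z}$, i.e.
\[
d'd''[-\log\|s|_Z\|]=[\omega_Y|_Z]-\delta_{D\cdot Z}
\]
as desired. (I am careful with the sign: $d'd''[\log\|s|_Z\|]=-d'd''[-\log\|s|_Z\|]$.)

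The only genuine point requiring care — the "main obstacle", such as it is — is the identification of the Weil divisor of the restricted section $s|_Z$ with the intersection cycle $D\cdot Z$, and the attendant check that $s|_Z$ is indeed a well-defined nonzero meromorphic section. This is exactly the content of proper intersection: the support of $D$ does not contain $Z$, so $s|_Z\not\equiv 0$, and then $\cyc(s|_Z)=D\cdot Z$ by the definition of the intersection product of a Cartier divisor with a prime cycle. Everything else is a matter of unwinding the definitions of pullback of $\delta$-metrics, pushforward of $\delta$-currents along $i_Z$ (used implicitly, since both sides of \eqref{restriction equation1} are currents on $\Zan$ and the statement is to be read there), and the compatibility $i_Z^*c_1(L,\metr)=c_1(i_Z^*L,i_Z^*\metr)$ from Proposition \ref{pull-back and Chern form}. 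I would state these compatibilities explicitly and then conclude in one line.
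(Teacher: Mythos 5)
Your proposal is correct and takes essentially the same route as the paper, whose entire proof consists of applying Corollary \ref{PL for line bundles} to the restricted section $s|_Z$ on $Z$ and using $c_1(L|_Z,\metr)=c_1(L,\metr)|_Z$ from Proposition \ref{pull-back and Chern form}, with the identification $\cyc(s|_Z)=D\cdot Z$ coming from proper intersection exactly as you say. Your intermediate sign manipulation (the ``$\cdot(-1)\cdot(-1)$'' step) is garbled as written --- it reflects the discrepancy between the printed statement of Corollary \ref{PL for line bundles} (with $\log\|s\|$) and the convention of \eqref{PL for s} and \ref{first Chern current} (with $-\log\|s\|$), and with the corollary read in the latter, intended form the identity drops out with no sign juggling --- but this does not affect the substance of the argument.
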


\begin{proof} This follows immediately from the Poincar\'e--Lelong equation for $s|_Z$ 
{(see Corollary \ref{PL for line bundles}}). 
We use here  $c_1(L|_Z,\metr)=c_1(L,\metr)|_Z$ which follows from 
Proposition \ref{pull-back and Chern form}.
\end{proof}

\begin{prop} \label{star product is Green current}
Under the hypothesis in \ref{star product}, we have 
$$\omega(D\cdot Z,g_Y * g_Z)=\omega_Y \wedge \omega(g_Z).$$ 
If $g_Z$ is a Green current for $Z$, then $g_Y * g_Z$ is a Green current for $D\cdot Z$.
\end{prop}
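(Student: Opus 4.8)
The plan is to verify the formula $\omega(D\cdot Z,g_Y * g_Z)=\omega_Y \wedge \omega(g_Z)$ by a direct computation with the definition $\omega(W,g):=d'd''g+\delta_W$, and then to read off the Green current assertion as an immediate corollary. By linearity in the prime components of $Z$, I would reduce to the case where $Z$ is a prime cycle of codimension $p$; all the terms in sight are additive in $Z$. Recall from \ref{star product} that $g_Y=[-\log\|s\|]$, $\omega_Y=c_1(L,\metr)$, and $g_Y*g_Z = g_Y\wedge\delta_Z + \omega_Y\wedge g_Z \in E^{p,p}(\Xan)$, so $D\cdot Z$ has codimension $p+1$ and $\omega(D\cdot Z,g_Y*g_Z) = d'd''(g_Y\wedge\delta_Z) + d'd''(\omega_Y\wedge g_Z) + \delta_{D\cdot Z}$.

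The key step is to compute $d'd''(g_Y\wedge\delta_Z)$. Here $g_Y\wedge\delta_Z$ is by definition the push-forward under $i_Z\colon Z\to X$ of $[-\log\|s\||_Z]\in E^{p-1,p-1}(\Zan)$. Since push-forward of $\delta$-currents commutes with $d'$ and $d''$ (these are defined by duality against $\delta$-forms, and $i_Z^*$ commutes with the differentials on $\delta$-forms by \ref{pull-back of delta-forms}), we get $d'd''(g_Y\wedge\delta_Z) = (i_Z)_*\,d'd''[-\log\|s\||_Z]$. Now Lemma \ref{Poincare Lelong and restriction} gives $d'd''[-\log\|s\||_Z] = [\omega_Y|_Z] - \delta_{D\cdot Z}$ on $\Zan$, and pushing forward yields $d'd''(g_Y\wedge\delta_Z) = [\omega_Y\wedge\delta_Z] - \delta_{D\cdot Z}$, where I use $(i_Z)_*\delta_{D\cdot Z} = \delta_{D\cdot Z}$ as cycles on $X$ and $(i_Z)_*[\omega_Y|_Z] = [\omega_Y]\wedge\delta_Z = \omega_Y\wedge\delta_Z$ (the projection-formula-type identity relating the pushforward of a restricted form to the wedge with the current of integration). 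For the second term, since $\omega_Y=c_1(L,\metr)$ is a $\delta$-form which is $d'$-closed and $d''$-closed (see \ref{first Chern delta-form properties}), the Leibniz rule for $d'd''$ acting on the product of a closed $\delta$-form with a $\delta$-current gives $d'd''(\omega_Y\wedge g_Z) = \omega_Y\wedge d'd''g_Z$. Adding the three contributions, the $\pm\delta_{D\cdot Z}$ terms cancel and we obtain
\[
\omega(D\cdot Z,g_Y*g_Z) = \omega_Y\wedge\delta_Z + \omega_Y\wedge d'd''g_Z = \omega_Y\wedge(d'd''g_Z + \delta_Z) = \omega_Y\wedge\omega(g_Z),
\]
which is the claimed identity.

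For the final sentence: if $g_Z$ is a Green current for $Z$, then by Definition \ref{green current definition} there is a $\delta$-form $\omega_Z\in B^{p,p}(\Xan)$ with $\omega(g_Z)=[\omega_Z]$. Then $\omega(D\cdot Z,g_Y*g_Z) = \omega_Y\wedge[\omega_Z] = [\omega_Y\wedge\omega_Z]$, and $\omega_Y\wedge\omega_Z = c_1(L,\metr)\wedge\omega_Z$ is again a $\delta$-form (the $\delta$-forms $B(\Xan)$ form a bigraded algebra by Proposition \ref{properties of delta-forms}(i)), so $g_Y*g_Z$ is a Green current for $D\cdot Z$ with $\omega_{D\cdot Z} = \omega_Y\wedge\omega_Z$. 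The main obstacle I anticipate is making the push-forward/restriction bookkeeping in the first term fully rigorous — in particular justifying that $d'd''$ commutes with $(i_Z)_*$ for $\delta$-currents and that $(i_Z)_*[\omega_Y|_Z]$ equals $\omega_Y\wedge\delta_Z$ as $\delta$-currents on $\Xan$ — but both are formal consequences of the duality definitions of push-forward and of the wedge product $B^{p,q}\times E^{p',q'}\to E^{p+p',q+q'}$ together with the functoriality of pull-back of $\delta$-forms in \ref{pull-back of delta-forms}, so this should be routine rather than deep.
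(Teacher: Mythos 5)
Your argument is correct and follows essentially the same route as the paper: the paper's proof likewise combines Lemma \ref{Poincare Lelong and restriction} with linearity in the prime components of $Z$ to get $d'd''(g_Y\wedge\delta_Z)=\omega_Y\wedge\delta_Z-\delta_{D\cdot Z}$, then uses the $d'$- and $d''$-closedness of $\omega_Y$ (\ref{first Chern delta-form properties}) together with the Leibniz rule to obtain $d'd''(\omega_Y\wedge g_Z)=\omega_Y\wedge d'd''g_Z$, and adds the terms. The only difference is that you make explicit the push-forward bookkeeping (commutation of $(i_Z)_*$ with $d'd''$ and the identity $(i_Z)_*[\omega_Y|_Z]=\omega_Y\wedge\delta_Z$), which the paper leaves implicit; these are indeed formal consequences of the duality definitions, as you say.
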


\begin{proof} 
Using Lemma \ref{Poincare Lelong and restriction} and linearity in the prime components of $Z$, we get
\begin{equation} \label{restriction equation2}
d'd''[-\log\|s\| \wedge \delta_Z]=\omega_Y \wedge \delta_Z - \delta_{D\cdot Z}
\end{equation}
and hence {\ref{first Chern delta-form properties} and Proposition \ref{properties of delta-forms}(iii) give}
\[
\omega(D\cdot Z,g_Y * g_Z)=d'd''[-\log\|s\| \wedge \delta_Z]+
d'd''(\omega_Y \wedge g_Z)+\delta_{D\cdot Z}=\omega_Y \wedge \omega(g_Z)
\] 
proving the claim.
\end{proof}

\begin{prop} \label{commutativity}
For $i=1,2$, let $L_i$ be a line bundle on $X$ with a $\delta$-metric $\metr_i$ and non-zero meromorphic section $s_i$. We assume that the associated Cartier divisors $D_1$ and $D_2$ intersect properly. Let $\eta_{Y_i}:=-\log\|s_i\|_i$ and let $g_{Y_i}=[\eta_{Y_i}]$ be the induced Green current for the Weil divisor $Y_i$ of $D_i$. Then we have the identity
\[
g_{Y_1} * g_{Y_2} - g_{Y_2} * g_{Y_1} 
= d'[\dpb\eta_{Y_1} \wedge \eta_{Y_2}] + d''[ \eta_{Y_1} \wedge \dpa \eta_{Y_2}]
\]
of $\delta$-currents on $\Xan$. 
\end{prop}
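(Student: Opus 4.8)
The identity to be established is local on $\Xan$ and compares two $\delta$-currents, so by a partition of unity argument it suffices to test it against a symmetric compactly supported $\delta$-form $\alpha \in B_c^{n-1,n-1}(\Xan)$. The first move is to unfold the definition of the $*$-product in \ref{star product}: by definition
\[
g_{Y_1} * g_{Y_2} - g_{Y_2} * g_{Y_1}
= \bigl(g_{Y_1} \wedge \delta_{Y_2} + \omega_{Y_1} \wedge g_{Y_2}\bigr)
- \bigl(g_{Y_2} \wedge \delta_{Y_1} + \omega_{Y_2} \wedge g_{Y_1}\bigr),
\]
where $\omega_{Y_i} = c_1(L_i,\metr_i)$ is the first Chern $\delta$-form, which exists and is $d'$- and $d''$-closed by \ref{first Chern delta-form properties}. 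Using the Poincar\'e--Lelong equation in Corollary \ref{PL for line bundles} in the form $d'd''[\eta_{Y_i}] = [\omega_{Y_i}] - \delta_{Y_i}$, one can replace each $\delta_{Y_i}$ by $[\omega_{Y_i}] - d'd''[\eta_{Y_i}]$ and rewrite the difference of $*$-products in terms of $\eta_{Y_1}$, $\eta_{Y_2}$ and $d'd''$ applied to products of these with the $\delta$-forms $\omega_{Y_i}$. After the $[\omega_{Y_1} \wedge \omega_{Y_2}]$-terms cancel, one is left with an expression of the shape $d'd''(\eta_{Y_1} \wedge g_{Y_2}) - d'd''(\eta_{Y_2} \wedge g_{Y_1})$ (modulo terms one must keep track of), and the target right-hand side $d'[\dpb\eta_{Y_1} \wedge \eta_{Y_2}] + d''[\eta_{Y_1} \wedge \dpa\eta_{Y_2}]$ suggests that the whole thing reduces to a Green-type formula.

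The technical core is then a local computation on a single tropical chart. By Proposition \ref{single chart} there is a very affine chart of integration $U$ with $\alpha = \trop_U^*(\alpha_U)$ for $\alpha_U \in AZ^{n-1,n-1}(U^\an,\varphi_U)$, and (shrinking $U$ if necessary, and using that both $D_1, D_2$ are supported away from a dense open set, together with Corollary \ref{support corollary}) one may arrange that $\eta_{Y_1}$ and $\eta_{Y_2}$ are pulled back from piecewise smooth functions $\phi_1, \phi_2$ on $\Trop(U)$ and that $\omega_{Y_i}$ is represented by a $\delta$-preform $\delta_{\phi_i \cdot N_{U,\R}} + \gamma_i$ as in \ref{summary of Chern delta-construction}. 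The identity to prove becomes a statement about integrals of $\delta$-preforms over $|\Trop(U)|$, and here the decisive input is Green's formula for $\delta$-preforms (Proposition \ref{greenfordeltapreforms}): applied with $\beta_1 = \eta_{Y_1} \wedge (\text{something})$ and $\beta_2 = \eta_{Y_2} \wedge (\text{something})$, built from the closed $\delta$-preforms $\delta_{\phi_i \cdot C}$ and $\alpha_U$, it produces exactly the antisymmetrized combination $\beta_1 \wedge d'd''\beta_2 - \beta_2 \wedge d'd''\beta_1$ on the left and a boundary integral $\int_{\partial|\Trop(U)|}(\beta_1 \wedge d''\beta_2 - \beta_2 \wedge d''\beta_1)$ on the right. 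Since $\Trop(U)$ is balanced, $\partial|\Trop(U)|$ contributes nothing (as in the proof of Theorem \ref{stokes-for-var}), and the cross-terms reorganize, via the Leibniz rule of Corollary \ref{omegaleibnizformel}, into $d'[\dpb\eta_{Y_1}\wedge\eta_{Y_2}] + d''[\eta_{Y_1}\wedge\dpa\eta_{Y_2}]$ evaluated against $\alpha$.

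The main obstacle I expect is bookkeeping rather than conceptual: one must carefully separate the \emph{piecewise smooth} part $\gamma_i$ of $c_1(L_i,\metr_i)$ from the \emph{residual} part $\delta_{\phi_i \cdot C}$, because the polyhedral differentials $\dpa, \dpb$ used inside Green's formula do \emph{not} agree with the current differentials $d', d''$ on piecewise smooth forms (the discrepancy is precisely the residue, Remark \ref{residues for PSP and PS-forms}). So the interchange between $d'd''[\eta_{Y_i} \wedge -]$ and $[\dpa\dpb\eta_{Y_i} \wedge -]$ generates exactly the corner-locus terms $\delta_{\phi_i \cdot C}$, and one must check that these, combined with the $\gamma_i$-contributions, reproduce $\omega_{Y_i} \wedge \delta_{Y_j}$ and $\omega_{Y_i} \wedge g_{Y_j}$ correctly — this is where the tropical Poincar\'e--Lelong formula (Corollary \ref{tropicalpoincarelelong}) and its refinement Lemma \ref{tropicallemma} do the real work. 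The hypothesis that $D_1$ and $D_2$ intersect properly guarantees that the wedge products $\delta_{\phi_1 \cdot C} \wedge \delta_{\phi_2 \cdot C}$ and the restrictions $\eta_{Y_i}|_{Y_j}$ are well-defined $\delta$-preforms of the expected codimension, which is needed to make every integral above legitimate; I would verify this proper-intersection compatibility first, then run the Green's formula argument.
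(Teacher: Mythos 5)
Your overall framework — test against $\alpha \in B_c^{n-1,n-1}(\Xan)$, reduce to one very affine chart of integration, and run an integration-by-parts on the tropical side in which the discrepancy between $d',d''$ and $\dpa,\dpb$ on piecewise smooth data produces corner-locus terms governed by Corollary \ref{tropicalpoincarelelong} and Lemma \ref{tropicallemma} — is indeed the right flavor, and it matches the spirit of the paper's Case~1. But there are two genuine gaps. First, your opening reduction substitutes $\delta_{Y_i}=[\omega_{Y_i}]-d'd''[\eta_{Y_i}]$ inside the $*$-products; the terms this creates, such as $[\eta_{Y_1}]\wedge d'd''[\eta_{Y_2}]$, are products of two $\delta$-currents and are not defined in this formalism (this is precisely the difficulty the proposition is designed to circumvent), so this step cannot be used even heuristically to organize the cancellation of the $[\omega_{Y_1}\wedge\omega_{Y_2}]$-terms. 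The paper avoids it entirely: it writes $\eta_{Y_i}=-\log|f_i|-\log\|1\|_i$ locally (trivializing $L_i$) and uses bilinearity of $*$ and $\wedge$ to split into four cases according to whether each factor is the piecewise smooth potential $-\log\|1\|_i$ or the singular part $-\log|f_i|$.

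Second, and more seriously, your chart reduction asserts that after shrinking one may take $\eta_{Y_1},\eta_{Y_2}$ to be pull-backs of piecewise smooth functions on $\Trop(U)$. That only treats the potentials; it gives no mechanism for recovering the terms $g_{Y_1}\wedge\delta_{Y_2}$ and $g_{Y_2}\wedge\delta_{Y_1}$, which are integrals over the divisors $Y_j$ (of dimension $n-1$, where $\alpha$ restricts to a top-degree form), and these divisors are exactly what a chart on which $\eta_{Y_i}$ is piecewise smooth must avoid. Corollary \ref{support corollary} does not make these contributions disappear. In the paper this is where the real work happens: in Cases 2 and 4 one restricts to the subdomains $W(s)=\{|f|\geq s\}$, applies Stokes for polyhedral supercurrents, and then uses the limit argument with cut-offs $\psi_\ell$ from the proof of the Poincar\'e--Lelong formula (Lemma \ref{PLL} and Remark \ref{PLR}, i.e.\ \eqref{PLL1} and \eqref{PLL2}) to identify the boundary integral at $\{x_0=-\log s\}$ with $\langle g_{Y_i}\wedge\delta_{Y_j},\alpha\rangle$; nothing in your sketch produces these $\delta_{Y_j}$-terms. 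A related technical point: Green's formula \ref{greenfordeltapreforms} requires the factors $\eta_i$ to be genuine smooth superforms, so it cannot be applied with the merely piecewise smooth $\phi_i$; the paper instead works directly with Stokes (Proposition \ref{stokesforpolyhedralcurrents}), the Leibniz rule, and Lemma \ref{tropicallemma}/Remark \ref{generalization of tropicallemma}, which is where the corner-locus bookkeeping you correctly anticipate is actually carried out.
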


Note that the piecewise smooth forms $\dpb\eta_{Y_1} \wedge \eta_{Y_2}$ 
and $\eta_{Y_1} \wedge \dpa \eta_{Y_2}$ of degree $1$ are defined on the 
analytification of a Zariski open and dense subset of $X$. 
By \ref{psp on W} and  Proposition \ref{currents and Zariski dense}, they define $\delta$-currents on $\Xan$.

\begin{proof} 
The claim can be checked locally {on $X$. Hence we may assume that
$X$ is affine} and $L_1=L_2=O_X$. For $i=1,2$, we may view $s_i$ as a rational function $f_i$ and we have 
\begin{equation*} \label{eta representation}
\eta_{Y_i} =  -\log|f_i|-\log\|1\|_i.
\end{equation*} 
The usual partition of unity argument shows that it is enough to test the claim 
by evaluating at  $\alpha \in B^{n-1,n-1}_c(W)$ for a small open 
neighbourhood $W$ of a given point $x$ in $\Xan$. 
There are finitely many tropical charts $((V_j,\varphi_{U_j})_{j=1, \dots, m}$ 
in $W$ covering $\supp(\alpha)$ such that 
$\alpha = \trop_{U_j}^*(\alpha_j)$ on $V_j$ for {some element}
$\alpha_j \in AZ^{n-1,n-1}(V_j,\varphi_{U_j})$. 
We will use a Zariski dense very affine open subset $U$ of 
$U_1 \cap \dots \cap U_m$ which will serve as a very affine chart of 
integration for various forms. 
Now we consider the restriction of the canonical affine map $F_j:N_{U,\R} \rightarrow N_{U_j,\R}$ to $\Trop(U)$.
{Let $\Omega$ in ${\rm Trop}\,(U)$ denote the union of the  preimages of the open subsets $\Omega_j:=\trop_{U_j}(V_j)$ 
in $\Trop(U_j)$ and put $V={\rm trop}_U^{-1}(\Omega)$.
By Proposition \ref{finite sheaf property} there exists
a unique element $\alpha_U\in AZ^{n-1,n-1}(V,\varphi_U)$ such
that $\alpha|_V={\rm trop}_U^*(\alpha_U)$ and such that $\alpha_U$ coincides for all $j$ 
on the preimage of $\Omega_j$ 
with the pullback of $\alpha_j$.} 
Note that $\Omega$ is an open subset of $\Trop(U)$ and $(V,\varphi_U)$ is 
a tropical chart for $V:=\trop_{U}^{-1}(\Omega)$. 
Then $\alpha_U$ has not necessarily compact support, but we 
can extend $\alpha_U$ by zero to an element in  
{$AZ^{n-1,n-1}(U^{\an},\varphi_U)$} using 
that $\supp(\alpha) \cap \Uan$ is a closed subset of $V$. 
By abuse of notation, this extension will also be denoted by $\alpha_U$. 
Then we have $\alpha={\rm trop}_U^*(\alpha_U)$  on $\Uan$.
By shrinking $W$ and using an appropriate $U$, we may assume that 
\begin{equation*}
-\log\|1\|_i = \phi_i \circ \trop_U
\end{equation*}
on $V$ for a piecewise smooth function $\phi_i$ on $\Trop(U)$ 
and $i=1,2$. 
Since we deal with $\delta$-metrics, we may assume that there is 
a piecewise smooth extension $\tilde\phi_i$ of $\phi_i$ to $N_{U,\R}$ 
and a superform $\gamma_i$ on $N_{U,\R}$ of bidegree $(1,1)$ such 
that the first Chern {$\delta$-form} $\omega_{Y_i}$ is represented on $V$
by the $\delta$-preform
$ \gamma_i + \delta_{\tilde\phi_i \cdot N_{U,\R}}$ 
on $N_{U,\R}$ and such that $\gamma_i$ restricts to $\dpa\dpb \phi_i$ on $\Omega$ (see \ref{summary of Chern delta-construction}).
We have  
\[
\eta_{Y_i} = -\log|f_i| -\log\|1\|_i =   -\log|f_i| + \phi_i \circ \trop_U
\]
on $V$. Using bilinearity of $*$ and of $\wedge$, we may either assume that 
$\eta_{Y_i}$ is equal to $-\log\|1\|_i$ or equal to $-\log|f_i|$. 
Hence we have to consider the following four cases:

\vspace{2mm}
\noindent {Case 1: $s_1=s_2=1$.}
\vspace{2mm}

{In this case, the divisors $Y_1,Y_2$ are zero and $\eta_{Y_i}= -\log \|1\|_i$ 
for $i=1,2$ are piecewise smooth functions on $\Xan$.} 
Then we have 
\begin{equation} \label{comm2}
\langle g_{Y_1} * g_{Y_2} , \alpha \rangle = \langle \omega_{Y_1} \wedge g_{Y_2}, \alpha \rangle = \langle g_{Y_2}, \omega_{Y_1} \wedge \alpha \rangle.
\end{equation}
Recall that $g_{Y_2}$ is the current associated to  $\eta_{Y_2}$. By \ref{properties of ps forms on X}, we have $PS^{0,0}(W)=P^{0,0}(W)$ and hence $\eta_{Y_2} \alpha \in P_c^{n-1,n-1}(W)$. 
We may view it as a generalized $\delta$-form on $\Xan$   given on $\Uan$ by  $\phi_2 \alpha_U \in P^{n-1,n-1}(\Uan,\varphi_U)$. 
Since the first Chern {$\delta$-form} $\omega_{Y_1}$ is represented on $V$ by $\delta_{\tilde\phi_1 \cdot N_{U,\R}} + \gamma_1 \in P^{1,1}(N_{U,\R})$, we get
\begin{equation} \label{comm3'}
\langle g_{Y_1} * g_{Y_2} , \alpha \rangle = \int_{|\Trop(U)|}
(\delta_{\phi_1\cdot \Trop(U)} + \dpa\dpb \phi_1 )\wedge
\phi_2 \alpha_U.
\end{equation}
Here, we have used that $U$ is a very affine chart of integration for $\omega_{Y_1} \wedge \eta_{Y_2}\alpha \in P_c^{n,n}(\Xan)$. 
Recall from \ref{summary of Chern delta-construction} that the generalized $\delta$-forms $c_1(L_1,\metr_1)_{\rm res}$ and $c_1(L_1,\metr_1)_{\rm ps}$ are represented on
$V$ by $\delta_{\tilde\phi_1\cdot N_{U,\R}}$ and $\gamma_1$ 
in $ P^{1,1}(N_{U,\R})$. 
We conclude that $U$ is a very affine chart of integration for 
$c_1(L_1,\metr_1)_{\rm res}  \wedge \eta_{Y_2}\alpha$
and $c_1(L_1,\metr_1)_{\rm ps}  \wedge \eta_{Y_2}\alpha$ in $P_c^{n,n}(\Xan)$ and hence \eqref{comm3'} yields
 \begin{equation} \label{comm3}
\langle g_{Y_1} * g_{Y_2} , \alpha \rangle = \int_{|\Trop(U)|}
\delta_{\phi_1\cdot \Trop(U)} \wedge
\phi_2 \alpha_U + \int_{|\Trop(U)|} \dpa\dpb \phi_1 \wedge \phi_2 \alpha_U.
\end{equation}
Since $\alpha$ has compact support in $W$ and $\supp(\alpha)\cap \Uan \subseteq V$, it follows from Proposition \ref{delta-support on chart} and 
Corollary \ref{support corollary} that the integrands in \eqref{comm3} have compact support in $\Omega$. 
The generalization of 
Corollary \ref{support corollary} to $PSP$-forms {given} in \ref{psp on W} shows that 
$-\dpb \log\|1\|_1 \wedge \alpha$ has compact support  contained in $\Uan$. Again, we conclude that  $\dpb \phi_1 \wedge  \alpha_U$ has compact support contained in $\Omega$. 
Now Leibniz's rule and the theorem of Stokes \ref{stokesforpolyhedralcurrents} {for $\dpa$}
show
\begin{eqnarray} \label{comm4'}
\nonumber
&&\int_{|\Trop(U)|} \dpa\dpb \phi_1 \wedge \phi_2 \alpha_U \\
\label{comm4}
&=&\int_{\partial |\Trop(U)|} \dpb \phi_1 \wedge \phi_2 \alpha_U 
+ \int_{|\Trop(U)|} \dpb \phi_1 \wedge \dpa(\phi_2 \alpha_U) \\
\nonumber
&=& \int_{\partial |\Trop(U)|} \dpb \phi_1 \wedge \phi_2 \alpha_U 
+ \int_{|\Trop(U)|} \dpb \phi_1 \wedge \dpa\phi_2 \wedge \alpha_U \\
\nonumber&&+ \int_{|\Trop(U)|} \dpb \phi_1 \wedge \phi_2 d'\alpha_U.
\end{eqnarray}
Recall that $\phi_2 \alpha_U$ is a $\delta$-preform on $\Trop(U)$ and hence Remark \ref{generalization of tropicallemma} gives
\begin{equation} \label{comm5}
\int_{|\Trop(U)|}\delta_{\phi_1\cdot \Trop(U)}\wedge\phi_2 \alpha_U 
+ \int_{\partial |\Trop(U)|} 
\dpb \phi_1 \wedge \phi_2 \alpha_U = 0.
\end{equation}
Using \eqref{comm4} and \eqref{comm5} in \eqref{comm3}, we get
\begin{equation} \label{comm6}
\langle g_{Y_1} * g_{Y_2} , \alpha \rangle 
=   \int_{|\Trop(U)|} \dpb \phi_1 \wedge \dpa\phi_2 \wedge \alpha_U 
+ \int_{|\Trop(U)|} \dpb \phi_1 \wedge \phi_2 d'\alpha_U .
\end{equation}
A similar computation {where we replace \eqref{comm4} by an application of Stokes' theorem with respect to $\dpb$} shows
\begin{equation} \label{comm7}
\langle g_{Y_2} * g_{Y_1}, \alpha \rangle 
= \int_{|\Trop(U)|} \dpb \phi_1 \wedge \dpa\phi_2 \wedge \alpha_U 
- \int_{|\Trop(U)|}  \phi_1  \dpa \phi_2 \wedge d''\alpha_U
\end{equation}
Using that $U$ is a very affine chart of integration for 
$\dpb\eta_{Y_1} \wedge \eta_{Y_2}\wedge d'\alpha \in PSP_c^{n,n}(\Xan)$  and for 
$\eta_{Y_1} \wedge \dpa\eta_{Y_2}\wedge d''\alpha_U \in  PSP_c^{n,n}(\Xan)$, this proves 
the claim in the first case.

\vspace{2mm}
\noindent {Case 2: $s_1 = 1$ {and $\|1\|_2=1$}.}
\vspace{2mm}

In this case $Y_1$ is zero and $\eta_{Y_2}=-\log|f_2|$.
The following computation is similar to the one in the proof of the 
Poincar\'e--Lelong formula (see Theorem \ref{Poincare-Lelong equation}) 
and we will use the same terminology as there. 
{We have $g_{Y_2}*g_{Y_1}=0$ as $\delta_{Y_1}=0$ and 
$\omega_{Y_2}=c_1(O_X,\|\phantom a\|_2)=0$.
It remains to show} that 
\begin{equation}\label{comm13}
\omega_{Y_1} \wedge g_{Y_2}= -g_{Y_1} \wedge \delta_{Y_2} 
+ d'[\dpb\eta_{Y_1} \wedge \eta_{Y_2}] + d''[ \eta_{Y_1} \wedge \dpa\eta_{Y_2}].
\end{equation}
It is enough to check the claim locally and by linearity, 
we may assume that {$f_2$} is a regular function on $X$. 
By the first case, we may assume that {$f_2$} is non-constant.
We choose a very affine open subset $U$, the open subset $\Omega$ of 
$\Trop(U)$ and $\phi_1$ as above. We may assume that 
{$\supp(\Div(f_2)) \cap U = \emptyset$} and hence $-\log|f_2|$ is induced by an 
integral $\Gamma$-affine function {$\varphi_2$} on $\Trop(U)$. 
We use the very affine open $U$ to compute the term 
$\langle \omega_{Y_1} \wedge g_{Y_2}, \alpha \rangle$.    
Similarly as in \eqref{comm2} and \eqref{comm3'}, we deduce 
\begin{equation} \label{comm8}
\langle \omega_{Y_1} \wedge g_{Y_2} , \alpha \rangle 
= \int_{|\Trop(U)|}\delta_{\phi_1\cdot\Trop(U)}\wedge {\varphi_2} \alpha_U
+ \int_{|\Trop(U)|} \dpa\dpb\phi_1 \wedge {\varphi_2} \alpha_U.
\end{equation} 
The same computation as in \eqref{comm4'} --\eqref{comm6} yields 
\begin{equation} \label{comm9}
\langle \omega_{Y_1} \wedge g_{Y_2} , \alpha \rangle  
= \int_{|\Trop(U)|} \dpb\phi_1 \wedge \dpa\varphi_2  \wedge \alpha_U 
+ \int_{|\Trop(U)|} \dpb \phi_1 \wedge \varphi_2 d'\alpha_U.
\end{equation}
As in the first case, we have 
\begin{equation} \label{comm9'}
\int_{|\Trop(U)|} \dpb\phi_1 \wedge \varphi_2 d'\alpha_U 
= \langle d'[\dpb \eta_{Y_1} \wedge \eta_{Y_2}], \alpha \rangle . 
\end{equation}
Similarly as in the proof of the Poincar\'e--Lelong formula, we may assume 
that the support of $\alpha$ is covered by the interiors of the affinoid 
subdomains $W_j:=\trop_{U_j}^{-1}(\Delta_j)$ of the tropical chart $V_j$ for 
$j=1, \dots, m$. We set $W:=\bigcup_{j=1}^m W_j$. 
We choose $s>0$ sufficiently small with $\varphi_2 \leq -\log|s|$ 
on the compact set 
{$\supp(\dpb\phi_1 \wedge \alpha_U)$}. Since $W$ covers $\supp(\alpha)$, 
the analytic subdomain $W(s):=\{x\in W \mid |f_2(x)| \geq s\}$ 
of $W$ contains $\supp(\dpb\phi_1 \wedge \alpha)$  
and hence we have
\begin{equation} \label{comm10}
\int_{|\Trop(U)|} \dpb \phi_1 \wedge \dpa\varphi_2 \wedge \alpha_U
= \int_{\trop_U(W(s) \cap \Uan)} \dpb\phi_1 \wedge \dpa\varphi_2 \wedge \alpha_U.
\end{equation}
By the theorem of Stokes in \ref{stokesforpolyhedralcurrents}, this is equal to 
\begin{equation} \label{comm11}
\int_{\partial \trop_U(W(s) \cap \Uan)}  \phi_1 \dpa\varphi_2  \wedge \alpha_U +
\int_{\trop_U(W(s) \cap \Uan) }  \phi_1  \dpa\varphi_2  \wedge d''\alpha_U.
\end{equation}
By Corollary \ref{support corollary}, the support of $d''\alpha$ is contained in $\Uan$. 
We may assume that the compact set $\supp(d'' \alpha)$ is contained in $W(s)$. 
Using that $U$ is very affine chart of integration for 
$\eta_{Y_1} \wedge \dpa \eta_{Y_2}\wedge d''\alpha$,  we get
\begin{equation} \label{comm11'}
\int_{\trop_U(W(s) \cap \Uan)}  \phi_1  \dpa\varphi_2  \wedge d''\alpha_U 
=\bigl\langle d''[\eta_{Y_1} \wedge \dpa\eta_{Y_2}] , \alpha \bigr\rangle.
\end{equation}
Now we apply Remark \ref{PLR} with $f_2$ 
instead of $f$ and the generalized $\delta$-form 
$\eta_{Y_1}\wedge\alpha$ instead of $\alpha$ and observe that
$\varphi_2$ corresponds to $F^*(x_0)$ in \ref{PLR}. 
Then equation \eqref{PLL2} yields
\begin{equation} \label{comm12}
\int_{\partial \trop_U(W(s) \cap \Uan)}  \phi_1 \dpa\varphi_2  \wedge \alpha_U =
-\bigl\langle g_{Y_1} \wedge \delta_{Y_2}, \alpha \bigr\rangle.
\end{equation}
as $W$ covers $\supp(\alpha)$. Using \eqref{comm9'}--\eqref{comm12}    
in \eqref{comm9}, {we get \eqref{comm13}}
proving the claim in the second case.

\vspace{2mm}
\noindent {Case 3: $\|1\|_1=1$ and $s_2 = 1$.}
\vspace{2mm}

{
The formula proved in the second case yields
\[
g_{Y_2} * g_{Y_1} - g_{Y_1} * g_{Y_2} 
= d'[\dpb\eta_{Y_2} \wedge \eta_{Y_1}] + d''[ \eta_{Y_2} \wedge \dpa \eta_{Y_1}]
\]
The $(1,1)$-current on the lefthand side is clearly symmetric.
Hence the righthand side is symmetric as well and equals
\[
-d''[\dpa\eta_{Y_2} \wedge \eta_{Y_1}] - d'[ \eta_{Y_2} \wedge \dpb \eta_{Y_1}]
\]
This proves our claim in the third case.}

\vspace{2mm}
\noindent {Case 4: $\|1\|_1=1$ and $\|1\|_2=1$.} 
\vspace{2mm}

In this case $\eta_{Y_1}=-\log|f_1|$ and $\eta_{Y_2}=-\log|f_2|$.
We have to show that 
\[
g_{Y_1} \wedge \delta_{Y_2}- g_{Y_2} \wedge \delta_{Y_1} 
= d'[\dpb\eta_{Y_1} \wedge \eta_{Y_2}] + d''[ \eta_{Y_1} \wedge \dpa\eta_{Y_2}]
\]
holds. Again, we may assume that $f_1$ and $f_2$ are regular functions on $X$. 
By the previous cases, we may assume that these functions are non-constant. 
We use the same notation as above. Here, we choose the very affine open subset 
$U$ disjoint from $\supp(\Div(f_1)) \cup \supp(\Div(f_2))$. 
Then {$\varphi_1, \varphi_2$} are integral $\Gamma$-affine functions on $\Trop(U)$ 
inducing $-\log|f_1|,-\log|f_2|$ on $\Uan$. 
Going the computation in  the second case backwards, we see that
\begin{equation} \label{comm14}
\langle g_{Y_1} \wedge \delta_{Y_2}, \alpha \rangle 
= -\int_{|\Trop(U)|} \dpb\varphi_1 \wedge d' \varphi_2 \wedge \alpha_U
+ \langle d''[\eta_{Y_1} \wedge \dpa\eta_{Y_2}] , \alpha \rangle.
\end{equation}
Note here that $\dpb\varphi_1 \wedge \dpa\varphi_2 \wedge \alpha_U$ has compact support in $\Omega$. 
Indeed, it follows from Corollary \ref{support corollary}, that 
$\dpb\log |f_1| \wedge \dpa \log |f_2| \wedge \alpha$ is a well-defined 
$\delta$-form on $\Xan$ with compact support in $\Uan$ (using that the divisors intersect properly) and hence we 
get compactness in $\Omega$ from Proposition \ref{delta-support on chart}. 
Interchanging the role of $Y_1,Y_2$ and also of $d',d''$ 
and $\dpa,\dpb$ in \eqref{comm14}, we get the fourth claim. This proves the proposition.
\end{proof}

In the following, we denote the support of a cycle $Z$ (resp. of a Cartier divisor $D$) on $X$ by $|Z|$ (resp. $|D|$). 

\begin{cor}\label{cor commutativity}
Let $Z$ be a cycle of $X$ of codimension $p$   and  {let $g_Z$} be any 
$\delta$-current in $E^{p-1,p-1}(X)$.
For $i=1,2$, let $L_i$ be a line bundle on $X$ with a $\delta$-metric 
$\metr_i$ and non-zero meromorphic section $s_i$. 
Let $D_i$ denote the Cartier divisor on $X$ defined by $s_i$.
We assume that $|D_1|\cap |Z|$ and $|D_2| \cap |Z|$ have both codimension  $\geq 1$ in $|Z|$, and that $|D_1| \cap |D_2| \cap |Z|$ has codimension  $\geq 2$ in $|Z|$.  
Let $\eta_{Y_i}:=-\log\|s_i\|_i$ and let $g_{Y_i}=[\eta_{Y_i}]$ be the induced Green current for the Weil divisor $Y_i$ of $D_i$. Then we have 
\[
g_{Y_1} * (g_{Y_2} *g_Z)- g_{Y_2} * (g_{Y_1}*g_Z) 
\in d'\bigl(E^{p,p+1}(X^\an)\bigr)+d''\bigl(E^{p+1,p}(X^{\an})\bigr).
\]
\end{cor}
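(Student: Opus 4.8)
The plan is to reduce \texttt{Corollary \ref{cor commutativity}} to the divisor case handled in \texttt{Proposition \ref{commutativity}}. First I would use linearity in the prime components of $Z$ to assume that $Z$ is a prime cycle. Writing $i_Z:Z \hookrightarrow X$ for the inclusion, the key identity is that for any $\delta$-metrized line bundle $(L,\metr)$ with meromorphic section $s$ whose Cartier divisor $D$ meets $|Z|$ properly, the $*$-product satisfies
\begin{equation*}
g_Y * (i_{Z*}T) = i_{Z*}\bigl( g_{Y|_Z} * T \bigr)
\end{equation*}
for a current $T$ on $Z^\an$, where $g_{Y|_Z}=[-\log\|s|_Z\|]$; this follows from the definitions in \ref{star product}, from $\omega_Y|_Z = c_1(L|_Z,\metr)$ (Proposition \ref{pull-back and Chern form}), and from the projection formula for push-forward of $\delta$-currents combined with $i_Z^*\omega_Y = \omega_{Y|_Z}$. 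The hypotheses on the codimensions of $|D_1|\cap|Z|$, $|D_2|\cap|Z|$ and $|D_1|\cap|D_2|\cap|Z|$ in $|Z|$ are exactly what is needed so that $D_1|_Z$, $D_2|_Z$ are Cartier divisors on $Z$ defined by non-zero meromorphic sections $s_1|_Z$, $s_2|_Z$ which intersect properly on $Z$ (here one uses that $Z$ might not be normal, but $s_i|_Z$ is still a non-zero rational section since $|D_i|$ does not contain the generic point of $Z$).

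Next I would apply \texttt{Proposition \ref{commutativity}} on the variety $Z$ (of dimension $n-p$) to the line bundles $L_1|_Z, L_2|_Z$ with their induced $\delta$-metrics and sections $s_1|_Z, s_2|_Z$. With $\eta_{Y_i|_Z} := -\log\|s_i|_Z\|_i$ this gives
\begin{equation*}
g_{Y_1|_Z} * g_{Y_2|_Z} - g_{Y_2|_Z} * g_{Y_1|_Z} = d'[\dpb\eta_{Y_1|_Z} \wedge \eta_{Y_2|_Z}] + d''[\eta_{Y_1|_Z} \wedge \dpa\eta_{Y_2|_Z}]
\end{equation*}
as $\delta$-currents on $Z^\an$. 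Applying the $*$-product with the Green current $g_Z$ would be the wrong order, so instead I observe that after pushing forward along $i_Z$ and using the identity from the first paragraph twice, the left-hand side of the desired statement (applied with $T = $ the restriction of $g$-type data to $Z$, more precisely after first forming $g_{Y_2}*g_Z$ and $g_{Y_1}*g_Z$) becomes $i_{Z*}$ of a commutator of $*$-products on $Z$. The subtlety is that $g_{Y_2}*g_Z$ is a current on $X^\an$, not obviously of the form $i_{Z*}$ of something; but by definition $g_{Y_2}\wedge\delta_Z = i_{Z*}[-\log\|s_2\||_Z]$ and $\omega_{Y_2}\wedge g_Z$ with $g_Z$ a Green current for the prime cycle $Z$ — so I would instead argue directly: unwind both iterated $*$-products using the definition $g_{Y}*g' = g_Y\wedge\delta_{(\cdot)} + \omega_Y\wedge g'$, push everything through $i_{Z*}$ where possible, and collect the terms $\omega_{Y_1}\wedge\omega_{Y_2}\wedge g_Z$ which appear symmetrically and cancel.

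Carrying this out, the commutator $g_{Y_1}*(g_{Y_2}*g_Z) - g_{Y_2}*(g_{Y_1}*g_Z)$ reduces, via the functoriality identity and the bilinearity established in \ref{star product}, to $i_{Z*}$ of the commutator in \texttt{Proposition \ref{commutativity}} plus terms involving $\omega_{Y_1}\wedge(g_{Y_2}\wedge\delta_Z) - (g_{Y_1}\wedge\delta_Z)\wedge\omega_{Y_2}$-type expressions, which themselves are $i_{Z*}$ of $\omega_{Y_1|_Z}\wedge g_{Y_2|_Z} - g_{Y_1|_Z}\wedge\delta_{D_2\cdot Z}$-type expressions and hence again covered by the divisor-case identity on $Z$. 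Since $i_{Z*}$ commutes with $d'$ and $d''$ (by \ref{pushforward of delta-currents} and \ref{differentiation of delta-currents}), the right-hand side of the divisor formula pushes forward to an element of $d'(E^{p,p+1}(X^\an)) + d''(E^{p+1,p}(X^\an))$, giving the claim.

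The main obstacle I anticipate is purely bookkeeping: correctly expanding the two triple $*$-products, verifying that every term is either manifestly symmetric in $Y_1, Y_2$ (and thus cancels in the commutator) or can be written as $i_{Z*}$ applied to a term controlled by \texttt{Proposition \ref{commutativity}} on $Z$, and checking that the properness hypotheses guarantee all the restricted sections, Chern $\delta$-forms, and piecewise smooth forms entering the argument are well-defined. There is no genuinely new analytic input beyond \texttt{Proposition \ref{commutativity}}; the work is in the careful reduction via the projection formula for $i_{Z*}$ and the observation that $i_{Z*}$ intertwines all the relevant operations.
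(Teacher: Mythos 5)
Your proposal is correct and follows essentially the same route as the paper: the paper's proof is precisely to expand the two iterated $*$-products, note the symmetric term cancels, and apply Proposition \ref{commutativity} to the analytifications of the prime components of $Z$ (with the restricted sections and $\delta$-metrics), then push forward, using that $i_{Z*}$ commutes with $d'$ and $d''$. The extra bookkeeping you describe (cancellation of $\omega_{Y_1}\wedge\omega_{Y_2}\wedge g$, identification of the remaining terms with $i_{Z*}$ of the commutator on $Z$ via Proposition \ref{pull-back and Chern form} and the projection formula) is exactly what the paper leaves implicit.
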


\proof
This follows immediately from Proposition \ref{commutativity} 
applied to the analytifications of the prime components of $Z$.
\qed

\section{Local heights of varieties} \label{Local heights of varieties}

In this section, we  study the local height of a proper variety $X$ of 
dimension $n$ over $K$ with respect to  metrized line bundles endowed 
with $\delta$-metrics. If the metrics are formal, 
then we  show that these analytically defined local 
heights agree with the ones based on divisorial intersection theory on formal model in
\cite{gubler-crelle}. In particular, they coincide with the local heights used in Arakelov theory over number fields.

\begin{art} \label{definition local height}
For $i=0, \dots , n$, let $L_i$ be a line bundle on $X$ endowed with a 
$\delta$-metric $\metr_i$ and a non-zero meromorphic section $s_i$. For 
the associated Cartier divisor  $D_i :=\Div(s_i)$, we consider the 
metrized Cartier divisor $\hat{D}_i:=(D_i,\metr_i)$, i.e. a Cartier divisor 
$D_i$ and a metric $\metr_i$ on the associated line bundle $O(D_i)$.
Recall from \ref{star product} that we obtain the Green current 
$g_{Y_i}:=[-\log\|s_i\|_i]$ for the Weil divisor $Y_i$ associated to $D_i$. 

We assume that the Cartier divisors $D_0, \dots , D_n$ intersect properly. 
Then we define {\it the local height of $X$ with respect to $\hat{D}_0, \dots, \hat{D}_n$} by 
$$\lambda_{\hat{D}_0, \dots, \hat{D}_n}(X):= g_{Y_0} * \dots * g_{Y_n}(1).$$
\end{art}

\begin{art} \label{local height of cycles}
If $Z$ is a cycle on $X$ of dimension $t$ and $\hat{D}_0, \dots ,\hat{D}_t$ are $\delta$-metrized 
Cartier divisors on $X$ with $|D_0|, \dots ,|D_t|, |Z|$ intersecting properly, then \ref{definition local height}
induces a local height $\lambda_{\hat{D}_0, \dots, \hat{D}_t}(Z)$ by linearity in the prime components of $Z$.
\end{art}

\begin{rem} \label{generalization to pseudo divisors}
The problem with this definition is that it is not functorial as the pull-back of a Cartier 
divisor is not always well-defined as a Cartier divisor. 
This problem is resolved by using pseudo-divisors 
instead of Cartier divisors (see \cite[Ch. 2]{fulton-intersection-theory}). 
We follow \cite{gubler-pisa} and define a 
{\it $\delta$-metrized pseudo-divisor} 
as a triple $(\overline L,Z,s)$ where $\overline L=(L,\metr)$
is a line bundle on $X$ equipped with a $\delta$-metric, $Z$ is
a closed subset of $X$, and $s$ is a nowhere vanishing section of $L$ over 
$X\setminus Z$.
Using the same arguments as in \cite{gubler-pisa}, we get 
a local height $\lambda_{\hat{D}_0, \dots, \hat{D}_t}(Z)$ for $\delta$-metrized pseudo-divisors which is 
well-defined under the weaker condition $|D_0| \cap  \dots  \cap |D_t| \cap |Z| = \emptyset$. 

It is straightforward to show that the local height is linear in $Z$ and multilinear in $\hat{D}_0, \dots, \hat{D}_t$. 
It follows from Corollary \ref{cor commutativity} along the arguments in
\cite{gubler-pisa} that the local height is symmetric in $\hat{D}_0, \dots, \hat{D}_t$. 
\end{rem}

The next result shows that the {\it induction formula} holds for local heights.

\begin{prop} \label{induction formula}
Let $\hat{D}_0, \dots, \hat{D}_n$ be $\delta$-metrized pseudo-divisors on $X$ with $|D_0| \cap \dots \cap |D_n| = \emptyset$.
Then  the local height $\lambda_{\hat{D}_0, \dots, \hat{D}_n}(X)$ is equal to 
$$\lambda_{\hat{D}_0, \dots, \hat{D}_{n-1}}(Y_n)- \int_{\Xan} \log\|s_n\|_n \cdot  c_1(\overline{O(D_0)}) \wedge \dots \wedge  c_1(\overline{O(D_{n-1})}),$$
where we assume that $D_n$ is a Cartier divisor with associated Weil divisor $Y_n$ and  canonical meromorphic section $s_n$ of $O(D_n)$.
\end{prop}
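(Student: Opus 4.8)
The plan is to unwind the definition of the local height as an iterated $*$-product, peel off the last factor $g_{Y_n}$, and use the defining properties of the $*$-product together with the Poincar\'e--Lelong equation. Write $\overline{D}_i=(\overline{O(D_i)},|D_i|,s_i)$; by symmetry of the local height (Remark \ref{generalization to pseudo divisors}) and by reducing to Cartier divisors via the usual limiting/pseudo-divisor formalism, we may assume all $D_i$ are genuine Cartier divisors and that $D_n$ carries its canonical meromorphic section $s_n$ of $O(D_n)$ with associated Weil divisor $Y_n$. Set $g_{Y_i}:=[-\log\|s_i\|_i]$ as in \ref{star product}. Unravelling Definition \ref{definition local height},
\[
\lambda_{\hat D_0,\dots,\hat D_n}(X)=\bigl(g_{Y_0}*\cdots*g_{Y_{n-1}}*g_{Y_n}\bigr)(1).
\]
The key move is to bring $g_{Y_n}$ to the front. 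Using symmetry of the $*$-product (which rests on Corollary \ref{cor commutativity}), this equals $\bigl(g_{Y_n}*(g_{Y_0}*\cdots*g_{Y_{n-1}})\bigr)(1)$. Now set $Z:=X$, $p=0$, and let $h:=g_{Y_0}*\cdots*g_{Y_{n-1}}$, which by iterated application of Proposition \ref{star product is Green current} is a Green current for the cycle $Y_0\cdots Y_{n-1}$ with associated $\delta$-form $\omega(h)=c_1(\overline{O(D_0)})\wedge\cdots\wedge c_1(\overline{O(D_{n-1})})$.

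Next I would apply the definition of $*$ in \ref{star product} directly: $g_{Y_n}*h = g_{Y_n}\wedge\delta_{Y_0\cdots Y_{n-1}} + \omega_{Y_n}\wedge h$. Evaluating at the function $1$, the second term contributes $\bigl(\omega_{Y_n}\wedge h\bigr)(1) = h(\omega_{Y_n}\wedge 1)$, i.e. $h\bigl(c_1(\overline{O(D_n)})\bigr)$; but since $D_0,\dots,D_n$ intersect properly this is a top-degree pairing that, after moving back to the symmetric form, one recognizes as $-\int_{\Xan}\log\|s_n\|_n\, c_1(\overline{O(D_0)})\wedge\cdots\wedge c_1(\overline{O(D_{n-1})})$ via the Poincar\'e--Lelong equation (Corollary \ref{PL for line bundles}): indeed $\omega_{Y_n}=d'd''[-\log\|s_n\|_n]+\delta_{Y_n}$, and pairing the $d'd''$ term against $h$ and integrating by parts (Theorem \ref{stokes-for-var}, using that $h$ has the right closedness after wedging with the $d'$- and $d''$-closed first Chern $\delta$-forms, cf. \ref{first Chern delta-form properties}) converts $h\bigl(d'd''[-\log\|s_n\|_n]\wedge\cdot\bigr)$ into $-\int_{\Xan}\log\|s_n\|_n\,\omega(h)$, while the $\delta_{Y_n}$-term is absorbed by restriction to $Y_n$. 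The first term, $g_{Y_n}\wedge\delta_{Y_0\cdots Y_{n-1}}$ evaluated at $1$, is by \ref{star product} the push-forward along the inclusion of the prime components of $Y_0\cdots Y_{n-1}$ of $[-\log\|s_n\|_n|_{Y_0\cdots Y_{n-1}}]$; rearranging the order of the $*$-products on the restricted cycle and using symmetry again, this is exactly $\lambda_{\hat D_0,\dots,\hat D_{n-1}}(Y_n)$ — here one uses the induction formula's symmetry to see that $*$-multiplying $g_{Y_0},\dots,g_{Y_{n-1}}$ against the cycle $Y_n$ and against $1$ reproduces the local height of $Y_n$ with respect to $\hat D_0,\dots,\hat D_{n-1}$.

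Putting the two contributions together yields the stated formula. The main obstacle I anticipate is the careful bookkeeping of \emph{which} cycle each $g_{Y_i}\wedge\delta_{\bullet}$ term is restricted to, and justifying the interchange that identifies $g_{Y_n}\wedge\delta_{Y_0\cdots Y_{n-1}}(1)$ with $\lambda_{\hat D_0,\dots,\hat D_{n-1}}(Y_n)$: this requires commuting the $*$-product past the restriction to $Y_n$, which is precisely where Corollary \ref{cor commutativity} (commutativity up to $d'$- and $d''$-exact currents, which vanish on $1$ by Theorem \ref{stokes-for-var}) does the work, but one must check the codimension/proper-intersection hypotheses of that corollary hold at each stage of the induction. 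A secondary point to handle with care is the integration-by-parts step turning $d'd''[-\log\|s_n\|_n]$ paired with $h$ into $-\int\log\|s_n\|_n\,\omega(h)$: this is legitimate because $\omega(h)$ is a $\delta$-form of the right bidegree (top degree after the wedge) with compact support on a proper $X$, so Theorem \ref{stokes-for-var} applies; the sign is pinned down by the definition of $d'd''$ on $\delta$-currents in \ref{differentiation of delta-currents}. Everything else is routine multilinearity and the defining property $\omega(g_Y*g_Z)=\omega_Y\wedge\omega(g_Z)$ from Proposition \ref{star product is Green current}.
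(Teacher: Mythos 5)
Your strategy of peeling off $\hat D_n$ is the right general idea, but the two term-identifications at the heart of your argument are false, so the proof does not close. After moving $g_{Y_n}$ to the front you write $\lambda(X)=(g_{Y_n}\wedge\delta_{Y_0\cdots Y_{n-1}})(1)+(\omega_{Y_n}\wedge h)(1)$ and claim the first summand is $\lambda_{\hat D_0,\dots,\hat D_{n-1}}(Y_n)$ and the second is the integral term. Neither holds: by \ref{star product}, $(g_{Y_n}\wedge\delta_{Y_0\cdots Y_{n-1}})(1)$ is just $\sum_P m_P\bigl(-\log\|s_n(P)\|_n\bigr)$, the value of the single Green function $-\log\|s_n\|_n$ on the zero-cycle $Y_0\cdots Y_{n-1}$, which is a completely different quantity from the local height of $Y_n$ with respect to the other $n$ metrized divisors (already for $n=1$ these are the two sides of the reciprocity that the proposition, combined with symmetry, encodes — so asserting their equality term by term assumes what is to be proved). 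Likewise $(\omega_{Y_n}\wedge h)(1)=h\bigl(c_1(\overline{O(D_n)})\bigr)$ is not $-\int_\Xan\log\|s_n\|_n\,\omega(h)$: your justification pairs the $\delta$-current $h$ against the $\delta$-currents $d'd''[-\log\|s_n\|_n]$ and $\delta_{Y_n}$, a pairing that is not defined, and the "integration by parts" you invoke is not covered by Theorem \ref{stokes-for-var} (Stokes for compactly supported $\delta$-forms). Making such a commutation rigorous is exactly the content of Proposition \ref{commutativity} and Corollary \ref{cor commutativity}, and even then it produces explicit $d'$- and $d''$-exact correction terms plus the contribution of $\delta_{Y_0\cdots Y_{n-1}}$ coming from $d'd''h=\omega(h)-\delta_{Y_0\cdots Y_{n-1}}$, all of which you drop. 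Finally, invoking the symmetry of the local height (which in \cite{gubler-pisa} is established by arguments intertwined with this very induction formula) is a heavier tool than needed and risks circularity relative to the source.

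The intended proof (the paper's one-line reference to \cite{gubler-pisa}) needs no symmetry, no Poincar\'e--Lelong, and no integration by parts. Expand the right-nested product using \ref{star product} repeatedly: $g_{Y_0}*\cdots*g_{Y_n}=\sum_{j=0}^{n}\omega_{Y_0}\wedge\cdots\wedge\omega_{Y_{j-1}}\wedge g_{Y_j}\wedge\delta_{Y_{j+1}\cdots Y_n}$ (no delta factor for $j=n$). Evaluated at $1$, the $j=n$ term equals $\langle g_{Y_n},\,c_1(\overline{O(D_0)})\wedge\cdots\wedge c_1(\overline{O(D_{n-1})})\rangle$, which is the stated integral by Corollary \ref{deltaform defines-measure} and Proposition \ref{current continuous function}; every term with $j<n$ carries a factor $\delta_{Y_{j+1}\cdots Y_{n-1}\cdot Y_n}$, and using that first Chern $\delta$-forms and the Green functions restrict compatibly to the prime components of $Y_n$ (Proposition \ref{pull-back and Chern form}) and that proper intersection with Cartier divisors commutes with restriction to $Y_n$, these terms assemble exactly into the expansion of $\lambda_{\hat D_0,\dots,\hat D_{n-1}}(Y_n)$ computed on $Y_n$. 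That bookkeeping (plus the pseudo-divisor adjustments of Remark \ref{generalization to pseudo divisors}) is the whole proof; I recommend you rewrite your argument along these lines.
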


\begin{proof} The argument is the same as for \cite[Proposition 3.5]{gubler-pisa}. 
\end{proof}

\begin{prop} \label{functoriality}
Let $\varphi:X' \rightarrow X$ be a morphism of proper varieties over $K$ and let $\hat{D}_0, \dots ,\hat{D}_n$ be $\delta$-metrized 
pseudo-divisors on $X$ with $|D_0| \cap \dots \cap |D_n| = \emptyset$. Then the functoriality
$$\deg(\varphi)\lambda_{\hat{D}_0, \dots, \hat{D}_n}(X)= \lambda_{\varphi^*(\hat{D}_0), \dots, \varphi^*(\hat{D}_n)}(X')$$
holds.
\end{prop}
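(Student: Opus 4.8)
The plan is to prove the functoriality statement
$$\deg(\varphi)\lambda_{\hat{D}_0, \dots, \hat{D}_n}(X)= \lambda_{\varphi^*(\hat{D}_0), \dots, \varphi^*(\hat{D}_n)}(X')$$
by reducing to the induction formula in Proposition \ref{induction formula} and arguing by induction on $n = \dim(X)$. First I would handle the degenerate case: if $\deg(\varphi)=0$, then $\dim(X') > \dim(X)$ or the extension of function fields is infinite, and one checks directly (using the definitions in \ref{definition local height} and \ref{local height of cycles} together with the projection formula for $\delta$-currents in Proposition \ref{integration well-defined}(iv)) that both sides vanish; so one may assume $\varphi$ is generically finite and $\dim(X')=\dim(X)=n$. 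Since the local height is multilinear and symmetric in the metrized pseudo-divisors by Remark \ref{generalization to pseudo divisors}, and since the statement is linear in the prime components on both sides, one may also assume $X$ and $X'$ are varieties and work with honest Cartier divisors after a suitable moving argument in the spirit of \cite{gubler-pisa}.

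The heart of the proof is the induction step. Applying Proposition \ref{induction formula} to $\hat D_0,\dots,\hat D_n$ on $X$ gives
$$\lambda_{\hat{D}_0, \dots, \hat{D}_n}(X) = \lambda_{\hat{D}_0, \dots, \hat{D}_{n-1}}(Y_n) - \int_{\Xan} \log\|s_n\|_n \cdot c_1(\overline{O(D_0)}) \wedge \dots \wedge c_1(\overline{O(D_{n-1})}),$$
and applying it again on $X'$ with the pulled-back data gives the analogous identity. For the first term on each side, one notes that $\varphi$ restricts to a morphism $Y_n' \to Y_n$ (with an appropriate compatibility of cycle-theoretic multiplicities) and that $\deg$ of this restriction times the sum over components recovers $\deg(\varphi)$ on the relevant components; the inductive hypothesis applied to $Y_n$ (which has dimension $n-1$) then yields $\deg(\varphi)\,\lambda_{\hat D_0,\dots,\hat D_{n-1}}(Y_n) = \lambda_{\varphi^*\hat D_0,\dots,\varphi^*\hat D_{n-1}}(Y_n')$. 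For the second (integral) term, I would use Proposition \ref{pull-back and Chern form}, which gives $c_1(\varphi^*\overline{O(D_j)}) = \varphi^* c_1(\overline{O(D_j)})$, together with compatibility of pull-back with the wedge product of $\delta$-forms, so that
$$c_1(\varphi^*\overline{O(D_0)}) \wedge \dots \wedge c_1(\varphi^*\overline{O(D_{n-1})}) = \varphi^*\bigl(c_1(\overline{O(D_0)}) \wedge \dots \wedge c_1(\overline{O(D_{n-1})})\bigr);$$
since $\varphi^*(-\log\|s_n\|_n) = -\log\|\varphi^*s_n\|_{\varphi^*\metr_n}$ as continuous functions, the projection formula for integration of $\delta$-forms (Proposition \ref{integration well-defined}(iv)) converts $\int_{(X')^{\an}}$ of the pulled-back integrand into $\deg(\varphi)$ times the integral over $\Xan$. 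Adding the two terms gives the claim.

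The base case $n=0$ is essentially a computation: $X$ is a point (over $K$), $D_0$ is the empty divisor with a metric on $O_X$, the local height is $-\log\|s_0\|_0$ evaluated at the point, and $\varphi$ of degree $d$ contributes $d$ copies, which matches. One subtlety to handle carefully is the proper-intersection / empty-intersection hypothesis: I must ensure that if $|D_0|\cap\dots\cap|D_n| = \emptyset$ on $X$, then $|\varphi^*D_0| \cap \dots \cap |\varphi^*D_n| = \emptyset$ on $X'$, which is immediate from $|\varphi^*D_j| \subseteq \varphi^{-1}(|D_j|)$, and that the restrictions of the remaining divisors to $Y_n$ still meet properly there; this is exactly the reason pseudo-divisors are used, and the bookkeeping is the same as in \cite[\S 3]{gubler-pisa}.

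The main obstacle I expect is not any single deep input but rather the careful verification of the cycle-theoretic compatibility in the induction step: namely that $\varphi_*(Y_n') $ relates to $Y_n$ correctly so that the inductive hypothesis on the $(n-1)$-dimensional cycle $Y_n'$ (a sum of prime components, possibly with $\varphi$ having varying degrees on them) assembles into exactly $\deg(\varphi)\,\lambda_{\hat D_0,\dots,\hat D_{n-1}}(Y_n)$. This is the standard projection-formula bookkeeping for intersection-theoretic heights, and it goes through because $\cyc(\varphi^*s_n) $ pushes forward compatibly under $\varphi$; but it is the step that requires the most attention to detail. Everything else is a formal consequence of Propositions \ref{induction formula}, \ref{pull-back and Chern form}, and \ref{integration well-defined}.
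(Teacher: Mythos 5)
Your proposal is correct and follows essentially the same route as the paper, whose proof consists precisely of the remark that the claim follows from the induction formula (Proposition \ref{induction formula}) together with the projection formula for integrals \eqref{integration well-definedg1}, with the detailed bookkeeping (induction on dimension, pull-back compatibility of first Chern $\delta$-forms, push-forward of the cycle $\cyc(\varphi^*s_n)$, degenerate and base cases) carried out as in the archimedean case in \cite{gubler-pisa}. The extra details you supply are exactly the ones the paper delegates to that reference, so there is nothing substantive to add.
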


\begin{proof}
The proof relies on the induction formula in Proposition \ref{induction formula}
and the projection formula for 
integrals \eqref{integration well-definedg1}. We refer to \cite{gubler-pisa} for 
the analogous arguments in the archimedean case.
\end{proof}

The following result is called the {\it metric change formula}.

\begin{prop} \label{metrik change formula}
Suppose that the local height $\lambda(X)$ with respect to the $\delta$-metrized pseudo-divisors $\hat{D}_0, \dots, \hat{D}_n$ 
is well-defined. Let $\lambda'(X)$ be the local height of $X$ obtained by replacing the metric $\metr_0$ on $O(D_0)$ by another $\delta$-metric $\metr_0'$. Then $\rho:= \log(\metr_0'/\metr_0)$ is a piecewise smooth function on $\Xan$ and we have
\[\lambda(X)-\lambda'(X) = \int_\Xan \rho \cdot c_1(\overline{O(D_1)}) \wedge \dots \wedge  c_1(\overline{O(D_{n})}).\]
\end{prop}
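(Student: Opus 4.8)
The plan is to reduce the metric change formula to the induction formula of Proposition \ref{induction formula}, applied twice with the two different metrics on $O(D_0)$. First I would note that since $\metr_0$ and $\metr_0'$ are both $\delta$-metrics on the \emph{same} line bundle $O(D_0)$, the quotient $\rho := \log(\metr_0'/\metr_0)$ is a globally defined real function on $\Xan$; locally on a trivialization $(U,s)$ of $O(D_0)$ it equals $\log\|s\|_{\metr_0} - \log\|s\|_{\metr_0'}$, which is the difference of two piecewise smooth functions by Definition \ref{piecewise smooth metrics}, hence piecewise smooth by \ref{properties of piecewise smooth}. This also shows that the two $\delta$-metrics differ by the trivial metric twisted by $e^{-\rho}$, so they define the same first Chern $\delta$-current up to $d'd''[\rho]$; more precisely, away from $|D_0|$ the section $s_0$ is nowhere vanishing and $-\log\|s_0\|_{\metr_0'} = -\log\|s_0\|_{\metr_0} + \rho$, so the two Green currents $g_{Y_0}$ and $g_{Y_0}'$ for the same Weil divisor $Y_0$ satisfy $g_{Y_0}' = g_{Y_0} + [\rho]$.

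Next I would symmetrize. By Remark \ref{generalization to pseudo divisors} the local height is symmetric in the metrized pseudo-divisors, so I may assume $D_0$ is the distinguished divisor in the induction formula; choose $D_0$ to be a Cartier divisor with canonical meromorphic section $s_0$ of $O(D_0)$ and associated Weil divisor $Y_0$. Applying Proposition \ref{induction formula} (with the roles of the indices permuted so that $D_0$ plays the role of $D_n$ there) to the two metric choices gives
\[
\lambda(X) = \lambda_{\hat{D}_1,\dots,\hat{D}_n}(Y_0) - \int_{\Xan} \log\|s_0\|_{\metr_0}\, c_1(\overline{O(D_1)}) \wedge \dots \wedge c_1(\overline{O(D_n)})
\]
and
\[
\lambda'(X) = \lambda_{\hat{D}_1,\dots,\hat{D}_n}(Y_0) - \int_{\Xan} \log\|s_0\|_{\metr_0'}\, c_1(\overline{O(D_1)}) \wedge \dots \wedge c_1(\overline{O(D_n)}),
\]
where the first term on the right-hand side is identical in both equations because the metrized pseudo-divisors $\hat{D}_1,\dots,\hat{D}_n$ and the cycle $Y_0$ are unchanged. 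Subtracting and using $\log\|s_0\|_{\metr_0'} - \log\|s_0\|_{\metr_0} = \rho$ (which holds as an identity of piecewise smooth functions on $\Xan\setminus|D_0|$, hence extends to a continuous identity on all of $\Xan$ since both sides are globally defined and agree on a dense open subset), I would obtain
\[
\lambda(X) - \lambda'(X) = \int_\Xan \rho\, c_1(\overline{O(D_1)}) \wedge \dots \wedge c_1(\overline{O(D_n)}),
\]
as claimed. Here I use that $\rho \cdot c_1(\overline{O(D_1)}) \wedge \dots \wedge c_1(\overline{O(D_n)})$ is a well-defined element of $PSP_c^{n,n}(\Xan)$, since $\rho \in PS^{0,0}(\Xan) = P^{0,0}(\Xan)$ by \ref{properties of ps forms on X} and the Chern $\delta$-forms give a $\delta$-form of bidegree $(n,n)$ which is compactly supported because $X$ is proper; its integral is then defined as in \ref{psp on W}.

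The main obstacle I anticipate is justifying the two subtleties in the integral manipulation: first, that the difference of the two integrals over $\Xan$ of $\log\|s_0\|_{\metr_i}$ against the \emph{same} wedge of Chern $\delta$-forms really collapses to the integral of $\rho$ against that wedge — this requires that $\log\|s_0\|_{\metr_0}$ and $\log\|s_0\|_{\metr_0'}$ define $\delta$-currents whose difference is exactly $[\rho]$, which follows from Proposition \ref{current continuous function} and the fact that $\rho$ is continuous with $\log\|s_0\|_{\metr_0'} = \log\|s_0\|_{\metr_0} + \rho$ away from a set of measure zero for the Monge--Amp\`ere measure; and second, checking that the induction formula genuinely applies with these metrized pseudo-divisors, i.e.\ that the proper intersection hypothesis $|D_0| \cap \dots \cap |D_n| = \emptyset$ is preserved under the symmetrization and that the canonical section $s_0$ may be chosen as required. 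Both of these are essentially bookkeeping given the machinery already developed, so I expect no serious difficulty, only care with the definitions of $\delta$-currents attached to continuous functions and with the fact — already noted in Remark \ref{generalization to pseudo divisors} following \cite[Proposition 3.5]{gubler-pisa} — that local heights of pseudo-divisors behave well under the operations used here.
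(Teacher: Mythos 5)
Your proof is correct and follows essentially the same route as the paper, which deduces the formula from symmetry of the local height together with the induction formula of Proposition \ref{induction formula}. The only point to tidy is that the paper additionally invokes linearity in $\hat{D}_0$, so the metric change is carried by the trivial line bundle with section $1$ (a genuine Cartier divisor, namely $0$), whereas your direct application of the induction formula with $D_0$ in the distinguished slot presupposes that the pseudo-divisor $D_0$ is an honest Cartier divisor with a meromorphic section --- an assumption not granted by the statement, but easily removed by exactly this linearity trick.
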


\begin{proof}
This follows from linearity and symmetry of the local height in  $\hat{D}_0$ and $\hat{D}_n$  and from the induction formula in Proposition \ref{induction formula}.
\end{proof}

\begin{rem} \label{comparision with old local heights}
Now suppose that $\hat{D}_0, \dots, \hat{D}_n$ are formally metrized 
pseudo-divisors on $X$ with $|D_0| \cap \dots \cap |D_n| = \emptyset$.
Then the intersection theory of divisors on admissible formal $\kcirc$-models 
given in \cite{gubler-crelle} induces also a local height 
of $X$ (see \cite{gubler-pisa}). It also satisfies an induction formula 
involving Chambert-Loir's measures (see \cite[Remark 9.5]{gubler-pisa}).
Since the Chambert--Loir measure agrees with the Monge--Amp\`ere measure 
(see Theorem \ref{Monge-Ampere vs Chambert-Loir}), we deduce 
from the induction formula in Proposition \ref{induction formula} that the 
local height based 
on intersection theory of divisors agrees with 
$\lambda_{\hat{D}_0, \dots, \hat{D}_n}(X)$ from Remark \ref{generalization to pseudo divisors}. 
This proves in particular Theorem \ref{localalgebraicheights}
stated in the introduction.
\end{rem}

\appendix

\section{Convex Geometry}  \label{convex geometry}

In this appendix, we gather the notions from convex geometry on a finite dimensional real vector space $W$ coming with an {\it integral structure}. This means that we consider a free abelian group $N$ of rank $r$ with  $W=N_{\R}:=N \otimes_\Z \R$. Let $M:=\Hom(N,\Z)$ be the dual abelian group and let $V:=\Hom(M,\R)=M_\R$ be the dual vector space of $W$. The natural duality between $V$ and $W$ is denoted by $\langle u, \omega \rangle$. Let $\Gamma$ be a fixed subgroup of $\R$. In the applications, it is usually the value group of a non-archimedean absolute value.

\begin{art} \label{integral affine map}
Let $N'$ be another free abelian group of finite rank and 
let $F: N_\R \rightarrow N_\R'$ be an affine map. 
Then $F$ is called {\it integral $\Gamma$-affine} if 
$F = \L_F + \omega$ with  {$\omega \in N'\otimes_\Z\Gamma\subseteq N'_\R$} 
and with the associated linear map $\L_F$ induced by a 
homomorphism $N \rightarrow N'$. 
\end{art}

\begin{art} \label{polyhedron}
A {\it polyhedron} $\Delta$ in $W$ is defined as the intersection of finitely many half spaces $\{\omega \in W \mid \langle u_i, \omega \rangle \geq c_i\}$ with $u_i \in V$ and $c_i \in \R$. If we may choose all $u_i \in M$ and all $c_i \in \Gamma$, then we say that $\Delta$ is an {\it integral $\Gamma$-affine polyhedron}. A {\it face} of $\Delta$ is either $\Delta$ itself or the intersection of $\Delta$ with  the boundary of a closed halfspace containing $\Delta$.  We write $\tau \preccurlyeq \Delta$ for a face $\tau$ of $\Delta$ and we write  {$\tau \prec \Delta$} if additionally $\tau \neq \Delta$. The {\it relative interior} of $\Delta$ is defined by 
$$\relint(\Delta):= \Delta \setminus \bigcup_{\tau \prec \Delta} \tau.$$
Note that every polyhedron is convex. A {\it polytope} is a bounded polyhedron. 

A polyhedron $\Delta$ in $W$ generates an affine space $\A_\Delta$ of the same dimension. 
 {Recall that an affine space in $W$ is a translate of
a linear subspace and $\A_\Delta$ is the intersection of all
affine spaces in $W$ which contain $\Delta$.}
We denote the underlying vector space by $\L_\Delta$. If $\Delta$ is integral $\Gamma$-affine, then the integral structure of $\A_\Delta$ is given by the complete lattice $N_\Delta := N \cap \L_\Delta$ in $\L_\Delta$.
\end{art}

\begin{art} \rm \label{polyhedral complex}
A {\it polyhedral complex} $\Ccal$ in $W$ is a finite set of polyhedra such that 
\begin{itemize}
\item[(a)] $\Delta \in \Ccal \text{ $\Rightarrow$ all closed faces of $\Delta$ are in $\Ccal$}$;
\item[(b)] $\Delta, \sigma \in \Ccal \text{ $\Rightarrow$ $\Delta \cap \sigma$ is either empty or a closed face of $\Delta$ and $\sigma$}$.
\end{itemize}
The polyhedral complex $\Ccal$ is called {\it integral $\Gamma$-affine} if every $\Delta \in \Ccal$ is integral $\Gamma$-affine. The {\it support}  of $\Ccal$ is defined as $$|\Ccal|:= \bigcup_{\Delta \in
\Ccal} \Delta.$$
We say that a polyhedral complex $\Ccal$ is  {\it complete} if $|\Ccal|=W$. A {\it subdivision} of $\Ccal$ is a polyhedral complex $\Dcal$ with $|\Dcal|=|\Ccal|$ and with every $\Delta \in \Dcal$  contained in a polyhedron of $\Ccal$. This has to be distinguished from a {\it subcomplex} of $\Ccal$ which is a polyhedral complex $\Dcal$ with  $\Dcal \subseteq \Ccal$. 
\end{art}

\begin{art} \label{dimension of complex} \rm
Given a polyhedral complex $\KC$ in $N_\R$, we denote
by $\KC_n$ the subset of $n$-dimensional polyhedra in $\KC$
and by $\KC^l=\KC_{r-l}$ the subset of polyhedra in $\KC$ of codimension
$l$ in $N_\R$.
We say that a polyhedral complex $\KC$ is of {\it pure dimension $n$} 
(resp. of {\it pure codimension $l$}) if all polyhedra in $\KC$ which are maximal with 
respect to $\preccurlyeq$ lie in $\KC_n$ (resp. $\KC^l$).
Given a polyhedral complex $\KC$ of pure dimension $n$ and $m\leq n$, we denote
by $\KC_{\leq m}$ the polyhedral subcomplex of $\KC$ of pure dimension $m$ 
given by all $\sigma \in \KC$ with ${\rm dim}\,\sigma\leq m$.
We set $\KC^{\geq l}=\KC_{\leq r-l}$ if $r-l\leq n$. Recall here that $r$ is the rank of $N$.
\end{art}

\begin{definition} \label{polyhedral set}
(i) A  {\it polyhedral set} $P$ in $N_\R$
(of pure dimension $n$) is a 
finite union of  polyhedra (of pure dimension $n$). 
Equivalently, there exists a polyhedral complex $\KD$ 
(of pure dimension $n$) whose support is $P$. The polyhedral set is called {\it integral $\Gamma$-affine} 
if the above polyhedra can be chosen integral $\Gamma$-affine. 

(ii) Let $P$ be a polyhedral set in $N_\R$. A point $x\in P$ is called 
{\it regular} if there exists a polyhedron $\Delta \subseteq P$  such that $\relint(\Delta)$ is an open
neighbourhood of $x$ in $P$. We denote by $\relint(P)$ the set of regular 
points of the polyhedral set $P$.
\end{definition}

\begin{art} \label{cones}
A {\it cone} $\sigma$ in $W$ is characterized by 
$\R_{\geq 0}\cdot \sigma =\sigma$.  A cone which is a polyhedron is called a {\it  polyhedral cone}.
An integral $\R$-affine polyhedral  cone is simply called a {\it rational polyhedral cone}.
A polyhedral cone is called {\it strictly convex} if it does not contain a line.
The {\it local cone} ${\rm LC}_\omega(S)$ of $S\subseteq W$ at $\omega \in W$ is defined by 
$${\rm LC}_\omega(S):= \{ \omega' \in W \mid 
\text{$\omega + [0,\ve){\omega'} \subseteq S$ for some $\ve >0$}\}.$$
\end{art}

\begin{art} \label{fans}
A polyhedral complex $\Sigma$ consisting of strictly convex rational polyhedral cones is called
a {\it rational polyhedral fan}. The theory of 
toric varieties (see 
\cite{kempfetal, oda, fulton-toric-varieties, cox-little-schenck}) gives 
a bijective correspondence $\Sigma \mapsto Y_\Sigma$ between rational 
polyhedral fans on $N_\R$ and normal toric varieties 
over any field $K$ with open dense torus $\Spec(K[M])$ (up 
to equivariant isomorphisms restricting to the identity on the torus). 
Then $\Sigma$ is complete 
if and only if $Y_\Sigma$ is a proper variety over $K$. 

A {\it simplicial cone} in $N_\R$ is generated by a part of a basis. 
A {\it simplicial fan} is a fan formed by simplicial cones. 
A {\it smooth fan} in $N_\R$ is a rational polyhedral fan $\Sigma$  
such  that every cone $\sigma \in \Sigma$ is generated by a part 
of a basis of $N$.
In particular, a smooth fan is a simplicial fan. 
A polyhedral fan $\Sigma$ is smooth if and only if $Y_\Sigma$ is 
a smooth variety 
\cite[Ch. 1, Thm. 3.12]{cox-little-schenck}, 
\cite[2.1 Prop.]{fulton-toric-varieties}.
\end{art}


\def\cprime{$'$}

\end{document}